\documentclass[a4, 12pt]{amsart}
\usepackage{amssymb,amstext,amscd,amsthm,amsfonts,mathdots,mathrsfs}
\usepackage{pdflscape}
\usepackage{hyperref}
\usepackage{pdflscape}
\usepackage{setspace}
\usepackage[all]{xy}
\usepackage{enumerate}
\usepackage{indentfirst}
\usepackage{ytableau}
\usepackage{wasysym}
\usepackage{color}
\usepackage{colortbl}
\usepackage{amsmath}
\usepackage{tikz}
\usetikzlibrary{shapes.geometric, arrows}
\usetikzlibrary{calc}
\numberwithin{equation}{section}
\tikzset{
  every node/.style={text=olive}
}

\usepackage[thicklines]{cancel}

\usepackage[capitalize]{cleveref}
\crefname{equation}{}{}
\crefname{alphatheorem}{Theorem}{Theorems}

\newtheorem{theorem}{Theorem}[section]
\newtheorem{lemma}[theorem]{Lemma}
\newtheorem{proposition}[theorem]{Proposition}
\newtheorem{definition}[theorem]{Definition}

\newtheorem{corollary}[theorem]{Corollary}
\newtheorem{example}[theorem]{Example}

\theoremstyle{remark}
\newtheorem{rem}[theorem]{Remark}

\newtheorem{alphatheorem}{\bf Theorem}
\newtheorem{alphacorollary}[alphatheorem]{\bf Corollary}

\newtheorem{alphadefinition}[alphatheorem]{\bf Definition}

\newcommand{\clr}{rgb:black,2;blue,2;red,0}

\tikzset{anchorbase/.style={baseline={([yshift=-0.5ex]current bounding box.center)}}}
\tikzset{wipe/.style={white,line width=4pt}}
\tikzset{
    circ/.style={
        draw,
        circle,
        minimum size=0.4em,
        inner sep=0pt,
    }
}

\newcommand{\ballstick}{
  \begin{tikzpicture}[anchorbase,scale=.6,line width=1pt]
    \draw (0,0) --(0,.5);
    \node[circ,fill=white] at (0,.5){};
  \end{tikzpicture}
}
\DeclareFontFamily{OT1}{pzc}{}
\DeclareFontShape{OT1}{pzc}{m}{it}{ <-> s*[1.2] pzcmi7t }{}
\DeclareMathAlphabet{\mathpzc}{OT1}{pzc}{m}{it}

\newcommand{\Wb}{\texttt{B}\!\mathpzc{Web}}
\newcommand{\WCb}{\texttt{B}\!\mathpzc{Web}^{\prime}}
\newcommand{\AWb}{\texttt{B}\!\mathpzc{Web}^\bullet}  
\newcommand{\AWCb}{\texttt{B}\!\mathpzc{Web}^{\bullet\prime}}

\newcommand{\SMat}{\text{SMat}}

\newcommand{\PSMat}{\text{ParSMat}}
\newcommand{\bSMat}{\texttt{B}\text{SMat}}
\newcommand{\bPSMat}{\texttt{B}\text{ParSMat}}
\newcommand{\Par}{\text{Par}}
\newcommand{\EPar}{\text{EPar}}

\newcommand\C{\mathbb{C}}
\newcommand\Z{\mathbb{Z}}
\newcommand\Q{\mathbb{Q}}

\newcommand\N{\mathbb{N}}
\newcommand\kk{\Bbbk}
\newcommand\la{\lambda}
\newcommand{\Hom}{{\rm Hom}}
\newcommand{\End}{{\rm End}}
\newcommand{\lol}{\ballstick}

\newcommand{\rot}{\rotatebox[origin=c]{180}}

\newcommand{\dotbubble}{\mbox{$\bigodot$}}

\newcommand{\arxiv}[1]{\href{http://arxiv.org/abs/#1}{\tt arXiv:\nolinkurl{#1}}}

\def\n{\mathfrak n}

\def\t{\mathfrak t}

\def\unit{\mathfrak{1}}

\newcommand{\braidup}{to[out=up,in=down]}
\newcommand{\braiddown}{to[out=down,in=up]}

\newcommand{\capa}{
\begin{tikzpicture}[xscale=-1,anchorbase, scale=0.3, color=\clr]
\draw[-,line width=1.2pt] (0,0) to [out=up,in=left] (1,1) to [out=right,in=up] (2,0);
\node at (0,-.3){$\scriptstyle a$};
\node at (2,-.3){$\scriptstyle a$};
\end{tikzpicture}}

\newcommand{\cupa}{
\begin{tikzpicture}[xscale=-1,anchorbase, scale=0.3, color=\clr]
\draw[-,line width=1.2pt] (1,1) to [out=down,in=left] (2,0) to [out=right,in=down] (3,1);
\node at (1,1.3){$\scriptstyle a$};
\node at (3,1.3){$\scriptstyle a$};
\end{tikzpicture}}

\newcommand{\bubblea}{
\begin{tikzpicture}[anchorbase, scale=1.2, color=\clr,line width=1.2pt]
					\draw[-] (0.2,0.2) to[out=90,in=0] (0,.4);
					\draw[-] (0,0.4) to[out=180,in=90] (-.2,0.2);
					\draw[-] (-.2,0.2) to[out=-90,in=180] (0,0);
					\draw[-] (0,0) to[out=0,in=-90] (0.2,0.2);
				\node at (-0.3,0.2) {$\scriptstyle{a}$};
			\end{tikzpicture}}
\newcommand{\bubble}{
\begin{tikzpicture}[anchorbase, scale=1.5, color=\clr,thick]
					\draw[-] (0.2,0.2) to[out=90,in=0] (0,.4);
					\draw[-] (0,0.4) to[out=180,in=90] (-.2,0.2);
					\draw[-] (-.2,0.2) to[out=-90,in=180] (0,0);
					\draw[-] (0,0) to[out=0,in=-90] (0.2,0.2);
				\node at (-0.3,0.2) {$\scriptstyle{1}$};
			\end{tikzpicture}}
   
\newcommand{\stra}{\begin{tikzpicture}[baseline = 10pt, scale=0.5, color=\clr]
            \draw[-,line width=1.6pt] (0,0.5)to[out=up,in=down](0,1.7);
            \draw (0,0.2) node{$\scriptstyle a$};
\end{tikzpicture} 
}

\newcommand{\bdot}{ node[circle, draw, fill=\clr, thick, inner sep=0pt, minimum width=2.8pt]{}}

\newcommand{\squ}{ node[draw, fill=white,scale=.35]{}}

\newcommand{\dia}{ node[draw, fill=\clr,scale=.35]{}}

\newcommand{\merge}
{\begin{tikzpicture}[baseline = -.5mm,color=\clr]
	\draw[-,line width=1pt] (0.28,-.3) to (0.08,0.04);
	\draw[-,line width=1pt] (-0.12,-.3) to (0.08,0.04);
	\draw[-,line width=1.6pt] (0.08,.4) to (0.08,0);
        \node at (-0.22,-.4) {$\scriptstyle a$};
        \node at (0.35,-.4) {$\scriptstyle b$};\node at (0,.55){$\scriptstyle a+b$};\end{tikzpicture} }
        
\newcommand{\splits}
{\begin{tikzpicture}[baseline = -.5mm,color=\clr]
	\draw[-,line width=1.6pt] (0.08,-.3) to (0.08,0.04);
	\draw[-,line width=1pt] (0.28,.4) to (0.08,0);
	\draw[-,line width=1pt] (-0.12,.4) to (0.08,0);
        \node at (-0.22,.5) {$\scriptstyle a$};
        \node at (0.36,.5) {$\scriptstyle b$};
        \node at (0.1,-.45){$\scriptstyle a+b$};
\end{tikzpicture}}

\newcommand{\dotgen}
{\begin{tikzpicture}[anchorbase, scale=0.5, color=\clr]
\draw[-,thick] (0,0) to[out=up, in=down] (0,1.4);
\draw(0,0.6) \bdot;
\node at (0,-.3) {$\scriptstyle 1$};
\end{tikzpicture}}

\newcommand{\crossing}{\begin{tikzpicture}[baseline=-.5mm,scale=.8,color=\clr]
	\draw[-,line width=1.2pt] (-0.3,-.3) to (.3,.4);
	\draw[-,line width=1.2pt] (0.3,-.3) to (-.3,.4);
        \node at (0.3,-.4) {$\scriptstyle b$};
        \node at (-0.3,-.4) {$\scriptstyle a$};
         \node at (0.3,.55) {$\scriptstyle a$};
        \node at (-0.3,.55) {$\scriptstyle b$};
\end{tikzpicture}}

\newcommand{\wkdota}{\begin{tikzpicture}[anchorbase, scale=0.5, color=\clr]
\draw[-,line width=1.4pt] (0,0) to[out=up, in=down] (0,1.4);
\draw(0,0.6) \bdot; 
\draw (0.7,0.6) node {$\scriptstyle \omega_r$};
\node at (0,-.3) {$\scriptstyle a$};
\end{tikzpicture} }

\newcommand{\wkdotaa}{\begin{tikzpicture}[anchorbase, scale=0.5, color=\clr]
\draw[-,line width=1.4pt] (0,0) to[out=up, in=down] (0,1.4);
\draw(0,0.6) \bdot; 
\node at (0,-.3) {$\scriptstyle a$};
\end{tikzpicture} }

\newcommand{\wkdotawmu}{\begin{tikzpicture}[anchorbase, scale=0.5, color=\clr]
\draw[-,line width=1.4pt] (0,0) to[out=up, in=down] (0,1.4);
\draw(0,0.6) \bdot; 
\draw (0.7,0.6) node {$\scriptstyle \omega_{\mu}$};
\node at (0,-.3) {$\scriptstyle {a}$};
\end{tikzpicture} }

\newcommand{\wkdotawmue}{\begin{tikzpicture}[anchorbase, scale=0.5, color=\clr]
\draw[-,line width=1.4pt] (0,0) to[out=up, in=down] (0,1.4);
\draw(0,0.6) \bdot; 
\draw (0.7,0.6) node {$\scriptstyle \omega_{\mu}$};
\node at (0,-.3) {$\scriptstyle {2a}$};
\end{tikzpicture} }

\newcommand{\lollipopa}{\begin{tikzpicture}[anchorbase, scale=0.5, color=\clr]
\draw[-,line width=1.4pt] (0,0) to[out=up, in=down] (0,1.4);
\draw (0,1.4) \bdot;
\node at (0,-.3) {$\scriptstyle {2a}$};
\end{tikzpicture} }

\newcommand{\wkdotamu}{\begin{tikzpicture}[anchorbase, scale=0.5, color=\clr]
\draw[-,line width=1.4pt] (0,0) to[out=up, in=down] (0,1.4);
\draw(0,0.6) \bdot; 
\draw (0,1.4) \bdot;
\draw (0.7,0.6) node {$\scriptstyle \omega_{\mu}$};
\node at (0,-.3) {$\scriptstyle {2a}$};
\end{tikzpicture} }

\newcommand{\wkdotamurev}{\begin{tikzpicture}[anchorbase, scale=0.5, yscale=-1 ,color=\clr]
\draw[-,line width=1.4pt] (0,0) to[out=up, in=down] (0,1.4);
\draw(0,0.6) \bdot; 
\draw (0,1.4) \bdot;
\draw (0.7,0.6) node {$\scriptstyle \omega_{\mu}$};
\node at (0,-.3) {$\scriptstyle {2a}$};
\end{tikzpicture} }

\newcommand{\wkdotamurevbox}{\begin{tikzpicture}[anchorbase, scale=0.5, yscale=-1 ,color=\clr]
\draw[-,line width=1.4pt] (0,0) to[out=up, in=down] (0,1.4);
\draw(0,0.6) \dia; 
\draw (0,1.4) \bdot;
\node at (0,-.3) {$\scriptstyle {2a}$};
\end{tikzpicture} }

\newcommand{\wkdotsnu}{\begin{tikzpicture}[anchorbase, scale=0.5, color=\clr]
\draw[-,line width=1.4pt] (0,0) to[out=up, in=down] (0,1.4);
\draw(0,0.6) \bdot; 
\draw (0,1.4) \bdot;
\draw (0.7,0.6) node {$\scriptstyle \omega_{\nu}$};
\node at (0,-.3) {$\scriptstyle {2s}$};
\end{tikzpicture} }

\newcommand{\wkdotsnurev}{\begin{tikzpicture}[anchorbase, scale=0.5, yscale=-1, color=\clr]
\draw[-,line width=1.4pt] (0,0) to[out=up, in=down] (0,1.4);
\draw(0,0.6) \bdot; 
\draw (0,1.4) \bdot;
\draw (0.7,0.6) node {$\scriptstyle \omega_{\nu}$};
\node at (0,-.3) {$\scriptstyle {2s}$};
\end{tikzpicture} }

 \newcommand{\bubbledota}{
\begin{tikzpicture}[anchorbase, scale=1.2, color=\clr,line width=1.2pt]
					\draw[-] (0.2,0.2) to[out=90,in=0] (0,.4);
					\draw[-] (0,0.4) to[out=180,in=90] (-.2,0.2);
					\draw[-] (-.2,0.2) to[out=-90,in=180] (0,0);
					\draw[-] (0,0) to[out=0,in=-90] (0.2,0.2);
				\node at (0,-.1) {$\scriptstyle{a}$};
         \draw (-.2,.2) \bdot;
			\end{tikzpicture}}

 \newcommand{\bubbledotat}{
\begin{tikzpicture}[anchorbase, line width=1.2pt,scale=1.2, color=\clr]
					\draw[-] (0.2,0.2) to[out=90,in=0] (0,.4);
					\draw[-] (0,0.4) to[out=180,in=90] (-.2,0.2);
					\draw[-] (-.2,0.2) to[out=-90,in=180] (0,0);
					\draw[-] (0,0) to[out=0,in=-90] (0.2,0.2);
			\node at (0,-.1) {$\scriptstyle{a}$};
         \draw (-.2,.2) \bdot;
     \node at (-.4,.2) {$\scriptstyle{t\omega_{a}}$};
			\end{tikzpicture}}

\newcommand{\bubbledottt}{
\begin{tikzpicture}[anchorbase, scale=1.2, line width=1.2pt, color=\clr]
					\draw[-] (0.2,0.2) to[out=90,in=0] (0,.4);
					\draw[-] (0,0.4) to[out=180,in=90] (-.2,0.2);
					\draw[-] (-.2,0.2) to[out=-90,in=180] (0,0);
					\draw[-] (0,0) to[out=0,in=-90] (0.2,0.2);
			\node at (0,-.1) {$\scriptstyle{t}$};
         \draw (-.2,.2) \bdot;
     \node at (-.4,.2) {$\scriptstyle{\omega_t}$};
			\end{tikzpicture}}

 \newcommand{\bubbledott}{
\begin{tikzpicture}[anchorbase, scale=1.2, color=\clr]
					\draw[-] (0.2,0.2) to[out=90,in=0] (0,.4);
					\draw[-] (0,0.4) to[out=180,in=90] (-.2,0.2);
					\draw[-] (-.2,0.2) to[out=-90,in=180] (0,0);
					\draw[-] (0,0) to[out=0,in=-90] (0.2,0.2);
			\node at (0,-.1) {$\scriptstyle{1}$};
         \draw (-.2,.2) \bdot;
     \node at (-.3,.2) {$\scriptstyle{t}$};
			\end{tikzpicture}}

\newcommand{\wkdotavar}[1]{\begin{tikzpicture}[anchorbase, scale=0.5, color=\clr]
\draw[-,line width=1.4pt] (0,0) to[out=up, in=down] (0,1.4);
\draw(0,0.6) \bdot; 
\draw (0.8,0.6) node {$\scriptstyle \omega_{#1}$};
\node at (0,-.3) {$\scriptstyle {a}$};
\end{tikzpicture}}

\newcommand{\wxdota}{\begin{tikzpicture}[anchorbase, scale=0.5, color=\clr]
\draw[-,line width=1.6pt] (0,0) to[out=up, in=down] (0,1.4);
\draw(0,0.6) \bdot; 
\draw (0.7,0.6) node {$\scriptstyle \omega_1$};
\node at (0,-.3) {$\scriptstyle a$};
\end{tikzpicture}}

\newcommand{\sh}{\text{Shape}}

\newcommand{\coupon}[2]{ 
    \draw (#1) node[inner sep=2pt,draw,fill=white,rounded corners] {$\scriptstyle{#2}$}
}
\newcommand{\strandlabel}[1]{$\scriptstyle{#1}$}

\newcommand{\toplabel}[1]{node[anchor=south] {\strandlabel{#1}}}

\newcommand{\BoxRow}[2]{ 
    \foreach \i in {0, ..., #1} {
        \draw (#2+\i, 0) rectangle (#2+\i+1, 1);
    }
}

\newcommand{\hm}{\hat{m}}

\newcommand{\g}{\mathfrak g}
\renewcommand{\n}{\mathfrak n}
\newcommand{\h}{\mathfrak h}
\newcommand{\borel}{\mathfrak b}
\newcommand{\so}{\mathfrak{so}}
\newcommand{\U}{\mathbf{U}}
\newcommand{\emod}{\mathcal{E}nd}
\newcommand{\cG}{\mathcal{G}}
\newcommand{\cF}{\mathcal{F}}
\newcommand{\id}{\text{Id}}

\newcommand{\modf}{\operatorname{-mod}\nolimits}
\newcommand{\Mod}{\operatorname{-Mod}\nolimits}

\begin{document}
\setlength{\baselineskip}{17pt}
\title{Brauer webs of finite and affine type}

\author{Yaolong Shen}
\address{School of Mathematical Sciences, Key Laboratory of MEA (Ministry of Education)
\& Shanghai Key Laboratory of PMMP, East China Normal University, Shanghai 200241,
China}
\email{ylshen@math.ecnu.edu.cn}

\author{Linliang Song}
\address{School of Mathematical Science, Tongji University, Shanghai, 200092, China} \email{llsong@tongji.edu.cn}

\author{Weiqiang Wang}
 \address{Department of Mathematics, University of Virginia, Charlottesville, VA 22904, USA} \email{ww9c@virginia.edu}

\subjclass[2020]{Primary 18M05, 20C08.}

\keywords{Web category, Brauer category, classical Lie algebras.}

\begin{abstract}
We introduce and develop an integral diagrammatic framework for two new monoidal categories, the finite and affine B-web categories. They arise naturally from representation categories of type BCD Lie algebras, and can be viewed as a Brauer type generalization of the affine web category which we introduced earlier. Integral bases consisting of bubbled elementary diagrams are established for the finite and affine B-webs. A simplified presentation of these categories over a field of characteristic zero is also obtained. 
\end{abstract}

\maketitle

\tableofcontents

%
\section{Introduction}

\subsection{The goal}

A monoidal category called the affine web category was introduced by two of the authors in \cite{SWweb}
and also in \cite{DKM25}. With additional thick dot generating morphisms, it was built on the finite web category (also called Schur category) from \cite{BEEO}, which can in turn be  viewed as a simplified $\mathfrak{gl}_\infty$-version of the web category from \cite{CKM}. The affine web category has been further developed with connections to Yangians in \cite{BI25}. The affine web category can be naturally enlarged to another monoidal category known as affine Schur category \cite{SWSchur}. These web and Schur categories and their cyclotomic quotients admit natural $q$-deformations \cite{SSW25} (see also \cite{Bru25} for finite type). 

All these web and Schur categories should be regarded as of type $A$ in the sense that they admit actions on various module categories of type $A$ Lie algebras or quantum groups. The path algebras of these categories and their cyclotomic quotients provide new diagrammatic realizations of variants of degenerate affine type $A$ Schur algebras and cyclotomic Schur algebras. New quantum algebras arise from cyclotomic $q$-web categories \cite{SSW25} and they provide a natural setting for categorification generalizing the classic  works of Lascoux-Leclerc-Thibon, Ariki, and Varagnolo-Vasserot; see \cite{SWCat} and the references therein. 

This is the first of a series of papers on development of a theory of new monoidal diagrammatic categories,  Brauer web categories (which will be referred to as B-web categories for short) of finite and affine type; 
we focus on the classical setting in this paper and address the quantum case in a sequel. The B-web categories act naturally on various module categories of Lie algebras of BCD type (so B-web stands both for BCD and for Brauer). These B-Webs and their $q$-variants are expected to be closely related to categorification of canonical bases on iquantum groups and modules \cite{BW18KL}. 

A major diagrammatic difference from the web categories of type A is that the B-webs contain (thick) caps, cups, and bubbles, which make diagrammatic computations more involved.

\subsection{The main results}

Let us explain in detail the main constructions and results of this paper. 

\subsubsection{}

Introduce the following commutative rings (as subrings of $\Q[x]$): 
\[
\mathbb J :=\{f(x)\in \Q[x]\mid f(n)\in\Z, \forall n\in\Z\},
\qquad
\mathbb J' :=\{f(x)\in \Q[x]\mid f(n)\in\Z[{\scriptstyle \frac12}], \forall n\in\Z\}.
\]
It is known that $\{\binom{x}{a}\mid a\ge 0\}$ forms a $\Z$-basis for $\mathbb J$ and also a $\Z[\frac12]$-basis for $\mathbb J'$.

By \cref{def-Bweb}, the B-web category $\Wb_{\mathbb J}$ (or simply $\Wb$) is the strict $\mathbb J$-linear monoidal category generated by objects $\stra$ for $a\in \mathbb Z_{\ge1}$. 
The generating morphisms are merges, splits, (thick) crossings, cups and caps depicted as, for $a,b \ge 1$:
\begin{align*}
\begin{tikzpicture}[anchorbase,color=\clr]
	\draw[-,line width=1pt] (0.28,-.3) to (0.08,0.04);
	\draw[-,line width=1pt] (-0.12,-.3) to (0.08,0.04);
	\draw[-,line width=1.4pt] (0.08,.4) to (0.08,0);
        \node at (-0.22,-.4) {$\scriptstyle a$};
        \node at (0.35,-.4) {$\scriptstyle b$};\node at (0,.55){$\scriptstyle a+b$};\end{tikzpicture} 
&:(a,b) \rightarrow (a+b),
\quad
\begin{tikzpicture}[anchorbase,color=\clr]
	\draw[-,line width=1.4pt] (0.08,-.3) to (0.08,0.04);
	\draw[-,line width=1pt] (0.28,.4) to (0.08,0);
	\draw[-,line width=1pt] (-0.12,.4) to (0.08,0);
        \node at (-0.22,.5) {$\scriptstyle a$};
        \node at (0.36,.5) {$\scriptstyle b$};
        \node at (0.1,-.45){$\scriptstyle a+b$};
\end{tikzpicture}
:(a+b)\rightarrow (a,b),
\quad
\crossing :(a,b) \rightarrow (b,a),
\\
\cupa &: \unit \rightarrow (a,a), 
\qquad \qquad
\capa  : (a,a) \rightarrow \unit
\;,  
\end{align*}
where $\unit$ denotes the unit object of the category. These morphisms satisfy the relations \cref{webassoc}--\cref{bubblecap}, that is, the same relations 
\eqref{webassoc}--\eqref{splitmerge} among merges, splits and crossings as for the type A web (or Schur) category in \cite{BEEO}, and the additional relations \eqref{zigzag}--\eqref{bubblecap} involving cups/caps, that is,
%
\begin{align*}
\begin{tikzpicture}[baseline = -1mm,scale=.8,color=\clr]
        \draw[-,line width=1.2pt] (0,-0.5) to (0,0) to
        [out=up,in=left] (.25,.25) to [out=right,in=up] (.5,0) 
        to [out=down,in=left] (.75,-0.25) to [out=right,in=down] (1,0)
        to (1,.5);
        \node at (0,-.65){$\scriptstyle{a}$};
    \end{tikzpicture}
    =
       \begin{tikzpicture}[baseline = -1mm,scale=.8,color=\clr]
        \draw[-,line width=1.2pt] (0,-.5) to (0,.5);
        \node at (0,-.65){$\scriptstyle{a}$};
    \end{tikzpicture} ,
    \quad 
\begin{tikzpicture}[baseline = -1mm,scale=.8,color=\clr]
        \draw[-,line width=1.2pt] (0,-0.5) to (0,0) to
        [out=up,in=right] (-.25,.25) to [out=left,in=up] (-.5,0) 
        to [out=down,in=right] (-.75,-0.25) to [out=left,in=down] (-1,0)
        to (-1,.5);
        \node at (0,-.65){$\scriptstyle{a}$};
    \end{tikzpicture}
   & =
       \begin{tikzpicture}[baseline = -1mm,scale=.8,color=\clr]
        \draw[-,line width=1.2pt] (0,-.5) to (0,.5);
        \node at (0,-.65){$\scriptstyle{a}$};
    \end{tikzpicture},
    \qquad \qquad
    %
    	\begin{tikzpicture}[anchorbase,color=\clr,line width=1pt]
	  \draw[-](.6,.4) to (.1,-.3);
		\draw[-] (0.6,-0.3) to[out=140, in=0] (0.1,0.2);
		\draw[-] (0.1,0.2) to[out = -180, in = 90] (-0.2,-0.3);
        \node at (-.2,-.45){$\scriptstyle{a}$};
        \node at (.6,-.45){$\scriptstyle{a}$};
        \node at (.1,-.45){$\scriptstyle{b}$};
	\end{tikzpicture}
	 =
	\begin{tikzpicture}[anchorbase,color=\clr,line width=1pt]
		\draw[-](-.5,.4) to (0,-.3);
		\draw[-] (0.3,-0.3) to[out=90, in=0] (0,0.2);
		\draw[-] (0,0.2) to[out = -180, in = 40] (-0.5,-0.3);
        \node at (-.5,-.45){$\scriptstyle{a}$};
        \node at (.3,-.45){$\scriptstyle{a}$};
        \node at (0,-.45){$\scriptstyle{b}$};
	\end{tikzpicture} ,
 \quad
	\begin{tikzpicture}[anchorbase,color=\clr,line width=1pt]
		\draw[-](-.5,-.3) to (0,.4);
		\draw[-] (0.3,0.4) to[out=-90, in=0] (0,-0.1);
		\draw[-] (0,-0.1) to[out = 180, in = -40] (-0.5,0.4);
        \node at (-.5,.5){$\scriptstyle{a}$};
        \node at (.3,.5){$\scriptstyle{a}$};
        \node at (.0,.52){$\scriptstyle{{b}}$};
	\end{tikzpicture}
	=
	\begin{tikzpicture}[anchorbase,color=\clr,line width=1pt]
		\draw[-](.6,-.3) to (.1,.4);
	  \draw[-] (0.6,0.4) to[out=-140, in=0] (0.1,-0.1);
		\draw[-] (0.1,-0.1) to[out = 180, in = -90] (-0.2,0.4);
        \node at (-.2,.5){$\scriptstyle{a}$};
        \node at (.6,.5){$\scriptstyle{a}$};
        \node at (.1,.52){$\scriptstyle{{b}}$};
	\end{tikzpicture} ,
    \\
        \begin{tikzpicture}[anchorbase,color=\clr]
	\draw[-,line width=1.4pt] (0.08,-.3) to (0.08,0.04);
	\draw[-,line width=1pt] (0.28,.4) to (0.08,0);
	\draw[-,line width=1pt] (-0.12,.4) to (0.08,0);
    \draw[-,line width=1pt] (.28,.4) to [out=up,in=up] (.7,.4) to (.7,-.3);
    \draw[-,line width=1pt] (-.12,.4) to [out=up,in=up] (1,.4) to (1,-.3);
     \node at (1,-.45) {$\scriptstyle {b}$};
     \node at (.7,-.45) {$\scriptstyle {a}$};
\end{tikzpicture}
 &=
\begin{tikzpicture}[anchorbase,color=\clr]
    \draw[-,line width=1.2pt] (0,-.3) to (0,.4);
    \draw[-,line width=1.2pt] (0,.4) to [out=up,in=up] (.6,.4) to (.6,0);
    \draw[-,line width=1pt] (.6,0) to (.8,-.3) ; 
    \draw[-,line width=1pt] (.6,0) to (.4,-.3); 
     \node at (.8,-.45) {$\scriptstyle {b}$};
      \node at (.4,-.45){$\scriptstyle {a}$};
\end{tikzpicture},
\qquad
 \begin{tikzpicture}[xscale=-1,anchorbase,color=\clr]
	\draw[-,line width=1.4pt] (0.08,-.3) to (0.08,0.04);
	\draw[-,line width=1pt] (0.28,.4) to (0.08,0);
	\draw[-,line width=1pt] (-0.12,.4) to (0.08,0);
    \draw[-,line width=1pt] (.28,.4) to [out=up,in=up] (.7,.4) to (.7,-.3);
    \draw[-,line width=1pt] (-.12,.4) to [out=up,in=up] (1,.4) to (1,-.3);
     \node at (1,-.45) {$\scriptstyle {a}$};
     \node at (.7,-.45) {$\scriptstyle {b}$};
\end{tikzpicture}
=
\begin{tikzpicture}[xscale=-1,anchorbase,color=\clr]
    \draw[-,line width=1.2pt] (0,-.3) to (0,.4);
    \draw[-,line width=1.2pt] (0,.4) to [out=up,in=up] (.6,.4) to (.6,0);
    \draw[-,line width=1pt] (.6,0) to (.8,-.3) ; 
    \draw[-,line width=1pt] (.6,0) to (.4,-.3); 
     \node at (.8,-.45) {$\scriptstyle {a}$};
      \node at (.4,-.45){$\scriptstyle {b}$};
\end{tikzpicture} \:\: ,
    \\
  \bubblea &= \binom{x }{a} ,
  \qquad
\begin{tikzpicture}[yscale=-1, baseline = -1mm,scale=.8,color=\clr]
\draw[-,line width=1.6pt] (0.1,.3) to (0.1,.6);
\draw[-,thick] (0.1,-.51) to [out=right,in=-45] (0.1,.31);
\draw[-,thick] (0.1,-.51) to [out=left,in=-135] (0.1,.31);
\node at (-.33,-.05) {$\scriptstyle a$};
\node at (.45,-.05) {$\scriptstyle a$};
\end{tikzpicture} =0,
\qquad
\begin{tikzpicture}[anchorbase, scale=1, color=\clr]
 \draw[line width=1.2pt] (-0.15,0) arc(180:0:0.15) to[out=down,in=60] (-0.15,-0.4);
        \draw[line width=1.2pt] (0.15,-0.4) to[out=120,in=down] (-0.15,0);
        \node at (-.15,-.5) {$\scriptstyle a$};
        \node at (.15,-.5) {$\scriptstyle a$};
\end{tikzpicture}
= (-1)^a\!\capa .
\end{align*} 
There is a natural functor from the $A$-web category to the B-web category, which sends objects $a$ to $a$ and the generating morphisms (splits, merges, and crossings) to the same ones.

\subsubsection{}
Note that $\bubble=x$ in $\Wb$. When developing representations of the B-web category, $x$ often acts as a fixed scalar $N$, and it is convenient to consider the variant of $\Wb$ after a base change $\Wb_\C(N) =\Wb_{\mathbb J} \otimes_{\mathbb J} \C$. 
Assuming further that $N\in \Z_{\ge 1}$, we denote by $\so_N\text{-mod}$ the category of finite-dimensional modules of the complex orthogonal Lie algebra $\so_N$. Let $V$ denote the natural representation of $\so_N$. We refer to \eqref{eq:Yab}--\eqref{cuparep} 
for morphisms \rm{\rot{Y}}$_{a,b}$, 
   ${\rm Y}_{a,b}$, ${\rm X}_{a,b}$, $\cap_a$ and $\cup_a$.

\begin{alphatheorem}  [\cref{thm:Worep}]
    \label{thm:A}
There is a monoidal functor $\mathcal F_N: \Wb_{\C}(N) \rightarrow \so_N\text{-mod}$ such that 
it sends $a$ to $\wedge^a V$ and sends the 
 generating morphisms $\merge$, $\splits$, $\crossing$, $\capa$ and $\cupa$ to \rm{\rot{Y}}$_{a,b}$, 
   ${\rm Y}_{a,b}$, ${\rm X}_{a,b}$, $\cap_a$ and $\cup_a$ respectively. 
\end{alphatheorem}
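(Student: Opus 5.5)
To construct $\mathcal F_N$, I would use the presentation of $\Wb_{\C}(N)$ by generators and relations (\cref{def-Bweb}, base-changed along $x\mapsto N$). It then suffices to check that the assignments $a\mapsto \wedge^a V$ and
\[
\text{merge}\mapsto \text{\rot{Y}}_{a,b},\quad \text{split}\mapsto {\rm Y}_{a,b},\quad \text{crossing}\mapsto {\rm X}_{a,b},\quad \text{cap}\mapsto\cap_a,\quad \text{cup}\mapsto \cup_a
\]
are $\so_N$-module homomorphisms and satisfy the defining relations \cref{webassoc}--\cref{bubblecap}.

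First, $\so_N$-equivariance. The maps \rot{Y}$_{a,b}$ (wedge multiplication), ${\rm Y}_{a,b}$ (comultiplication) and ${\rm X}_{a,b}$ (signed flip) are already $\mathfrak{gl}_N$-equivariant, hence $\so_N$-equivariant by restriction. The map $\cap_a$ is the pairing on $\wedge^aV$ induced by the symmetric form $(\cdot,\cdot)$, namely $\det((v_i,w_j))$ up to the normalization fixed in \eqref{caprep}. It is invariant because $\so_N$ preserves the form. The map $\cup_a$ is the corresponding copairing, $\sum_S v_S\otimes v_S^*$ over an orthonormal or dual basis. It is invariant because it is the image of the identity under the isomorphism $\wedge^aV\cong(\wedge^aV)^*$ given by $\cap_a$.

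Second, the type A relations \eqref{webassoc}--\eqref{splitmerge}. These hold already for $\mathfrak{gl}_N$ by the known functor from the type A web (Schur) category of \cite{BEEO} to $\mathfrak{gl}_N$-modules sending $a\mapsto\wedge^aV$. I would simply invoke that result.

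Third, the cup/cap relations. I would check them on basis vectors $v_S=v_{s_1}\wedge\cdots\wedge v_{s_a}$ for an orthonormal basis $\{v_i\}$.
\begin{itemize}
\item Zigzag \eqref{zigzag}: this holds because $\cup_a$ and $\cap_a$ are mutually dual (co)pairings.
\item Sliding cups and caps through crossings: this follows from the symmetry of the form together with naturality of the flip.
\item Cap-with-merge versus split-into-caps (the fourth displayed relation): this says the pairing on $\wedge^{a+b}V$, pulled back along multiplication, equals the nested pairing after comultiplication. That is the Laplace expansion of the Gram determinant.
\item Bubble: $\cap_a\circ\cup_a=\dim\wedge^aV=\binom Na$, matching $\binom xa$ at $x=N$.
\item Cap composed with a split $a+a$ at the top vanishes: writing $v_S\mapsto\sum_{T\sqcup T'=S}\pm v_T\otimes v_{T'}$, the pairing $(v_T,v_{T'})$ vanishes since $T\cap T'=\emptyset$ with $T,T'$ nonempty. (For orthonormal $v_i$ this is immediate from disjointness.)
\item Twisted cap equals $(-1)^a$ cap: this follows because the flip on $\wedge^aV\otimes\wedge^aV$ carries the sign $(-1)^{a\cdot a}=(-1)^a$ in the super/sign convention of ${\rm X}_{a,a}$, while the form itself is symmetric.
\end{itemize}

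The main obstacle is getting all these signs and normalizations consistent with the exact formulas in \eqref{eq:Yab}--\eqref{cuparep}, especially the $(-1)^a$ in the twisted cap and its compatibility with the sign convention of ${\rm X}_{a,b}$ and the Laplace expansion relation. I would handle this by reducing every relation to thin strands. The thick objects are direct summands of tensor powers of $1$ via split/merge idempotents (over $\C$ this uses $\frac1{a!}$). So the relations can first be verified for $a=b=1$, where they reduce to the ordinary Brauer category relations for $\so_N$ acting on $V^{\otimes r}$, and the thick cases can then be deduced by composing with merges and splits. Where an explicit formula is needed, I would fall back on the direct basis-vector computation above.
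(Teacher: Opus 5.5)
Your proposal is correct and has the same skeleton as the paper's proof: verify the defining relations, quote \cite{BEEO} for \eqref{webassoc}--\eqref{splitmerge}, and check the cup/cap relations by hand. Where you differ is in how you check the cup/cap relations, which you do intrinsically rather than in a fixed basis.

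The paper stays in the isotropic basis $(v_i,v_j)=\delta_{i,-j}$ in which \eqref{caparep}--\eqref{cuparep} are written. It proves the lollipop relation in its upside-down form $\rot{Y}_{a,a}\circ\cup_a=0$ by expanding $(\sum_i v_i\wedge v_{-i})^{\wedge a}=0$. That detour is needed there because in this basis the terms $\cap_a(v_T\otimes v_{T'})$ with $T\cap T'=\emptyset$ need not vanish (take $a=1$, $T=\{i\}$, $T'=\{-i\}$); they only cancel in pairs. So your sentence ``$(v_T,v_{T'})$ vanishes since $T\cap T'=\emptyset$'' is true only after your parenthetical change of basis.

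That change of basis is legitimate once you verify from \eqref{caparep} that $\cap_a(u_1\wedge\cdots\wedge u_a\otimes w_1\wedge\cdots\wedge w_a)=\det\big((u_k,w_{a+1-l})\big)_{k,l}$. This makes $\cap_a$ basis-free, and in an orthonormal basis each term is then the determinant of a zero matrix. The same formula gives the remaining relations:
\begin{itemize}
\item the twist relation, because this pairing is symmetric;
\item the zigzag relations, because the given $\cup_a$ is exactly its dual copairing;
\item \eqref{mergesplitmove}, by Laplace expansion along the block of columns coming from the $\wedge^bV$ factor, whose signs coincide with the signs $(-1)^{\ell(w)}$ in the split.
\end{itemize}
The normalization matters for \eqref{mergesplitmove}. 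With the plain Gram determinant $\det((u_k,w_l))$ that relation would fail by $(-1)^{ab}$, since $\binom{a+b}{2}-\binom{a}{2}-\binom{b}{2}=ab$.

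Your route gives sign-transparent, basis-free verifications, and it checks $\so_N$-equivariance explicitly, which the paper leaves implicit. The paper's route is a direct computation in one fixed basis.

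The reduction to thin strands in your last paragraph is not needed, and as stated it is incomplete, for three reasons.
\begin{itemize}
\item Sandwiching with splits and merges only helps once you know how the specific maps $\cap_a,\cup_a$ factor through thin caps and cups. For example, $\cap_a$ equals $\frac{1}{a!}$ times the nested thin caps precomposed with the $a$-fold splits on both factors, and that is itself a computation from \eqref{caparep}.
\item \eqref{mergesplitmove} and the lollipop relation already involve $\wedge^2V$ when $a=b=1$.
\item By \eqref{eq:Xab} the thin crossing is minus the flip, so the thin relations are Brauer relations with curl $-1$, not the ordinary ones.
\end{itemize}
Made precise, this route amounts to checking the relations of $\WCb$ and invoking \cref{prop:Wiso} at $x=N$. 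That is the mechanism the paper uses, via \cref{thm:AWiso}, to reduce the dot relations for $\mathcal G_N$ in \cref{AWbrep} to thin ones.
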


\subsubsection{}

Introduce the shorthand lollipop diagrams
\begin{align}
    \label{def:huochai}
\begin{tikzpicture}[anchorbase,scale=.8,color=\clr]
\draw[-,line width=1.4pt] (0.08,-.2) to (0.08,.4);\draw (0.08,-.2)\bdot;
\node at (0.08,.6) {$\scriptstyle 2s$};
\end{tikzpicture}
& :=~\frac{1}{2^s}~
\begin{tikzpicture}[anchorbase,scale=.8,color=\clr]
\draw[-,line width=1.4pt] (0.08,.3) to (0.08,.6);
\draw[-,thick] (0.1,-.51) to [out=right,in=-45] (0.1,.31);
\draw[-,thick] (0.1,-.51) to [out=left,in=-135] (0.1,.31);
\node at (0.08,.8) {$\scriptstyle 2s$};
\draw (-.1,-.2)\bdot;
\end{tikzpicture},
\qquad \qquad \qquad
\begin{tikzpicture}[yscale=-1,anchorbase,scale=.8,color=\clr]
\draw[-,line width=1.4pt] (0.08,-.2) to (0.08,.4);\draw (0.08,-.2)\bdot;\node at (0.08,.6) {$\scriptstyle 2s$};
\end{tikzpicture}
:=~\frac{1}{2^s}~
\begin{tikzpicture}[yscale=-1, anchorbase,scale=.8,color=\clr]
\draw[-,line width=1.4pt] (0.08,.3) to (0.08,.6);
\draw[-,thick] (0.1,-.51) to [out=right,in=-45] (0.1,.31);
\draw[-,thick] (0.1,-.51) to [out=left,in=-135] (0.1,.31);
\node at (0.08,.8) {$\scriptstyle 2s$};
\draw (-.1,-.2)\bdot;
\end{tikzpicture}. 
\end{align}

A basic contribution of this paper is the formulation of the following new monoidal category with favorable properties.

\begin{alphadefinition}
\label{def:affineBWeb}
    The affine B-web category $\AWb$ is the $\mathbb J'$-linear monoidal category with generating objects $\stra$ (for $a\ge 1$) and generating morphisms (for $a,b \ge 1$)
 \begin{align}
  \splits, & \quad \merge,\quad \crossing,\quad \cupa,\quad \capa, \quad
\label{dotgenerator}
 \begin{tikzpicture}[baseline = 3pt, scale=0.5, color=\clr]
\draw[-,line width=1.4pt] (0,0) to[out=up, in=down] (0,1.4);
\draw(0,0.6) \bdot;
\node at (0,-.3) {$\scriptstyle a$};
\end{tikzpicture}.
\end{align}
These morphisms, excluding $\!\begin{tikzpicture}[baseline = 3pt, scale=0.5, color=\clr]
\draw[-,line width=1.4pt] (0,0) to[out=up, in=down] (0,1.4);
\draw(0,0.6) \bdot;
\node at (0,-.3) {$\scriptstyle a$};
\end{tikzpicture}\!$,  satisfy the same finite-type relations \cref{webassoc}--\cref{bubblecap} as for $\Wb$ and $\!\begin{tikzpicture}[baseline = 3pt, scale=0.5, color=\clr]
\draw[-,line width=1.4pt] (0,0) to[out=up, in=down] (0,1.4);
\draw(0,0.6) \bdot;
\node at (0,-.3) {$\scriptstyle a$};
\end{tikzpicture}\!$ satisfies the following additional relations \cref{dotmovecrossing1}--\cref{dotmovecupcaps}:
\begin{gather}
\label{dotmovecrossing1}
\sum_{0\leq t \leq \min(a,b)}t!~
\begin{tikzpicture}[baseline = 7.5pt,scale=.8, color=\clr]
\draw[-,thick] (0.58,-.2) to (0.58,0) to (-.48,.8) to (-.48,1);
\draw[-,thick] (-.48,-.2) to (-.48,0) to (.58,.8) to (.58,1);
\draw[-,thick] (-.48,.95) to [out=-46,in=left](0.1,.7) to [out=right, in =45] (.58,.95);
\draw[-,thick] (-.48,.-.15) to [out=46,in=left](0.1,.1) to [out=right, in =-45] (.58,-.15);
\draw[-,thick] (-.5,-.2) to (-.5,0);
\draw[-,thick] (-.5,0.8) to (-.5,1);
\draw[-,thick] (0.605,-.2) to (0.605,0);
\draw[-,thick] (0.605,.8) to (0.605,1);
\draw(-.2,0.6)\bdot;
\node at (-.5,-.3) {$\scriptstyle a$};
\node at (0.6,-.3) {$\scriptstyle b$};
\node at (0.1,.9) {$\scriptstyle t$};
\node at (0.1,-.1) {$\scriptstyle t$};
\end{tikzpicture} 
= 
 \sum_{0 \leq t \leq \min(a,b)}t!~
\begin{tikzpicture}[baseline = 7.5pt,scale=.8, color=\clr]
\draw[-,thick] (0.58,-.2) to (0.58,0) to (-.48,.8) to (-.48,1);
\draw[-,thick] (-.48,-.2) to (-.48,0) to (.58,.8) to (.58,1);
\draw[-,thick] (-.5,-.2) to (-.5,1);
\draw[-,thick] (0.605,-.2) to (0.605,1);
\draw(.35,0.2)\bdot;
\node at (-.5,-.3) {$\scriptstyle a$};
\node at (0.6,-.3) {$\scriptstyle b$};
\node at (0.77,.5) {$\scriptstyle t$};
\end{tikzpicture},
\\
 \label{updotmovesplits+merge}
\begin{tikzpicture}[baseline = -.5mm,scale=1,color=\clr]
\draw[-,line width=1.2pt] (0.08,-.5) to (0.08,0.04);
\draw[-,line width=1pt] (0.34,.5) to (0.08,0);
\draw[-,line width=1pt] (-0.2,.5) to (0.08,0);
\node at (-0.22,.6) {$\scriptstyle a$};
\node at (0.36,.65) {$\scriptstyle b$};
\draw (0.08,-.2) \bdot;
\end{tikzpicture} 
=
\sum_{0\leq s\leq \min(a,b)}s!~
\begin{tikzpicture}[baseline = -.5mm,scale=1,color=\clr]
\draw[-,line width=1.2pt] (0.08,-.5) to (0.08,0.04);
\draw[-,line width=1.2pt] (0.08,-.5) to (-.1,-.1);
\draw (-.1,-.1) \bdot;
\node at (-0.2,-.3) {$\scriptstyle 2s$};
\draw[-,line width=1pt] (0.44,.6) to (0.08,0);
\draw[-,line width=1pt] (-0.3,.6) to (0.08,0);
\draw[-, line width=1pt](-0.3,.6)  to[out=down,in=down] (0.44,.6);
\node at (-0.3,.7) {$\scriptstyle a$};
\node at (0.44,.75) {$\scriptstyle b$};
\draw (-.05,.24) \bdot;
\node at (0.08,.5) {$\scriptstyle s$};
\draw (.22,.24) \bdot;
\end{tikzpicture},
\qquad \:
\begin{tikzpicture}[yscale=-1,baseline = -.5mm,scale=1,color=\clr]
\draw[-,line width=1.2pt] (0.08,-.5) to (0.08,0.04);
\draw[-,line width=1pt] (0.34,.5) to (0.08,0);
\draw[-,line width=1pt] (-0.2,.5) to (0.08,0);
\node at (-0.22,.6) {$\scriptstyle a$};
\node at (0.36,.65) {$\scriptstyle b$};
\draw (0.08,-.2) \bdot;
\end{tikzpicture} 
=
\sum_{0\leq s\leq \min(a,b)}s!~
\begin{tikzpicture}[yscale=-1,baseline = -.5mm,scale=1.1,color=\clr]
\draw[-,line width=1.2pt] (0.08,-.5) to (0.08,0.04);
\draw[-,line width=1.2pt] (0.08,-.5) to (-.1,-.1);
\draw (-.1,-.1) \bdot;
\node at (-0.2,-.3) {$\scriptstyle 2s$};
\draw[-,line width=1pt] (0.44,.6) to (0.08,0);
\draw[-,line width=1pt] (-0.3,.6) to (0.08,0);
\draw[-, line width=1pt](-0.3,.6)  to[out=down,in=down] (0.44,.6);
\node at (-0.3,.7) {$\scriptstyle a$};
\node at (0.44,.75) {$\scriptstyle b$};
\draw (-.05,.24) \bdot;
\node at (0.08,.5) {$\scriptstyle s$};
\draw (.22,.24) \bdot;
\end{tikzpicture},
\\
\label{dotmovecupcaps}
\begin{tikzpicture}[anchorbase, scale=0.3, color=\clr]
\draw[-,line width=1.2pt] (0,0) to [out=up,in=left] (1,1) to [out=right,in=up] (2,0);
\node at (0,-.3){$\scriptstyle a$};
\node at (2,-.3){$\scriptstyle a$};
\draw (.25,.58)\bdot;
\end{tikzpicture} \:
=(-1)^a
\begin{tikzpicture}[anchorbase, scale=0.3, color=\clr]
\draw[-,line width=1.2pt] (0,0) to [out=up,in=left] (1,1) to [out=right,in=up] (2,0);
\node at (0,-.3){$\scriptstyle a$};
\node at (2,-.3){$\scriptstyle a$};
\draw (1.75,.58)\bdot;
\end{tikzpicture} \:\:, 
\qquad \qquad  \quad
\begin{tikzpicture}[yscale=-1,anchorbase, scale=0.3, color=\clr]
\draw[-,line width=1.2pt] (0,0.5) to [out=up,in=left] (1,1.5) to [out=right,in=up] (2,0.5);
\node at (0,.2){$\scriptstyle a$};
\node at (2,.2){$\scriptstyle a$};
\draw (.1,.88)\bdot;
\end{tikzpicture} \:
=(-1)^a
\begin{tikzpicture}[yscale=-1,anchorbase, scale=0.3, color=\clr]
\draw[-,line width=1.2pt] (0,.5) to [out=up,in=left] (1,1.5) to [out=right,in=up] (2,0.5);
\node at (0,.2){$\scriptstyle a$};
\node at (2,.2){$\scriptstyle a$};
\draw (1.95,.88)\bdot;
\end{tikzpicture} .
\end{gather}
\end{alphadefinition}

In contrast to the finite type, the above relations in $\AWb$ moving dots over crossings, merges and splits differ from their type A counterparts in \cite{SWweb} and \cite{DKM25}. On the other hand, dropping the summands involving cups/caps in the B-relations (i.e., dropping the summands for $t\neq 0$ on the LHS of the relation \eqref{dotmovecrossing1} or the summands for $s\neq 0$ from 
\eqref{updotmovesplits+merge}) recovers the type $A$ relations; to a large extent, our previous work on affine webs was foundational and preparatory for the current B-constructions. As we shall see in \cref{thm:F}, the nontrivial relations \cref{dotmovecrossing1}--\cref{dotmovecupcaps} are determined by the familiar and simpler relations in Definition \ref{def:affineBwebC} which suffice for $\AWb$ over $\C[x]$. In particular, the thin-strand subcategory of $\AWb$ can be identified with the affine Brauer category \cite{RS2019}.

\subsubsection{}

Let $\emod(\so_N\Mod)$ denote the monoidal category of the endofunctors of the category $\so_N$-Mod
of $U(\so_N)$-modules. The functor $\cG_N$ below extends a functor from the affine Brauer category to $\emod(\so_N\Mod)$ in \cite{RS2019}.

\begin{alphatheorem} [\cref{AWbrep}]
\label{thm:C}
    There is a monoidal functor 
    \[
    \cG_N: \AWb_{\mathbb C}(N)\longrightarrow \emod(\so_N\Mod)
    \]
    such that $\cG_N(a)=-\otimes \bigwedge^a V$ and the images of $\cG_N$ on the generating morphisms of $\AWb_{\mathbb C}(N)$ are explicitly given in \eqref{cG}.
\end{alphatheorem}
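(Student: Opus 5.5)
The plan is to build $\cG_N$ from Theorem~\ref{thm:A} and to treat the dot as the only new ingredient. Set $\cG_N(a)=-\otimes\wedge^aV$. Each finite-type generator (merge, split, crossing, cup, cap) goes to $\id_M\otimes\mathcal F_N(\text{generator})$, the natural transformation obtained by tensoring the $\so_N$-module map of Theorem~\ref{thm:A} with the identity on a varying module $M$. Relations \cref{webassoc}--\cref{bubblecap} hold automatically, since $\mathcal F_N$ is a monoidal functor and $\id_M\otimes-$ preserves composition and tensor products. For the thick dot on a strand of thickness $a$, I take the natural transformation whose $M$-component is
\[
\Omega_a=\sum_{i,j}\bigl(E_{ij}\bigr)_M\otimes\bigl(E_{ji}\bigr)_{\wedge^aV}
\]
acting on $M\otimes\wedge^aV$. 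Here $\{E_{ij}\}$ is the standard spanning set of $\so_N$ and the sum is the split Casimir, suitably normalized by $\tfrac12$. Equivalently, $\Omega_a=\tfrac12(\Delta(C)-C\otimes1-1\otimes C)$. This recovers on $a=1$ the dot of \cite{RS2019}, up to the usual shift convention, which I would fix by comparing with \cite{RS2019}. These operators commute with the $\so_N$-action and are natural in $M$, so they are morphisms in $\emod(\so_N\Mod)$.

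The three families of dot relations then have to be checked. For \cref{dotmovecupcaps}, I would write the cap $\cap_a$ via the invariant form $\wedge^aV\otimes\wedge^aV\to\C$. The $\so_N$-invariance $\langle Xu,v\rangle=-\langle u,Xv\rangle$ moves $\Omega$ from the left leg to the right leg with a sign. The factor $(-1)^a$ should come from how the pairing on $\wedge^aV$ is normalized relative to the flipped cap in \eqref{bubblecap}, so the sign must be tracked exactly as in the definition of $\cap_a,\cup_a$. The cup case is dual, via the zigzag relations.

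For \cref{updotmovesplits+merge}, the key identity is coassociativity of $\Delta$ on the Casimir tensor. Pushing $\Omega_{a+b}$ through the split $\wedge^{a+b}V\to\wedge^aV\otimes\wedge^bV$ gives $\Omega_a\otimes1+(\text{dot on }b\text{ through the }a\text{-strand})$. Written diagrammatically, the second term is not simply a dot on the $b$ strand: it is a dot on the $b$ strand conjugated past the $a$ strand. The sum over $s$ with cup/lollipop terms arises exactly from expanding the mixed term $\sum E_{ij}|_{\wedge^a}\otimes E_{ji}|_{\wedge^b}$. For $\so_N$ this equals the crossing minus the cup–cap contraction (Brauer element), extended to thick strands via the explicit wedge formulas. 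I would verify it by evaluating both sides on basis vectors $v_{I}\in\wedge^{a+b}V$ after reducing, via the merge–split relations and the $\mathbb J'$-integrality, to checking on the generic component. Relation \cref{dotmovecrossing1} follows by the same mechanism, with $t!$ and the lollipop normalization $2^{-s}$ matched against the action of iterated contractions.

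The main obstacle is this combinatorial verification of \cref{updotmovesplits+merge} and \cref{dotmovecrossing1}, with their exact coefficients $s!$, $t!$ and $2^{-s}$. To manage it, I would first prove a thin-strand identity
\[
\Omega_{1,1}=\text{crossing}-\text{cup}\circ\text{cap}\quad\text{on }V\otimes V,
\]
as in \cite{RS2019}. I would then realize $\wedge^aV$ as the image of the antisymmetrizer on $V^{\otimes a}$ using the splits and merges, and express $\Omega_a$ as a sum of thin dots $\sum_k\Omega^{(k)}$. Thick relations then reduce to thin ones by exploding strands. Merging the resulting contraction terms back together should produce the $s!$ and $t!$ coefficients and the lollipops: for instance, $s$ cup–cap pairs merged into a thick cup of thickness $s$ contribute $s!$, and the dotted bubble normalization accounts for $2^{-s}$. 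This step is only valid after inverting $2$, which is why the construction is stated over $\C$ (or $\mathbb J'$) rather than $\mathbb J$.
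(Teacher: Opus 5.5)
Your construction fails at the image of the thick dot, so the proposed functor does not exist.

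\textbf{The thick dot is not free.} In \eqref{cG} only the thin dot is assigned. The thick dot on a strand of thickness $a$ is then determined: by \eqref{newgendot} and \cref{thm:AWiso} it equals $\frac{1}{a!}\,\mathrm{merge}\circ(\text{thin dot on each of the } a \text{ strands})\circ\mathrm{split}$. Its image is therefore $\frac{1}{a!}\,\mathrm{merge}\circ(y_1\cdots y_a)\circ\mathrm{split}$, where $y_k$ is the thin-dot operator on the $k$-th exploded strand. This operator has filtered degree $a$ and leading symbol $h_{i_1}\cdots h_{i_a}$ (\cref{actionlantern}).

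\textbf{Your choice is the wrong operator.} The split Casimir $\Omega_{M,\wedge^aV}$ is, up to an additive scalar, $y_1+\cdots+y_a$. That is the image of the dot packet $\omega_1$ on a thickness-$a$ strand (compare \cref{actionodotgen}), with leading symbol $\sum_j h_{i_j}$. It is additive where the dot must be multiplicative, and two dot relations then fail.
\begin{enumerate}
\item \emph{The cap relation.} $\cap_a$ pairs indices $I$ on one leg with $-I$ on the other, and $h_{-i}=-h_i$. So on $M^{\rm gen}$ your assignment gives $\cap_a\circ(\text{dot on left})\equiv-\cap_a\circ(\text{dot on right})$, with nonzero leading term, for every $a$. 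This contradicts \eqref{dotmovecupcaps} for even $a$. With the correct dot the leading symbols are $\prod_{i\in I}h_i$ and $\prod_{i\in I}h_{-i}=(-1)^a\prod_{i\in I}h_i$. That is where $(-1)^a$ really comes from, not from a normalization of the pairing.
\item \emph{The split relation.} Take $a=b=1$. Under your assignment the left side of \eqref{updotmovesplits+merge} has filtered degree $1$. On the right, the $s=0$ summand (split followed by thin dots on both legs) has a nonzero part of degree $2$. The $s=1$ summand contains only a single thin dot, so it cannot cancel that part. Hence the additive formula $\Omega_a\otimes 1+\cdots$ you get from coassociativity cannot match the relation.
\end{enumerate}

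\textbf{How the paper avoids this.} It never computes with thick dots in $\emod(\so_N\Mod)$. By \cref{thm:AWiso}, $\AWb_\C(N)$ is isomorphic to the specialization of $\AWCb$. That category has only a thin dot generator, and its only dot relations are \eqref{dotmovecapcupC}. So it suffices to check two things:
\begin{enumerate}
\item the relations \eqref{webassoc}--\eqref{bubblecap}, done in \cref{thm:Worep};
\item the relations \eqref{dotmovecapcupC}, which are affine Brauer relations verified in \cite[Theorem 3.8]{RS2019}.
\end{enumerate}
The coefficients $s!$, $t!$ and $2^{-s}$ arise purely diagrammatically inside $\AWCb$, in \cref{dotmovecrossingC}, \cref{lem:dotmoveSM} and \cref{dotmovethickcupcap}. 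Your idea of reducing thick relations to thin ones by exploding strands is the right instinct once the thick dot is defined as the exploded product. At that point, though, it is exactly this diagrammatic reduction, and no evaluation on basis vectors is needed.
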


The objects in $\AWb$ are naturally identified with strict compositions $\mu \in \Lambda_{\text{st}} =\cup_{m\ge 0} \Lambda_{\text{st}}(m)$. 
Recall the combinatorial sets $\SMat_{\lambda,\mu}$ from \eqref{eq:Mat} and $\PSMat_{\lambda,\mu}$ from \eqref{eq:MSMat}; elements in these sets can be naturally visualized as diagrammatic morphisms composed from the generating morphisms of $\AWb$. 

Denote by $\Xi =\Hom_{\AWb}(\unit,\unit)$ the ring of bubbles. Note that $\Xi$ acts naturally on each hom-space $\Hom_{\AWb}(\mu,\lambda)$ by horizontal concatenation from the left.
We introduce in \eqref{bubble+dot+par} the shorthand notation for thick bubbles $\dotbubble_a:=\bubbledota$, for $a\ge 1$, and set $\dotbubble_0=1$. 

\begin{alphatheorem} [Basis 
\cref{thm:spanAWb,thm:basisAWb}]
\label{thm:D}
\qquad
\begin{enumerate}
    \item $\Xi$ is a free commutative $\mathbb J'$-algebra in the even bubbles $\{\dotbubble_{2a} \mid a\in \Z_{\ge 1}\}$.
    \item 
    The morphism space $\Hom_{\AWb}(\mu,\lambda)$ is a free left (or right) $\Xi$-module with basis $\PSMat_{\lambda,\mu}$, for $\lambda, \mu\in \Lambda_{\text{st}}$.
\end{enumerate}
\end{alphatheorem}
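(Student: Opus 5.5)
The proof has two halves: a spanning argument carried out inside $\AWb$, and a linear-independence argument obtained by mapping $\AWb$ to a family of representations.

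\textbf{Spanning.} First I would show that every diagram in $\Hom_{\AWb}(\mu,\lambda)$ is a $\Xi$-linear combination of the bubbled elementary diagrams indexed by $\PSMat_{\lambda,\mu}$. I would argue by induction on a filtration: the total number of dots, together with the number of crossings, cups and caps. The relations \cref{dotmovecrossing1}--\cref{dotmovecupcaps} let me move dots through crossings, merges, splits and cups/caps. Each move costs only correction terms that are lower in the filtration, or that involve lollipops of thickness $2s$, which are accounted for by the even part of the $\PSMat$ data.

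Having normalized the dots, I would apply the finite-type spanning result for $\Wb$. Its elementary diagrams are indexed by matrices with cup/cap parts. With dots placed at the bottom (respectively top) ends, this reduces any diagram to the form "cups $\circ$ dotted elementary A-web $\circ$ caps" times closed components. Closed components are bubbles. Using \eqref{bubblecap}, the relation $\bubblea=\binom{x}{a}$, and the fact that a dotted bubble of odd thickness equals minus itself by \eqref{dotmovecupcaps}, I would show that $\Xi$ is generated over $\mathbb J'$ by the even $\dotbubble_{2a}$. Here the factor $2$ causes no difficulty, because $\tfrac12\in\mathbb J'$. Commutativity of $\Xi$ is the standard argument for endomorphisms of the unit.

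\textbf{Linear independence.} I would use the functors $\cG_N$ of \cref{thm:C}, or evaluations of them at suitable modules (for example Verma modules, or $M\otimes\wedge^\bullet V$ for generic highest weights), for all large $N$, and base change along $\mathbb J'\to\C$, $x\mapsto N$. The aim is to show that the images of the $\PSMat_{\lambda,\mu}$ basis elements, multiplied by monomials in the $\dotbubble_{2a}$, are linearly independent. The images of $\dotbubble_{2a}$ act on a module via central elements; these would be Casimir-type elements built from the Harish-Chandra center of $\so_N$. For $N$ large they are algebraically independent, which gives freeness of $\Xi$.

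For the hom-spaces I would pass to leading terms. Filter by dot degree. The associated graded image of a basis diagram should be a tensor of a finite B-web morphism with a polynomial in the Cartan-type operators. Independence then reduces to the finite-type basis theorem for $\Wb_\C(N)$ when $N$ is large relative to $\lambda,\mu$, which would come from first fundamental theorem / Brauer-type injectivity. Combined with independence of the polynomial parts, this yields independence over $\Xi$. Integrality is handled by noting that a $\mathbb J'$-linear relation specializes to $\C$-linear relations for all large $N$, and a polynomial vanishing at infinitely many integers is zero. The right-module version follows by symmetry: flip the diagrams vertically, or note that bubbles commute past everything up to lower terms.

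\textbf{Main obstacle.} I expect the hardest step to be the leading-term analysis in the independence argument. The relations \eqref{dotmovecrossing1} and \eqref{updotmovesplits+merge} carry cup/cap correction terms, so one must choose the filtration and the normal form in $\PSMat$ carefully so that these corrections are strictly lower. One must also identify the top-degree symbol of $\cG_N$ on thick dots, presumably through $\omega$-type elements acting on $\wedge^aV$, concretely enough to compare with a known free module. I would first try the associated graded of $U(\so_N)$, i.e.\ $S(\so_N)$-valued symbols, to make this comparison.
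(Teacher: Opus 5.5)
Your overall architecture matches the paper's: spanning modulo lower degree, then independence via $\cG_N$ and leading terms for $N\gg0$. However, the spanning half leaves out the ideas that make it work, and one step is false.

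First, modulo lower degree a dot can only be moved from a thick edge onto its legs (up through a split, down through a merge, \cref{dotmovefreely}), never back. So a packet on a leg running from a split to a merge is stuck. For example, take $\mu=\lambda=(2,2)$ and the shape in which each bottom strand splits as $1+1$ into both top merges. The four single-leg dots are basis elements, but, modulo lower degree, dots placed at the boundary ends only produce a $3$-dimensional subspace of their span. Hence you cannot first ``normalize the dots'' and then invoke the finite-type spanning set. You must instead induct by composing one generator with an elementary diagram, as in \S\ref{sec:spanning}.

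The hard cases in that induction are a cap, merge or split meeting merges that share cup legs or carry lollipops. They produce closed dotted webs, and lollipops with an arbitrary packet $\omega_\lambda$, $\lambda\in\Par_a$, on the bulb. That these are ``accounted for by the even part of the $\PSMat$ data'' is not automatic. It is \cref{lollipopspan}: modulo lower degree, such lollipops lie in the span of lollipops carrying $\omega_\mu$, $\mu\in\EPar_{2a}$, on the stick. Its proof is the long appendix induction, and it is used together with \cref{lollipop-merge-split}, \cref{sidewaydumbbell} and \cref{strand-stick-stick}.

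Second, your bubble argument fails as written. \eqref{dotmovecupcaps} alone is a tautology on a closed bubble: the sign $(-1)^a$ arises once at the cup and once at the cap. An exact identity $\dotbubble_a=-\dotbubble_a$ for odd $a$ would force $\dotbubble_1=0$, since $\frac12\in\mathbb J'$; but $\dotbubble_1=\binom{x}{2}$ by \cref{bubbleOdd}. What is true is $\dotbubble_a\equiv-\dotbubble_a$ modulo lower degree, obtained by rotating through a curl with \eqref{dotmovecrossing1} (\cref{Deltaaodd}). You then still must show that the lower-degree remainder lies in the algebra generated by the even bubbles. That remainder consists of bubbles $\dotbubble_{\vartheta,a}$ with several packets (e.g.\ $t$ copies of $\omega_a$) and nested or crossing dotted loops. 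Handling it is the content of \cref{lem:nested}, \cref{lem:bubble1+dott}, \cref{dumbell+dots} and the two-case induction of \cref{bubblealgebra}; the undotted relations in \eqref{bubblecap} do not reach it.

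On independence, your plan is viable in spirit, but the reduction you describe does not work as stated. The top-degree image of a basis diagram is not an undotted finite morphism tensored with a polynomial in $\U(\h)$. By \cref{actionlantern}, a packet on a leg multiplies the $v_{\bf i}$-component by a polynomial in the $h_i$ for the particular indices passing through that leg. So dotted and undotted data are entangled, and independence of the family does not follow from a finite-type (FFT) statement plus independence of the polynomials taken separately.

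The paper handles this in three steps:
\begin{enumerate}
\item It reduces to $\mu=\emptyset$ by rigidity (\eqref{isomorphismspace}, \cref{phicorrespond}).
\item It assigns to each $g$ an index tuple $\theta(g)$ such that only diagrams with the same matrix $A_g$ contribute to the $v_{\theta(g)}$-component (\cref{seperatewedgecomponent}).
\item In that component it reads off the top coefficient \eqref{highestdegreetermh}. This is a product of elementary symmetric polynomials in disjoint blocks of Cartan variables, times the bubble factor, whose leading part $\pm\prod_a e_a(h_1^2,\dots,h_n^2)^{\beta_{2a}}$ comes from \cref{actionbubble2a}. With $N\gg0$ and a suitable monomial order, the bubble leading monomial avoids the block variables, so distinct basis elements have distinct leading monomials.
\end{enumerate}
Your Harish--Chandra remark likewise needs this explicit leading term to give algebraic independence of the $\dotbubble_{2a}$. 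Your handling of $\mathbb J'$-coefficients by specializing $x=N$ for infinitely many $N$ is fine.
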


The proof of this basis theorem consists of two parts: the spanning part requires an in-depth study of lollipops and bubbles (i.e., morphisms in $\Hom_{\AWb}(\unit,\mu)$ and $\Hom_{\AWb}(\unit,\unit)$), which are new type $B$ features. We establish a number of additional relations involving lollipops and bubbles which allow us to reduce general diagrams to the canonical ones appearing in $\PSMat_{\lambda,\mu}$, and the whole reduction procedure is much more technically challenging than the type $A$ case in \cite{SWweb}; see Section \ref{sec:spanning} and Appendix~ \ref{app:lollipop}. The linear independence part is based on Theorem \ref{thm:C} in an essential way. 

While there are many variants of web categories in the literature, it is often challenging to formulate (even conjecturally) basis results such as \cref{thm:D}. We view our basis theorem (as for affine web categories in the previous works \cite{SWweb,SSW25}) as a supporting evidence that our affine B-web category is natural and fundamental. 

\begin{alphacorollary} [\cref{cor:basis:Wb}]
    \label{cor:E}
    The morphism space $\Hom_{\Wb}(\mu,\lambda)$ is a free $\mathbb J$-module with basis $\SMat_{\lambda,\mu}$, for $\lambda, \mu\in \Lambda_{\text{st}}$.  
\end{alphacorollary}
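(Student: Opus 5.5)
The plan is to deduce the corollary from \cref{thm:D} by comparing $\Wb$ with the subcategory of $\AWb$ without dots. The argument has two halves: spanning inside $\Wb$ itself, and linear independence obtained by transport to $\AWb$.

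\textbf{Spanning.} I would rerun the spanning argument for $\AWb$ (Section~\ref{sec:spanning}) with every dot set to zero. In the absence of dots, the relations \cref{webassoc}--\cref{bubblecap} suffice for the following reductions.

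First, use the cup/cap analogues of the type A straightening from \cite{BEEO} (zigzag and sliding relations) to move all cups to the bottom and all caps to the top.

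Second, simplify the closed and lollipop-shaped pieces. A merge followed by a cap of equal thicknesses is zero by the relation $\cdots=0$ in \cref{bubblecap}. Every closed component reduces, through merges and splits, to a product of thick bubbles, and each thick bubble equals $\binom{x}{a}\in\mathbb J$.

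What remains is a diagram encoded by an element of $\SMat_{\lambda,\mu}$. So $\SMat_{\lambda,\mu}$ spans $\Hom_{\Wb}(\mu,\lambda)$ over $\mathbb J$. No denominators $1/2^s$ appear, since lollipops only arise from dots; this is why the result holds over $\mathbb J$ rather than $\mathbb J'$.

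\textbf{Linear independence.} There is a functor $\Wb_{\mathbb J}\otimes_{\mathbb J}\mathbb J'\to\AWb$ sending generators to generators, because $\AWb$ imposes the finite relations. Under it, each element of $\SMat_{\lambda,\mu}$ maps to the corresponding dotless element of $\PSMat_{\lambda,\mu}$, namely the one with zero dot labels and no lollipops. By \cref{thm:D}(2), these images are $\Xi$-linearly independent, hence $\mathbb J'$-linearly independent. It follows that $\SMat_{\lambda,\mu}$ is linearly independent over $\mathbb J'$.

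Since $\mathbb J\subset\mathbb J'$ and $\mathbb J$ is a domain, any $\mathbb J$-linear relation among elements of $\SMat_{\lambda,\mu}$ in $\Hom_{\Wb_{\mathbb J}}(\mu,\lambda)$ survives base change to $\mathbb J'$ and so must be trivial. Combined with the spanning step, $\SMat_{\lambda,\mu}$ is a $\mathbb J$-basis.

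\textbf{Expected obstacle.} The delicate point is the base-change step. A priori, $\Hom_{\Wb_{\mathbb J}}$ could have $\mathbb J$-torsion that is killed in passing to $\mathbb J'$. The order of the argument handles this: spanning already shows the hom-space is a quotient of the free $\mathbb J$-module $\mathbb J^{\SMat_{\lambda,\mu}}$. Then injectivity of $\mathbb J^{\SMat_{\lambda,\mu}}\to\mathbb J'^{\SMat_{\lambda,\mu}}\to\Hom_{\AWb}$ forces the quotient map to be an isomorphism. So I would phrase the proof as this composite being injective, with the second map injective by \cref{thm:D}.

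An alternative, should the dotless identification be subtle, is to use $\mathcal F_N$ from \cref{thm:A} directly. This would mirror the independence proof in \cite{BEEO}: specialize at many $N$ and use that a polynomial in $\mathbb J$ vanishing at infinitely many integers is zero.
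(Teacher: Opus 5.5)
Your argument follows the paper's proof. Spanning is the dot-free specialization of the reduction in Section~\ref{sec:spanning}, and independence comes from the functor $\Wb\otimes_{\mathbb J}\mathbb J'\to\AWb$, the basis theorem \cref{thm:basisAWb}, and descent along $\mathbb J\hookrightarrow\mathbb J'$. Your phrasing of the descent (spanning plus injectivity of $\mathbb J^{\SMat_{\lambda,\mu}}\to\Hom_{\AWb}(\mu,\lambda)$) is correct.

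One step fails as written, and the paper's proof passes over it in the same way. That step is the claim that every $A\in\SMat_{\lambda,\mu}$ maps to a dot-free, lollipop-free element of $\PSMat_{\lambda,\mu}$.

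By \eqref{eq:Mat}, $\SMat_{\lambda,\mu}$ allows diagonal entries $a_{ii}>0$, which record lollipops of thickness $2a_{ii}$. Every element of $\PSMat_{\lambda,\mu}$ with such an $A$ contains a lollipop, and by \eqref{def:huochai} a lollipop always carries a dot. In $\Wb$ the only dot-free realization is a cup of thickness $a_{ii}$ merged into a strand of thickness $2a_{ii}$. That diagram is $0$ by \eqref{bubblecap} and \eqref{zigzag}, which is exactly the relation your own spanning step uses to discard it.

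Concretely, $\SMat_{(2),\emptyset}=\{(1)\}$, but $\Hom_{\Wb}(\unit,(2))=0$, since it is spanned by the thin cup merged into thickness $2$. So the corollary read literally with \eqref{eq:Mat} is false. What your argument (and the paper's) proves is that $\{A\in\SMat_{\lambda,\mu}\mid a_{ii}=0\text{ for all }i\}$ is a $\mathbb J$-basis.

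With that restriction made explicit, both halves of your proof go through unchanged. Your spanning step only produces zero-diagonal diagrams, and these map exactly to the basis elements $(A,\varnothing)$ of $\PSMat_{\lambda,\mu}$.

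Separately, you have the names swapped. In the paper the cup is $\unit\to(a,a)$ and sits in the top part of a chicken-foot diagram (Definition~\ref{OwebCFD}), while caps sit in the bottom part.
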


\cref{thm:A} can be used to give a more direct argument for the linear independence part of \cref{cor:E}.

\subsubsection{}

Over a field of characteristic zero, a presentation of the B-web category can be simplified. We formulate a $\C[x]$-linear variant of the affine B-web category $\AWCb$ in Definition \ref{def:affineBwebC}. After base change, we also regard \(\AWb\) as a \(\mathbb C[x]\)-linear category below.

\begin{alphatheorem} [Theorem
\ref{thm:AWiso}]
\label{thm:F}
 The $\C[x]$-linear monoidal categories $\AWCb$ and $\AWb$ are isomorphic by matching the generating objects and generating morphisms in the same symbols. 
\end{alphatheorem}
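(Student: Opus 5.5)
To prove \cref{thm:F}, I will build mutually inverse strict monoidal $\C[x]$-linear functors $\Phi:\AWCb\to\AWb$ and $\Psi:\AWb\to\AWCb$. Both are the identity on generating objects and generating morphisms, so they are automatically inverse once they are shown to be well defined. The whole proof therefore reduces to checking defining relations in each direction.

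The generators are merges, splits, crossings, cups, caps and the thick dots. Definition \ref{def:affineBwebC} is expected to be the familiar, simpler presentation:
\begin{itemize}
\item the finite relations \cref{webassoc}--\cref{bubblecap};
\item the thin-dot affine Brauer relations (dot sliding through a thin crossing up to cup/cap and identity correction terms, and dot sign change across a thin cap/cup);
\item a relation expressing a thick dot on an $a$-strand in terms of thin dots, e.g. split into $1^a$, put a dot on one thin strand, merge, with the appropriate normalization.
\end{itemize}
Over $\C[x]$, $a!$ and $2^s$ are invertible. So in both categories every thick strand is, up to scalar, a retract of thin strands via split/merge (the idempotent split-merge $=a!\cdot\mathrm{id}$). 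Also, the lollipops \eqref{def:huochai} make sense in both categories.

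For $\Phi$, I check that the simpler relations of Definition \ref{def:affineBwebC} hold in $\AWb$. The finite relations are identical. The thin-strand relations are the cases $a=b=1$ of \cref{dotmovecrossing1}, \cref{updotmovesplits+merge} and \cref{dotmovecupcaps}. For $a=b=1$, the $t=1$ term of \cref{dotmovecrossing1} gives the cup-cap correction, and the $s$-sum in \cref{updotmovesplits+merge} reduces to the $s=0$ term plus a lollipop term. From these I would extract the affine Brauer relations of \cite{RS2019}. The thick-dot-via-thin-dots relation should follow from iterating the $b=1$ case of \cref{updotmovesplits+merge} by induction on $a$, together with the lollipop computations of Section \ref{sec:spanning} and Appendix \ref{app:lollipop}.

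For $\Psi$, the main work is showing that \cref{dotmovecrossing1}--\cref{dotmovecupcaps} hold in $\AWCb$. My plan is to reduce each relation to thin strands:
\begin{enumerate}
\item Precompose and postcompose with merges and splits into thin strands. Since these are injective or surjective up to invertible scalars, it suffices to verify the exploded relation.
\item Rewrite each thick dot as a sum over thin dots, i.e. an elementary symmetric expression in the thin dots.
\item Push the thin dots through the exploded diagram using the affine Brauer relations.
\end{enumerate}

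\cref{dotmovecupcaps} should then follow quickly from the thin dot-over-cap relation, a thick cap exploding to nested thin caps, and $a$ sign changes giving $(-1)^a$.

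The main obstacle is \cref{updotmovesplits+merge} and \cref{dotmovecrossing1}. When thin dots are pushed past each other through exploded crossings, the Brauer correction terms produce cups and caps. These must be regrouped into the terms $s!\,(\text{lollipop of thickness }2s)\cdot(\text{$s$-strand cap with dots})$ and $t!\,(\text{cap/cup connectors})$. I would attack this by induction: first on $b$ with $b=1$ as the base, then using the associativity of merges to pass from $(a,1)$ to $(a,b)$. The combinatorics of the correction terms would be matched against the type A computation of \cite{SWweb} extended by the cup/cap terms.

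An alternative strategy for faithfulness-type gaps is to compare bases. \cref{thm:D} gives a $\Xi$-basis of $\AWb$. If the spanning argument of Section \ref{sec:spanning} runs verbatim in $\AWCb$ once $\Psi$ is known to be full, then $\Phi$ maps a spanning set onto a basis and is an isomorphism. This route only needs $\Phi$ to be well defined plus the spanning set in $\AWCb$, and I would use it if the direct inductive check of \cref{dotmovecrossing1} becomes unmanageable.
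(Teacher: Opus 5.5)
Your main route is the paper's: the real content of \cref{thm:AWiso} is verifying \cref{dotmovecrossing1}--\cref{dotmovecupcaps} inside $\AWCb$ by induction from the thin relations \cref{dotmovecapcupC}. The other direction is essentially formal. Three points need fixing, however.

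\textbf{The definition of $\AWCb$.} It is smaller than you guessed. Its only generators are:
\begin{itemize}
\item the merges $(k,1)\to(k+1)$ and splits $(k+1)\to(k,1)$;
\item the thin crossing, thin cup, thin cap and thin dot.
\end{itemize}
These are subject to \cref{webassocC}--\cref{bubbleC} and \cref{dotmovecapcupC}. Thick merges, splits, crossings, cups and caps are \emph{defined} by \cref{thickmergesplitC}. The thick dot on an $r$-strand is \emph{defined} by \cref{newgendot} as $\tfrac{1}{r!}$ times: split into $1^r$, a dot on \emph{every} thin strand, merge.

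This has three consequences.
\begin{itemize}
\item For $\AWb\to\AWCb$ you must also derive \cref{webassoc}--\cref{bubblecap} in $\AWCb$. This is \cref{prop:Wiso}, which the paper's proof cites, and every manipulation of exploded thick strands in your plan depends on it.
\item Your example recipe for the thick dot (a dot on one thin strand) gives $\omega_1$. With that choice the sign in \cref{dotmovecupcaps} would be $-1$, not $(-1)^a$. Your own sign count there already uses the product of all thin dots, which is the correct generator.
\item $\Phi\Psi=\mathrm{id}$ needs these defining formulas to hold in $\AWb$. For the dot, your iteration of the $b=1$ case of \cref{updotmovesplits+merge} does work: after merging back, the $s=1$ summand contains a merged thin cup, which vanishes by \cref{zigzag} and \cref{bubblecap}.
\end{itemize}

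\textbf{The fallback is circular.} Drop it, for three reasons.
\begin{itemize}
\item The linear independence in \cref{thm:D} is proved with $\cG_N$, and the existence of $\cG_N$ (\cref{AWbrep}) is deduced from \cref{thm:AWiso}: only the relations of $\AWCb$ are checked for $\cG_N$.
\item Running Section \ref{sec:spanning} inside $\AWCb$ requires \cref{dotmovecrossing1}--\cref{dotmovecupcaps} there, through \cref{dotmovefreely} and the lollipop reductions. That is exactly what is unproved.
\item ``$\Psi$ full'' already presupposes that $\Psi$ exists.
\end{itemize}

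\textbf{The hard step is only announced.} The paper proceeds in three stages.
\begin{itemize}
\item The thick crossing relation (\cref{dotmovecrossingC}) comes first, by induction on $b$ and then $a$. The base case is the first relation of \cref{dotmovecapcupC}. It lowers $b$ using \cref{newgendot}, \cref{splitmerge}, \cref{sliders} and \cref{swallows}; your induction on $b$ matches this.
\item The split/merge relations (\cref{lem:dotmoveSM}) are then obtained by induction on $a+b$. Their base case comes from \cref{dotmovecrossingC} applied twice, together with \cref{Lem:newdotsss}.
\item Finally, \cref{dotmovethickcupcap} is proved by induction on $a$.
\end{itemize}

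The base case of \cref{lem:dotmoveSM} shows the mechanism you left unspecified. Write $M,S,X,U,C$ for the thin merge, split, crossing, cup and cap, and $\bullet$ for the thin dot. From \cref{dotmovecapcupC} one gets $X(\bullet\otimes\bullet)=(\bullet\otimes\bullet)X+(1\otimes\bullet)UC-UC(1\otimes\bullet)$. Using $SM=1+X$, $XS=S$, $CS=0$ and the sign rule at a cap, this yields $S\omega_2=\tfrac12(1+X)(\bullet\otimes\bullet)S=(\bullet\otimes\bullet)S+U\circ\tfrac12 C(\bullet\otimes 1)S$. The last summand is precisely the $s=1$ term of \cref{updotmovesplits+merge}: a cup composed with the lollipop of thickness $2$.

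Exploding every thick strand at once, as you propose, is legitimate. But then this regrouping of cup--cap corrections into the $t!$ and $s!$ terms must be carried out for all $a,b$ simultaneously, and that regrouping is the actual content of the theorem. The paper's stepwise induction avoids the global bookkeeping, and your write-up should supply one or the other explicitly.
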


It is much easier to check the relations in $\AWCb$ in practice, and Theorem \ref{thm:F} allows us to derive the relations for $\AWb$ from $\AWCb$. The affine B-web category $\AWCb$ contains a finite B-web category $\WCb$ over $\C$ as a subcategory; an isomorphic variant of $\WCb$
can be obtained by taking the $q=1$ limit from a (non-monoidal) quantum web category constructed in \cite{ST19}.

The main part of the proof of Theorem \ref{thm:F} is to derive the  intricate defining relations \cref{dotmovecrossing1}--\cref{dotmovecupcaps} for $\AWb$ from the simple and standard relations for $\AWCb$ (which arise from type A web and Brauer categories). This is how we were able to write down in the first place the defining relations for $\AWb$ in Definition \ref{def:affineBWeb}. Note that the presentation for $\AWb$ uses crucially the (redundant) thick crossings as part of the generating morphisms as the thin crossing appears already in $\AWCb$ and in the Brauer category. The strategy of starting with a simpler presentation over $\C$ this way for type A, B and beyond was first spelled out in \cite{SWweb}.

\subsubsection{}

The basis Theorem \ref{thm:D} indicates that only the even bubbles are algebraically independent. Recall a sequence of unsigned Genocchi numbers $\{G_{2k}\mid k\ge 1\}$ from Remark~ \ref{rem:Genocchi}; see also \cref{recursiondm,identity2}. It is remarkable that such a classic number sequence appears naturally in the rich combinatorial setting of B-bubbles. 

\begin{alphatheorem}  [\cref{RelationBubble,bubbleOdd}]
    \label{thm:G}
    For $m\in \N$, we have
    \begin{align*} 
        \dotbubble_{2m+1} = \sum_{k=0}^m (-1)^k(2k)!\, G_{2k+2}\, \binom{x-2m+2k}{2k+2} \binom{x-2m+2k-1}{2k} \dotbubble_{2m-2k}.
    \end{align*}
\end{alphatheorem}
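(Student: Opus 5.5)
We will derive the formula in two stages: first a recursion for the odd bubbles in terms of lower bubbles, obtained diagrammatically; then a combinatorial solution of that recursion in which the Genocchi numbers appear.

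\textbf{Step 1: a recursion in $\AWb$.} Write $\dotbubble_{a}$ as the cap of $a$ closed against the dotted cup of $a$, and split the thick strand of thickness $a=2m+1$ into $1$ and $2m$. We then apply \eqref{updotmovesplits+merge} to move the dot through the split. This produces a sum over $s$ of terms containing lollipops of thickness $2s$ together with two dotted strands of thicknesses $1$ and $2m$ with cups attached.

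Next we close up using the cap relations \eqref{bubblecap} and \eqref{dotmovecupcaps}. The sign $(-1)^a$ in \eqref{dotmovecupcaps} means that moving a dot around an odd strand is antisymmetric. For $a$ odd this should force a linear relation of the schematic form
\begin{align*}
2\,\dotbubble_{2m+1}=\sum_{j} c_{m,j}(x)\,\dotbubble_{j},
\end{align*}
with coefficients $c_{m,j}(x)$ that are products of binomials in $x$. The terms on the right come from the undotted bubbles $\binom{x}{a}$, from the relation $\bubblea=\binom{x}{a}$, and from the lollipop evaluations of Appendix~\ref{app:lollipop}.

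Equivalently, and more cleanly, we can work in $\AWCb$ via \cref{thm:F}, or even pass through the functor $\cG_N$ of \cref{thm:C}. There the dot acts by a Casimir-type operator $\Omega$ on $-\otimes\wedge^aV$, and the bubble is a partial trace. Since both sides are polynomial in $x$, it suffices to verify the identity for infinitely many $N$ after evaluating on $\so_N$-modules. This reduction is legitimate because the basis theorem (\cref{thm:D}) makes the even bubbles free, so the coefficients are uniquely determined once they are checked on enough representations.

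\textbf{Step 2: reduce odd bubbles to even ones.} Iterating the Step 1 relation expresses every $\dotbubble_{2m+1}$ in the even bubbles $\dotbubble_{2m-2k}$. The coefficients are then defined by a triangular recursion in $k$. We will identify this recursion with the standard Genocchi recursion recorded in \cref{recursiondm,identity2}: concretely, the coefficient $d_{m,k}$ of $\dotbubble_{2m-2k}$, after stripping the factor $\binom{x-2m+2k}{2k+2}\binom{x-2m+2k-1}{2k}$, should satisfy a binomial-convolution identity characterising $(-1)^k(2k)!\,G_{2k+2}$. Roughly, this comes from $\tanh$-type generating functions, since $\sum G_{2n}t^{2n}/(2n)!=t\tan(t/2)$.

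\textbf{Main obstacle.} The difficulty lies in the bookkeeping of Step 1. The lollipops of thickness $2s$ that appear must be evaluated back into bubbles, which requires the lollipop relations proved in the spanning section (Appendix~\ref{app:lollipop}), and the $x$-dependence must be shifted correctly. Our first approach would be the representation-theoretic computation for $\so_N$ acting on $\wedge^\bullet V$. In that setting, the dotted bubble $\dotbubble_a$ is the trace of the Casimir piece acting on $M\otimes\wedge^aV$, which is a scalar expressible through a generating series in $a$. Restricting to odd $a$ and comparing generating functions should produce the Genocchi coefficients together with the binomial prefactors directly.
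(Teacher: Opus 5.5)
There is a genuine gap. Step~1 never derives the recursion, and the mechanism you name cannot supply it.

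\begin{itemize}
\item Sliding the dot over the cap and back under the cup with \eqref{dotmovecupcaps} costs $(-1)^a$ twice, so it gives no constraint. The antisymmetry needs an independent route from the left side of the bubble to the right side. In the paper that route goes through a crossing, already at leading order (Lemma~\ref{Deltaaodd}).
\item Moving the dot through a split does not replace this. Suppose you insert $\mathrm{merge}\circ\mathrm{split}=(2m+1)\,\mathrm{id}$ and apply \eqref{updotmovesplits+merge}. Every $s\ge1$ summand dies on re-merging: its inner $s$-cup is merged into a $2s$-strand, and $\mathrm{merge}\circ\cup_s=0$ by \eqref{bubblecap} and \eqref{zigzag}. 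The $s=0$ summand is the type $A$ relation, so the computation is circular.
\item The lollipop results of Appendix~\ref{app:lollipop} hold only modulo lower degree terms, so they cannot give exact coefficients $c_{m,j}(x)$. Since $\dotbubble_{2m+1}\equiv 0$, Theorem~\ref{thm:G} lives entirely in those lower terms.
\end{itemize}

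The missing step is Proposition~\ref{RelationBubble}. For the thick crossing $X$ of two $a$-strands one has $X\circ\cup_a=(-1)^a\cup_a$. Hence $(-1)^a\dotbubble_a=\cap_a\circ(\bullet\otimes 1)\circ X\circ\cup_a$, and you push the dot through $X$ with the \emph{exact} relation \eqref{dotmovecrossing1} for $b=a$. This produces two kinds of terms.
\begin{itemize}
\item The correction terms with an inner cup/cap of thickness $t\ge1$ close up into two sideways dumbbells around a dotted $(a-t)$-curl. By \eqref{sidewaydumbell} they evaluate to $t!\,(-1)^{a-t}\binom{x-a+t}{t}^2\dotbubble_{a-t}$.
\item The remaining closed ``fans'' need the exact evaluations \eqref{bubblecross}--\eqref{bubblecrosssum}. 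They sum to $\dotbubble_a+(x-a+1)\dotbubble_{a-1}$.
\end{itemize}
Together these give
\[
\sum_{t=0}^{a}(-1)^{a-t}\,t!\binom{x-a+t}{t}^{2}\dotbubble_{a-t}=\dotbubble_a+(x-a+1)\,\dotbubble_{a-1}.
\]
For $a=2m+1$ the $t=0$ term is $-\dotbubble_{2m+1}$, which is where your factor $2$ comes from.

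Your fallback through $\cG_N$ is a sound reduction, but not merely because even bubbles are free. You need that, for any fixed finite set of even-bubble monomials, their images under $\cG_N$ are $\C$-linearly independent for all $N\gg0$; the leading-term argument of Appendix~\ref{app:independence} provides this. What is missing is the computation itself: the exact central element $\cG_N(\dotbubble_a)$, not just its top-degree part as in Lemma~\ref{actionbubble2a}. Without it, ``comparing generating functions should produce the Genocchi coefficients'' simply restates the theorem.

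Step~2 likewise has nothing to match until the $c_{m,j}$ are explicit. In the paper one substitutes the induction hypothesis into the case $a=2m+1$ above. Coefficient comparison then reduces to \eqref{keybinomial}, which is linear in $z=x-2m+2k$. Its $z$-coefficient is the recursion \eqref{recursiondm}, and its constant term reduces to \eqref{identity2}. That identity is exactly the Genocchi recursion $\sum_{j=0}^{k-1}(-1)^j\binom{2k+1}{2j+2}G_{2j+2}=2k+1$, which gives $d_k=G_{2k+2}$.
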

This formula provides necessary and sufficient admissibility conditions for representations of $\AWb$ where the bubbles act as scalars.

As a part of our long-term program initiated in \cite{SWweb, SWSchur, SSW25}, this work has opened up several research directions: 
\begin{itemize}
   \item 
   Define and develop the representation theory of cyclotomic B-web categories. As $\AWb$ admits caps/cups and bubbles of various thickness, this is highly nontrivial; in particular, it will lead to intimate connections between bubbles and centers of the universal enveloping algebras of classical Lie algebras. 
\item 
It is also natural to ask for a $q$-deformed version of the finite and affine B-web categories, generalizing our previous construction in \cite{SSW25}; see \cite{BW25} for some related type $B$ work. 
\item 
Enlarge $\AWb$ (and its cyclotomic quotients) to an affine B-Schur category (and its cyclotomic quotients), generalizing the type $A$ constructions in \cite{SWSchur}.  
\item 
Develop homological properties for all variants of B-web and B-Schur categories and their endomorphism algebras, such as triangular bases and quasi-hereditary algebra structures. They provide new settings for Schur type dualities and Schur functors. 
\item 
Develop the representation theories of these B-web and B-Schur categories, which are expected to be closely related to categorification of modules of iquantum groups; for  categorification in the type A setting, see \cite{SWCat} and references therein. 
\end{itemize}
We plan to follow up in future work on these topics. 


\subsection{The organization}

The paper is organized as follows. 
In Section \ref{sec:BWeb}, we introduce the B-web category $\Wb$, and derive a number of secondary relations. Then we construct a representation of $\Wb_\C(N)$ on $\so_N$-mod. In Section \ref{sec:affine_BWeb}, we formulate the affine B-web category $\AWb$ and establish new relations which will be needed for proving the spanning result in Section \ref{sec:spanning}. 
Section \ref{sec:lol:bubbles} reduces lollipops/bubbles of general type to simpler forms, with some reduction steps postponed to Appendix \ref{app:lollipop}. Section \ref{sec:spanning} uses these reductions and various relations formulated in Section \ref{sec:affine_BWeb} to establish a spanning set for an arbitrary morphism space in $\AWb$. In Section \ref{sec:basis} and Appendix \ref{app:independence}, we show the spanning set from the previous section is indeed a basis for the morphism spaces of $\AWb$, using the functor $\mathcal G_N$ from Theorem \ref{thm:C}. 
In Section \ref{sec:char0}, we provide a simpler presentation for $\AWb$ over $\C[x]$, and the proof of the equivalence can be found in Appendix \ref{app:Wiso}. The bases for morphism spaces of $\AWb$ indicate that only the even bubbles are algebraically independent. In Section \ref{sec:thick bubbles}, we identify the odd bubbles as polynomials in the even bubbles. 


%

\section{B-webs}
  \label{sec:BWeb}
    
In this section we introduce a diagrammatic monoidal category, the Brauer web (or B-web) category. 

\subsection{Definition of the B-web category} 

For any \( m \in \mathbb{N} \), a composition (respectively, partition) \( \mu = (\mu_1, \mu_2, \ldots, \mu_n) \) of \( m \) is defined as a sequence of (respectively, weakly decreasing) non-negative integers such that \( \sum_{i \geq 1} \mu_i = m \). A composition \( \mu \) is called \textit{strict} if \( \mu_i > 0 \) for all \( i \). The length of \( \mu \) is denoted by \( l(\mu) \). Let \( \Lambda_{\text{st}}(m) \) (respectively, \( \Par(m) \)) denote the set of all strict compositions (respectively, partitions) of \( m \). We further denote \( \Lambda_{\text{st}} =\cup_{m\ge 0}\Lambda_{\text{st}}(m) \) and \( \Par=\cup_{m\ge 0}\Par(m)\).

Throughout this paper, let $\kk$ be a commutative ring with $1$. All categories and functors will be $\kk$-linear without further mention. Introduce the commutative ring
\begin{equation}
    \label{eq:J}
    \mathbb J:=\{f(x)\in \Q[x]\mid f(n)\in\Z \text{ for all }n\in\Z\}\subseteq\Q[x].
\end{equation}

\begin{definition}
\label{def-Bweb}
The Brauer web category (or B-web category, for short) $\Wb =\Wb_{\mathbb J}$ is the strict $\mathbb J$-linear monoidal category generated by objects $a\in \mathbb Z_{\ge1}$. The object $a$ and its identity morphism  will be drawn as $\stra.$ 
The generating morphisms are merges, splits, (thick) crossings, cups and caps depicted as (for $a,b \ge 1$)
\begin{align}
\label{merge+split+crossing}
\begin{tikzpicture}[anchorbase,color=\clr]
	\draw[-,line width=1pt] (0.28,-.3) to (0.08,0.04);
	\draw[-,line width=1pt] (-0.12,-.3) to (0.08,0.04);
	\draw[-,line width=1.4pt] (0.08,.4) to (0.08,0);
        \node at (-0.22,-.4) {$\scriptstyle a$};
        \node at (0.35,-.4) {$\scriptstyle b$};\node at (0,.55){$\scriptstyle a+b$};\end{tikzpicture} 
&:(a,b) \rightarrow (a+b),
\quad
\begin{tikzpicture}[anchorbase,color=\clr]
	\draw[-,line width=1.4pt] (0.08,-.3) to (0.08,0.04);
	\draw[-,line width=1pt] (0.28,.4) to (0.08,0);
	\draw[-,line width=1pt] (-0.12,.4) to (0.08,0);
        \node at (-0.22,.5) {$\scriptstyle a$};
        \node at (0.36,.5) {$\scriptstyle b$};
        \node at (0.1,-.45){$\scriptstyle a+b$};
\end{tikzpicture}
:(a+b)\rightarrow (a,b),
\quad
\crossing :(a,b) \rightarrow (b,a),
\\
\label{gen:cupcap}
\cupa &: \unit \rightarrow (a,a), 
\qquad \qquad
\capa  : (a,a) \rightarrow \unit
\;,  
\end{align}
subject to the following relations \eqref{webassoc}--\eqref{bubblecap}, for $a,b,c,d \in \Z_{\ge 1}$ with $d-a=c-b$:
\begin{gather}
\label{webassoc}
\begin{tikzpicture}[baseline = 0,color=\clr]
	\draw[-,thick] (0.35,-.3) to (0.08,0.14);
	\draw[-,thick] (0.1,-.3) to (-0.04,-0.06);
	\draw[-,line width=1pt] (0.085,.14) to (-0.035,-0.06);
	\draw[-,thick] (-0.2,-.3) to (0.07,0.14);
	\draw[-,line width=1.4pt] (0.08,.45) to (0.08,.1);
        \node at (0.45,-.41) {$\scriptstyle c$};
        \node at (0.07,-.4) {$\scriptstyle b$};
        \node at (-0.28,-.41) {$\scriptstyle a$};
\end{tikzpicture}
=
\begin{tikzpicture}[baseline = 0, color=\clr]
	\draw[-,thick] (0.36,-.3) to (0.09,0.14);
	\draw[-,thick] (0.06,-.3) to (0.2,-.05);
	\draw[-,line width=1pt] (0.07,.14) to (0.19,-.06);
	\draw[-,thick] (-0.19,-.3) to (0.08,0.14);
	\draw[-,line width=1.4pt] (0.08,.45) to (0.08,.1);
        \node at (0.45,-.41) {$\scriptstyle c$};
        \node at (0.07,-.4) {$\scriptstyle b$};
        \node at (-0.28,-.41) {$\scriptstyle a$};
\end{tikzpicture}\:,
\qquad
\begin{tikzpicture}[baseline = -1mm, color=\clr]
	\draw[-,thick] (0.35,.3) to (0.08,-0.14);
	\draw[-,thick] (0.1,.3) to (-0.04,0.06);
	\draw[-,line width=1pt] (0.085,-.14) to (-0.035,0.06);
	\draw[-,thick] (-0.2,.3) to (0.07,-0.14);
	\draw[-,line width=1.4pt] (0.08,-.45) to (0.08,-.1);
        \node at (0.45,.4) {$\scriptstyle c$};
        \node at (0.07,.42) {$\scriptstyle b$};
        \node at (-0.28,.4) {$\scriptstyle a$};
\end{tikzpicture}
=\begin{tikzpicture}[baseline = -1mm, color=\clr]
	\draw[-,thick] (0.36,.3) to (0.09,-0.14);
	\draw[-,thick] (0.06,.3) to (0.2,.05);
	\draw[-,line width=1pt] (0.07,-.14) to (0.19,.06);
	\draw[-,thick] (-0.19,.3) to (0.08,-0.14);
	\draw[-,line width=1.4pt] (0.08,-.45) to (0.08,-.1);
        \node at (0.45,.4) {$\scriptstyle c$};
        \node at (0.07,.42) {$\scriptstyle b$};
        \node at (-0.28,.4) {$\scriptstyle a$};
\end{tikzpicture}\:,
\\
  \label{splitmerge}
\begin{tikzpicture}[baseline = -1mm,scale=.8,color=\clr]
\draw[-,line width=1.6pt] (0.08,-.8) to (0.08,-.5);
\draw[-,line width=1.6pt] (0.08,.3) to (0.08,.6);
\draw[-,thick] (0.1,-.51) to [out=45,in=-45] (0.1,.31);
\draw[-,thick] (0.06,-.51) to [out=135,in=-135] (0.06,.31);
\node at (-.33,-.05) {$\scriptstyle a$};
\node at (.45,-.05) {$\scriptstyle b$};
\end{tikzpicture}
= 
{\displaystyle \binom{a+b}{a} }
\begin{tikzpicture}[baseline = -1mm,scale=.8,color=\clr]
	\draw[-,line width=1.4pt] (0.08,-.8) to (0.08,.6);
        \node at (.08,-1) {$\scriptstyle a+b$};
\end{tikzpicture}, 
\qquad
\begin{tikzpicture}[baseline = 7.5pt,scale=1, color=\clr]
	\draw[-,line width=1pt] (0,0) to (.29,.3) to (.29,.7) to (0,1);
	\draw[-,line width=1pt] (.6,0) to (.31,.3) to (.31,.7) to (.6,1);
        \node at (0,1.13) {$\scriptstyle b$};
        \node at (0.63,1.13) {$\scriptstyle d$};
        \node at (0,-.1) {$\scriptstyle a$};
        \node at (0.63,-.1) {$\scriptstyle c$};
\end{tikzpicture}
=
\sum_{\substack{0 \leq s \leq \min(a,b)\\0 \leq t \leq \min(c,d)\\t-s=d-a}}
\begin{tikzpicture}[baseline = 7.5pt,scale=1, color=\clr]
	\draw[-,thick] (0.58,0) to (0.58,.2) to (.02,.8) to (.02,1);
	\draw[-,thick] (0.02,0) to (0.02,.2) to (.58,.8) to (.58,1);
	\draw[-,thick] (0,0) to (0,1);
	\draw[-,line width=1pt] (0.61,0) to (0.61,1);
        \node at (0,1.13) {$\scriptstyle b$};
        \node at (0.6,1.13) {$\scriptstyle d$};
        \node at (0,-.1) {$\scriptstyle a$};
        \node at (0.6,-.1) {$\scriptstyle c$};
        \node at (-0.1,.5) {$\scriptstyle s$};
        \node at (0.77,.5) {$\scriptstyle t$};
\end{tikzpicture},
\\
    \label{zigzag}
\begin{tikzpicture}[baseline = -1mm,scale=.8,color=\clr]
        \draw[-,line width=1.2pt] (0,-0.5) to (0,0) to
        [out=up,in=left] (.25,.25) to [out=right,in=up] (.5,0) 
        to [out=down,in=left] (.75,-0.25) to [out=right,in=down] (1,0)
        to (1,.5);
        \node at (0,-.65){$\scriptstyle{a}$};
    \end{tikzpicture}
    =
       \begin{tikzpicture}[baseline = -1mm,scale=.8,color=\clr]
        \draw[-,line width=1.2pt] (0,-.5) to (0,.5);
        \node at (0,-.65){$\scriptstyle{a}$};
    \end{tikzpicture} ,
    \quad 
\begin{tikzpicture}[baseline = -1mm,scale=.8,color=\clr]
        \draw[-,line width=1.2pt] (0,-0.5) to (0,0) to
        [out=up,in=right] (-.25,.25) to [out=left,in=up] (-.5,0) 
        to [out=down,in=right] (-.75,-0.25) to [out=left,in=down] (-1,0)
        to (-1,.5);
        \node at (0,-.65){$\scriptstyle{a}$};
    \end{tikzpicture}
    =
       \begin{tikzpicture}[baseline = -1mm,scale=.8,color=\clr]
        \draw[-,line width=1.2pt] (0,-.5) to (0,.5);
        \node at (0,-.65){$\scriptstyle{a}$};
    \end{tikzpicture},
    \qquad\;\,
    %
    	\begin{tikzpicture}[anchorbase,color=\clr,line width=1pt]
	  \draw[-](.6,.4) to (.1,-.3);
		\draw[-] (0.6,-0.3) to[out=140, in=0] (0.1,0.2);
		\draw[-] (0.1,0.2) to[out = -180, in = 90] (-0.2,-0.3);
        \node at (-.2,-.45){$\scriptstyle{a}$};
        \node at (.6,-.45){$\scriptstyle{a}$};
        \node at (.1,-.45){$\scriptstyle{b}$};
	\end{tikzpicture}
	 =
	\begin{tikzpicture}[anchorbase,color=\clr,line width=1pt]
		\draw[-](-.5,.4) to (0,-.3);
		\draw[-] (0.3,-0.3) to[out=90, in=0] (0,0.2);
		\draw[-] (0,0.2) to[out = -180, in = 40] (-0.5,-0.3);
        \node at (-.5,-.45){$\scriptstyle{a}$};
        \node at (.3,-.45){$\scriptstyle{a}$};
        \node at (0,-.45){$\scriptstyle{b}$};
	\end{tikzpicture} ,
 \quad
	\begin{tikzpicture}[anchorbase,color=\clr,line width=1pt]
		\draw[-](-.5,-.3) to (0,.4);
		\draw[-] (0.3,0.4) to[out=-90, in=0] (0,-0.1);
		\draw[-] (0,-0.1) to[out = 180, in = -40] (-0.5,0.4);
        \node at (-.5,.5){$\scriptstyle{a}$};
        \node at (.3,.5){$\scriptstyle{a}$};
        \node at (.0,.52){$\scriptstyle{{b}}$};
	\end{tikzpicture}
	=
	\begin{tikzpicture}[anchorbase,color=\clr,line width=1pt]
		\draw[-](.6,-.3) to (.1,.4);
	  \draw[-] (0.6,0.4) to[out=-140, in=0] (0.1,-0.1);
		\draw[-] (0.1,-0.1) to[out = 180, in = -90] (-0.2,0.4);
        \node at (-.2,.5){$\scriptstyle{a}$};
        \node at (.6,.5){$\scriptstyle{a}$};
        \node at (.1,.52){$\scriptstyle{{b}}$};
	\end{tikzpicture} ,
    \\
    \label{mergesplitmove}
        \begin{tikzpicture}[anchorbase,color=\clr]
	\draw[-,line width=1.4pt] (0.08,-.3) to (0.08,0.04);
	\draw[-,line width=1pt] (0.28,.4) to (0.08,0);
	\draw[-,line width=1pt] (-0.12,.4) to (0.08,0);
    \draw[-,line width=1pt] (.28,.4) to [out=up,in=up] (.7,.4) to (.7,-.3);
    \draw[-,line width=1pt] (-.12,.4) to [out=up,in=up] (1,.4) to (1,-.3);
     \node at (1,-.45) {$\scriptstyle {b}$};
     \node at (.7,-.45) {$\scriptstyle {a}$};
\end{tikzpicture}
 =
\begin{tikzpicture}[anchorbase,color=\clr]
    \draw[-,line width=1.2pt] (0,-.3) to (0,.4);
    \draw[-,line width=1.2pt] (0,.4) to [out=up,in=up] (.6,.4) to (.6,0);
    \draw[-,line width=1pt] (.6,0) to (.8,-.3) ; 
    \draw[-,line width=1pt] (.6,0) to (.4,-.3); 
     \node at (.8,-.45) {$\scriptstyle {b}$};
      \node at (.4,-.45){$\scriptstyle {a}$};
\end{tikzpicture},
\qquad
 \begin{tikzpicture}[xscale=-1,anchorbase,color=\clr]
	\draw[-,line width=1.4pt] (0.08,-.3) to (0.08,0.04);
	\draw[-,line width=1pt] (0.28,.4) to (0.08,0);
	\draw[-,line width=1pt] (-0.12,.4) to (0.08,0);
    \draw[-,line width=1pt] (.28,.4) to [out=up,in=up] (.7,.4) to (.7,-.3);
    \draw[-,line width=1pt] (-.12,.4) to [out=up,in=up] (1,.4) to (1,-.3);
     \node at (1,-.45) {$\scriptstyle {a}$};
     \node at (.7,-.45) {$\scriptstyle {b}$};
\end{tikzpicture}
=
\begin{tikzpicture}[xscale=-1,anchorbase,color=\clr]
    \draw[-,line width=1.2pt] (0,-.3) to (0,.4);
    \draw[-,line width=1.2pt] (0,.4) to [out=up,in=up] (.6,.4) to (.6,0);
    \draw[-,line width=1pt] (.6,0) to (.8,-.3) ; 
    \draw[-,line width=1pt] (.6,0) to (.4,-.3); 
     \node at (.8,-.45) {$\scriptstyle {a}$};
      \node at (.4,-.45){$\scriptstyle {b}$};
\end{tikzpicture} \:\: ,
    \\
\bubblea = \binom{x}{a} ,
  \qquad
\begin{tikzpicture}[yscale=-1, baseline = -1mm,scale=.8,color=\clr]
\draw[-,line width=1.6pt] (0.1,.3) to (0.1,.6);
\draw[-,thick] (0.1,-.51) to [out=right,in=-45] (0.1,.31);
\draw[-,thick] (0.1,-.51) to [out=left,in=-135] (0.1,.31);
\node at (-.33,-.05) {$\scriptstyle a$};
\node at (.45,-.05) {$\scriptstyle a$};
\end{tikzpicture}
=0 ,
\qquad
    \begin{tikzpicture}[anchorbase, scale=1, color=\clr]
 \draw[line width=1.2pt] (-0.15,0) arc(180:0:0.15) to[out=down,in=60] (-0.15,-0.4);
        \draw[line width=1.2pt] (0.15,-0.4) to[out=120,in=down] (-0.15,0);
        \node at (-.15,-.5) {$\scriptstyle a$};
        \node at (.15,-.5) {$\scriptstyle a$};
\end{tikzpicture}
= (-1)^a\!\capa .
  \label{bubblecap} 
\end{gather} 
\end{definition}
We often choose to omit some of the labels when they can be read off from others. For example, the thick strands in \eqref{webassoc} are implicitly labeled by $a+b+c$, the thick strand in the middle relation in \eqref{bubblecap} is implicitly labeled by $2a$, and so on. 

By base change, sometimes we view $\Wb$ as $\Q[x]$-linear or $\C[x]$-linear. 
We consider some other base changes later in this section.

By the associativity \eqref{webassoc}, we can define 3-fold merge and split as follows:
\begin{align}
\begin{tikzpicture}[baseline = 0, color=\clr]
	\draw[-,thick] (0.35,-.3) to (0.09,0.14);
	\draw[-,thick] (0.08,-.3) to (.08,0.1);
	\draw[-,thick] (-0.2,-.3) to (0.07,0.14);
	\draw[-,line width=1.6pt] (0.08,.45) to (0.08,.1);
        \node at (0.45,-.41) {$\scriptstyle c$};
        \node at (0.07,-.4) {$\scriptstyle b$};
        \node at (-0.28,-.41) {$\scriptstyle a$};
\end{tikzpicture}
:=
\begin{tikzpicture}[baseline = 0, color=\clr]
	\draw[-,thick] (0.35,-.3) to (0.08,0.14);
	\draw[-,thick] (0.1,-.3) to (-0.04,-0.06);
	\draw[-,line width=1pt] (0.085,.14) to (-0.035,-0.06);
	\draw[-,thick] (-0.2,-.3) to (0.07,0.14);
	\draw[-,line width=1.6pt] (0.08,.45) to (0.08,.1);
        \node at (0.45,-.41) {$\scriptstyle c$};
        \node at (0.07,-.4) {$\scriptstyle b$};
        \node at (-0.28,-.41) {$\scriptstyle a$};
\end{tikzpicture}
&=
\begin{tikzpicture}[baseline = 0, color=\clr]
	\draw[-,thick] (0.36,-.3) to (0.09,0.14);
	\draw[-,thick] (0.06,-.3) to (0.2,-.05);
	\draw[-,line width=1pt] (0.07,.14) to (0.19,-.06);
	\draw[-,thick] (-0.19,-.3) to (0.08,0.14);
	\draw[-,line width=1.6pt] (0.08,.45) to (0.08,.1);
        \node at (0.45,-.41) {$\scriptstyle c$};
        \node at (0.07,-.4) {$\scriptstyle b$};
        \node at (-0.28,-.41) {$\scriptstyle a$};
\end{tikzpicture}\:,&
\begin{tikzpicture}[baseline = 0, color=\clr]
	\draw[-,thick] (0.35,.3) to (0.09,-0.14);
	\draw[-,thick] (0.08,.3) to (.08,-0.1);
	\draw[-,thick] (-0.2,.3) to (0.07,-0.14);
	\draw[-,line width=1.6pt] (0.08,-.45) to (0.08,-.1);
        \node at (0.45,.41) {$\scriptstyle c$};
        \node at (0.07,.43) {$\scriptstyle b$};
        \node at (-0.28,.41) {$\scriptstyle a$};
\end{tikzpicture}
:=
\begin{tikzpicture}[baseline = 0, color=\clr]
	\draw[-,thick] (0.35,.3) to (0.08,-0.14);
	\draw[-,thick] (0.1,.3) to (-0.04,0.06);
	\draw[-,line width=1pt] (0.085,-.14) to (-0.035,0.06);
	\draw[-,thick] (-0.2,.3) to (0.07,-0.14);
	\draw[-,line width=1.6pt] (0.08,-.45) to (0.08,-.1);
        \node at (0.45,.41) {$\scriptstyle c$};
        \node at (0.07,.43) {$\scriptstyle b$};
        \node at (-0.28,.41) {$\scriptstyle a$};
\end{tikzpicture}
&=
\begin{tikzpicture}[baseline = 0, color=\clr]
	\draw[-,thick] (0.36,.3) to (0.09,-0.14);
	\draw[-,thick] (0.06,.3) to (0.2,.05);
	\draw[-,line width=1pt] (0.07,-.14) to (0.19,.06);
	\draw[-,thick] (-0.19,.3) to (0.08,-0.14);
	\draw[-,line width=1.6pt] (0.08,-.45) to (0.08,-.1);
        \node at (0.45,.41) {$\scriptstyle c$};
        \node at (0.07,.43) {$\scriptstyle b$};
        \node at (-0.28,.41) {$\scriptstyle a$};
\end{tikzpicture}\:.
\end{align}    
More general $r$-fold merges and splits, for any $r$, can be similarly defined.
 
We list some well-known implied relations in $\Wb$ (which follow from \eqref{webassoc}--\eqref{splitmerge}, same as the defining relations for type $A$ webs), cf. \cite{CKM, BEEO, SWweb}:
\begin{align}
\begin{tikzpicture}[anchorbase,scale=.7,color=\clr]
	\draw[-,line width=1.6pt] (0.08,.3) to (0.08,.5);
\draw[-,thick] (-.2,-.8) to [out=45,in=-45] (0.1,.31);
\draw[-,thick] (.36,-.8) to [out=135,in=-135] (0.06,.31);
        \node at (-.3,-.95) {$\scriptstyle a$};
        \node at (.45,-.95) {$\scriptstyle b$};
\end{tikzpicture}
&=
\begin{tikzpicture}[anchorbase,scale=.7,color=\clr]
	\draw[-,line width=1.6pt] (0.08,.1) to (0.08,.5);
\draw[-,thick] (.46,-.8) to [out=100,in=-45] (0.1,.11);
\draw[-,thick] (-.3,-.8) to [out=80,in=-135] (0.06,.11);
        \node at (-.3,-.95) {$\scriptstyle a$};
        \node at (.43,-.95) {$\scriptstyle b$};
\end{tikzpicture},
& 
\begin{tikzpicture}[anchorbase,scale=.7,color=\clr]
	\draw[-,line width=1.6pt] (0.08,-.3) to (0.08,-.5);
\draw[-,thick] (-.2,.8) to [out=-45,in=45] (0.1,-.31);
\draw[-,thick] (.36,.8) to [out=-135,in=135] (0.06,-.31);
        \node at (-.3,.95) {$\scriptstyle a$};
        \node at (.45,.95) {$\scriptstyle b$};
\end{tikzpicture}
&=
\begin{tikzpicture}[anchorbase,scale=.7,color=\clr]
	\draw[-,line width=1.6pt] (0.08,-.1) to (0.08,-.5);
\draw[-,thick] (.46,.8) to [out=-100,in=45] (0.1,-.11);
\draw[-,thick] (-.3,.8) to [out=-80,in=135] (0.06,-.11);
        \node at (-.3,.95) {$\scriptstyle a$};
        \node at (.43,.95) {$\scriptstyle b$};
\end{tikzpicture},
\label{swallows}
\\
\begin{tikzpicture}[anchorbase,scale=0.7,color=\clr]
	\draw[-,thick] (0.4,0) to (-0.6,1);
	\draw[-,thick] (0.08,0) to (0.08,1);
	\draw[-,thick] (0.1,0) to (0.1,.6) to (.5,1);
        \node at (0.6,1.13) {$\scriptstyle c$};
        \node at (0.1,1.16) {$\scriptstyle b$};
        \node at (-0.65,1.13) {$\scriptstyle a$};
\end{tikzpicture}
\!\!=\!\!
\begin{tikzpicture}[anchorbase,scale=0.7,color=\clr]
	\draw[-,thick] (0.7,0) to (-0.3,1);
	\draw[-,thick] (0.08,0) to (0.08,1);
	\draw[-,thick] (0.1,0) to (0.1,.2) to (.9,1);
        \node at (0.9,1.13) {$\scriptstyle c$};
        \node at (0.1,1.16) {$\scriptstyle b$};
        \node at (-0.4,1.13) {$\scriptstyle a$};
\end{tikzpicture},\:
\begin{tikzpicture}[anchorbase,scale=0.7,color=\clr]
	\draw[-,thick] (-0.4,0) to (0.6,1);
	\draw[-,thick] (-0.08,0) to (-0.08,1);
	\draw[-,thick] (-0.1,0) to (-0.1,.6) to (-.5,1);
        \node at (0.7,1.13) {$\scriptstyle c$};
        \node at (-0.1,1.16) {$\scriptstyle b$};
        \node at (-0.6,1.13) {$\scriptstyle a$};
\end{tikzpicture}
 &=\!\!
\begin{tikzpicture}[anchorbase,scale=0.7,color=\clr]
	\draw[-,thick] (-0.7,0) to (0.3,1);
	\draw[-,thick] (-0.08,0) to (-0.08,1);
	\draw[-,thick] (-0.1,0) to (-0.1,.2) to (-.9,1);
        \node at (0.4,1.13) {$\scriptstyle c$};
        \node at (-0.1,1.16) {$\scriptstyle b$};
        \node at (-0.95,1.13) {$\scriptstyle a$};
\end{tikzpicture},
&
\:\begin{tikzpicture}[baseline=-3.3mm,scale=0.7,color=\clr]
	\draw[-,thick] (0.4,0) to (-0.6,-1);
	\draw[-,thick] (0.08,0) to (0.08,-1);
	\draw[-,thick] (0.1,0) to (0.1,-.6) to (.5,-1);
        \node at (0.6,-1.13) {$\scriptstyle c$};
        \node at (0.07,-1.13) {$\scriptstyle b$};
        \node at (-0.6,-1.13) {$\scriptstyle a$};
\end{tikzpicture}
&=\!\!
\begin{tikzpicture}[baseline=-3.3mm,scale=0.7,color=\clr]
	\draw[-,thick] (0.7,0) to (-0.3,-1);
	\draw[-,thick] (0.08,0) to (0.08,-1);
	\draw[-,thick] (0.1,0) to (0.1,-.2) to (.9,-1);
        \node at (1,-1.13) {$\scriptstyle c$};
        \node at (0.1,-1.13) {$\scriptstyle b$};
        \node at (-0.4,-1.13) {$\scriptstyle a$};
\end{tikzpicture},\:
\begin{tikzpicture}[baseline=-3.3mm,scale=0.7,color=\clr]
	\draw[-,thick] (-0.4,0) to (0.6,-1);
	\draw[-,thick] (-0.08,0) to (-0.08,-1);
	\draw[-,thick] (-0.1,0) to (-0.1,-.6) to (-.5,-1);
        \node at (0.6,-1.13) {$\scriptstyle c$};
        \node at (-0.1,-1.13) {$\scriptstyle b$};
        \node at (-0.6,-1.13) {$\scriptstyle a$};
\end{tikzpicture}
\!\!=\!\!
\begin{tikzpicture}[baseline=-3.3mm,scale=0.7,color=\clr]
	\draw[-,thick] (-0.7,0) to (0.3,-1);
	\draw[-,thick] (-0.08,0) to (-0.08,-1);
	\draw[-,thick] (-0.1,0) to (-0.1,-.2) to (-.9,-1);
        \node at (0.34,-1.13) {$\scriptstyle c$};
        \node at (-0.1,-1.13) {$\scriptstyle b$};
        \node at (-0.95,-1.13) {$\scriptstyle a$};
\end{tikzpicture},
\label{sliders}
\\
\label{braid}
\mathord{
\begin{tikzpicture}[baseline = -1mm,scale=0.8,color=\clr]
	\draw[-,thick] (0.28,0) to[out=90,in=-90] (-0.28,.6);
	\draw[-,thick] (-0.28,0) to[out=90,in=-90] (0.28,.6);
	\draw[-,thick] (0.28,-.6) to[out=90,in=-90] (-0.28,0);
	\draw[-,thick] (-0.28,-.6) to[out=90,in=-90] (0.28,0);
        \node at (0.3,-.75) {$\scriptstyle b$};
        \node at (-0.3,-.75) {$\scriptstyle a$};
\end{tikzpicture}
}
&=
\mathord{
\begin{tikzpicture}[baseline = -1mm,scale=0.8,color=\clr]
	\draw[-,thick] (0.2,-.6) to (0.2,.6);
	\draw[-,thick] (-0.2,-.6) to (-0.2,.6);
        \node at (0.2,-.75) {$\scriptstyle b$};
        \node at (-0.2,-.75) {$\scriptstyle a$};
\end{tikzpicture}
}\:,
& 
\mathord{
\begin{tikzpicture}[baseline = -1mm,scale=0.8,color=\clr]
	\draw[-,thick] (0.45,.6) to (-0.45,-.6);
	\draw[-,thick] (0.45,-.6) to (-0.45,.6);
        \draw[-,thick] (0,-.6) to[out=90,in=-90] (-.45,0);
        \draw[-,thick] (-0.45,0) to[out=90,in=-90] (0,0.6);
        \node at (0,-.77) {$\scriptstyle b$};
        \node at (0.5,-.77) {$\scriptstyle c$};
        \node at (-0.5,-.77) {$\scriptstyle a$};
\end{tikzpicture}
}
&=
\mathord{
\begin{tikzpicture}[baseline = -1mm,scale=0.8,color=\clr]
	\draw[-,thick] (0.45,.6) to (-0.45,-.6);
	\draw[-,thick] (0.45,-.6) to (-0.45,.6);
        \draw[-,thick] (0,-.6) to[out=90,in=-90] (.45,0);
        \draw[-,thick] (0.45,0) to[out=90,in=-90] (0,0.6);
        \node at (0,-.77) {$\scriptstyle b$};
        \node at (0.5,-.77) {$\scriptstyle c$};
        \node at (-0.5,-.77) {$\scriptstyle a$};
\end{tikzpicture}
}\:.
\end{align}

The defining relations for $\Wb$ are chosen to be convenient but they are not optimal. For example, the second relation in \eqref{mergesplitmove} follows from the first one and the fourth relation in \eqref{zigzag} follows from the third one using the zigzag relations (i.e., the first two relations in \eqref{zigzag}). 

\begin{rem}
    Our convention on the crossings differs by a sign on the thin crossing from the (affine) Brauer category in \cite{RS2019}. 
\end{rem}

\subsection{More implied relations in $\Wb$}

\begin{lemma}
    The following relations hold in $\Wb$, for $a\ge 1$: 
\begin{align}
 \begin{tikzpicture}[anchorbase, yscale=-1, color=\clr]
 \draw[line width=1.2pt] (-0.15,0) arc(180:0:0.15) to[out=down,in=60] (-0.15,-0.4);
        \draw[line width=1.2pt] (0.15,-0.4) to[out=120,in=down] (-0.15,0);
        \node at (-.15,-.5) {$\scriptstyle a$};
        \node at (.15,-.5) {$\scriptstyle a$};
\end{tikzpicture}
=(-1)^a\!\cupa ,
\qquad
    \begin{tikzpicture}[baseline = -1mm,scale=.8,color=\clr]
\draw[-,line width=1.6pt] (0.1,.3) to (0.1,.6);
\draw[-,thick] (0.1,-.51) to [out=right,in=-45] (0.1,.31);
\draw[-,thick] (0.1,-.51) to [out=left,in=-135] (0.1,.31);
\node at (-.33,-.05) {$\scriptstyle a$};
\node at (.45,-.05) {$\scriptstyle a$};
\end{tikzpicture}
=0.
\end{align}
\end{lemma}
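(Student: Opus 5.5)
The plan is to derive both relations from their vertically flipped counterparts in \eqref{bubblecap}. The only tools needed are the zigzag relations, the cup/cap sliding relations in \eqref{zigzag}, and the merge/split sliding relations \eqref{mergesplitmove}. In effect we "rotate by $\pi$" the relations \eqref{bubblecap}, which are stated for caps, so that they become statements about cups.

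For the first relation (a cup followed by the crossing $(a,a)\to(a,a)$ equals $(-1)^a\cupa$), we proceed in three steps.
\begin{enumerate}
\item Apply a zigzag \eqref{zigzag} to the right leg of $\cupa$. This writes the leg as $(\capa\otimes \mathrm{id}_a)\circ(\mathrm{id}_a\otimes\cupa)$, so the diagram becomes a cup with a new cap and a new cup next to it.
\item Use the third and fourth relations in \eqref{zigzag} (sliding a strand of thickness $b=a$ through a cup or cap) to move the crossing onto the new cap. At that point the crossing sits directly below $\capa$.
\item The configuration now contains the crossing followed by $\capa$, which by \eqref{bubblecap} equals $(-1)^a\capa$. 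Undoing the zigzag then gives $(-1)^a\cupa$.
\end{enumerate}
Alternatively, one can write the crossing composed with the cup as $(\mathrm{id}\otimes\mathrm{id}\otimes \capa)\circ(\mathrm{id}\otimes X\otimes \mathrm{id})\circ(\cupa\otimes\cupa)$, use the slide relation to transfer the crossing onto the other cup, and straighten.

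For the second relation, namely that $\cupa$ followed by the merge $(a,a)\to(2a)$ is zero, set $f:=\text{merge}\circ\cupa:\unit\to 2a$. We then argue as follows.
\begin{enumerate}
\item Use the zigzag for the thickness-$2a$ strand to write
\[
f=(\mathrm{id}_{2a}\otimes\text{cap}_{2a})\circ(\text{cup}_{2a}\otimes f).
\]
\item The factor $\text{cap}_{2a}\circ(\mathrm{id}_{2a}\otimes\text{merge})$ is exactly the right-hand side of one of the relations in \eqref{mergesplitmove}. Replacing it by the left-hand side turns it into a split of the first $2a$ strand into $(a,a)$, followed by two nested thin caps joining the split outputs to the two inputs of the merge. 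Those inputs are the legs of $\cupa$.
\item The inner cap meets one leg of $\cupa$, so a zigzag \eqref{zigzag} straightens it. After this, the outer cap and the remaining leg close up into a single $\capa$ attached to the two outputs of the split. Depending on how the strands are ordered, this closing may or may not involve a crossing.
\item If a crossing appears, remove it at the cost of a sign $(-1)^a$ using the crossing/cap relation in \eqref{bubblecap}.
\end{enumerate}
This gives $f=\pm(\mathrm{id}_{2a}\otimes(\capa\circ\text{split}_{a,a}))\circ\text{cup}_{2a}$. Since $\capa\circ\text{split}_{a,a}=0$ by the middle relation of \eqref{bubblecap}, we conclude $f=0$.

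The main obstacle is the bookkeeping in the middle steps. One has to choose the correct one of the two relations in \eqref{mergesplitmove} (left or right version, with the labels $a,b$ both equal to $a$). One also has to check carefully whether the nested caps, after the zigzag, produce the cap directly or the cap precomposed with a crossing. Either outcome suffices, because of the crossing/cap relation, but the pictures must be tracked honestly.
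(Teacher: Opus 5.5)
Your argument is correct and is essentially the paper's proof, which says only that both relations follow from \eqref{zigzag} and \eqref{bubblecap}, i.e.\ they are the relations in \eqref{bubblecap} rotated through the cup/cap duality. Your version is more complete, since the second relation also needs \eqref{mergesplitmove} to turn the rotated split into a merge; moreover, the final zigzag straightening gives the cap on the two split outputs directly, with no crossing, so the sign case you allow for never arises.
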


\begin{proof}
    This follows from \eqref{zigzag} and \eqref{bubblecap}.
\end{proof}

\begin{lemma}
The following relations hold in $\Wb$, for $a,b \ge 1$: 
      \begin{align}
    \label{mergesplitslideleft}
\begin{tikzpicture}[yscale=-1,anchorbase,scale=.8,color=\clr]
	\draw[-,line width=1.4pt] (0.08,-.3) to (0.08,0.04);
	\draw[-,line width=1pt] (0.28,.4) to (0.08,0);
	\draw[-,line width=1pt] (-0.12,.4) to (0.08,0);
    \draw[-,line width=1pt] (.28,.4) to [out=up,in=up] (.7,.4) to (.7,-.3);
    \draw[-,line width=1pt] (-.12,.4) to [out=up,in=up] (1,.4) to (1,-.3);
     \node at (1,-.45) {$\scriptstyle {{b}}$};
     \node at (.7,-.45) {$\scriptstyle {{a}}$};
\end{tikzpicture}=\begin{tikzpicture}[yscale=-1,anchorbase,color=\clr]
    \draw[-,line width=1.2pt] (0,-.3) to (0,.4);
    \draw[-,line width=1.2pt] (0,.4) to [out=up,in=up] (.6,.4) to (.6,0);
    \draw[-,line width=1pt] (.6,0) to (.8,-.3) ; 
    \draw[-,line width=1pt] (.6,0) to (.4,-.3); 
     \node at (.8,-.45) {$\scriptstyle {{b}}$};
      \node at (.4,-.45){$\scriptstyle {{a}}$};
\end{tikzpicture},       
\qquad
  \begin{tikzpicture}[xscale=-1,yscale=-1,anchorbase,scale=.8,color=\clr]
	\draw[-,line width=1.4pt] (0.08,-.3) to (0.08,0.04);
	\draw[-,line width=1pt] (0.28,.4) to (0.08,0);
	\draw[-,line width=1pt] (-0.12,.4) to (0.08,0);
    \draw[-,line width=1pt] (.28,.4) to [out=up,in=up] (.7,.4) to (.7,-.3);
    \draw[-,line width=1pt] (-.12,.4) to [out=up,in=up] (1,.4) to (1,-.3);
     \node at (1,-.45) {$\scriptstyle {{a}}$};
     \node at (.7,-.45) {$\scriptstyle {{b}}$};
\end{tikzpicture}=\begin{tikzpicture}[xscale=-1,yscale=-1,anchorbase,color=\clr]
    \draw[-,line width=1.2pt] (0,-.3) to (0,.4);
    \draw[-,line width=1.2pt] (0,.4) to [out=up,in=up] (.6,.4) to (.6,0);
    \draw[-,line width=1pt] (.6,0) to (.8,-.3) ; 
    \draw[-,line width=1pt] (.6,0) to (.4,-.3); 
     \node at (.8,-.45) {$\scriptstyle {{a}}$};
      \node at (.4,-.45){$\scriptstyle {{b}}$};
\end{tikzpicture} .
    \end{align}
\end{lemma}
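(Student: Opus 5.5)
The plan is to obtain \eqref{mergesplitslideleft} from \eqref{mergesplitmove} by taking mates, i.e.\ bending strands with cups and caps and straightening them with the zigzag relations (the first two relations in \eqref{zigzag}). The two relations in \eqref{mergesplitslideleft} are the vertical reflections of the two relations in \eqref{mergesplitmove}: a merge or split on one leg of a cup must slide to the other leg, where it becomes the opposite vertex. Caps and cups are mutually inverse dualities on each object $c$ by \eqref{zigzag}. So any equality of morphisms $(c_1,c_2)\to\unit$ built from a cap can be transported to an equality of morphisms $\unit\to(c_2',c_1')$ built from a cup.

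For the first relation of \eqref{mergesplitslideleft}, I start from its left-hand side. This is a thin cup pair of thicknesses $a,b$ (nested), followed by a merge or split on one leg. On the other side I tensor with the identity and attach the appropriate thick and thin caps. Concretely, I place an extra thick strand of thickness $a+b$ to the left, together with thin strands $a,b$ on the right, and cap off so that the cups in the given diagram become part of a zigzag. After straightening with \eqref{zigzag}, the diagram reduces to a configuration in which caps of thicknesses $a$ and $b$ (or $a+b$) sit above a merge or split. That configuration is exactly one side of \eqref{mergesplitmove}. I then apply \eqref{mergesplitmove} to move the vertex across the cap. Finally I bend the strands back with cups and apply \eqref{zigzag} again, which yields the right-hand side of \eqref{mergesplitslideleft}. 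The second relation follows the same way from the second relation in \eqref{mergesplitmove}. Alternatively, it follows from the first one by a left-right reflection argument, mirroring the remark after \eqref{braid} that the second relation of \eqref{mergesplitmove} is implied by the first.

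Some bookkeeping is needed when the nested thin cups of labels $a$ and $b$ are rebent into a single pair of nested caps. Here I will use the third and fourth relations of \eqref{zigzag}, which slide a strand through a cup or cap, to untangle any crossings produced during the bending. I will also use \eqref{webassoc} and \eqref{swallows} where necessary. This step is the only potential obstacle. One must check that the crossing introduced by the nested configuration appears in the same position on both sides, so that no sign $(-1)^a$ from \eqref{bubblecap} enters. Since both sides are transformed by the identical bending procedure, any such crossings cancel symmetrically. Therefore no twist relation should actually be needed, and the argument reduces to \eqref{zigzag} and \eqref{mergesplitmove}.
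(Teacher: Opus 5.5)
Your proposal is correct and follows the paper's own one-line proof, which simply cites \eqref{zigzag} and \eqref{mergesplitmove}. Capping the thick output on the left and the outputs $a,b$ on the right turns the two sides of the first relation of \eqref{mergesplitslideleft}, using only the zigzag relations, into the two sides of the first relation of \eqref{mergesplitmove} with $a$ and $b$ interchanged; bending back gives the claim, and the second relation is the mirror image. Every step is planar, so no crossings arise: you need neither the sliding relations in \eqref{zigzag} nor \eqref{webassoc} or \eqref{swallows}, and no sign from \eqref{bubblecap} can enter.
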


\begin{proof}
    This follows from \eqref{zigzag} and \eqref{mergesplitmove}.
  \end{proof}

\begin{lemma}
    The following relations hold in $\Wb$, for $a\in \Z_{\geq 1}$:
    \begin{align}
        \label{curlremove}
        \begin{tikzpicture}[anchorbase,color=\clr,line width=1pt]
            	\draw[-] (0,0.6) to (0,0.3);
					\draw[-] (0,0.3) to [out=-90,in=0] (-.3,-0.2);
					\draw[-] (-0.3,-0.2) to [out=180,in=-90](-.5,0);
					\draw[-] (-0.5,0) to [out=90,in=180](-.3,0.2);
					\draw[-] (-0.3,.2) to [out=0,in=90](0,-0.3);
					\draw[-] (0,-0.3) to (0,-0.6);
                    \node at (0,-.75){$\scriptstyle{a}$};
        \end{tikzpicture}
        =(-1)^a
        \begin{tikzpicture}[anchorbase,color=\clr,line width=1pt]
            	\draw[-] (0,-0.6) to (0,0.4);
             \node at (0,-.75){$\scriptstyle{a}$};
        \end{tikzpicture}
        =\begin{tikzpicture}[anchorbase,color=\clr,line width=1pt]
            		\draw[-] (0,0.6) to (0,0.3);
					\draw[-] (0,0.3) to [out=-90,in=180] (.3,-0.2);
					\draw[-] (0.3,-0.2) to [out=0,in=-90](.5,0);
					\draw[-] (0.5,0) to [out=90,in=0](.3,0.2);
					\draw[-] (0.3,.2) to [out=180,in=90](0,-0.3);
					\draw[-] (0,-0.3) to (0,-0.6);
     \node at (0,-.75){$\scriptstyle{a}$};
        \end{tikzpicture}.
    \end{align}
\end{lemma}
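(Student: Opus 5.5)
The plan is to rewrite each curl as a composite of a cap (or cup) with a crossing on the two curl strands, so that the last relation in \eqref{bubblecap} (or its cup version from the preceding lemma) applies, and then straighten with the zigzag relations \eqref{zigzag}.

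For the left-hand curl, the strand of thickness $a$ comes up from the bottom and crosses itself. It then turns left over a cap, goes around, and exits at the top. Isotopically, this is the identity strand with a cup--cap pair inserted, where the crossing sits between the cup and the cap. I will therefore first use the zigzag relations in \eqref{zigzag} to write the curl as
\[
(\text{cap}_a \otimes 1_a)\circ (1_a\otimes X_{a,a}\text{-type piece})\circ(\ldots),
\]
where the self-crossing of the curl is identified with the crossing $\crossing$ (with $a=b$) composed directly under a cap. Concretely, I will slide the crossing along the strand using the third and fourth relations of \eqref{zigzag}, which say that a crossing with a strand of thickness $b$ can pass through a cup/cap. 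This isolates the local picture
\begin{tikzpicture}[anchorbase, scale=1, color=\clr]
 \draw[line width=1.2pt] (-0.15,0) arc(180:0:0.15) to[out=down,in=60] (-0.15,-0.4);
        \draw[line width=1.2pt] (0.15,-0.4) to[out=120,in=down] (-0.15,0);
\end{tikzpicture}
(a cap precomposed with the crossing $X_{a,a}$). By \eqref{bubblecap} this equals $(-1)^a$ times the cap. After the replacement, the remaining diagram is a cap and cup forming a zigzag, and the first two relations of \eqref{zigzag} straighten it to $(-1)^a$ times the identity on $a$.

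The right-hand curl is the mirror image. I will either repeat the argument using the cup version, namely the first relation of the previous lemma, where a crossing composed above a cup gives $(-1)^a$ times the cup, or observe that the curl can be rewritten so that the crossing sits immediately above a cup. In either case the zigzag relation again yields $(-1)^a$ times the identity, which proves both equalities.

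The main obstacle is the bookkeeping in the first step: checking that the self-crossing in the curl really is the generating crossing adjacent to the cap, and not one that differs by a rotation. Since $\Wb$ is only given its cups, caps and crossings with the stated relations, and is not a priori pivotal, I would verify this carefully. I would draw the curl as $(\cap_a\otimes 1_a)\circ(1_a\otimes X_{a,a})\circ(\cup_a\otimes 1_a)$ up to zigzags, then apply \eqref{bubblecap} directly to the factor $\cap_a\circ X_{a,a}$ after moving the $1_a$ strand appropriately using \eqref{zigzag}.
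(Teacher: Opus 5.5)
Your proposal is correct and takes the same route as the paper, which proves the lemma simply by citing \eqref{zigzag} and \eqref{bubblecap}. Concretely, write the left curl as $(\cap_a\otimes 1_a)\circ(1_a\otimes \mathrm{X}_{a,a})\circ(\cup_a\otimes 1_a)$; the fourth relation in \eqref{zigzag} with $b=a$ turns this into $((\cap_a\circ \mathrm{X}_{a,a})\otimes 1_a)\circ(1_a\otimes \cup_a)$, and then \eqref{bubblecap} followed by the first zigzag relation gives $(-1)^a$ times the identity, with the right curl handled by the mirror-image computation you describe.
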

\begin{proof}
    This follows from \eqref{zigzag} and \eqref{bubblecap}.
\end{proof}

\begin{lemma}
 The following relations hold in $\Wb$: 
      \begin{align}
    \label{dumbbellremove}
\begin{tikzpicture}[anchorbase,scale=0.2,yscale=0.7, color=\clr]
\draw [-,line width=1.4pt] (0,-1) to (0,.75);
\draw [-,line width=1pt] (0,.75) to [out=30,in=270] (1,2.5);
\draw [-,line width=1pt] (0,.75) to [out=150,in=270] (-1,2.5); 
\draw [-,line width=1pt] (1,-2.75) to [out=90,in=330] (0,-1);
\draw [-,line width=1pt] (-1,-2.75) to [out=90,in=210] (0,-1);
\draw [-,line width=1pt] (1,2.5) to [in=180, out=90] (2,3.5) to [in=90, out=0](3,2.5);
\draw [-,line width=1pt] (1,-2.75) to [in=180, out=270] (2,-3.75) to [in=270, out=360](3,-2.75);
\draw [-,line width=1pt] (-1,-3.75) to (-1,-2.75);
\draw [-,line width=1pt] (3,-2.75) to (3,2.5);
\draw [-,line width=1pt] (-1,2.5) to (-1,3.5);
\node at (-1,4) {$\scriptstyle{a}$};
\node at (-1,-4.5) {$\scriptstyle{a}$};
\node at (3.5,0) {$\scriptstyle{b}$};
\end{tikzpicture}
&= \binom{x-a}{b}
\begin{tikzpicture}[anchorbase,scale=0.2,yscale=0.7, color=\clr]
\draw [very thick, -] (0,-2.5) to (0,2.5);
\node at (0,-3.1) {$\scriptstyle{a}$};
\end{tikzpicture}
=
\begin{tikzpicture}[anchorbase,scale=0.2,yscale=0.7, color=\clr]
\draw [-,line width=1.4pt] (0,-1) to (0,.75);
\draw [-,line width=1pt] (0,.75) to [out=30,in=270] (1,2.5);
\draw [-,line width=1pt] (0,.75) to [out=150,in=270] (-1,2.5); 
\draw [-,line width=1pt] (1,-2.75) to [out=90,in=330] (0,-1);
\draw [-,line width=1pt] (-1,-2.75) to [out=90,in=210] (0,-1);
\draw [-,line width=1pt] (-3,2.5) to [in=180, out=90] (-2,3.5) to [in=90, out=0](-1,2.5);
\draw [-,line width=1pt] (-1,-2.75) to [in=0, out=270] (-2,-3.75) to [in=270, out=180](-3,-2.75);
\draw [-,line width=1pt] (1,-3.75) to (1,-2.75);
\draw[-,line width=1pt] (-3,-2.75) to (-3,2.5);
\draw[-,line width=1pt] (1,2.5) to (1,3.5);
\node at (1,4) {$\scriptstyle{a}$};
\node at (1,-4.5) {$\scriptstyle{a}$};
\node at (-3.5,0) {$\scriptstyle{b}$};
\end{tikzpicture}, 
\\
\label{sidewaydumbell}
\begin{tikzpicture}
    [anchorbase,color=\clr,scale=.6]
    \draw[-,line width=1.2pt] (0,0) to [out=up,in=left] (.5,.5) to [out=right,in=up] (1,0);
    \draw[-] (1,0) -- (1.25,-.25)
             (0,0) to [out=-45,in=-135] (1,0)
             (0,0) -- (-.25,-.25);
    \node at (.5,.7){$\scriptstyle a+b$};
    \node at (.5,-.5){$\scriptstyle b$};
\end{tikzpicture}
&=\binom{x-a}{b}~
\begin{tikzpicture}
    [anchorbase,color=\clr,scale=.5]
    \draw[-,line width=1pt] (0,0) to [out=up,in=left] (.5,.5) to [out=right,in=up] (1,0);
    \node at (0,-.2){$\scriptstyle a$};
     \node at (1,-.2){$\scriptstyle a$};
\end{tikzpicture},\qquad
\begin{tikzpicture}
    [anchorbase,color=\clr,scale=.6,yscale=-1]
    \draw[-,line width=1.2pt] (0,0) to [out=up,in=left] (.5,.5) to [out=right,in=up] (1,0);
    \draw[-] (1,0) -- (1.25,-.25)
             (0,0) to [out=-45,in=-135] (1,0)
             (0,0) -- (-.25,-.25);
    \node at (.5,.7){$\scriptstyle a+b$};
    \node at (.5,-.5){$\scriptstyle b$};
\end{tikzpicture}
~=\binom{x-a}{b}~
\begin{tikzpicture}
    [anchorbase,color=\clr,yscale=-1,scale=.5]
    \draw[-,line width=1pt] (0,0) to [out=up,in=left] (.5,.5) to [out=right,in=up] (1,0);
    \node at (0,-.2){$\scriptstyle a$};
     \node at (1,-.2){$\scriptstyle a$};
\end{tikzpicture} .
\end{align} 
\end{lemma}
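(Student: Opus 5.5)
We will prove the first equality in \eqref{dumbbellremove} directly. The mirror equality then follows by the left–right symmetric argument, since all the defining relations come in mirror pairs. The sideways version \eqref{sidewaydumbell} will be deduced from it.

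\emph{Step 1: expand the middle of the dumbbell.} The dumbbell is the composite split$\circ$merge $(a,b)\to(a+b)\to(a,b)$, with the $b$-strand closed off on the right by a cap and a cup. We expand this composite using the second relation in \eqref{splitmerge}, taking the input and output labels to be $(a,b)$. This writes split$\circ$merge as a sum over $0\le s\le\min(a,b)$ of diagrams of the following shape: the $a$-strand splits off an $s$-strand, the $b$-strand splits off an $s$-strand, and these two $s$-strands cross and merge into the opposite sides, leaving an $(a-s)$-strand and a $(b-s)$-strand going straight through.

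\emph{Step 2: close the $b$-strand in each summand.} We insert the resulting sum into the closure along $b$. For each $s$, we use the sliders \eqref{sliders}, associativity \eqref{webassoc} and the merge/split–cap moves \eqref{mergesplitmove}, \eqref{mergesplitslideleft} to pull the straight $(b-s)$-part of the closed $b$-strand off as a free loop. Together with \eqref{splitmerge}, this loop should contribute a bubble, which equals $\binom{x}{b-s}$ by \eqref{bubblecap}. What remains is an $s$-strand that leaves the $a$-strand, crosses, and returns to it through the closure. Using the zigzag relations \eqref{zigzag} and the sliding of thick strands past caps (third and fourth relations in \eqref{zigzag}), this is a curl on an $s$-strand. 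By \eqref{curlremove} it equals $(-1)^s$ times a digon, i.e. an $s$-strand split off the $a$-strand and merged back. By \eqref{splitmerge} and \eqref{swallows}, this evaluates to a binomial multiple of $\mathrm{id}_a$. There are also bookkeeping binomials from the split of $b$ into $s,b-s$ and its re-merging. Collecting all of these, we aim to show that the total coefficient is
\[
\sum_{s\ge 0}(-1)^s\binom{a+s-1}{s}\binom{x}{b-s}.
\]
By the Chu–Vandermonde identity with $\binom{-a}{s}=(-1)^s\binom{a+s-1}{s}$, this sum equals $\binom{x-a}{b}$. Since both sides are polynomials, this is an identity in $\mathbb J$.

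\emph{Step 3: the sideways relations.} For the first relation in \eqref{sidewaydumbell}, we use \eqref{mergesplitmove} to slide the left merge across the thick $(a+b)$-cap, turning it into a split on the right-hand leg. Then the zigzag relations \eqref{zigzag} straighten the picture so that it becomes the cap $\capa$ precomposed on one leg with the dumbbell of \eqref{dumbbellremove}. Applying Steps 1–2 gives $\binom{x-a}{b}\capa$. The cup version is the vertical mirror, obtained using \eqref{mergesplitslideleft}.

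\emph{Expected obstacle.} The main difficulty is the bookkeeping in Step 2. We must get exactly the right binomial coefficients from removing the $(b-s)$-loop and from the $s$-digon, and the signs from the curls must combine into the Vandermonde form above. If the direct computation becomes unwieldy, we would instead argue by induction on $b$. We would split the $b$-strand as $(b-1,1)$ using \eqref{splitmerge} (this introduces the factor $\binom{b}{1}$), remove one thin loop at a time, and verify the recursion $b\binom{x-a}{b}=(x-a-b+1)\binom{x-a}{b-1}$.
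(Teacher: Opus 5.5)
Step 2 does not work as described. In each summand, the closed $b$-strand consists of three pieces: the closure segment of thickness $b$, a split $b\to(s,b-s)$ at the right input, and a merge $(s,b-s)\to b$ at the right output. After you slide these around the cup and cap, the closure segment carries a merge followed by a split, and their $(b-s)$-legs are joined by the straight $(b-s)$-strand. That is not a free loop. It is again a dumbbell, with labels $(s,b-s)$ and its loop on the opposite side of the thick strand.

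Applying \eqref{splitmerge} to it produces a free $(b-s)$-bubble in only one term; the other terms carry further crossings. Its actual value is $\binom{x-s}{b-s}\,\mathrm{id}_s$, which is the mirror form of the very relation you are proving. So the $s$-th summand contributes $(-1)^s\binom{a}{s}\binom{x-s}{b-s}$, not $(-1)^s\binom{a+s-1}{s}\binom{x}{b-s}$.

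Your target coefficients were fitted to Chu--Vandermonde rather than derived. The expansion in Step 1 only has summands with $s\le\min(a,b)$, whereas your weights are nonzero for all $s\le b$. For $a=1$, $b=2$ there are two summands, but $\binom{x-1}{2}=\binom{x}{2}-x+1$ needs three terms. Your termwise values $\binom{x}{2}$ and $-x$ do not add up to it, while the correct values $\binom{x}{2}$ and $-(x-1)$ do. Your sum does equal the right total, as one sees by expanding each $\binom{x-s}{b-s}$ in the $\binom{x}{b-s-j}$, but that expansion is exactly the recursion your argument is missing.

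The missing idea is to make this recursion an induction on $a+b$, proving both mirror versions of \eqref{dumbbellremove} simultaneously. The smaller dumbbell has its loop on the other side, so the ``mirror argument'' cannot be postponed to the end. With the inductive hypothesis, \eqref{curlremove} and the digon value $\binom{a}{s}$, the total is
$\sum_s(-1)^s\binom{a}{s}\binom{x-s}{b-s}=(-1)^b\sum_s\binom{a}{s}\binom{b-x-1}{b-s}=(-1)^b\binom{a+b-x-1}{b}=\binom{x-a}{b}$.
This is the paper's proof.

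Your fallback induction on $b$ does yield a correct relation. Sliding the $(b-1,1)$ lantern around the closure and using associativity gives two nested dumbbells, so $b$ times the left-hand side equals $(x-a-b+1)\binom{x-a}{b-1}\,\mathrm{id}_a$. But that is only $b$ times the desired identity. Since $b$ is not invertible in $\mathbb J$, and torsion-freeness of $\End_{\Wb}(a)$ is not available at this point (in the paper it follows from the later basis theorem), you cannot cancel the factor $b$ over $\mathbb J$.

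Step 3 is fine, and it is how the paper deduces \eqref{sidewaydumbell}.
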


\begin{proof}
    Note that \cref{sidewaydumbell} follows from \cref{dumbbellremove} via \cref{mergesplitslideleft} and \cref{zigzag} directly. Hence it suffices to prove \cref{dumbbellremove}. To that end, we prove both equalities in \cref{dumbbellremove} simultaneously by induction on $a+b$. For $b=1$ we have
\begin{align*}
    \begin{tikzpicture}[baseline = .5pt,scale=0.3,yscale=0.7, color=\clr]
        \draw [-,line width=2pt] (0,-1) to (0,.75);
\draw [-,line width=1pt] (0,.75) to [out=30,in=270] (1,2.5);
\draw [-,line width=1pt] (0,.75) to [out=150,in=270] (-1,2.5); 
\draw [-,line width=1pt] (1,-2.75) to [out=90,in=330] (0,-1);
\draw [-,line width=1pt] (-1,-2.75) to [out=90,in=210] (0,-1);
\draw [-,line width=1pt] (-3,2.5) to [in=180, out=90] (-2,3.5) to [in=90, out=0](-1,2.5);
\draw [-,line width=1pt] (-1,-2.75) to [in=0, out=270] (-2,-3.75) to [in=270, out=180](-3,-2.75);
\draw [-,line width=1pt] (1,-3.75) to (1,-2.75);
\draw [-,line width=1pt] (-3,-2.75) to (-3,2.5);
\draw [-,line width=1.2pt] (1,2.5) to (1,3.5);
\node at (1,4) {$\scriptstyle{a}$};
\node at (1,-4.5) {$\scriptstyle{a}$};
\node at (-3.4,0) {$\scriptstyle{1}$};
\node at (1.2,0) {$\scriptstyle{a+1}$};
    \end{tikzpicture}\overset{\eqref{splitmerge}}{=}  
    \begin{tikzpicture}[baseline = .5pt,scale=0.3,yscale=0.7, color=\clr]
\draw [-,line width=1pt] (-1,-2.75) to  (1,2.5);
\draw [-,line width=1pt] (1,-2.75) to (-1,2.5); 
\draw [-,line width=1.2pt] (1,-2.75) to (1,2.5);
\draw [-,line width=1pt] (-3,2.5) to [in=180, out=90] (-2,3.5) to [in=90, out=0](-1,2.5);
\draw [-,line width=1pt] (-1,-2.75) to [in=0, out=270] (-2,-3.75) to [in=270, out=180](-3,-2.75);
\draw [-,line width=2pt] (1,-3.75) to (1,-2.75);
\draw [-,line width=1pt] (-3,-2.75) to (-3,2.5);
\draw [-,line width=2pt] (1,2.5) to (1,3.5);
\node at (1,4) {$\scriptstyle{a}$};
\node at (1,-4.5) {$\scriptstyle{a}$};
\node at (-3.4,0) {$\scriptstyle{1}$};
\node at (2.2,0) {$\scriptstyle{a-1}$};
    \end{tikzpicture}
    ~+~    \bubble\stra  
\overset{\eqref{curlremove},\eqref{splitmerge},\eqref{bubblecap}}{=}(x-a)\stra.
\end{align*}
The argument for the other relation with $b=1$ is entirely similar. 

Now assume that all the relations in \cref{dumbbellremove}  hold after replacing $a,b$ by any $t,s$ such that $t+s< a+b$. Then we have
\begin{align*}
    \begin{tikzpicture}[baseline = .5pt,scale=0.3,yscale=0.7, color=\clr]
\draw [-,line width=2pt] (0,-1) to (0,.75);
\draw [-,line width=1pt] (0,.75) to [out=30,in=270] (1,2.5);
\draw [-,line width=1pt] (0,.75) to [out=150,in=270] (-1,2.5); 
\draw [-,line width=1pt] (1,-2.75) to [out=90,in=330] (0,-1);
\draw [-,line width=1pt] (-1,-2.75) to [out=90,in=210] (0,-1);
\draw [-,line width=1pt] (-3,2.5) to [in=180, out=90] (-2,3.5) to [in=90, out=0](-1,2.5);
\draw [-,line width=1pt] (-1,-2.75) to [in=0, out=270] (-2,-3.75) to [in=270, out=180](-3,-2.75);
\draw [-,line width=1pt] (1,-3.75) to (1,-2.75);
\draw [-,line width=1pt] (-3,-2.75) to (-3,2.5);
\draw [-,line width=1pt] (1,2.5) to (1,3.5);
\node at (1,4) {$\scriptstyle{a}$};
\node at (1,-4.5) {$\scriptstyle{a}$};
\node at (1.2,0) {$\scriptstyle{a+b}$};
\end{tikzpicture}\overset{\eqref{splitmerge}}{=}
\sum_{t-s=a-b}
 \begin{tikzpicture}[baseline = .5pt,scale=0.3,yscale=0.7, color=\clr]
 \draw[-,line width=1pt] (-1,-2.75) to (-1,2.5);
  \draw[-,line width=1pt] (1,-2.75) to (1,2.5);
  \draw[-,line width=1pt] (-1,-2.5) to (1,2.2);
  \draw[-,line width=1pt] (1,-2.5) to (-1,2.2);
\draw [-,line width=1pt] (-3,2.5) to [in=180, out=90] (-2,3.5) to [in=90, out=0](-1,2.5);
\draw [-,line width=1pt] (-1,-2.75) to [in=0, out=270] (-2,-3.75) to [in=270, out=180](-3,-2.75);
\draw [-,line width=1pt] (1,-3.75) to (1,-2.75);
\draw [-,line width=1pt] (-3,-2.75) to (-3,2.5);
\draw [-,line width=1pt] (1,2.5) to (1,3.5);
\node at (1,4) {$\scriptstyle{a}$};
\node at (1,-4.5) {$\scriptstyle{a}$};
\node at (-1.2,-4.5) {$\scriptstyle{b}$};
\node at (1.25,.6) {$\scriptstyle{t}$};
\node at (-1.25,.6) {$\scriptstyle{s}$};
\end{tikzpicture}\overset{\eqref{mergesplitslideleft},\eqref{zigzag}}{=} 
\sum_{t-s=a-b}
\begin{tikzpicture}[baseline = .5pt,scale=0.3,yscale=0.7, color=\clr]
 \draw[-,line width=1pt] (-1,-2.75) to (-1,2.5);
  \draw[-,line width=1pt] (1,-2.75) to (1,2.5);
  \draw[-,line width=1pt] (1,2.2) to (-.6,-2.5) to (-.6,-3.2)to [out=down,in=down] (-4,-3.2) to (-4,-2.75) to (-3.5,-1) to (-3,-2.75);
  \draw[-,line width=1pt] (1,-2.5) to (-.6,2.2) to (-.6,3) to [out=up,in=up] (-4,3) to (-4,2.5) to (-3.5,1) to (-3,2.5);
  \draw[-,line width=2pt] (-3.5,1) to (-3.5,-1);
\draw [-,line width=1pt] (-3,2.5) to [in=180, out=90] (-2,3.5) to [in=90, out=0](-1,2.5);
\draw [-,line width=1pt] (-1,-2.75) to [in=0, out=270] (-2,-3.75) to [in=270, out=180](-3,-2.75);
\draw [-,line width=1pt] (1,-3.75) to (1,-2.75);
\draw [-,line width=1pt] (1,2.5) to (1,3.5);
\node at (1,4) {$\scriptstyle{a}$};
\node at (1,-4.5) {$\scriptstyle{a}$};
\node at (-4.1,0) {$\scriptstyle{b}$};
\node at (1.25,.6) {$\scriptstyle{t}$};
\node at (-1.3,.6) {$\scriptstyle{s}$};
\end{tikzpicture}.
\end{align*}
Thus by the inductive hypothesis we have
\begin{align*}
    \begin{tikzpicture}[baseline = .5pt,scale=0.3,yscale=0.7, color=\clr]
\draw [-,line width=2pt] (0,-1) to (0,.75);
\draw [-,line width=1pt] (0,.75) to [out=30,in=270] (1,2.5);
\draw [-,line width=1pt] (0,.75) to [out=150,in=270] (-1,2.5); 
\draw [-,line width=1pt] (1,-2.75) to [out=90,in=330] (0,-1);
\draw [-,line width=1pt] (-1,-2.75) to [out=90,in=210] (0,-1);
\draw [-,line width=1pt] (-3,2.5) to [in=180, out=90] (-2,3.5) to [in=90, out=0](-1,2.5);
\draw [-,line width=1pt] (-1,-2.75) to [in=0, out=270] (-2,-3.75) to [in=270, out=180](-3,-2.75);
\draw [-,line width=1pt] (1,-3.75) to (1,-2.75);
\draw [-,line width=1pt] (-3,-2.75) to (-3,2.5);
\draw [-,line width=1pt] (1,2.5) to (1,3.5);
\node at (1,4) {$\scriptstyle{a}$};
\node at (1,-4.5) {$\scriptstyle{a}$};
\node at (1.2,0) {$\scriptstyle{a+b}$};
\end{tikzpicture}
=&\sum_{t-s=a-b}\binom{x-a+t}{s}\begin{tikzpicture}[baseline = .5pt,scale=0.3,yscale=0.7, color=\clr]
 \draw[-,line width=1pt] (1,-2.75) to (1,2.75);
  \draw[-,line width=1pt] (1,2.2) to (-.6,-2.2) to [out=240,in=120] (-.6,2.2) to (1,-2.2);
\node at (1,4) {$\scriptstyle{a}$};
\node at (1,-4.5) {$\scriptstyle{a}$};
\node at (1.25,.6) {$\scriptstyle{t}$};
\end{tikzpicture}
\overset{\eqref{curlremove},\eqref{splitmerge}}{=}\sum_{t-s=a-b}(-1)^{a-t}\binom{x-a+t}{s}\binom{a}{t}\stra.
\end{align*}

By a change of variable
$k=b-s=a-t$, we have
\begin{align*}
\sum_{t-s=a-b}(-1)^{a-t}\binom{x-a+t}{s}\binom{a}{t}
    =&\sum_{k=0}^{b}(-1)^k\binom{x-k}{b-k}\binom{a}{a-k}
    \\
    =&\sum_{k=0}^{b}(-1)^{b}\binom{b-x-1}{b-k}\binom{a}{k}
    \\
    =&(-1)^b\binom{b+a-x-1}{b}=\binom{x-a}{b}.
\end{align*}
This completes the inductive proof for the first relation in \cref{dumbbellremove}.

The inductive step for the other relation in \cref{dumbbellremove} is similar and skipped. 
\end{proof}

\begin{lemma}
 The following relations hold in $\Wb$: 
   \begin{align}
   \label{thickcapandthin}
   \begin{tikzpicture}[anchorbase,scale=0.16,color=\clr]
\draw [-,line width=1.4pt] (0,-.5) to (0,.75);
\draw [-,line width=1pt] (0,.75) to [out=30,in=270] (1,2.5);
\draw [-,line width=1pt] (0,.75) to [out=150,in=270] (-1,2.5); 
\draw [-,line width=1pt] (1,2.5) to [out=90,in=180] (2,3.5) to [out=0,in=90] (3,2.5);
\draw [-,line width=1pt] (-1,2.5) to [out=90,in=180] (2,5.5) to [out=0,in=90] (5,2.5);
\draw [-,line width=1.4pt] (4,-.5) to (4,.75);
\draw [-,line width=1pt] (4,.75) to [out=30,in=270] (5,2.5);
\draw [-,line width=1pt] (4,.75) to [out=150,in=270] (3,2.5);
\node at (2,6.5) {$\scriptstyle{b}$};
\node at (0,-1) {$\scriptstyle{a}$};
\node at (4,-1) {$\scriptstyle{a}$};
\end{tikzpicture}
=\binom{a}{b}\capa, 
\qquad  
\begin{tikzpicture}[yscale=-1,anchorbase,scale=0.16,color=\clr]
\draw [-,line width=1.4pt] (0,-.5) to (0,.75);
\draw [-,line width=1pt] (0,.75) to [out=30,in=270] (1,2.5);
\draw [-,line width=1pt] (0,.75) to [out=150,in=270] (-1,2.5); 
\draw [-,line width=1pt] (1,2.5) to [out=90,in=180] (2,3.5) to [out=0,in=90] (3,2.5);
\draw [-,line width=1pt] (-1,2.5) to [out=90,in=180] (2,5.5) to [out=0,in=90] (5,2.5);
\draw [-,line width=1.4pt] (4,-.5) to (4,.75);
\draw [-,line width=1pt] (4,.75) to [out=30,in=270] (5,2.5);
\draw [-,line width=1pt] (4,.75) to [out=150,in=270] (3,2.5);
\node at (0,-1) {$\scriptstyle{a}$};
\node at (4,-1) {$\scriptstyle{a}$};
\node at (2,6.5) {$\scriptstyle{b}$};
\end{tikzpicture}
=\binom{a}{b}\cupa.
\end{align}
\end{lemma}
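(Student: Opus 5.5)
The plan is to slide one of the two splits through the nested pair of caps, turning it into a merge on the other side, and then collapse the resulting split--merge bubble with \eqref{splitmerge}.

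First, fix the labels in the left diagram of \eqref{thickcapandthin}. The left thick strand $a$ splits into a left branch and a right branch. The outer cap has label $b$, so it joins the left branch of the left split to the right branch of the right split. The inner cap joins the right branch of the left split to the left branch of the right split, and so carries label $a-b$. Thus the left split is $a\to(b,a-b)$ and the right split is $a\to(a-b,b)$.

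\emph{Step 1.} Look at the left split together with the two nested caps hanging off it. The caps end on the two strands coming from the right split, labelled $a-b$ and then $b$ from left to right. This is exactly the left-hand side of the first relation in \eqref{mergesplitmove}, with $(a,b)$ there replaced by $(b,a-b)$. That relation rewrites it as a single thick cap of label $a$. This cap joins the left strand $a$ to a merge of the two strands labelled $a-b$ and $b$. Before applying it, I will check that the merge produced has its inputs in the order $(a-b,b)$, which is the order in which they come out of the right split.

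\emph{Step 2.} After Step 1, the right-hand portion of the diagram is the split $a\to(a-b,b)$ followed directly by the merge $(a-b,b)\to a$. By the first relation in \eqref{splitmerge} this composite is $\binom{a}{a-b}=\binom{a}{b}$ times the identity on $a$. The whole diagram therefore equals $\binom{a}{b}\capa$, as claimed. If $b=a$, or if $b=0$ is allowed, one of the labels is zero and the statement is immediate from the conventions, so I assume $1\le b\le a-1$.

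\emph{Step 3.} The cup version of \eqref{thickcapandthin} is proved the same way, turned upside down. The first relation of \eqref{mergesplitslideleft} is the vertically reflected form of \eqref{mergesplitmove}, and it converts the left merge under the nested cups into a thick cup of label $a$ followed by a split on the right. The second relation in \eqref{mergesplitslideleft} handles the mirror-image configuration if the label order requires it. The resulting merge $(a-b,b)\to a$ followed by split $a\to(a-b,b)$ again collapses by \eqref{splitmerge} to $\binom{a}{b}$ times the identity.

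The only real obstacle is bookkeeping. I must match the nesting order of the caps with the exact left/right label convention in \eqref{mergesplitmove}, so that the merge created in Step 1 is literally inverse to the remaining split. If the order comes out reversed, I would first use the left-right mirrored relation (the second relation of \eqref{mergesplitmove}) and slide the right split instead. In that case the roles of the two splits are swapped, and the same collapse gives $\binom{a}{b}$.
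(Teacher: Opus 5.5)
Your argument is correct and is essentially the paper's proof: the paper cites \eqref{zigzag} and \eqref{mergesplitslideleft}, i.e.\ it slides one split or merge through the nested caps or cups (which is \eqref{mergesplitmove} up to zigzags) and leaves implicit the collapse by the first relation of \eqref{splitmerge}, which you make explicit. One small bookkeeping fix: in Step 1 the substitution into \eqref{mergesplitmove} should be $(a,b)\mapsto(a-b,b)$, not $(b,a-b)$. There the split has outputs $(b,a)$ and the strands at the other ends of the caps read $(a,b)$ from left to right, so this substitution is the one that yields the merge with inputs $(a-b,b)$ you want.
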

\begin{proof}
     This follows from \eqref{zigzag} and \eqref{mergesplitslideleft}.
\end{proof}

\subsection{ A representation of $\Wb$}
\label{sec:repWo}
In this subsection, we work over $\kk=\mathbb C$. For any $N\in\N$, we define
\[
[N]\colon=\begin{cases}
    \{-n,\ldots,-1,0,1,\ldots,n-1,n\}, & \text{ for }N=2n+1,\\
     \{-n,\ldots,-1,1,\ldots,n-1,n\}, & \text{ for } N=2n.
\end{cases}
\]
Let $V$ be the $N$-dimensional $\C$-vector space with a standard basis $\{v_i\mid i\in [N]\}$. Let $(\cdot,\cdot)$ denote the non-degenerate symmetric bilinear form on $V$ defined by 
$(v_i,v_j)=\delta_{i,-j}, \forall i,j\in[N].$
We identify the orthogonal Lie algebra as
\[
\g=\so_N=\{g\in \End(V)\mid (gx,y)+(x,gy)=0\}.
\]
Let $E_{i,j}$ be the usual matrix unit with respect to the basis $\{v_i\mid i\in [N]\}$. Set $F_{i,j}:=E_{i,j}-E_{-j,-i}$. Then $\so_N$ has a basis given by
\begin{align} \label{eq:basis:g}
\begin{cases}
    \{F_{i,i}\mid 1\leq i\leq n\}\cup \{F_{\pm i,\pm j}\mid 1\leq i<j\leq n\}\cup \{F_{0,\pm i}\mid 1\leq i \leq n\}, & \text{ for }N=2n+1,\\
     \{F_{i,i}\mid 1\leq i\leq n\}\cup \{F_{\pm i,\pm j}\mid 1\leq i<j\leq n\}, & \text{ for }N=2n,
\end{cases}
\end{align}
where the Cartan subalgebra $\mathfrak h$ has a basis $\{h_i:=F_{i,i}\mid 1\leq i\leq n\}$. There is a triangular decomposition $\g=\n^+\oplus \h\oplus \n^-$ such that the positive part $\n^+$ has a basis 
\[
\begin{cases}
    \{F_{i,\pm j}\mid 1\leq i<j\leq n\}\cup \{F_{0,-i}\mid 1\leq i \leq n\}, & \text{ for }N=2n+1, \\
    \{F_{i,\pm j}\mid 1\leq i<j\leq n\}, & \text{ for }N=2n.
\end{cases}
\]

Let $\U(\g)$ be the universal enveloping algebra of $\g$ and $\g$-mod be the category of all finite-dimensional $\g$-modules or equivalently $\U(\g)$-modules. Then $\g$-mod is a strict symmetric monoidal category with the braiding 
\[
\sigma_{M,L}: M\otimes L \longrightarrow L \otimes M, \qquad m\otimes l\mapsto l\otimes m.
\]
For later use, we also define

 \begin{equation}
 \label{casimir}
     \Omega=\frac{1}{2}\sum_{i,j\in[N]} F_{ij}\otimes F_{ji}.
 \end{equation}

To give a representation of $\Wb$, we define the following 
morphisms in $\so_N$-mod. First, we have the following operators for merges, splits, and thick crossings, which coincide with the counterparts in type A (cf. \cite{CKM, BEEO, SWweb}):
\begin{align}
\text{Y}_{a,b} : &
\bigwedge\nolimits^{a+b} V  
\longrightarrow 
\bigwedge\nolimits ^a V\otimes \bigwedge\nolimits^b V,
\label{eq:Yab} \\  
 & v_{i_1}\wedge \ldots \wedge v_{i_{a+b}} 
\mapsto  
\sum_{w} (-1)^{\ell(w)}v_{i_{w(1)}}\wedge\ldots\wedge v_{i_{w(a)}}\otimes v_{i_{w(a+1)}}\wedge \ldots\wedge v_{i_{w(a+b)}}, 
\notag \\
\rot{Y}_{a,b}: &
\bigwedge\nolimits ^aV\otimes \bigwedge\nolimits ^bV
 \longrightarrow 
 \bigwedge\nolimits^{a+b} V,
 \label{eq:rotYab} \\
& v_{i_1}\wedge \ldots\wedge v_{i_a} \otimes v_{i_{a+1}}\wedge \ldots\wedge v_{i_{a+b}} 
  \mapsto v_{i_1}\wedge \ldots \wedge v_{i_{a+b}},
 \notag \\
 \text{X}_{a,b}: &
 \bigwedge\nolimits ^aV\otimes \bigwedge\nolimits ^b V
  \longrightarrow 
 \bigwedge\nolimits ^b V\otimes \bigwedge\nolimits^a V, \quad x\otimes y \mapsto (-1)^{ab} y\otimes x,
 \label{eq:Xab}
\end{align}
where the summation over $w$ in \eqref{eq:Yab} runs over the set $(\mathfrak S_{a+b}/\mathfrak S_a\times\mathfrak S_b)_{\text{min}}$ of minimal length coset representatives.

Associated to the thin cap and cup, we define the morphisms

\[ 
\cup_1: \C \longrightarrow V^{\otimes 2}, \qquad 1\mapsto \sum_{i\in [N]}v_i\otimes v_{-i} , 
\]
 and 
 \[
 \cap_1: V^{\otimes 2} \longrightarrow \C, \qquad  v_i\otimes v_j \mapsto \delta_{i,-j}. 
 \]
For the thick cups and caps $\cupa$ and $\capa$, we can compute the corresponding morphisms $\cup_a$ and $\cap_a$ inductively via \cref{thickcapandthin}. Indeed, we have
\begin{align}
\label{caparep}
      &\cap_a: \bigwedge\nolimits ^a V\otimes \bigwedge\nolimits ^a V\rightarrow \C,
      \quad v_{i_1}\wedge\ldots\wedge v_{i_a} \otimes v_{j_a}\wedge\ldots\wedge v_{j_1}\mapsto \prod_{k=1}^a \delta_{i_k,-j_k},\atop\qquad\qquad \text{ for }i_1<\cdots<i_a, j_1>\cdots>j_a\\
\label{cuparep}
    &\cup_a: \C\rightarrow \bigwedge\nolimits ^a V\otimes \bigwedge\nolimits ^a V,\quad 1\mapsto \sum_{ i_1<i_2<\ldots< i_a} v_{i_1}\wedge\ldots \wedge v_{i_a}\otimes v_{-i_a} \wedge \ldots \wedge v_{-i_1}.
\end{align}

Recall that the category $\Wb$ is defined over $\mathbb J$. Denote by $\mathbb J_N$ the quotient ring of $\mathbb J$ by the ideal generated by $x-N$, and identify $\mathbb J_N\cong \Z$. We note that all the defining relations \cref{webassoc}--\eqref{bubblecap} are {\em integral} when we replace $x$ by $N$. Thus we can do a base change from $\mathbb J$ to $\Z\equiv \mathbb J_N$ to obtain the monoidal category $\Wb_\Z(N)=\Wb\otimes_{\mathbb J} \mathbb J_N$ over $\Z$. By another base change from $\Z$ to $\kk$, we obtain a category $\Wb_\kk(N)$ over an arbitrary commutative ring $\kk$ with $1$. We work with $\kk=\C$ in the remainder of this section.

\begin{theorem} 
\label{thm:Worep}
There is a monoidal functor $\mathcal F_N\colon \Wb_{\C}(N) \rightarrow \so_N\text{-mod}$ which sends $a$ to $\wedge^a V$ and sends the 
 generating morphisms $\merge$, $\splits$, $\crossing$, $\capa$ and $\cupa$ to \rm{\rot{Y}}$_{a,b}$, 
   ${\rm Y}_{a,b}$, ${\rm X}_{a,b}$, $\cap_a$ and $\cup_a$ respectively. 
\end{theorem}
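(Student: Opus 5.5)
The plan is to verify directly that the proposed images are $\so_N$-module homomorphisms and that they satisfy every defining relation \eqref{webassoc}--\eqref{bubblecap}, with $x$ specialized to $N$. The universal property of the strict monoidal category $\Wb_\C(N)$ then gives the functor.

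First, equivariance. The maps ${\rm Y}_{a,b}$, $\rot{Y}_{a,b}$ and ${\rm X}_{a,b}$ are $\mathfrak{gl}(V)$-equivariant, since they are the standard type A web maps. Hence they are $\so_N$-equivariant. The map $\cap_1$ is the invariant form $(\cdot,\cdot)$, so it is $\so_N$-equivariant. I would check that $\cap_a$ in \eqref{caparep} is, up to the chosen ordering conventions, the induced pairing $\bigwedge^a V\otimes\bigwedge^a V\to\C$, namely
\[
(u_1\wedge\cdots\wedge u_a,\ w_a\wedge\cdots\wedge w_1)\mapsto\det\bigl((u_k,w_l)\bigr).
\]
This pairing is invariant because $\so_N$ acts by derivations that preserve $(\cdot,\cdot)$. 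The cup $\cup_a$ is the corresponding copairing, i.e. the dual basis element with respect to $\cap_a$. It is therefore invariant as well.

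Second, the type A relations. Relations \eqref{webassoc} and \eqref{splitmerge} involve only merges, splits and crossings. They hold already for $\mathfrak{gl}(V)$ on exterior powers by the known type A result \cite{CKM, BEEO}, so they hold after restriction to $\so_N$.

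Third, the new relations, checked on basis vectors.
\begin{itemize}
\item The zigzag relations in \eqref{zigzag} hold because $\cup_a$ is the copairing dual to the nondegenerate pairing $\cap_a$. The sign and ordering conventions of \eqref{caparep}--\eqref{cuparep} must be tracked here.
\item The crossing--cap relations (the third and fourth in \eqref{zigzag}) follow from naturality of the symmetric braiding ${\rm X}$ in the symmetric monoidal category $\so_N$-mod. This includes the sign $(-1)^{ab}$: ${\rm X}_{a,b}$ is $\sigma$ twisted by a super sign, and the signs cancel on both sides.
\item The bubble equals $\sum_{i_1<\cdots<i_a}1=\binom{N}{a}$.
\item Crossing composed with a cap gives $(-1)^{a^2}=(-1)^a$ times the cap, since $\cap_a$ is symmetric.
\item The split $2a\to(a,a)$ followed by $\cap_a$ vanishes: the split output is antisymmetric under swap up to sign $(-1)^{a^2}$, and this is incompatible with the symmetry of $\cap_a$.
\end{itemize}

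The main obstacle is \eqref{mergesplitmove}, the relation sliding a merge through caps. I would verify it on basis vectors $v_{i_1}\wedge\cdots\wedge v_{i_{a+b}}\otimes v_J$. It amounts to the compatibility
\[
\cap_{a+b}(\text{merge}\otimes\mathrm{id})=\cap_b\circ(\mathrm{id}\otimes\cap_a\otimes\mathrm{id})\circ(\mathrm{id}\otimes\text{split}),
\]
which is the Laplace expansion of the determinant pairing along a block of rows. The care needed is in the orderings: the reversed index order in \eqref{caparep} is designed to make the signs match the shuffle signs in ${\rm Y}_{a,b}$. Alternatively, I would take the determinant formula as the definition of $\cap_a$, verify \eqref{mergesplitmove} by Laplace expansion, and then note that \eqref{thickcapandthin} forces it to agree with \eqref{caparep}.
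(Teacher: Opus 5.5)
The gap is in the middle relation of \eqref{bubblecap}, $\cap_a\circ\mathrm{Y}_{a,a}=0$, and only for even $a$. Otherwise your route is the paper's: import \eqref{webassoc}--\eqref{splitmerge} from the type A case \cite{BEEO}, then check the cap/cup relations directly. Your zigzag, crossing--cap, bubble and crossing-with-cap checks are sound, as is the Laplace-expansion check of \eqref{mergesplitmove}; the equivariance check is a useful addition.

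Combining $\sigma\circ\mathrm{Y}_{a,a}=(-1)^{a^2}\mathrm{Y}_{a,a}$ with $\cap_a\circ\sigma=\cap_a$ yields only $\cap_a\circ\mathrm{Y}_{a,a}=(-1)^{a}\,\cap_a\circ\mathrm{Y}_{a,a}$. This forces vanishing for odd $a$ and is vacuous for even $a$, since then the split output and $\cap_a$ are both symmetric, so nothing is incompatible. No argument built only on these two facts can succeed for even $a$. Both facts still hold if $(\cdot,\cdot)$ is replaced by a nondegenerate skew form, because the induced pairing on $\bigwedge^aV$ is then $(-1)^a$-symmetric. Yet for $N=4$ with a skew form pairing $v_i$ with $v_{-i}$, the composite $\cap_2\circ\mathrm{Y}_{2,2}$ is nonzero on $v_1\wedge v_{-1}\wedge v_2\wedge v_{-2}$.

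What you need is the symmetry of $(\cdot,\cdot)$ on $V$ itself, i.e.\ $\sum_{i\in[N]}v_i\wedge v_{-i}=0$. This is the case $a=1$, $\cap_1\circ\mathrm{Y}_{1,1}=0$, which your parity argument does cover; it then has to be propagated to thickness $a$.

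The paper does this as follows:
\begin{itemize}
\item It passes via the zigzag relations to the equivalent statement $\rot{Y}_{a,a}\circ\cup_a=0$.
\item From \eqref{cuparep} it computes $(\rot{Y}_{a,a}\circ\cup_a)(1)=\sum_{i_1<\cdots<i_a}v_{i_1}\wedge\cdots\wedge v_{i_a}\wedge v_{-i_a}\wedge\cdots\wedge v_{-i_1}$.
\item It notes that $\bigl(\sum_i v_i\wedge v_{-i}\bigr)^{\wedge a}=0$ equals $\pm a!$ times this element.
\end{itemize}

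Alternatively, you can reduce to $a=1$ with tools you already have.
\begin{itemize}
\item Once \eqref{zigzag}, \eqref{splitmerge} and \eqref{mergesplitmove} hold for the images, so does \eqref{thickcapandthin}. With $b=a-1$ it gives $a\,\cap_a=\cap_{a-1}\circ(\mathrm{id}\otimes\cap_1\otimes\mathrm{id})\circ(\mathrm{Y}_{a-1,1}\otimes\mathrm{Y}_{1,a-1})$.
\item By coassociativity \eqref{webassoc}, $(\mathrm{Y}_{a-1,1}\otimes\mathrm{Y}_{1,a-1})\circ\mathrm{Y}_{a,a}$ is the split $2a\to(a-1,2,a-1)$ followed by $\mathrm{Y}_{1,1}$ on the middle strand.
\item Hence $a\,\cap_a\circ\mathrm{Y}_{a,a}$ contains the factor $\cap_1\circ\mathrm{Y}_{1,1}=0$, and over $\C$ you may divide by $a$.
\end{itemize}
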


\begin{proof}
    It suffices to check that $\mathcal F_N$ preserves all the relations  \cref{webassoc}--\cref{bubblecap} (with $x$ replaced by $N$). The verification of the relations \cref{webassoc}-\cref{splitmerge} which involve no cap/cup is the same as \cite[Theorem 4.14]{BEEO} in the type A setting.
    
    The remaining relations \cref{zigzag}--\cref{bubblecap} follow from direct computations using \cref{caparep} and \cref{cuparep}. We will only give the details for \cref{bubblecap} below. 
    
    The first relation in \cref{bubblecap} is preserved by $\mathcal F_N$ since
\[
    \cap_a\circ \cup_a(1):= \sum_{i_1<\cdots<i_a}\cap_a\bigl(
v_{i_1}\wedge\cdots\wedge v_{i_a}\otimes
v_{-i_a}\wedge\cdots\wedge v_{-i_1}
\bigr) = \sum_{i_1<\cdots<i_a} 1=\binom{N}{a}.
\]

The third relation in \cref{bubblecap} is preserved by $\mathcal F_N$ by the following computation:
\[
\begin{aligned}
\cap_a\circ \mathrm{X}_{a,a}\left(
v_{i_a}\wedge\cdots\wedge v_{i_1}
\otimes
v_{j_1}\wedge\cdots\wedge v_{j_a}
\right) 
 &=
(-1)^{a^2}
\prod_{k=1}^a \delta_{i_k,-j_{a+1-k}}\\
=&
(-1)^a
\cap_a\left(
v_{i_a}\wedge\cdots\wedge v_{i_1}
\otimes
v_{j_1}\wedge\cdots\wedge v_{j_a}
\right).
\end{aligned}
\]

For the second relation in \cref{bubblecap} (or rather its equivalent upside down relation after applying the zigzag relation), we compute
\[
\begin{aligned}
(\rot{Y}_{a,a}\circ \cup_a)(1)
&=
\rot{Y}_{a,a}\left(
\sum_{i_1<i_2<\cdots<i_a}
v_{i_1}\wedge\cdots\wedge v_{i_a}
\otimes
v_{-i_a}\wedge\cdots\wedge v_{-i_1}
\right) \\
&=
\sum_{i_1<i_2<\cdots<i_a}
v_{i_1}\wedge\cdots\wedge v_{i_a}
\wedge
v_{-i_a}\wedge\cdots\wedge v_{-i_1}.
\end{aligned}
\]
Thus it remains to show that
\begin{align}  \label{wedge0}
\sum_{i_1<i_2<\cdots<i_a}
v_{i_1}\wedge\cdots\wedge v_{i_a}
\wedge
v_{-i_a}\wedge\cdots\wedge v_{-i_1}
=0.
\end{align}
To that end, we start with an identity $\sum_i v_i\wedge v_{-i} =0$ (due to
\(
v_i\wedge v_{-i}+v_{-i}\wedge v_i=0
\)).
Taking the \(a\)-fold exterior product of this sum gives us
\[
0=
\left(\sum_i v_i\wedge v_{-i}\right)^{\wedge a}
=
a!(-1)^{\frac{a(a-1)}2}
\sum_{i_1<i_2<\cdots<i_a}
v_{i_1}\wedge\cdots\wedge v_{i_a}
\wedge
v_{-i_a}\wedge\cdots\wedge v_{-i_1},
\]
and \eqref{wedge0} follows.
Consequently, we have $(\rot{Y}_{a,a}\circ \cup_a)(1)=0.$
\end{proof}

\begin{rem}
   Let $N$ be a positive even integer. An analogous monoidal functor $\cF_{-N}\colon \Wb_{\mathbb C}(-N)\to \mathfrak{sp}_N\modf$ can be constructed, where the object $a$ is sent to  the symmetric power $S^aV$ of the natural $\mathfrak{sp}_N$-module $V$.
\end{rem}

\section{Affine B-webs}
  \label{sec:affine_BWeb}

In this section, we first derive a number of new relations in the affine B-web category $\AWb$ defined in Definition \ref{def:affineBWeb}. Then we work out various general relations valid only modulo lower degree terms, which will be used in Section \ref{sec:spanning} in establishing spanning sets for the morphism spaces of $\AWb$.

\subsection{More relations in $\AWb$}

By the defining relations of the affine B-web category $\AWb$ in Definition \ref{def:affineBWeb} (and relations obtained from these by the zigzag identities), there is an isomorphism of strict monoidal categories
\begin{equation}
\label{anti-auto}
\div : \AWb\longrightarrow {\AWb}^{\text{op}}    
\end{equation}
by reflecting each diagram along a horizontal axis.
We define the following morphisms in $\AWb$
\begin{equation}
\label{equ:r=0andr=a}
 \qquad\quad (1\le r\le a).
\end{equation}
We adopt the convention throughout the paper that
\[ %
.
 \]

\begin{lemma}
    The following relation holds in $\AWb$, for $a,b\ge 1$:
\begin{align}    
\label{dotmovecrossing2}
\sum_{0\leq t \leq \min(a,b)}t!~
%
.
\end{align}
\end{lemma}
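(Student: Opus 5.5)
The plan is to derive \eqref{dotmovecrossing2} from the defining relation \eqref{dotmovecrossing1} by symmetry. I will not compute anything directly. Two symmetries of $\AWb$ are available: the anti-isomorphism $\div$ of \eqref{anti-auto}, which reflects diagrams in a horizontal axis, and a ``rotation'' obtained by bending strands with cups and caps. I expect \eqref{dotmovecrossing2} to be the companion of \eqref{dotmovecrossing1} in which the dot sits on the other strand of the crossing, or on the other side of it. So the first step is to check which symmetry carries the picture of \eqref{dotmovecrossing1} to that of \eqref{dotmovecrossing2}. Applying $\div$ to \eqref{dotmovecrossing1} swaps top and bottom. This exchanges the upper and lower cap--cup pairs labeled $t$ and moves each dot to the mirror position. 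The coefficients $t!$ and the range $0\le t\le\min(a,b)$ are unchanged. If this already matches \eqref{dotmovecrossing2} up to relabeling $a\leftrightarrow b$, the proof is immediate.

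If instead the dot has to change strands, which is what I expect, I will rotate \eqref{dotmovecrossing1} by $180^\circ$. Concretely, I compose on the left with a cup of thickness $a$ and on the right with a cap of thickness $b$, or the reverse, and then straighten using the zigzag relations \eqref{zigzag}. The ingredients are as follows.
\begin{enumerate}
\item The slider relations for crossings past cups and caps, namely the last two relations in \eqref{zigzag}, together with \eqref{mergesplitslideleft}, turn the rotated thick crossing back into a thick crossing with the strands exchanged.
\item \eqref{dotmovecupcaps} moves a dot around a cup or cap at the cost of a sign $(-1)^a$ or $(-1)^b$.
\item \eqref{curlremove} and the last relation in \eqref{bubblecap} remove the curls that appear when the inner thickness-$t$ cup--cap pieces are rotated. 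These produce signs $(-1)^t$ and $(-1)^{a}$ or $(-1)^{b}$.
\end{enumerate}

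The main obstacle is the sign bookkeeping in this rotation step. The $t$-th summand picks up signs from three sources: the dot passing around the outer cup or cap, which contributes $(-1)^a$ or $(-1)^b$; the two inner thickness-$t$ arcs being rotated through the crossing; and any curls that arise. I must show that the total sign is uniform in $t$, so that it cancels between the two sides of the equation and the coefficients $t!$ survive unchanged. I will first check the case $a=b=1$, where the relation should reduce to the known affine Brauer relation of \cite{RS2019} up to the sign convention noted in the Remark. I will then track the signs for general $t$ using \eqref{thickcapandthin} and \eqref{swallows} to rearrange the split and merge vertices on the thick strands.

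If the signs fail to be uniform, the fallback is to combine the rotation with $\div$, since the composite may avoid the curl signs. As a last resort I would expand the dotted thick crossing via \eqref{splitmerge} and argue by induction on $\min(a,b)$.
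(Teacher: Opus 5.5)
Your underlying idea, obtaining \eqref{dotmovecrossing2} from \eqref{dotmovecrossing1} by bending strands with \eqref{zigzag}, is exactly the paper's one-line proof. As written, though, the plan has two real problems.

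\emph{The $\div$ shortcut is circular.} The anti-isomorphism \eqref{anti-auto} exists only if the horizontal reflection of every defining relation holds in $\AWb$. The reflection of \eqref{dotmovecrossing1} is not a defining relation; after $a\leftrightarrow b$ it is precisely \eqref{dotmovecrossing2}. This is why the paper justifies \eqref{anti-auto} by ``relations obtained \dots\ by the zigzag identities''. So both your ``immediate'' route and the fallback ``combine the rotation with $\div$'' assume what is to be proved.

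\emph{The rotation is misidentified and the sign bookkeeping is wrong.} A genuine $180^\circ$ rotation keeps the dot on the same diagonal. What you actually describe (one cup, one cap) is a quarter turn, and its direction matters. The sign from \eqref{dotmovecupcaps} is not $(-1)^a$ or $(-1)^b$. Once \eqref{mergesplitmove} or \eqref{mergesplitslideleft} turns the merge or split at a bent end into nested caps or cups, the dot sits on a U-turn of thickness $b-t$. Moving it off costs $(-1)^{b-t}$, which depends on $t$. Bending the top-left and bottom-right $b$-legs (your ``reverse'' option), or doing a true half turn, therefore produces a different valid relation with coefficients $(-1)^t t!$. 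The uniformity in $t$ that you plan to prove is false. For $a=b=1$, a uniform sign would give the left--right mirror of \eqref{dotmovecrossing1}, which contradicts the relation displayed right after Definition~\ref{def:affineBwebC}.

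\emph{What to do instead: bend only the undotted legs.} Compose both sides of \eqref{dotmovecrossing1} with a cup of thickness $a$ bending the bottom-left leg up on the left, and a cap of thickness $a$ bending the top-right leg down on the right. The cap must have thickness $a$, not $b$, since the top-right leg has thickness $a$.
\begin{itemize}
\item Both ends of the dotted $(b-t)$-strand, the bottom-right split and the top-left merge, are untouched. So no dot ever passes a cup or cap, and none of \eqref{dotmovecupcaps}, \eqref{curlremove}, \eqref{bubblecap} is needed.
\item The split and merge at the bent ends slide by \eqref{mergesplitmove} and \eqref{mergesplitslideleft}.
\item The inner thick crossing becomes a crossing again, with no sign, by the last two relations of \eqref{zigzag}.
\item The point your plan misses: the thickness-$t$ cup/cap pair in each left-hand summand straightens into the two vertical $t$-strands. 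Conversely, the two vertical $t$-strands in each right-hand summand become a thickness-$t$ cup/cap pair.
\end{itemize}
So the two kinds of summands trade places with the coefficients $t!$ unchanged, and relabeling $a\leftrightarrow b$ gives \eqref{dotmovecrossing2}.
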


\begin{proof}
    It follows from \eqref{dotmovecrossing1} via the zigzag relations in \eqref{zigzag}.
\end{proof}

\begin{lemma}
    The following relation holds in $\AWb$, for $a\ge 1$:
\begin{align}
\label{lollipopr}
%
 .
\end{align}
\end{lemma}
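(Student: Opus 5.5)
The identity \eqref{lollipopr} concerns a thick cup of thickness $a$ whose strands are merged into a strand of thickness $2a$, with one of the weighted dots $\omega_r$ from \eqref{equ:r=0andr=a} placed on the cup. My plan is to reduce everything to three ingredients: the definition \eqref{equ:r=0andr=a} of the dot $\omega_r$, the finite-type relations of $\Wb$, and the cup/cap dot-moving relation \eqref{dotmovecupcaps}.

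First I unfold the $\omega_r$-dot on the $a$-strand using \eqref{equ:r=0andr=a}: a split $a\to(r,a-r)$, a thick dot on the $r$-strand, and a merge back. Next I slide the split through the cup with the sliding relations \eqref{mergesplitmove} and \eqref{mergesplitslideleft}, together with the zigzag relations \eqref{zigzag}. After this step the other leg of the cup carries a merge $(a-r,r)\to a$ in reversed order, and the cup itself has become two nested cups of thicknesses $r$ and $a-r$, with the dot on the $r$-cup.

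The outer merge into $2a$ can then be reassociated by \eqref{webassoc}. The diagram becomes a $4$-fold merge of the strands $r,\,a-r,\,a-r,\,r$ (with one crossing), fed by the nested cups. I use \eqref{swallows}, \eqref{sliders} and the crossing/cup compatibilities in \eqref{zigzag} to bring the inner $(a-r)$-cup next to its own merge. The dot on the $r$-cup can be moved from one leg to the other by \eqref{dotmovecupcaps}, which produces a sign $(-1)^r$. The crossing of the $(a-r)$-cup with itself contributes a further sign via \eqref{bubblecap} and \eqref{curlremove}.

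Collecting the signs and the binomial coefficients coming from \eqref{splitmerge} and \eqref{thickcapandthin}, I compare the result with the right-hand side. By \eqref{def:huochai}, that right-hand side should be a multiple of the lollipop $\lollipopa$-type morphism, possibly merged with an undotted cup. In particular, any undotted cup of positive thickness that is directly merged into a single strand vanishes by the second relation in \eqref{bubblecap}. This kills all summands except the expected one(s).

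The main obstacle is the middle step. I must reorganize the $4$-fold merge so that the undotted $(a-r)$-cup either becomes directly merged, and hence vanishes, or is absorbed into a lollipop of thickness $2s$, while tracking the signs from the crossings against cups. If a direct reassociation does not close up, I will instead induct on $r$. For the inductive step I write $\omega_r$ via a thin dot using the splitting relation \eqref{updotmovesplits+merge}; this introduces the correction terms with $s!$ and lollipops, and I check that these telescope. The cases $r=0$ and $r=a$ serve as the base cases and follow directly from the convention fixed after \eqref{equ:r=0andr=a} together with \eqref{bubblecap}.
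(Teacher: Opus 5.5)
There is a genuine gap: the proposal does not yet prove a definite identity. You never fix the right-hand side of \eqref{lollipopr} (``should be a multiple of \ldots, possibly merged with an undotted cup''), and you leave open the step you yourself call the main obstacle. The fallback is not an argument either. \eqref{updotmovesplits+merge} moves the full dot of an $(a+b)$-strand past a split; it does not ``write $\omega_r$ via a thin dot''. The claimed telescoping of the $s!$-weighted lollipop terms is never checked. Finally, $r=a$ is not a base case that follows from \eqref{bubblecap}; under your reading it is just \eqref{def:huochai}.

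More seriously, your reconstruction of the statement cannot be what the lemma says. Suppose the left-hand side were a thickness-$a$ cup merged into $2a$ with $\omega_r$ on one leg. Then your first two steps already finish the proof:
\begin{itemize}
\item By \eqref{mergesplitslideleft}, $(\mathrm{split}_{r,a-r}\otimes 1_a)\circ\mathrm{cup}_a$ equals $(1_r\otimes 1_{a-r}\otimes\mathrm{merge}_{a-r,r})$ applied to nested cups of thicknesses $r$ (outer) and $a-r$ (inner). No crossing appears.
\item After reassociating by \eqref{webassoc}, the undotted inner cup is merged directly. So for $r<a$ the diagram vanishes by \eqref{bubblecap}.
\end{itemize}
No signs from \eqref{dotmovecupcaps} or \eqref{curlremove}, and no binomials from \eqref{splitmerge} or \eqref{thickcapandthin}, ever enter, and nothing requires induction.

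The paper instead proves \eqref{lollipopr} by induction on the thickness $a$. It starts from $a!$ times the left-hand side, manipulates that diagram, and closes with the inductive hypothesis at smaller thickness; the case $a=1$ is immediate. The factor $a!$ signals that the lemma concerns how the thick dot interacts with the decomposition of its $a$-strand into thinner strands. The correction terms created when a dot passes a split (the $s\ge 1$ summands of \eqref{updotmovesplits+merge}) must then be shown to disappear. That reduction in thickness is the idea missing from your plan: the induction has to run on $a$, not on $r$.
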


\begin{proof}
    We prove by induction on $a$. The base case $a=1$ is trivial. Assume $a>1$. We have 
    \begin{align*}
  a!~ %
    \end{align*}
by the inductive hypothesis.
\end{proof}

Recall  
$
%
 
$
 from \eqref{def:huochai}. 

\begin{lemma} 
The following relations hold in $\AWb$ for $b, s\ge 0$:
\[ 
%
.
\]
\end{lemma}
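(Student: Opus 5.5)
The plan is to reduce the relations to the defining relations of $\AWb$ by unfolding the shorthand \eqref{def:huochai}. Each lollipop of thickness $2s$ is, up to the factor $2^{-s}$, a thick $s$-cup (resp.\ cap) whose two legs merge (resp.\ split) into a strand of thickness $2s$, with a dot on one leg. After substituting this into both sides, the relation becomes a statement about a diagram built only from merges and splits, crossings, one cup or cap, and a single dot. The dot on the cup can be moved from one leg to the other at the cost of the sign $(-1)^s$ by \eqref{dotmovecupcaps}, and this freedom will be used repeatedly. The cases $s=0$ or $b=0$ are trivial by our conventions, so I will assume $b,s\ge 1$.

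First I will treat the strand of thickness $b$ interacting with the lollipop. I will move it across the cup part of the lollipop, using the sliders \eqref{sliders}, the swallows \eqref{swallows}, the braid relations \eqref{braid}, and the cup/crossing relations \eqref{zigzag}; none of these involve the dot. This leaves a single local configuration in which the dot has to pass a crossing, merge or split. There I will apply \eqref{dotmovecrossing1} or \eqref{dotmovecrossing2} (resp.\ \eqref{updotmovesplits+merge}). The $t=0$ (resp.\ $s=0$) summand gives the expected main term. The summands with $t\neq 0$ contain a new cup/cap pair with a dot, which I will recognise, after merging, as a lollipop of smaller thickness attached to the remaining strands. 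The coefficients $t!$ and $2^{-s}$ should combine into the normalisation of \eqref{def:huochai}, using \eqref{lollipopr} to convert a dotted thick cap into the factorial multiple of the thin-strand picture.

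Where the statement has a general $b$, I will argue by induction on $b$, and if necessary on $s$. I will split the $b$-strand as $(b-1,1)$ using \eqref{splitmerge}, dividing by $\binom{b}{1}$ after base change to $\mathbb J'$ if needed, or rather using the integral form via merges. I will apply the inductive hypothesis to each piece and recombine with \eqref{webassoc}. The bookkeeping for the recombined coefficients is a Vandermonde-type identity, of the same kind as in the proof of \eqref{dumbbellremove}.

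The main obstacle is the lower-order terms. The $t\ne 0$ corrections produce diagrams where a thick cup and a merge meet, and these must either vanish or collapse to scalar multiples of simpler lollipops. I will first try to kill all terms with an undotted cup feeding a merge using the second relation of \eqref{bubblecap} and its flipped version. I will then evaluate the surviving dumbbell-shaped pieces by \eqref{sidewaydumbell} and \eqref{thickcapandthin}. Only after that will I match the remaining sum term by term against the right-hand side.
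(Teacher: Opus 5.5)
\textbf{The first half matches the paper; the gap is in the correction terms.} Unfolding the lollipop by \eqref{def:huochai} (so you work with $2^s$ times the left-hand side), sliding the $b$-strand past the undotted parts, and pushing the single dot through the crossing with the dot--crossing relation \eqref{dotmovecrossing1} (or its variant \eqref{dotmovecrossing2}) is exactly what the paper does. The $t=0$ summands give $2^s$ times the right-hand side.

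The problem is the $t\ge 1$ summands. The right-hand side of the lemma is the single switched diagram with no lower terms. So the whole content of the proof is that every one of these summands is zero; there are two families of them, one from each side of the dot--crossing relation.

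Your plan instead expects them to survive as smaller lollipops, with $t!$ and $2^{-s}$ matched via \eqref{lollipopr}, a Vandermonde identity, \eqref{sidewaydumbell} and \eqref{thickcapandthin}. This rests on a misreading of \eqref{dotmovecrossing1}. In its $t$-th summands the new thickness-$t$ cap/cup pair carries no dot. The dot stays on the $(s-t)$-thick remainder of the lollipop's leg, and the thickness-$t$ pieces split off are undotted.

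What is missing is the argument that these undotted pieces kill each summand. The paper proves this by induction on $s$:
\begin{itemize}
\item For $s=1$ it is checked directly.
\item In general, the inductive hypothesis (the lemma for a lollipop of smaller thickness) moves the remaining dotted piece out of the way.
\item After that, an undotted thickness-$t$ cup feeds into a single merge (equivalently, a split feeds into a cap), so the summand vanishes by the second relation in \eqref{bubblecap}.
\end{itemize}
You mention \eqref{bubblecap} only as something to ``try'', without identifying the undotted cup or the induction on $s$ that exposes it. As written, the proposal does not show that the correction terms disappear.

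\textbf{Drop the induction on $b$.} Writing $1_b$ as $\frac1b$ times a merge composed with a split is not allowed over $\mathbb J'$, which inverts only $2$. From $b\cdot(\text{LHS}-\text{RHS})=0$ you also cannot cancel $b$, since torsion-freeness of the morphism spaces is only established later, in Theorem~\ref{thm:basisAWb}. The induction is unnecessary anyway, because \eqref{dotmovecrossing1} already handles a strand of arbitrary thickness $b$.
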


\begin{proof}
    We only prove the first relation as the second one follows similarly. Indeed we have
    \begin{equation}
    \label{eq:stickswitch}
    \begin{aligned}
       2^s%
\ .
    \end{aligned}
    \end{equation}
Now to complete the proof it suffices to show that each summand in the second and third sums on the right hand side of \cref{eq:stickswitch} equals $0$.  We proceed by induction on $s$. When $s=1$, by \cref{eq:stickswitch} we see that
\[
2%
.
\]
Now for general $s$ and any $1\leq t\leq \min(s,b)$, by the inductive hypothesis we have
\[ 
%
~\overset{\cref{bubblecap}}{=}0.
\]
Similarly the summands in the third term of \cref{eq:stickswitch} are all zero. This concludes the proof.
\end{proof}

\begin{lemma}
\label{Lem:newdotsss}
For any $a,b \ge 1$, we have 
\[ 
%
.
\]    
\end{lemma}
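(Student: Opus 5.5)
The relation in \cref{Lem:newdotsss} moves a dot past a merge (or split) of strands $a$ and $b$, with correction terms that carry cups, caps or lollipops. I will derive it from the defining relation \cref{updotmovesplits+merge} rather than from the functor $\cG_N$, because \cref{updotmovesplits+merge} already says how a dot on the thick strand $a+b$ splits over the two thinner strands. The plan is to attach a split (respectively merge) to the side of the diagram where the dot sits, and then use \cref{splitmerge} to trade the composite for a multiple of the original diagram.

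Concretely, the steps are as follows.

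1. Precompose (or postcompose) both sides of \cref{updotmovesplits+merge} with a merge of $a$ and $b$, so that the left-hand side becomes a dot on the thick strand sandwiched between split and merge.
2. Apply \cref{splitmerge} to the digon, which gives $\binom{a+b}{a}$ times a dotted strand $a+b$.
3. On the right-hand side, each summand indexed by $s$ contains a lollipop of thickness $2s$, a cap-shaped $s$-strand connecting the two branches, and dots on the branches. Close these branches off with the merge and use the associativity \cref{webassoc} together with the slider relations \cref{sliders}.
4. Move the lollipop of thickness $2s$ across the merge using the stick-switching lemma proved just before \cref{Lem:newdotsss}.
5. Kill the terms in which a thin cap feeds directly into a merge of equal labels, using the relation $\bigl($merge of a cup$\bigr)=0$ from \cref{bubblecap} and its upside-down version from the lemma following \cref{def-Bweb}.
6. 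Simplify the remaining dumbbell-type pieces with \cref{sidewaydumbell} and \cref{thickcapandthin}.

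Carrying out these steps should leave the stated right-hand side, multiplied by $\binom{a+b}{a}$ and plus terms already known by induction.

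Since $\binom{a+b}{a}$ is not invertible in $\mathbb J'$, I will not divide by it. Instead I will argue by induction on $a+b$, or on $\min(a,b)$. For $\min(a,b)=1$, the relation should follow directly from \cref{dotmovecrossing1} and \cref{lollipopr}, since the sums have at most two terms. For the inductive step, I will split off a thin strand of thickness $1$ from $a$ via \cref{splitmerge} and \cref{webassoc}, write the $(a,b)$-merge as a merge of $(a-1,1)$ followed by a merge with $b$, and apply the inductive hypothesis twice. The dot passing the intermediate thin crossings is handled by \cref{dotmovecrossing1} and \cref{dotmovecrossing2}, and curls are removed with \cref{curlremove}.

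The main obstacle is the bookkeeping in the inductive step. Combining the two inductive applications produces double sums over $t$ and $s$ of lollipops and thin cups, which must be recombined into the single sum with the coefficients $s!$ (or $t!$) appearing in the statement. I expect this to reduce to a Vandermonde-type identity, similar to the binomial computation in the proof of \cref{dumbbellremove}, after lollipops are merged using \cref{webassoc} and the definition \cref{def:huochai}. If the recombination resists, my fallback is to check the identity after applying $\cG_N$ for all large $N$. This fallback is only a heuristic guide, since faithfulness is established later in the paper and is not yet available here.
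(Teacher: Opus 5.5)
Your proposal never fixes what is being proved. The reading you adopt (a dot sliding past a merge or split with cup, cap and lollipop corrections) is essentially the defining relation \eqref{updotmovesplits+merge} itself, so it cannot be the content of \cref{Lem:newdotsss}. The diagrams are stripped from the source, but the context makes the statement fairly clear: the lemma follows the stick-switching lemma, and \cref{lem:222s} is obtained by applying it $s$ times. It is the rule for fusing two lollipops. Write $L_{2s}$ for the lollipop of \eqref{def:huochai}, $L'_{2s}$ for its mirror (cap-type) image, $M$ for merges and $S$ for splits. The lemma then asserts $M_{2a,2b}\circ(L_{2a}\otimes L_{2b})=\binom{a+b}{a}L_{2(a+b)}$, together with $(L'_{2a}\otimes L'_{2b})\circ S_{2a,2b}=\binom{a+b}{a}L'_{2(a+b)}$. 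The binomial coefficient sits on the right-hand side, so nothing ever has to be divided. The induction on $a+b$, the unspecified Vandermonde-type recombination and the $\cG_N$ check (which, as you note, is unavailable here) are therefore unnecessary. As written, your plan stops exactly at the step it cannot carry out, so it proves nothing.

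The missing idea is a short direct computation, and your steps 1, 2 and 5 already contain its key cancellation. Write $\bullet_c$ for the dot on a strand of thickness $c$. Compose \eqref{updotmovesplits+merge} with the complementary merge (or split). Every summand with $s\ge 1$ then contains a cup of thickness $s$ feeding into $M_{s,s}$ (or $S_{s,s}$ feeding into a cap), which vanishes by \eqref{bubblecap} and its mirror image. Then \eqref{splitmerge} gives the exact identity $M_{a,b}\circ(\bullet_a\otimes\bullet_b)\circ S_{a,b}=\binom{a+b}{a}\bullet_{a+b}$. To use it, proceed as follows.
\begin{enumerate}
\item Expand $L_{2a}$ and $L_{2b}$ by \eqref{def:huochai}, which produces the factor $2^{-(a+b)}$.
\item Using \eqref{webassoc} and the stick-switching lemma, slide $L_{2a}$ to the other side of the dotted left leg of the $b$-cup, then expand it. The two cups are now nested, with $\bullet_b$ and $\bullet_a$ on their left legs.
\item Apply \eqref{mergesplitslideleft}: the merge of the two right legs becomes a single cup of thickness $a+b$ whose left leg carries $M_{b,a}\circ(\bullet_b\otimes\bullet_a)\circ S_{b,a}=\binom{a+b}{a}\bullet_{a+b}$.
\item Refold with \eqref{def:huochai} to obtain $\binom{a+b}{a}L_{2(a+b)}$.
\end{enumerate}
The mirrored identity follows in the same way, or by applying the reflection \eqref{anti-auto}. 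No lower-degree terms, dumbbells or induction are needed.
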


\begin{proof}
    We only prove the first one as the second one is similar. In fact, we have
    \[
    %
.
\]
This concludes the proof.
\end{proof}

\begin{lemma}
\label{lem:222s}
For any $s \ge 1$, we have 
\[ 
%
.
\]        
\end{lemma}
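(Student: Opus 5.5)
The plan is to unfold the lollipop with thickness $2s$ using its definition \eqref{def:huochai}. This writes it as $\frac1{2^s}$ times a merge of an $s$-cup into a $2s$-strand, with a single dot on one leg of the cup. The identity of Lemma~\ref{lem:222s} then reduces to a statement about diagrams built only from a cup, a merge (or split) of two $s$-strands, and dots.

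First, move every dot that sits on the thick $2s$-strand down through the merge using \eqref{updotmovesplits+merge}. This yields a sum over $0\le t\le s$ with weights $t!$. Each summand contains a smaller lollipop of thickness $2t$ hanging off the merge, together with a thin $t$-cup joining the two $s$-legs, with dots on both.

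Next, move the dots on the legs across the cup, using \eqref{dotmovecupcaps} (which introduces the sign $(-1)^s$) and Lemma~\ref{Lem:newdotsss}. This lets us collect all dots on a single leg, which is exactly the normal form appearing in \eqref{def:huochai}.

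The terms with $t\neq 0$ should then either vanish or telescope:
\begin{itemize}
\item A $t$-cup feeding back into the same merge produces the configuration killed by the middle relation of \eqref{bubblecap}.
\item Where a smaller lollipop of thickness $2t<2s$ appears, we invoke Lemma~\ref{lem:222s} itself for $t<s$, arguing by induction on $s$.
\item Where a lollipop must be switched across a strand, we use the stick-switch lemma immediately preceding Lemma~\ref{Lem:newdotsss}, i.e.\ the identity \eqref{eq:stickswitch} together with the vanishing of its extra summands.
\end{itemize}
The base case $s=1$ is a direct computation. There the only cup term is the thin bubble-cap configuration, which is handled by \eqref{bubblecap} and \eqref{curlremove}.

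The main obstacle I expect is the bookkeeping of the correction terms in the inductive step. After applying \eqref{updotmovesplits+merge}, one must check that the coefficients $t!$, the powers $2^{-s}$ and $2^{-t}$ coming from the normalization \eqref{def:huochai}, and the signs $(-1)^s$ from \eqref{dotmovecupcaps} combine so that the surviving terms match the right-hand side exactly. To do this, I will group the summands by the thickness of the intermediate lollipop. I will then compare coefficients using the integrality available over $\mathbb J'$ (division by $2$ is allowed) and the binomial identities \eqref{splitmerge}.
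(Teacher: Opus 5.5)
There is a genuine gap. What you have written is a search strategy rather than a proof, and it misses the mechanism the paper actually uses.

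The paper's argument is one line: both identities follow by applying \cref{Lem:newdotsss} $s$ times to their left-hand sides. Whatever dot-sliding and cancellation of cup/cap correction terms is needed has already been packaged into \cref{Lem:newdotsss} (together with the lemmas preceding it). So at the level of \cref{lem:222s} nothing remains to cancel. There is also no coefficient bookkeeping beyond what each application of \cref{Lem:newdotsss} produces.

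Your plan instead returns to the defining relations, and its decisive step is asserted rather than shown. That step is the claim that, after applying \eqref{updotmovesplits+merge}, the correction summands ``either vanish or telescope''. It also includes the claim that the factors $t!$, $2^{-s}$, $2^{-t}$ and the signs from \eqref{dotmovecupcaps} combine to give exactly the right-hand side. You yourself flag this as the main obstacle, and it is precisely the content that would need proof.

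The opening step is also misaimed. By \eqref{def:huochai}, the dot of a lollipop sits on a leg of the $s$-cup, not on the thick $2s$-strand, so \eqref{updotmovesplits+merge} is not the relation that acts on it after unfolding. Beyond that, the sketch never fixes which identity the induction on $s$ is supposed to propagate, so the appeal to \cref{lem:222s} ``for $t<s$'' has no definite content.

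If you want a self-contained argument, what must be established is the two-parameter statement \cref{Lem:newdotsss} (thicknesses $a,b\ge 1$). Once that is available, \cref{lem:222s} follows by iterating it $s$ times, with no separate induction on correction terms.
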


\begin{proof}
   This follows from applying \cref{Lem:newdotsss} $s$ times to the LHS of the above equations.
\end{proof}

\begin{lemma} 
\label{bendstick}
For any $a \ge 1$, we have 
\[  
%
 \;\; .
\]
\end{lemma}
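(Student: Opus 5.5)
We propose to argue by induction on $a$, unfolding the lollipop shorthand \eqref{def:huochai} so that both sides become diagrams built from a (thick) cap or cup, merges or splits, and dots. The stick is then bent through the cap using the zigzag relations \eqref{zigzag} and the slide relations \eqref{mergesplitslideleft}. By the symmetry $\div$ in \eqref{anti-auto}, it suffices to treat the version in which the stick points upward. The downward version then follows by reflecting the diagrams, together with the zigzag relations.

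For the base case $a=1$, the thick lollipop of thickness $2$ equals $\tfrac12$ times a thin cap with a dot on one leg, composed with a merge. We move the dot across the cap using \eqref{dotmovecupcaps}, which contributes the sign $(-1)^a=-1$. We then absorb the resulting curl by \eqref{curlremove} and the middle relation of \eqref{bubblecap}. This should produce the claimed identity with the correct scalar.

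For the inductive step, we split the thick strand of thickness $2a$ into $(2a-2,2)$ via the associativity \eqref{webassoc}. We then use \cref{lem:222s}, or \cref{Lem:newdotsss} applied once, to rewrite the dot on the thick cap as dots on the two thinner pieces. Next we apply the dot-through-split relation \eqref{updotmovesplits+merge}. The summands with $s\neq0$ there carry an extra lollipop of thickness $2s$ and cups of thickness $s$. We show these summands vanish or recombine, using the middle relation of \eqref{bubblecap}, namely that a merge composed with a cup of the same label is zero, together with \eqref{thickcapandthin}. This is the same mechanism as the vanishing argument for the second and third sums in \eqref{eq:stickswitch}.

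After this reduction we apply the inductive hypothesis to the strand of thickness $2a-2$, and the base case to the remaining thickness-$2$ piece. We then reassemble using \eqref{webassoc} and \eqref{splitmerge}, which produces binomial factors. These factors should cancel against the normalization $1/2^s$ in \eqref{def:huochai} and against the factorials $s!$ in \eqref{updotmovesplits+merge}.

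The main obstacle will be the bookkeeping of signs $(-1)^a$ coming from \eqref{dotmovecupcaps} and \eqref{bubblecap}, together with the binomial and $2^s$ coefficients in the reassembly. If the coefficient check becomes messy, we would first try comparing both sides under the functor $\mathcal G_N$ from \cref{thm:C} on a highest weight vector to pin down the scalar. The diagrammatic induction would then only be needed to show that the two sides agree up to that scalar.
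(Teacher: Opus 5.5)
The induction you actually set up does not work as written. In the base case no crossing ever appears, so there is no curl for \eqref{curlremove} to remove. The middle relation of \eqref{bubblecap} says that an \emph{undotted} lollipop vanishes, so invoking it can only kill terms; it cannot produce the identity. A single application of \eqref{dotmovecupcaps} leaves a stray factor $-1$, which you then cancel against the sign of a curl that is not there.

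In the inductive step, \eqref{updotmovesplits+merge} moves a full-thickness dot through a split. Nothing in bending a lollipop requires the dot to pass a split or merge, so importing this relation only creates the $s\neq 0$ correction terms. You assert their vanishing by analogy with \eqref{eq:stickswitch} but do not check it, and the cancellation of the resulting binomials against $2^{-s}$ and $s!$ is left at ``should''.

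The $\mathcal G_N$ fallback does not close the gap either. Evaluating under $\mathcal G_N$ can fix a scalar only after you have proved the two sides are proportional, and your induction does not supply that. Deducing the identity itself from $\mathcal G_N$ would be circular: separating morphisms by $\mathcal G_N$ relies on the spanning theorem, which uses this lemma through \cref{stick-stick}.

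None of this machinery is needed. Your opening paragraph is already the whole proof, and it is exactly the paper's argument; carry it out directly as follows.
\begin{enumerate}
\item Unfold \eqref{def:huochai}: the lollipop of thickness $2a$ is $2^{-a}$ times a merge placed on a dotted thickness-$a$ cup (resp.\ a dotted thickness-$a$ cap placed on a split).
\item Where it meets the thick cup or cap, apply \eqref{mergesplitslideleft} (or its cap form \eqref{mergesplitmove}). This trades the merge/split on one leg for a split/merge on the other leg, together with two nested thickness-$a$ cups/caps.
\item The inner one of these forms a zigzag with the lollipop's own thickness-$a$ cap/cup, and \eqref{zigzag} removes it.
\item What remains is literally the other lollipop, with the same prefactor $2^{-a}$. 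The dot is simply carried along by the interchange law.
\end{enumerate}
There is no induction, no binomial bookkeeping and no net sign. In an orientation where the dot lands on the middle strand of the zigzag, you move it off and then onto the left leg with \eqref{dotmovecupcaps}, which costs $(-1)^a$ twice and so cancels. Since the case $a=1$ is no easier than general $a$, an induction buys nothing here.
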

\begin{proof}
    These relations are straightforward to check via the relations \cref{zigzag} and \cref{mergesplitslideleft}.
\end{proof}

\begin{lemma}
\label{Lem:dotmovesplitmerge}
  The following relations hold in $\AWb$, for $0\le r \le a+b$:
 \begin{equation}\label{dotmovesplitss}
%
 .
\end{equation}
\end{lemma}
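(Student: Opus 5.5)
The plan is to reduce the statement to the defining dot-sliding relation \eqref{updotmovesplits+merge} together with the type A web relations \eqref{webassoc}--\eqref{splitmerge}. I am assuming the displayed relation \eqref{dotmovesplitss} slides a dot $\omega_r$ on the thick strand $a+b$ through a split (respectively a merge), producing a sum of terms with $\omega_{r_1}$ on $a$ and $\omega_{r_2}$ on $b$, plus correction terms involving cups and lollipops. The second relation should then follow from the first by applying the anti-automorphism $\div$ of \eqref{anti-auto}, so I would only treat the split case.

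First I would unfold the definition \eqref{equ:r=0andr=a} of the dot $\omega_r$ on $a+b$. It is a split $(a+b)\to(r,a+b-r)$, a dot on the $r$-strand, and a merge back to $a+b$. The boundary cases $r=0$ and $r=a+b$ follow from the convention stated after \eqref{equ:r=0andr=a} together with \eqref{updotmovesplits+merge} directly, so I would assume $1\le r<a+b$.

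Next I would compose with the split $(a+b)\to(a,b)$ on top. The resulting merge-then-split square is rewritten by the square-switch relation in \eqref{splitmerge}. This gives a sum over decompositions $r=r_1+r_2$ and $a+b-r=(a-r_1)+(b-r_2)$. In each summand the dotted $r$-strand is split into $r_1$ and $r_2$, which then pass (with crossings) to the $a$ side and the $b$ side respectively.

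In each summand, the dot on $r$ sits just below the split $r\to(r_1,r_2)$, and here I would apply \eqref{updotmovesplits+merge}. The $s=0$ term places dots on $r_1$ and $r_2$ separately. After undoing the crossings with \eqref{braid} and \eqref{sliders}, and reassembling with \eqref{webassoc} and \eqref{swallows}, these terms recombine into $\omega_{r_1}$ on $a$ tensored with $\omega_{r_2}$ on $b$; this is the type A part of the answer.

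The $s\ge 1$ terms carry a lollipop of thickness $2s$ and a dotted cap of thickness $s$ joining the $a$ side and the $b$ side. To put them in the form of the statement, I would pull the lollipop to the bottom using \cref{bendstick} and \cref{lem:222s}. I would then slide the thin cap legs through the surrounding merges and splits using \eqref{mergesplitmove}, \eqref{mergesplitslideleft} and \cref{Lem:newdotsss}. Where a cap meets a crossing, I would use \eqref{zigzag}, \eqref{curlremove} or \eqref{bubblecap}, tracking signs $(-1)^{s}$.

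The main obstacle is the coefficient bookkeeping in the correction terms. Binomial factors from \eqref{splitmerge} (merging parallel strands) combine with the $s!$ from \eqref{updotmovesplits+merge} and the $2^{-s}$ normalisation of the lollipop \eqref{def:huochai}, and everything must match the stated coefficients. I would first verify the case $a=b=1$ (equivalently $r\le 2$) by hand to fix signs. I would then organise the general computation by induction on $r$, splitting off one thin dotted strand at a time via \eqref{lollipopr}, so that each step only needs the already-established $r-1$ case and a single application of \eqref{updotmovesplits+merge}.
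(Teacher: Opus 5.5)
Your opening steps match the paper's proof: unfold $\omega_r$ via \eqref{equ:r=0andr=a}, apply the square switch of \eqref{splitmerge}, then apply \eqref{updotmovesplits+merge} to the dotted $r$-strand. (The paper does the merge and gets the split by symmetry.)

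\textbf{The gap.} The problem is your next step. You claim the $s=0$ summands recombine into $\omega_{r_1}$ on $a$ tensored with $\omega_{r_2}$ on $b$ using only \eqref{braid}, \eqref{sliders}, \eqref{webassoc} and \eqref{swallows}. In those summands the dot on the $r_2$-strand sits below that strand's crossing with the $(a-r_1)$-strand. None of those relations moves a dot through a crossing. In $\AWb$ that move is governed by \eqref{dotmovecrossing1}, which produces genuine correction terms.

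Already $a=b=1$, $r=1$ is a counterexample (take the dot on the left leg in \eqref{equ:r=0andr=a}). Write $x$ for the thin dot, $X$ for the thin crossing, and $e=\mathrm{cup}\circ\mathrm{cap}$.
\begin{itemize}
\item Your two summands are $(x\otimes 1)\circ\mathrm{split}$ and $X(x\otimes 1)\circ\mathrm{split}$.
\item The thin case of \eqref{dotmovecrossing1} reads $(x\otimes 1)X-X(1\otimes x)=1-e$. Conjugate it by $X$, using $X^2=1$ from \eqref{braid}, $\mathrm{cap}\circ X=-\mathrm{cap}$ from \eqref{bubblecap}, and $X\circ\mathrm{cup}=-\mathrm{cup}$. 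This gives $X(x\otimes 1)=(1\otimes x)X+1-e$.
\item Then $X\circ\mathrm{split}=\mathrm{split}$ from \eqref{swallows} and $\mathrm{cap}\circ\mathrm{split}=0$ from \eqref{bubblecap} yield
\[
\mathrm{split}\circ\omega_1=(x\otimes 1+1\otimes x)\circ\mathrm{split}+\mathrm{split}.
\]
With the dot on the other leg, the last term is $-\mathrm{split}$.
\end{itemize}
Here \eqref{updotmovesplits+merge} contributes no $s\ge 1$ terms, since one of $r_1,r_2$ is $0$. So the extra $\pm\mathrm{split}$ is not a cup/lollipop correction. An exact identity of the shape you assume (a type A part $\sum\omega_{r_1}\otimes\omega_{r_2}$ plus cup/lollipop terms) is false.

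\textbf{The repair.} Stop earlier. The exact relation must keep the crossings of the dotted legs that the square switch creates. The paper's proof accordingly ends by merely redrawing each summand obtained from \eqref{splitmerge} and \eqref{updotmovesplits+merge}. Moving the dot packets onto the outer legs to get $\sum_{c+d=r}\omega_c\otimes\omega_d$ is valid only modulo lower degree. That is \cref{dotmovefreely}(2), and there \eqref{dotmovecrossing1} does the work.

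Once you stop there, your remaining steps are unnecessary. The square switch has coefficient $1$, so the only coefficients are the $s!$ from \eqref{updotmovesplits+merge}. There is no binomial or $2^{-s}$ bookkeeping, and no need for \cref{bendstick}, \cref{lem:222s}, \eqref{lollipopr} or an induction on $r$.
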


\begin{proof}
    We only show \cref{dotmovemergess}, as the identity 
    \cref{dotmovesplitss} follows by symmetry. 
    We have 
    \begin{align*}
%
.
    \end{align*}
    The diagrams in the summand may be rewritten as 
\begin{align*}
 %
.
\end{align*}

The lemma is proved.
\end{proof}

Given $a\ge 1$, let $E_a$ be the subalgebra of $\End_{\AWb}(a)$ generated by the elements $\wkdota$, for $0\le r \le a$. 

\begin{lemma}
\label{lem:commute}
The following commutativity relation holds in $\AWb$:
\begin{equation}
    \label{commutato}
    %
,
\end{equation}    
for any admissible $r,t,a$. In particular, $E_a$ is a commutative subalgebra of $\End_{\AWb}(a)$. 
\end{lemma}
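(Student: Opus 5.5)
The plan is to prove \cref{commutato} by induction on $a$, using \cref{Lem:dotmovesplitmerge} to reduce the product of two elementary dots on a strand of thickness $a$ to products of elementary dots on strictly thinner strands. Here I use the convention \eqref{equ:r=0andr=a}: the dot labelled $\omega_r$ on a strand $a$ is the split $a\to(r,a-r)$, followed by the thick dot on the $r$-strand, followed by the merge back to $a$. The cases $r\in\{0,a\}$ or $t\in\{0,a\}$ are handled first. For $r=0$ or $t=0$ the dot is the identity and there is nothing to prove. For $r=a$ (or $t=a$) the element is the generating thick dot on $a$; commuting it with an elementary dot of type $\omega_t$ then reduces to moving the thick dot on $a$ down through the split and up through the merge.

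For the general case $1\le r,t\le a-1$, write the $\omega_r$-dot as merge$\,\circ\,$(thick dot on $r$ $\otimes$ id$_{a-r}$)$\,\circ\,$split, and stack the $\omega_t$-dot on top. I then apply \cref{dotmovemergess} to push the $\omega_t$-dot down through the merge $(r,a-r)\to a$. This produces a sum over $t_1+t_2=t$ of terms in which an $\omega_{t_1}$-dot sits on the $r$-strand and an $\omega_{t_2}$-dot sits on the $(a-r)$-strand. I expect this sum to carry additional summands involving cups, caps and lollipops, as in \cref{updotmovesplits+merge}. On the $r$-strand I then have a product of an $\omega_{t_1}$-dot with the full thick dot. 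Since $r<a$, the induction hypothesis lets me commute them. I then pull everything back up through the merge using \cref{dotmovemergess} in reverse, and down through the split using \cref{dotmovesplitss}. The target is a normal form that is manifestly symmetric in $(r,t)$, namely a sum over pairs of compositions $(r_1,r_2)$ and $(t_1,t_2)$ of elementary dots placed on a split/merge ``square''. Running the same computation with $r$ and $t$ interchanged gives the same normal form, which proves \cref{commutato}. Commutativity of $E_a$ then follows at once, because $E_a$ is generated by these elements.

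The main obstacle is the correction terms, which are new compared with the type A argument in \cite{SWweb}: the B-version of moving dots through merges and splits produces summands with lollipops and caps. For these I would use \cref{lem:222s} and \cref{bendstick} to slide the lollipops so that they attach to the strand below the dots. Once there, the lollipop terms should either cancel in pairs between the two orders of composition or be killed by the second relation in \eqref{bubblecap}. If tracking these terms directly becomes unwieldy, my fallback is to check that the two sides have the same images under $\cG_N$ for all $N$. The difficulty with that route is that it needs the faithfulness established only later in the paper, so I would try to make the direct diagrammatic argument close first.
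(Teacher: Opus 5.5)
There is a genuine gap. Your reduction is sound as far as it goes, but it stops exactly where the type B content begins.

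Write $M,S$ for the merge and split between $a$ and $(r,a-r)$, and $x_r$ for the thick dot, so that $\omega_r=M\circ(x_r\otimes 1)\circ S$. Let $K:=\omega_t\circ M-\sum_{t_1+t_2=t}M\circ(\omega_{t_1}\otimes\omega_{t_2})$ be the B-type correction in \cref{Lem:dotmovesplitmerge}. The reflection $\div$ of \eqref{anti-auto} fixes every elementary dot, exchanges $M$ and $S$, and reverses composition. Hence $S\circ\omega_t=\sum_{t_1+t_2=t}(\omega_{t_1}\otimes\omega_{t_2})\circ S+\div(K)$. Put $C:=K\circ(x_r\otimes 1)\circ S$. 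Your sequence of steps (push $\omega_t$ down through the merge, commute on the $r$-strand by induction, push back through the split) then yields exactly
\[
\omega_t\omega_r=\omega_r\omega_t+C-\div(C).
\]
So everything beyond type A is the assertion $C=\div(C)$, and you give no argument for it.

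Neither of your proposed mechanisms supplies one. ``Cancel in pairs between the two orders of composition'' is just a restatement of $C=\div(C)$. The terms are also not killed by \eqref{bubblecap}: a split followed by a cap joining its legs vanishes, but here the thick dot $x_r$ sits between them.

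Already for $a=2$, $r=1$, $t=2$ (your ``easy'' case $t=a$), \eqref{updotmovesplits+merge} gives $K=(\text{upward lollipop of thickness }2)\circ(\text{thin cap})$. Then \eqref{def:huochai} gives $C=2\,(\text{upward lollipop})\circ(\text{downward lollipop})$. This is twice an elementary diagram, hence nonzero by \cref{thm:basisAWb}, but it is visibly $\div$-invariant.

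Your fallback through $\cG_N$ is indeed circular. Equal images give equality in $\AWb$ only via the basis theorem, and its spanning half uses \eqref{commutato}, for instance in \cref{bubblealgebra} and in indexing dot packets by partitions.

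The missing idea is to build the argument around $\div$ from the start, as the paper does. Since $\div(\omega_r\omega_t)=\omega_t\omega_r$, it suffices to show that the single composite $\omega_r\omega_t$ is $\div$-fixed. The paper argues by induction on $r+t+a$ and reduces to $1\le r<t\le a$. It expands $\omega_r\omega_t$ once using the exact relations. It then rewrites the resulting diagrams so that each is fixed by $\div$, once the products of dots on its pieces are commuted using the induction hypothesis.

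The example above shows the typical mechanism for the lollipop terms: a dotted split followed by a cap becomes a downward lollipop, which mirrors the upward one. This route never transports correction terms through a merge and back through a split to match them term by term, which is where your plan is stuck. In your notation, it is exactly what proves $C=\div(C)$.
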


\begin{proof}
   We may assume $1\le r\le t\le a$. We prove by induction on $r+t+a$. The case $r=t=1=a$ is trivial. Without loss of generality we may assume that $1\le r <t\le a$.
We have 
\begin{align*}
  %
.
\end{align*}
The new diagrams may be rewritten as 
\begin{align*}
 %
.
\end{align*}
Then the identity \cref{commutato} follows since each diagram in the summation above is fixed under $\div$ by the induction hypothesis.
\end{proof}

Let $\Par_a$ be the set of  partitions $\lambda=(\lambda_1,\ldots,\lambda_k)$ with  each part  $\lambda_i\le a$. Let $\EPar_a$ be the subset of $\Par_a$ consisting of even partitions (i.e., with all parts even). For any partition $\lambda=(\lambda_1,\lambda_2,\ldots, \lambda_k)\in \Par_a$, the {\em elementary dot packet (of thickness $a$)} is defined to be 
\begin{equation}
%
.   
\end{equation}

\subsection{Relations modulo lower terms}

Let $\unrhd$ denote the dominance order on partitions. We write $\lambda\rhd\mu$ if $\lambda\unrhd \mu$ and $\lambda\neq \mu$. We also denote by $\lambda+\mu$ the partition $(\lambda_1+\mu_1,\lambda_2+\mu_2,\cdots)$ for $\la=(\la_1,\la_2,\ldots)$ and $\mu=(\mu_1,\mu_2,\ldots)$, where we set $\lambda_i=\mu_j=0$ for $i>\ell(\lambda)$ and $ j>\ell(\mu)$. We define 
\begin{equation} \label{eq:Ea}
    E_{a}^{\lambda}:=\mathbb J'\text{-span} \big\{%
\mid \nu\unrhd \lambda \big\}.
\end{equation}

By definition, any morphism of $\AWb$ is a $\mathbb J'$-linear combination of {\em dotted B-web diagrams}, i.e., the dotted string diagrams generated by the generating morphisms \cref{dotgenerator}.  

By assigning degree $r$ to $\wkdota$ (for $r\le a$) and degree $0$ to other generating morphisms, we define the degree of any dotted B-web diagram additively by summing up degrees of local generating morphisms. Let $\Hom_{\AWb}(\mu,\lambda)_{\le k}$ (and respectively, $\Hom_{\AWb}(\mu,\lambda)_{< k}$) be the $\mathbb J'$-span
of all dotted B-web diagrams of type $\mu\rightarrow \lambda$ with degree $\le k$ (and respectively, $<k$). For any two dotted B-web diagrams $D$ and $D'$  in  $\Hom_{\AWb}(\mu,\lambda)_{\le k}$, we write 
\begin{align}  \label{modLOT}
D\equiv D' \quad \text{ if } D=D'  \mod \Hom_{\AWb}(\mu,\lambda)_{< k}.
\end{align}
In this case, we say $D$ equals $D'$ up to lower degree terms.

\cref{dotmovefreely} below tells us that we may move the dots through crossings, splits and merges freely up to lower degree terms. This result will be used frequently without explicit reference.

\begin{lemma} In $\AWb$, the following $\equiv$-relations hold:
\label{dotmovefreely}
 \begin{enumerate}
  \item 
    $\begin{tikzpicture}[anchorbase, scale=0.5, color=\clr]
\draw[-,line width=1.4pt] (0,-.2) to  (1,2.2);
\draw[-,line width=1.4pt] (1,-.2) to  (0,2.2);
\draw(0.2,1.6)\bdot;
\draw(-.4,1.6) node {$\scriptstyle \omega_r$};
\node at (0, -.5) {$\scriptstyle a$};
\node at (1, -.5) {$\scriptstyle b$};
\end{tikzpicture}
~\equiv~ 
\begin{tikzpicture}[anchorbase, scale=0.5, color=\clr]
\draw[-, line width=1.4pt] (0,-.2) to (1,2.2);
\draw[-,line width=1.4pt] (1,-.2) to (0,2.2);
\draw(.8,0.3)\bdot;
\draw(1.35,0.3) node {$\scriptstyle \omega_r$};
\node at (0, -.5) {$\scriptstyle a$};
\node at (1, -.5) {$\scriptstyle b$};
\end{tikzpicture}, 
\qquad 
\begin{tikzpicture}[anchorbase, scale=0.5, color=\clr]
\draw[-, line width=1.4pt] (0,-.2) to (1,2.2);
\draw[-,line width=1.4pt] (1,-.2) to(0,2.2);
\draw(.2,0.2)\bdot;
\draw(-.4,.2)node {$\scriptstyle \omega_r$};
\node at (0, -.5) {$\scriptstyle b$};
\node at (1, -.5) {$\scriptstyle a$};
\end{tikzpicture}
~\equiv~
\begin{tikzpicture}[anchorbase, scale=0.5, color=\clr]
\draw[-,line width=1.4pt] (0,-.2) to  (1,2.2);
\draw[-,line width=1.4pt] (1,-.2) to  (0,2.2);
\draw(0.8,1.6)\bdot;
\draw(1.4,1.6)node {$\scriptstyle \omega_r$};
\node at (0, -.5) {$\scriptstyle b$};
\node at (1, -.5) {$\scriptstyle a$};
\end{tikzpicture},
$ 
\item
$
\begin{tikzpicture}[baseline = -.5mm,scale=1.2,color=\clr]
\draw[-,line width=1.4pt] (0.08,-.5) to (0.08,0.04);
\draw[-,line width=1pt] (0.34,.5) to (0.08,0);
\draw[-,line width=1pt] (-0.2,.5) to (0.08,0);
\node at (-0.22,.6) {$\scriptstyle a$};
\node at (0.36,.65) {$\scriptstyle b$};
\draw (0.08,-.2) \bdot;
\draw (0.35,-.2) node{$\scriptstyle \omega_r$};
\end{tikzpicture} 
\equiv~
\sum\limits_{c+d=r} ~
\begin{tikzpicture}[baseline = -.5mm,scale=1.2,color=\clr]
\draw[-,line width=1.4pt] (0.08,-.5) to (0.08,0.04);
\draw[-,line width=1pt] (0.34,.5) to (0.08,0);
\draw[-,line width=1pt] (-0.2,.5) to (0.08,0);
\node at (-0.22,.6) {$\scriptstyle a$};
\node at (0.36,.65) {$\scriptstyle b$};
\draw (-.05,.24) \bdot;
\draw (-0.26,.2) node{$\scriptstyle \omega_{c}$};
\draw (0.48,.2) node{$\scriptstyle \omega_{d}$};
\draw (.22,.24) \bdot;
\end{tikzpicture},
\qquad 
\begin{tikzpicture}[baseline = -.5mm, scale=1.2, color=\clr]
\draw[-,line width=1pt] (0.3,-.5) to (0.08,0.04);
\draw[-,line width=1pt] (-0.2,-.5) to (0.08,0.04);
\draw[-,line width=1.4pt] (0.08,.6) to (0.08,0);
\node at (-0.22,-.6) {$\scriptstyle a$};
\node at (0.35,-.6) {$\scriptstyle b$};
\draw (0.08,.2) \bdot;
\draw (0.35,.2) node{$\scriptstyle \omega_r$};  
\end{tikzpicture}
 ~\equiv~
\sum\limits_{c+d=r}~
\begin{tikzpicture}[baseline = -.5mm,scale=1.2, color=\clr]
\draw[-,line width=1pt] (0.3,-.5) to (0.08,0.04);
\draw[-,line width=1pt] (-0.2,-.5) to (0.08,0.04);
\draw[-,line width=1.4pt] (0.08,.6) to (0.08,0);
\node at (-0.22,-.6) {$\scriptstyle a$};
\node at (0.35,-.6) {$\scriptstyle b$};
\draw (-.08,-.3) \bdot; \draw (.22,-.3) \bdot;
\draw (-0.31,-.3) node{$\scriptstyle \omega_{c}$};
\draw (0.48,-.3) node{$\scriptstyle \omega_{d}$};
\end{tikzpicture}
$. 
\end{enumerate}  
\end{lemma}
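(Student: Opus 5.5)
The plan is to reduce both parts to the defining dot relations \cref{dotmovecrossing1}, \cref{dotmovecrossing2} and \cref{updotmovesplits+merge}. In each of these, the extra summands involve cups and caps and turn out to have strictly smaller degree. Throughout I use that degree is additive under composition and tensor product. Hence $\Hom_{\AWb}(\cdot,\cdot)_{<k}$ is stable under pre- and post-composition and tensoring with degree-$0$ diagrams. So an $\equiv$-relation proved locally extends to any ambient diagram, and it suffices to prove each relation locally.

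\textbf{Part (1).} First take $r=a$, i.e.\ a full-thickness dot of degree $a$ on the $a$-strand.
\begin{itemize}
\item In \cref{dotmovecrossing1}, the $t=0$ summands on the two sides are exactly the two diagrams to be compared.
\item A $t\ge 1$ summand on the left has $t$ strands joined by a cap/cup, so the dot sits on a strand of thickness $a-t$ and has degree $a-t<a$.
\item On the right, the $t\ge1$ summands with $t<a$ carry the dot on a strand of thickness $a-t$ (and similarly for the $b$-side), again of degree $<a$.
\item If the right-hand summands place the dot differently, I will check the degrees from the picture, expecting every $t\ge1$ term to lose at least $t$ units.
\end{itemize}
This gives the first relation for $r=a$; the second follows in the same way from \cref{dotmovecrossing2}.

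For general $r$, I expand $\omega_r$ via its definition \cref{equ:r=0andr=a} as split $(a)\to(r,a-r)$, a full dot on the $r$-strand, then merge. The sliders \cref{sliders} carry the split and merge through the crossing. The crossing of the $a$-strand with the $b$-strand then becomes crossings of the $r$- and $(a-r)$-strands with $b$. By the $r$-case above, the degree-$r$ dot crosses up to lower terms, and sliding the split and merge back finishes the argument.

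\textbf{Part (2).} The first choice is to invoke \cref{Lem:dotmovesplitmerge}, which gives an exact formula for moving $\omega_r$ through a split or merge. In that formula I will identify the summands of top degree $r$; they should be exactly $\sum_{c+d=r}\omega_c\otimes\omega_d$. All other summands contain lollipops or caps and so have lower degree.

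If the formula in \cref{Lem:dotmovesplitmerge} is not directly in that shape, the fallback is as follows.
\begin{itemize}
\item Write $\omega_r$ as split, full dot, merge.
\item Apply \cref{splitmerge} to the resulting merge-then-split square, with associativity \cref{webassoc}. This reduces everything to a full dot of thickness $c'$ passing through a split $c'\to(c,d)$.
\item Apply \cref{updotmovesplits+merge}. The $s=0$ term gives full dots on both legs, of degree $c+d$.
\item The $s$-th term carries the lollipop of thickness $2s$, defined in \eqref{def:huochai} via a dot of degree $s$, together with dots of thickness $c-s$ and $d-s$. Its total degree is $c+d-s<c+d$.
\end{itemize}
The merge case follows by applying the anti-involution $\div$ of \eqref{anti-auto}.

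\textbf{Main obstacle.} The hard step is the bookkeeping in the fallback for (2). After the square switch \cref{splitmerge} and the reassembly of split dots into $\omega_c\otimes\omega_d$, one must check that the leading terms have coefficient exactly $1$. Here I would use the commutativity in \cref{lem:commute} and the binomial identities already used in \cref{splitmerge}, and verify the case $r=a+b$ first as a sanity check.
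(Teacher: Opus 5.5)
Your proposal is correct and matches the paper's proof. For (1), the paper likewise expands $\omega_r$ via \eqref{equ:r=0andr=a}, slides the split and merge through the crossing with \eqref{sliders}, and moves the full dot across by \eqref{dotmovecrossing1}, discarding the lower-degree $t\ge 1$ terms; for (2), it simply cites \cref{Lem:dotmovesplitmerge}, as in your first option. Your fallback for (2) is therefore unnecessary. If you did use it, the coefficient check is immediate, since the square relation in \eqref{splitmerge} has all coefficients $1$ and the $s=0$ term of \eqref{updotmovesplits+merge} has coefficient $0!=1$; however, it would also need part (1) to carry the dot on the $d$-strand through the crossing that \eqref{splitmerge} creates.
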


\begin{proof}
By definition, we replace the dot on the left-hand side of the first equation in (1) with the diagram from the right-hand side of \eqref{equ:r=0andr=a}. First, we slide the split through the crossing using \eqref{sliders}. Next, we move the dot through the crossing via \eqref{dotmovecrossing1}, working modulo lower terms. Finally, applying \eqref{sliders} again to move the merge through the crossing yields the right-hand side. The proof of the second equation is analogous.

    (2) follows from Lemma \ref{Lem:dotmovesplitmerge}. 
\end{proof}

\begin{lemma}
\label{wkdotsspan}
For any $\lambda$ and $\mu$, we have $\begin{tikzpicture}
[anchorbase,scale=1,color=\clr]
\draw[-,line width=1.5pt] (0.08,-.8) to (0.08,-.5);
\draw[-,line width=1.5pt] (0.08,.3) to (0.08,.6);
\draw[-,thick] (0.1,-.51) to [out=45,in=-45] (0.1,.31);
\draw[-,thick] (0.06,-.51) to [out=135,in=-135] (0.06,.31);
\draw(-.1,0) \bdot;
\draw(-.4,0)node {$\scriptstyle \omega_\lambda$};
\draw(.25,0) \bdot;
\draw(.55,0)node {$\scriptstyle \omega_\mu$};
\node at (-.22,-.35) {$\scriptstyle a$};
\node at (.42,-.35) {$\scriptstyle b$};
\end{tikzpicture}\in E^{\lambda+\mu}_{a+b}$ up to lower degree terms.
    \end{lemma}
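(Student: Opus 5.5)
We prove the statement by induction on the total number of parts $\ell(\lambda)+\ell(\mu)$. The main tool is the relation of \cref{Lem:dotmovesplitmerge} (dots passing through merges and splits), together with \cref{dotmovefreely}(2), which says that up to lower degree terms
\[
\omega_r \text{ on a thick strand } \equiv \sum_{c+d=r} (\omega_c \text{ on one thin leg})(\omega_d \text{ on the other}).
\]
We use this in reverse. A product of elementary dots on the two legs of a digon (split followed by merge) should be pushed through the bottom merge, or the top split, onto the thick strand of thickness $a+b$. Doing so produces elementary dot packets indexed by partitions obtained by adding parts of $\lambda$ and $\mu$.

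\textbf{Base case.} If $\lambda=(r)$ and $\mu=(s)$ are single parts, consider $\omega_{r+s}$ on the $(a+b)$-strand and push it into the digon with \cref{dotmovefreely}(2):
\[
\omega_{r+s}\cdot(\text{digon}) \equiv \sum_{c+d=r+s}\text{digon}(\omega_c \text{ on the } a\text{-leg},\ \omega_d \text{ on the } b\text{-leg}).
\]
The term $(c,d)=(r,s)$ is the one we want. Every other term has $(c,d)$ further from balanced, so the pair $\{c,d\}$ dominates $\{r,s\}$ when viewed as a two-part partition. We therefore run a downward induction on dominance: start from the most unbalanced pairs, which are bounded because $c\le a$ and $d\le b$, and solve the triangular system. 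This expresses $\text{digon}(\omega_r,\omega_s)$ as a combination of $\omega_\nu$ on the $(a+b)$-strand with $\nu\unrhd(r+s)$ or $\nu\unrhd\{r,s\}$, modulo lower degree terms. Here we also use \cref{splitmerge}: the bare digon is $\binom{a+b}{a}$ times the identity.

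This base case contains a subtlety. The claimed target is $E^{\lambda+\mu}_{a+b}$, and $(r+s)\rhd\{r,s\}$. So the argument must instead produce exactly packets with $\nu\unrhd(r+s)$. The point is that $\omega_r$ times $\omega_s$ on the same thick strand lies in $E^{(r,s)}$, not in $E^{(r+s)}$. The correct approach is to first merge the dots of the two legs up to the top thick strand. On the thick strand, $\omega_{r+s}$ arises only as a component, so the claim requires that the digon with dots on both legs equals a sum of packets $\omega_\nu$ with $\nu$ dominating $\lambda+\mu$. To see this, apply \cref{Lem:dotmovesplitmerge} at the top split to write the $a$-leg dot packet as dots on the thick strand minus correction terms in which part of the dot moves to the $b$-leg. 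Iterating increases the degree on the $b$-leg, and the process terminates because degrees are bounded by thickness.

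\textbf{Inductive step.} For general $\lambda$ and $\mu$, peel off the top parts $\lambda_1$ and $\mu_1$ using the commutativity in \cref{lem:commute}, and treat the remaining packets by induction. Then multiply the results on the thick strand, using $E^{\alpha}_{a+b}E^{\beta}_{a+b}\subseteq E^{\alpha\cup\beta}_{a+b}$ modulo lower degree terms. Note that $\alpha\cup\beta$ is dominated by $\alpha+\beta$ only in the wrong direction, so this combination step has to be handled carefully. The main obstacle is this dominance bookkeeping: making sure that every correction term lands in $E^{\nu}$ with $\nu\unrhd\lambda+\mu$. We would organize it by proving the stronger statement that the leading term is exactly $\omega_{\lambda+\mu}$ with coefficient $1$, and then check the dominance inequalities part by part.
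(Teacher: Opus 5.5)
Your argument has a genuine gap at both stages. Write $D(\alpha,\beta)$ for the digon with $\alpha$ on the $a$-leg and $\beta$ on the $b$-leg.

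\textbf{Base case.} For $\lambda=(r)$, $\mu=(s)$ the claim is that $D(\omega_{a,r},\omega_{b,s})$ is a scalar multiple of $\omega_{a+b,r+s}$ modulo lower terms.
\begin{itemize}
\item Pushing $\omega_{a+b,r+s}$ into the digon, as in your first attempt, gives only the single relation $\sum_{c+d=r+s}D(\omega_{a,c},\omega_{b,d})\equiv\binom{a+b}{a}\omega_{a+b,r+s}$. One relation cannot isolate one summand.
\item Your second attempt pushes $\omega_{a,r}$ alone through the merge. Its main term is $\omega_{a+b,r}\,D(\varnothing,\omega_{b,s})$, a nonzero multiple of $\omega_{a+b,(r,s)}$. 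That element lies in $E^{(r,s)}_{a+b}$ but not in $E^{(r+s)}_{a+b}$.
\item The correction terms carry products $\omega_{b,d}\omega_{b,s}$ on the $b$-leg. So you have already left the single-part case that your induction on $\ell(\lambda)+\ell(\mu)$ was meant to start from.
\item You never show that the $\omega_{(r,s)}$-component cancels. Also, thickness bounds a single $\omega_{b,d}$, not a product of them.
\end{itemize}

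\textbf{Inductive step.} No relation lets you factor $D(\omega_\lambda,\omega_\mu)$ into a product, on the thick strand, of digons carrying the peeled-off packets. Composing two digons leaves the merge-then-split composite $(a,b)\to a+b\to(a,b)$ in the middle. By \eqref{splitmerge} that composite is a sum of crossing diagrams, not a scalar.

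The factorization is in fact false. Take $a=2$, $b=1$, $\lambda=(1,1)$, $\mu=(1)$. Using \cref{actionlantern}, the leading action of $D(\omega_\lambda,\omega_\mu)$ is $e_1e_2+3e_3$ in $h_{i_1},h_{i_2},h_{i_3}$. Meanwhile $D(\omega_{2,1},\omega_{1,1})$, $D(\omega_{2,1},\varnothing)$ and $D(\varnothing,\varnothing)$ act by $2e_2$, $2e_1$ and $3$, so no scalar makes the product match.

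The dominance issue you flag is actually harmless, since $(\lambda_1+\mu_1)\cup\big((\lambda_2,\lambda_3,\dots)+(\mu_2,\mu_3,\dots)\big)=\lambda+\mu$. The problem is the factorization itself. Your proposed strengthening also fails: for $a=b=1$, $\lambda=\mu=(1)$ one gets $D\equiv 2\,\omega_{2,2}$, so the leading coefficient is not $1$.

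\textbf{What is missing.} You need a move that lifts a part of $\lambda$ \emph{together with} a part of $\mu$. Apply \cref{dotmovefreely}(2) at the merge in the combined degree $\lambda_i+\mu_i$. This places $\omega_{a+b,\lambda_i+\mu_i}$, a genuine part of $\lambda+\mu$, on the thick strand; the other terms only redistribute degree between the legs.

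To make this well founded, the paper first treats $b=1$, so $\mu=(1^l)$. There the relation has only two terms and gives
\[
D(\omega_{\lambda},\omega_{1,1}^{\,l})\equiv\omega_{a+1,\lambda_l+1}\,D(\omega_{\lambda^-},\omega_{1,1}^{\,l-1})-D(\omega_{\lambda^-}\omega_{a,\lambda_l+1},\omega_{1,1}^{\,l-1}),
\]
where $\lambda^-$ is $\lambda$ with its $l$-th part (possibly $0$) deleted. Both terms have fewer dots on the thin leg, and their targets dominate $\lambda+(1^l)$, so one inducts on $l$.

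The case $l=0$ is done by induction on $\ell(\lambda)$ and $\lambda_k$. It starts from the exact evaluation of $D(\omega_{a,r},\varnothing)$ as a scalar multiple of $\omega_{a+b,r}$. That evaluation comes from the definition \eqref{equ:r=0andr=a} of $\omega_{a,r}$ as split--dot--merge, together with \eqref{webassoc} and \eqref{splitmerge}.

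General $b$ is then reduced to smaller thickness. For $\mu_l<b$, expand $\omega_{b,\mu_l}$ as split--dot--merge and re-associate, which allows induction on $b$. The case $\mu=(b^l)$ is handled by a further induction on $l$.
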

    
\begin{proof} 
The arguments below are similar to \cite[Lemma 2.13]{SWweb}.

    Let $k=\ell(\lambda)$ and $l=\ell(\mu)$.
  We may assume that $a\geq b$. Moreover, we set $\overline{\lambda}=(\lambda_1,\ldots,\lambda_{k-1})$ when $k\ge 2$ and $\tilde{\lambda}=(\lambda_2,\ldots,\lambda_{k-1})$ when $k> 2$.
  
  We first treat the case when $b=1$. Then $\mu=(1^{l})$, and we proceed via an induction on $l$. In the case when $l=0$, we use induction on $k$ and $\lambda_k$. If $k=1$, then there is nothing to prove. If $k>1$ and $\lambda_k=1$, then we have
  \[
.
  \]
Thus it follows from the induction on $k$ that $%
\in E_{a+1}^{\lambda}$.

Now suppose $k>1$ and $\lambda_k>1$, then we have
\[
%
.
\]
Thus it follows from the induction on $k$ and $\lambda_k$ that $%
\in E_{a+1}^{\lambda}$. This concludes the proof of the case when $b=1$ and $l=0$.

For $b=1$ and $l\geq 1$, we have
\[ 
%
.
\]
Thus it follows from the induction on $l$ that $%
\in B^{\lambda+\mu}_{a+1}$.
This concludes the proof for the case when $b=1$.

Now consider the case when $b>1$. We proceed by induction on $l$ again. We first consider the case when $l=0$. In this case, when $k=1$ there is nothing to prove. When $k>1$, we have
\[
%
\; .
\]
The first summand here lies in $E_{a+b}^{\lambda}$ by induction on $k$. The second summand lies in $E_{a+b}^{\lambda}$ by induction on $b$ since $%
$ for any $0<s<b$. For the last summand, we see that
\[
%
 \; .
\]
The first summand here lies in $E_{a+b}^\lambda$ by induction on $k$ while the second summand lies in $E_{a+b}^\lambda$ by induction on $b$. This concludes the proof of the case when $l=0$ and $b>1$.

Now suppose that $l\geq 1$ and $b>1$, and let $\overline{\mu}=(\mu_1,\ldots,\mu_{l-1})$. If $\mu_l<b$, then we have
\[
%
\]
where $\nu^{(1)}\in \Par_{\mu_l}$ and $\nu^{(2)}\in \Par_{b-\mu_l}$;
then we have 
$%
\in E_{a+b}^{\lambda+\mu}$ by induction on $b$. Hence we only need to consider the case when $\mu_l=b$, where $\mu=(b^l)$. In this case we have
\[
%
.
\]
The first summand lies in $E_{a+b}^{\lambda+\mu}$ by induction on $l$. For the second summand, the terms for $0<u<b$ lie in $E_{a+b}^{\lambda+\mu}$ by induction on $b$ while the term for $u=b$ lies in $E_{a+b}^{\lambda+\mu}$ by induction on $l$. This concludes the proof of the case when $l\geq 1$ and hence for the case $b>1$.
\end{proof}

\section{Relations among lollipops and bubbles}
  \label{sec:lol:bubbles}

  In this section, we provide reduction procedures to rewrite arbitrary morphisms in $\Hom_\AWb(\unit,\mu)$ (called lollipops) and $\Hom_\AWb(\unit,\unit)$ (called bubbles) in terms of some canonical ones. This is preparatory for the spanning sets of the morphism spaces of $\AWb$ in Section \ref{sec:spanning}.

\subsection{Lollipops}

We aim to simplify the diagram 
\begin{align}   \label{eq:ballstick}
\lol_{a,\lambda} :=
\begin{tikzpicture}[yscale=-1, anchorbase,scale=.8,color=\clr]
\draw[-,line width=1.4pt] (0.08,.3) to (0.08,.6);
\draw[-,thick] (0.1,-.51) to [out=right,in=-45] (0.1,.31);
\draw[-,thick] (0.1,-.51) to [out=left,in=-135] (0.1,.31);
\node at (0.08,.8) {$\scriptstyle 2a$};
\draw (-.1,-.2)\bdot;
\node at (-.4,-.2) {$\scriptstyle \omega_\lambda$};
\end{tikzpicture}
\end{align}
for $\lambda\in \Par_a$. Let $\venus_{2a}$ be the subspace of $\Hom_{\AWb}(2a,0)$ spanned by $\wkdotamu$, for all  $\mu\in\EPar_{2a}$ (recall $\EPar_{2a}$ denotes the set of even partitions with each part $\le 2a$). 

\begin{proposition}
\label{lollipopspan}
    Let $a,b\in \N$ with $b\le a$. We have
    \begin{enumerate}
        \item $\lol_{a,\lambda}\in \venus_{2a}$ modulo lower degree terms, for any $\lambda\in\Par_a$.
\item
$\begin{tikzpicture}[anchorbase,color=\clr]
    \draw[line width=1.4pt] (0,-.3) -- (0,0);
    \draw[line width=1pt] (0,0) -- (.3,.6)
                          (0,0) -- (-.3,.6);
    \draw (.3,.6)\bdot (-.3,.6)\bdot (.15,.3)\bdot (-.15,.3)\bdot;
    \node at (-.45,.35){$\scriptstyle \omega_\lambda$};
    \node at (.5,.3){$\scriptstyle \omega_\mu$};
    \node at (-.6,.1){$\scriptstyle 2a-2b$};
     \node at (.35,.1){$\scriptstyle 2b$};
\end{tikzpicture} 
\in  \venus_{2a}$ modulo lower degree terms, for any $\lambda\in \EPar_{2a-2b}$ and $\mu\in \EPar_{2b}$.
    \end{enumerate}
\end{proposition}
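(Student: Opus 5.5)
We prove (1) and (2) together by induction on $a$, and for fixed $a$ on the degree $|\lambda|$ (respectively $|\lambda|+|\mu|$), always working modulo lower degree terms. The tools are \cref{dotmovefreely} for moving dots through splits and merges up to lower terms, the cap relation \eqref{dotmovecupcaps} for moving a dot from one leg of the cup-shaped lollipop to the other at the cost of $(-1)^{r}$, and \cref{wkdotsspan} for absorbing products of dot packets into $E^{\nu}$. The base case $a=0$ is trivial. For $|\lambda|=0$, statement (1) is $2^a$ times the undotted lollipop $\wkdotamu$ with $\mu=\emptyset$, which lies in $\venus_{2a}$ by \eqref{def:huochai}.

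For (1), write $\omega_\lambda$ as a product of elementary dots $\omega_{\lambda_i}$ on the left $a$-leg of the merge-of-cup diagram $\lol_{a,\lambda}$. Each $\omega_r$ on an $a$-strand is, by \eqref{equ:r=0andr=a}, a split $a\to(r,a-r)$ with a dot on the $r$-strand followed by a merge. Using \eqref{mergesplitslideleft} and \cref{bendstick}, the split/merge pair on one leg of the cup can be transported around the cup. The lollipop then becomes a merge of a thick $(2r)$-lollipop, carrying the dot $\omega_r$ on one leg, and a $(2a-2r)$-lollipop carrying the remaining dots. We then use \eqref{dotmovecupcaps} to redistribute the dot $\omega_r$ symmetrically on both legs: $\omega_r\otimes 1=\frac12(\omega_r\otimes 1+(-1)^r\, 1\otimes\omega_r)$ up to the cap. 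By \cref{dotmovefreely}(2), the dot $\omega_{2r}$ on the $2r$-stick equals $\sum_{c+d=2r}\omega_c\otimes\omega_d$ on the two legs modulo lower terms. The top-degree terms are $\omega_r\otimes\omega_r$ together with the off-diagonal terms, and the off-diagonal terms pair up and cancel or reduce via \eqref{dotmovecupcaps}. Odd $r$ should give zero at top degree: the symmetric combination vanishes by \eqref{dotmovecupcaps} together with the sign in \eqref{bubblecap}, as in the lemma showing that the odd stick is zero. This expresses each factor as an even dot on a stick, reducing (1) to configurations of type (2) with smaller pieces.

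For (2), apply \cref{dotmovefreely}(2) in reverse. The merge of $\wkdotamu$-type sticks of thickness $2a-2b$ and $2b$, with dots $\omega_\lambda,\omega_\mu$, equals up to lower terms a $2a$-stick carrying the dot packet obtained by moving the dots through the merge. By \cref{wkdotsspan}, the merged packet lies in $E_{2a}^{\lambda+\mu}$, i.e.\ it is a combination of $\omega_\nu$ with $\nu\unrhd\lambda+\mu$. The parts of $\nu$ need not be even, so we return to (1)-type reductions: an odd part on the $2a$-stick is decomposed by splitting. Here the induction on degree and on $a$ (smaller sticks) closes the argument, since odd parts produce, modulo lower degree, lollipops of strictly smaller thickness merged together, which are handled by the induction hypothesis.

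The main obstacle is the top-degree bookkeeping when transporting $\omega_r$ around the cup. One must show that the cross terms $\omega_c\otimes\omega_d$ with $c\ne d$, and the odd $r$ contributions, either cancel or land in lower degree. This requires carefully combining \eqref{dotmovecupcaps} with the sign $(-1)^a$ in \eqref{bubblecap} and with \cref{lem:222s}. We expect this to be where the paper defers computations to Appendix~\ref{app:lollipop}. We would first try to verify the case $a=1$ and $a=2$ explicitly, and then set up the induction ordering (thickness, then dominance order on $\nu$) so that every unwanted term is strictly smaller.
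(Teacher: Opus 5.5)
There is a genuine gap in (1), which is the heart of the proposition. Your splitting-off move is exactly the move the paper uses in \cref{lem:diamondnonzero}: write $\omega_r$ via split/merge and transport it around the cap to get a $2r$-lollipop merged with a $(2a-2r)$-lollipop. But the conclusion you draw from it is wrong.

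\begin{itemize}
\item A single part $\lambda_i$ cannot be turned into an even dot on a stick, and odd parts do not contribute zero. For every $a$, odd included, $\lol_{a,(a)}$ is $2^a$ times an elementary lollipop, so it is nonzero modulo lower terms. Likewise $\lol_{2,(1,1)}$ has leading action $8h_{i_1}h_{i_2}\neq 0$ (computed as in the proof of \cref{actionlollipop}).
\item Your computation of $\omega_{2r}$ on the stick of a $2r$-lollipop concerns the wrong object. Both legs there have thickness $r$, so only $\omega_r\otimes\omega_r$ appears and there are no off-diagonal terms to pair up.
\item The base case has the same confusion. $\lol_{a,\emptyset}$ is the undotted split--cap, which is $0$ by \eqref{bubblecap}. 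It is not $2^a$ times the elementary lollipop, which by \eqref{def:huochai} carries the head dot $\omega_a$ on a leg.
\end{itemize}

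The missing idea is this distinguished part $a$. The paper proves three things, none of which is in your plan.
\begin{enumerate}
\item[(i)] By \cref{lem:diamondnonzero}, $\lol_{a,\lambda}\equiv 0$ unless $|\lambda|-a\in 2\N$. Otherwise $\lol_{a,\lambda}$ is a combination of $\lol_{a,(a,\mu)}$ with $(a,\mu)\unrhd\lambda$. After splitting off the smallest part $r<a$, induction on $a$ forces a head dot $\omega_{a-r}$ on the $(2a-2r)$-piece (the $2r$-piece already carries $\omega_r$). The two heads are then recombined through a lantern into $\omega_a$ on the $a$-leg by \cref{wkdotsspan}.
\item[(ii)] By \cref{diamondadmissible}, inverse induction on $\rhd$ lets one take the remaining parts to come in equal pairs.
\item[(iii)] A pair $(c,c)$ on the $a$-leg of the full $2a$-lollipop is traded for $\omega_{2c}$ on the $2a$-stick. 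Moving $\omega_{2c}$ from the stick onto the legs and across the cap with \eqref{dotmovecupcaps} gives $\pm\omega_{(c,c)}$ plus terms $\omega_{(c+t,c-t)}$ with $t>0$, which are strictly more dominant; this is \eqref{eq:casei=i+1}.
\end{enumerate}
The ``top-degree bookkeeping'' you defer is essentially the whole proof.

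Your argument for (2) does not work as stated either. \Cref{dotmovefreely}(2) rewrites a dot on the thick strand as a sum of dots on the two thin strands, not conversely. So the packets $\omega_\lambda,\omega_\mu$ on the two sticks cannot be pushed back onto the $2a$-stick. Also, \cref{wkdotsspan} applies only to a lantern, and there is none there. The paper proceeds as follows:
\begin{itemize}
\item move $\omega_\lambda$ and $\omega_\mu$ through each lollipop's own split onto its cap legs;
\item rewrite the two caps as a single $2a$ split--cap, one of whose $a$-legs carries a lantern;
\item apply \cref{wkdotsspan} to that lantern to get a combination of $\lol_{a,\nu}$;
\item invoke (1).
\end{itemize}
So (2) is a corollary of (1), and your mutual induction between them is circular. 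Your (1) only produces even dots on smaller sticks. Step (2), done correctly, turns these back into leg dots $\lol_{a,\nu}$ at the same thickness and degree. Nothing in your scheme ever produces even dots on the $2a$-stick; that comes only from the pair conversion (iii), organized by the dominance-order induction.
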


The proof of \cref{lollipopspan} can be found in Appendix \ref{app:lollipop}. 

For any $a,b\in\N$ with $a+b\in 2\N$, let $U_{a,b}$ be the subspace of $\Hom_{\AWb}(\unit,(a,b))$ spanned by
\[
\begin{tikzpicture}[anchorbase, scale=0.45, color=\clr]
\draw [-,line width=1.4pt] (-0.02,-.1) to (-0.05,.4);
\draw [-,line width=1.4pt] (2,-.1) to (2,.4);
\draw[-,line width=1.1pt] (0,0) to [out=down,in=left] (1,-1) to [out=right,in=down] (2,0);
\draw[-,line width=1pt] (0,0) to (-.5,-.5) -- (-1,-1);
\draw[-,line width=1pt] (2,0) to (2.5,-.5) -- (3,-1);
\node at (0,.7){$\scriptstyle a$};
\node at (1.4,-1.3){$\scriptstyle t$};
\node at (2,.7){$\scriptstyle b$};
\node at (-1.05, -.4) {$\scriptstyle \omega_{\lambda}$};
\node at (3.1, -.4) {$\scriptstyle \omega_{\mu}$};
\draw (-.5,-.5)\bdot (-1,-1)\bdot;
\draw (.1,-.5)\bdot (3,-1)\bdot;
\node at (.05,-.9){$\scriptstyle \omega_{\nu}$};
\draw (2.5,-.5)\bdot;
\end{tikzpicture}
\]
for all \( t\in\N \) such that \( a - t, b - t \in 2\mathbb{N} \),  \( \lambda \in \EPar_{a-t} \), \( \mu \in \EPar_{b-t} \) and \( \nu \in \Par_t \).

\begin{proposition}
\label{lollipop-merge-split}  
For any $a,b\in\N$ with $a+b\in 2\N$ and $\lambda\in \Par_{a+b}$,  modulo lower degree terms we have 
$
\begin{tikzpicture}[anchorbase,scale=.8,color=\clr]
\draw[-,line width=1.2pt] (0.08,-.5) to (0.08,0.04);
\draw[-,line width=1pt] (0.34,.5) to (0.08,0);
\draw[-,line width=1pt] (-0.2,.5) to (0.08,0);
\node at (0.36,.65) {$\scriptstyle b$};
\node at (-.2,.65) {$\scriptstyle a$};
\draw (0.08,-.5) \bdot;
\end{tikzpicture} \in U_{a,b}
$ 
and 
$
\begin{tikzpicture}[anchorbase,scale=.8,color=\clr]
\draw[-,line width=1.2pt] (0.08,-.5) to (0.08,0.04);
\draw[-,line width=1pt] (0.34,.5) to (0.08,0);
\draw[-,line width=1pt] (-0.2,.5) to (0.08,0);
\node at (0.36,.65) {$\scriptstyle b$};
\node at (-.2,.65) {$\scriptstyle a$};
\node at (.4,-.2){$\scriptstyle \omega_{\lambda}$};
\draw (0.08,-.5) \bdot (0.08,-.2)\bdot;
\end{tikzpicture}
\in U_{a,b}$.
\end{proposition}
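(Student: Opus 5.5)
The first diagram is the lollipop of thickness $a+b$ split into $(a,b)$, with no dots except the lollipop dot. I would reduce it directly by expanding the lollipop via \eqref{def:huochai} as $\frac{1}{2^{s}}$ times a dotted cup merged into thickness $2s=a+b$. The split below a merge of a cup is then rewritten using \eqref{splitmerge}, which gives a sum over crossing diagrams. In each summand, the cup of thickness $s$ on each side of the merge is split into pieces, and the pieces travel either to the left output $a$ or to the right output $b$.

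Using the sliders \eqref{sliders}, \eqref{mergesplitslideleft} and \cref{bendstick}, every summand becomes a diagram of the following shape: a thin-ish middle cup of thickness $t$ joining the $a$ and $b$ strands, together with two lollipop-type pieces of thickness $a-t$ and $b-t$ attached to them. Pieces whose cup folds back onto itself across a merge vanish by \eqref{bubblecap}, which is what forces $a-t$ and $b-t$ to be even.

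The single dot on the cup is moved off using the rule for moving dots across cups, \eqref{dotmovecupcaps}, and past splits and merges via \cref{dotmovefreely}. Modulo lower degree terms, this leaves elementary dot packets on the middle cup (giving $\omega_\nu$ with $\nu\in\Par_t$) and on the two side lollipops. By \cref{lollipopspan}(1), each side lollipop $\lol_{(a-t)/2,\lambda}$ lies in $\venus_{a-t}$ modulo lower terms, so its dot packet can be taken to be an even partition.

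For the second diagram, I would first push $\omega_\lambda$ through the split using \cref{dotmovefreely}(2). This expresses it, up to lower degree terms, as a sum of $\omega_c\otimes\omega_d$ placed above the split, further refined into dot packets by \cref{wkdotsspan} and \cref{lem:commute}. The problem then reduces to the first case with additional dots $\omega_{\lambda'}$, $\omega_{\mu'}$ on the $a$ and $b$ outputs.

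After decomposing the first diagram as above, these extra dots sit on strands that split into the middle cup part of thickness $t$ and the side lollipop parts. Pushing them downward through those splits, again via \cref{dotmovefreely}(2), distributes them onto the cup and the side pieces. Dots on the middle cup can be transferred to one side using \eqref{dotmovecupcaps}, up to sign, and then collected by \cref{wkdotsspan} into a single $\omega_\nu$. Dots on the side pieces are absorbed by \cref{lollipopspan}(2) and (1) into $\venus_{a-t}$ and $\venus_{b-t}$.

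The main obstacle will be the bookkeeping in the decomposition step. One has to check that after applying \eqref{splitmerge} to the split-over-merge, every surviving summand really takes the canonical $U_{a,b}$ shape, and that all other configurations vanish by \eqref{bubblecap} or are of lower degree. I would organize this by induction on $a+b$: treat the $t=0$ and extreme terms directly, and handle the remaining terms by recognizing smaller instances of the same statement, which are covered by the inductive hypothesis.
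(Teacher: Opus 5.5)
Your proposal is correct and follows essentially the paper's route: write the lollipop as $\frac{1}{2^r}$ times a dotted cup merged into $2r=a+b$, resolve the merge--split by \eqref{splitmerge}, slide with \eqref{mergesplitslideleft} and apply \eqref{splitmerge} once more to pair up the sub-cups, then move the dot modulo lower terms (via \eqref{updotmovesplits+merge}, \eqref{dotmovecrossing2}, \eqref{dotmovecupcaps}) to land in $U_{a,b}$; for the second claim, push $\omega_\lambda$ through the split, apply the first claim, move the resulting dots down onto the legs and invoke \cref{lollipopspan}. The only deviations are inessential: no induction on $a+b$ is needed, because this direct computation already yields the $U_{a,b}$ shape, and the evenness of $a-t$ and $b-t$ is automatic rather than forced by \eqref{bubblecap} (each side piece is a sub-cup with both ends merged into the same strand, and modulo lower terms it always carries a full dot, so nothing has to vanish).
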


\begin{proof} 
Write $a+b=2r$.
We have
  \begin{align*}
   \begin{tikzpicture}[anchorbase,scale=.8,color=\clr]
\draw[-,line width=1.2pt] (0.08,-.5) to (0.08,0.04);
\draw[-,line width=1pt] (0.34,.5) to (0.08,0);
\draw[-,line width=1pt] (-0.2,.5) to (0.08,0);
\node at (0.36,.65) {$\scriptstyle b$};
\node at (-.2,.65) {$\scriptstyle a$};
\draw (0.08,-.5) \bdot;
\end{tikzpicture}
= \frac{1}{2^r}
  \begin{tikzpicture}[anchorbase,scale=.8,color=\clr]
\draw[-,line width=1.2pt] (0.08,-.2) to (0.08,0.04);
\draw[-,line width=1pt] (0.34,.5) to (0.08,0);
\draw[-,line width=1pt] (-0.2,.5) to (0.08,0);
\draw[-,line width=1pt] (0.34,-.5) to (0.08,-.2);
\draw[-,line width=1pt] (-0.2,-.5) to (0.08,-.2);
\draw[-,line width=1pt] (-.2,-.5) to [out=down,in=left] (0.08,-.8) to [out=right,in=down] (.34,-.5);
\node at (0.36,.65) {$\scriptstyle b$};
\node at (-.2,.65) {$\scriptstyle a$};
\draw (-.2,-.5) \bdot;
\end{tikzpicture}
\overset{\eqref{splitmerge}}=
\frac{1}{2^r}\sum_{\substack{0 \leq s \leq \min(a,r)\\0 \leq t \leq \min(r,b)\\t-s=b-r}}
\begin{tikzpicture}[anchorbase,scale=.9, color=\clr]
	\draw[-,thick] (0.58,0) to (0.58,.2) to (.02,.8) to (.02,1);
	\draw[-,thick] (0.02,0) to (0.02,.2) to (.58,.8) to (.58,1);
	\draw[-,thick] (0,0) to (0,1);
	\draw[-,line width=1pt] (0.61,0) to (0.61,1);
 \draw[-,thick](.3,-.4) to [out=left,in=up] (0,0);
 \draw[-,thick](.3,-.4) to [out=right,in=up] (0.6,0);
 \draw (.1,-.2) \bdot;
        \node at (0,1.16) {$\scriptstyle a$};
        \node at (0.6,1.16) {$\scriptstyle b$};
        \node at (-0.1,.5) {$\scriptstyle s$};
        \node at (0.77,.5) {$\scriptstyle t$};
\end{tikzpicture}
\overset{\eqref{updotmovesplits+merge}}\equiv
\frac{1}{2^r}\sum_{\substack{0 \leq s \leq \min(a,r)\\0 \leq t \leq \min(r,b)\\t-s=b-r}}
\begin{tikzpicture}[anchorbase,scale=.9, color=\clr]
	\draw[-,thick] (0.58,0) to (0.58,.2) to (.02,.8) to (.02,1);
	\draw[-,thick] (0.02,0) to (0.02,.2) to (.58,.8) to (.58,1);
	\draw[-,thick] (0,0) to (0,1);
	\draw[-,line width=1pt] (0.61,0) to (0.61,1);
 \draw[-,thick](.3,-.4) to [out=left,in=up] (0,0);
 \draw[-,thick](.3,-.4) to [out=right,in=up] (0.6,0);
 \draw (0,.5) \bdot;
  \draw (0.2,.4) \bdot;
        \node at (0,1.16) {$\scriptstyle a$};
        \node at (0.6,1.16) {$\scriptstyle b$};
        \node at (-0.2,.3) {$\scriptstyle s$};
        \node at (0.77,.5) {$\scriptstyle t$};
\end{tikzpicture}.
  \end{align*}
The summand can be rewritten as 
\begin{align*}
    \begin{tikzpicture}[anchorbase,scale=.9, color=\clr]
	\draw[-,thick] (0.58,0) to (0.58,.2) to (.02,.8) to (.02,1);
	\draw[-,thick] (0.02,0) to (0.02,.2) to (.58,.8) to (.58,1);
	\draw[-,thick] (0,0) to (0,1);
	\draw[-,line width=1pt] (0.61,0) to (0.61,1);
 \draw[-,thick](.3,-.4) to [out=left,in=up] (0,0);
 \draw[-,thick](.3,-.4) to [out=right,in=up] (0.6,0);
 \draw (0,.5) \bdot;
  \draw (0.2,.4) \bdot;
        \node at (0,1.16) {$\scriptstyle a$};
        \node at (0.6,1.16) {$\scriptstyle b$};
        \node at (-0.2,.3) {$\scriptstyle s$};
        \node at (0.77,.5) {$\scriptstyle t$};
\end{tikzpicture}
&\overset{\eqref{dotmovecrossing2}}\equiv
\begin{tikzpicture}[anchorbase,scale=.9, color=\clr]
	\draw[-,thick] (0.58,0) to (0.58,.2) to (.02,.8) to (.02,1);
	\draw[-,thick] (0.02,0) to (0.02,.2) to (.58,.8) to (.58,1);
	\draw[-,thick] (0,0) to (0,1);
	\draw[-,line width=1pt] (0.61,0) to (0.61,1);
 \draw[-,thick](.3,-.4) to [out=left,in=up] (0,0);
 \draw[-,thick](.3,-.4) to [out=right,in=up] (0.6,0);
 \draw (0,.5) \bdot;
  \draw (0.4,.6) \bdot;
        \node at (0,1.16) {$\scriptstyle a$};
        \node at (0.6,1.16) {$\scriptstyle b$};
        \node at (-0.2,.3) {$\scriptstyle s$};
        \node at (0.77,.5) {$\scriptstyle t$};
\end{tikzpicture}
\overset{\eqref{mergesplitslideleft}}= 
\begin{tikzpicture}[anchorbase,scale=.9, color=\clr]
	\draw[-,thick] (0.4,0) to (0.4,.2) to (.02,.8) to (.02,1);
	\draw[-,thick] (0.02,0) to (0.02,.2) to (.58,.8) to (.58,1);
	\draw[-,thick] (0,0) to (0,1);
    \draw[-,thick] (0,0) to (0,-.3);
      \draw[-,thick] (0,0) to [out=down,in=left] (0.2,-.2) to [out=right,in=down] (0.4,0);
       \draw[-,thick] (0,-.3) to [out=down,in=left] (0.3,-.5) to [out=right,in=down] (0.61,-.3);
	\draw[-,line width=1pt] (0.61,-.3) to (0.61,1);
 \draw (0,.5) \bdot;
  \draw (0.4,.6) \bdot;
        \node at (0,1.16) {$\scriptstyle a$};
        \node at (0.6,1.16) {$\scriptstyle b$};
        \node at (-0.2,.3) {$\scriptstyle s$};
        \node at (0.77,.5) {$\scriptstyle t$};
\end{tikzpicture}\\
&\overset{\eqref{splitmerge}}=
\sum_{k,l} ~ 
\begin{tikzpicture}[anchorbase,scale=.9, color=\clr]
\draw[-,thick] (0,1) to (0,0);
\draw[-,thick] (1,1) to (1,0);
\draw[-,thick] (0,.5) to (0.4,0);
\draw[-,thick] (0.4,.4) to (0.4,0);
\draw[-,thick] (0,0) to (1,.9);
 \draw[-,thick] (0.4,0) to [out=down,in=left] (0.7,-.2) to [out=right,in=down] (0.9,0);
 \draw[-,thick](0.9,0) to (0,.9);
 \node at (0,1.16) {$\scriptstyle a$};
        \node at (1,1.16) {$\scriptstyle b$};
 \draw[-,thick] (0,0) to [out=down,in=left] (0.5,-.4) to [out=right,in=down] (1,0);
 \draw (0,.6) \bdot;
  \draw (0.75,.7) \bdot;
  \node at (-0.2,.3) {$\scriptstyle k$};
        \node at (0.55,.15) {$\scriptstyle l$};
\end{tikzpicture}=
\sum_{k,l} ~ 
\begin{tikzpicture}[anchorbase,scale=.9, color=\clr]
\draw[-,thick] (0.2,1) to (0.2,0.8);
\draw[-,thick](0,.6) to (0,.2);
\draw[-,thick] (0.2,.8) to [out=left,in=up] (0,.6);
\draw[-,thick] (0.2,.8) to [out=right,in=up] (0.4,.6);
\draw[-,thick] (1.2,1) to (1.2,0.8);
\draw[-,thick] (1.2,.8) to [out=left,in=up] (1,.6);
\draw[-,thick] (1.2,.8) to [out=right,in=up] (1.4,.6);
\draw[-,thick](1.4,.6) to (1.4,.2);
 \draw[-,thick] (0,0.2) to [out=down,in=left] (0.7,-.2) to [out=right,in=down] (1.4,0.2);
  \draw[-,thick] (0.4,0.6) to [out=down,in=left] (0.7,.2) to [out=right,in=down] (1,0.6);
  \draw[-,thick] (1,0.6) to [out=down,in=left] (1.2,.4) to [out=right,in=down] (1.4,0.6);
   \draw[-,thick] (0,0.6) to [out=down,in=left] (.2,.4) to [out=right,in=down] (.4,0.6);
    \draw (0,.65) \bdot;
  \draw (1,.7) \bdot;
  \node at (-0.2,.3) {$\scriptstyle k$};
        \node at (0.55,.15) {$\scriptstyle l$};
\end{tikzpicture}\\
&\overset{\eqref{updotmovesplits+merge}}
\equiv\sum_{k,l} ~ 
\begin{tikzpicture}[anchorbase,scale=.9, color=\clr]
\draw[-,thick] (0.2,1) to (0.2,0.8);
\draw[-,thick](0,.6) to (0,.2);
\draw[-,thick] (0.2,.8) to [out=left,in=up] (0,.6);
\draw[-,thick] (0.2,.8) to [out=right,in=up] (0.4,.6);
\draw[-,thick] (1.2,1) to (1.2,0.8);
\draw[-,thick] (1.2,.8) to [out=left,in=up] (1,.6);
\draw[-,thick] (1.2,.8) to [out=right,in=up] (1.4,.6);
\draw[-,thick](1.4,.6) to (1.4,.2);
 \draw[-,thick] (0,0.2) to [out=down,in=left] (0.7,-.2) to [out=right,in=down] (1.4,0.2);
  \draw[-,thick] (0.4,0.6) to [out=down,in=left] (0.7,.2) to [out=right,in=down] (1,0.6);
  \draw[-,thick] (1,0.6) to [out=down,in=left] (1.2,.4) to [out=right,in=down] (1.4,0.6);
   \draw[-,thick] (0,0.6) to [out=down,in=left] (.2,.4) to [out=right,in=down] (.4,0.6);
    \draw (0.2,.4) \bdot;
      \draw (0,.3) \bdot;
  \draw (1.2,.4) \bdot;
   \draw (.9,.3) \bdot;
  \node at (-0.2,.3) {$\scriptstyle k$};
        \node at (0.55,.15) {$\scriptstyle l$};
\end{tikzpicture}
= 2^{r-t}\sum_{k+l =t}(-1)^l \binom{t}{k}
\begin{tikzpicture}[anchorbase, scale=0.4, color=\clr]
\draw[-,line width=1.2pt] (0,0) to [out=down,in=left] (1,-1) to [out=right,in=down] (2,0);
\draw[-,line width=1pt] (0,0) to (-.5,-.5);
\draw[-,line width=1pt] (2,0) to (2.5,-.5);
\draw[line width=1.3pt] (0,0) -- (0,.4)
                      (2,0) -- (2,.4);
\node at (0,.65){$\scriptstyle a$};
\node at (1.4,-1.2){$\scriptstyle t$};
\node at (2,.65){$\scriptstyle b$};
\draw (-.5,-.5)\bdot;
\draw (.1,-.5)\bdot;
\draw (2.5,-.5)\bdot;
\end{tikzpicture}\in U_{a,b}.
\end{align*}
 This proves the first claim.

We have
\begin{align*}
    \begin{tikzpicture}[anchorbase,scale=.8,color=\clr]
\draw[-,line width=1.2pt] (0.08,-.5) to (0.08,0.04);
\draw[-,line width=1pt] (0.34,.5) to (0.08,0);
\draw[-,line width=1pt] (-0.2,.5) to (0.08,0);
\node at (0.36,.65) {$\scriptstyle b$};
\node at (-.2,.65) {$\scriptstyle a$};
\node at (.4,-.2){$\scriptstyle \omega_{\lambda}$};
\draw (0.08,-.5) \bdot (0.08,-.2)\bdot;
\end{tikzpicture}
~\overset{\eqref{dotmovesplitss}}{\equiv}~
\sum_{\vartheta+\varsigma=\lambda}
    \begin{tikzpicture}[anchorbase,scale=.8,color=\clr]
\draw[-,line width=1.2pt] (0.08,-.5) to (0.08,0.04);
\draw[-,line width=1pt] (0.34,.5) to (0.08,0);
\draw[-,line width=1pt] (-0.2,.5) to (0.08,0);
\node at (0.36,.65) {$\scriptstyle b$};
\node at (-.2,.65) {$\scriptstyle a$};
\draw (0.08,-.5) \bdot;
\draw (.2,.24)\bdot (-.1,.24)\bdot;
\node at (.55,.24){$\scriptstyle \omega_{\varsigma}$};
\node at (-.45,.24){$\scriptstyle \omega_{\vartheta}$};
\end{tikzpicture}
~\overset{\cref{lollipop-merge-split}}{\equiv}~
\sum h_{x,y,z}^{\vartheta,\varsigma}
\begin{tikzpicture}[anchorbase, scale=0.45, color=\clr]
\draw[-,line width=1.2pt] (0,0) to [out=down,in=left] (1,-1) to [out=right,in=down] (2,0);
\draw[-,line width=1pt] (0,0) to (-.5,-.5) -- (-1,-1);
\draw[-,line width=1pt] (2,0) to (2.5,-.5) -- (3,-1);
\draw[line width=1.3pt] (0,0) -- (0,.6)
                      (2,0) -- (2,.6);
\draw (0,.3)\bdot (2,.3)\bdot;
\node at (-.6,.3){$\scriptstyle \omega_{\vartheta}$};
\node at (2.6,.3){$\scriptstyle \omega_{\varsigma}$};
\node at (0,.8){$\scriptstyle a$};
\node at (1.4,-1.2){$\scriptstyle t$};
\node at (2,.8){$\scriptstyle b$};
\node at (-1.05, -.25) {$\scriptstyle \omega_{x}$};
\node at (3.1, -.25) {$\scriptstyle \omega_{z}$};
\draw (-.5,-.5)\bdot (-1,-1)\bdot;
\draw (.1,-.5)\bdot (3,-1)\bdot;
\node at (.05,-.95){$\scriptstyle \omega_y$};
\draw (2.5,-.5)\bdot;
\end{tikzpicture},
\end{align*}
for $h_{x,y,z}^{\vartheta,\varsigma}\in \mathbb J'.$
Through \eqref{dotmovemergess}, we may move the dots $\omega_{\vartheta}$ and $\omega_{\varsigma}$ down to the legs. Then the second claim follows from \cref{lollipopspan}.
\end{proof}

\subsection{Bubbles}

\begin{lemma}
\label{lem:nested}
    Any nested or intersecting bubbles in $\End_{\AWb}(\unit)$ is a linear combination of products of mutually disjoint, non-nested bubbles.
\end{lemma}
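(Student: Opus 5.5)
The plan is to argue by induction on a complexity measure of closed dotted B-web diagrams. The measure is the pair $(d,c)$, ordered lexicographically, where $d$ is the degree of \eqref{modLOT} and $c$ is the total number of crossings after all thick crossings have been resolved into thin ones.

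First I would reduce to diagrams of a convenient shape. Using the definition \eqref{equ:r=0andr=a} of the dots $\omega_r$ in terms of splits, merges and the generating dot, every closed diagram in $\End_{\AWb}(\unit)$ can be written using only elementary dots on strands. Using \eqref{sliders}, \eqref{braid}, the zigzag relations and the last two relations of \eqref{zigzag}, all merges, splits, cups and caps can be slid through crossings \emph{exactly}, with no correction terms. So, apart from the dots, a closed diagram is determined up to planar isotopy and Reidemeister-type moves by its underlying web. Consequently, if a diagram carried no dots, any component nested inside or crossing another could be pulled off freely. Such a component could then be placed to the left of everything else, giving a product of disjoint, non-nested bubbles.

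Next I would handle dots crossing strands. Choose a component $C$ which is nested in, or intersects, the rest of the diagram, and isotope it outward past each obstructing strand. Whenever a dot on $C$ (or on the strand being passed) has to cross, apply \eqref{dotmovecrossing1} or \eqref{dotmovecrossing2}. The $t=0$ summands give exactly the moved diagram. The $t\ge 1$ summands replace part of the crossing by cup/cap pairs joining the two strands. These correction terms either have strictly smaller degree, because dots on the $t$-strands are absorbed via \eqref{updotmovesplits+merge} and \eqref{dotmovecupcaps} into lower terms, or have the same degree but strictly fewer thin crossings. Either way they are smaller in the order $(d,c)$, so the induction hypothesis applies to them. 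They remain closed diagrams, so they are again of the form being treated. The term $E_a^\lambda$-type bookkeeping from \cref{dotmovefreely} and \cref{wkdotsspan} ensures that the leading term keeps the same dot packet after moving.

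Iterating over all components, every closed diagram becomes a disjoint union of non-nested components, plus lower terms, which are handled by induction. The base case, degree $0$ with no crossings, is a planar dotless closed web. There nesting is removed purely by planar isotopy, since a strict monoidal category with $\unit$ has $\End(\unit)$ commutative and inner closed components can be pulled out freely.

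The main obstacle is verifying that every correction term produced in this step is genuinely smaller in the order $(d,c)$. Sliding $C$ past a strand can temporarily \emph{create} crossings, so $c$ must be measured carefully, for instance as the number of crossings between $C$ and the rest. I would first check the thin case, $a=b=1$ in \eqref{dotmovecrossing1}, which is the affine Brauer relation and where the single correction term is a cup–cap with one fewer crossing. I would then reduce the thick case to it by exploding thick strands via \eqref{splitmerge} into thin ones over $\mathbb J'$, using that the dot-packet relations are compatible with this expansion.
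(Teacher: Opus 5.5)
Your strategy is the paper's: induct on degree, slide dots through crossings modulo lower-degree terms, and handle the leading, effectively dotless configuration by exact moves. Those moves are \eqref{zigzag}, \eqref{sliders}, \eqref{braid}, and also \eqref{curlremove}, which you should include since each Reidemeister~I move costs a sign $(-1)^a$. However, the step you single out as the main obstacle is misread, and your proposed way of dealing with it would fail.

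\emph{The correction terms.} In \eqref{dotmovecrossing1}, every summand with $t\ge 1$, on either side, carries the dot on the crossing strand of thickness $b-t$. Hence it has degree $b-t<b$. There are no dots on the $t$-strands to be absorbed via \eqref{updotmovesplits+merge}, and no ``same degree, fewer crossings'' case ever occurs; the same holds for \eqref{dotmovecrossing2}. This is precisely \cref{dotmovefreely}(1), which is already available to you. So the crossing count $c$ is superfluous and plain induction on the degree suffices. Crossings created by isotopies along the way are harmless, because every correction term drops in degree regardless.

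\emph{The thick-to-thin reduction.} Your fallback of exploding thick strands into thin ones via \eqref{splitmerge} does not work over $\mathbb J'$. Recovering $1_a$ from an $a$-fold thin split followed by an $a$-fold thin merge requires dividing by $a!$, which is not invertible in $\mathbb J'$ once $a\ge 3$.

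\emph{The base case.} Nesting is not removed ``purely by planar isotopy'', since nesting is a planar-isotopy invariant. Commutativity of $\End_{\AWb}(\unit)$ also does not by itself let a closed component pass through a strand. What un-nests components is the exact sliding through crossings that you described earlier, and you should cite that instead.
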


\begin{proof}
We proceed by induction on the total degree, showing that every nested or intersecting configuration can be rewritten as a linear combination of unnested, nonintersecting bubbles, modulo lower degree terms.

For the base case when the degree is zero (i.e., the bubbles contain no dots), the relations \eqref{zigzag}, \eqref{braid}, and \eqref{curlremove} imply that strands can be moved freely via isotopies to eliminate any nesting and intersection.

For the inductive step, \cref{dotmovefreely} allows us to slide dots along the bubbles modulo lower degree terms. Specifically, working modulo lower-degree terms, we can systematically shift each bubble to the left to eliminate nesting, and subsequently straighten each strand to remove crossings. The lemma follows. 
\end{proof}

For any partition $\vartheta=(\vartheta_1,\ldots, \vartheta_k)\in \Par_a$, we define the following dotted bubbles
\begin{align}
    \label{bubble+dot+par}    \dotbubble_{\vartheta,a}:=\begin{tikzpicture}[yscale=.6,anchorbase, scale=.8, color=\clr] 
\draw[-,line width=1.2pt]  (0.5,2) to[out=left,in=up] (0,1)to[out=down,in=left] (0.5,0);      
\draw[-,line width=1.2pt] (0.5,0)to[out=right,in=down] (1,1)to[out=up,in=right] (0.5,2);
\node at (-.2,.9){$\scriptstyle \vdots$};
\draw (0.1,1.7) \bdot;
\node at (-.3,1.7) {$\scriptstyle \omega_{\vartheta_1}$}; 
\draw (0,1.2) \bdot; 
\node at (-0.4,1.2) {$\scriptstyle \omega_{\vartheta_2}$};
\draw (0.05,.4) \bdot; 
\node at (-0.35,.2) {$\scriptstyle \omega_{\vartheta_k}$};
\node at (0.3,-.3) {$\scriptstyle a$};
\end{tikzpicture} \: ,
\qquad \qquad
\dotbubble_a:=\dotbubble_{(a),a} =\bubbledota \: .
\end{align}
Also set $\dotbubble_0=1$.

Let $\Xi$ be the commutative subalgebra of $\End_{\AWb}(\unit)$ generated by the even bubbles $\bubbledota$, for $a \in 2\N$. (It will follow from \cref{thm:basisAWb} that $\Xi =\End_{\AWb}(\unit)$ and that it is a polynomial algebra.)

\begin{lemma}
\label{Deltaaodd}
We have $\dotbubble_a\equiv 0$ and $\dotbubble_a\in \Xi$, for $a$ odd.
\end{lemma}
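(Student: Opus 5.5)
Fix an odd $a$. The plan is to prove the two claims together by induction on $a$, using the cap/dot relation \eqref{dotmovecupcaps} to show that $\dotbubble_a$ equals its own negative modulo lower degree.

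\textbf{Step 1: $\dotbubble_a\equiv 0$.} Write the bubble $\dotbubble_a$ as $\capa\circ(\text{dot }\omega_a\otimes 1)\circ\cupa$. Slide the dot along the cap using \eqref{dotmovecupcaps}. Since $a$ is odd, this moves it from the left strand to the right strand at the cost of a sign $(-1)^a=-1$. Next move it back around the bubble through the cup with the second relation of \eqref{dotmovecupcaps}; this either restores the left position with a further sign, or, read as a single-strand isotopy, lets the dot travel all the way around. Combining this with rotation invariance of the closed diagram (by \eqref{zigzag}, together with \eqref{curlremove} if a curl appears) should give $\dotbubble_a=-\dotbubble_a$ exactly, or at least modulo lower degree terms.

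I expect this sign bookkeeping to be the main obstacle. The two relations in \eqref{dotmovecupcaps} together only express consistency, so the honest argument is probably the following. Use the crossing/cap relation \eqref{bubblecap}, $\capa\circ X_{a,a}=(-1)^a\capa$, with \eqref{dotmovefreely}(1), which lets the dot pass the crossing modulo lower terms. Then
\[
\capa(\omega_a\otimes1)=(-1)^a\capa X_{a,a}(\omega_a\otimes 1)\equiv(-1)^a\capa(1\otimes\omega_a)X_{a,a}.
\]
By \eqref{dotmovecupcaps} the last term is $\capa(\omega_a\otimes1)X_{a,a}$, and composing with $\cupa$, using $X_{a,a}\cupa=(-1)^a\cupa$, returns $\dotbubble_a$ with total sign $(-1)^a$ times one further factor $(-1)^a$ coming from \eqref{dotmovecupcaps}. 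I will have to choose the route so that an odd number of factors $(-1)^a$ survives, yielding $2\dotbubble_a\equiv0$. Since $2$ is invertible in $\mathbb J'$, this gives $\dotbubble_a\equiv0$.

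\textbf{Step 2: $\dotbubble_a\in\Xi$.} By Step 1, $\dotbubble_a$ is a $\mathbb J'$-combination of closed dotted diagrams of degree $<a$. By \cref{lem:nested}, each of these is a combination of products of disjoint, non-nested bubbles. Any single closed component of thickness $c$ carrying a dot packet can be rewritten, using \cref{wkdotsspan}, \cref{lem:commute} and the split/merge dot relations, as a combination of elementary bubbles $\dotbubble_{\vartheta,c}$. Such a bubble reduces to $\dotbubble_{c'}$ for $c'<a$, times bubbles of lower degree. For this reduction I open the bubble into a lollipop via the cup/split, apply \cref{lollipopspan}(1) to bring it into $\venus_{2c}$, and close it up again. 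Even-thickness bubbles already lie in $\Xi$. Odd ones of degree $<a$ lie in $\Xi$ by the induction hypothesis, and undotted bubbles are the scalars $\binom{x}{c}\in\mathbb J'$.

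Running a secondary induction on degree, so that all lower degree terms end in $\Xi$, completes the argument. The base case $a=1$ is a thin dotted bubble. There Step 1 gives $\dotbubble_1=-\dotbubble_1+(\text{degree }0)$, and the degree $0$ part is a multiple of the undotted bubble, which lies in $\mathbb J'\subset\Xi$.
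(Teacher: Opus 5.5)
Your Step~1, in its second form, is the paper's argument. You insert a crossing via \eqref{bubblecap} and its cup version, push the dot through it with \eqref{dotmovecrossing1} modulo lower degree, remove the crossing, and compare with \eqref{dotmovecupcaps}. Your displayed chain is already correct, so there is no route left to ``choose''. There are exactly three signs:
\begin{itemize}
\item $\capa\circ X_{a,a}=(-1)^a\capa$;
\item the sign from \eqref{dotmovecupcaps} that turns $(-1)^a\capa\circ(1\otimes\omega_a)\circ X_{a,a}$ into $\capa\circ(\omega_a\otimes 1)\circ X_{a,a}$;
\item $X_{a,a}\circ\cupa=(-1)^a\cupa$.
\end{itemize}
Together they give $\dotbubble_a\equiv(-1)^{3a}\dotbubble_a=-\dotbubble_a$. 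Your remark about ``one further factor $(-1)^a$ coming from \eqref{dotmovecupcaps}'' counts that sign twice, and would yield the empty statement $\dotbubble_a\equiv\dotbubble_a$. Drop the first paragraph of Step~1 as well. The relations \eqref{dotmovecupcaps}, the zigzags and planar rotation only ever give $\dotbubble_a=(-1)^{2a}\dotbubble_a$; the crossing relation \eqref{dotmovecrossing1} is the one real input.

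The genuine gap is in Step~2, at ``such a bubble reduces to $\dotbubble_{c'}$''. Your overall shape (lower-degree closed diagrams, then \cref{lem:nested}, then induction on degree) matches the paper. However, the claim that every dotted bubble $\dotbubble_{\vartheta,c}$ lies in the algebra generated by the $\dotbubble_{c'}$ is exactly the nontrivial content of \cref{bubblealgebra}, and the mechanism you propose does not establish it.
\begin{itemize}
\item The obvious way to factor a thickness-$c$ bubble through a $2c$ strand, merge$\circ$cup, is zero by \eqref{bubblecap}. So you cannot cut the bubble open without dots.
\item Whatever dotted morphism $\unit\to 2c$ you use to close an element of $\venus_{2c}$ back up, you land on closed diagrams of the same degree, of the kind treated in \cref{stick-stick}. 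The paper itself proves that those lie in $\Xi$ via \cref{bubblealgebra}.
\end{itemize}
So nothing in your open / apply \cref{lollipopspan} / close loop lowers complexity, and the induction never closes.

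The paper states the second claim as a consequence of the first and \cref{lem:nested}. The actual work on the lower-degree dotted bubbles is \cref{bubblealgebra}, proved by induction on thickness using the dumbbell relations \eqref{dumbell+dots}, \eqref{sumdumbell} and the $\binom{at}{a}\dotbubble_{at}$ computation \eqref{eq:ataat}. That proposition in turn uses the first claim of this lemma to pass from $\widetilde\Xi$ to $\Xi$, so the two statements form one induction on degree. Replace your lollipop step by an appeal to \cref{bubblealgebra} in that degree-inductive form.
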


\begin{proof}
Assume that $a$ is odd. By \eqref{zigzag} we have 
    \[
    \dotbubble_a=(-1)^a
    \begin{tikzpicture}
        [anchorbase,scale=.5,color=\clr]
        \draw[thick] (.5,-.5) \braidup (-.5,.5) to [out=up,in=left] (0,1) to [out=right,in=up] (.5,.5) \braiddown (-.5,-.5) to [out=down,in=left] (0,-1) to [out=right,in=down] (.5,-.5);
        \node at (0,-1.25){$\scriptstyle a$};
        \draw (-.5,.5)\bdot;
    \end{tikzpicture}
    ~\overset{\eqref{dotmovecrossing1}}{\equiv}(-1)^a~
       \begin{tikzpicture}
        [anchorbase,scale=.5,color=\clr]
        \draw[thick] (.5,-.5) \braidup (-.5,.5) to [out=up,in=left] (0,1) to [out=right,in=up] (.5,.5) \braiddown (-.5,-.5) to [out=down,in=left] (0,-1) to [out=right,in=down] (.5,-.5);
        \node at (0,-1.25){$\scriptstyle a$};
        \draw (.5,-.5)\bdot;
    \end{tikzpicture}
    =
    \begin{tikzpicture}
        [anchorbase,color=\clr,scale=.5]
        \draw[thick] (.5,0) to [out=up,in=right] (0,.5) to
                [out=left,in=up] (-.5,0) to
                [out=down,in=left] (0,-.5) to
                [out=right,in=down] (.5,0);
        \draw (.5,0)\bdot;
        \node at (0,-.75){$\scriptstyle a$};
    \end{tikzpicture}
    \overset{\eqref{dotmovecupcaps}}{=} (-1)^a\dotbubble_a
    = -\dotbubble_a.
    \]
    Since we work over \(\mathbb J'\), the scalar 2 is invertible. Thus the congruence \(\dotbubble_a\equiv -\dotbubble_a\) expresses \(\dotbubble_a\) as a linear combination of lower-degree bubble diagrams. The second claim follows from the first one and \cref{lem:nested}. 
\end{proof}

Our goal is to show that $\dotbubble_{\vartheta,a}\in\Xi$ by induction on $a$. The following lemma is the first step of the induction (for $a=1$).
For the thin bubble, we use $\bubbledott$ to denote the diagram with  $t$ dots on the left of the bubble. 
\begin{lemma}
\label{lem:bubble1+dott}
For any $t\in \Z_{\geq 1}$, we have   $\bubbledott\in \Xi$.    
\end{lemma}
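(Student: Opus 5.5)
We will prove the statement by strong induction on $t$, with the claim that the thin bubble carrying $t$ dots lies in $\Xi$; the case $t=0$ is the constant $x\in\mathbb J'$. The first reduction handles odd $t$: the argument of \cref{Deltaaodd} applies verbatim to the thin bubble. Using the zigzag relation we rewrite the bubble so that the dots sit on a curl. We then pass the dots through the crossing via \eqref{dotmovecrossing1}, which for $a=b=1$ reads: dot on one side plus the cup–cap correction term equals dot on the other side. Finally \eqref{dotmovecupcaps} moves the dots from the left side of the bubble to the right side at the cost of $(-1)^t$. This gives $\bubbledott=(-1)^t\bubbledott+(\text{terms with fewer dots})$. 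The lower terms here are products of thin bubbles with fewer dots, possibly nested or disjoint; by \cref{lem:nested} they are products of disjoint thin bubbles with $<t$ dots. For $t$ odd we may divide by $2$ in $\mathbb J'$, and induction finishes this case.

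For even $t$ the point is to relate the thin dotted bubble to the thick even bubble $\dotbubble_{t}$, which lies in $\Xi$ by definition. Our plan is to start from $\dotbubble_{2s}$ and write the thick strand of thickness $2s$ as split-then-merge, using \eqref{splitmerge} with the coefficient $\binom{2s}{1}$ carefully tracked. We expand the dot $\omega_{2s}$ on the thick strand, or more conveniently work with thickness $1$ pieces, via \eqref{updotmovesplits+merge} and \cref{Lem:dotmovesplitmerge}. The aim is to reach an expression in which the leading term is a thin strand with $t$ dots closed up against a thickness $(t-1)$ strand carrying no dots.

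The cleaner route, which we would try first, is to use instead the thin dotted bubble itself and explode it. Take the thin bubble with $t$ dots and note that the relation \eqref{dotmovecupcaps} together with \eqref{lollipopr} identifies the dot power $x_1^t$ on a thin strand as the leading part of a symmetric expression. Then \cref{wkdotsspan}, applied iteratively to merges of $t$ thin strands each carrying one dot, gives that the closed diagram equals $c\,\dotbubble_{t}$ plus lower degree bubbles in $E^{\nu}_t$ with $\nu\rhd(1^t)$, all closed into thick bubbles. Concretely, one compares the thin bubble with $t$ dots against $\dotbubble_{(t),t}$ via the identity relating $\omega_t$ to products of single dots through splits (the $r=a$ case of \eqref{equ:r=0andr=a}). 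In either formulation, modulo lower degree, $\bubbledott$ becomes a nonzero $\mathbb J'$-multiple of $\dotbubble_{t}$ plus bubbles $\dotbubble_{\vartheta,a}$ of degree $\le t$. Lower degree terms are handled by induction, after reducing nested configurations with \cref{lem:nested}.

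The main obstacle is to keep the induction closed. Reducing to thick bubbles produces terms $\dotbubble_{\vartheta,a}$ with $a>1$, which are not yet known to lie in $\Xi$ at this stage. To avoid circularity, we will organize the induction on total degree simultaneously over all thicknesses. Since degree is additive and strictly drops in every correction term, it suffices to show that each thick bubble of degree $d$ is congruent, modulo degree $<d$, to a polynomial in the $\dotbubble_{2k}$ and in thin dotted bubbles. We would then verify that the leading coefficient relating $\bubbledott$ to $\dotbubble_t$ is a unit in $\mathbb J'$, namely a power of $2$ up to sign, coming from \eqref{def:huochai} and \eqref{updotmovesplits+merge}.
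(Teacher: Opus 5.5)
Your odd case is the paper's argument: the curl trick with \eqref{dotmovecrossing1} and \eqref{dotmovecupcaps} makes the bubble congruent to minus itself, and $2$ is invertible. The even case, however, has a genuine gap. None of the routes you sketch yields a relation that isolates the thin bubble.

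Write $B_j$ for the thin bubble with $j$ dots.
\begin{itemize}
\item Your first target is a thin strand with $t$ dots closed against an undotted strand of thickness $t-1$ inside the thickness-$t$ bubble. By \eqref{dumbbellremove} this is $\binom{x-1}{t-1}B_t$, a non-unit multiple, so you cannot solve for $B_t$.
\item Your second route uses \cref{wkdotsspan} in the wrong direction. That lemma rewrites split, dotted and re-merged thick strands as dot packets on one thick strand. Closing a merge of $t$ once-dotted thin strands gives a multiple of $\dotbubble_t$, not $B_t$, so the congruence $B_t\equiv c\,\dotbubble_t+\sum\dotbubble_{\vartheta,a}$ is asserted but never derived.
\item The terms $\dotbubble_{\vartheta,a}$ with $a>1$ and $\vartheta\neq(a)$ have full degree $t$, so they are not lower-order noise. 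Your simultaneous induction over all thicknesses assumes that every thick bubble of degree $d$ is, modulo degree $<d$, a polynomial in the $\dotbubble_{2k}$ and thin dotted bubbles. That is the content of \cref{bubblealgebra}, whose proof uses the present lemma as its base case.
\item Even granting that, without a triangularity statement $B_t$ itself can reappear among the thin bubbles of degree $t$, and the argument becomes circular.
\item The claimed unit coefficient ``a power of $2$'' from \eqref{def:huochai} has no basis. The lollipop terms of \eqref{updotmovesplits+merge} are of lower degree and play no role.
\end{itemize}

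The missing idea is a concrete recursion that only ever produces the single-packet bubbles $\dotbubble_j$. The paper starts from $t\,\dotbubble_t$, which equals the closure of $\omega_{t-1}\otimes\omega_1$ between a split $t\to(t-1,1)$ and a merge. Then \eqref{dumbbellremove} and \eqref{splitmerge} give $B_1\dotbubble_{t-1}$ (after un-nesting) plus a twisted term. In the twisted term, \eqref{dotmovecrossing1}, \eqref{curlremove}, \eqref{dotmovecupcaps} and \eqref{dumbbellremove} convert one unit of thickness into an extra dot on the thin strand. Iterating gives, for $t=2s$,
\[ t\,\dotbubble_t\equiv\sum_{k=1}^{s}\Big((-1)^{k-1}B_{2k-1}\,\dotbubble_{t-2k+1}+(-1)^{k}B_{2k}\,\dotbubble_{t-2k}\Big). \]
Here $B_t$ occurs once, with coefficient $(-1)^s$. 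Every other term is a thin bubble with fewer dots times some $\dotbubble_j$ with $j<t$, which lies in $\Xi$ by definition for $j$ even and by \cref{Deltaaodd} for $j$ odd. So the induction closes on $t$ alone, and no division by a non-unit is needed.
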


\begin{proof}
   Suppose $t$ is odd. Then 
     \[
   \bubbledott=-
    \begin{tikzpicture}
        [anchorbase,scale=.5,color=\clr]
        \draw[thick] (.5,-.5) \braidup (-.5,.5) to [out=up,in=left] (0,1) to [out=right,in=up] (.5,.5) \braiddown (-.5,-.5) to [out=down,in=left] (0,-1) to [out=right,in=down] (.5,-.5);
        \node at (-.75,.5){$\scriptstyle t$};
        \draw (-.5,.5)\bdot;
    \end{tikzpicture}
    ~\overset{\eqref{dotmovecrossing1}}{\equiv}-~
       \begin{tikzpicture}
        [anchorbase,scale=.5,color=\clr]
        \draw[thick] (.5,-.5) \braidup (-.5,.5) to [out=up,in=left] (0,1) to [out=right,in=up] (.5,.5) \braiddown (-.5,-.5) to [out=down,in=left] (0,-1) to [out=right,in=down] (.5,-.5);
        \node at (0.75,-.5){$\scriptstyle a$};
        \draw (.5,-.5)\bdot;
    \end{tikzpicture}
    =
    \begin{tikzpicture}
        [anchorbase,color=\clr,scale=.5]
        \draw[thick] (.5,0) to [out=up,in=right] (0,.5) to
                                [out=left,in=up] (-.5,0) to
                                [out=down,in=left] (0,-.5) to
                                [out=right,in=down] (.5,0);
        \draw (.5,0)\bdot;
        \node at (0.75,0){$\scriptstyle t$};
    \end{tikzpicture}
    \overset{\eqref{dotmovecupcaps}}{=} -\bubbledott .
    \]
    Hence it suffices to prove the result when $t$ is even.
    
    Assuming $t$ is even, we have 
    \begin{align*}
        t \bubbledottt
        =&\begin{tikzpicture}[yscale=0.8,anchorbase,scale=.7,color=\clr]
\draw[-,thick] (0.08,-.8) to (0.08,-.5);
\draw[-,thick] (0.08,.3) to (0.08,.6) to [out=up,in=up] (1,.6) to (1,-.8) to [out=down,in=down] (.08,-.8);
\draw[-,line width=1pt] (0.1,-.51) to [out=45,in=-45] (0.1,.31);
\draw[-,line width=1pt] (0.06,-.51) to [out=135,in=-135] (0.06,.31);
\draw(-.1,0) \bdot;
\draw(.25,0) \bdot;
\node at (.45,-.35) {$\scriptstyle 1$};
\node at (.5,-1.3){$\scriptstyle t$};
\end{tikzpicture}
~\overset{\eqref{dumbbellremove}}{=}~
\begin{tikzpicture}[anchorbase,scale=.7,color=\clr]
\draw[-,line width=1pt] (-.1,-.51) to (-.1,.31);
\draw[-,line width=1pt] (.25,-.51) to (.25,.31);
\draw[-,line width=1pt] (.25,.31) to [out=up,in=up] (.75,.31) to (1,.1); 
\draw[-,line width=1pt] (-.1,.31) to [out=up,in=up] (1.25,.31) to (1,.1); 
\draw[-,line width=1pt] (.25,-.51) to [out=down,in=down] (.75,-.51) to (1,-.3); 
\draw[-,line width=1pt] (-.1,.-.51) to [out=down,in=down] (1.25,-.51) to (1,-.3);
\draw[-,thick] (1,-.3) to (1,.1);
\draw(-.1,0) \bdot;
\draw(.25,0) \bdot;
\node at (.45,-.35) {$\scriptstyle 1$};
\node at (.5,-1.1){$\scriptstyle t-1$};
\end{tikzpicture}
~\overset{\eqref{splitmerge}}{=}
\begin{tikzpicture}[anchorbase,scale=.7,color=\clr]
\draw[-,line width=1pt] (-.1,-.51) to (-.1,.31);
\draw[-,line width=1pt] (.25,-.51) to (.25,.31);
\draw[-,line width=1pt] (.25,.31) to [out=up,in=up] (.75,.31) to (.75,-.51); 
\draw[-,line width=1pt] (-.1,.31) to [out=up,in=up] (1.25,.31) to (1.25,-.51); 
\draw[-,line width=1pt] (.25,-.51) to [out=down,in=down] (.75,-.51); 
\draw[-,line width=1pt] (-.1,.-.51) to [out=down,in=down] (1.25,-.51);
\draw(-.1,0) \bdot;
\draw(.25,0) \bdot;
\node at (.45,-.35) {$\scriptstyle 1$};
\node at (.5,-1.1){$\scriptstyle t-1$};
\end{tikzpicture}
~+~
\begin{tikzpicture}[anchorbase,scale=.7,color=\clr]
\draw[-,line width=1pt] (-.1,-.51) to (-.1,.31);
\draw[-,line width=1pt] (.25,-.51) to (.25,.31);
\draw[-,line width=1pt] (.25,.31) to [out=up,in=up] (.75,.31) to (1.25,-.51); 
\draw[-,line width=1pt] (-.1,.31) to [out=up,in=up] (1.25,.31) to (.75,-.51); 
\draw[-,line width=1pt] (.25,-.51) to [out=down,in=down] (.75,-.51); 
\draw[-,line width=1pt] (-.1,.-.51) to [out=down,in=down] (1.25,-.51) to (1.25,.31);
\draw(-.1,0) \bdot;
\draw(.25,0) \bdot;
\node at (.45,-.35) {$\scriptstyle 1$};
\node at (.5,-1.1){$\scriptstyle t-1$};
\end{tikzpicture}
~\overset{\eqref{dotmovecrossing1}}{\underset{\eqref{curlremove}}{\equiv}}~
\dotbubble_1\dotbubble_{t-1} +~ 
\begin{tikzpicture}[yscale=0.8,anchorbase,scale=.7,xscale=-1,color=\clr]
\draw[-,thick] (0.08,-.8) to (0.08,-.5);
\draw[-,thick] (0.08,.3) to (0.08,.6) to [out=up,in=up] (1,.6) to (1,-.8) to [out=down,in=down] (.08,-.8);
\draw[-,line width=1pt] (0.1,-.51) to [out=45,in=-45] (0.1,.31);
\draw[-,line width=1pt] (0.06,-.51) to [out=135,in=-135] (0.06,.31);
\draw(1,0) \bdot;
\draw(.25,0) \bdot;
\node at (.45,-.35) {$\scriptstyle 1$};
\node at (.5,-1.3){$\scriptstyle t-1$};
\end{tikzpicture}\\
~\overset{\eqref{dotmovecupcaps}}{=}~
&\dotbubble_1\dotbubble_{t-1}+ (-1)^{t-1}
\begin{tikzpicture}[yscale=0.8,anchorbase,scale=.7,xscale=-1,color=\clr]
\draw[-,thick] (0.08,-.8) to (0.08,-.5);
\draw[-,thick] (0.08,.3) to (0.08,.6) to [out=up,in=up] (1,.6) to (1,-.8) to [out=down,in=down] (.08,-.8);
\draw[-,line width=1pt] (0.1,-.51) to [out=45,in=-45] (0.1,.31);
\draw[-,line width=1pt] (0.06,-.51) to [out=135,in=-135] (0.06,.31);
\draw(-.1,0) \bdot;
\draw(.25,0) \bdot;
\draw(.25,-.2)\bdot;
\node at (.45,-.35) {$\scriptstyle 1$};
\node at (.5,-1.3){$\scriptstyle t-1$};
\end{tikzpicture}
~\overset{\eqref{dumbbellremove}}{=}~\dotbubble_1\dotbubble_{t-1}+ (-1)^{t-1}
\begin{tikzpicture}[anchorbase,scale=.7,xscale=-1,color=\clr]
\draw[-,line width=1pt] (-.1,-.51) to (-.1,.31);
\draw[-,line width=1pt] (.25,-.51) to (.25,.31);
\draw[-,line width=1pt] (.25,.31) to [out=up,in=up] (.75,.31) to (1,.1); 
\draw[-,line width=1pt] (-.1,.31) to [out=up,in=up] (1.25,.31) to (1,.1); 
\draw[-,line width=1pt] (.25,-.51) to [out=down,in=down] (.75,-.51) to (1,-.3); 
\draw[-,line width=1pt] (-.1,.-.51) to [out=down,in=down] (1.25,-.51) to (1,-.3);
\draw[-,thick] (1,-.3) to (1,.1);
\draw(-.1,0) \bdot;
\draw(.25,0) \bdot;
\draw(.25,-.2)\bdot;
\node at (.45,-.35) {$\scriptstyle 1$};
\node at (.5,-1.1){$\scriptstyle t-2$};
\end{tikzpicture}
\\
~\overset{\eqref{splitmerge}}{\equiv}~&\dotbubble_1\dotbubble_{t-1} -
\begin{tikzpicture}[anchorbase, scale=1.4, color=\clr]
					\draw[-] (0.2,0.2) to[out=90,in=0] (0,.4);
					\draw[-] (0,0.4) to[out=180,in=90] (-.2,0.2);
					\draw[-] (-.2,0.2) to[out=-90,in=180] (0,0);
					\draw[-] (0,0) to[out=0,in=-90] (0.2,0.2);
			\node at (0,-.1) {$\scriptstyle{1}$};
         \draw (-.2,.2) \bdot;
     \node at (-.3,.2) {$\scriptstyle{2}$};
			\end{tikzpicture}\dotbubble_{t-2}
~-~
\begin{tikzpicture}[yscale=.8,anchorbase,scale=.7,color=\clr]
\draw[-,thick] (0.08,-.8) to (0.08,-.5);
\draw[-,thick] (0.08,.3) to (0.08,.6) to [out=up,in=up] (1,.6) to (1,-.8) to [out=down,in=down] (.08,-.8);
\draw[-,line width=1pt] (0.1,-.51) to [out=45,in=-45] (0.1,.31);
\draw[-,line width=1pt] (0.06,-.51) to [out=135,in=-135] (0.06,.31);
\draw(.08,.6) \bdot;
\draw(.25,0) \bdot;
\draw(.25,-.2) \bdot;
\node at (.45,-.35) {$\scriptstyle 1$};
\node at (.5,-1.3){$\scriptstyle t-2$};
\end{tikzpicture}
\\
\equiv & \sum_{k=1}^{s}\Big((-1)^{k-1}\begin{tikzpicture}[anchorbase, scale=1.4, color=\clr]
					\draw[-] (0.2,0.2) to[out=90,in=0] (0,.4);
					\draw[-] (0,0.4) to[out=180,in=90] (-.2,0.2);
					\draw[-] (-.2,0.2) to[out=-90,in=180] (0,0);
					\draw[-] (0,0) to[out=0,in=-90] (0.2,0.2);
			\node at (0,-.1) {$\scriptstyle{1}$};
         \draw (-.2,.2) \bdot;
     \node at (-.5,.2) {$\scriptstyle{2k-1}$};
			\end{tikzpicture}\dotbubble_{t-2k+1}
    ~+~(-1)^k
  \begin{tikzpicture}[anchorbase, scale=1.4, color=\clr]
					\draw[-] (0.2,0.2) to[out=90,in=0] (0,.4);
					\draw[-] (0,0.4) to[out=180,in=90] (-.2,0.2);
					\draw[-] (-.2,0.2) to[out=-90,in=180] (0,0);
					\draw[-] (0,0) to[out=0,in=-90] (0.2,0.2);
			\node at (0,-.1) {$\scriptstyle{1}$};
         \draw (-.2,.2) \bdot;
     \node at (-.4,.2) {$\scriptstyle{2k}$};
			\end{tikzpicture}\dotbubble_{t-2k}\Big).
    \end{align*}
 Thus the lemma follows from the above computation and an induction on $t=2s$.
\end{proof}

\begin{lemma}
    For any partition $\vartheta\in \Par_a$, we have (modulo lower degree terms):
    \begin{align}
    \label{dumbell+dots}
     \begin{tikzpicture}[xscale=-1,yscale=.8,anchorbase,scale=.7,color=\clr] 
\draw[-,line width=1.2pt]  (0.5,2) to[out=left,in=up] (0,1)to[out=down,in=left] (0.5,0);      
\draw[-,line width=1.2pt] (0.5,0)to[out=right,in=-120] (1,.5);
\draw[-,line width=1.2pt] (1,1.5)to[out=60,in=left] (1.5,2);
\draw[-,line width=1.2pt] (0.5,2)to[out=right,in=120] (1,1.5);
\draw[-,line width=1.2pt] (1,.5)to[out=-60,in=left] (1.5,0);
\draw[-,line width=1.4pt] (1,.5) to (1,1.5);
\draw (0,1.2) \bdot; 
\node at (-0.5,1.2) {$\scriptstyle \omega_{\vartheta}$};
\node at (0.2,-.1) {$\scriptstyle b$};
\node at (1.25,-.1) {$\scriptstyle a$};
\end{tikzpicture},
\quad
\begin{tikzpicture}[yscale=.8,anchorbase, scale=.7, color=\clr] 
\draw[-,line width=1.2pt]  (0.5,2) to[out=left,in=up] (0,1)to[out=down,in=left] (0.5,0);      
\draw[-,line width=1.2pt] (0.5,0)to[out=right,in=-120] (1,.5);
\draw[-,line width=1.2pt] (1,1.5)to[out=60,in=left] (1.5,2);
\draw[-,line width=1.2pt] (0.5,2)to[out=right,in=120] (1,1.5);
\draw[-,line width=1.2pt] (1,.5)to[out=-60,in=left] (1.5,0);
\draw[-,line width=1.4pt] (1,.5) to (1,1.5);
\draw (0,1.2) \bdot; 
\node at (-0.4,1.2) {$\scriptstyle \omega_{\vartheta}$};
\node at (0.2,-.1) {$\scriptstyle b$};
\node at (1.25,-.1) {$\scriptstyle a$};
\end{tikzpicture}
~\in \mathbb J'\text{-span}~\left\{\dotbubble_{c,\nu}\otimes\wkdotavar{\varsigma} \colon 0\leq c\leq a, \nu+\varsigma  \unrhd \vartheta\right\} .
\end{align}
\end{lemma}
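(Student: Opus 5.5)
The diagram is a thickness-$b$ strand and a thickness-$a$ strand merging into $a+b$ and then splitting again. One side of the $b$-strand is closed into a loop carrying the dot packet $\omega_\vartheta$, and the $a$-strand passes through. The plan is to open up the thick middle segment with the second relation in \eqref{splitmerge}, so that the diagram becomes a sum over crossing diagrams, then dissolve each resulting picture into a disjoint bubble tensored with a dotted identity strand of thickness $a$. Throughout we work modulo lower degree terms, so by \cref{dotmovefreely} we may slide the dot packet $\omega_\vartheta$ freely through splits, merges and crossings.

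\emph{Step 1: expand the middle segment.} Applying \eqref{splitmerge} to the merge--split pair with top labels $(b,a)$ and bottom labels $(b,a)$ gives a sum over $s,t$ with $t-s=a-b$. Each summand has the $b$-strand split into a piece of thickness $s$ staying on its side and a piece of thickness $b-s$ crossing over to the $a$-side; the $a$-strand splits symmetrically. By the sliders \eqref{sliders}, the loop closing the $b$-strand, together with \eqref{mergesplitslideleft} and \eqref{zigzag}, is reorganized as in the proof of \eqref{dumbbellremove}. The result is a closed loop of thickness $s$ (possibly with curls), and a thickness-$(b-s)$ strand that joins the thickness-$a$ through-strand via a merge then a split.

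\emph{Step 2: distribute the dots.} Before the expansion, use \eqref{dotmovesplitss} (in its $\equiv$ form, \cref{dotmovefreely}(2)) to split $\omega_\vartheta$ on the thickness-$b$ loop as $\sum \omega_{\vartheta'}\otimes\omega_{\vartheta''}$ over the two pieces, with $\vartheta'+\vartheta''$ compatible with $\vartheta$. Because the combination $\vartheta'+\vartheta''$ only ever lands in terms dominating $\vartheta$, this bookkeeping is what yields the dominance condition $\nu+\varsigma\unrhd\vartheta$. The dots $\omega_{\vartheta'}$ on the thickness-$s$ closed loop give, after removing curls via \eqref{curlremove} (a sign only), a bubble $\dotbubble_{\vartheta',s}$. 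The dots $\omega_{\vartheta''}$ slide onto the part routed into the $a$-strand, where the merge--split sandwich $(a,b-s)\to a+b-s\to(a,b-s)$ with one leg capped off appears.

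\emph{Step 3: close the capped leg.} The capped-off leg of thickness $b-s$ carrying dots is exactly the left-hand side of the statement again, with the roles of $a$ and $b$ swapped, but with the closed strand of thickness $b-s<b$ when $s>0$. By an induction on $b$, that term lies in the desired span. In the $s=0$ term the whole $b$-strand crosses over. After straightening with \eqref{braid} and \eqref{curlremove}, this term becomes a dumbbell whose thick middle segment carries the dots. Move the dots onto the thick segment with \cref{wkdotsspan} to express them as elements of $E^{\vartheta}_{a+b}$, i.e.\ in terms of $\omega_\nu$ with $\nu\unrhd\vartheta$. Then use \eqref{dotmovemergess} together with \eqref{dumbbellremove} to get $\binom{x-a}{b}$-type scalars times a dotted thickness-$a$ strand. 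Degree is preserved exactly only in the top-degree term, and everything else is absorbed in lower terms.

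The main obstacle is Step 3. There the dots must be transported across the merge--split while tracking the dominance order, and a direct use of \cref{wkdotsspan} must be checked to produce the same $\varsigma$ on the $a$-strand after the dumbbell is removed. If this bookkeeping fails, I would instead induct on $\ell(\vartheta)$: peel off one $\omega_{\vartheta_1}$ at a time using \eqref{dotmovesplitss} and the commutativity in \cref{lem:commute}. The mirrored diagram follows in the same way, or by applying a horizontal reflection together with \eqref{dotmovecupcaps}.
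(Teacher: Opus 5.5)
\noindent Your Steps 1--3 misidentify the summands of the \eqref{splitmerge} expansion, and the argument breaks at that point.

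Take the summand whose left vertical strand has thickness $s$, so that the crossing strands have thickness $b-s$. Slide the top-left merge and bottom-left split around the loop with \eqref{zigzag} and \eqref{mergesplitslideleft}, exactly as in the proof of \eqref{dumbbellremove}. This does \emph{not} produce a free closed loop of thickness $s$ next to a merge--split $(a,b-s)\to a+b-s\to(a,b-s)$. Instead:
\begin{itemize}
\item the return part of the original loop stays a thick segment of thickness $b$ carrying $\omega_\vartheta$;
\item this segment splits into legs of thickness $b-s$ and $s$ at one end and merges them at the other;
\item the two $s$-legs are closed up through the left vertical strand;
\item the two $(b-s)$-legs are attached to the split of the incoming $a$-strand and to the merge into the outgoing one.
\end{itemize}
So the summand is a lantern $a\to(b-s,\,a-b+s)\to a$ whose $(b-s)$-leg makes a curl and carries a smaller dotted dumbbell (through strand $b-s$, loop $s$). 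No bubble $\dotbubble_{\vartheta',s}$ splits off. There are also no ``two pieces'' onto which the dots could be distributed before the expansion.

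The undotted case $a=b=1$ already rules out your picture. It would make the $s=1$ summand equal to $x\cdot 1_1$ and the $s=0$ summand the original diagram itself, so \eqref{splitmerge} would say the diagram equals itself plus $x\cdot 1_1$. In fact the $s=0$ summand is a curl, equal to $-1_1$, and the sum is $(x-1)1_1=\binom{x-1}{1}1_1$.

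Your $s=0$ step is also circular. That summand is $(-1)^b$ times the lantern $a\to(b,a-b)\to a$ with $\omega_\vartheta$ on its $b$-leg, and it lies in $E^\vartheta_a$ directly by \cref{wkdotsspan}. There is no $(a+b)$-thick segment in it. Moreover, \eqref{dumbbellremove} can never be applied to a dumbbell carrying dots, because pushing the dots off its thick segment puts them back on the loop.

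For $0<s<b$, the missing step, which is essentially the paper's argument, goes as follows.
\begin{itemize}
\item Distribute $\omega_\vartheta$ from the thick $b$-segment onto its legs as $\sum\omega_\alpha\otimes\omega_\beta$ using \eqref{dotmovesplitss}, with $\alpha$ on the $(b-s)$-leg and $\beta$ on the $s$-leg.
\item Apply the induction hypothesis to the nested dumbbell. Induct on $a+b$: its parameters $(b-s,s)$ have total $b<a+b$. This yields terms $\dotbubble_{c,\nu}\otimes\omega_\gamma$ on the $(b-s)$-strand.
\item Remove the curl with \eqref{curlremove}, moving dots across the crossing by \cref{dotmovefreely}.
\item Absorb the dots on the lantern leg into $E_a$ with \cref{wkdotsspan}.
\end{itemize}
The $s=b$ summand is $\dotbubble_{\vartheta,b}\otimes 1_a$.

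Do not expect the dominance condition to come out of this bookkeeping. The dots on that leg multiply as $\omega_\alpha\omega_\gamma=\omega_{\alpha\cup\gamma}$, not as $\omega_{\alpha+\gamma}$. Already for $a=b=2$ and $\vartheta=(2)$, the $s=1$ summand contributes $\omega_{2,(1,1)}$ with nonzero coefficient, although $(1,1)\not\unrhd(2)$. You can confirm this with $\cG_N$: for $I=\{i_1,i_2\}$ with $|i_1|\neq|i_2|$, the whole diagram acts on $1\otimes h\otimes v_I$, up to lower filtered degree, by multiplying $h$ by $e_1(h_I)^2-e_2(h_I)-\sum_{k=1}^n h_k^2$. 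What the argument actually yields is membership in the span with $|\nu|+|\varsigma|=|\vartheta|$.
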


\begin{proof}
    We proceed by induction on $a+b$ and $|\vartheta|$ simultaneously. When $a=b=1$, the statement for any $\vartheta$ follows from \eqref{splitmerge} and \eqref{curlremove} directly. Suppose the lemma is true for any $c,d$ such that $c+d<a+b$. Now we only prove for $\begin{tikzpicture}[yscale=.8,anchorbase, scale=.7, color=\clr] 
\draw[-,line width=1.2pt]  (0.5,2) to[out=left,in=up] (0,1)to[out=down,in=left] (0.5,0);      
\draw[-,line width=1.2pt] (0.5,0)to[out=right,in=-120] (1,.5);
\draw[-,line width=1.2pt] (1,1.5)to[out=60,in=left] (1.5,2);
\draw[-,line width=1.2pt] (0.5,2)to[out=right,in=120] (1,1.5);
\draw[-,line width=1.2pt] (1,.5)to[out=-60,in=left] (1.5,0);
\draw[-,line width=1.4pt] (1,.5) to (1,1.5);
\draw (0,1.2) \bdot; 
\node at (-0.4,1.2) {$\scriptstyle \omega_{\vartheta}$};
\node at (0.2,-.1) {$\scriptstyle b$};
\node at (1.25,-.1) {$\scriptstyle a$};
\end{tikzpicture}$ 
as the other case is proved similarly. 
    
    Suppose $a\ge b$. In this case we have
\begin{align}
\label{sumdumbell}
    \begin{tikzpicture}[yscale=.8,anchorbase, scale=.7, color=\clr] 
\draw[-,line width=1.2pt]  (0.5,2) to[out=left,in=up] (0,1)to[out=down,in=left] (0.5,0);      
\draw[-,line width=1.2pt] (0.5,0)to[out=right,in=-120] (1,.5);
\draw[-,line width=1.2pt] (1,1.5)to[out=60,in=left] (1.5,2);
\draw[-,line width=1.2pt] (0.5,2)to[out=right,in=120] (1,1.5);
\draw[-,line width=1.2pt] (1,.5)to[out=-60,in=left] (1.5,0);
\draw[-,line width=1.4pt] (1,.5) to (1,1.5);
\draw (0,1.2) \bdot; 
\node at (-0.43,1.23) {$\scriptstyle \omega_{\vartheta}$};
\node at (0.2,-.1) {$\scriptstyle b$};
\node at (1.25,-.1) {$\scriptstyle a$};
\end{tikzpicture}
\equiv
\sum_{s=0}^b\begin{tikzpicture}[yscale=.8,anchorbase, scale=.7, color=\clr] 
\draw[-,line width=1.2pt]  (0.5,2) to[out=left,in=up] (0,1)to[out=down,in=left] (0.5,0);      
\draw[-,line width=1.2pt] (0.5,0)to[out=right,in=-120] (1,.5);
\draw[-,line width=1.2pt] (1.5,1.5)to[out=60,in=left] (2,2);
\draw[-,line width=1.2pt] (0.5,2)to[out=right,in=120] (1,1.5);
\draw[-,line width=1.2pt] (1.5,.5)to[out=-60,in=left] (2,0);
\draw[-,line width=1pt] (1,.5) to (1,1.5);
\draw[-,line width=1pt] (1.5,.5) to (1.5,1.5);
\draw[-,line width=1pt] (1,.5) to (1.5,1.5);
\draw[-,line width=1pt] (1,1.5) to (1.5,.5);
\node at (0.85,1) {$\scriptstyle s$};
\node at (2.2,1) {$\scriptstyle s+a-b$};
\draw (0,1.2) \bdot; 
\node at (-0.43,1.23) {$\scriptstyle \omega_{\vartheta}$};
\node at (0.2,-.1) {$\scriptstyle b$};
\node at (1.4,-.1) {$\scriptstyle a$};
\end{tikzpicture}
\pmod {\End_{\AWb}(a)_{<|\vartheta|}} .
\end{align}
In the above summation, the summand for $s=0$ is equal to $(-1)^b\begin{tikzpicture}
[anchorbase,scale=1,color=\clr]
\draw[-,line width=1.5pt] (0.08,-.8) to (0.08,-.5);
\node at (.08,-1){$\scriptstyle a$};
\draw[-,line width=1.5pt] (0.08,.3) to (0.08,.6);
\draw[-,thick] (0.1,-.51) to [out=45,in=-45] (0.1,.31);
\draw[-,thick] (0.06,-.51) to [out=135,in=-135] (0.06,.31);
\draw(-.1,0) \bdot;
\draw(-.4,0)node {$\scriptstyle \omega_{\vartheta}$};
\node at (-.22,-.35) {$\scriptstyle b$};
\end{tikzpicture}$. By \cref{wkdotsspan} we see this term belongs to $E_a^\vartheta$ up to lower degree terms. The summand for $s=b$ is equal to $\dotbubble_{\vartheta,b}\otimes \stra$. When $0<s<b$, we have
\[
\begin{tikzpicture}[yscale=.8,anchorbase, scale=.7, color=\clr] 
\draw[-,line width=1.2pt]  (0.5,2) to[out=left,in=up] (0,1)to[out=down,in=left] (0.5,0);      
\draw[-,line width=1.2pt] (0.5,0)to[out=right,in=-120] (1,.5);
\draw[-,line width=1.2pt] (1.5,1.5)to[out=60,in=left] (2,2);
\draw[-,line width=1.2pt] (0.5,2)to[out=right,in=120] (1,1.5);
\draw[-,line width=1.2pt] (1.5,.5)to[out=-60,in=left] (2,0);
\draw[-,line width=1pt] (1,.5) to (1,1.5);
\draw[-,line width=1pt] (1.5,.5) to (1.5,1.5);
\draw[-,line width=1pt] (1,.5) to (1.5,1.5);
\draw[-,line width=1pt] (1,1.5) to (1.5,.5);
\node at (0.85,1) {$\scriptstyle s$};
\node at (2.2,1) {$\scriptstyle s+a-b$};
\draw (0,1.2) \bdot; 
\node at (-0.43,1.23) {$\scriptstyle \omega_{\vartheta}$};
\node at (0.2,-.1) {$\scriptstyle b$};
\node at (1.4,-.1) {$\scriptstyle a$};
\end{tikzpicture}\overset{\eqref{zigzag},\eqref{mergesplitslideleft}}{=}\begin{tikzpicture}[anchorbase,scale=.8,scale=0.3,yscale=0.7, color=\clr]
 \draw[-,line width=1pt] (-1,-2.75) to (-1,2.5);
  \draw[-,line width=1pt] (1,-2.75) to (1,2.5);
  \draw[-,line width=1pt] (1,2.2) to (-.6,-2.5) to (-.6,-3.2)to [out=down,in=down] (-4,-3.2) to (-4,-2.75) to (-3.5,-1) to (-3,-2.75);
  \draw[-,line width=1pt] (1,-2.5) to (-.6,2.2) to (-.6,3) to [out=up,in=up] (-4,3) to (-4,2.5) to (-3.5,1) to (-3,2.5);
  \draw[-,line width=1.4pt] (-3.5,1) to (-3.5,-1);
  \draw (-3.5,.3)\bdot;
\draw [-,line width=1pt] (-3,2.5) to [in=180, out=90] (-2,3.5) to [in=90, out=0](-1,2.5);
\draw [-,line width=1pt] (-1,-2.75) to [in=0, out=270] (-2,-3.75) to [in=270, out=180](-3,-2.75);
\draw [-,line width=1pt] (1,-3.75) to (1,-2.75);
\draw [-,line width=1pt] (1,2.5) to (1,3.5);
\node at (1,4) {$\scriptstyle{a}$};
\node at (1,-4.5) {$\scriptstyle{a}$};
\node at (-4.5,.3) {$\scriptstyle{\omega_{\vartheta}}$};
\node at (-1.4,.6) {$\scriptstyle{s}$};
\end{tikzpicture}
~\equiv~
\sum_{\varsigma+\varsigma'=\vartheta}
\begin{tikzpicture}[anchorbase,scale=0.25,yscale=0.7, color=\clr]
 \draw[-,line width=1pt] (-1,-2.75) to (-1,2.5);
  \draw[-,line width=1pt] (1,-2.75) to (1,2.5);
  \draw[-,line width=1pt] (1,2.2) to (-.6,-2.5) to (-.6,-3.2)to [out=down,in=down] (-4,-3.2) to (-4,-2.75) to (-3.5,-1) to (-3,-2.75);
  \draw[-,line width=1pt] (1,-2.5) to (-.6,2.2) to (-.6,3) to [out=up,in=up] (-4,3) to (-4,2.5) to (-3.5,1) to (-3,2.5);
  \draw[-,line width=1.4pt] (-3.5,1) to (-3.5,-1);
  \draw (-3,2.5)\bdot;
  \draw (-4,2.5)\bdot;
\draw [-,line width=1pt] (-3,2.5) to [in=180, out=90] (-2,3.5) to [in=90, out=0](-1,2.5);
\draw [-,line width=1pt] (-1,-2.75) to [in=0, out=270] (-2,-3.75) to [in=270, out=180](-3,-2.75);
\draw [-,line width=1pt] (1,-3.75) to (1,-2.75);
\draw [-,line width=1pt] (1,2.5) to (1,3.5);
\node at (1,4) {$\scriptstyle{a}$};
\node at (1,-4.5) {$\scriptstyle{a}$};
\node at (-4.9,2.5) {$\scriptstyle{\omega_{\varsigma}}$};
\node at (-2.1,2) {$\scriptstyle{\omega_{\varsigma'}}$};
\node at (-1.6,.6) {$\scriptstyle{s}$};
\end{tikzpicture}.
\]
 Then applying the inductive assumption to each summand we conclude that they belong to the $\mathbb J'$-span in \eqref{dumbell+dots}. The case when $a<b$ can be proved similarly. 
\end{proof}

\begin{lemma}
    For any partition $\vartheta\in \Par_a$, we have (modulo lower degree terms):
    \begin{equation}
        \label{sidewaydumbbell}
        \begin{tikzpicture}
            [anchorbase,color=\clr]
            \draw[line width=1.4pt] (0,-.3) -- (0,0)
                                    (1,-.3) -- (1,0)
                                    (0,0) to [out=45,in=135] (1,0);
            \draw[line width=1pt]   (0,-.3) to [out=-45,in=-135] (1,-.3)
            (0,-.3) -- (-.2,-.5) (1,-.3) -- (1.2,-.5);
            \draw (.2,-.45)\bdot;
            \node at (.3,-.25){$\scriptstyle \omega_{\vartheta}$};
            \node at (.5,-.6){$\scriptstyle a$};
        \end{tikzpicture}
        \in \text{$\mathbb J'$-span} \Big\{
        \dotbubble_{c,\nu}\otimes \begin{tikzpicture}
            [anchorbase,color=\clr]
            \draw[line width=1pt] (0,0) to [out=up,in=left] (.4,.4) to[out=right,in=up] (.8,0);
            \draw (.2,.35)\bdot;
            \node at (.2,.55){$\scriptstyle \omega_{\varsigma}$};
            \node at (0,-.2){$\scriptstyle a$};
             \node at (.8,-.2){$\scriptstyle a$};
        \end{tikzpicture}
        ~\colon~
        0\leq c\leq a, \nu+\varsigma\unrhd \vartheta
        \Big\} .
    \end{equation}
\end{lemma}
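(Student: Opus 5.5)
The idea is to reduce \eqref{sidewaydumbbell} to the dumbbell statement \eqref{dumbell+dots} by bending strands, in the same way that \eqref{sidewaydumbell} was deduced from \eqref{dumbbellremove}.

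The diagram on the left of \eqref{sidewaydumbbell} is a thick cap of thickness $a+b$. It is fed from below by a merge at each foot. Each merge joins an outer leg (the one going to the boundary, of thickness $a$) with one end of a thin arc of thickness $b$. This arc connects the two feet and carries the dots $\omega_\vartheta$. Using the zigzag relations \eqref{zigzag} together with \eqref{mergesplitslideleft}, I rotate the right merge through the cap: it becomes a split sitting on the other side. Concretely, I write the morphism as
\[
\capa\circ\bigl(\stra\otimes D\bigr),
\]
where $D\in\End_{\AWb}(a)$ is exactly a dumbbell of the type appearing in \eqref{dumbell+dots}. Its thick middle strand has thickness $a+b$, and its side loop has thickness $b$ and carries $\omega_\vartheta$.

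This rewriting involves only merges, splits, cups and caps, and the dots are carried along by isotopy. So it is an equality, not merely a congruence. The one point to watch is whether the dotted arc ends up on the left or the right of the thick strand. Both orientations are covered by \eqref{dumbell+dots}, which is stated for both mirror pictures. If a dot has to pass over a cup or cap, \eqref{dotmovecupcaps} contributes only a sign $(-1)^b$ and does not change the degree.

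Next I apply \eqref{dumbell+dots}. Modulo lower degree terms, $D$ lies in the $\mathbb J'$-span of $\dotbubble_{c,\nu}\otimes\wkdotavar{\varsigma}$ with $0\le c\le a$ and $\nu+\varsigma\unrhd\vartheta$. Composing with the cap sends each such term to $\dotbubble_{c,\nu}$ times the thickness-$a$ cap carrying the dot packet $\omega_\varsigma$ on its right leg. I then move this packet to the left leg using \eqref{dotmovecupcaps}, one elementary dot at a time; each move only costs a sign. This produces exactly the spanning elements listed in \eqref{sidewaydumbbell}.

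The lower degree terms need no extra work. Composing with a cap and tensoring with $\stra$ do not raise degree, so the error terms from \eqref{dumbell+dots} stay in lower degree. This is consistent with the ``modulo lower degree terms'' qualifier in the statement.

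The main obstacle I expect is the first step: checking carefully that the sideways dumbbell really equals the capped ordinary dumbbell. In particular I need the thicknesses at the merge to match ($a$ and $b$ merging into $a+b$), and I need the bending in \eqref{mergesplitslideleft} not to introduce crossings. If a crossing does appear, I would remove it using \eqref{bubblecap} and \eqref{curlremove}, which costs only a sign, and use \cref{dotmovefreely} to move dots past it modulo lower degree terms.
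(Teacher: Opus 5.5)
Your argument is correct and is essentially the paper's proof, which simply says the lemma follows from \eqref{mergesplitslideleft} and \eqref{dumbell+dots}: sliding one merge through the thick cap turns the sideways dumbbell into a thickness-$a$ cap composed with an ordinary dumbbell, and then \eqref{dumbell+dots} applies. One small slip: sliding the right merge actually puts the dumbbell on the left strand, giving the cap composed with $D\otimes 1_a$ rather than $1_a\otimes D$, so the bubble already lands at the far left and $\omega_\varsigma$ already sits on the left leg of the cap; moreover, moving a packet $\omega_r$ with $r<a$ across a cap gives the sign only modulo lower degree terms, which is harmless here.
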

\begin{proof}
    This follows from \cref{mergesplitslideleft} and \cref{dumbell+dots} directly.
\end{proof}

\begin{proposition}
\label{bubblealgebra}
    For any partition $\vartheta\in \Par_a$, we have $\dotbubble_{\vartheta,a}\in \Xi$.
\end{proposition}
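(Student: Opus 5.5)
We argue by a double induction: an outer induction on the thickness $a$ and, for fixed $a$, an inner induction on the degree $|\vartheta|$, taken together with the dominance order on $\vartheta$ (for $\nu\rhd\vartheta$ of the same size we use downward induction in dominance order, which is fine since $\Par_a$ has finitely many partitions of a given size). The base case $a=1$ is \cref{lem:bubble1+dott}; when $|\vartheta|=0$ the bubble is $\binom{x}{a}\in\Xi$ by \eqref{bubblecap}. Lower-degree error terms are closed diagrams in $\End_{\AWb}(\unit)$ of smaller degree. By \cref{lem:nested} these are combinations of products of disjoint non-nested dotted bubbles, each of which is of the form $\dotbubble_{\vartheta',c}$ after merging its dots into a packet. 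For this merging step we use \cref{lem:commute} and the fact that products of the generators $\omega_r$ on a strand of thickness $c$ lie in $E_c$, which is spanned by elementary dot packets. Each such factor has degree $<|\vartheta|$ (or thickness $<a$), so the induction hypothesis applies to it.

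The key step is to isolate $\dotbubble_{\vartheta,a}$ using the dumbbell relation \eqref{dumbell+dots}. Write $\vartheta=(\vartheta_1,\dots,\vartheta_k)$. Split the thick strand of the bubble as $a=b+(a-b)$ with $b=1$ (or $b=\vartheta_k$), and close it up. By \eqref{dumbbellremove} this gives
\[
\binom{x-(a-b)}{b}\dotbubble_{\vartheta,a}\quad\text{or}\quad \binom{a}{b}\dotbubble_{\vartheta,a}.
\]
In the second reading we use \eqref{splitmerge} to write the $a$-strand as a merge of a split, up to the scalar $\binom{a}{b}$, and then slide the dots onto the two legs by \cref{Lem:dotmovesplitmerge}. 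This expresses $\binom{a}{b}\dotbubble_{\vartheta,a}$ as a sum of closed diagrams in which a $b$-strand carrying $\omega_{\vartheta'}$ and an $(a-b)$-strand carrying $\omega_{\vartheta''}$, with $\vartheta'+\vartheta''=\vartheta$, are merged and split. Closing the $b$-strand side first turns each summand into the dumbbell configuration of \eqref{dumbell+dots}, with an $(a-b)$-strand closed around it. Then \eqref{dumbell+dots} (or its sideways form \eqref{sidewaydumbbell}) rewrites the result, modulo lower degree, as a combination of terms $\dotbubble_{c,\nu}\cdot\dotbubble_{\varsigma,a-b}$ with $c\le b<a$ and $\nu+\varsigma\unrhd\vartheta''$ (after the leftover dots are absorbed). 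Every such product is in $\Xi$ by the outer induction, since both thicknesses are $<a$.

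The main obstacle is that the scalar in front of $\dotbubble_{\vartheta,a}$ is not a unit in $\mathbb J'$: $\binom{a}{b}$ may fail to be invertible. To get around this, we instead peel off one thin strand, taking $b=1$. The argument then becomes a recursion in which $\dotbubble_{\vartheta,a}$ appears with coefficient $\pm1$ plus terms that either have thickness $<a$ or have partitions strictly dominating $\vartheta$; compare the bookkeeping in the proof of \cref{lem:bubble1+dott}, where $t\dotbubble_t$ was handled in the same way. Concretely, we apply \eqref{dumbell+dots} directly to the closure of the $(a-1,1)$ split–merge with $\vartheta$ on one side. The $s=b$ summand in \eqref{sumdumbell} reproduces $\dotbubble_{\vartheta,a}$ up to sign. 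The other summands are products of thinner bubbles, or carry dot packets $\nu+\varsigma\rhd\vartheta$, which are handled by the inner induction. If a non-unit coefficient still survives, we first carry out the whole argument over $\Q[x]$ and then argue integrality afterwards using the basis theorem. That fallback only works if the basis theorem can be proved independently of this proposition, so we would check that carefully before relying on it.
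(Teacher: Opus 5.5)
The gap is the non-unit scalar in front of $\dotbubble_{\vartheta,a}$, which you flag yourself, and neither of your proposed fixes removes it.

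\textbf{The scalar.} Write $\mathrm{id}_a$ as $\binom{a}{b}^{-1}$ times the split--merge lantern via \eqref{splitmerge}, slide the packets onto the legs, and apply \eqref{dumbell+dots}. This only shows that $\binom{a}{b}\dotbubble_{\vartheta,a}$ lies in $\Xi$ modulo lower degree terms. With $b=1$ the scalar is $a$, and the units of $\mathbb J'$ are only $\pm 2^k$, so for, say, $a=3$ you learn nothing.

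\textbf{The claimed $\pm1$ recursion.} With $b=1$ you assert that the $s=b$ summand of \eqref{sumdumbell} ``reproduces $\dotbubble_{\vartheta,a}$ up to sign''. That is false. That summand is $\dotbubble_{\vartheta',b}\otimes 1$, a bubble whose thickness is that of the closed loop ($1$ or $a-1$). No term of the dumbbell expansion is a thickness-$a$ bubble at all. The only occurrence of $\dotbubble_{\vartheta,a}$ is the one you started with, and it carries the non-unit scalar.

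\textbf{The analogy and the fallback.} Your comparison with \cref{lem:bubble1+dott} runs the wrong way. There the scalar $t$ multiplies $\dotbubble_t$, which is a generator and so harmless, while the unknown thin bubble with $t$ dots enters with coefficient $\pm1$. The fallback via the basis theorem is circular. \cref{thm:spanAWb} is proved using this very proposition, and cancelling $\binom ab$ would require $\End_{\AWb}(\unit)/\Xi$ to be torsion-free over $\mathbb J'$, which is essentially the claim itself.

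\textbf{What is missing.} The paper's proof works in the algebra $\widetilde\Xi$ generated by all $\dotbubble_c$, with odd $c$ handled afterwards by \cref{Deltaaodd}. It then splits into two cases, each using an idea your proposal lacks.

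First, suppose the last part satisfies $\vartheta_t<a$. Then no identity needs to be inserted. By the definition \eqref{equ:r=0andr=a}, $\omega_{\vartheta_t}$ on the $a$-strand already is the split $a\to(\vartheta_t,a-\vartheta_t)$, a full dot on the $\vartheta_t$-leg, and a merge, with coefficient $1$. Slide the remaining packets onto the two legs, using \eqref{commutato} to keep them elementary, and close up. This gives exactly your closed dumbbell, and \eqref{dumbell+dots} together with \cref{lem:nested} reduces it to products of bubbles of thickness $<a$. This is what your ``$b=\vartheta_k$'' variant should have been.

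Second, suppose $\vartheta=(a^t)$. Here $\omega_a$ is a generator and no lantern is available, so one goes \emph{up} to the generator $\dotbubble_{at}$. The element $\binom{at}{a}\dotbubble_{at}$ is the closure of the $(a(t-1),a)$ lantern with full dots on both legs. Iterate \eqref{sumdumbell}: the $s=0$ term transfers one more $\omega_a$ onto the $a$-strand, and the other terms fall under the first case or the induction hypothesis. This yields
$\binom{at}{a}\dotbubble_{at}\equiv\dotbubble_{(a^t),a}+(\text{elements of }\Xi)$.
The non-unit binomial now sits on a generator, exactly as in \cref{lem:bubble1+dott}. Since this step produces bubbles of thickness up to $a(t-1)$, the induction must run on the degree, with thickness bounded by $at$, rather than on $a$ alone as in your setup.
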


\begin{proof}
    By \cref{Deltaaodd}, it suffices to show that $\dotbubble_{\vartheta,a}\in \widetilde{\Xi}$ where $\widetilde{\Xi}$ is the subalgebra of $\End_{\AWb}(\unit)$ generated by $\dotbubble_a$ for all $a\in\N$. We proceed by induction on $a$. The case when $a=1$ follows from \cref{lem:bubble1+dott}. 

    By inductive assumption, we have $\dotbubble_{\vartheta,b} \in\widetilde{\Xi}$ whenever $b<a$. Write $\vartheta=(\vartheta_1,\ldots, \vartheta_t)$ with $\vartheta_1\le a$. We divide the following proof into two cases:

\underline{Case 1, $\vartheta\neq (a^t)$}: In this case, $\vartheta_t<a$, and we have  
\begin{align*}
   \dotbubble_{\vartheta,a}{=}
   \begin{tikzpicture}[anchorbase, scale=.8, color=\clr] 
\draw[-,line width=1.4pt]  (0.5,2) to[out=left,in=up] (0,1)to[out=down,in=left] (0.5,-.5);      
\draw[-,line width=1.4pt] (0.5,-.5)to[out=right,in=down] (1,1)to[out=up,in=right] (0.5,2);
\node at (-.2,.9){$\scriptstyle \vdots$};
\draw (0.1,1.7) \bdot;
\node at (-.3,1.7) {$\scriptstyle \omega_{\vartheta_1}$}; 
\draw (0,1.2) \bdot; 
\node at (-0.4,1.2) {$\scriptstyle \omega_{\vartheta_2}$};
\node at (.8,-.65) {$\scriptstyle a$};
\draw[-,line width=1pt] (0,.4) to [out=-45,in=45] (.2,-.4);
\draw (.2,.15)\bdot;
\node at (.6,.2){$\scriptstyle{\omega_{\vartheta_t}}$};
\end{tikzpicture}
\equiv
\sum_{\varsigma_i+\varsigma_i'=\vartheta_i\atop \varsigma_t=0}\begin{tikzpicture}[yscale=.8,anchorbase, scale=.7, color=\clr] 
\draw[-, line width=1.4pt] (0.5,2) to (0.5,2.5);
\draw[-, line width=1.4pt] (0.5,0) to (0.5,-.5);
\draw[-,line width=1.4pt]  (0.5,2) to[out=left,in=up] (0,1)to[out=down,in=left] (0.5,0);      
\draw[-,line width=1pt] (0.5,0)to[out=right,in=down] (1,1)to[out=up,in=right] (0.5,2);
\draw[-,line width=1.4pt] (.5,2.5) to [out=up,in=up]  (2,2.5) to (2,-.5) to [out=down,in=down] (.5,-.5);
\node at (-.2,.9){$\scriptstyle \vdots$};
\draw (0.1,1.7) \bdot;
\node at (-.3,1.7) {$\scriptstyle \omega_{\varsigma_1}$}; 
\draw (0,1.2) \bdot; 
\node at (-0.4,1.2) {$\scriptstyle \omega_{\varsigma_2}$};
\draw (0.05,.4) \bdot; 
\node at (-0.3,.3) {$\scriptstyle \omega_{\varsigma_t}$};
\draw (1,1.2) \bdot;
\draw (.95,.4) \bdot; 
\node at (1.5,.1) {$\scriptstyle \omega_{\varsigma_t'}$};
\draw (.9,1.7) \bdot; 
\node at (1.3,1.7) {$\scriptstyle \omega_{\varsigma_1'}$};
\node at (1.5,1.1) {$\scriptstyle \omega_{\varsigma_2'}$};
\node at (1.2,.9){$\scriptstyle \vdots$};
\node at (.9,-0.2) {$\scriptstyle \vartheta_t$};
\end{tikzpicture}
=
\sum_{\varsigma_i+\varsigma_i'=\vartheta_i\atop \varsigma_t=0}\begin{tikzpicture}[yscale=.8,anchorbase, scale=.7, color=\clr] 
\draw[-,line width=1.4pt]  (0,2) to (0,0);      
\draw[-,line width=1pt] (.5,0)to (.5,2);
\draw[-,line width=1pt] (.5,2) to [out=up,in=up] (1.5,2) to (1.75,1.25);
\draw[-,line width=1.4pt] (0,2) to [out=up,in=up] (2,2) to (1.75,1.25);
\draw[-,line width=1pt] (.5,0) to [out=down,in=down] (1.5,0) to (1.75,.75);
\draw[-,line width=1.4pt] (0,0) to [out=down,in=down] (2,0) to (1.75,.75);
\draw[-,line width=1.4pt] (1.75,1.25) to (1.75,.75);
\node at (-.3,.9){$\scriptstyle \vdots$};
\draw (0,1.7) \bdot;
\node at (-.4,1.7) {$\scriptstyle \omega_{\varsigma_1}$}; 
\draw (0,1.2) \bdot; 
\node at (-0.4,1.2) {$\scriptstyle \omega_{\varsigma_2}$};
\draw (0.,.4) \bdot; 
\node at (-0.4,.3) {$\scriptstyle \omega_{\varsigma_t}$};
\draw (.5,1.2) \bdot;
\draw (.5,.4) \bdot; 
\node at (1,.1) {$\scriptstyle \omega_{\varsigma_t'}$};
\draw (.5,1.7) \bdot; 
\node at (.9,1.7) {$\scriptstyle \omega_{\varsigma_1'}$};
\node at (.9,1.1) {$\scriptstyle \omega_{\varsigma_2'}$};
\node at (.8,.9){$\scriptstyle \vdots$};
\node at (1,-.85) {$\scriptstyle a-\vartheta_t$};
\end{tikzpicture}
\pmod {\End_{\AWb}(\varnothing)_{<|\vartheta|}}.
\end{align*}
Note that by the commutativity \eqref{commutato}, we can always assume that $\varsigma$ and $\varsigma'$ above are partitions. Then by \eqref{dumbell+dots}, the inductive assumption (since $\vartheta_k<a$ and $a-\vartheta_k<a$) and \cref{lem:nested}, each summand above is equivalent to a $\mathbb J'$-linear combination of elements in $\Xi$.

\underline{Case 2, $\vartheta=(a^t)$}: 
In this case, we have  $\dotbubble_{\vartheta,a}=\bubbledotat$. Note that Lemma~\ref{lem:bubble1+dott} is a special case (for $a=1$) of this case. Now suppose the lemma is true for any $\dotbubble_{\vartheta,b}$ where $b<at$ and $|\vartheta|<at$ (the base case again follows from Lemma~\ref{lem:bubble1+dott}).

To deal with $\bubbledotat$, we shall consider $\dotbubble_{at}$. (The case for $a=1$ was treated in Lemma~\ref{lem:bubble1+dott}.) In fact, we have the following equality modulo $\End_{\AWb}(\varnothing)_{<at}$:
\begin{equation}
    \label{eq:ataat}
\begin{aligned}
    & \binom{at}{a}\dotbubble_{at}=\begin{tikzpicture}[yscale=0.8,anchorbase,scale=.8,color=\clr]
\draw[-,line width=1.2pt] (0.08,-.8) to (0.08,-.5);
\draw[-,line width=1.2pt] (0.08,.3) to (0.08,.6) to [out=up,in=up] (1,.6) to (1,-.8) to [out=down,in=down] (.08,-.8);
\draw[-,line width=1pt] (0.1,-.51) to [out=45,in=-45] (0.1,.31);
\draw[-,line width=1pt] (0.06,-.51) to [out=135,in=-135] (0.06,.31);
\draw(-.1,0) \bdot;
\draw(.25,0) \bdot;
\draw(.6,0)node {$\scriptstyle \omega_{a}$};
\draw(.45,-.3)node {$\scriptstyle a$};
\node at (.5,-1.3){$\scriptstyle a(t-1)$};
\end{tikzpicture}
~=~
\begin{tikzpicture}[anchorbase,scale=.8,color=\clr]
\draw[-,line width=1pt] (-.1,-.51) to (-.1,.31);
\draw[-,line width=1pt] (.25,-.51) to (.25,.31);
\draw[-,line width=1pt] (.25,.31) to [out=up,in=up] (.75,.31) to (1,.1); 
\draw[-,line width=1pt] (-.1,.31) to [out=up,in=up] (1.25,.31) to (1,.1); 
\draw[-,line width=1pt] (.25,-.51) to [out=down,in=down] (.75,-.51) to (1,-.3); 
\draw[-,line width=1pt] (-.1,.-.51) to [out=down,in=down] (1.25,-.51) to (1,-.3);
\draw[-,line width=1.3pt] (1,-.3) to (1,.1);
\draw(-.1,0) \bdot;
\draw(.25,0) \bdot;
\draw(.6,0)node {$\scriptstyle \omega_{a}$};
\draw(.55,-.4)node {$\scriptstyle a$};
\node at (.5,-1.1){$\scriptstyle a(t-1)$};
\end{tikzpicture}
~\overset{\cref{dotmovecupcaps},\cref{dumbell+dots},\cref{sumdumbell}}{\equiv}~\sum_{{y+z=a\atop z\neq a}}f_y\begin{tikzpicture}[yscale=.6,anchorbase, scale=.8, color=\clr] 
\draw[-,line width=1.2pt]  (0.5,2) to[out=left,in=up] (0,1)to[out=down,in=left] (0.5,0);      
\draw[-,line width=1.2pt] (0.5,0)to[out=right,in=down] (1,1)to[out=up,in=right] (0.5,2);
\draw (0.1,1.7) \bdot;
\draw (0.05,.4) \bdot; 
\node at (-0.3,.4) {$\scriptstyle \omega_{z}$};
\node at (0.4,-.4) {$\scriptstyle a(t-1)$};
\end{tikzpicture}
~+
\begin{tikzpicture}[yscale=.8,anchorbase,scale=.8,color=\clr]
\draw[-,line width=1.2pt] (0.08,-.8) to (0.08,-.5);
\draw[-,line width=1.2pt] (0.08,.3) to (0.08,.6) to [out=up,in=up] (1,.6) to (1,-.8) to [out=down,in=down] (.08,-.8);
\draw[-,line width=1pt] (0.1,-.51) to [out=45,in=-45] (0.1,.31);
\draw[-,line width=1pt] (0.06,-.51) to [out=135,in=-135] (0.06,.31);
\draw(.08,.6) \bdot;
\draw(.25,0) \bdot;
\draw(.6,.0)node {$\scriptstyle \omega_{a}$};
\draw(.45,-.3)node {$\scriptstyle a$};
\node at (.5,-1.3){$\scriptstyle a(t-1)$};
\end{tikzpicture} ,
\end{aligned}
\end{equation}
where $f_y=\sum_{0\le c\le a}h_{c,y}\dotbubble_{c,y}\in \Xi$ by the inductive assumption, and the second summand on RHS \eqref{eq:ataat} comes from \eqref{sumdumbell} with $s=0$.

The first summand on RHS \eqref{eq:ataat}  belongs to $\Xi$ because of the proof of Case 1 and the inductive assumption. For the second summand, we observe that
\[
\begin{tikzpicture}[yscale=.8,anchorbase,scale=.8,color=\clr]
\draw[-,line width=1.2pt] (0.08,-.8) to (0.08,-.5);
\draw[-,line width=1.2pt] (0.08,.3) to (0.08,.6) to [out=up,in=up] (1,.6) to (1,-.8) to [out=down,in=down] (.08,-.8);
\draw[-,line width=1pt] (0.1,-.51) to [out=45,in=-45] (0.1,.31);
\draw[-,line width=1pt] (0.06,-.51) to [out=135,in=-135] (0.06,.31);
\draw(.08,.6) \bdot;
\draw(.25,0) \bdot;
\draw(.6,.0)node {$\scriptstyle \omega_{a}$};
\draw(.45,-.3)node {$\scriptstyle a$};
\node at (.45,-.35) {$\scriptstyle $};
\node at (.5,-1.3){$\scriptstyle a(t-1)$};
\end{tikzpicture}
~=~
\begin{tikzpicture}[yscale=.8,anchorbase,scale=.8,color=\clr]
\draw[-,line width=1.2pt] (0.08,-.8) to (0.08,-.5);
\draw[-,line width=1.2pt] (0.08,.3) to (0.08,.6) to [out=up,in=up] (1,.6) to (1,-.8) to [out=down,in=down] (.08,-.8);
\draw[-,line width=1pt] (0.1,-.51) to [out=45,in=-45] (0.1,.31);
\draw[-,line width=1pt] (0.06,-.51) to [out=135,in=-135] (0.06,.31);
\draw(-.1,0) \bdot;
\draw(.25,0) \bdot;
\draw(.6,.0)node {$\scriptstyle \omega_{a}$};
\draw(.25,-.3) \bdot;
\draw(.6,-.3)node {$\scriptstyle \omega_{a}$};
\node at (.5,-1.3){$\scriptstyle a(t-1)$};
\end{tikzpicture}
~=~
\begin{tikzpicture}[anchorbase,scale=.8,color=\clr]
\draw[-,line width=1pt] (-.1,-.51) to (-.1,.31);
\draw[-,line width=1pt] (.25,-.51) to (.25,.31);
\draw[-,line width=1pt] (.25,.31) to [out=up,in=up] (.75,.31) to (1,.1); 
\draw[-,line width=1pt] (-.1,.31) to [out=up,in=up] (1.25,.31) to (1,.1); 
\draw[-,line width=1pt] (.25,-.51) to [out=down,in=down] (.75,-.51) to (1,-.3); 
\draw[-,line width=1pt] (-.1,.-.51) to [out=down,in=down] (1.25,-.51) to (1,-.3);
\draw[-,line width=1.3pt] (1,-.3) to (1,.1);
\draw(-.1,0) \bdot;
\draw(.25,0) \bdot;
\draw(.7,0)node {$\scriptstyle 2\omega_{a}$};
\draw(.55,-.3)node {$\scriptstyle a$};
\node at (.5,-1.1){$\scriptstyle a(t-2)$};
\end{tikzpicture} \: .
\]
Hence we can repeat the process in \eqref{eq:ataat}. By the inductive assumption and \cref{lem:nested} we have 
\[
\binom{at}{a}\dotbubble_{at}\equiv \bubbledotat+\text{ elements in $\Xi$} 
\pmod {\End_{\AWb}(\varnothing)_{<at}}.
\]
Hence we have $\bubbledotat\in \Xi$.
\end{proof}

\begin{lemma}
    \label{stick-stick}
    We have
    $\begin{tikzpicture}
        [anchorbase,color=\clr]
        \draw[-,line width=1pt] (0,-.4) -- (0,.4);
        \draw (0,-.4)\bdot (0,.4)\bdot (0,0)\bdot;
        \node at (.3,0){$\scriptstyle \omega_\lambda$};
         \node at (-.2,-.2){$\scriptstyle a$};
    \end{tikzpicture}\in \Xi$, for any $a\in 2\N$ and $\lambda\in \EPar_a$.
\end{lemma}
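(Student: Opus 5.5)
We have to show that the "stick–stick" diagram lies in $\Xi$. This is the thickness-$a$ strand with a lollipop (cap-type dot) at the bottom, the packet $\omega_\lambda$ in the middle, and an upside-down lollipop at the top. The plan is to unfold the two lollipops via their definition \eqref{def:huochai} and reduce to closed bubbles covered by \cref{bubblealgebra}.

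Write $a=2s$. By definition the bottom lollipop equals $\frac{1}{2^s}$ times a split of $2s$ into $(s,s)$, joined by a cup carrying a single dot on one leg. The top lollipop is the mirror image. The diagram therefore becomes $\frac{1}{4^s}$ times a closed diagram: a $2s$-strand carrying $\omega_\lambda$, which splits at each end into two $s$-strands, the two legs being capped off. Using \eqref{mergesplitslideleft} and \eqref{zigzag}, we straighten this into a thick bubble of thickness $s$. Concretely, it is a closed $s$-strand passing through a merge to $2s$ and back (a sideways dumbbell as in \eqref{sidewaydumbbell}), carrying the dots $\omega_\lambda$ on the thick part and two further dots on the thin part.

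We argue by induction on the degree $|\lambda|$ plus the number of dots, working modulo lower degree terms. The lower terms are again closed diagrams with the same shape or with bubbles, so the induction applies to them. We move the packet $\omega_\lambda$ off the thick $2s$-segment onto the two $s$-legs using Lemma~\ref{Lem:dotmovesplitmerge}, i.e.\ \cref{dotmovefreely}(2). This gives a sum over $\vartheta+\varsigma=\lambda$ of diagrams in which the thick segment is dot-free. By \cref{lem:commute} we may reorder the dots so that they form partitions. We then apply \eqref{dumbell+dots}, or directly \eqref{sidewaydumbbell} after sliding through a cap. Each term becomes a $\mathbb J'$-combination of $\dotbubble_{c,\nu}\otimes(\text{a thinner closed dotted bubble})$. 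These are products of bubbles $\dotbubble_{\vartheta,b}$, possibly nested, which lie in $\Xi$ by \cref{lem:nested} and \cref{bubblealgebra}.

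The main obstacle is controlling lower degree terms. The relation \eqref{updotmovesplits+merge} used to move dots produces extra lollipop summands, namely the $s\ne0$ terms in the defining relation. These reintroduce stick-type pieces of smaller thickness. They have strictly smaller thickness, or lower degree, and since $\lambda$ is even they feed back into the same statement for smaller data. I would therefore set up the induction on the pair (thickness, degree) lexicographically, so that these terms fall under the inductive hypothesis. As a fallback, I would invoke \cref{lollipopspan}(2) to rewrite any intermediate lollipop in $\venus_{2a}$ before closing up.
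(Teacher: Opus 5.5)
There is a genuine gap in your central step.

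\textbf{What you actually have after unfolding.} Unfolding both lollipops correctly gives $\frac{1}{4^s}$ times a closed diagram of the form
$\cap_s\circ(\text{dots})\circ \mathrm{split}_{s,s}\circ\omega_\lambda\circ\mathrm{merge}_{s,s}\circ(\text{dots})\circ\cup_s$.
This is a thick $2s$-bar with a cup-loop at its lower vertex and a cap-loop at its upper vertex.

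\textbf{Why the cited lemmas do not apply.} Neither \eqref{dumbell+dots} nor \eqref{sidewaydumbbell} has this shape.
\begin{itemize}
\item In \eqref{dumbell+dots}, a through-strand merges with a side loop that runs from the split back to the merge. Closing it up gives a theta graph: two vertices joined by three edges.
\item \eqref{sidewaydumbbell} is a thick cap whose two legs are joined by an arc. After closing, it is again theta-shaped.
\end{itemize}
Your diagram instead has a loop at each vertex. The moves you propose only slide merges, splits and dots along edges: \eqref{zigzag}, \eqref{mergesplitslideleft}, reordering dots, ``sliding through a cap''. None of them changes the underlying graph. In particular, your thick segment stays a split sitting above a merge and never becomes a lantern. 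So the claimed expansion into $\dotbubble_{c,\nu}\otimes(\cdots)$ is unsupported, and nothing else in the proposal actually reduces the closed diagram.

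\textbf{How the paper proceeds.} The paper's proof is organised to reach a lantern instead.
\begin{itemize}
\item Using \cref{bendstick}, it rewrites the diagram as a thickness-$a$ cap with lollipops at both feet.
\item It splits $\omega_\lambda$ as $\sum_{\mu+\nu=\lambda}$ modulo lower degree terms. It then writes the result as a thickness-$a$ bubble containing an $(a/2,a/2)$ lantern (merge above split), carrying $\omega_\mu$, $\omega_\nu$ and the two lollipop dots.
\item \cref{wkdotsspan} collapses the lantern into $E_a$, giving $\sum_\vartheta a_\vartheta\dotbubble_{\vartheta,a}$.
\item \cref{bubblealgebra} then finishes.
\end{itemize}
If you want to keep your picture, you would have to break the bar with \eqref{splitmerge}. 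The two extreme summands close up to single dotted $s$-bubbles, which are fine. The intermediate summands, however, close up to tetrahedral webs (four trivalent vertices, all six pairs joined). These need a further reduction that you have not supplied.

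\textbf{Two smaller points.}
\begin{itemize}
\item The $s\neq0$ summands of \eqref{updotmovesplits+merge} have strictly lower degree. They vanish modulo lower degree terms, so no lexicographic induction on thickness is needed for them. What the discarded terms do require, as in the paper, is that lower-degree elements of $\End_{\AWb}(\unit)$ already lie in $\Xi$. That comes from the global degree induction; those terms are not, in general, stick--stick diagrams of smaller data.
\item Your fallback via \cref{lollipopspan}(2) gives nothing here, since your lollipops already carry even packets and are elementary.
\end{itemize}
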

\begin{proof}
    We have
    \begin{align*}
        \begin{tikzpicture}
        [anchorbase,color=\clr]
        \draw[-,line width=1pt] (0,-.4) -- (0,.4);
        \draw (0,-.4)\bdot (0,.4)\bdot (0,0)\bdot;
        \node at (.3,0){$\scriptstyle \omega_\lambda$};
        \node at (-.2,-.2){$\scriptstyle a$};
    \end{tikzpicture}
    ~\overset{\cref{bendstick}}{=}~
    \begin{tikzpicture}
        [anchorbase,color=\clr]
        \draw[-,line width=1pt] (0,-.2) to [out=up,in=left] (.2,.2) to [out=right,in=up] (.4,-.2);
        \draw (0,-.2)\bdot (.4,-.2)\bdot (.05,.1)\bdot;
        \node at (-.35,.1){$\scriptstyle \omega_\lambda$};
         \node at (.2,.4){$\scriptstyle a$};
    \end{tikzpicture}
    ~\equiv \sum_{\mu+\nu=\lambda}~
\begin{tikzpicture}[yscale=.8,anchorbase,scale=.8,color=\clr]
\draw[-,line width=1.2pt] (0.08,-.8) to (0.08,-.5);
\draw[-,line width=1.2pt] (0.08,.3) to (0.08,.6) to [out=up,in=up] (1,.6) to (1,-.8) to [out=down,in=down] (.08,-.8);
\draw[-,line width=1pt] (0.1,-.51) to [out=45,in=-45] (0.1,.31);
\draw[-,line width=1pt] (0.06,-.51) to [out=135,in=-135] (0.06,.31);
\draw(.25,0) \bdot;
\draw (-.08,.1)\bdot;
\node at (-.4,.1){$\scriptstyle \omega_\mu$};
\node at (.55,-.3){$\scriptstyle \frac{a}{2}$};
\draw (-.12,-.15) \bdot (-.08,-.35)\bdot;
\node at (-.4,-.45){$\scriptstyle \omega_\nu$};
\node at (.5,-1.3){$\scriptstyle a$};
\end{tikzpicture}
~\overset{\cref{wkdotsspan}}{\equiv}\sum_{\nu}a_\vartheta~
\begin{tikzpicture}
    [anchorbase,color=\clr,scale=1.2]
    \draw [-,line width=1pt] (0,0) to [out=up,in=left] (.2,.2) to [out=right,in=up] (.4,0) to [out=down,in=right] (.2,-.2) to [out=left,in=down] (0,0);
    \draw (0,0)\bdot;
    \node at (.6,0){$\scriptstyle a$};
     \node at (-.3,0){$\scriptstyle \omega_{\vartheta}$};
\end{tikzpicture}
\end{align*}
for $a_\vartheta\in\mathbb J'$. Thus the lemma follows from \cref{bubblealgebra}.
\end{proof}

\begin{lemma}
    \label{strand-stick-stick}
    Let $a,b\in\N$ and $c,d\in 2\N$ with $a+c=b+d$. Then, for any ${\lambda}\in\EPar_c$ and ${\mu}\in\EPar_d$, we have (modulo lower degree terms)
\[
\begin{tikzpicture}[anchorbase,scale=1, color=\clr]
	\draw[-,line width=1pt] (0,0) to (.29,.3) to (.29,.7) to (0,1);
	\draw[-,line width=1pt] (.6,0) to (.31,.3) to (.31,.7) to (.6,1);
    \draw (.6,1)\bdot (.6,0)\bdot (.45,.15)\bdot (.45,.85)\bdot;
    \node at (.7,.25){$\scriptstyle \omega_{\mu}$};
     \node at (.7,.75){$\scriptstyle \omega_{\lambda}$};
        \node at (0,1.2) {$\scriptstyle b$};
        \node at (0.63,1.2) {$\scriptstyle d$};
        \node at (0,-.2) {$\scriptstyle a$};
        \node at (0.63,-.2) {$\scriptstyle c$};
\end{tikzpicture}
~\in\text{$\mathbb J'$-span}~
\Bigg\{
\begin{tikzpicture}[anchorbase,scale=1, color=\clr]
       \draw[-,line width=1pt] (0,-.7) -- (0,.7)
                              (0,.5) -- (-.2,.2)
                               (0,-.5) -- (-.2,-.2);
      \draw (-.2,.2)\bdot (-.2,-.2)\bdot (-.1,-.35)\bdot (-.1,.35)\bdot
     (0,0)\bdot;
     \node at (-.4,.35){$\scriptstyle \omega_{\nu}$};
     \node at (-.4,-.35){$\scriptstyle \omega_{\gamma}$};
     \node at (.3,0){$\scriptstyle \omega_{\vartheta}$};
     \node at (.2,-.2){$\scriptstyle t$};
     \node at (0,-.9){$\scriptstyle a$};
      \node at (0,.9){$\scriptstyle b$};
\end{tikzpicture}~\colon~ a-t,b-t\in 2\N, \vartheta\in \Par_t, \nu\in \EPar_{b-t},\gamma\in \EPar_{a-t}
\Bigg\}.
\]
\end{lemma}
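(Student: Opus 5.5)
The plan is to first expand the middle of the diagram with \eqref{splitmerge} and then push all dots and lollipops into a standard position, working modulo lower degree terms throughout. The input is a merge of $(a,c)$ into a thick strand $a+c=b+d$, followed by a split into $(b,d)$. The $c$-leg carries a dotted lollipop $\omega_\mu$ at the bottom, and the $d$-leg carries a dotted lollipop $\omega_\lambda$ at the top.

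Applying \eqref{splitmerge} to the merge–split rewrites the morphism as a sum over $s,u$ with $u-s=d-a$. In each summand the $a$-strand splits into $(s,a-s)$ and the $c$-strand splits into $(c-u,u)$. The pieces $a-s$ and $c-u$ then cross, and everything merges into $(b,d)$. The $c$-strand is the output of a lollipop of thickness $c$, and the $d$-strand feeds into an upside-down lollipop of thickness $d$. So the relevant pieces are of the form of \cref{lollipop-merge-split}, namely a dotted lollipop followed by a split.

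By \cref{lollipop-merge-split}, the split of the dotted lollipop $\omega_\mu$ into $(c-u,u)$ lies, modulo lower terms, in $U_{c-u,u}$. The same holds for the top via the reflection $\div$. Elements of $U$ consist of two even lollipops with even dot packets, one on each leg, joined by a cup of thickness $t'$ carrying a dot packet in $\Par_{t'}$.

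After substituting, each summand decomposes into three parts:
\begin{enumerate}
\item the outer lollipop legs, which merge directly into the $b$ or $a$ strands;
\item cups and caps whose ends connect to the $d$-output or to strands coming from $a$;
\item a portion that closes up.
\end{enumerate}
Crossings, merges and splits interacting with cups and caps are straightened using \eqref{zigzag}, \eqref{mergesplitslideleft}, \eqref{sliders} and \cref{bendstick}. Dots are moved freely using \cref{dotmovefreely}, which is legitimate because we only work modulo lower degree terms. Closed components are bubbles and belong to $\Xi$ by \cref{bubblealgebra} and \cref{stick-stick}. I would check that scalar multiples by $\Xi$ are allowed in the target span, or else that such components only produce the coefficients $\binom{x-\cdot}{\cdot}$ of \eqref{dumbbellremove} and \eqref{sidewaydumbell}.

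What remains after these reductions is a diagram from $a$ to $b$ of the following shape. Lollipops of even thickness are merged onto the $a$-strand and split off the $b$-strand, via the merge/split to thickness $t$. All dots are collected on the thickness-$t$ middle strand. By \cref{wkdotsspan} and \eqref{commutato}, these dots can be put in the form $\omega_\vartheta$ with $\vartheta\in\Par_t$, modulo lower terms.

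Several lollipops attached on the same side can be combined into one lollipop of larger even thickness using \eqref{webassoc} and \cref{lollipopspan}(2). The parity conditions $a-t,b-t\in 2\N$ then follow automatically, since every lollipop thickness is even.

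The main obstacle is the bookkeeping in the middle step. When the cup coming from $U_{c-u,u}$ meets the thick crossing of $a-s$ with $c-u$, or meets the cap from the top $U$, it can form dumbbell-like or sideways configurations. These must be reduced to a single cup or cap with dots, using \eqref{sidewaydumbbell} and \eqref{dumbell+dots}, before they can be absorbed. I would handle this by induction on the total thickness $c+d$. The base case $c=d=0$ is trivial. In the inductive step, any configuration in which a cup reconnects into the lollipop side gets rewritten as a lower-thickness instance of the same statement, times elements of $\Xi$, modulo lower degree terms.
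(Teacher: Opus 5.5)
Your outline is essentially the paper's proof: expand by \eqref{splitmerge}, apply \cref{lollipop-merge-split} to the bottom lollipop and (via $\div$) to the top one, straighten, induct, and finish with \cref{wkdotsspan} and \cref{lollipopspan}. The middle step is simpler than you fear, since no dumbbells arise: once the crossing is slid off the end of the top lollipop, the cap, the $u$-strand and the cup straighten into a single strand from $a$ to $b$, whose inner merge/split is a smaller instance of the lemma and which forms a lantern with the $s$-strand; on the other hand, your $\Xi$ caveat is real, because for $c=d$ the summand $u=c$ is $1_a$ times a stick-stick bubble, so the span must be taken with $\Xi$-coefficients (as the paper's own appeal to \cref{stick-stick} in its base case reflects), which is harmless for the application in \cref{thm:spanAWb}.
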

\begin{proof}
   We proceed by induction on $a+c=b+d$. The base case $a+c=1$ follows from \cref{stick-stick}. We have
    \begin{align*}
        \begin{tikzpicture}[anchorbase,scale=1, color=\clr]
	\draw[-,line width=1pt] (0,0) to (.29,.3) to (.29,.7) to (0,1);
	\draw[-,line width=1pt] (.6,0) to (.31,.3) to (.31,.7) to (.6,1);
    \draw (.6,1)\bdot (.6,0)\bdot (.45,.15)\bdot (.45,.85)\bdot;
    \node at (.7,.25){$\scriptstyle \omega_{\mu}$};
     \node at (.7,.75){$\scriptstyle \omega_{\lambda}$};
        \node at (0,1.2) {$\scriptstyle b$};
        \node at (0.63,1.2) {$\scriptstyle d$};
        \node at (0,-.2) {$\scriptstyle a$};
        \node at (0.63,-.2) {$\scriptstyle c$};
\end{tikzpicture}
~\overset{\cref{splitmerge}}{=}
    \sum_{{0\leq s\leq \min(a,b)\atop 0\leq t \leq \min(c,d)} \atop t-s=d-a}~
    \begin{tikzpicture}
        [anchorbase,color=\clr]
        \draw[-,line width=1pt] (0,0) -- (0,1)
                                (0,.2) -- (.5,.8)
                                (0,.8) -- (.5,.2) -- (.5,.8)
                                (.5,.8) -- (.7,.9) -- (.9,1)
                                (.5,.2) -- (.7,.1) -- (.9,0);
        \draw (.7,.9)\bdot (.9,1)\bdot (.7,.1)\bdot (.9,0)\bdot;
        \node at (-.2,.5){$\scriptstyle s$};
        \node at (.7,.5){$\scriptstyle t$};
        \node at (0,-.2){$\scriptstyle a$};
        \node at (0,1.2){$\scriptstyle b$};
        \node at (.6,1.1){$\scriptstyle \omega_{\lambda}$};
         \node at (.6,-.1){$\scriptstyle \omega_{\mu}$};
    \end{tikzpicture} \: .
    \end{align*}
The summand above can be rewritten as
\begin{align*}
      \begin{tikzpicture}
        [anchorbase,color=\clr]
        \draw[-,line width=1pt] (0,0) -- (0,1)
                                (0,.2) -- (.5,.8)
                                (0,.8) -- (.5,.2) -- (.5,.8)
                                (.5,.8) -- (.7,.9) -- (.9,1)
                                (.5,.2) -- (.7,.1) -- (.9,0);
        \draw (.7,.9)\bdot (.9,1)\bdot (.7,.1)\bdot (.9,0)\bdot;
        \node at (-.2,.5){$\scriptstyle s$};
        \node at (.7,.5){$\scriptstyle t$};
        \node at (0,-.2){$\scriptstyle a$};
        \node at (0,1.2){$\scriptstyle b$};
        \node at (.6,1.1){$\scriptstyle \omega_{\lambda}$};
         \node at (.6,-.1){$\scriptstyle \omega_{\mu}$};
    \end{tikzpicture}
    ~&\equiv~
    \begin{tikzpicture}
        [anchorbase,color=\clr]
        \draw[-,line width=1pt]  (0,-.5) -- (0,1)
                                 (0,-.3) -- (.2,-.1) -- (.4,-.3) -- (.6,-.1) to [out=45,in=-45] (0,.8)
                                 (.2,-.1) -- (.2,.3)
                                 (.4,-.3) -- (.4,-.7);
        \draw (.2,.1)\bdot (.2,.3)\bdot (.4,-.5)\bdot (.4,-.7)\bdot;
        \node at (-.2,.25){$\scriptstyle s$};
        \node at (0,-.7){$\scriptstyle a$};
        \node at (0,1.2){$\scriptstyle b$};
        \node at (.45,.1){$\scriptstyle \omega_{\lambda}$};
         \node at (.65,-.5){$\scriptstyle \omega_{\mu}$};
         \node at (.35,-.1){$\scriptstyle t$};
    \end{tikzpicture}
    ~\overset{\cref{lollipop-merge-split}}{\equiv}
    \text{ a $\mathbb J'$-span of }~
        \begin{tikzpicture}
        [anchorbase,color=\clr]
        \draw[-,line width=1pt]  (0,-.5) -- (0,1)
                                 (0,-.3) -- (.2,-.1) to [out=45,in=135] (.6,-.1) -- (.7,-.2)
                                 to [out=-45,in=-135] (1.1,-.2) -- (1.3,0) to [out=45,in=-45] (0,.8);
        \draw[-,line width=1pt]  (.2,-.1) -- (.1,.3)
                                 (.6,-.1) -- (.7,.3)
                                 (.7,-.2) -- (.6,-.6)
                                 (1.1,-.2) -- (1.2,-.6);
        \draw (.65,.1)\bdot (.7,.3)\bdot (.15,.1)\bdot (.1,.3)\bdot
              (.65,-.4)\bdot (.6,-.6)\bdot (1.15,-.4)\bdot (1.2,-.6)\bdot (.3,-.05)\bdot (.8,-.25)\bdot;
        \node at (0,-.7){$\scriptstyle a$};
        \node at (0,1.2){$\scriptstyle b$};
        \node at (-.2,.5){$\scriptstyle s$};
    \end{tikzpicture}
    =\text{ a $\mathbb J'$-span of }\begin{tikzpicture}
        [anchorbase,color=\clr]
         \draw[-,line width=1pt]  (0,-.5) -- (0,1)
                                 (0,-.3) -- (.5,.1) -- (.8,-.2)
                                 (.5,.1) -- (.5,.4) -- (0,.8)
                                 (.5,.4) -- (.8,.7);
        \draw (.65,.55)\bdot (.8,.7)\bdot (.65,-.05)\bdot (.8,-.2)\bdot
             (.2,.65)\bdot (.2,-.15)\bdot (-.2,.65)\bdot (-.4,.5)\bdot
             (-.2,-.15)\bdot (-.4,0)\bdot;
        \draw[-,line width=1pt] (0,.8) -- (-.2,.65) -- (-.4,.5)
                                (0,-.3) -- (-.2,-.15) -- (-.4,0);
        \node at (0,-.7){$\scriptstyle a$};
        \node at (0,1.2){$\scriptstyle b$};
        \node at (-.2,.25){$\scriptstyle s$};
        \node at (.3,.25){$\scriptstyle t$};
        \node at (-.25,.9){$\scriptstyle \omega_{\nu_1}$};
        \node at (-.25,-.4){$\scriptstyle \omega_{\nu_2}$};
        \node at (.25,.85){$\scriptstyle \omega_{\nu_3}$};
        \node at (.25,-.4){$\scriptstyle \omega_{\nu_4}$};
         \node at (.85,.35){$\scriptstyle \omega_{\nu_5}$};
        \node at (.8,.15){$\scriptstyle \omega_{\nu_6}$};
        \node at (.65,.85){$\scriptstyle x$};
        \node at (.65,-.35){$\scriptstyle y$};
    \end{tikzpicture}
\end{align*}
where $x,y\in 2\N$, $\nu_1\in \EPar_{b-t-s+x}, \nu_2\in \EPar_{a-s-t+y}, \nu_3\in \Par_{t-x},\nu_4\in \Par_{t-y},\nu_5\in \EPar_x,\nu_6\in \EPar_y$. Then the lemma follows from the inductive assumption, \cref{wkdotsspan} and \cref{lollipopspan}.
\end{proof}

\section{A spanning set}
\label{sec:spanning}

In this section, we establish a diagrammatic spanning set consisting of bubbled elementary diagrams for each morphism space of $\AWb$ by establishing a reduction procedure of arbitrary diagrams to these distinguished diagrams.

\subsection{Elementary diagrams of $\AWb$}

Suppose $\lambda\in\Lambda_{\text{st}}(m)$ and $\mu\in \Lambda_{\text{st}}(n)$. By \cref{def:affineBWeb}, we have
\[
\Hom_{\AWb}(\lambda,\mu)=0, \qquad \text{ unless } m\equiv n\mod 2.
\]
Suppose that $m\equiv n\mod 2$ below. A (dotted) B-diagram is a (dotted) string diagram formed by tensor products and compositions of the generating diagrams \eqref{dotgenerator}. 

\begin{definition} \label{OwebCFD}
   A $\lambda\times \mu$  chicken foot B-diagram is an undotted B-diagram in $\Hom_{\AWb}(\mu,\la)$ consisting of three horizontal parts such that
   \begin{itemize} 
     \item the bottom part consists of splits, caps and possibly crossings involving some cap,
    \item the top part consists of merges, cups and possibly crossings involving some cup,
    \item the middle part consists of only crossings of the thinner strands (i.e., legs) which are not cups or caps,
    \end{itemize}
\end{definition}
A $\lambda\times \mu$ chicken foot B-diagram is said to be {\em reduced} if there is at most one intersection or join between every pair of legs.

\begin{example}
Let $\lambda=(1,2,4,1)$ and $\mu=(4,2)$. The following are three $\lambda\times \mu$ chicken foot B-diagrams, where the first one is reduced while the second and the third one are not reduced:
\begin{align}
    \label{OwebCFDexample}
     \begin{tikzpicture}[anchorbase,scale=.6,color=\clr]
     \draw[-,thick] (0,0) to (0.5,1);
     \draw[-,thick] (1.5,1) to (.5,0);
    \draw[-,line width=1.2pt] (1.5,0) to (.5,1);
    \draw[-,line width=1.5pt] (.5,1) to (.5,1.8);
    \draw[-,line width=1.2pt] (1.5,1.5) to (1.5,1.8);
        \draw[-,thick] (1.5,1.6) to (1.5,1);
    \draw[-,thick] (0,-.8) to (0,-.6);
    \draw[-,thick] (0,-.6) to (0,0);
    \draw[-,thick] (.5,-.6) to (.5,0);
    \draw[-,line width=1.2pt] (.5,-.8) to (.5,-.5);
    \draw[-,line width=1.5pt] (1.5,-.8) to (1.5,-.5);
    \draw[-,line width=1.2pt] (1.5,-.6) to (1.5,0);
    \draw[-,thick] (2.5,-.8) to (2.5,-.6);
    \draw[-,thick] (1.5,1.6) to [out=down,in=down] (.5,1.6);
    \draw[-,thick] (1.5,-.6) to [out=up,in=up] (.5,-.6);
    \draw[-,thick] (1.5,-.6) to [out=up,in=up] (2.5,-.6);
    \node at (1,1.5){$\scriptstyle{1}$};
    \node at (1,-.65){$\scriptstyle{1}$};
    \node at (2,-.65){$\scriptstyle{1}$};
\end{tikzpicture},\qquad\qquad
\begin{tikzpicture}[anchorbase,scale=.6,color=\clr]
     \draw[-,thick] (0,0) to (0.5,1);
     \draw[-,thick] (1.5,1) to (.5,0);
     \draw[-,thick] (1.5,0) to (.5,1);
    \draw[-,thick] (1.5,0) to [out=170,in=down] (.5,1);
    \draw[-,line width=1.4pt] (.5,1) to (.5,1.8);
    \draw[-,line width=1.2pt] (1.5,1.5) to (1.5,1.8);
        \draw[-,thick] (1.5,1.6) to (1.5,1);
    \draw[-,thick] (0,-.8) to (0,0);
    \draw[-,thick] (.5,-.6) to (.5,0);
    \draw[-,line width=1.2pt] (.5,-.8) to (.5,-.5);
    \draw[-,line width=1.4pt] (1.5,-.8) to (1.5,-.5);
    \draw[-,line width=1.2pt] (1.5,-.6) to (1.5,0);
    \draw[-,thick] (2.5,-.8) to (2.5,-.6);
    \draw[-,thick] (1.5,1.6) to [out=down,in=down] (.5,1.6);
    \draw[-,thick] (1.5,-.6) to [out=up,in=up] (.5,-.6);
    \draw[-,thick] (1.5,-.6) to [out=up,in=up] (2.5,-.6);
    \node at (1,1.5){$\scriptstyle{1}$};
    \node at (1,-.65){$\scriptstyle{1}$};
    \node at (2,-.65){$\scriptstyle{1}$};
\end{tikzpicture},\qquad\qquad
\begin{tikzpicture}[anchorbase,scale=.6,color=\clr]
     \draw[-,thick] (0,0) to (0.5,1);
     \draw[-,thick] (1.5,1) to (.5,0);
    \draw[-,line width=1.2pt] (1.5,0) to (.5,1);
    \draw[-,line width=1.4pt] (.5,1) to (.5,1.8);
    \draw[-,line width=1.2pt] (1.5,1.8) to (1.5,1);
    \draw[-,thick] (0,-.8) to (0,0);
    \draw[-,thick] (.5,-.6) to (.5,0);
    \draw[-,line width=1.2pt] (.5,-.8) to (.5,-.5);
    \draw[-,line width=1.4pt] (1.5,-.8) to (1.5,-.5);
    \draw[-,line width=1.2pt] (1.5,-.6) to (1.5,0);
    \draw[-,thick] (2.5,-.8) to (2.5,-.6);
    \draw[-,thick] (1.5,1) to (1.5,0.9) to [out=down,in=down] (.5,.9) to (.5,1);
    \draw[-,thick] (1.5,-.6) to [out=up,in=up] (.5,-.6);
    \draw[-,thick] (1.5,-.6) to [out=up,in=up] (2.5,-.6);
    \node at (1.8,.8){$\scriptstyle{1}$};
    \node at (1,-.65){$\scriptstyle{1}$};
    \node at (2,-.65){$\scriptstyle{1}$};
\end{tikzpicture}.
\end{align}
\end{example}

For any partition $\nu\in \Par_a$, the {\em elementary dot packet (of thickness $a$ and shape $\nu$)} is defined to be 
\[\omega_{a,\nu}:=\wkdotavar{\nu}.\]
We write $\omega_\nu=\omega_{a,\nu}$ if $a$ is clear from the context and draw it as 
$
\begin{tikzpicture}[baseline = 3pt, scale=0.5, color=\clr]
\draw[-,line width=1.6pt] (0,-.2) to[out=up, in=down] (0,1.2);
\draw(0,0.5) \bdot; \node at (0.6,0.5) {$ \scriptstyle \nu$};
\node at (0,-.4) {$\scriptstyle a$};
\end{tikzpicture}$. To enhance the readability of our dotted B-diagrams, we may omit the subscript \( \nu \) and instead represent it as  
\(
\begin{tikzpicture}[baseline = 3pt, scale=0.5, color=\clr]
\draw[-,line width=1.6pt] (0,-.2) to[out=up, in=down] (0,1.2);
\draw(0,0.5) \squ; 
\node at (0,-.4) {$\scriptstyle a$};
\end{tikzpicture}
\)
when the information of \( \nu \) does not need to be explicitly tracked. When \( \nu \) is an even partition at the leg of a lollipop, we distinguish it by drawing  
\(
\begin{tikzpicture}[baseline = 3pt, scale=0.5, color=\clr]
\draw[-,line width=1.6pt] (0,-.2) to[out=up, in=down] (0,1.2);
\draw(0,0.5) \dia; 
\node at (0,-.4) {$\scriptstyle a$};
\end{tikzpicture}.
\)

\begin{definition} \label{elementaryD}
    A $\lambda\times \mu$ elementary B-diagram is a $\lambda\times \mu$ reduced chicken foot B-diagram with 
    \begin{enumerate}
        \item an elementary dot packet $\omega_{\nu}$ (for some $\nu\in \Par_a$) attached to each leg of thickness $a$ in the following way:
    \begin{itemize}
        \item If the leg is neither a cup nor a cap, then the elementary dot packet is attached at the bottom of that leg,
        \item If the leg is a cup or a cap, then the elementary dot packet is attached at the leftmost boundary of that leg;
    \end{itemize}
        \item elementary lollipops $\wkdotsnu$ (resp. $\wkdotsnurev$), for $\nu\in \EPar_{2s}$,  attached to thick legs on the bottom (resp. top);
        \item crossing-free and nest-free bubbles $\bubbledota$, for $a\in 2\N$, arranged to the left of all other strands.
    \end{enumerate}
\end{definition}

 \begin{example} 
Let $\lambda=(15,12),\mu=(11,4,14)$. The following is a $\lambda\times \mu$ elementary diagram in $\Hom_{\AWb}(\mu,\la)$:
\begin{equation}
\label{exelementaryD}
    \begin{tikzpicture}
        [anchorbase,scale=1.1,color=\clr]
        \draw[thick] (0,0) to[out=down,in=left] (.2,-.2) to[out=right,in=down] (.4,0)
        to[out=up,in=right] (.2,.2) to[out=left,in=up] (0,0);
        \draw (0,0) \bdot;
        \node at (.2,-.4){$\scriptstyle 2$};
    \end{tikzpicture}\ 
     \begin{tikzpicture}
        [anchorbase,scale=1.1,color=\clr]
        \draw[thick] (0,0) to[out=down,in=left] (.2,-.2) to[out=right,in=down] (.4,0)
        to[out=up,in=right] (.2,.2) to[out=left,in=up] (0,0);
        \draw (0,0) \bdot;
        \node at (.2,-.4){$\scriptstyle 4$};
    \end{tikzpicture}\ 
    \begin{tikzpicture}
        [anchorbase,scale=1.2, color=\clr]
        \draw[line width=1.4pt] (0,.5) -- (0,.8)
                              (1,.5) -- (1,.8)
                              (-.5,-1) -- (-.5,-1.3)
                              (.5,.-1) -- (.5,-1.3)
                              (1.5,-1) -- (1.5,-1.3);  
        \draw (0,.5) to[out=-30,in=-150] (1,.5)
               (-.5,-1) to[out=30,in=150] (1.5,-1)
               (-.5,-1) to[out=15,in=165] (.5,-1);
        \draw (0,.5) -- (.5,-1) -- (1,.5)
              (0,.5) -- (-.5,-1)
              (1,.5) -- (1.5,-1)
              (0,.5) -- (1.5,-1)
              (1,.5) -- (-.5,-1);
        \draw (0,.5) -- (-.3,.3) -- (-.6,.1) 
                (-.3,.3)\bdot (-.6,.1)\bdot
              (1,.5) -- (1.3,.3) (1.3,.3)\bdot
              (-.5,-1) -- (-.7,-.8) (-.7,-.8)\bdot
              (1.5,-1) -- (1.7,-.8) -- (1.9,-.6) (1.7,-.8)\bdot (1.9,-.6)\bdot;
        \node at (-.2,.55){$\scriptstyle 4$};
         \node at (1.2,.55){$\scriptstyle 4$};
        \node at (-.7,-1.05){$\scriptstyle 2$};
        \node at (1.7,-1){$\scriptstyle 8$};
        \draw (-.35,-.55) \bdot
              (-.2,-.7) \bdot
              (-.2,-.95)\bdot
              (.55,-.85)\bdot
              (1.4,-.7)\bdot
              (1.05,-.55)\bdot
              (.25,.4)\bdot;
         \node at (-.5,.4){$\scriptstyle \nu_1$};
        \node at (.25,.6){$\scriptstyle \nu_2$};
        \node at (-.6,-.55){$\scriptstyle \nu_3$};
        \node at (0.1,-.65){$\scriptstyle \nu_4$};
        \node at (-.2,-1.15){$\scriptstyle \nu_5$};
        \node at (.8,-.85){$\scriptstyle \nu_6$};
        \node at (1.05,-.35){$\scriptstyle \nu_7$};
        \node at (1.5,-.5){$\scriptstyle \nu_8$};
         \node at (1.95,-.85){$\scriptstyle \nu_9$};
        \node at (-.3,0){$\scriptstyle 5$};
        \node at (.075,0){$\scriptstyle 2$};
        \node at (1.3,0){$\scriptstyle 3$};
        \node at (.5,.6){$\scriptstyle 3$};
        \node at (1.2,-1){$\scriptstyle 2$};
    \end{tikzpicture}
\end{equation}
\end{example}
If the information of $\nu_1,\ldots,\nu_9$ are not explicitly tracked, we may draw \cref{exelementaryD} as 
\begin{equation}
    \label{simplifiedD}
        \begin{tikzpicture}
        [anchorbase,scale=1.1,color=\clr]
        \draw[thick] (0,0) to[out=down,in=left] (.2,-.2) to[out=right,in=down] (.4,0)
        to[out=up,in=right] (.2,.2) to[out=left,in=up] (0,0);
        \draw (0,0) \bdot;
        \node at (.2,-.4){$\scriptstyle 2$};
    \end{tikzpicture}\ 
     \begin{tikzpicture}
        [anchorbase,scale=1.1,color=\clr]
        \draw[thick] (0,0) to[out=down,in=left] (.2,-.2) to[out=right,in=down] (.4,0)
        to[out=up,in=right] (.2,.2) to[out=left,in=up] (0,0);
        \draw (0,0) \bdot;
        \node at (.2,-.4){$\scriptstyle 4$};
    \end{tikzpicture}\ 
    \begin{tikzpicture}
        [anchorbase,scale=1.2, color=\clr]
        \draw[line width=1.4pt] (0,.5) -- (0,.8)
                              (1,.5) -- (1,.8)
                              (-.5,-1) -- (-.5,-1.3)
                              (.5,.-1) -- (.5,-1.3)
                              (1.5,-1) -- (1.5,-1.3);  
        \draw (0,.5) to[out=-30,in=-150] (1,.5)
               (-.5,-1) to[out=30,in=150] (1.5,-1)
               (-.5,-1) to[out=15,in=165] (.5,-1);
        \draw (0,.5) -- (.5,-1) -- (1,.5)
              (0,.5) -- (-.5,-1)
              (1,.5) -- (1.5,-1)
              (0,.5) -- (1.5,-1)
              (1,.5) -- (-.5,-1);
        \draw (0,.5) -- (-.3,.3) -- (-.6,.1) 
                (-.3,.3)\dia (-.6,.1)\bdot
              (1,.5) -- (1.3,.3) (1.3,.3)\bdot
              (-.5,-1) -- (-.7,-.8) (-.7,-.8)\bdot
              (1.5,-1) -- (1.7,-.8) -- (1.9,-.6) (1.7,-.8)\dia (1.9,-.6)\bdot;
        \node at (-.2,.55){$\scriptstyle 4$};
         \node at (1.2,.55){$\scriptstyle 4$};
        \node at (-.7,-1.05){$\scriptstyle 2$};
        \node at (1.7,-1){$\scriptstyle 8$};
        \draw (-.35,-.55) \squ
              (-.2,-.7) \squ
              (-.2,-.95)\squ
              (.55,-.85)\squ
              (1.4,-.7)\squ
              (1.05,-.55)\squ
              (.25,.4)\squ;
        \node at (-.3,0){$\scriptstyle 5$};
        \node at (.075,0){$\scriptstyle 2$};
        \node at (1.3,0){$\scriptstyle 3$};
        \node at (.5,.6){$\scriptstyle 3$};
        \node at (1.2,-1){$\scriptstyle 2$};
    \end{tikzpicture}
\end{equation}
The complete data of a bubble-free $\lambda\times \mu$ elementary diagram can be recorded by a pair of matrices $(A,P)$ as follows. Suppose $\lambda=(\lambda_1,\ldots,\lambda_s)$ and $\mu=(\mu_1,\ldots,\mu_t)$. Let 
$$
k=l(\lambda)+l(\mu)=s+t,
\qquad
\lambda\cup\mu:=(\lambda_1,\ldots,\lambda_s,\mu_1,\ldots,\mu_t).
$$ 
Then $A=(a_{ij})_{1\leq i,j\leq k}$ is the matrix where $2^{\delta_{ij}}a_{ij}$ records the thickness of the leg connecting $(\lambda\cup\mu)_i$ and $(\lambda\cup\mu)_j$ while $P=(p_{ij})_{1\leq i,j\leq k}$ is the matrix such that $p_{ij}$ records the dot packet on the corresponding leg. In particular, $2a_{ii}\in 2\N$ counts the thickness of lollipop at $(\lambda\cup\mu)_i$.

Note that $A$ and $P$ are symmetric by definition.
We say the bubble-free $\lambda\times \mu$ elementary diagram is of type $(A,P)$. Note that by the relations \eqref{zigzag} and \eqref{sliders}--\eqref{braid} we see that elementary diagrams of the same type are equal as morphisms in $\AWb$.
\begin{example}
    Let $\lambda=(15,12),\mu=(11,4,14)$. The elementary diagram \eqref{exelementaryD} with bubbles removed is recorded by
    \[
    A=\begin{pmatrix}
        2& 3& 5&2 & 1\\
        3& 2& 1& 1& 3\\
        5& 1&1 & 1& 2\\
        2& 1& 1& 0& 0\\
        1& 3& 2& 0& 4
    \end{pmatrix},\qquad
    P=\begin{pmatrix}
        \nu_1 & \nu_2& \nu_3& (\varnothing) & \nu_7\\
        \nu_2& (\varnothing)& \nu_4& \nu_6& \nu_8\\
        \nu_3& \nu_4&(\varnothing) &\nu_5& (\varnothing)\\
        (\varnothing)& \nu_6& \nu_5& (\varnothing)& (\varnothing)\\
        \nu_7& \nu_8& (\varnothing)& (\varnothing)& \nu_9
    \end{pmatrix} .
    \]
\end{example}

Denote by $\SMat_{k\times k}(\N)$ the set of $k\times k$ symmetric matrices with non-negative integer entries. Denote
\begin{equation}\label{eq:Mat}
 \SMat_{\lambda,\mu}:=\left\{ A=(a_{ij}) \in \text{SMat}_{k\times k}(\N)~\Big\vert~ {\sum_{1\le h\le k}2^{\delta_{ih}}a_{ih}=(\la\cup\mu)_i, 
 \atop 
 \sum_{1\le h\le k}2^{\delta_{jh}}a_{jh}=(\la\cup\mu)_j, \forall i,j}
 \right\}.   
\end{equation}

For each elementary diagram from $\mu$ to $\lambda$, we encode its information with two symmetric matrices $A=(a_{ij})\in \SMat_{\lambda,\mu}$ and $P=(\vartheta_{ij}), \vartheta_{ij}\in \Par_{2^{\delta_{ij}}a_{ij}}$. Denote  
\begin{equation}
 \label{eq:MSMat}
 \PSMat_{\lambda,\mu}:=\left\{ (A, P)~\Big\vert~ { A=(a_{ij})\in \SMat_{\lambda,\mu}, P=(\vartheta_{ij})\in\SMat_{k\times k}, \atop \vartheta_{ij}\in \Par_{a_{ij}}\text{ for } i\neq j \text{ and } \vartheta_{ii}\in \EPar_{2a_{ii}} }\right\}.   
\end{equation}

We will identify the set of all elementary diagrams from $\mu$ to $\lambda$ with $\PSMat_{\lambda,\mu}$.

\subsection{A spanning set of $\AWb$}

Again we fix $\lambda=(\lambda_1,\ldots,\lambda_s)$, $\mu=(\mu_1,\ldots,\mu_t)$ and $k=s+t$ such that $\lambda \vdash m,\mu\vdash n$ and $m\equiv n \mod 2$. We shall construct a spanning set for each morphism space $\Hom_{\AWb}(\mu,\lambda)$. 

By definition $\Hom_{\AWb}(\mu,\lambda)$ is spanned by all dotted B-web diagrams from $\mu$ to $\lambda$. These dotted web diagrams may contain bubbles with dots.  If such a bubble does not intersect with any other strands, we say it is a {\em floating} bubble. It follows from \cref{bubblealgebra} that the algebra generated by all floating bubbles is just $\Xi$. Recall $\dotbubble_a$ from \eqref{bubble+dot+par}. Denote
\begin{align} \label{eq:BPMat}
\bPSMat_{\lambda,\mu} &:=\{\dotbubble_2^{k_2}\dotbubble_4^{k_4}\cdots f \mid f\in \PSMat_{\lambda,\mu}, k_{2i}\in \N,\forall i\geq 1\text{ and }\sum_i k_{2i}<\infty\}.
\end{align}
Note that $\Xi$ acts naturally on each morphism space $\Hom_{\AWb}(\mu,\lambda)$ by horizontal concatenation from the left.

\begin{theorem}
\label{thm:spanAWb}
    Let $\lambda, \mu\in \Lambda_{\text{st}}$. Then the morphism space $\Hom_{\AWb}(\mu,\lambda)$ is spanned by 
    $\bPSMat_{\lambda,\mu}.$ Equivalently, 
    $\Xi=\Hom_{\AWb}(\unit,\unit)$, and as a $\Xi$-module, $\Hom_{\AWb}(\mu,\lambda)$ is spanned by $\PSMat_{\lambda,\mu}$.
\end{theorem}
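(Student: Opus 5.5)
We argue by induction on the degree $k$, showing that every dotted B-web diagram $D\in\Hom_{\AWb}(\mu,\lambda)_{\le k}$ lies in the $\Xi$-span of $\PSMat_{\lambda,\mu}$ modulo $\Hom_{\AWb}(\mu,\lambda)_{<k}$. Since $\Hom_{\AWb}(\mu,\lambda)_{<0}=0$, this proves the spanning statement. The equality $\Xi=\Hom_{\AWb}(\unit,\unit)$ is then the case $\lambda=\mu=\varnothing$: there $\PSMat$ consists only of the empty diagram, and by \cref{lem:nested}, \cref{Deltaaodd} and \cref{bubblealgebra} every closed component (after separation) lies in $\Xi$.

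\textbf{Degree zero.} First we treat undotted diagrams, following the type A argument of \cite{SWweb}. Using \eqref{splitmerge}, \eqref{webassoc} and the sliders \eqref{sliders}, together with \eqref{mergesplitmove} and \eqref{mergesplitslideleft} to move merges and splits through cups and caps, we push all splits and caps to the bottom and all merges and cups to the top. This produces chicken foot B-diagrams. We then remove double intersections or joins between pairs of legs, using \eqref{braid} and \eqref{splitmerge}. Self-loops are removed via \eqref{curlremove}, dumbbells via \eqref{dumbbellremove} and \eqref{sidewaydumbell}, and a cap joined to a single merge is killed by \eqref{bubblecap}. Undotted closed bubbles evaluate to $\binom{x}{a}\in\mathbb J'$. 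The outcome is reduced chicken foot diagrams. A leg that connects a boundary point to itself appears exactly as a lollipop of even thickness $2a_{ii}$, produced through \eqref{thickcapandthin} and \cref{bendstick}. Isotopy invariance from \eqref{zigzag} and \eqref{sliders}--\eqref{braid} then shows that the diagram depends only on its matrix $A\in\SMat_{\lambda,\mu}$.

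\textbf{Inductive step.} For a dotted diagram of degree $k$, we rewrite each dot as an elementary dot packet. By \cref{dotmovefreely}, dots slide through crossings, merges and splits modulo lower degree terms. We apply the same undotted reduction, carrying the dots along. Every time a reduction step fuses strands, \cref{wkdotsspan} turns dots on parallel strands into an element of $E^{\nu}_{a}$, with $E_a$ commutative by \cref{lem:commute}. Dots on caps and cups are moved to the leftmost boundary using \eqref{dotmovecupcaps}. Dotted closed components are first separated by \cref{lem:nested} and then land in $\Xi$ by \cref{bubblealgebra}. Dotted dumbbells and sideways dumbbells are handled by \eqref{dumbell+dots} and \eqref{sidewaydumbbell}. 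A self-connecting leg with dots is, by \cref{lollipop-merge-split} and \cref{lollipopspan}, a combination of elementary lollipops with even packets, up to lower degree terms. Leg configurations of the form strand–lollipop–strand are reduced by \cref{strand-stick-stick}. After these steps, each summand is, modulo lower degree, a $\Xi$-multiple of an elementary diagram of type $(A,P)$. Note that \cref{lollipopspan} and \cref{lollipop-merge-split} force the lollipop packets to be even partitions, which is exactly the condition defining $\PSMat$.

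\textbf{Main obstacle.} The main difficulty is the bookkeeping needed to make this a genuine reduction. The relations \eqref{dotmovecrossing1} and \eqref{updotmovesplits+merge} generate extra cup and cap terms, which are of lower degree but have different shape. Similarly, moving a dot past a thick lollipop produces new lollipops. To guarantee termination, I would induct first on degree and then, within a fixed degree, on a secondary complexity: the number of crossings plus the number of nonreduced pairs of legs. I would check that every equality used only produces terms that are lower in this lexicographic order. The lollipop and bubble input (the results of Section~\ref{sec:lol:bubbles} and Appendix~\ref{app:lollipop}) does the real work, so the remaining difficulty is organisational.
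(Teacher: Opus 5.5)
Your outer frame matches the paper: induction on degree, working modulo $\Hom_{\AWb}(\mu,\lambda)_{<k}$, with the lollipop and bubble results of Section~\ref{sec:lol:bubbles} supplying the input. The gap is the reduction engine itself.

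``Apply the same undotted reduction, carrying the dots along'' is a global rewriting procedure, and you never show that it terminates in elementary diagrams. The secondary complexity you propose does not decrease.
\begin{itemize}
\item The square switch \eqref{splitmerge} is exactly the move that pushes splits below merges, and it replaces a merge--split pair by split--crossing--merge diagrams, so it \emph{adds} crossings.
\item \eqref{mergesplitmove} converts merges into splits that must then be pushed down again.
\item \cref{lollipop-merge-split} and \cref{strand-stick-stick} replace a single lollipop by a cup with new lollipops and new split/merge vertices, at the same degree.
\item ``Number of nonreduced pairs of legs'' is only defined once a diagram is already chicken-foot, so it cannot steer the process that produces that form.
\end{itemize}
Conversely, the cup/cap correction terms from \eqref{dotmovecrossing1} and \eqref{updotmovesplits+merge} need no secondary measure at all: they have strictly lower degree and are absorbed by the primary induction. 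A related problem is that you never check that the configurations you meet are the specific local shapes for which the lemmas are actually stated.

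The missing idea, which is what makes the paper's argument well founded, is to never leave normal form by more than one generator. Write any diagram as a composite of generators $1_*f1_*$, where $f$ is a cup, merge, split, cap or dot; crossings are eliminated via merges and splits by \cite[(4.36)]{BEEO}. Then prove that for $g$ \emph{elementary}, $fg$ is a combination of bubbled elementary diagrams modulo lower degree. An induction on the number of generators, nested inside the degree induction, is then automatically well founded. Each step only has to renormalize the configuration at the one or two vertices of $g$ where $f$ attaches. This is the Case (O)--(IV) analysis of Table~\ref{tab1}, and it is there that each lemma meets exactly the shape it handles:
\begin{itemize}
\item merges: \cref{lollipopspan}, and \cref{wkdotsspan} applied to isolated lanterns;
\item splits: \cref{lollipop-merge-split}, for new splits whose heads end in lollipops;
\item caps: \eqref{sidewaydumbbell} for cups shared by the two capped vertices, then \eqref{mergesplitslideleft} and \cref{strand-stick-stick};
\item dots: \cref{dotmovefreely}, \eqref{dotmovecupcaps} and \cref{lollipopspan}.
\end{itemize}

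A smaller slip concerns degree zero: no lollipops survive there. An undotted split--cap vanishes by \eqref{bubblecap}, and the elementary lollipops of \eqref{def:huochai} already carry degree $a_{ii}$. So the degree-zero normal forms are those with $a_{ii}=0$, not diagrams containing lollipops of thickness $2a_{ii}$.
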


\subsection{Proof of \cref{thm:spanAWb} }

 To prove the theorem, it suffices to show that $fg$ can be written as a linear combination of elementary diagrams, for an arbitrary elementary diagram $g$ (up to lower degree terms) and a specific generating morphism $f$ of the following five types, where $1_*$ denotes suitable identity morphisms (cf. \cite{SWweb,SWSchur}); see Table \ref{tab1}. (The crossing generators can be expressed in terms of merges and splits by \cite[(4.36)]{BEEO}.)
 
\begin{table}[h]  
	\begin{center}
		\centering
	\begin{tabular}{|m{1.6cm}<{\centering}|m{2.2cm}<{\centering}|m{2.2cm}<{\centering}|m{2.2cm}<{\centering}
    |m{2.2cm}<{\centering}|}
	\hline
	(O) & (I) & (II) & (III) & (IV) \\
	\hline
	$\cup$ & \rot{Y} & Y & $\cap$ & \wkdotaa
		\\
	\hline
	  	 
   $1_*\cupa 1_*$ & $1_*\merge 1_*$ & $1_* \splits 1_*$ & $1_*\capa 1_*$ & $1_* \wkdotaa 1_*$
\\\hline
\end{tabular} 
\end{center}
	\vspace{0.5cm}
\caption{Five types of generating morphisms}
	\label{tab1}
\end{table}


The strategy is to proceed by induction on the degree \( k \) for \( \Hom_{\AWb}(\mu,\lambda)_{\leq k} \).
  The core of the argument involves demonstrating that for any generating morphism $f$ of the five types in Table~\ref{tab1} and an arbitrary elementary diagram $g$, the composition $fg$ can be rewritten as a linear combination of elementary diagrams, working modulo terms of lower degrees. This reduction is achieved by systematically applying the defining and implied relations of the category to untangle the resulting diagrams into their reduced forms.

The base case \( k = 0 \) follows from similar arguments as those used for types (O)--(IV) below. We divide the proof according to the types of $f$. 

The verification of type $\cup$ is straightforward as the composition $fg$ must be reduced. This takes care of Case (O). 

Suppose that $f$ is of type $\rot{Y}$ and we can join $f$ to the $i$-th and $(i+1)$-th strand at the top of $g$. Then $g$ has an $s$-fold merge $\rot{Y}_i$ and $t$-fold merge $\rot{Y}_{i+1}$ on its $i$-th and $(i+1)$-st vertices, respectively. If a cup in the top part of $g$ is a shared leg of the $u$-th and the $v$-th strands such that $\{u,v\}\neq \{i,i+1\}$, then we can move this cup upwards so that it only intersects with the head of $f$. All the caps in the bottom part of $g$ can be moved downward via \eqref{sliders} so that they intersect with the head of the splits only. Hence these cups and caps do not interfere with our further operations.

For example, suppose 
$f=1_*
\begin{tikzpicture}[anchorbase,color=\clr]
	\draw[-,line width=1pt] (0.28,-.3) to (0.08,0.04);
	\draw[-,line width=1pt] (-0.12,-.3) to (0.08,0.04);
	\draw[-,line width=1.4pt] (0.08,.3) to (0.08,0);
        \node at (-0.22,-.4) {$\scriptstyle 9$};
        \node at (0.4,-.45) {$\scriptstyle 10$};
\end{tikzpicture}1_*$ 
    and 
\begin{equation}
            \label{exofg}
            g=    \begin{tikzpicture}
        [anchorbase,scale=.8, color=\clr]
       \draw [-,line width=1.4pt] (0,0) -- (0,-.4)
                                  (1,0) -- (1,-.4)
                                  (2,0) -- (2,-.4)
                                  (-1,0) -- (-1,-.4)
                                  (0,2) -- (0,2.4)
                                  (2,2) -- (2,2.4)
                                  (3,2) -- (3,2.4);
        \draw[-]   (0,0) -- (0,2)
                   (0,0) -- (-.3,.3)
                   (0,0) to [out=30,in=150] (2,0)
                   (-1,0) to [out=60,in=120] (1,0)
                   (1,0) -- (0,2)
                   (1,0) -- (2,2)
                   (2,0) -- (2,2)
                   (2,0) -- (2.3,.3)
                   (0,2) -- (-.3,1.7)
                   (0,2) to [out=-30,in=-150] (2,2)
                   (0,2) to [out=-60,in=-120] (3,2)
                   (2,2) -- (2.3,1.7);
        \draw (-.3,1.7) \bdot (-.3,.3) \bdot (2.3,.3)\bdot (2.3,1.7)\bdot;
        \draw (-.15,1.85)\dia (-.15,.15)\dia 
        (2.15,.15)\dia (2.15,1.85)\dia;
        \draw (.2,.11) \squ (.3,1.85)\squ (-.85,.2)\squ (1.08,.16)\squ (0,.2)\squ
        (.3,1.65)\squ (2,.3)\squ;
        \node at (1,2){$\scriptstyle 4$};
        \node at (-.4,2){$\scriptstyle 2$};
        \node at (2.4,2){$\scriptstyle 4$};
        \node at (-.4,0){$\scriptstyle 4$};
        \node at (2.4,0){$\scriptstyle 6$};
    \end{tikzpicture}
    \quad\Rightarrow\quad
    fg\equiv
    \begin{tikzpicture}
        [anchorbase,scale=.8, color=\clr]
       \draw [-,line width=1.4pt] (0,0) -- (0,-.4)
                                  (1,.5) -- (1,-.4)
                                  (2,0) -- (2,-.4)
                                  (-1,-.3) -- (-1,-.4)
                                  (0,2) -- (0,2.2) -- (1,2.6)-- (1,3)
                                  (2,2) -- (2,2.2) -- (1,2.6)
                                  (3,2.8) -- (3,3);
        \draw[-]   (0,0) -- (0,2)
                   (0,0) -- (-.3,.3)
                   (0,0) to [out=30,in=150] (2,0)
                   (-1,-.3) to [out=15,in=165] (1,-.3)
                   (1,.5) -- (0,2)
                   (1,.5) -- (2,2)
                   (2,0) -- (2,2)
                   (2,0) -- (2.3,.3)
                   (0,2) -- (-.3,1.7)
                   (0,2) to [out=-30,in=-150] (2,2)
                   (1,2.8) to [out=-30,in=-150] (3,2.8)
                   (2,2) -- (2.3,1.7);
        \draw (-.3,1.7) \bdot (-.3,.3) \bdot (2.3,.3)\bdot (2.3,1.7)\bdot;
        \draw (-.15,1.85)\dia (-.15,.15)\dia 
        (2.15,.15)\dia (2.15,1.85)\dia;
        \draw (.2,.11) \squ (.3,1.85)\squ (-.85,-.25)\squ (1.2,.7)\squ (0,.2)\squ
        (1.3,2.65)\squ (2,.3)\squ;
        \node at (1,2){$\scriptstyle 4$};
        \node at (-.4,2){$\scriptstyle 2$};
        \node at (2.4,2){$\scriptstyle 4$};
        \node at (-.4,0){$\scriptstyle 4$};
        \node at (2.4,0){$\scriptstyle 6$};
    \end{tikzpicture}
\end{equation}

After moving all those cups and caps, the remaining cups can only share legs between $\rot{Y}_i$ and $\rot{Y}_{i+1}$. Then by \cref{lollipopspan} and the symmetry \cref{anti-auto} we can write each lollipop of the shape $\begin{tikzpicture}[anchorbase,scale=.8,color=\clr]
\draw[-,line width=1.4pt] (0.08,.3) to (0.08,.6);
\draw[-,thick] (0.1,-.51) to [out=right,in=-45] (0.1,.31);
\draw[-,thick] (0.1,-.51) to [out=left,in=-135] (0.1,.31);
\node at (0.08,.8) {$\scriptstyle 2a$};
\draw (-.1,-.2)\bdot;
\node at (-.4,-.2) {$\scriptstyle \omega_\lambda$};
\end{tikzpicture}$ as a $\mathbb J'$-linear combination of lollipops of the form $\wkdotamurev$ where $\mu$ is an even partition. After that, we may move all the lollipops of the form $\wkdotamurev$ attached to $\rot{Y}_i$ and $\rot{Y}_{i+1}$ upwards so that they become legs of the head of $f$. At last, we apply \cref{lollipopspan} to the lollipops attached to the head of $f$. 

For example, continuing from the right hand side of \cref{exofg}, we have
\begin{equation}
    \label{exmergelollipop}
    fg\equiv 
     \begin{tikzpicture}
        [anchorbase,scale=.8, color=\clr]
       \draw [-,line width=1.4pt] (0,0) -- (0,-.4)
                                  (1,.5) -- (1,-.4)
                                  (2,0) -- (2,-.4)
                                  (-1,-.3) -- (-1,-.4)
                                  (0,2) -- (0,2.2) -- (1,2.6)-- (1,3.2)
                                  (2,2) -- (2,2.2) -- (1,2.6)
                                  (3,2.8) -- (3,3.2);
        \draw[-]   (0,0) -- (0,2)
                   (0,0) -- (-.3,.3)
                   (0,0) to [out=30,in=150] (2,0)
                   (-1,-.3) to [out=15,in=165] (1,-.3)
                   (1,.5) -- (0,2)
                   (1,.5) -- (2,2)
                   (2,0) -- (2,2)
                 (2,0) -- (2.3,.3)
                   (0,2) to [out=-30,in=-150] (2,2)
                   (1,2.8) to [out=-30,in=-150] (3,2.8)
                   (1,3) -- (.7,2.7)
                   (1,3) -- (1.5,3);
        \draw (-.3,.3) \bdot (.7,2.7) \bdot (2.3,.3)\bdot (1.5,3)\bdot;
        \draw (.85,2.85)\dia (-.15,.15)\dia 
        (2.15,.15)\dia (1.25,3)\dia;
        \draw (.2,.11) \squ (.3,1.85)\squ (-.85,-.25)\squ (1.2,.7)\squ (0,.2)\squ
        (1.3,2.65)\squ (2,.3)\squ;
        \node at (1,2){$\scriptstyle 4$};
        \node at (.6,3){$\scriptstyle 2$};
        \node at (1.25,3.3){$\scriptstyle 4$};
        \node at (-.4,0){$\scriptstyle 4$};
        \node at (2.4,0){$\scriptstyle 6$};
    \end{tikzpicture}
    \equiv
    \text{ a $\mathbb J'$-span of }
    \begin{tikzpicture}
         [anchorbase,scale=.8, color=\clr]
          \draw [-,line width=1.4pt] (0,0) -- (0,-.5)
                                  (1,.5) -- (1,-.5)
                                  (2,0) -- (2,-.5)
                                  (-1,-.3) -- (-1,-.5)
                                  (1,1.3) -- (1,1.9)
                                  (2,1.5) -- (2,1.9);
            \draw[-]   (0,0) -- (1,1.5)
                   (0,0) -- (-.3,.3)
                   (0,0) to [out=30,in=150] (2,0)
                   (-1,-.3) to [out=15,in=165] (1,-.3)
                   (1,.5) to [out=70,in=-70] (1,1.3)
                   (1,.5) to [out=110,in=-110] (1,1.3)
                   (2,0) -- (1,1.5)
                 (2,0) -- (2.3,.5)
                   (1,1.5) to [out=-30,in=-150] (2,1.5)
                        (1,1.5) -- (.5,1.2);
        \draw (-.3,.3) \bdot (.5,1.2) \bdot (2.3,.5)\bdot;
        \draw (.75,1.35)\dia (-.15,.15)\dia 
        (2.15,.25)\dia;
        \draw (.3,.11) \squ (-.85,-.25)\squ (1.1,.8)\squ (.3,.5)\squ (1.3,1.35)\squ;
        \node at (-.4,0){$\scriptstyle 4$};
        \node at (2.4,0){$\scriptstyle 6$};
         \node at (.3,1.5){$\scriptstyle 14$};
    \end{tikzpicture} \: .
\end{equation}

Note that the resulting diagrams in this case might still not be reduced since there may exist two legs joining twice in the bottom part. For each pair of strands which join twice (i.e., they join both on the top and the bottom), we use relations \eqref{webassoc} and \eqref{swallows}-\eqref{braid} to pull one of the strands freely such that the non-reduced pair of strands becomes disjoint lanterns. Moreover, using \eqref{swallows} and \eqref{sliders} we can move the lanterns such that no other strand passes through the thin strands of the lantern. This allows us to use \cref{wkdotsspan} to write each lantern as a $\mathbb J'$-linear combination of $\wkdotawmu$. We then move back those cups and caps to appropriate positions afterwards via \eqref{sliders} and \cref{dotmovefreely} such that the resulting diagrams are elementary. This concludes the proof of Case (I).

Suppose $f$ is of type Y and we can join $f$ to the $i$-th strand at the top of $g$. Then $g$ has a $h$-fold merge $\rot{Y}_i$ in the $i$-th vertex. Similar to case (I), see  \cref{exofg}, we can move the cups and caps to appropriate positions such that they do not interfere our further operations. Then we can use \eqref{webassoc}, \eqref{splitmerge} and \eqref{sliders}
to rewrite the composition of the split $f$ and the merge $\rot{Y}_i$ as a sum of reduced  diagrams. 

For example, suppose  $f=1_*$
\begin{tikzpicture}[anchorbase,color=\clr]
	\draw[-,line width=1.4pt] (0.08,-.3) to (0.08,0.04);
	\draw[-,line width=1pt] (0.28,.4) to (0.08,0);
	\draw[-,line width=1pt] (-0.12,.4) to (0.08,0);
        \node at (-0.22,.6) {$\scriptstyle 5$};
        \node at (0.36,.6) {$\scriptstyle 4$};
\end{tikzpicture}$1_*$ and
\begin{equation}
    \label{exofgsplit}
    g=
    \begin{tikzpicture}
        [anchorbase,color=\clr]
        \draw[-,line width=1.4pt] (0,1) -- (0,1.3) 
                                  (1,1) -- (1,1.3)
                                  (.5,0) -- (.5,-.3)
                                  (1.5,0) -- (1.5,-.3);
        \draw[-,line width=1pt]   (0,1) -- (.5,0)
                                  (0,1) to [out=-30,in=-150] (1,1)
                                  (0,1) -- (-.2,.5)
                                  (.5,0) -- (1,1)
                                  (.5,0) to [out=30,in=150] (1.5,0)
                                  (1.5,0) -- (1,1)
                                  (1,1) -- (1.5,.8);
        \draw (-.2,.5)\bdot (1.5,.8)\bdot;
        \draw (-.1,.75)\dia (1.25,.9)\dia;
        \draw (.4,.2)\squ (.6,.2)\squ (.7,.1)\squ (.2,.9)\squ (1.3,.4)\squ;
        \node at (-.2,1){$\scriptstyle 2$};
        \node at (.2,.3){$\scriptstyle 3$};
        \node at (.5,1){$\scriptstyle 4$};
        \node at (1.5,1.1){$\scriptstyle 4$};
        \node at (1,0){$\scriptstyle 2$};
    \end{tikzpicture} \: .
\end{equation}
Locally we have
\begin{equation}\label{examplenewsplit}
    \begin{aligned}
    \begin{tikzpicture}[anchorbase,color=\clr]
	\draw[-,line width=1.4pt] (0.08,0) to (0.08,0.2);
 \draw[-,line width=1pt] (0.08,0) to (-.4,-.6);\node at (-0.4,-.8) {$\scriptstyle 2$};
 \draw[-,line width=1pt] (0.08,0) to (0.08,-.6);\node at (0.08,-.8) {$\scriptstyle 3$};
 \draw[-,line width=1pt] (0.08,0) to (0.48,-.6);\node at (0.48,-.8) {$\scriptstyle 4$};
	\draw[-,line width=1pt] (0.28,.6) to (0.08,0.2);
	\draw[-,line width=1pt] (-0.12,.6) to (0.08,0.2);
        \node at (-0.22,.7) {$\scriptstyle 5$};
        \node at (0.36,.7) {$\scriptstyle 4$};
\end{tikzpicture}&
=\begin{tikzpicture}[anchorbase,color=\clr]
 \draw[-,line width=1pt]  (-.4,-.6)to (.6,.6) ;
 \draw[-,line width=1pt]  (0.08,-.6) to(.6,.6);
 \node at (-0.4,-.8) {$\scriptstyle 2$};
 \draw[-,line width=1pt]  (0.08,-.6) to (-.4,.6);
 \node at (0.08,-.8) {$\scriptstyle 3$}; \node at (-0.4,.2) {$\scriptstyle 1$};\node at (0.55,.2) {$\scriptstyle 2$};
 \draw[-,line width=1pt]  (0.6,-.6) to(-.4,.6)  ;
 \node at (0.6,-.8) {$\scriptstyle 4$};
%
%
        \node at (-0.4,0.85) {$\scriptstyle 5$};
        \node at (0.6,.85) {$\scriptstyle 4$};
\end{tikzpicture}
+ \begin{tikzpicture}[anchorbase,color=\clr]
 \draw[-,line width=1pt]  (-.4,-.6)to (.6,.6) ;
 \draw[-,line width=1pt]  (0.08,-.6) to(.6,.6);
 \node at (-0.4,-.8) {$\scriptstyle 2$};
 \draw[-,line width=1pt]  (0.08,-.6) to (-.4,.6); \draw[-,line width=1pt]  (0.6,-.6) to(.6,.6)  ;
 \node at (0.08,-.8) {$\scriptstyle 3$}; \node at (-0.4,.2) {$\scriptstyle 2$};
 \node at (0.7,.2) {$\scriptstyle 1$};
 \draw[-,line width=1pt]  (0.6,-.6) to(-.4,.6)  ;\node at (0.6,-.8) {$\scriptstyle 4$};
%
%
        \node at (-0.4,0.85) {$\scriptstyle 5$};
        \node at (0.6,.85) {$\scriptstyle 4$};
\end{tikzpicture}
+ \begin{tikzpicture}[anchorbase,color=\clr]
 \draw[-,line width=1pt]  (-.4,-.6)to (.6,.6) ;
 \node at (-0.4,-.8) {$\scriptstyle 2$};
 \draw[-,line width=1pt]  (0.08,-.6) to (-.4,.6); \draw[-,line width=1pt]  (0.6,-.6) to(.6,.6)  ;
 \node at (0.08,-.8) {$\scriptstyle 3$}; 
 \node at (0.7,.2) {$\scriptstyle 2$};
 \draw[-,line width=1pt]  (0.6,-.6) to(-.4,.6)  ;\node at (0.6,-.8) {$\scriptstyle 4$};
%
%
        \node at (-0.4,0.85) {$\scriptstyle 5$};
        \node at (0.6,.85) {$\scriptstyle 4$};
\end{tikzpicture}
~-~\begin{tikzpicture}[anchorbase,color=\clr]
 \draw[-,line width=1pt]  (-.4,-.6)to (.6,.6) ;
 \draw[-,line width=1pt]  (0.08,-.6) to(.6,.6);
 \node at (-0.4,-.8) {$\scriptstyle 2$};
 \draw[-,line width=1pt]  (-0.4,-.6) to (-.4,.6); 
 \node at (0.08,-.8) {$\scriptstyle 3$}; 
 \draw[-,line width=1pt]  (0.6,-.6) to(-.4,.6)  ;\node at (0.6,-.8) {$\scriptstyle 4$};
%
%
        \node at (-0.4,0.85) {$\scriptstyle 5$};
        \node at (0.6,.85) {$\scriptstyle 4$};
\end{tikzpicture}
~+~\begin{tikzpicture}[anchorbase,color=\clr]
 \draw[-,line width=1pt]  (-.4,-.6)to (.6,.6) ;
 \draw[-,line width=1pt]  (-.4,-.6)to (-.4,.6) ;
  \draw[-,line width=1pt]  (0.08,-.6) to(.6,.6);
 \node at (-0.4,-.8) {$\scriptstyle 2$};
 \draw[-,line width=1pt]  (0.08,-.6) to (-.4,.6); \draw[-,line width=1pt]  (0.6,-.6) to(.6,.6)  ;
 \node at (0.08,-.8) {$\scriptstyle 3$}; 
 \node at (0.7,.2) {$\scriptstyle 1$};
 \draw[-,line width=1pt]  (0.6,-.6) to(-.4,.6)  ;\node at (0.6,-.8) {$\scriptstyle 4$};
%
%
        \node at (-0.4,0.85) {$\scriptstyle 5$};
        \node at (0.6,.85) {$\scriptstyle 4$};
\end{tikzpicture}
~-~\begin{tikzpicture}[anchorbase,color=\clr]
 \draw[-,line width=1pt]  (-.4,-.6)to (.6,.6) ;
 \draw[-,line width=1pt]  (-.4,-.6)to (-.4,.6) ;
  \draw[-,line width=1pt]  (0.08,-.6) to(.6,.6);
 \node at (-0.4,-.8) {$\scriptstyle 2$};
 \draw[-,line width=1pt]  (0.08,-.6) to (-.4,.6); \draw[-,line width=1pt]  (0.6,-.6) to(.6,.6)  ;
 \node at (0.08,-.8) {$\scriptstyle 3$}; 
 \node at (0.7,.2) {$\scriptstyle 2$};
 \draw[-,line width=1pt]  (0.6,-.6) to(-.4,.6)  ;\node at (0.6,-.8) {$\scriptstyle 4$};
%
%
        \node at (-0.4,0.85) {$\scriptstyle 5$};
        \node at (0.6,.85) {$\scriptstyle 4$};
\end{tikzpicture}\\
&+\begin{tikzpicture}[anchorbase,color=\clr]
 \draw[-,line width=1pt]  (-.4,-.6)to (.6,.6) ;
 \draw[-,line width=1pt]  (-.4,-.6)to (-.4,.6) ;
 \node at (-0.4,-.8) {$\scriptstyle 2$};
 \draw[-,line width=1pt]  (0.08,-.6) to (-.4,.6); \draw[-,line width=1pt]  (0.6,-.6) to(.6,.6)  ;
 \node at (0.08,-.8) {$\scriptstyle 3$}; 
 \node at (0.7,.2) {$\scriptstyle 3$};
 \draw[-,line width=1pt]  (0.6,-.6) to(-.4,.6)  ;\node at (0.6,-.8) {$\scriptstyle 4$};
%
%
        \node at (-0.4,0.85) {$\scriptstyle 5$};
        \node at (0.6,.85) {$\scriptstyle 4$};
\end{tikzpicture} 
+\begin{tikzpicture}[anchorbase,color=\clr]
 \draw[-,line width=1pt]  (-.4,-.6)to (-.4,.6) ;
 \draw[-,line width=1pt]  (0.08,-.6) to(.6,.6);
 \node at (-0.4,-.8) {$\scriptstyle 2$};
 \draw[-,line width=1pt]  (0.08,-.6) to (-.4,.6); \draw[-,line width=1pt]  (0.6,-.6) to(.6,.6)  ;
 \node at (0.08,-.8) {$\scriptstyle 3$}; 
 \node at (0.7,.2) {$\scriptstyle 2$};
 \draw[-,line width=1pt]  (0.6,-.6) to(-.4,.6)  ;\node at (0.6,-.8) {$\scriptstyle 4$};
    %
%
        \node at (-0.4,0.85) {$\scriptstyle 5$};
        \node at (0.6,.85) {$\scriptstyle 4$};
\end{tikzpicture}
+\begin{tikzpicture}[anchorbase,color=\clr]
 \draw[-,line width=1pt]  (-.4,-.6)to (-.4,.6) ;
 \draw[-,line width=1pt]  (0.08,-.6) to(.6,.6);
 \node at (-0.4,-.8) {$\scriptstyle 2$};
 \draw[-,line width=1pt]  (0.08,-.6) to (-.4,.6); \draw[-,line width=1pt]  (0.6,-.6) to(.6,.6)  ;
 \node at (0.08,-.8) {$\scriptstyle 3$}; 
 \node at (0.7,.2) {$\scriptstyle 3$};
 \draw[-,line width=1pt]  (0.6,-.6) to(-.4,.6)  ;\node at (0.6,-.8) {$\scriptstyle 4$};
%
%
        \node at (-0.4,0.85) {$\scriptstyle 5$};
        \node at (0.6,.85) {$\scriptstyle 4$};
\end{tikzpicture}
+\begin{tikzpicture}[anchorbase,color=\clr]
 \draw[-,line width=1pt]  (-.4,-.6)to (-.4,.6) ;
 \draw[-,line width=1pt]  (0.08,-.6) to(.6,.6);
 \node at (-0.4,-.8) {$\scriptstyle 2$};
 \draw[-,line width=1pt]  (0.6,-.6) to(.6,.6)  ;
 \node at (0.08,-.8) {$\scriptstyle 3$}; 
 \node at (0.7,.2) {$\scriptstyle 1$};
 \draw[-,line width=1pt]  (0.6,-.6) to(-.4,.6)  ;\node at (0.6,-.8) {$\scriptstyle 4$};
  %
%
        \node at (-0.4,0.85) {$\scriptstyle 5$};
        \node at (0.6,.85) {$\scriptstyle 4$};
\end{tikzpicture}
+\begin{tikzpicture}[anchorbase,color=\clr]
 \draw[-,line width=1pt]  (-.4,-.6)to (-.4,.6) ;
 \node at (-0.4,-.8) {$\scriptstyle 2$};
 \draw[-,line width=1pt]  (0.08,-.6) to (-.4,.6);
 \draw[-,line width=1pt]  (0.56,-.6) to(.56,.6)  ;
 \node at (0.08,-.8) {$\scriptstyle 3$}; 
 \node at (0.6,-.8) {$\scriptstyle 4$};
%
        \node at (-0.4,0.85) {$\scriptstyle 5$};
        \node at (0.6,.85) {$\scriptstyle 4$};
\end{tikzpicture}.
\end{aligned} \end{equation}
Next, we compose these diagrams with the remainder of \( fg \). The resulting diagrams may not be in reduced form, as new splits may appear in the top part. The heads of these new splits are connected in one of the three ways: 
\begin{enumerate}
    \item  to the top part of \( fg \) by a cup,
  \item  to the bottom of the diagram \( fg \) by a vertical strand, 
    \item to a lollipop of the form \( \wkdotamurevbox \).
\end{enumerate}

We first handle the new splits whose heads are connected to the top part of \( fg \) by a cup. In this case, we apply \eqref{sliders} and \eqref{mergesplitmove} to transform these splits into merges in the top part. Next, if the head of a new split is connected to the bottom of \( fg \) by a vertical strand, we use \eqref{swallows}--\eqref{braid} to move it downward. Finally, we apply  \cref{webassoc}  and \cref{lollipop-merge-split} to rewrite the local diagrams in the top part of the form
 $
\begin{tikzpicture}[anchorbase,scale=.8,color=\clr]
\draw[-,line width=1.2pt] (0.08,-.5) to (0.08,0.04);
\draw[-,line width=1pt] (0.34,.5) to (0.08,0);
\draw[-,line width=1pt] (-0.2,.5) to (0.08,0);
\node at (0.36,.65) {$\scriptstyle b$};
\node at (-.2,.65) {$\scriptstyle a$};
\draw (0.08,-.5) \bdot (0.08,-.2)\dia;
\end{tikzpicture}$. 
In the resulting diagrams, we combine all the lollipops attached to the same thick strand on the top part via \cref{lollipopspan} such that they become reduced.

For example, if we compose the fifth term in \cref{examplenewsplit} with the remaining of $fg$, then
\begin{align*}
\begin{tikzpicture}[anchorbase,color=\clr]
 \draw[-,line width=1pt]  (-.5,0) to (.5,.5) ;
 \draw[-,line width=1pt]  (-.5,0)to (-.5,.5) ;
  \draw[-,line width=1pt]  (0,0) to(.5,.5);
 \draw[-,line width=1pt]  (0,0) to (-.5,.5); 
 \draw[-,line width=1pt]  (0.5,0) to(.5,.5)  ;
 \draw[-,line width=1pt]  (0.5,0) to(-.5,.5);
 \draw[-,line width=1.4pt] (.5,.5) -- (.5,.8)
                           (-.5,.5) -- (-.5,.8)
                           (1.2,.5) -- (1.2,.8)
                           (.5,-.5) -- (.5,-.8)
                           (1.5,-.5) -- (1.5,-.8);
\draw[-,line width=1pt]   (.5,-.5) to [out=30,in=150] (1.5,-.5)
                          (.5,-.5) -- (1.2,.5)
                          (1.5,-.5) -- (1.2,.5)
                          (.5,-.5) -- (0,0)
                          (1.2,.5) -- (1.5,.3)
                          (.5,0) to [out=-20,in=70] (1.2,.5)
                          (-.5,0) -- (-.6,-.5);
\draw (-.6,-.5)\bdot (1.5,.3)\bdot;
\draw (-.55,-.25)\dia (1.35,.4)\dia;
\draw (.4,-.4)\squ  (.7,-.25)\squ (.7,-.42)\squ (1.42,-.2)\squ (.7,0.05)\squ;
\node at (-.35,.6){$\scriptstyle 3$};
\node at (-.75,-.25){$\scriptstyle 2$};
\node at (.1,-.3){$\scriptstyle 3$};
\node at (.7,.25){$\scriptstyle 4$};
\node at (1,-.5){$\scriptstyle 2$};
\node at (.2,0){$\scriptstyle 2$};
\node at (1.35,.6){$\scriptstyle 4$};
\end{tikzpicture}
~&\equiv\text{ a $\mathbb J'$-span of}~
\begin{tikzpicture}[anchorbase,color=\clr]
     \draw[-,line width=1.4pt] (.5,.5) -- (.5,.8)
                           (-.5,.5) -- (-.5,.8)
                           (1.2,.5) -- (1.2,.8)
                           (.5,-.5) -- (.5,-.8)
                           (1.5,-.5) -- (1.5,-.8);
    \draw[-,line width=1pt]   (.5,-.5) to [out=30,in=150] (1.5,-.5)
                          (.5,-.5) -- (1.2,.5)
                          (1.5,-.5) -- (1.2,.5)
                          (.5,-.5) -- (-.5,.5)
                          (.5,-.5) -- (.5,.5)
                          (1.2,.5) -- (1.5,.3)
                          (-.5,0) -- (-.6,-.5)
                          (-.5,.5) -- (-.5,0)
                          (-.5,0) -- (.5,.5)
                          (-.5,.55) to [out=-40,in=-140] (1.2,.5)
                          (.5,.5) to [out=-20,in=-160] (1.2,.5);
    \draw (-.6,-.5)\bdot (1.5,.3)\bdot;
\draw (-.55,-.25)\dia (1.35,.4)\dia;
\draw (.3,-.3)\squ  (.7,-.25)\squ (.7,-.42)\squ (1.42,-.2)\squ (.5,-.3)\squ (-.3,.45)\squ (.6,.45)\squ;
\node at (1,-.5){$\scriptstyle 2$};
\node at (-.75,-.25){$\scriptstyle 2$};
\node at (-.25,.65){$\scriptstyle 3$};
\node at (.6,0){$\scriptstyle 2$};
\node at (1.35,.6){$\scriptstyle 4$};
\end{tikzpicture}
~\equiv\text{ a $\mathbb J'$-span of}~
\begin{tikzpicture}[anchorbase,color=\clr]
     \draw[-,line width=1.4pt] (.5,.5) -- (.5,.8)
                           (-.5,.5) -- (-.5,.8)
                           (1.2,.5) -- (1.2,.8)
                           (.5,-.5) -- (.5,-.8)
                           (1.5,-.5) -- (1.5,-.8);
    \draw[-,line width=1pt]   (.5,-.5) to [out=30,in=150] (1.5,-.5)
                          (.5,-.5) -- (1.2,.5)
                          (1.5,-.5) -- (1.2,.5)
                          (.5,-.5) -- (.5,.5)
                          (1.2,.5) -- (1.5,.3)
                        (-.5,.5) -- (0,.4) -- (.5,.5)
                        (0,.4) -- (0,.1)
                        (-.5,.5) to [out=-60,in=left] (0,-.1) to [out=right,in=-135] (1.2,.5)
                        (-.5,.5) -- (-.5,.3) to [out=down,in=135] (.5,-.5)
                          (.5,.5) to [out=-20,in=-160] (1.2,.5);
    \draw  (1.5,.3)\bdot (0,.1)\bdot;
\draw  (1.35,.4)\dia (0,.25)\dia;
\draw (.7,-.25)\squ (.7,-.42)\squ (1.42,-.2)\squ (.5,-.3)\squ  (.6,.45)\squ (-.3,.15)\squ (.4,-.4)\squ;
\node at (1.35,.6){$\scriptstyle 4$};
\node at (1,-.5){$\scriptstyle 2$};
\node at (.2,.2){$\scriptstyle 2$};
\node at (-.28,.35){$\scriptstyle 3$};
\node at (.6,.25){$\scriptstyle 2$};
\end{tikzpicture} \\
&~\underset{\cref{webassoc}}{\overset{{\cref{lollipop-merge-split}}}{\equiv}}\text{ a $\mathbb J'$-span of}~\begin{tikzpicture}[anchorbase,color=\clr]
     \draw[-,line width=1.4pt] (.5,.5) -- (.5,.8)
                           (-.5,.5) -- (-.5,.8)
                           (1.2,.5) -- (1.2,.8)
                           (.5,-.5) -- (.5,-.8)
                           (1.5,-.5) -- (1.5,-.8);
    \draw[-,line width=1pt]   (.5,-.5) to [out=30,in=150] (1.5,-.5)
                          (.5,-.5) -- (1.2,.5)
                          (1.5,-.5) -- (1.2,.5)
                        (-.5,.5) to [out=-30,in=-150] (.5,.5)
                          (.5,-.5) -- (.5,.5)
                          (1.2,.5) -- (1.5,.3)
                        (-.5,.5) to [out=-60,in=left] (0,-.1) to [out=right,in=-135] (1.2,.5)
                        (-.5,.5) -- (-.5,.3) to [out=down,in=135] (.5,-.5)
                          (.5,.5) to [out=-20,in=-160] (1.2,.5)
                          (-.5,.5) -- (-.7,.2)
                          (.5,.5) -- (.3,.2);
    \draw  (1.5,.3)\bdot (.3,.2)\bdot (-.7,.2)\bdot;
\draw  (1.35,.4)\dia (-.6,.35)\dia (.4,.35)\dia;
\draw (.7,-.25)\squ (.7,-.42)\squ (1.42,-.2)\squ (.5,-.3)\squ  (.6,.45)\squ (-.3,.15)\squ (.4,-.4)\squ (-.3,.45)\squ;
\node at (1.35,.6){$\scriptstyle 4$};
\node at (1,-.5){$\scriptstyle 2$};
\node at (0,.1){$\scriptstyle 3$};
\node at (.6,.25){$\scriptstyle 2$};
\end{tikzpicture}.
\end{align*}
This concludes the proof of Case (II).

Suppose that $f$ is of type $\cap$, and we assume that $f=1_*\capa1_*$ is joined to the $i$-th and $i+1$-th strand at the top of $g$. Then $g$ has an $s$-fold merge $\rot{Y}_i$ and $t$-fold merge $\rot{Y}_{i+1}$ on its $i$-th and $(i+1)$-st vertices, respectively. Once again we may move cups and caps of $g$ to positions where they do not interfere with our operations unless they are shared legs of $\rot{Y}_{i}$ and $\rot{Y}_{i+1}$.

We first apply \cref{sidewaydumbbell} to the composition of $f,g$. We see that every cup which is a shared leg of $\rot{Y}_{i}$ and $\rot{Y}_{i+1}$ now becomes a cap on the top part. So we may assume that $\rot{Y}_{i}$ and $\rot{Y}_{i+1}$ does not have any shared leg. 

Under this assumption, we then apply \eqref{mergesplitslideleft} to \( \rot{Y}_{i} \), transforming it into an \( s \)-fold split on top of \( \rot{Y}_{i+1} \). After moving the dot packets to appropriate positions via \cref{dotmovefreely}, we use \cref{splitmerge} to rewrite the composition of the \( s \)-fold split with \( \rot{Y}_{i+1} \), cf. \cref{examplenewsplit}. 

For example, suppose $f=1_* \begin{tikzpicture}
    [anchorbase,color=\clr]
    \draw[line width=1pt] (0,0) to [out=up,in=left] (.2,.2) to [out=right,in=up] (.4,0);
    \node at (0,-.2) {$\scriptstyle 12$};
    \node at (.4,-.2) {$\scriptstyle 12$};
\end{tikzpicture}1_*$ and 
\begin{equation}
    \label{exofgcap}
   g= \begin{tikzpicture}
        [anchorbase,scale=1.2, color=\clr]
        \draw[line width=1.4pt] (0,.5) -- (0,.8)
                              (1,.5) -- (1,.8)
                              (-.5,-1) -- (-.5,-1.3)
                              (.5,.-1) -- (.5,-1.3)
                              (1.5,-1) -- (1.5,-1.3);  
        \draw 
               (-.5,-1) to[out=30,in=150] (1.5,-1)
               (-.5,-1) to[out=15,in=165] (.5,-1);
        \draw (0,.5) -- (.5,-1) -- (1,.5)
              (0,.5) -- (-.5,-1)
              (1,.5) -- (1.5,-1)
              (0,.5) -- (1.5,-1)
              (1,.5) -- (-.5,-1);
        \draw (0,.5) -- (-.3,.3) -- (-.6,.1) 
                (-.3,.3)\dia (-.6,.1)\bdot
              (1,.5) -- (1.3,.3) -- (1.6,.1) (1.3,.3)\dia (1.6,.1)\bdot
              (-.5,-1) -- (-.7,-.8) (-.7,-.8)\bdot
              (1.5,-1) -- (1.7,-.8) -- (1.9,-.6) (1.7,-.8)\dia (1.9,-.6)\bdot;
        \node at (-.2,.55){$\scriptstyle 4$};
         \node at (1.2,.55){$\scriptstyle 6$};
        \node at (-.7,-1.05){$\scriptstyle 2$};
        \node at (1.7,-1.05){$\scriptstyle 8$};
        \draw (-.35,-.55) \squ
              (-.2,-.7) \squ
              (-.2,-.95)\squ
              (.55,-.85)\squ
              (1.4,-.7)\squ
              (1.05,-.55)\squ;
        \node at (.8,.5){$\scriptstyle 2$};
        \node at (-.3,0){$\scriptstyle 5$};
        \node at (.075,0){$\scriptstyle 2$};
        \node at (1.3,0){$\scriptstyle 3$};
        \node at (1.2,-1.1){$\scriptstyle 2$};
    \end{tikzpicture}
    \quad \Rightarrow \quad fg \overset{\cref{mergesplitslideleft}}{=}
    \begin{tikzpicture}
        [anchorbase,color=\clr]
     \draw[line width=1.4pt] 
                              (1,.5) -- (1,.8)
                              (-.5,-1) -- (-.5,-1.3)
                              (.5,.-1) -- (.5,-1.3)
                              (1.5,-1) -- (1.5,-1.3);  
     \draw 
               (-.5,-1) to[out=30,in=150] (1.5,-1)
               (-.5,-1) to[out=15,in=165] (.5,-1);
     \draw    (.5,-1) -- (1,.5)
              (1,.5) -- (1.5,-1)
              (1,.5) -- (-.5,-1);
        \draw (1,.5) -- (1.3,.3) -- (1.6,.1) (1.3,.3)\dia (1.6,.1)\bdot
              (-.5,-1) -- (-.7,-.8) (-.7,-.8)\bdot
              (1.5,-1) -- (1.7,-.8) -- (1.9,-.6) (1.7,-.8)\dia (1.9,-.6)\bdot;
         \node at (1.2,.55){$\scriptstyle 6$};
        \node at (-.7,-1.05){$\scriptstyle 2$};
        \node at (1.7,-1.05){$\scriptstyle 8$};
        \draw  (-.2,-.7) \squ
               (-.2,-.95)\squ
              (.55,-.85)\squ
              (1.4,-.7)\squ;
        \node at (.6,.3){$\scriptstyle 2$};
         \node at (-.2,-.3){$\scriptstyle 2$};
        \node at (1.3,0){$\scriptstyle 3$};
                   \node at (-.7,1){$\scriptstyle 5$};
        \node at (-.2,1){$\scriptstyle 2$};
        \draw (-.5,-1) -- (-.5,.8) to [out=up,in=left] (.3,1.4) to [out=right,in=up] (1,.8);
        \draw (1,.8) to [out=110,in=70] (0,.8) to [out=-110,in=110] (.5,-1);
        \draw (1,.8) to [out=150,in=30] (.5,.8) to [out=-150,in=150] (1.5,-1);
        \draw (-.5,-.7)\squ
              (1,-.7)\squ;
        \draw (1,.8) -- (1.3,1) -- (1.6,1.2)
             (1.3,1)\dia (1.6,1.2)\bdot;
        \node at (1.3,1.2){$\scriptstyle 4$};
    \end{tikzpicture} \: .
\end{equation}
We apply \cref{strand-stick-stick} to the top part of \( fg \). In the resulting diagrams, we then move the lollipops to the bottom part using \cref{webassoc} and \cref{dotmovefreely}.

For example, continuing from \cref{exofgcap}, we have
\begin{align*}
     \begin{tikzpicture}
        [anchorbase,color=\clr]
     \draw[line width=1.4pt] 
                              (1,.5) -- (1,.8)
                              (-.5,-1) -- (-.5,-1.3)
                              (.5,.-1) -- (.5,-1.3)
                              (1.5,-1) -- (1.5,-1.3);  
     \draw 
               (-.5,-1) to[out=30,in=150] (1.5,-1)
               (-.5,-1) to[out=15,in=165] (.5,-1);
     \draw    (.5,-1) -- (1,.5)
              (1,.5) -- (1.5,-1)
              (1,.5) -- (-.5,-1);
        \draw (1,.5) -- (1.3,.3) -- (1.6,.1) (1.3,.3)\dia (1.6,.1)\bdot
              (-.5,-1) -- (-.7,-.8) (-.7,-.8)\bdot
              (1.5,-1) -- (1.7,-.8) -- (1.9,-.6) (1.7,-.8)\dia (1.9,-.6)\bdot;
              \node at (1.3,.5){$\scriptstyle 6$};
        \node at (-.7,-1.05){$\scriptstyle 2$};
        \node at (1.7,-1.05){$\scriptstyle 8$};
        \draw  (-.2,-.7) \squ
               (-.2,-.95)\squ
              (.55,-.85)\squ
              (1.4,-.7)\squ;
        \node at (.6,.3){$\scriptstyle 2$};
         \node at (-.2,-.3){$\scriptstyle 2$};
        \node at (1.3,0){$\scriptstyle 3$};
        \draw (-.5,-1) -- (-.5,.8) to [out=up,in=left] (.3,1.4) to [out=right,in=up] (1,.8);
        \draw (1,.8) to [out=110,in=70] (0,.8) to [out=-110,in=110] (.5,-1);
        \draw (1,.8) to [out=150,in=30] (.5,.8) to [out=-150,in=150] (1.5,-1);
        \draw (-.5,-.7)\squ
              (1,-.7)\squ;
        \draw (1,.8) -- (1.3,1) -- (1.6,1.2)
             (1.3,1)\dia (1.6,1.2)\bdot;
        \node at (1.3,1.2){$\scriptstyle 4$};
       \node at (-.7,1){$\scriptstyle 5$};
        \node at (-.2,1){$\scriptstyle 2$};
    \end{tikzpicture}
    ~&\overset{\cref{strand-stick-stick}}{\equiv}\text{a $\mathbb J'$-span of }~
     \begin{tikzpicture}
        [anchorbase,color=\clr]
     \draw[line width=1.4pt] 
                              (1,.5) -- (1,.8)
                              (-.5,-1) -- (-.5,-1.3)
                              (.5,.-1) -- (.5,-1.3)
                              (1.5,-1) -- (1.5,-1.3);  
     \draw 
               (-.5,-1) to[out=30,in=150] (1.5,-1)
               (-.5,-1) to[out=15,in=165] (.5,-1);
     \draw    (.5,-1) -- (1,.5)
              (1,.5) -- (1.5,-1)
              (1,.5) -- (-.5,-1);
        \draw (1,.5) -- (1.3,.55) -- (1.6,.6) (1.3,.55)\dia (1.6,.6)\bdot
              (-.5,-1) -- (-.7,-.8) (-.7,-.8)\bdot
              (1.5,-1) -- (1.7,-.8) -- (1.9,-.6) (1.7,-.8)\dia (1.9,-.6)\bdot;
        \node at (-.7,-1.05){$\scriptstyle 2$};
        \node at (1.7,-1.05){$\scriptstyle 8$};
        \draw  (-.2,-.7) \squ
               (-.2,-.95)\squ
              (.55,-.85)\squ
              (1.4,-.7)\squ
              (1,.65)\squ;
        \node at (.6,.3){$\scriptstyle 2$};
         \node at (-.2,-.3){$\scriptstyle 2$};
        \node at (1.3,0){$\scriptstyle 3$};
        \node at (-.7,1){$\scriptstyle 5$};
        \node at (-.2,1){$\scriptstyle 2$};
        \draw (-.5,-1) -- (-.5,.8) to [out=up,in=left] (.3,1.4) to [out=right,in=up] (1,.8);
        \draw (1,.8) to [out=110,in=70] (0,.8) to [out=-110,in=110] (.5,-1);
        \draw (1,.8) to [out=150,in=30] (.5,.8) to [out=-150,in=150] (1.5,-1);
        \draw (-.5,-.7)\squ
              (1,-.7)\squ;
        \draw (1,.8) -- (1.3,.75) -- (1.6,.7)
             (1.3,.75)\dia (1.6,.7)\bdot;
        \node at (1.3,.95){$\scriptstyle 8-t$};
         \node at (1.3,.35){$\scriptstyle 6-t$};
    \end{tikzpicture}
    \; (t\in2\N)
    \\
    ~&\underset{\cref{lollipopspan}}{\overset{\cref{webassoc}}{\equiv}}\text{ a $\mathbb J'$-span of }~
       \begin{tikzpicture}
        [anchorbase,color=\clr]
     \draw[line width=1.4pt] 
                              (1,.5) -- (1,.8)
                              (-.5,-1) -- (-.5,-1.3)
                              (.5,.-1) -- (.5,-1.3)
                              (1.5,-1) -- (1.5,-1.3);  
     \draw 
               (-.5,-1) to[out=30,in=150] (1.5,-1)
               (-.5,-1) to[out=15,in=165] (.5,-1);
     \draw    (.5,-1) -- (1,.5)
              (1,.5) -- (1.5,-1)
              (1,.5) -- (-.5,-1);
        \draw  (-.5,-1) -- (-.7,-.8) -- (-.9,-.6) (-.7,-.8)\dia (-.9,-.6)\bdot
              (1.5,-1) -- (1.7,-.8) -- (1.9,-.6) (1.7,-.8)\dia (1.9,-.6)\bdot;
        \node at (-.9,-1.05){$\scriptstyle 10-t$};
        \node at (1.9,-1.05){$\scriptstyle 14-t$};
        \draw  (-.2,-.7) \squ
               (-.2,-.95)\squ
              (.55,-.85)\squ
              (1.4,-.7)\squ;
        \node at (.6,.3){$\scriptstyle 2$};
         \node at (-.2,-.3){$\scriptstyle 2$};
        \node at (1.3,0){$\scriptstyle 3$};
        \node at (-.7,1){$\scriptstyle 5$};
        \node at (-.2,1){$\scriptstyle 2$};
        \draw (-.5,-1) -- (-.5,.8) to [out=up,in=left] (.3,1.4) to [out=right,in=up] (1,.8);
        \draw (1,.8) to [out=110,in=70] (0,.8) to [out=-110,in=110] (.5,-1);
        \draw (1,.8) to [out=150,in=30] (.5,.8) to [out=-150,in=150] (1.5,-1);
        \draw (-.5,-.7)\squ
              (1,-.7)\squ;
    \end{tikzpicture} \; (t\in 2\N).
\end{align*}

 Next, we rewrite the composition of the merge and split using \cref{splitmerge}; see also \cref{examplenewsplit}.  This operation results in new splits and merges in the top part of \( fg \). We observe that all new splits are connected to the bottom of \( fg \) and can thus be pushed downward as in Case (II). Similarly, the new merges are connected via a cap on top of the diagrams. Hence, we apply \cref{mergesplitmove} to transform these merges into new splits in the top part, which can then be pushed downward to the bottom as well. Similar to Case (II), the resulting diagrams will be elementary. This concludes the proof of Case (III).

Finally, suppose that \(f\) is of type \(\wkdotaa\). Then \(fg\) is obtained from
\(g\) by adding a dot to the head at the \(i\)-th vertex. Suppose that \(g\)
has an \(s\)-fold merge at this vertex. By \cref{dotmovefreely}, we may move the
new dot, modulo lower degree terms, to the legs of the \(s\)-fold merge and obtain
\[
fg\equiv \sum\limits_{1\le j\le s, r_1+r_2+\ldots+r_s=r} g_{j,r_j} 
\]
where the diagram $g_{j,r_j}$ is obtained from $g$ by adding a dot $\omega_{r_j}$ on the $j$-th leg of the $s$-fold merge at its $i$-th vertex. If the $j$-th leg is a lollipop, we use \cref{lollipopspan} to rewrite the resulting diagrams as a $\mathbb J'$-span of elementary diagrams. If the $j$-th leg is a cup in the top part of $g$, we use \cref{dotmovefreely} and \eqref{dotmovecupcaps} to move the new dot of $g_{j,r_j}$  through crossings (up to lower degree terms) along the cup until it meets the leftmost boundary of that cup. If the $j$-th leg is connected to the bottom part, we use Lemma \ref{dotmovefreely} to move the new dots of $g_{j,r_j}$ downward  through crossings, modulo lower degree terms, until it meets the existing dot packet on that leg; the resulting diagram is elementary. This finishes the proof of Case (IV).

The proof of \cref{thm:spanAWb} is completed.

%
\section{A basis theorem for the affine B-web category}
  \label{sec:basis}

In this section we construct a monoidal functor from the affine B-web category $\AWb$ to a category of endomorphisms for $\so_N$-modules. Then we use it to show that the spanning sets for morphism spaces of $\AWb$ constructed in the previous section are indeed bases.

\subsection{A representation of $\AWb$}

Recall the category $ \Wb_\C(N) $ obtained from $\Wb$ by base change and its representation from \cref{sec:repWo}. We do a similar base change for $\AWb$ (which is defined over $\mathbb J'$). Denote by $\mathbb J'_N$ the quotient ring of $\mathbb J'$ by the ideal generated by $x-N$, which is then identified with $\mathbb Z[\frac12]$. We set
\[
\AWb_{\mathbb Z[\frac12]}(N):=\AWb\otimes_{\mathbb J'}\mathbb J'_N,
\qquad
\AWb_{\mathbb C}(N):=\AWb_{\mathbb Z[\frac12]}(N)\otimes_{\mathbb Z[\frac12]}\mathbb C.
\]

Let $\g=\so_N$.
Recall the element $\Omega$ from \cref{casimir}. For any $M,L\in \g$-Mod, $\Omega$ naturally acts on $M\otimes L$ by
\[
\Omega(m\otimes l)=\frac{1}{2}\sum_{i,j\in[N]}F_{i,j}m\otimes F_{j,i}l,\quad \forall m\in M,l\in L.
\]
In particular, we have $\Omega=\frac{1}{2}(\Delta(C)-C\otimes 1-1\otimes C)$ where $\Delta$ is the co-multiplication of $\U(\g)$ and $C$ is the quadratic Casimir element. Since $C$ is central in $\U(\g)$, we have $\Omega\in \End_{\U(\g)}(M\otimes L)$.

Let $\emod(\so_N\Mod)$ denote the category of endofunctors of the category $\g$-Mod of $\U(\g)$-modules. The category $\emod(\so_N\Mod)$ has a monoidal structure as follows. If $F,G$ are two endofunctors of $\g$-Mod, then $F\otimes G:=F\circ G$. If $\xi:F\to G,\varsigma:H\to K$ are two natural transformations, then $\xi\otimes \varsigma:F\otimes H \to G\otimes K$ is defined to be the natural transformation determined by \[
   (\xi\otimes \varsigma)_M
   =
   \xi_{K(M)}\circ F(\varsigma_M)
   =
   G(\varsigma_M)\circ \xi_{H(M)}
\]
 for any $M\in \g$-Mod.

The following theorem extends the representation functor $\cF_N$ from \cref{sec:repWo}.
\begin{theorem}
    \label{AWbrep} 
    There is a monoidal functor $\cG_N: \AWb_{\mathbb C}(N)\to \emod(\so_N\Mod)$ such that $\cG_N(a)=-\otimes \bigwedge^a V$ and
    \begin{equation}
    \label{cG}
    \begin{aligned}
        &\cG_N\Big(\merge\Big)_M=\id_M \otimes \cF_N\Big(\merge\Big),
        \qquad
        \cG_N\Big(\splits\Big)_M=\id_M \otimes \cF_N\Big(\splits\Big),\\
         &\cG_N\Big(\capa\Big)_M=\id_M \otimes \cF_N\Big(\capa\Big),
        \qquad
        \cG_N\Big(\cupa\Big)_M=\id_M \otimes \cF_N\Big(\cupa\Big),\\
        &\cG_N\Big(\crossing\Big)_M=\id_M \otimes \cF_N\Big(\crossing\Big),
        \qquad \cG_N\Big(\dotgen\Big)_M=\Big(\Omega+\frac{1}{2}(N-1)\Big)|_{M\otimes V}.
    \end{aligned}
    \end{equation}
\end{theorem}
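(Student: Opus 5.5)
Since $\AWb_{\mathbb C}(N)$ is defined by generators and relations, I will construct $\cG_N$ by assigning the values \eqref{cG} to the generators and then checking the relations. Both sides of every relation are natural transformations of functors $M\mapsto M\otimes(\text{tensor products of exterior powers})$, so it suffices to check each relation on an arbitrary $\g$-module $M$.

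\textbf{Step 1 (well-definedness).} Each value in \eqref{cG} is a $\g$-module map natural in $M$. For the undotted generators this is clear from $\cF_N$. For the thin dot, $\Omega+\frac12(N-1)$ commutes with $\g$ because $\Omega=\frac12(\Delta(C)-C\otimes1-1\otimes C)$.

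The thick dots $\omega_r$ on a strand of thickness $a$ are defined through \eqref{equ:r=0andr=a} as split, thin dots on $r$ strands, and merge. Their images are therefore determined, and I will record them. The key fact is the standard identity $\Omega_{M,V^{\otimes r}}=\sum_{k}\Omega_{M,V_k}$, where $V_k$ is the $k$-th tensor factor. This sum commutes with antisymmetrization.

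\textbf{Step 2 (undotted relations).} The relations \cref{webassoc}--\cref{bubblecap} hold after applying $\id_M\otimes-$ to \cref{thm:Worep}, since $x$ acts as $N$.

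\textbf{Step 3 (dot relations).} It remains to verify \eqref{dotmovecrossing1}--\eqref{dotmovecupcaps}. Rather than attack the thick relations directly, I will invoke \cref{thm:F}. By that theorem, $\AWb$ is isomorphic over $\C[x]$ to $\AWCb$, whose relations are the simple ones from \cref{def:affineBwebC}. Those relations are:
\begin{itemize}
\item the thin dot/crossing relation of affine Brauer type, involving a correction term made of a crossing plus a cup--cap;
\item the dot/cap antisymmetry;
\item compatibility of the thick generators with thin ones.
\end{itemize}
So it suffices to check the affine Brauer relations on $M\otimes V\otimes V$.

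The dot/cap relation says that the image of $\Omega_{M,V_1}+\Omega_{M,V_2}$ composed with $\cap_1$ is zero. This follows because $\cap_1$ is $\g$-invariant: one has $\sum F_{ij}\otimes(F_{ji}\otimes1+1\otimes F_{ji})$ acting before $\cap_1$, and this vanishes. The shift constants $\frac12(N-1)$ must also cancel with sign $(-1)^1$; I expect this is exactly why that shift was chosen, matching \cite{RS2019} up to the sign convention remark.

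The dot/crossing relation reduces to computing $\Omega_{M\otimes V_1,V_2}-\Omega_{M,V_2}=\Omega_{V_1,V_2}$. On $V\otimes V$, $\Omega_{V_1,V_2}$ equals $\frac12\sum F_{ij}\otimes F_{ji}$, and a direct matrix-unit computation gives $P-Q$. Here $P$ is the flip and $Q=\cup_1\circ\cap_1$, which is precisely the Brauer correction (crossing minus cup-cap).

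\textbf{Main obstacle.} The hardest part is bookkeeping signs. The crossing image $\mathrm X_{1,1}$ carries the sign $(-1)^{ab}$, so $P$ appears as $-\mathrm X_{1,1}$, and the constant $\frac12(N-1)$ must make the lollipop/bubble normalizations consistent. I would first check $\dotbubble_1$ and the curl with a dot on $M=\C$ to pin down these conventions. If one wanted to avoid \cref{thm:F}, the fallback is to verify \eqref{dotmovecrossing1} and \eqref{updotmovesplits+merge} directly. One would use that $\omega_r$ acts as the $r$-th elementary symmetric function in the commuting operators $\Omega_{M,V_k}+\frac12(N-1)$ restricted to $\wedge^aV$. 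The $t!$ and $s!$ summands would then arise from expanding the $Q$-terms.
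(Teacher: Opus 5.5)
Your skeleton is the paper's: pass through \cref{thm:AWiso} to $\AWCb$, take the undotted relations from \cref{thm:Worep}, and note that \eqref{dotmovecapcupC} are thin affine Brauer relations. At that point the paper simply cites \cite[Theorem 3.8]{RS2019}. You instead sketch a direct check, and that sketch contains a genuine error in the dot/cap relation.

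Use the convention implicit in $\id_M\otimes\cF_N(-)$, so that on $M\otimes V_1\otimes V_2$ the left strand is adjacent to $M$, and put $c=\frac12(N-1)$. Then the dot on the left strand is $x_L=\Omega_{M,V_1}+c$. The dot on the right strand is $x_R=\Omega_{M\otimes V_1,V_2}+c=\Omega_{M,V_2}+\Omega_{V_1,V_2}+c$. You use exactly this in the crossing step, and there it works: with the crossing acting as $-P$, one gets $x_L(-P)-(-P)x_R=P\,\Omega_{V_1,V_2}=P(P-Q)=1-Q$.

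In the cap step, however, you silently replace $x_R$ by $\Omega_{M,V_2}+c$. With that operator, invariance of $\cap_1$ kills $\Omega_{M,V_1}+\Omega_{M,V_2}=\Omega_{M,V_1\otimes V_2}$. What survives is $2c\,\cap_1=(N-1)\cap_1\neq 0$, so the relation $\cap_1\circ x_L=-\cap_1\circ x_R$ would \emph{fail}. The constants do not ``cancel with sign $(-1)^1$''; that sign is exactly what makes them add. The missing input is the cross term. Since $\cap_1\circ P=\cap_1$ and $\cap_1\circ\cup_1=N$, one has $\cap_1\circ\Omega_{V_1,V_2}=\cap_1\circ(P-Q)=(1-N)\cap_1$, which exactly cancels $(N-1)\cap_1$. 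This is the real reason for the shift $\frac12(N-1)$.

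The same slip affects your fallback. The operators $\Omega_{M,V_k}$ on $M\otimes V^{\otimes r}$ do not commute with one another, because they share the factor $M$. Indeed, the infinitesimal braid relation gives $[\Omega_{M,V_1},\Omega_{M,V_2}]=-[\Omega_{M,V_1},\Omega_{V_1,V_2}]$, which is nonzero in general. So ``elementary symmetric functions'' in them are not what the thick dots become. The commuting family is the Jucys--Murphy-type operators $\Omega_{M\otimes V_1\otimes\cdots\otimes V_{k-1},V_k}+\frac12(N-1)$, and the thick dots are built from these via \eqref{newgendot}. With this correction your direct verification of \eqref{dotmovecapcupC} goes through, and it reproduces what the paper imports from \cite{RS2019}.
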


\begin{proof}
A simpler presentation for $\AWb$ over $\C[x]$ is given in Definition \ref{def:affineBwebC}. By a base change as for $\AWb_\C(N)$ we obtain ${\AWb_\C}'(N)$ from $\AWCb$. It follows that $\AWb_\C(N)$ is isomorphic to ${\AWb_\C}'(N)$ as $\C$-linear monoidal categories. Thanks to the isomorphism (see \cref{thm:AWiso}), it suffices to check that $\cG_N$ defined by \cref{cG} preserves the relations \cref{webassoc}--\cref{bubblecap} and \cref{dotmovecapcupC} (with $x$ replaced by $N$). The verification of \cref{webassoc}--\cref{bubblecap} was carried out in the proof of \cref{thm:Worep}. As the relations \cref{dotmovecapcupC} only involve thickness $1$, they are part of relations for affine Brauer category, and the fact that these relations are preserved by $\cG_N$ has been verified in \cite[Theorem 3.8]{RS2019}.
\end{proof}

Through the specialization functor from $\AWb \to \AWb_\C(N)$, we may regard the functor $\cG_N$ as a representation for $\AWb$ as well.

\begin{rem}
    An analogous monoidal functor $\cG_{-N}: \AWb_{\mathbb C}(-N)\to \emod(\mathfrak{sp}_N\Mod)$ can be constructed, for any positive even integer $N$. 
\end{rem}

Let $n=\lfloor N/2 \rfloor$. Recall the triangular decomposition $\g=\n^+\oplus \h \oplus \n^-$ from \cref{sec:repWo}. Let $\borel=\n^+\oplus \h$. We define the generic Verma module as follows:
\begin{equation}\label{defofgenericverma}
M^{\text{gen}} := U(\mathfrak{g}) \otimes_{U(\mathfrak{b})} U(\mathfrak{h}),
\end{equation}
where $U(\mathfrak{h})$ is viewed as a $U(\mathfrak{b})$-module by inflation. By the PBW theorem, $ M^{\rm gen} $ forms a free right $ \U(\h) $-module with a basis consisting of elements of the form  
\begin{equation}\label{Mgenbasis}
    \{f_1^{a_1} \cdots f_s^{a_s} \otimes 1 \mid a_k \in \mathbb{N}, \forall 1 \leq k \leq s \}.
\end{equation}  
where $\{f_1,\ldots,f_s\}$ forms a basis of $\n^-$. Recall the basis $\{h_i=F_{i,i}\mid 1\le i\le n\}$ for $\h$. We introduce a $ \mathbb{Z} $-grading on $M^{\text{gen}}$ by assigning $ \deg h_i = 1 $ and $ \deg f_k=0 $ for each $ i,  k $. Let $ \U(\h)_{\leq t} $ denote the span of monomials in $ \U(\h) $ of degree at most $ t $. This induces a filtration for $M^{\rm gen}$: 
\[
0 \subset M^{\rm gen}_0 \subset M^{\rm gen}_1 \subset \ldots \subset M^{\rm gen}_t \subset \ldots,
\]
where $ M^{\rm gen}_t $ is generated by elements of the form  
$f_1^{a_1} \cdots f_s^{a_s} \otimes \U(\h)_{\leq t}$, for all $a_k \in \mathbb{N}.$

The free right $ \U(\h) $-module $ M^{\rm gen} \otimes \bigwedge ^r V $ has a basis given by  
\begin{equation}\label{Mgenrbasis}
    f_1^{a_1} \cdots f_s^{a_s} \otimes h_1^{b_1} \cdots h_n^{b_n} \otimes v_{\mathbf{i}}, \quad \mathbf{i} \in [N]^r, \quad a_k, b_j \in \mathbb{N}, \quad 1 \leq k \leq s, \quad 1 \leq j \leq n,
\end{equation}  
where $ v_{\mathbf{i}} = v_{i_1} \wedge \dots \wedge v_{i_r} $ for $i_1<\cdots<i_r$.  Setting $(M^{\rm gen}\otimes \bigwedge ^r V)_t := M^{\rm gen}_t\otimes \bigwedge ^r V$ and assigning 
the degree $ \sum_{i=1}^{n} b_i $ to the basis element in \cref{Mgenrbasis}, we have defined a filtration for $M^{\rm gen}\otimes \bigwedge ^r V$:
\[
0 \subset \big(M^{\rm gen}\otimes \bigwedge ^r V\big)_0 \subset \big(M^{\rm gen}\otimes \bigwedge ^r V\big)_1 \subset \dots \subset \big(M^{\rm gen}\otimes \bigwedge ^r V\big)_t\subset \dots,
\]

\begin{lemma}\cite[Lemma 3.9]{RS2019}
\label{eM filtered degree 1}
    For $1\leq i,j\leq n$,  let $\phi_{\pm i,\pm j}$ (resp., $\phi_{\pm i}$): $M^{\rm gen} \rightarrow M^{\rm gen}  $ be the linear map
    such that $\phi_{\pm i,\pm j}(m)=F_{\pm i,\pm j}m$ (resp., $\phi_{\pm i}(m)=F_{0,\pm i}m$), for $m\in M^{\rm gen} $.
     Then
    \begin{enumerate}
        \item[(1)] $\phi_{-i,\pm j}$ (resp., $\phi_{i}$ ) is homogeneous of degree 0 for  $i<j$  (resp., $1\leq i\leq n$);
        \item[(2)] $\phi_{i,\pm j}$ (resp., $\phi_{-i}$, $\phi_{i,i}$) is of  filtered degree $1$ for     $i<j$  (resp., $ 1\leq i\leq n$);
        \item[(3)]  $\Omega\in \End_{\mathbb C} (M^{\rm gen} \otimes V)$ is of  filtered degree $1$.
    \end{enumerate}
\end{lemma}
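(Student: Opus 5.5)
We have to prove the lemma cited from RS2019: the filtration properties of the maps $\phi$ on $M^{\rm gen}$ and of $\Omega$ on $M^{\rm gen}\otimes V$.

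The plan is a direct computation in the PBW basis \eqref{Mgenbasis}, commuting the relevant root vector past the monomial $f_1^{a_1}\cdots f_s^{a_s}$.

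\textbf{Parts (1) and (2).} Order the PBW basis of $\n^-$ compatibly, so that every element of $M^{\rm gen}$ is a right $\U(\h)$-combination of the monomials $f^{\mathbf a}\otimes 1$. There are two kinds of elements $X$ acting on the left.

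\begin{itemize}
\item If $X\in\n^-$, namely $F_{-i,\pm j}$ with $i<j$, or $F_{0,i}$, then $Xf^{\mathbf a}$ is re-expressed through the PBW theorem inside $\U(\n^-)$. No Cartan elements are created, so the action is homogeneous of degree $0$.
\item If $X\in\borel$, namely $F_{i,\pm j}$, $F_{0,-i}$ or $h_i$, write
\[
Xf^{\mathbf a}=\sum f^{\mathbf a'}\,Y,
\]
with $Y$ ranging over $\borel$-elements together with scalars. The commutators $[X,f_k]$ lie in $\g$. Each time a commutator lands in $\h$, it is moved to the right past the remaining $f$'s; this only shifts it by scalars and produces further $f$'s, and never creates higher powers of $\h$.
\end{itemize}

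The key point for the second bullet is that each term of $Xf^{\mathbf a}\otimes u$ contains at most one factor from $\borel$ at the far right. The $\n^+$-part kills $1\otimes u$, while an $\h$-part multiplies $u\in\U(\h)$ by one $h_j$, raising the degree by at most $1$. I would formalize this by induction on $|\mathbf a|$, using $[\h,\n^-]\subseteq\n^-$ and $[\n^+,\n^-]\subseteq\g$, together with the fact that $\U(\n^-)\U(\h)$ filtered by $\h$-degree is stable under $\operatorname{ad}$. This gives filtered degree $1$ in (2) and degree $0$ in (1).

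\textbf{Part (3).} Write
\[
\Omega=\tfrac12\sum_{i,j}F_{ij}\otimes F_{ji}
\]
in terms of the basis \eqref{eq:basis:g}, noting $F_{-j,-i}=-F_{ij}$. On $M^{\rm gen}\otimes V$ the second factor acts on $V$ without affecting degree, so $\Omega$ is a finite sum of terms $\phi\otimes(\text{matrix on }V)$, where $\phi$ is one of the maps from (1) and (2). Each $\phi$ has filtered degree at most $1$, hence so does $\Omega$.

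The main subtlety is the one in the second bullet: checking that commuting $\n^+$ elements through $f$'s never produces products of two Cartan elements. This is resolved by moving each Cartan commutator fully to the right immediately, where it acts as a single $h$ on the $\U(\h)$-coefficient.
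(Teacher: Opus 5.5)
Your proof is correct. The paper gives no argument of its own here; it simply quotes \cite[Lemma 3.9]{RS2019}. Your direct PBW computation is the standard proof of that lemma. It rests on two facts: the inclusion $\mathfrak g\,U(\mathfrak n^-)\subseteq U(\mathfrak n^-)\oplus U(\mathfrak n^-)\mathfrak h\oplus U(\mathfrak n^-)\mathfrak n^+$, and $\mathfrak n^+$ annihilating $1\otimes U(\mathfrak h)$. Part (3) then follows because $\Omega$ is a finite sum of terms $X|_{M^{\rm gen}}\otimes Y|_V$ with $X\in\mathfrak g$.

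One auxiliary claim in your sketch is false as stated, though you do not need it. $U(\mathfrak n^-)U(\mathfrak h)$ is not stable under $\operatorname{ad}$: for $X\in\mathfrak n^+$ of weight $\alpha$ and $h\in\mathfrak h$, one has $\operatorname{ad}(X)(h)=-\alpha(h)X\in\mathfrak n^+$.

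The clean way to run your induction is on the length $m$ of a word $f_1\cdots f_m$ in $\mathfrak n^-$. Take as hypothesis that $Zf_1\cdots f_m\in U(\mathfrak n^-)(\mathbb C\oplus\mathfrak g)$ for \emph{all} $Z\in\mathfrak g$, not only $Z\in\mathfrak b$, since a commutator $[Z,f_1]$ can land in any of $\mathfrak n^-$, $\mathfrak h$, $\mathfrak n^+$. The inductive step is $Zf_1\cdots f_m=f_1(Zf_2\cdots f_m)+[Z,f_1]f_2\cdots f_m$, applying the hypothesis to the shorter word in both terms.
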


\subsection{Explicit actions}
Recall the basis $\{h_i=F_{i,i}\mid i=1,\ldots,n\}$ of $\mathfrak h$ from \cref{sec:repWo}. We define $h_{-i}=-h_i$ for $i=1,\ldots,n$ and $h_0=0$. For later use, we shall write down explicit formulas for actions of several morphisms modulo lower degree terms.

\begin{lemma}
 \label{actionodotgen}
 The action of $\cG_N (\wxdota)_{M^{\text{gen}}}$
 on $M^{\text{gen}}\otimes \bigwedge^a V$ is of filtered degree 1. Moreover, up to lower degree terms, we have 
 \begin{equation*}
     \wxdota(1\otimes h\otimes v_{i_1}\wedge v_{i_2}\wedge\ldots \wedge v_{i_a})
     \equiv 1\otimes \sum_{1\le j\le a}h_{i_j}h\otimes v_{i_1}\wedge v_{i_2}\wedge \ldots \wedge v_{i_a},
 \end{equation*}  
 for any $h\in U(\mathfrak h)$ and ${\bf i}=({i}_1,\ldots\,{i}_a)\in [N]^a$ such that $ i_1<\cdots<i_a$.
 \end{lemma}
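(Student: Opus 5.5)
The plan is to unwind the definition of the thick dot $\omega_1$ on a strand of thickness $a$ via \eqref{equ:r=0andr=a}. It is the composite
\[
\text{split } a\to(1,a-1),\quad\text{then the thin dot on the strand of thickness }1,\quad\text{then merge } (1,a-1)\to a.
\]
I will then apply $\cG_N$ factor by factor using \eqref{cG}. I am assuming the definition carries no normalizing scalar; if it does, it only rescales both sides identically.

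\textbf{Step 1 (split).} By \eqref{eq:Yab}, $\id_M\otimes \mathrm Y_{1,a-1}$ sends $1\otimes h\otimes v_{i_1}\wedge\cdots\wedge v_{i_a}$ to
\[
\sum_{j}(-1)^{j-1}\,1\otimes h\otimes v_{i_j}\otimes v_{i_1}\wedge\cdots\widehat{v_{i_j}}\cdots\wedge v_{i_a}.
\]

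\textbf{Step 2 (thin dot).} The thin dot acts on the factor $M^{\rm gen}\otimes V$ by $\Omega+\tfrac12(N-1)$. The constant $\tfrac12(N-1)$ has filtered degree $0$ and so is a lower degree term. For $\Omega$, I will use $F_{ji}v_k=\delta_{ik}v_j-\delta_{j,-k}v_{-i}$ together with $F_{-k,-j}=-F_{jk}$ to rewrite
\[
\Omega(m\otimes v_k)=\sum_{j\in[N]}F_{jk}m\otimes v_j .
\]
Then I take $m=1\otimes h$ and sort the terms.
\begin{itemize}
\item The diagonal term $j=k$ gives $F_{kk}(1\otimes h)=1\otimes h_kh$ (with $h_{-k}=-h_k$, $h_0=0$), which has degree exactly $\deg h+1$.
\item For $F_{jk}\in\n^+$, we have $F_{jk}(1\otimes h)=1\otimes F_{jk}\cdot h$. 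In $M^{\rm gen}=U(\g)\otimes_{U(\borel)}U(\h)$, $\n^+$ acts on $U(\h)$ by zero through the inflation, so this term vanishes.
\item For $F_{jk}\in\n^-$, the element $F_{jk}\otimes h$ is again a PBW basis element \eqref{Mgenbasis} of degree $\deg h$, so it is a lower degree term.
\end{itemize}
The general filtered-degree-$1$ statement on all of $M^{\rm gen}\otimes\bigwedge^aV$ then follows from Lemma~\ref{eM filtered degree 1}(3), applied to the $M^{\rm gen}\otimes V$ factor. Split and merge do not touch $M^{\rm gen}$, so they preserve the filtration.

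\textbf{Step 3 (merge).} Modulo lower terms, the thin dot has multiplied the $j$-th summand by $h_{i_j}$ and left the wedge factor unchanged. Applying $\rot{Y}_{1,a-1}$ by \eqref{eq:rotYab} reinserts $v_{i_j}$ in front, and the sign $(-1)^{j-1}$ is exactly cancelled when $v_{i_j}$ is moved back to position $j$. This yields $1\otimes\sum_j h_{i_j}h\otimes v_{i_1}\wedge\cdots\wedge v_{i_a}$, as claimed.

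The main obstacle I anticipate is bookkeeping rather than conceptual. First, I must pin down the exact form (and any scalar) of $\omega_1$ in \eqref{equ:r=0andr=a}. Second, in Step 2 I must check that the tensor ordering $M\otimes V\otimes\bigwedge^{a-1}V$ matches the monoidal structure of $\emod(\so_N\Mod)$, so that $\Omega$ genuinely acts on $M$ and the separated thin strand. Third, I must verify the $\Omega$ rewriting with correct signs so that the diagonal coefficient is exactly $h_k$, not $2h_k$ or $\tfrac12h_k$. I would check the sign and normalization first on the case $a=1$, $N=2n$, $k=1$.
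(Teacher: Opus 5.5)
Your proof is correct and follows essentially the paper's route. The paper asserts directly that the thick dot $\omega_1$ acts on $M^{\rm gen}\otimes\bigwedge^aV$ as $\Omega$ (dropping the additive scalar) and keeps only the Cartan part $\sum_{i=1}^n h_i\otimes h_i$, which is exactly what your split/thin-dot/merge computation recovers at top degree; the filtered-degree claim comes from Lemma~\ref{eM filtered degree 1}(3) in both arguments.

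One harmless slip: the correct rewriting is $\Omega(m\otimes v_k)=\sum_{j}F_{kj}\,m\otimes v_j$, not $F_{jk}$. Your argument is unaffected, because only the diagonal term $j=k$ survives at top degree, and every off-diagonal $F_{kj}$ is either $0$ or a root vector in $\n^{+}$ or $\n^{-}$.
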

 \begin{proof}
     It follows from \cref{eM filtered degree 1}(3) directly that $\cG_N (\wxdota)_{M^{\text{gen}}}$ is of filtered degree 1. Moreover, we have
    \begin{multline*}
       \wxdota(1\otimes h\otimes v_{i_1}\wedge v_{i_2}\wedge\ldots\wedge v_{i_a})
       =\Omega(1\otimes h\otimes v_{i_1}\wedge v_{i_2}\wedge\ldots\wedge v_{i_a} )
       \\
     \equiv \sum_{1\le i\le  n} F_{i,i}(1\otimes h)\otimes F_{i,i} v_{i_1}\wedge v_{i_2}\wedge\ldots\wedge v_{i_a} 
     =
      1\otimes \sum_{1\le j\le a}h_{i_j}h\otimes v_{i_1}\wedge v_{i_2}\wedge \ldots \wedge v_{i_a},
    \end{multline*}  
    since $F_{i,i}v_j=\begin{cases}
        v_i & j=i,\\
        -v_{-i} &j=-i,\\
        0 & \text{otherwise}.
    \end{cases}$ for $i=1,\ldots,n$ and $j\in [N]$.  
\end{proof}

\begin{lemma}
\label{actionlantern}
    For any $\nu\in \Par_a$ and $h\in \U(\mathfrak h)$, we have 
 \[
 \omega_{a,\nu} (1\otimes h\otimes v_{i_1}\wedge \ldots\wedge v_{i_a})
 \equiv
 \big( 1\otimes e_{\nu}(h_{i_1},\ldots,h_{i_a})h \big) \otimes 
 v_{i_1}\wedge \ldots\wedge v_{i_a},
 \]
 where $e_\nu$ denotes the elementary symmetric polynomial associated with partition $\nu$.
\end{lemma}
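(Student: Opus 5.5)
The plan is to reduce to the single-dot case, \cref{actionodotgen}, using how $\omega_{a,\nu}$ is built. By definition $\omega_{a,\nu}$ is a composition of the elementary packets $\omega_{a,(\nu_1)},\ldots,\omega_{a,(\nu_k)}$, each of filtered degree $\nu_j$. The target $e_\nu=e_{\nu_1}\cdots e_{\nu_k}$ is multiplicative, and the leading terms lie in $U(\h)$, which is commutative. So it is enough to treat the case $\nu=(r)$, $0\le r\le a$; the general case then follows by composing the leading-term formulas.

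For $\nu=(r)$, I recall from \eqref{equ:r=0andr=a} that $\omega_r$ on a strand of thickness $a$ is defined by the composite
\[
a\xrightarrow{\text{split}}(r,a-r)\xrightarrow{\text{thick dot }\omega_r\otimes 1}(r,a-r)\xrightarrow{\text{merge}}a.
\]
The thick dot on $r$ is itself given via \eqref{lollipopr}, i.e.\ by splitting $r$ into $1^r$, placing a thin dot on each thin strand, and merging back with the normalization $1/r!$. I would therefore first establish the basic formula: the element that splits $a$ into $1^a$, places $x_j$ thin dots on the $j$-th strand, and merges back acts, up to lower degree terms, on $1\otimes h\otimes v_{i_1}\wedge\cdots\wedge v_{i_a}$ by a symmetrized monomial in the $h_{i_j}$.

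The computation goes as follows. By \eqref{eq:Yab}, the split $Y_{1,\ldots,1}$ sends $v_{\mathbf i}$ to $\sum_{w\in\mathfrak S_a}(-1)^{\ell(w)}v_{i_{w(1)}}\otimes\cdots\otimes v_{i_{w(a)}}$. A thin dot on the $j$-th factor is $\Omega+\frac{N-1}{2}$ acting on $M\otimes V^{\otimes(j-1)}\otimes V$. Modulo lower filtered degree, only the Cartan part $\sum_i F_{ii}\otimes F_{ii}$ acting on the $M^{\rm gen}$ factor survives, by \cref{eM filtered degree 1} and the argument of \cref{actionodotgen}. Since $\n^\pm$ do not raise the $h$-degree, the factor $F_{ii}$ hitting the intermediate $V$'s contributes nothing at top degree. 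Hence each thin dot on the $j$-th strand multiplies the leading term by $h_{i_{w(j)}}$. Merging back via \eqref{eq:rotYab} returns the sign $(-1)^{\ell(w)}$ and the vector $v_{\mathbf i}$. The result is
\[
\sum_{w}\prod_j h_{i_{w(j)}}^{x_j}\,h\otimes v_{\mathbf i}.
\]
Specializing to one dot on each of the first $r$ strands and dividing by $r!(a-r)!$ gives $e_r(h_{i_1},\ldots,h_{i_a})$. This is consistent with the grouping through $(r,a-r)$ in \eqref{equ:r=0andr=a}.

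The step I expect to be the main obstacle is justifying that the intermediate factors can be ignored. Concretely, I must show that the off-diagonal terms $F_{ij}\otimes F_{ji}$ in $\Omega$, acting between $M^{\rm gen}$ and a later tensor factor, only produce terms of strictly lower $\h$-degree. For this I would use \cref{eM filtered degree 1}(1)(2): the operators $\phi$ have filtered degree at most $1$. The relevant point is that a term of top degree must use the $h$-part, and in $\Omega$ each off-diagonal term has exactly one factor acting on $M^{\rm gen}$, with only its Cartan component raising the degree. Throughout, I would keep careful track that the filtration is compatible with tensoring by $\bigwedge V$-factors, and that the constant shift $\frac{N-1}{2}$ is of lower degree.
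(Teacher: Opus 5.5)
Your reduction to $\nu=(r)$ matches the paper's. Your bookkeeping for $\nu=(r)$ is also correct. Writing $\omega_r$ on a strand of thickness $a$ as $\frac{1}{r!(a-r)!}$ times (split into $1^a$, one thin dot on each of the first $r$ strands, merge) is legitimate in $\cG_N$: the $\frac1{r!}$ for the thick dot comes from \eqref{newgendot} via \cref{thm:AWiso}, which is how $\cG_N$ is defined on thick dots. The sign $(-1)^{\ell(w)}$ from the split cancels the reordering sign in the merge, and $\sum_{w\in\mathfrak S_a}h_{i_{w(1)}}\cdots h_{i_{w(r)}}=r!\,(a-r)!\,e_r(h_{i_1},\ldots,h_{i_a})$.

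The gap is in the step you flag as the main obstacle. You argue that $\n^\pm$ do not raise the $\h$-degree, so the off-diagonal terms of $\Omega$ give only strictly lower terms because the $\phi$'s have filtered degree at most $1$. This is false for $\n^+$. By \cref{eM filtered degree 1}(2) these operators have filtered degree exactly $1$, and their top part is nonzero on $f\otimes h$ with $f\in U(\n^-)$ nonconstant. For example, for $n\ge 2$ we have $F_{2,1}=-F_{-1,-2}\in\n^-$ and $[F_{1,2},F_{2,1}]=h_1-h_2$, so $F_{1,2}(F_{2,1}\otimes h)=1\otimes(h_1-h_2)h$. Such vectors really occur in your composite. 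The $\n^-$-part of the first thin dot produces $F_{2,1}\otimes h\otimes\cdots$, and a later thin dot can hit it with $F_{1,2}$. So filtered degree alone cannot show that only the Cartan part survives.

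What is missing is to track the shape of the leading term through the composite. There are two facts to use.

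(i) Since $\n^+$ annihilates $1\otimes U(\h)$ and $\n^-$ does not raise degree, a thin dot sends $1\otimes h\otimes v_{\mathbf k}$ (a tensor of basis vectors) to $1\otimes h_k h\otimes v_{\mathbf k}$ modulo lower degree. Here $v_k$ is the factor on the dotted strand. In particular the leading term again has the form $1\otimes h'\otimes v_{\mathbf k}$.

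(ii) Every thin dot has filtered degree $\le 1$ on all of $M^{\rm gen}\otimes V^{\otimes a}$, since its cross terms between two $V$-factors and the shift $\frac{N-1}{2}$ have degree $0$. Splits and merges have degree $0$. Hence error terms already of lower degree, such as $F_{2,1}\otimes h\otimes\cdots$ above, stay of lower degree after the remaining dots.

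Applying (i) and (ii) dot by dot completes your computation. The same two facts are also what justify composing the leading-term formulas in your reduction to $\nu=(r)$. With this repair, your direct expansion is a valid and fully explicit alternative to the paper's route, which handles the $(r)$ case by a short induction modeled on \cite[Corollary 3.11]{SWweb}. The induction only ever applies a known formula to inputs of the form $1\otimes h\otimes v$, so it uses exactly the bookkeeping in (i)--(ii). Your version makes the origin of $e_r$ more transparent.
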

\begin{proof}
     Since $\omega_{a,\nu}=\omega_{a,\nu_1}\cdots \omega_{a,\nu_k}$, it suffices to prove the case for $\nu=(r)$, which follows from a straightforward induction as in the proof of \cite[Corollary 3.11]{SWweb}.
\end{proof}

    Recall the bubbles defined in \cref{bubble+dot+par}. 
\begin{lemma}
\label{actionbubble2a}
    For any $a\in \N$ and $h\in \U(\mathfrak h)$, we have 
    \[
    \dotbubble_{2a}(1\otimes h)
    \equiv
    (-1)^a\otimes e_a( h_1^2,\ldots, h_{n}^2)h .
    \]
\end{lemma}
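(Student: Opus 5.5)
We compute the action of the bubble $\dotbubble_{2a}=\cup_{2a}$ followed by the dot packet $\omega_{2a}$ on the left strand and then $\cap_{2a}$, applied to $1\otimes h\in M^{\rm gen}$. The plan is to track only the top filtered degree, using \cref{actionlantern} for the dot and the explicit formulas \eqref{caparep}--\eqref{cuparep} for cup and cap.

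\textbf{Step 1 (cup).} By \eqref{cuparep},
\[
\cG_N(\cup_{2a})(1\otimes h)=\sum_{i_1<\cdots<i_{2a}} 1\otimes h\otimes v_{i_1}\wedge\cdots\wedge v_{i_{2a}}\otimes v_{-i_{2a}}\wedge\cdots\wedge v_{-i_1}.
\]
Since cups and caps act as $\id_M\otimes\cF_N(\cdot)$, this step introduces no degree change.

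\textbf{Step 2 (dot).} The dot $\omega_{2a}$ sits on the left strand, which carries $M^{\rm gen}\otimes\bigwedge^{2a}V$. By \cref{actionlantern} with $\nu=(2a)$, it acts, modulo lower filtered degree, by multiplying $h$ by
\[
e_{2a}(h_{i_1},\ldots,h_{i_{2a}})=h_{i_1}\cdots h_{i_{2a}}.
\]
A small point needs checking here: \cref{actionlantern} is stated for $M^{\rm gen}\otimes\bigwedge^aV$, but the lower-degree error terms stay lower-degree after tensoring with the second factor. Right multiplication by $U(\h)$ commutes with the $\g$-action, so the filtration behaves.

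\textbf{Step 3 (cap).} Applying $\cap_{2a}$ via \eqref{caparep} pairs $v_{i_1}\wedge\cdots\wedge v_{i_{2a}}$ with $v_{-i_{2a}}\wedge\cdots\wedge v_{-i_1}$ and gives $1$ for each term. The result is therefore
\[
\sum_{i_1<\cdots<i_{2a}} 1\otimes h_{i_1}\cdots h_{i_{2a}}\,h.
\]
Here the $i$'s range over $[N]$, with $h_{-i}=-h_i$ and $h_0=0$.

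\textbf{Step 4 (combinatorial identity).} It remains to show
\[
e_{2a}(h_{-n},\ldots,h_{-1},h_0,h_1,\ldots,h_n)=(-1)^a e_a(h_1^2,\ldots,h_n^2).
\]
The argument uses generating functions. The multiset $\{h_i : i\in[N]\}$ consists of $\pm h_1,\ldots,\pm h_n$, possibly together with $0$. Hence
\[
\sum_k e_k\,t^k=\prod_{j=1}^n(1+h_jt)(1-h_jt)=\prod_j(1-h_j^2t^2).
\]
The coefficient of $t^{2a}$ in this product is $(-1)^a e_a(h_1^2,\ldots,h_n^2)$, as claimed. Note that $U(\h)$ is commutative, so these symmetric-function manipulations are legitimate.

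\textbf{Main obstacle.} The delicate part is Step 2: justifying that the cap applied after the dot kills nothing of top degree, and that the Casimir cross terms contributed by $\Omega$ are genuinely of lower filtered degree. I would handle this by invoking \cref{eM filtered degree 1}(3) together with \cref{actionodotgen}, exactly as in the proof of \cref{actionlantern}. The sign conventions in \eqref{caparep} also need to be checked so that each pairing gives $+1$.
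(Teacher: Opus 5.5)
Your proof is correct and follows the paper's argument step for step. You apply the cup via \eqref{cuparep}, the top-degree dot action via \cref{actionlantern} with $\nu=(2a)$, and the cap via \eqref{caparep}. You finish with the identity $e_{2a}(h_{-n},\ldots,h_n)=(-1)^a e_a(h_1^2,\ldots,h_n^2)$, which the paper attributes simply to $h_{-i}=-h_i$ and you verify via the generating function $\prod_j(1-h_j^2t^2)$.

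The ``main obstacle'' you flag is not a real difficulty. Cups and caps act as $\id_M\otimes\cF_N(\cdot)$ and so preserve the filtration. Lower-degree error terms from the dot therefore stay lower-degree, and the congruence passes through the cap directly.
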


\begin{proof}
   We have
   \begin{align*}
        \dotbubble_{2a}(1\otimes h) 
        \overset{\cref{cuparep}}{=}&
        \begin{tikzpicture}
            [anchorbase,color=\clr]
            \draw[line width=1pt] (0,0) -- (0,.4) to [out=up,in=left] (.2,.6)
            to [out=right,in=up] (.4,.4) -- (.4,.0);
            \draw (0,.2)\bdot;
            \node at (0,-.2){$\scriptstyle 2a$};
            \node at (.4,-.2){$\scriptstyle 2a$};
         \end{tikzpicture}
          \Big( 1\otimes h \otimes \sum_{i_1<\cdots<i_{2a}} v_{i_1}\wedge \cdots \wedge v_{i_{2a}} \otimes
                                    v_{-i_{2a}}\wedge \cdots \wedge v_{-i_1}\Big)
        \\
         \overset{\cref{actionlantern}}{\equiv}& \begin{tikzpicture}
            [anchorbase,color=\clr]
            \draw[line width=1pt]  (0,.4) to [out=up,in=left] (.2,.6)
            to [out=right,in=up] (.4,.4);
            \node at (0,.2){$\scriptstyle 2a$};
            \node at (.4,.2){$\scriptstyle 2a$};
         \end{tikzpicture} 
         \Big(
         \sum_{i_1<\cdots<i_{2a}} 1\otimes e_{2a}(h_{i_1},\ldots,h_{i_{2a}})h \otimes  v_{i_1}\wedge \cdots \wedge v_{i_{2a}} \otimes
        v_{-i_{2a}}\wedge \cdots \wedge v_{-i_1}\Big) 
        \\
        \overset{\cref{caparep}}{=}&  \sum_{i_1<\cdots<i_{2a}} 1\otimes e_{2a}(h_{i_1},\ldots,h_{i_{2a}})h 
        =
        1\otimes e_{2a}(h_{-n},\ldots,h_{-1},h_{1},\ldots,h_{n})h 
        \\
       {=}&(-1)^a\otimes e_a(h_1^2,\ldots, h_{n}^2)h ,
   \end{align*}
   where the last equation follows from the fact that $h_i=-h_{-i}$ for $i=1,\ldots,n$. 
\end{proof}
Next we calculate the action of $\wkdotamu$, for $\mu\in\EPar_{2a}$. Let $\mu/2\in \Par_a$ denote the partition obtained from $\mu$ by dividing each part by $2$.

\begin{lemma}
\label{actionlollipop}
    For any $a\in \N$ and $\mu\in \EPar_{2a}$, we have 
    \begin{multline*}
            \wkdotamu \Big(1\otimes h\otimes v_{i_1}\wedge\cdots\wedge v_{i_{2a}}\Big)
    \equiv
        \prod_{j=1}^a \delta_{i_j,-i_{2a+1-j}}\otimes (-1)^{|\mu|/2}e_{\mu/2}(h_{i_1}^2,\ldots,h_{i_a}^2)h_{i_1}\cdots h_{i_a}h
    \end{multline*}
    for any $i_1<\cdots<i_{2a}$ with $i_j\in[N]$.
\end{lemma}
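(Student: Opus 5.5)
The lollipop $\wkdotamu$ is, by definition \eqref{def:huochai}, the composition
\[
\frac{1}{2^a}\,\cap_a\circ(\omega_{a,(1)}\otimes 1)\circ\text{Y}_{a,a}
\]
with the dot packet $\omega_{2a,\mu}$ placed below it. So I would compute the action of $\cG_N$ on a basis vector $1\otimes h\otimes v_{i_1}\wedge\cdots\wedge v_{i_{2a}}$ in three steps, working modulo lower filtered degree throughout.

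\textbf{Step 1.} Apply the dot packet. By \cref{actionlantern}, $\omega_{2a,\mu}$ multiplies the vector by $e_\mu(h_{i_1},\ldots,h_{i_{2a}})$ up to lower degree terms.

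\textbf{Step 2.} Apply the split $\text{Y}_{a,a}$ via \eqref{eq:Yab}. This produces a signed sum over $a$-subsets $S=\{j_1<\cdots<j_a\}$ of $\{1,\ldots,2a\}$ of the terms $v_{i_S}\otimes v_{i_{S^c}}$.

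\textbf{Step 3.} Put the thin-type dot $\omega_a=\omega_{a,(a)}$ on the left factor. By \cref{actionlantern} this multiplies by $\prod_{j\in S}h_{i_j}$. Then apply $\cap_a$ via \eqref{caparep}, which is nonzero only when the multiset $\{i_{S^c}\}$ equals $\{-i_j: j\in S\}$.

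Since the $i_j$ are distinct, a nonzero result forces the index set $\{i_1,\ldots,i_{2a}\}$ to be closed under negation. In the increasing order this means exactly $\prod_{j}\delta_{i_j,-i_{2a+1-j}}$. Note also that $0$ cannot occur, because $h_0=0$ kills such terms modulo lower degree, or the pairing forbids it.

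When the set is closed under negation, each surviving $S$ picks exactly one element from each pair $\{i_j,-i_j\}$ for $j\le a$, so there are $2^a$ choices. The sign of the shuffle, the reordering sign in $\cap_a$, and the sign from $h_{-i}=-h_i$ should combine so that every choice contributes the same value $\prod_{j\le a}h_{i_j}$. The $2^a$ equal contributions then cancel the factor $\frac1{2^a}$.

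Finally, $e_\mu(h_{i_1},\ldots,h_{i_a},-h_{i_a},\ldots,-h_{i_1})$ must be rewritten. The generating function is $\prod_j(1+h_{i_j}t)(1-h_{i_j}t)=\prod_j(1-h_{i_j}^2t^2)$, so $e_{2k}=(-1)^ke_k(h_{i_j}^2)$. Hence $e_\mu=(-1)^{|\mu|/2}e_{\mu/2}(h_{i_1}^2,\ldots,h_{i_a}^2)$, which gives the stated formula. This is the same mechanism as in \cref{actionbubble2a}.

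\textbf{Main obstacle.} The hardest part is the sign bookkeeping in Step 3: showing that all $2^a$ subsets contribute with the same sign. I would handle this by induction on $a$, or by first reducing to $S=\{1,\ldots,a\}$. For the reduction, I would use the relation \eqref{bubblecap} for the crossing-and-cap together with \eqref{dotmovecupcaps}. These let one swap a pair $i_j\leftrightarrow -i_j$ between the two factors, and the $(-1)$ coming from $h_{-i}=-h_i$ offsets the sign change of the wedge reordering. I would check the case $a=1$ explicitly first to fix the convention: there the contributions are $v_{i}\otimes v_{-i}$ and $-v_{-i}\otimes v_i$, with dot values $h_i$ and $-h_i$ respectively, so both contribute $h_i$.
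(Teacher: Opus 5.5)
You follow the paper's computation step for step: the dot packet via \cref{actionlantern}, then $\mathrm{Y}_{a,a}$, then the dotted cap. The $2^a$ surviving shuffles cancel the factor $\frac{1}{2^a}$, and you finish with $e_{2k}(x_1,\ldots,x_a,-x_a,\ldots,-x_1)=(-1)^k e_k(x_1^2,\ldots,x_a^2)$. However, the one step with real content is only asserted (``should combine''): that every surviving shuffle contributes the same sign. The route you sketch for it does not work. The relations \eqref{bubblecap} and \eqref{dotmovecupcaps} are identities between morphisms acting on whole thick strands. The crossing-and-cap relation swaps the two $a$-fold factors entirely, and sliding a dot across a cap moves the whole packet. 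Neither can act on a single summand $v_{i_S}\otimes v_{i_{S^c}}$ of $\mathrm{Y}_{a,a}(v_{i_1}\wedge\cdots\wedge v_{i_{2a}})$ to swap just one pair $i_j\leftrightarrow -i_j$. At best they relate the terms for $S$ and $S^c$, which flips all $a$ pairs at once. Separately, the dot in the lollipop is the generating dot $\omega_{a,(a)}$, acting by $e_a$. The $\omega_{a,(1)}$ in your first display would act by $e_1$.

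The missing check is a direct parity count.
\begin{itemize}
\item Write the surviving subset as $S=(\{1,\ldots,a\}\setminus T)\cup\{2a+1-j\mid j\in T\}$ with $T\subseteq\{1,\ldots,a\}$.
\item The minimal shuffle $w_S$ has $\ell(w_S)=\sum_{s\in S}s-\binom{a+1}{2}=\sum_{j\in T}(2a+1-2j)\equiv|T|\pmod 2$.
\item The dot contributes $\prod_{s\in S}h_{i_s}=(-1)^{|T|}h_{i_1}\cdots h_{i_a}$, since $h_{i_{2a+1-j}}=h_{-i_j}=-h_{i_j}$.
\item $\cap_a$ contributes no extra sign. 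Both factors produced by $\mathrm{Y}_{a,a}$ are in increasing order. \eqref{caparep} pairs the $k$-th smallest index on the left with the $k$-th largest on the right, which is exactly its negative when $i_{S^c}=-i_S$. So $\cap_a$ returns $1$.
\end{itemize}
Hence each of the $2^a$ terms equals $h_{i_1}\cdots h_{i_a}h$. This is the paper's observation $n_w\equiv\ell(w)\pmod 2$, and with it your argument is complete.
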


\begin{proof}
    We first compute the action of $\lollipopa$. In fact, for $i_1<\cdots<i_{2a}$ we have
    \begin{align*}
        &\lollipopa \Big(1\otimes h\otimes v_{i_1}\wedge\cdots\wedge v_{i_{2a}}\Big)\\
        =& \frac{1}{2^a}
        \begin{tikzpicture}
            [anchorbase,color=\clr]
            \draw[line width=1pt] (0,0) -- (0,.4) to [out=up,in=left] (.2,.6)
            to [out=right,in=up] (.4,.4) -- (.4,.0);
            \draw (0,.2)\bdot;
            \node at (0,-.2){$\scriptstyle a$};
            \node at (.4,-.2){$\scriptstyle a$};
         \end{tikzpicture}
          \Big( 1\otimes h \otimes \sum_{w\in S_{2a}/(S_a\times S_a)} (-1)^{\ell(w)}v_{i_{w(1)}}\wedge \cdots 
          \wedge v_{i_{w(a)}} \otimes
            v_{i_{w(a+1)}}\wedge \cdots 
            \wedge v_{i_{w(2a)}}\Big) \\
         \overset{\cref{actionlantern}}{\equiv}&
        \frac{1}{2^a} \capa \Big(
            \sum_{w\in S_{2a}/(S_a\times S_a)}(-1)^{\ell(w)}
            1\otimes e_a(h_{i_{w(1)}},\ldots,h_{i_{w(a)}})h \\
            &\qquad\qquad \otimes
            v_{i_{w(1)}}\wedge \cdots 
          \wedge v_{i_{w(a)}} \otimes
            v_{i_{w(a+1)}}\wedge \cdots 
            \wedge v_{i_{w(2a)}}
         \Big) .
    \end{align*}
 By \cref{caparep}, the above equation evaluates to zero unless $ i_j = -i_{2a+1-j} $ for all $ j = 1, \dots, a $. In this case, a nonzero summand appears if and only if $ i_{w(j)} = -i_{w(2a+1-j)} $ for all $ j = 1, \dots, a $. There are precisely $ 2^a $ such permutations $ w $ in $ S_{2a} / (S_a \times S_a) $.  
For these values of $ w $, we have  
$
\capa \Big(
     v_{i_{w(1)}}\wedge \cdots 
          \wedge v_{i_{w(a)}} \otimes
            v_{i_{w(a+1)}}\wedge \cdots 
            \wedge v_{i_{w(2a)}}
    \Big) = 1
$
by \cref{caparep}. %
Moreover, suppose each such $ w $ changes exactly $ n_w$ indices $ i_j $ to $ -i_j $. Every such $ w $ is generated by $\{s_{a}, s_{a-1}s_{a+1}s_a,\ldots,(s_1\cdots s_{a-1})(s_{2a-1}\cdots s_{a+1})s_a \}$. Thus we have $n_w\equiv \ell(w) \mod 2$, leading to the relation  
\[
e_a(h_{i_{w(1)}},\ldots,h_{i_{w(a)}}) = (-1)^{\ell(w)} e_a(h_{i_1},\ldots,h_{i_a}).
\]  
Thus, we conclude that  
\[
\lollipopa \Big( 1\otimes h\otimes v_{i_1}\wedge\cdots\wedge v_{i_{2a}} \Big) =    \prod_{j=1}^a \delta_{i_j,-i_{2a+1-j}}\otimes h_{i_1}\cdots h_{i_{a}}h.
\]

Now suppose $ i_j = -i_{2a+1-j} $ for all $ j = 1, \dots, a $, then we have
    \begin{multline*}
            \wkdotawmue \Big(
        1\otimes h \otimes v_{i_1}\wedge \cdots \wedge v_{i_a}\wedge v_{-i_a}\wedge \cdots \wedge v_{-i_1}
    \Big)
    \overset{\cref{actionlantern}}{\equiv} 1\otimes e_\mu(h_{i_1},\ldots, h_{i_a},h_{-i_a},\ldots, h_{-i_1})h .
    \end{multline*}
    Suppose $\mu=(\mu_1,\ldots,\mu_k)$. Since $\mu\in \EPar_{2a}$, we have $\mu_t\in 2\N$ and
    \[
    e_{\mu_t}(h_{i_1},\ldots, h_{i_a},h_{-i_a},\ldots, h_{-i_1})=(-1)^{\mu_t/2}e_{\mu_t/2}(h^2_{i_1},\ldots, h^2_{i_a}),\quad \forall t=1,\ldots,k.
    \]
    This proves the lemma.
\end{proof}
Similarly to \cref{actionbubble2a} and \cref{actionlollipop}, one can show directly that 
\begin{equation}
    \label{actionlollipoprev}
    \wkdotamurev(1\otimes h)=\sum_{i_1<\cdots <i_a}1\otimes(-1)^{|\mu|/2}e_{\mu/2}(h_{i_1}^2,\ldots,h_{i_a}^2)h_{i_1}\cdots h_{i_a}h\otimes
    v_{i_1} \wedge \cdots \wedge v_{i_a} \wedge
    v_{-i_a} \wedge \cdots \wedge v_{-i_1}
\end{equation}
for any $a\in \N$, $\mu\in \EPar_{2a}$ and $h\in \U(\mathfrak h)$.

\subsection{A basis theorem for $\Hom_{\AWb}(\mu,\lambda)$}

Recall from \cref{thm:spanAWb} that $\bPSMat_{\lambda,\mu}$ from \eqref{eq:BPMat} is a spanning set for $\Hom_{\AWb}(\mu,\lambda)$. 

\begin{theorem} \label{thm:basisAWb}
As a $\mathbb J'$-module, $\Hom_{\AWb}(\mu,\lambda)$ is free with basis $\bPSMat_{\lambda,\mu}$ for any $\lambda, \mu\in \Lambda_{\text{st}}$.
\end{theorem}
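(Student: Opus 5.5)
Since \cref{thm:spanAWb} already gives spanning, only linear independence of $\bPSMat_{\lambda,\mu}$ over $\mathbb J'$ remains. The plan is to detect it through the functors $\cG_N$ of \cref{AWbrep}, evaluated on the generic Verma module $M^{\rm gen}$, for infinitely many $N$. A $\mathbb J'$-linear relation $\sum_D c_D(x) D=0$, with $D$ running over finitely many elements of $\bPSMat_{\lambda,\mu}$, specializes under $x\mapsto N$ to a relation in $\AWb_\C(N)$. If each specialized relation forces $c_D(N)=0$ for all large $N$ (say all even $N$, or all $N$ of a fixed parity, both large), then every $c_D\in\mathbb J'\subset\Q[x]$ vanishes. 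We may also clear the bubble factors: \cref{actionbubble2a} shows $\dotbubble_{2a}$ acts, to top degree, as multiplication by $(-1)^a e_a(h_1^2,\dots,h_n^2)$. These are algebraically independent once $n$ exceeds the number of bubble types in play, so bubble monomials contribute independent top-degree $\U(\h)$-multipliers.

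By the usual reduction (cf.\ \cite{SWweb}), bending all top strands down with cups via \cref{zigzag}, we may assume $\lambda=\unit$ or, dually, $\mu=\unit$. We then work with $\Hom_{\AWb}(\unit,\nu)$ where $\nu=\lambda\cup\mu$ up to reordering. Under this bending the elementary diagrams of type $(A,P)$ correspond bijectively to elementary diagrams $\unit\to\nu$, consisting of cups with dot packets, lollipops, and merges at the top. We apply $\cG_N(D)_{M^{\rm gen}}$ to $1\otimes 1$ and land in $M^{\rm gen}\otimes\bigwedge^{\nu_1}V\otimes\cdots$. Using the filtration by $\U(\h)$-degree and \cref{eM filtered degree 1}, \cref{actionlantern}, \cref{actionlollipop}, \eqref{actionlollipoprev}, we compute the top-degree component of each image. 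For the leg joining vertices $i,j$ with packet $\vartheta_{ij}$, the contribution is $\sum e_{\vartheta_{ij}}(h_{k_1},\dots)$ times a wedge of basis vectors $v_{k}$ and $v_{-k}$ distributed to vertices $i$ and $j$. A lollipop at vertex $i$ contributes $(-1)^{|\mu|/2}e_{\mu/2}(h^2_{k_1},\dots)h_{k_1}\cdots h_{k_a}$ times $v_{k_1}\wedge\cdots\wedge v_{-k_1}$. Dots lower-degree terms are discarded throughout.

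Next we choose, for each $A\in\SMat_{\lambda,\mu}$, a distinguished index pattern. Take $N$ huge, and assign to each leg of $A$ pairwise disjoint blocks of positive indices $\{k\}$, paired with $-k$ on the other endpoint, arranged so that each vertex's wedge product is a fixed nonzero basis vector. Extracting the coefficient of this tensor basis element isolates the diagrams with matrix $A$, because the multiset of index pairs $(k,-k)$ occurring between vertices determines $A$. Some other $A'$ could produce the same basis vector only by redistributing indices; to rule this out we order the $A$'s and use a triangularity argument as in \cite{SWweb}. Among diagrams with matrix $A$, the $\U(\h)$-coefficient is a product over legs of $e_{\vartheta_{ij}}$ in disjoint variable sets, and for lollipops $e_{\mu/2}$ in squared variables times a monomial. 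Bubbles contribute $e_a(h_1^2,\dots,h_n^2)$, which involves variables outside all the blocks. Products of elementary symmetric polynomials in disjoint variable sets are linearly independent, so all coefficients $c_D(N)=0$.

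The main obstacle is the triangularity/isolation step: showing that, for the chosen index pattern, only diagrams with the given $A$ contribute to the selected tensor coefficient at top degree. This is complicated by cups, whose pairings $\delta_{i,-j}$ allow an index $k$ to appear either at a leg or inside a lollipop at a single vertex. We would handle it by using positive indices at the left endpoint and negative ones at the right endpoint of each cup, so that lollipops, which place both $k$ and $-k$ at one vertex, are distinguished from cup legs. We also choose the blocks of distinct legs at distinct vertex pairs so that the pair counts recover $A$ uniquely. The details of the sign and parity bookkeeping are deferred to an appendix-style verification.
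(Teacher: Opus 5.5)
Your proposal is correct and follows essentially the same route as the paper: you reduce to $\Hom_{\AWb}(\unit,\lambda\otimes\mu^*)$ by bending, apply $\cG_N$ to $1\otimes 1\in M^{\rm gen}$ for $N\gg0$, read off the component at the index pattern you describe (which is the paper's $v_{\theta(g)}$, with positive indices at left cup ends and $\pm k$ pairs at lollipops), and then compare leading monomials of products of elementary symmetric polynomials at top degree. Two refinements: with that pattern the isolation of the shape $A$ is already exact, since the pair counts force $A'=A$, so no triangularity is needed; and the bubble factor $e_a(h_1^2,\dots,h_n^2)$ does involve the block variables, so it must be handled, as in the paper, by a monomial order whose leading term avoids them (your specialization at infinitely many $N$ is a welcome explicit version of the paper's brief base-change remark).
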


To complete the proof of \cref{thm:basisAWb} it remains to establish the linear independence of $\bPSMat_{\lambda,\mu}$ in $\Hom_{\AWb}(\mu,\lambda)$. 
The proof of this independence can be found in Appendix~\ref{app:independence}.

Recall $\SMat_{\lambda,\mu}$ from \eqref{eq:Mat}, and denote $\bSMat_{\lambda,\mu} :=\{\bubblea\cdot f \mid f\in \SMat_{\lambda,\mu}, a\in \Z_{\geq 0}\}.$ Recall also that $\bubblea=\binom{x}{a}$ from \eqref{bubblecap}.

\begin{corollary} \label{cor:basis:Wb}
    As a $\mathbb J$-module, the morphism space $\Hom_{\Wb}(\mu,\lambda)$ has a basis $\SMat_{\lambda,\mu}$. Equivalently, $\Hom_{\Wb}(\mu,\lambda)$ has a $\Z$-basis
    $\bSMat_{\lambda,\mu}.$
\end{corollary}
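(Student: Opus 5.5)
The plan is to deduce Corollary \ref{cor:basis:Wb} from Theorem \ref{thm:basisAWb} by comparing $\Wb$ with the degree-zero part of $\AWb$. Spanning and independence are handled separately.

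\emph{Spanning.} I would repeat the reduction of Section \ref{sec:spanning} with no dots present. Every undotted B-web diagram is a composite of the generators of types (O)--(III) in Table \ref{tab1}. The case-by-case arguments there, restricted to degree $0$, rewrite $fg$ for $g$ an undotted reduced chicken foot diagram into a $\mathbb J$-combination of such diagrams. Since nothing is dotted, the congruences $\equiv$ become genuine equalities.

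The ingredients that leave $\mathbb J$ in the dotted setting do not appear here:
\begin{itemize}
\item lollipops are zero by the middle relation of \eqref{bubblecap} and its flip, so no factor $\frac{1}{2^s}$ arises;
\item undotted lanterns equal binomial multiples of identities by \eqref{splitmerge};
\item sideways dumbbells reduce by \eqref{sidewaydumbell} with coefficient $\binom{x-a}{b}\in\mathbb J$;
\item undotted bubbles reduce to $\binom{x}{a}$, and nested or crossing bubbles are removed by isotopy (\eqref{zigzag}, \eqref{braid}, \eqref{curlremove}).
\end{itemize}
Hence $\Hom_{\Wb}(\mu,\lambda)$ is $\mathbb J$-spanned by reduced chicken foot diagrams with no lollipops and no dots. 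These are exactly the diagrams indexed by $A\in\SMat_{\lambda,\mu}$ with zero diagonal. If diagonal entries must be allowed by the convention on $\SMat_{\lambda,\mu}$, the corresponding diagrams contain lollipops and so vanish.

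\emph{Independence.} There is an obvious $\mathbb J$-linear monoidal functor $\iota:\Wb\to\AWb$ (after the base change $\mathbb J\subset\mathbb J'$), since every defining relation of $\Wb$ is a relation of $\AWb$. It sends each undotted elementary diagram to the element of $\PSMat_{\lambda,\mu}$ with $A$ off-diagonal and all $P$ entries empty. Suppose $\sum_A c_A D_A=0$ in $\Wb$ with $c_A\in\mathbb J$. Applying $\iota$ and expanding each $c_A$ in the $\Z$-basis $\binom{x}{k}$ of $\mathbb J$ gives a relation with coefficients in $\mathbb J\subset\mathbb J'$ among distinct elements of $\bPSMat_{\lambda,\mu}$. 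By Theorem \ref{thm:basisAWb} all $c_A$ vanish.

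One could instead argue independence directly through $\mathcal F_N$ of Theorem \ref{thm:Worep}: specialize $x=N$ for infinitely many $N$ and use that a polynomial vanishing at infinitely many integers is zero. I would mention this only as a remark.

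\emph{The $\Z$-basis.} The second formulation follows because $\{\binom{x}{a}\}$ is a $\Z$-basis of $\mathbb J$ and $\binom{x}{a}$ equals the thick bubble $\bubblea$, acting by left concatenation.

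The main obstacle is spanning over $\mathbb J$ rather than $\mathbb J'$. I must check that the degree-zero reduction never divides by $2$ and never produces lollipops or odd bubbles needing Lemma \ref{Deltaaodd}. This is where the undotted vanishing of lollipops in \eqref{bubblecap} is essential.
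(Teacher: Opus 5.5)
Your proposal is correct and follows essentially the paper's route: spanning is the degree-zero specialization of the reduction in Section~\ref{sec:spanning}, and independence comes from pushing a relation through the natural functor $\Wb_{\mathbb J'}\to\AWb$ into the $\mathbb J'$-basis $\bPSMat_{\lambda,\mu}$ of Theorem~\ref{thm:basisAWb}, then using $\mathbb J\hookrightarrow\mathbb J'$ and the $\Z$-basis $\{\binom{x}{a}\}$ of $\mathbb J$. Your caveat about diagonal entries is well founded, and the paper's proof passes over it: read literally, \eqref{eq:Mat} allows $a_{ii}\neq 0$, but the corresponding diagrams are undotted lollipops and vanish by \eqref{bubblecap} (for example, $\SMat_{(2),(2)}$ has two elements while $\End_{\Wb}(2)$ is spanned by the identity alone), so the correct statement, which is the one your argument proves, uses only the zero-diagonal matrices.
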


\begin{proof}
Consider the base change $\Wb_{\mathbb J'} =\Wb\otimes_{\mathbb J} {\mathbb J'}$. 
    There is a natural functor $\imath: \Wb_{\mathbb J'}\rightarrow\AWb$ which sends the  generating objects and morphisms to the ones in the same notation, since the defining relations for $\Wb$ are part of those for $\AWb$.

    The images of the diagrams in $\SMat_{\lambda,\mu}$ under
    $\iota$ are precisely the bubble-free elementary diagrams, hence form a subset of the $\mathbb J'$-basis $\bPSMat_{\lambda,\mu}$. Therefore $\SMat_{\lambda,\mu}$ is $\mathbb J'$-linearly independent after base change. Since $\mathbb J\hookrightarrow \mathbb J'$ is injective, it is already $\mathbb J$-linearly independent in $\Wb$. The spanning argument is the degree-zero specialization of \cref{sec:spanning}. Finally, because $\{\binom{x}{r}\mid r\ge0\}$ is a $\mathbb Z$-basis of $\mathbb J$, the set $\bSMat_{\lambda,\mu}$ is a $\mathbb Z$-basis.
\end{proof}

\section{Affine B-webs over $\C[x]$}
  \label{sec:char0}
  
In this section, we formulate a simplified presentation of the finite and affine B-web categories over $\kk[x]$, for any field $\kk$ of characteristic zero. We simply fix $\kk=\C$. 

\subsection{Definition of $\WCb$ over $\C[x]$}

\begin{definition}
\label{def:affineBwebC}
  The B-web category $\WCb$ is the strict $\C[x]$-linear monoidal category with generating objects $a\in \mathbb Z_{\ge 1}$ and generating morphisms 
  \begin{align}
\label{morphismsC}
\begin{tikzpicture}[anchorbase,color=\clr]
	\draw[-,line width=1pt] (0.28,-.3) to (0.08,0.04);
	\draw[-,line width=1pt] (-0.12,-.3) to (0.08,0.04);
	\draw[-,line width=1.4pt] (0.08,.4) to (0.08,0);
        \node at (-0.22,-.4) {$\scriptstyle k$};
        \node at (0.35,-.4) {$\scriptstyle 1$};\node at (0,.55){$\scriptstyle k+1$};\end{tikzpicture} 
&:(k,1) \rightarrow (k+1),&
\begin{tikzpicture}[anchorbase,color=\clr]
	\draw[-,line width=1.4pt] (0.08,-.3) to (0.08,0.04);
	\draw[-,line width=1pt] (0.28,.4) to (0.08,0);
	\draw[-,line width=1pt] (-0.12,.4) to (0.08,0);
        \node at (-0.22,.5) {$\scriptstyle k$};
        \node at (0.36,.5) {$\scriptstyle 1$};
        \node at (0.1,-.45){$\scriptstyle k+1$};
\end{tikzpicture}
&:(k+1)\rightarrow (k,1),&
\begin{tikzpicture}[baseline=-1mm, scale=.8, color=\clr]
	\draw[-,thick] (0.3,-.3) to (-.3,.4);
	\draw[-,thick] (-0.3,-.3) to (.3,.4);
        \node at (-0.3,-.45) {$\scriptstyle 1$};
        \node at (0.26,-.45) {$\scriptstyle 1$};
\end{tikzpicture}
 & :(1,1) \rightarrow (1,1),
 \\
\begin{tikzpicture}[xscale=-1,anchorbase, scale=0.3, color=\clr]
\draw[-,thick] (1,1) to [out=down,in=left] (2,0) to [out=right,in=down] (3,1);
\node at (1,1.5){$\scriptstyle 1$};
\node at (3,1.5){$\scriptstyle 1$};
\end{tikzpicture} &: \unit \rightarrow (1,1), &
\qquad 
\begin{tikzpicture}[xscale=-1,anchorbase, scale=0.3, color=\clr]
\draw[-,thick] (0,0) to [out=up,in=left] (1,1) to [out=right,in=up] (2,0);
\node at (0,-.5){$\scriptstyle 1$};
\node at (2,-.5){$\scriptstyle 1$};
\end{tikzpicture} & : (1,1) \rightarrow \unit, &
 & \;
 \notag
\end{align}
for $k \in \Z_{\ge 1}$,
subject to the relations \cref{webassocC}--\cref{bubbleC}:
\begin{gather}
\label{webassocC}
\begin{tikzpicture}[baseline = 0,color=\clr]
	\draw[-,thick] (0.35,-.3) to (0.08,0.14);
	\draw[-,thick] (0.1,-.3) to (-0.04,-0.06);
	\draw[-,line width=1pt] (0.085,.14) to (-0.035,-0.06);
	\draw[-,thick] (-0.2,-.3) to (0.07,0.14);
	\draw[-,line width=1.4pt] (0.08,.45) to (0.08,.1);
        \node at (0.45,-.41) {$\scriptstyle 1$};
        \node at (0.07,-.4) {$\scriptstyle 1$};
        \node at (-0.28,-.41) {$\scriptstyle k$};
\end{tikzpicture}
=
\begin{tikzpicture}[baseline = 0, color=\clr]
	\draw[-,thick] (0.36,-.3) to (0.09,0.14);
	\draw[-,thick] (0.06,-.3) to (0.2,-.05);
	\draw[-,line width=1pt] (0.07,.14) to (0.19,-.06);
	\draw[-,thick] (-0.19,-.3) to (0.08,0.14);
	\draw[-,line width=1.4pt] (0.08,.45) to (0.08,.1);
        \node at (0.45,-.41) {$\scriptstyle 1$};
        \node at (0.07,-.4) {$\scriptstyle 1$};
        \node at (-0.28,-.41) {$\scriptstyle k$};
\end{tikzpicture}\:,
\qquad \quad \;
\begin{tikzpicture}[anchorbase, color=\clr]
	\draw[-,thick] (0.35,.3) to (0.08,-0.14);
	\draw[-,thick] (0.1,.3) to (-0.04,0.06);
	\draw[-,line width=1pt] (0.085,-.14) to (-0.035,0.06);
	\draw[-,thick] (-0.2,.3) to (0.07,-0.14);
	\draw[-,line width=1.4pt] (0.08,-.45) to (0.08,-.1);
        \node at (0.45,.4) {$\scriptstyle 1$};
        \node at (0.07,.42) {$\scriptstyle 1$};
        \node at (-0.28,.4) {$\scriptstyle k$};
\end{tikzpicture}
= 
\begin{tikzpicture}[anchorbase, color=\clr]
	\draw[-,thick] (0.36,.3) to (0.09,-0.14);
	\draw[-,thick] (0.06,.3) to (0.2,.05);
	\draw[-,line width=1pt] (0.07,-.14) to (0.19,.06);
	\draw[-,thick] (-0.19,.3) to (0.08,-0.14);
	\draw[-,line width=1.4pt] (0.08,-.45) to (0.08,-.1);
        \node at (0.45,.4) {$\scriptstyle 1$};
        \node at (0.07,.42) {$\scriptstyle 1$};
        \node at (-0.28,.4) {$\scriptstyle k$};
\end{tikzpicture}\:,
\\
 \label{mergesplitC}
 \begin{tikzpicture}[anchorbase,scale=.8,color=\clr]
\draw[-,line width=1.6pt] (0.08,-.8) to (0.08,-.5);
\draw[-,line width=1.6pt] (0.08,.3) to (0.08,.6);
\draw[-,thick] (0.1,-.51) to [out=45,in=-45] (0.1,.31);
\draw[-,thick] (0.06,-.51) to [out=135,in=-135] (0.06,.31);
\node at (-.33,-.05) {$\scriptstyle k$};
\node at (.45,-.05) {$\scriptstyle 1$};
\end{tikzpicture}
= 
(k+1) \:
\begin{tikzpicture}[anchorbase,scale=.8,color=\clr]
	\draw[-,line width=1.4pt] (0.08,-.8) to (0.08,.6);
        \node at (.08,-1) {$\scriptstyle k+1$};
\end{tikzpicture}, 
\qquad \quad
    \begin{tikzpicture}[anchorbase,scale=.8, color=\clr]
	\draw[-,thick] (0,0) to (.28,.3) to (.28,.7) to (0,1);
	\draw[-,thick] (.6,0) to (.31,.3) to (.31,.7) to (.6,1);
        \node at (0,1.15) {$\scriptstyle k$};
        \node at (0.63,1.15) {$\scriptstyle 1$};
        \node at (0,-.15) {$\scriptstyle k$};
        \node at (0.63,-.15) {$\scriptstyle 1$};
\end{tikzpicture}
    =
    \begin{tikzpicture}[anchorbase,scale=.8, color=\clr]
        \draw[-,thick] (0,0) -- (0,1);
        \draw[-,thick] (.6,0) -- (.6,1);
                \node at (0,1.15) {$\scriptstyle k$};
        \node at (0.63,1.15) {$\scriptstyle 1$};
        \node at (0,-.15) {$\scriptstyle k$};
        \node at (0.63,-.15) {$\scriptstyle 1$};
    \end{tikzpicture}
    ~+~
        \begin{tikzpicture}[anchorbase,scale=.8, color=\clr]
        \draw[-,thick] (.6,0) -- (0,1);
        \draw[-,thick] (0,0) -- (.6,1);
                \draw[-,line width=1.4pt] (0,-.1) -- (0,1.1);
                \node at (0,1.25) {$\scriptstyle k$};
        \node at (0.63,1.15) {$\scriptstyle 1$};
        \node at (0,-.25) {$\scriptstyle k$};
        \node at (0.63,-.15) {$\scriptstyle 1$};
    \end{tikzpicture} ,
\\
     \label{zigzagC}
\begin{tikzpicture}[anchorbase,scale=.8,color=\clr]
        \draw[-,thick] (0,-0.5) to (0,0) to
        [out=up,in=left] (.25,.25) to [out=right,in=up] (.5,0) 
        to [out=down,in=left] (.75,-0.25) to [out=right,in=down] (1,0)
        to (1,.5);
        \node at (0,-.65){$\scriptstyle{1}$};
    \end{tikzpicture}
=
       \begin{tikzpicture}[anchorbase,scale=.8,color=\clr]
        \draw[-,thick] (0,-.5) to (0,.5);
        \node at (0,-.65){$\scriptstyle{1}$};
    \end{tikzpicture} ,
    \qquad
\begin{tikzpicture}[anchorbase,scale=.8,color=\clr]
        \draw[-,thick] (0,-0.5) to (0,0) to
        [out=up,in=right] (-.25,.25) to [out=left,in=up] (-.5,0) 
        to [out=down,in=right] (-.75,-0.25) to [out=left,in=down] (-1,0)
        to (-1,.5);
        \node at (0,-.65){$\scriptstyle{1}$};
    \end{tikzpicture}
    =
       \begin{tikzpicture}[anchorbase,scale=.8,color=\clr]
        \draw[-,thick] (0,-.5) to (0,.5);
        \node at (0,-.65){$\scriptstyle{1}$};
    \end{tikzpicture},
      \qquad
	\begin{tikzpicture}[anchorbase,color=\clr]
		\draw[-,thick] (0.6,-0.3) to[out=140, in=0] (0.1,0.2);
		\draw[-,thick] (0.1,0.2) to[out = -180, in = 90] (-0.2,-0.3);
        \draw[-,thick](.6,.4) to (.1,-.3);
        \node at (-.2,-.45){$\scriptstyle{1}$};
        \node at (.6,-.45){$\scriptstyle{1}$};
        \node at (.1,-.45){$\scriptstyle{1}$};
	\end{tikzpicture}
	 =
	\begin{tikzpicture}[anchorbase,color=\clr]
		\draw[-,thick] (0.3,-0.3) to[out=90, in=0] (0,0.2);
		\draw[-,thick] (0,0.2) to[out = -180, in = 40] (-0.5,-0.3);
         \draw[-,thick](-.5,.4) to (0,-.3);
        \node at (-.5,-.45){$\scriptstyle{1}$};
        \node at (.3,-.45){$\scriptstyle{1}$};
        \node at (0,-.45){$\scriptstyle{1}$};
	\end{tikzpicture},
\\
\label{bubbleC}
   \begin{tikzpicture}[anchorbase, scale=1, color=\clr]
 \draw[thick] (-0.15,0) arc(180:0:0.15) to[out=down,in=60] (-0.15,-0.4);
        \draw[thick] (0.15,-0.4) to[out=120,in=down] (-0.15,0);
        \node at (-.15,-.5) {$\scriptstyle 1$};
        \node at (.15,-.5) {$\scriptstyle 1$};
\end{tikzpicture}
= -\!\begin{tikzpicture}
    [anchorbase,color=\clr]
    \draw[-,thick] (0,0) to [out=up,in=left] (.2,.2) to [out=right,in=up] (.4,0);
    \node at (0,-.15){$\scriptstyle 1$};
    \node at (.4,-.15){$\scriptstyle 1$};
 \end{tikzpicture} ,
 \qquad
\begin{tikzpicture}[anchorbase,scale=.8,color=\clr]
\draw[-,line width=1.6pt] (0.1,.3) to (0.1,.6);
\draw[-,thick] (0.1,-.51) to [out=right,in=-45] (0.1,.31);
\draw[-,thick] (0.1,-.51) to [out=left,in=-135] (0.1,.31);
\node at (-.33,-.05) {$\scriptstyle 1$};
\node at (.45,-.05) {$\scriptstyle 1$};
\end{tikzpicture}
 =0= 
\begin{tikzpicture}[yscale=-1, anchorbase,scale=.8,color=\clr]
\draw[-,line width=1.6pt] (0.1,.3) to (0.1,.6);
\draw[-,thick] (0.1,-.51) to [out=right,in=-45] (0.1,.31);
\draw[-,thick] (0.1,-.51) to [out=left,in=-135] (0.1,.31);
\node at (-.33,-.05) {$\scriptstyle 1$};
\node at (.45,-.05) {$\scriptstyle 1$};
\end{tikzpicture},
\qquad 
\bubble 
    =x.
\end{gather}

The affine B-web category $\AWCb$ is the strict $\C[x]$-linear monoidal category with the same generating objects $a\in \mathbb Z_{\ge 1}$ as for $\WCb$ and one extra generating morphism 
$
\begin{tikzpicture}[anchorbase, scale=0.5, color=\clr]
\draw[-,thick] (0,0) to[out=up, in=down] (0,1.4);
\draw(0,0.6) \bdot;
\node at (0,-.3) {$\scriptstyle 1$};
\end{tikzpicture}
$
besides \eqref{morphismsC}, subject to relations \eqref{webassocC}--\eqref{bubbleC} and the following additional relations: 
\begin{gather}
\label{dotmovecapcupC}
\begin{tikzpicture}[anchorbase, scale=0.5, color=\clr]
\draw[-,thick] (0,-.2) to  (1,1.6);
\draw[-,thick] (1,-.2) to  (0,1.6);
\draw(0.25,1.2)\bdot;
\node at (0, -.5) {$\scriptstyle 1$};
\node at (1, -.5) {$\scriptstyle 1$};
\end{tikzpicture}
-
\begin{tikzpicture}[anchorbase, scale=0.5, color=\clr]
\draw[-, thick] (0,-.2) to (1,1.6);
\draw[-,thick] (1,-.2) to (0,1.6);
\draw(.8,0.2)\bdot;
\node at (0, -.5) {$\scriptstyle 1$};
\node at (1, -.5) {$\scriptstyle 1$};
\end{tikzpicture}
~=~
\begin{tikzpicture}[anchorbase, scale=0.5, color=\clr]
\draw[-,thick] (0,0) to[out=up, in=down] (0,1.6);
\node at (0,-.3) {$\scriptstyle 1$};
\draw[-,thick] (1,0) to[out=up, in=down] (1,1.6);
\node at (1,-.3) {$\scriptstyle 1$};
\end{tikzpicture}
 ~-~
\begin{tikzpicture}[anchorbase, scale=0.5, color=\clr]
\draw[-,thick] (2,1.6) to[out=down,in=right] (1.5,1.1) to[out=left,in=down] (1,1.6);
\draw[-,thick] (2,0) to[out=up, in=right] (1.5,0.5) to[out=left,in=up] (1,0);
\node at (1,-.3) {$\scriptstyle 1$};
\node at (1,1.9) {$\scriptstyle 1$};
\node at (2,-.3) {$\scriptstyle 1$};
\node at (2,1.9) {$\scriptstyle 1$};
\end{tikzpicture},
\qquad
\begin{tikzpicture}[anchorbase, scale=0.3, color=\clr]
\draw[-,thick] (0,0) to [out=up,in=left] (1,1) to [out=right,in=up] (2,0);
\node at (0,-.5){$\scriptstyle 1$};
\node at (2,-.5){$\scriptstyle 1$};
\draw (.25,.58)\bdot;
\end{tikzpicture}
= -
\begin{tikzpicture}[anchorbase, scale=0.3, color=\clr]
\draw[-,thick] (0,0) to [out=up,in=left] (1,1) to [out=right,in=up] (2,0);
\node at (0,-.5){$\scriptstyle 1$};
\node at (2,-.5){$\scriptstyle 1$};
\draw (1.75,.58)\bdot;
\end{tikzpicture}\,\,, 
\qquad
\begin{tikzpicture}[yscale=-1,anchorbase, scale=0.3, color=\clr]
\draw[-,thick] (0,0.5) to [out=up,in=left] (1,1.5) to [out=right,in=up] (2,0.5);
\node at (0,-.1){$\scriptstyle 1$};
\node at (2,-.1){$\scriptstyle 1$};
\draw (.1,.88)\bdot;
\end{tikzpicture}
=-
\begin{tikzpicture}[yscale=-1,anchorbase, scale=0.3, color=\clr]
\draw[-,thick] (0,.5) to [out=up,in=left] (1,1.5) to [out=right,in=up] (2,0.5);
\node at (0,0){$\scriptstyle 1$};
\node at (2,0){$\scriptstyle 1$};
\draw (1.95,.88)\bdot;
\end{tikzpicture} \;\; .
\end{gather}  
\end{definition}

Note the relations \eqref{webassocC}--\eqref{bubbleC} above form part of the relations \eqref{webassoc}--\eqref{bubblecap} for $\Wb$, while the relations \eqref{dotmovecapcupC} form part of \eqref{dotmovecrossing1}--\eqref{dotmovecupcaps} for $\AWCb$. There is a natural functor $\imath: \WCb \rightarrow \AWCb$, which identifies the generating objects and morphisms denoted by the same symbols. 

Using the zigzag relation in \cref{zigzagC} we derive that 
\[
\begin{tikzpicture}[anchorbase,yscale=-1,color=\clr]
		\draw[-,thick] (0.6,-0.3) to[out=140, in=0] (0.1,0.2);
		\draw[-,thick] (0.1,0.2) to[out = -180, in = 90] (-0.2,-0.3);
        \draw[-,thick](.6,.4) to (.1,-.3);
        \node at (-.2,-.45){$\scriptstyle{1}$};
        \node at (.6,-.45){$\scriptstyle{1}$};
        \node at (.1,-.45){$\scriptstyle{1}$};
	\end{tikzpicture}
	 =
	\begin{tikzpicture}[anchorbase,yscale=-1,color=\clr]
		\draw[-,thick] (0.3,-0.3) to[out=90, in=0] (0,0.2);
		\draw[-,thick] (0,0.2) to[out = -180, in = 40] (-0.5,-0.3);
         \draw[-,thick](-.5,.4) to (0,-.3);
        \node at (-.5,-.45){$\scriptstyle{1}$};
        \node at (.3,-.45){$\scriptstyle{1}$};
        \node at (0,-.45){$\scriptstyle{1}$};
	\end{tikzpicture},
    \qquad
     \begin{tikzpicture}[anchorbase, yscale=-1, color=\clr]
 \draw[-,thick] (-0.15,0) arc(180:0:0.15) to[out=down,in=60] (-0.15,-0.4);
        \draw[-,thick] (0.15,-0.4) to[out=120,in=down] (-0.15,0);
        \node at (-.15,-.5) {$\scriptstyle 1$};
        \node at (.15,-.5) {$\scriptstyle 1$};
\end{tikzpicture}
= -\!
\begin{tikzpicture}[yscale=-1,anchorbase, scale=0.3, color=\clr]
\draw[-,thick] (0,.5) to [out=up,in=left] (1,1.5) to [out=right,in=up] (2,0.5);
\node at (0,0){$\scriptstyle 1$};
\node at (2,0){$\scriptstyle 1$};
\end{tikzpicture}.
\]
Composing the first relation in \cref{dotmovecapcupC} with $\begin{tikzpicture}[baseline=-.5mm,scale=.8,color=\clr]
	\draw[-,thick] (-0.3,-.3) to (.3,.4);
	\draw[-,thick] (0.3,-.3) to (-.3,.4);
        \node at (0.3,-.4) {$\scriptstyle 1$};
        \node at (-0.3,-.4) {$\scriptstyle 1$};
         \node at (0.3,.55) {$\scriptstyle 1$};
        \node at (-0.3,.55) {$\scriptstyle 1$};
\end{tikzpicture}$ both on top and bottom, we obtain
\begin{align}
\begin{tikzpicture}[anchorbase, scale=0.5, color=\clr]
\draw[-, thick] (0,-.2) to (1,1.6);
\draw[-,thick] (1,-.2) to(0,1.6);
\draw(.2,0.2)\bdot;
\node at (0, -.5) {$\scriptstyle 1$};
\node at (1, -.5) {$\scriptstyle 1$};
\end{tikzpicture}
-
\begin{tikzpicture}[anchorbase, scale=0.5, color=\clr]
\draw[-,thick] (0,-.2) to  (1,1.6);
\draw[-,thick] (1,-.2) to  (0,1.6);
\draw(0.75,1.2)\bdot;
\node at (0, -.5) {$\scriptstyle 1$};
\node at (1, -.5) {$\scriptstyle 1$};
\end{tikzpicture}
=~
\begin{tikzpicture}[anchorbase, scale=0.5, color=\clr]
\draw[-,thick] (0,0) to[out=up, in=down] (0,1.6);
\node at (0,-.3) {$\scriptstyle 1$};
\draw[-,thick] (1,0) to[out=up, in=down] (1,1.6);
\node at (1,-.3) {$\scriptstyle 1$};
\end{tikzpicture}
 ~-~
\begin{tikzpicture}[anchorbase, scale=0.5, color=\clr]
\draw[-,thick] (2,1.6) to[out=down,in=right] (1.5,1.1) to[out=left,in=down] (1,1.6);
\draw[-,thick] (2,0) to[out=up, in=right] (1.5,0.5) to[out=left,in=up] (1,0);
\node at (1,-.3) {$\scriptstyle 1$};
\node at (1,1.9) {$\scriptstyle 1$};
\node at (2,-.3) {$\scriptstyle 1$};
\node at (2,1.9) {$\scriptstyle 1$};
\end{tikzpicture}.
\end{align}

Hence, by the symmetry of the defining relations \cref{webassocC}--\cref{dotmovecapcupC}, there exists a strict $\C[x]$-linear monoidal isomorphism
\[
  \div \colon \WCb \longrightarrow {\WCb}^{\operatorname{op}} ,
\qquad
\div \colon \AWCb \longrightarrow {\AWCb}^{\operatorname{op}} ,
\]
which sends $a\mapsto a$ on generating objects and reflects the generating morphism diagrams across a horizontal axis.

\subsection{Isomorphism of $\WCb$ and $\Wb$}

We view $\Wb$ as a $\C[x]$-linear category by a base change in this subsection. 
We define (thick) merges, splits, crossings, caps and cups in $\WCb$ and $\AWCb$ inductively by
\begin{align}
    \label{thickmergesplitC}
    \begin{tikzpicture}[anchorbase,color=\clr]
	\draw[-,line width=1pt] (0.28,-.3) to (0.08,0.04);
	\draw[-,line width=1pt] (-0.12,-.3) to (0.08,0.04);
	\draw[-,line width=1.4pt] (0.08,.4) to (0.08,0);
        \node at (-0.22,-.4) {$\scriptstyle a$};
        \node at (0.35,-.4) {$\scriptstyle b$};\node at (0,.55){$\scriptstyle a+b$};\end{tikzpicture}
      &  :=
    \frac{1}{b}
    \begin{tikzpicture}[anchorbase,color=\clr]
	\draw[-,line width=1pt] (0.4,-.6) to (0.08,0.04);
    \draw[-,line width=1pt] (.35,-.5) -- (-.05,-.2);
    \node at (.4,-.2){$\scriptstyle 1$};
	\draw[-,line width=1pt] (-0.2,-.6) to (0.08,0.04);
	\draw[-,line width=1.4pt] (0.08,.4) to (0.08,0);
        \node at (-0.2,-.7) {$\scriptstyle a$};
        \node at (0.5,-.7) {$\scriptstyle b$};\node at (0,.55){$\scriptstyle a+b$};\end{tikzpicture},
        \qquad
    \begin{tikzpicture}[anchorbase,color=\clr]
	\draw[-,line width=1.4pt] (0.08,-.3) to (0.08,0.04);
	\draw[-,line width=1pt] (0.28,.4) to (0.08,0);
	\draw[-,line width=1pt] (-0.12,.4) to (0.08,0);
        \node at (-0.22,.5) {$\scriptstyle a$};
        \node at (0.36,.5) {$\scriptstyle b$};
        \node at (0.1,-.45){$\scriptstyle a+b$};
\end{tikzpicture} 
:=
\frac{1}{b}
 \begin{tikzpicture}[anchorbase,color=\clr,yscale=-1]
	\draw[-,line width=1pt] (0.4,-.6) to (0.08,0.04);
    \draw[-,line width=1pt] (.35,-.5) -- (-.05,-.2);
    \node at (.4,-.2){$\scriptstyle 1$};
	\draw[-,line width=1pt] (-0.2,-.6) to (0.08,0.04);
	\draw[-,line width=1.4pt] (0.08,.4) to (0.08,0);
        \node at (-0.2,-.8) {$\scriptstyle a$};
        \node at (0.4,-.8) {$\scriptstyle b$};\node at (0,.55){$\scriptstyle a+b$};\end{tikzpicture},
        \qquad
        \begin{tikzpicture}
            [anchorbase,color=\clr,line width=1pt]
            \draw[-] (0,0) -- (.6,.6);
            \draw[-] (.6,0) -- (0,.6);
             \node at (0,-.15){$\scriptstyle a$};
        \node at (.6,-.15){$\scriptstyle b$};
        \end{tikzpicture}
        = \frac{1}{ab}\begin{tikzpicture}
        [anchorbase,color=\clr,line width=1pt]
        \draw (0,.6) -- (.1,.5) to[out=right,in=up] (.9,-.5)
              (.1,.5) to[out=down,in=left] (.9,-.5) -- (1,-.6);
              (.9,.5) to[out=down,in=right] (.1,-.5) -- (0,-.6);
        \draw (1,.6) -- (.9,.5) to[out=left,in=up] (.1,-.5)
              (.9,.5) to[out=down,in=right] (.1,-.5) -- (0,-.6);
        \node at (0,-.75){$\scriptstyle a$};
        \node at (1,-.75){$\scriptstyle b$};
        \node at (0.3,-.6){$\scriptstyle 1$};
        \node at (1.1,-.3){$\scriptstyle 1$};
    \end{tikzpicture} , 
\\
\notag
 \capa &=
 \frac{1}{a}\;
 \begin{tikzpicture}[anchorbase,scale=0.2,color=\clr]
\draw [-,line width=1.4pt] (0,-.1) to (0,.75);
\draw [-,line width=1pt] (0,.75) to [out=30,in=270] (1,2.5);
\draw [-,line width=1pt] (0,.75) to [out=150,in=270] (-1,2.5); 
\draw [-,line width=1pt] (1,2.5) to [out=90,in=180] (2,3.5) to [out=0,in=90] (3,2.5);
\draw [-,line width=1pt] (-1,2.5) to [out=90,in=180] (2,5.5) to [out=0,in=90] (5,2.5);
\draw [-,line width=1.4pt] (4,-.1) to (4,.75);
\draw [-,line width=1pt] (4,.75) to [out=30,in=270] (5,2.5);
\draw [-,line width=1pt] (4,.75) to [out=150,in=270] (3,2.5);
\node at (2,2.8) {$\scriptstyle{1}$};
\node at (2,6.4) {$\scriptstyle{a-1}$};
\end{tikzpicture}, 
\qquad 
\cupa= 
\frac{1}{a} \;
\begin{tikzpicture}[anchorbase,scale=0.2,color=\clr,yscale=-1]
\draw [-,line width=1.4pt] (0,-.1) to (0,.75);
\draw [-,line width=1pt] (0,.75) to [out=30,in=270] (1,2.5);
\draw [-,line width=1pt] (0,.75) to [out=150,in=270] (-1,2.5); 
\draw [-,line width=1pt] (1,2.5) to [out=90,in=180] (2,3.5) to [out=0,in=90] (3,2.5);
\draw [-,line width=1pt] (-1,2.5) to [out=90,in=180] (2,5.5) to [out=0,in=90] (5,2.5);
\draw [-,line width=1.4pt] (4,-.1) to (4,.75);
\draw [-,line width=1pt] (4,.75) to [out=30,in=270] (5,2.5);
\draw [-,line width=1pt] (4,.75) to [out=150,in=270] (3,2.5);
\node at (2,2.8) {$\scriptstyle{1}$};
\node at (2,6.4) {$\scriptstyle{a-1}$};
\end{tikzpicture}.
\end{align}

\begin{proposition}
\label{prop:Wiso}
    The $\mathbb C[x]$-linear monoidal categories $\WCb$ and $\Wb$ are isomorphic by matching the generating objects and generating morphisms in the same symbols. 
\end{proposition}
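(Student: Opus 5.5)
We will construct two mutually inverse strict $\C[x]$-linear monoidal functors $\Phi\colon \WCb\to\Wb$ and $\Psi\colon \Wb\to\WCb$, both the identity on objects.

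\emph{The functor $\Phi$.} It sends each generator of $\WCb$ in \eqref{morphismsC} to the morphism of $\Wb$ with the same symbol, namely the thin crossing, the merges and splits of type $(k,1)$, and the thin cup and cap. To show $\Phi$ is well defined we check that the relations \eqref{webassocC}--\eqref{bubbleC} hold in $\Wb$:
\begin{itemize}
\item \eqref{webassocC} is a special case of \eqref{webassoc};
\item \eqref{mergesplitC} consists of special cases of \eqref{splitmerge}, taking $b=1$, and $a=c=k$, $b=d=1$;
\item \eqref{zigzagC} is a special case of \eqref{zigzag};
\item \eqref{bubbleC} follows from \eqref{bubblecap} with $a=1$, together with the lemma following \eqref{bubblecap} for the upside-down version, and from $\binom{x}{1}=x$.
\end{itemize}

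\emph{The functor $\Psi$.} It sends the generators of $\Wb$ to the thick morphisms defined in \eqref{thickmergesplitC}. Here we need to invert the integers $b$, $ab$ and $a$, which is possible over $\C$. The bulk of the work is verifying that \eqref{webassoc}--\eqref{bubblecap} hold in $\WCb$ for these definitions.
\begin{itemize}
\item The cup/cap-free relations \eqref{webassoc}--\eqref{splitmerge} are exactly the type A statement: the thin-generated presentation (merges/splits with a thin leg and thin crossings) is equivalent to the thick web presentation over a field of characteristic zero. We will cite the corresponding result of \cite{BEEO} (cf.\ also \cite{SWweb}), since the subcategory of $\WCb$ generated by these morphisms satisfies precisely those relations.
\item For the cup/cap relations we induct on $a$. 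The zigzag relations in \eqref{zigzag} follow by unfolding the definition of the thick cup and cap, using the thin zigzag together with \eqref{splitmerge} (to absorb the factor $\frac1a$ and the idempotent $\frac1{a!}$ merge-split).
\item The sliding relations (the third and fourth relations in \eqref{zigzag} and \eqref{mergesplitmove}) are proved first for thin strands, using \eqref{zigzagC} together with naturality of the thin crossing. They are then extended to thick strands by writing the thick crossing and the merge as compositions of thin ones.
\item For the bubble, compute $\capa\circ\cupa$ inductively via the dumbbell computation of \eqref{dumbbellremove}, and show that it equals $\binom{x}{a}$ by the same binomial identity used there.
\item The relation with the thick cap twisted by a crossing, which must equal $(-1)^a\capa$, reduces to $a$ thin twists, each contributing $-1$ by \eqref{bubbleC}, and then to \eqref{curlremove}.
\item The vanishing relation, which says that merging the two ends of a thick cap gives zero, reduces via associativity to a thin merge on a thin cap, which vanishes by \eqref{bubbleC}.
\end{itemize}

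\emph{Mutual inverseness.} Finally, $\Phi\circ\Psi$ is the identity: in $\Wb$, the defining formulas in \eqref{thickmergesplitC} hold by \eqref{splitmerge}, \eqref{thickcapandthin}, and the type A expression of the thick crossing. Likewise $\Psi\circ\Phi$ is the identity on the thin generators by construction.

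The main obstacle is the verification of the cup/cap relations for thick strands in $\WCb$, in particular the bubble value $\binom{x}{a}$ and the vanishing relation. In both cases the inductive definition of the thick cup and cap must be carefully untangled. The plan is to first establish the $\WCb$-analogues of \eqref{thickcapandthin} and \eqref{mergesplitslideleft} for thin legs, and then run exactly the induction of \eqref{dumbbellremove}.
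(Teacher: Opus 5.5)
Your proposal is correct and follows essentially the paper's route: the relations of $\WCb$ are special cases of those of $\Wb$. The real work is checking \eqref{webassoc}--\eqref{bubblecap} in $\WCb$ for the recursively defined thick morphisms of \eqref{thickmergesplitC}. The paper does this lemma by lemma in the appendix, largely re-deriving the type A relations rather than citing them, and proving the twisted-cap, bubble and lollipop relations by induction on $a$ much as you sketch.

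One harmless slip: the second relation of \eqref{mergesplitC} is the case $a=b=k$, $c=d=1$ of \eqref{splitmerge}, not $a=c=k$, $b=d=1$.
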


The proof of \cref{prop:Wiso}, which amounts to verifying that all the defining relations of $\Wb$ hold in $\WCb$, can be found in Appendix \ref{app:Wiso}.

\subsection{Isomorphism of $\AWCb$ and $\AWb$}

By a base change, we view $\AWb$ as a $\C[x]$-linear category in this subsection. 
Recall the definitions of thick generating morphisms in $\AWCb$ from \eqref{thickmergesplitC}. 
For $r \ge 1$, we further define the thick dots:
\begin{align}
\label{newgendot}
  \begin{tikzpicture}[anchorbase, scale=0.5, color=\clr]
\draw[-,line width=1.4pt] (0,0) to[out=up, in=down] (0,1.4);
\draw(0,0.6) \bdot;
\node at (0,-.3) {$\scriptstyle r$};
\end{tikzpicture}~ 
:=   \frac{1}{r!} \,
\begin{tikzpicture}[baseline = 1.5mm, scale=.5, color=\clr]
\draw[-, line width=1.4pt] (0.5,2) to (0.5,2.5);
\draw[-, line width=1.4pt] (0.5,0) to (0.5,-.4);
\draw[-,thin]  (0.5,2) to[out=left,in=up] (-.5,1)
to[out=down,in=left] (0.5,0);
\draw[-,thin]  (0.5,2) to[out=left,in=up] (0,1)
 to[out=down,in=left] (0.5,0);      
\draw[-,thin] (0.5,0)to[out=right,in=down] (1.5,1)
to[out=up,in=right] (0.5,2);
\draw[-,thin] (0.5,0)to[out=right,in=down] (1,1) to[out=up,in=right] (0.5,2);
\node at (0.5,.6){$\scriptstyle \cdots$};
\draw (-0.5,1) \bdot; 
\draw (0,1) \bdot; 
\node at (0.5,-.6) {$\scriptstyle r$};
\draw (1,1) \bdot;
\draw (1.5,1) \bdot; 
\node at (-.22,0) {$\scriptstyle 1$};
\node at (1.2,0) {$\scriptstyle 1$};
\node at (.3,0.3) {$\scriptstyle 1$};
\node at (.7,0.3) {$\scriptstyle 1$};
\end{tikzpicture} \; .
\end{align}

\begin{theorem}
\label{thm:AWiso}
    The $\mathbb C[x]$-linear monoidal categories $\AWCb$ and $\AWb$ are isomorphic by matching the generating objects and generating morphisms in the same symbols. 
\end{theorem}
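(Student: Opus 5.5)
To prove \cref{thm:AWiso}, I would build two mutually inverse $\C[x]$-linear monoidal functors $\Phi\colon \AWCb\to\AWb$ and $\Psi\colon \AWb\to\AWCb$, both the identity on objects, and extend the isomorphism of \cref{prop:Wiso}.

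\emph{The functor $\Phi$.} It sends each generator of $\AWCb$ to the generator with the same symbol. The relations \eqref{webassocC}--\eqref{bubbleC} hold in $\AWb$, because they are special cases of \eqref{webassoc}--\eqref{bubblecap}, with the thin crossing and the $(k,1)$ merges and splits. For \eqref{dotmovecapcupC}, set $a=b=1$ in \eqref{dotmovecrossing1}. The $t=1$ term on the left is the thin cup--cap diagram with a dot, and after using \eqref{dotmovecupcaps} to move that dot it becomes the correction term. The cup and cap dot relations are exactly \eqref{dotmovecupcaps} at $a=1$.

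\emph{The functor $\Psi$.} It sends the thick merges, splits, crossings, cups and caps to the morphisms \eqref{thickmergesplitC}, and the thick dot of thickness $r$ to \eqref{newgendot}. By \cref{prop:Wiso}, the finite relations \eqref{webassoc}--\eqref{bubblecap} already hold for these images. It remains to verify \eqref{dotmovecrossing1}, \eqref{updotmovesplits+merge} and \eqref{dotmovecupcaps} in $\AWCb$.

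\emph{Cup/cap relation \eqref{dotmovecupcaps}.} Write the thick dot as $\frac1{a!}$ times a split into $a$ thin strands, each carrying a dot, followed by a merge. Expand the thick cap as in \eqref{thickmergesplitC}, so that it is built from thin caps nested inside splits. Using \eqref{mergesplitslideleft}, move each thin dot across its own thin cap; each move contributes a factor $-1$, so the total sign is $(-1)^a$. Reassembling gives the dot on the other side.

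\emph{Split/merge relation \eqref{updotmovesplits+merge}.} Argue by induction on $a+b$. Write the thick dot on $a+b$ as a split into thin dotted strands followed by a merge. Apply the merge–split relation \eqref{splitmerge} to the composite of that merge with the $(a,b)$ split. This produces a sum over ways of routing the $a+b$ thin dotted strands to the two outputs, with thin crossings in between. Commuting the thin dots past those crossings with the first relation of \eqref{dotmovecapcupC} gives, each time, an identity term plus a cup–cap term.
\begin{itemize}
\item The identity terms reassemble into the $s=0$ summand: the $a$-dot and the $b$-dot on the two legs.
\item The cup–cap terms pair one strand going left with one going right. Collecting $s$ such pairs gives $s$ thin cups connecting the two legs; the accompanying cap pairs form a dotted thick lollipop $\frac1{2^s}$(cap with dots) of thickness $2s$, as in \eqref{def:huochai}.
\item The factor $s!$ counts the orderings of the pairs.
\end{itemize}

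\emph{Crossing relation \eqref{dotmovecrossing1}.} Express the thick crossing through merges and splits, using \cite[(4.36)]{BEEO} or \eqref{thickmergesplitC}. Then apply the split/merge relation just proved, followed by \eqref{dotmovecupcaps}. The left-hand side has the $t!$-weighted cup–cap corrections; the identification comes from matching these against the lollipop terms via \cref{bendstick}-type manipulations.

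\emph{Inverse check.} $\Phi\Psi$ is the identity on generators, because in $\AWb$ the definitions \eqref{thickmergesplitC} and \eqref{newgendot} are implied relations. Specifically, use \eqref{splitmerge} for merges and splits, and \eqref{lollipopr} (i.e.\ $E_a$) for the dot. $\Psi\Phi$ is the identity since $\Phi$ sends thin generators to thin generators.

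I expect the main obstacle to be the combinatorics in the split/merge and crossing relations: tracking signs from thin crossings, the normalisation $\frac1{r!}$, and why the cup–cap corrections assemble into exactly the $\frac1{2^s}$ lollipops with coefficient $s!$. I would first check $a=b=1$ by hand, then organise the induction by peeling off one thin strand, using the $(k,1)$ merges directly.
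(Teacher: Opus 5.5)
Your overall architecture matches the paper: the relations of $\AWCb$ are special cases of those of $\AWb$, and the real work is to verify \eqref{dotmovecrossing1}--\eqref{dotmovecupcaps} in $\AWCb$ with the thick dot defined by \eqref{newgendot}. Your cup/cap argument is sound, and it is more direct than the paper's induction on $a$. However, the two relations that carry the content of the theorem are not proved.

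\textbf{The split/merge relation.} Here your bookkeeping is wrong. Pushing a thin dot through a thin crossing with \eqref{dotmovecapcupC} produces correction terms $1$ and $\cup_1\cap_1$ that have lost a dot. So the ``identity terms'' cannot reassemble into the $s=0$ summand, which has degree $a+b$ and is the fully commuted leading term; they must cancel. Showing what survives is the whole computation. Already for $a=b=1$ (with $fg=f\circ g$) the thin relations give $\mathrm X(\bullet\otimes\bullet)=(\bullet\otimes\bullet)\mathrm X+(1\otimes\bullet)\cup_1\cap_1-\cup_1\cap_1(1\otimes\bullet)$. Writing $\bullet_2$ for the dot on the $2$-strand, this yields
\[
\mathrm{split}_{1,1}\,\bullet_2=\tfrac12(1+\mathrm X)(\bullet\otimes\bullet)\,\mathrm{split}_{1,1}=(\bullet\otimes\bullet)\,\mathrm{split}_{1,1}+\tfrac12\,\cup_1\cap_1(\bullet\otimes 1)\,\mathrm{split}_{1,1}.
\]
In this computation:
\begin{itemize}
\item the two identity corrections cancel;
\item one cup--cap term dies because $\cap_1\circ\mathrm{split}_{1,1}=0$ by \eqref{bubbleC};
\item the other becomes the $s=1$ lollipop term only after the sign from the cap relation.
\end{itemize}
For general $a,b$, the corrections created at one crossing must then be pushed through the remaining crossings. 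Nothing in your sketch shows that the survivors are exactly $\sum_s s!$ times the normalized lollipop diagrams. ``$s!$ counts the orderings of the pairs'' is an assertion, not a computation.

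\textbf{The crossing relation.} This step has no mechanism. \eqref{updotmovesplits+merge} and its flip only move a dot from the input of a split up to its legs, or from the output of a merge down to its legs. Suppose you write $\mathrm X_{a-t,b-t}$ as a ladder of merges and splits and push the dot down through the top merge. You then get dots on both inputs of that merge, including the one fed by the left input. The right side of \eqref{dotmovecrossing1}, however, has the dot only on the $(b-t)$-diagonal. You would have to show these extra terms cancel against the lollipop terms, and ``\cref{bendstick}-type manipulations'' does not do that.

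\textbf{The paper's order.} The paper runs the dependency the other way:
\begin{itemize}
\item \cref{dotmovecrossingC} proves \eqref{dotmovecrossing1} in $\AWCb$ first, by induction on $b$ and then $a$. Its base case is literally the first relation of \eqref{dotmovecapcupC}, and the inductive step peels off one thin strand via \eqref{newgendot}, \eqref{splitmerge}, \eqref{sliders}, \eqref{swallows}.
\item \cref{lem:dotmoveSM} then derives the split/merge relation by induction on $a+b$. It moves the dots through the crossings created by \eqref{splitmerge} using \cref{dotmovecrossingC}; the display above is its base case.
\end{itemize}

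\textbf{Smaller slips.} In \eqref{dotmovecrossing1} the dot sits on the $(b-t)$-strand. So at $a=b=1$ the $t=1$ terms are the undotted $\cup_1\cap_1$ on the left and the identity on the right, and the relation is exactly the first relation of \eqref{dotmovecapcupC}. A dotted cup--cap could not turn into an undotted correction term by moving the dot. In your cup/cap argument, the thin dot crosses its thin cap by the second relation of \eqref{dotmovecapcupC}. It is \eqref{mergesplitmove}, not \eqref{mergesplitslideleft}, that converts the thick cap into nested thin caps.
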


\begin{proof}
     We have a natural functor $\natural^\bullet: \AWCb \rightarrow \AWb$ which matches the generating objects and generating morphisms (see  \cref{thickmergesplitC,newgendot} for $\AWCb$) in the same symbols, extending the functor $\natural: \WCb \rightarrow \Wb$ from \cref{prop:Wiso}. Note that the defining relations for $\AWCb$ are part of the defining relations for $\AWb$, \cref{webassoc}--\cref{bubblecap} and \cref{dotmovecrossing1}--\cref{dotmovecupcaps}. In the proof of \cref{prop:Wiso} that $\natural$ is an isomorphism, we have already verified the relations \cref{webassoc}--\cref{bubblecap} in $\WCb$ (and hence in $\AWCb$). To show that $\natural^\bullet$ is an isomorphism, it suffices to verify the remaining relations \cref{dotmovecrossing1}--\cref{dotmovecupcaps} in $\AWCb$; the detailed verifications can be found in \cref{dotmovecrossingC,lem:dotmoveSM,dotmovethickcupcap} below, respectively.
\end{proof}

\begin{rem}
By specializations, we obtain $\mathbb C$-linear categories  $\WCb(\delta)$ and $\AWCb(\delta)$ from $\WCb$ and $\AWCb$ by specializing $x$ to any $\delta\in \C$. Similarly, we have $\C$-linear categories  $\Wb(\delta)$ and $\AWb(\delta)$ via specializations from $\Wb$ and $\AWb$. The same arguments also establish the isomorphisms  $\WCb(\delta)\cong \Wb(\delta)$ and $\AWCb(\delta)\cong \AWb(\delta)$. 
\end{rem}

\begin{lemma} 
\label{dotmovecrossingC}
The following relations hold in $\AWCb$: 
\begin{align}
\label{dotdowncross}
\sum_{0\leq t \leq \min(a,b)}t!~

.
\label{dotupcross}
\end{align}
\end{lemma}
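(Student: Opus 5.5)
The plan is to derive the relation \eqref{dotdowncross}, together with its mirror image \eqref{dotupcross}, inside $\AWCb$. Here the thick dot of thickness $a$ is \emph{defined} by \eqref{newgendot} as $\frac{1}{a!}$ times a split into thin strands, each carrying a thin dot, followed by a merge. The thick crossing is defined through \eqref{thickmergesplitC}, and thick caps and cups are built from thin ones. By \cref{prop:Wiso}, all the relations \eqref{webassoc}--\eqref{bubblecap} already hold in $\AWCb$, so I may use splits, merges and thick crossings freely. In particular, the thick crossing $(a,b)\to(b,a)$ equals the composite: split both strands completely into thin strands, apply the permutation braid of thin crossings, then merge. (This is the standard consequence of \eqref{splitmerge} and \eqref{sliders} in the type A web category.) It also suffices to prove \eqref{dotdowncross}. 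The other relation then follows by applying the anti-involution $\div$ of $\AWCb$, whose existence was noted before \cref{prop:Wiso}.

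First I reduce to thin strands. Expand the dot on the $a$-strand via \eqref{newgendot}. Explode both thick strands into $a$ and $b$ thin strands, absorbing the splits and merges with \eqref{splitmerge}; this produces factorials that cancel the $\frac{1}{a!}$ normalisation. The left-hand side then becomes a symmetrised sum of thin diagrams. In each one, a single thin dot (summed over the $a$ thin strands) sits on one of the $a$ strands and must pass through all $b$ thin strands it crosses. I then move it using the first relation of \eqref{dotmovecapcupC} and its variant derived right after \cref{def:affineBwebC}. Each passage of the dot through a thin crossing produces two correction terms: an identity term, which is the crossing resolved, and a cup–cap term.

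The key step is to reorganise these correction terms. The identity terms are exactly the type A corrections; in the type A theory of \cite{SWweb} they telescope into the thick-crossing form appearing with $t=0$. The cup–cap terms produce diagrams in which a thin strand of $a$ and a thin strand of $b$ are joined by a cap at the bottom and a cup at the top. After re-merging with \eqref{webassoc}, \eqref{splitmerge} and \eqref{thickcapandthin}, a configuration with $t$ such thin cap/cup pairs regroups into the thickness-$t$ cap/cup diagram appearing in the $t$-th summand. The combinatorial factor $t!$ arises from the $t!$ ways of pairing thin strands, which is what \eqref{thickcapandthin} accounts for. I would set this up as an induction on $a$, using $a=1$ (and then $b$ arbitrary, by induction on $b$ via \eqref{mergesplitC}) as the base case. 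In the inductive step, I write the thickness-$a$ dot as a merge of a thickness-$(a-1)$ piece and a thin piece, by the recursion $a\,\omega_a \sim$ merge of $(\omega_{a-1}$ on $a-1)$ with a dotted thin strand, which follows from \eqref{newgendot}, \eqref{commutato} and \eqref{dotmovemergess}. I then apply the inductive hypothesis to each piece separately.

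The main obstacle is the bookkeeping in this inductive step. When the two pieces each pass through the $b$-strand, their cup–cap corrections can interact: a second cap may attach to the same $b$-strand below an earlier cup. I must show that these terms recombine, via \eqref{sidewaydumbell}, \eqref{mergesplitmove} and the bubble relations \eqref{bubblecap}, into exactly the $t$-fold summands with coefficient $t!$. Terms in which caps nest on the same thin pair vanish by \eqref{bubblecap}, since $\text{merge}\circ\text{cup}=0$. My first attempt would be to verify the identity after applying the functor to thin strands, i.e. in the completely exploded form where both sides are sums over partial matchings between the $a$ and $b$ thin strands. I would compare the coefficients matching by matching, so that the $t!$ is visible as the number of orderings of a $t$-element matching.
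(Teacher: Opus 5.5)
\textbf{There is a genuine gap: the proposal never carries out the computation that constitutes the lemma.}

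Your induction skeleton is the one the paper uses. The base case is the first relation of \eqref{dotmovecapcupC}. Next comes a thin dotted strand crossing a strand of arbitrary thickness, by induction on that thickness. Finally the thickness $r$ of the dotted strand is raised via $r\,\omega_r=\mathrm{merge}_{r-1,1}\circ(\omega_{r-1}\otimes\text{thin dot})\circ\mathrm{split}_{r-1,1}$. Getting the second relation from $\div$ is also fine.

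The gap is that you stop exactly where the content of the lemma begins. The claim that the correction terms ``recombine \dots into exactly the $t$-fold summands with coefficient $t!$'' \emph{is} the lemma. You give neither the computation nor a mechanism that produces $t!$; note that \eqref{thickcapandthin} only supplies binomial coefficients.

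In the paper, where the dot sits on the $b$-strand, this is done in two explicit steps:
\begin{enumerate}
\item For $b=1$, the proof reduces to thickness $a-1$ by decomposing the $a$-strand through $(a-1,1)$. It then pushes the thin dotted strand across both pieces, using the hypothesis for $a-1$ together with \eqref{dotmovecapcupC}, and regroups.
\item For $b\ge 2$, the $(b-1,1)$ split and merge coming from \eqref{newgendot} are slid through the crossing with \eqref{sliders} and \eqref{swallows}. Applying the hypothesis for $b-1$ and re-merging with \eqref{splitmerge} gives a sum with coefficients $\frac{b-t}{b}\,t!$. The already established case $b=1$ is then applied to each summand to move the leftover thin dot.
\end{enumerate}
Only the final summation yields \eqref{dotmovecrossing1}. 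These two computations are what you would have to supply.

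Several of your supporting heuristics are also wrong:
\begin{enumerate}
\item[(i)] \eqref{newgendot} puts a dot on \emph{every} thin strand. After exploding you therefore have the product of all the thin dots on the dotted strand, not ``a single thin dot summed over the strands'' (that sum would be $\omega_1$). Each thin passage consumes one dot in its correction terms, which is why the $t$-th summand carries $\omega_{b-t}$. Products of corrections coming from different dots must then be controlled, and your sketch does not address this.
\item[(ii)] The identity-type corrections do not telescope into the $t=0$ term. Already for $a=b=1$ the relation says that the dotted crossing with the dot above, plus $\cup\circ\cap$, equals the dotted crossing with the dot below, plus $\mathrm{id}$. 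So the identity corrections build the $t\ge 1$ squares on the right-hand side.
\item[(iii)] Invoking \eqref{commutato} and Lemma~\ref{Lem:dotmovesplitmerge} is circular. Both were derived in $\AWb$ from \eqref{dotmovecrossing1}--\eqref{dotmovecupcaps}, which is exactly what is being verified in $\AWCb$; moreover \cref{lem:dotmoveSM} itself relies on the present lemma. This is repairable: the recursion for $\omega_r$ follows from \eqref{newgendot} and \eqref{webassoc} alone.
\item[(iv)] ``Comparing coefficients matching by matching'' proves something only if you first rewrite both sides, through relations, into literally the same combination of diagrams. No independence of exploded dotted diagrams is available in $\AWCb$ at this stage, since the basis theorem is obtained only via \cref{thm:AWiso}. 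Checking the identity in a representation would not establish it in $\AWCb$ either.
\end{enumerate}
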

\begin{proof}
    We only verify \cref{dotdowncross} since \cref{dotupcross} can be checked similarly.
 We proceed by induction on $b$ and $a$. The case for  $a=1=b$ is the first relation in \cref{dotmovecapcupC}.
Suppose that $b=1$ and  $a\ge 2$. 
We have 
\begin{equation}
\label{r=1}
\begin{aligned}
%
,
\quad \text{by inductive assumption on $a-1$}.
    \end{aligned}
\end{equation}

Suppose $b\ge 2$. We have 
\begin{align*}
 %
\end{align*}
which is equal to
\begin{align*}
&\overset{\cref{newgendot}\cref{splitmerge}}{\underset{\cref{sliders}\cref{swallows}}{=}}\frac{b-t}{b}
\sum t!~\:\:
%
 \quad \text{ by induction on $b$}
\end{align*}
\begin{align*}
&
=
\sum t!~
%
 .
\end{align*}
Using the result for $b=1$ we compute each summand $%
 $ of the last summation as follows. 
\begin{align*}
&
%
.
\end{align*}
Then \cref{dotdowncross} follows by summing up all the above together. 
\end{proof}

We define the following morphisms (in $\AWCb$)
\begin{equation}
\label{equ:defofnewdot}
 %
.   
\end{equation}

\begin{lemma}
\label{lem:dotmoveSM}
  The following identities hold, for $a,b\ge 1$:
    \begin{equation}
  \label{updotmovesplits+merge-simplify}
%
.
\end{equation}
     
\end{lemma}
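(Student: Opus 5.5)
We prove \cref{updotmovesplits+merge-simplify}, i.e.\ the relations \eqref{updotmovesplits+merge} for moving a thick dot (defined in $\AWCb$ by \eqref{newgendot}) through a split and a merge. It suffices to treat the split version. The merge version then follows by applying the anti-involution $\div$ of $\AWCb$, since the definitions \eqref{newgendot} and \eqref{thickmergesplitC} are $\div$-symmetric.

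The argument is an induction on $b$, with $a$ arbitrary. For the base case $b=1$, expand the thick dot on the $(a+1)$-strand via \eqref{newgendot}: it is $\frac{1}{(a+1)!}$ times a lantern of $a+1$ thin dotted strands. Absorb the top merge into the split using \eqref{webassoc} and \eqref{splitmerge}, so the thin strands are redistributed between the outputs $a$ and $1$. The redistribution is either direct, or through a thin crossing (the second relation of \eqref{mergesplitC}). Pulling the single dotted thin strand destined for the output $1$ through these crossings uses the first relation of \eqref{dotmovecapcupC}. Each application produces an identity term, which gives the $s=0$ summand after regrouping the remaining $a$ dotted thin strands into $\wkdota$ by \eqref{newgendot}. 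It also produces a cup-cap term. The cup-cap terms, summed over the positions of the crossing, recombine with \eqref{webassoc} and \eqref{sliders} into the $s=1$ summand: a cap joining the two outputs, decorated by dots, together with a thick lollipop $\frac12$(cup with a dot) of thickness $2$, which is the $2s=2$ stick. Here one uses \eqref{dotmovecapcupC} to move dots across caps at the cost of a sign, and the $\frac{1}{2^s}$ in \eqref{def:huochai} absorbs the factor $2$ from the two ends of the cap.

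For the inductive step $b\ge 2$, write the $b$-leg of the split as $\frac1b$ times a split of $(b-1,1)$ followed by a merge, as in \eqref{thickmergesplitC}. Apply the inductive hypothesis to the $(a,b-1)$ split and then the $b=1$ case to the remaining thin strand. This yields a double sum over $s'$ (from the first step) and $\epsilon\in\{0,1\}$ (from the thin step), with caps of thickness $s'$ and $\epsilon$ and lollipops of thickness $2s'$ and $2\epsilon$. Combine the caps with \eqref{thickcapandthin} and \eqref{mergesplitmove}. Merge the two lollipops into a $2s$-lollipop, $s=s'+\epsilon$, using \eqref{splitmerge} together with the relation on stick switching proved earlier (for $b,s\ge 0$). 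Finally collect the merged dots $\wkdota$ on the outer legs, using the commutativity Lemma \ref{lem:commute}.

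The main obstacle is the coefficient bookkeeping. We must show that the contributions assemble exactly to $s!$ times the stated diagram, with the $\frac1b$ and the binomials from \eqref{splitmerge} cancelling. The fallback is to verify the resulting scalar identity by a direct binomial computation. If the combinatorics becomes unwieldy, an alternative is to check the equality after applying the faithful-in-the-limit functor $\cG_N$ for all $N$. That approach is not available yet, however, since faithfulness is what we are building toward. So we will do the direct induction and treat the $\epsilon=1$ terms carefully, because their lollipop merges create the $s!$ factors.
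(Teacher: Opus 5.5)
Your route differs genuinely from the paper's, and it can be completed. As written, however, one step is circular and the steps that carry the actual content are misdescribed or skipped.

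The paper inducts on $a+b$ starting from $a=b=1$. It produces a square via \eqref{splitmerge}, pushes the dots through the resulting crossing with two applications of Lemma~\ref{dotmovecrossingC}, and reassembles the lollipops with Lemma~\ref{Lem:newdotsss}. You avoid thick crossings entirely: you induct on $b$, peel one thin strand off the $b$-leg via \eqref{thickmergesplitC}, and treat $b=1$ using only the thin relation \eqref{dotmovecapcupC}. Your version makes the coefficient $s!$ transparent, at the price of a base case that is itself an induction.

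\emph{Circularity.} Lemma~\ref{lem:commute} is a statement about $\AWb$. Its proof goes through the dot--split/merge relations of Lemma~\ref{Lem:dotmovesplitmerge}, which are derived from \eqref{updotmovesplits+merge}. So you may not use it in $\AWCb$ here. It is also unnecessary: every dot you recombine is a full dot, and \eqref{newgendot} with coassociativity gives $\mathrm{merge}_{1,n-1}\circ(x\otimes\mathrm{dot}_{n-1})\circ\mathrm{split}_{1,n-1}=n\,\mathrm{dot}_n$.

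\emph{Inductive step.} By \eqref{thickmergesplitC}, $\mathrm{split}_{a,b}=\frac1b(1_a\otimes\mathrm{merge}_{1,b-1})(\mathrm{split}_{a,1}\otimes 1_{b-1})\,\mathrm{split}_{a+1,b-1}$. So the hypothesis applies to $\mathrm{split}_{a+1,b-1}$, not to an $(a,b-1)$ split.
\begin{itemize}
\item In the $s'$-term, the $(a+1)$-leg is a merge of the dotted leg with the left end of the $s'$-cup. Before the $b=1$ case can be used, you must push $\mathrm{split}_{a,1}$ through that merge by \eqref{splitmerge}.
\item The summand that takes the thin strand from the cup end vanishes. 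By \eqref{mergesplitmove}, both ends of a thin cup land in the $b$-output, and $\mathrm{merge}_{1,1}\circ\mathrm{cup}_1=0$.
\item In the other summand, apply the $b=1$ case. Its $\epsilon=0$ part contributes $\frac{b-s}{b}\,s!$ to the $s$-term. To see this, slide the dotted thin strand across the $s'$-cup using \eqref{zigzag} and \eqref{swallows}, then recombine dots as above.
\item Its $\epsilon=1$ part with $s'=s-1$ contributes $\frac1b(s-1)!\cdot s\cdot s=\frac{s}{b}\,s!$. One factor $s$ comes from merging the nested $1$- and $(s-1)$-cups by \eqref{thickcapandthin}. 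The other comes from merging the $2(s-1)$- and $2$-lollipops into the $2s$-lollipop, which is what Lemma~\ref{Lem:newdotsss} supplies; \eqref{splitmerge} and stick switching alone do not give it.
\end{itemize}
The two contributions sum to $s!$.

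\emph{Base case.} Your account of the mechanism is off. When both strands at a thin crossing carry dots, $X(x\otimes x)=(x\otimes x)X+(1\otimes x)U-U(1\otimes x)$ with $U=\mathrm{cup}\circ\mathrm{cap}$, so no identity term appears.
\begin{itemize}
\item The $s=0$ summand comes from the leading term.
\item The term $(1\otimes x)U$ dies because $\mathrm{cap}\circ\mathrm{split}_{1,1}=0$.
\item The term $-U(1\otimes x)$ yields $2\,\mathrm{cup}\circ\mathrm{lol}_2$.
\end{itemize}
If you sum directly over a permutation of $a+1$ dotted strands, the corrections at different crossings interact. The clean way is an inner induction on $a$:
\begin{itemize}
\item Write $\mathrm{dot}_{a+1}=\frac1{a+1}\mathrm{merge}_{a,1}(\mathrm{dot}_a\otimes x)\,\mathrm{split}_{a,1}$.
\item Expand $\mathrm{split}_{a,1}\circ\mathrm{merge}_{a,1}$ by \eqref{mergesplitC}.
\item Use the $(a-1,1)$ case.
\item Where an undotted cup end crosses the dotted strand, the identity term either dies by $\mathrm{merge}\circ\mathrm{cup}=0$ or cancels against the accompanying $U$-term.
\item Stick switching moves the new $2$-lollipop to the left.
\end{itemize}
The coefficient $c_a$ of the $s=1$ term then satisfies $c_a=\frac{2+(a-1)c_{a-1}}{a+1}$ with $c_1=1$, so $c_a=1$ for all $a$.
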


\begin{proof}
We will only prove the first formula and skip a similar proof for the second one. We proceed by induction on $t=a+b$. For the base case $a=1=b$, we have 
    \begin{align*}
    %
 \ .
\end{align*}
We use Lemma \ref{dotmovecrossingC} twice to move the dots through the crossing and  obtain the first diagram as follows:
\begin{align*}
  %
.
\end{align*}
Substituting this formula into the calculation above and using Lemma \ref{Lem:newdotsss}, we have proved the first formula in the lemma. 
\end{proof}

\begin{lemma}
 \label{dotmovethickcupcap}  
The following relations hold in $\AWCb$, for $a\ge 1$:
\[ 
%
 \;.
\]
\end{lemma}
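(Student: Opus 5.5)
The statement (content elided in the source) is \cref{dotmovethickcupcap}: the thick analogue of \eqref{dotmovecupcaps} in $\AWCb$. It says that a thick dot of thickness $a$ on the left leg of a thick cap (resp.\ cup) equals $(-1)^a$ times the same dot moved to the right leg. The plan is to reduce everything to the thin relations \eqref{dotmovecapcupC} by unfolding the definitions \eqref{thickmergesplitC} and \eqref{newgendot}.

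\textbf{Step 1: unfold the thick dot.} By \eqref{newgendot}, the thick dot $\wkdotaa$ equals $\frac{1}{a!}$ times the diagram in which an $a$-fold split produces $a$ thin strands, each thin strand carries a dot, and an $a$-fold merge recombines them. Since \cref{prop:Wiso} already gives all the relations \eqref{webassoc}--\eqref{bubblecap} in $\AWCb$, we may use \eqref{mergesplitslideleft} and \eqref{zigzag} freely.

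\textbf{Step 2: push the dotted thin strands across the cap.} Apply \eqref{mergesplitslideleft} to slide the $a$-fold merge sitting on the left leg through the thick cap. It becomes an $a$-fold split on the right leg. The thick cap then composes into a nest of $a$ thin caps, the outermost joining the first strand to the last. Each thin cap carries one dot on its left end. Using the thin relation in \eqref{dotmovecapcupC}, move every dot across its thin cap; each move contributes a factor $-1$, for a total of $(-1)^a$. Finally slide the split back through the cap with \eqref{mergesplitslideleft} to recover $\frac{1}{a!}$ times the dot packet on the right leg, which by \eqref{newgendot} is the thick dot there.

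\textbf{Step 3: the main obstacle, and the cup case.} The delicate point is that a thick cap is not literally a nest of thin caps. It is defined in \eqref{thickmergesplitC} via the $\frac1a$ formula, and the identification with nested thin caps, including the orientation that produces the sign $(-1)^a$ in \eqref{bubblecap}, must be extracted from \cref{prop:Wiso}. I would handle this by induction on $a$, using the recursive definition $\capa=\frac1a(\cdots)$ together with \eqref{lollipopr}-type decompositions. In the inductive step, the dot on the $(a-1)$-strand crosses the inner cap with sign $(-1)^{a-1}$ by hypothesis, and the dot on the single thin strand crosses the outer cap with sign $-1$ by \eqref{dotmovecapcupC}.

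Commutativity of the thin dots, as in \cref{lem:commute}, is needed to reorder the strands and keep the labels consistent. The cup case then follows by applying the symmetry $\div$ to the cap case.
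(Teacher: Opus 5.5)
Your argument is correct and is essentially the paper's proof. The paper inducts on $a$ starting from the thin relation in \eqref{dotmovecapcupC}, peeling one thin strand off the thick dot and sliding the resulting merge through the cap, which is your Step~3; your Step~2 is the same computation carried out for all $a$ thin strands at once via \eqref{newgendot}. Note that Step~2 is already complete on its own: the merge-through-cap move is the second identity in \eqref{mergesplitmove}, available in $\AWCb$ by \cref{prop:Wiso} (\eqref{mergesplitslideleft} is the cup version). So the ``delicate point'' about the sign in \eqref{bubblecap} is spurious, and \cref{lem:commute} is not needed (being an $\AWb$ statement, it would be circular here), since dots on distinct thin strands commute by the interchange law.
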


\begin{proof}
We prove the first formula only, and the second one can be proved similarly. The base case for $a=1$ is the second defining relation in \cref{dotmovecapcupC}. For $a>1$, we have 
    \[
    %
 \; .
\]
The lemma is proved.
\end{proof}
This concludes the proof of \cref{thm:AWiso}. 

\section{Bubble relations}
  \label{sec:thick bubbles}

In this section, we show that odd bubbles are explicit polynomials in even bubbles. This gives rise to admissibility conditions for any representation of $\AWb$ in which bubbles act as scalars.

\subsection{Some preparatory relations}

\begin{lemma}
   For $0\leq t\leq a$, we have
    \begin{equation}
    \label{bubblecross}
        %
 .
    \end{equation}
\end{lemma}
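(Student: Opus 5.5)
The plan is to prove \eqref{bubblecross} by induction on $t$, with $a$ fixed. Throughout I read the left-hand side as a thickness-$a$ bubble carrying the dot $\omega_t$ next to a (self-)crossing. The case $t=0$ has no dot. There the identity is a purely finite-type statement, and I would obtain it from the curl relation \eqref{curlremove} together with \eqref{zigzag} and the sign relation in \eqref{bubblecap}. Following the convention \eqref{equ:r=0andr=a}, the case $t=a$ is the thick dot itself, which I would treat together with the general case.

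For $1\le t\le a$, the first step is to unfold $\omega_t$ using its definition in \eqref{equ:r=0andr=a}: split the thickness-$a$ strand into strands of thickness $t$ and $a-t$, place the thick dot on the $t$-strand, and merge back. Then I would use the sliders \eqref{sliders} and the swallows \eqref{swallows} to slide the split and the merge through the crossing. After this, the dot sits on a thickness-$t$ strand immediately adjacent to a crossing. Next I would apply \eqref{dotmovecrossing1}, or its variant \eqref{dotmovecrossing2}, to push the dot across. This produces the straightforward term plus correction terms indexed by $1\le s\le \min(t,\cdot)$, each carrying a thin cup--cap pair of thickness $s$ with coefficient $s!$.

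The last step is to close up the diagrams. The leading term should return to the dotted bubble with the dot on the other side. By \eqref{dotmovecupcaps}, moving the dot around the cap of the bubble then contributes the sign $(-1)^t$. The correction terms should each collapse, via \eqref{sidewaydumbell} and \eqref{dumbbellremove}, to scalars $\binom{x-\ast}{s}$ multiplying bubbles with fewer dots, of the form $\omega_{t-s}$. To those I would apply the induction hypothesis, and use \eqref{commutato} to reorder dot packets where needed. Finally I would match the resulting coefficients against the right-hand side of \eqref{bubblecross}, using a Vandermonde-type identity as in the proof of \eqref{dumbbellremove}.

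I expect the main obstacle to be this final bookkeeping. Each cup/cap correction from \eqref{dotmovecrossing1} has to be tracked through the split/merge decomposition of $\omega_t$, and the signs $(-1)^a$ from \eqref{bubblecap} and \eqref{dotmovecupcaps} have to combine correctly with the factors $s!$ and $\binom{x-a}{s}$. If the direct computation becomes unmanageable, I would fall back on checking the identity under $\mathcal G_N$ for all $N$ using \cref{actionlantern}. Since the basis \cref{thm:basisAWb} comes from a faithful-enough family of representations, agreement under $\mathcal G_N$ for all $N$ suffices.
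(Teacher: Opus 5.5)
There is a genuine gap: the identity you set out to prove is not \eqref{bubblecross}.

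\textbf{What the statement actually says.} Its use in the proof of \eqref{bubblecrosssum} determines it. The left side of \eqref{bubblecross} is the closure, by a thickness-$a$ cup below and a thickness-$a$ cap above, of the $t$-th summand on the right of \eqref{dotmovecrossing1} with $b=a$. In that diagram each thickness-$a$ strand splits into a vertical strand of thickness $t$ and a crossing strand of thickness $s=a-t$. The two $s$-strands cross, one of them carries the dot, and everything merges back. The right side is $(-1)^{a-t}$ times a single ``fan'' diagram: a thickness-$a$ cup whose ends split into an outer $t$-strand and an inner $s$-strand, closed by nested caps of thickness $t$ and $s$, with the dot on the inner right $s$-strand. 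At $t=a$ both sides are the undotted bubble. At $t=0$ the statement is just the last relation of \eqref{bubblecap} applied to the closed dotted crossing.

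\textbf{Why your plan does not address it.} There is no $\omega_t$ on a thickness-$a$ bubble, and no induction on $t$ is needed. The right side contains no binomial coefficients and no bubbles with fewer dots; those enter only later, in \eqref{fanrecursion} and \eqref{fanandbubble}. Pushing the dot across the crossing with \eqref{dotmovecrossing1} is the wrong move here. It produces the cap--crossing--cup terms of the left side of \eqref{dotmovecrossing1}, which have no counterpart in \eqref{bubblecross}. The paper applies \eqref{dotmovecrossing1} to these closed diagrams only afterwards, in \eqref{fanrecursion} and \cref{RelationBubble}.

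\textbf{The missing idea.} \eqref{bubblecross} is a purely finite-type identity: the dot is a passive decoration on one $s$-strand and never moves. Set $s=a-t$.
\begin{enumerate}
\item Use the second relation in \eqref{mergesplitmove} to trade the merge on the left leg of the top cap (the one receiving the dotted strand) for a split on its right leg. This places nested caps of thickness $s$ (inner) and $t$ (outer) on top.
\item What remains on the right is a split composed with a merge on the undotted pair $(s,t)$. Expand it by \eqref{splitmerge}.
\item In every term where a positive thickness is exchanged between the $s$- and $t$-strands, isotope using \eqref{sliders}, \eqref{swallows}, \eqref{braid}, \eqref{zigzag} and \eqref{mergesplitmove}, moving only undotted strands. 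You reach a thick strand splitting into two equal halves that are immediately capped off, so the term vanishes by the middle relation in \eqref{bubblecap}.
\item In the one surviving term, the inner thickness-$s$ cap sits directly on the crossing of the two $s$-strands. The last relation in \eqref{bubblecap} removes that crossing at the cost of $(-1)^{s}=(-1)^{a-t}$, leaving exactly the fan.
\end{enumerate}

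\textbf{Your fallback.} It would not rescue the argument. \cref{actionlantern} and the other action formulas hold only modulo lower filtered degree, so they cannot certify an exact identity.
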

\begin{proof}
    Let $s=a-t$, then we have
    \begin{gather*}
         %
    \overset{\cref{bubblecap}}{=}0$ unless $t=r_1$, we have proved the identity \cref{bubblecross}.
\end{proof}

Recall by definition \eqref{bubble+dot+par} that $\dotbubble_a =\bubbledota$, for $a\ge 1$. 

\begin{lemma}
    For $0\leq t\leq a$, we have
    \begin{equation}
        \label{fanrecursion}
        %
.
    \end{equation}
\end{lemma}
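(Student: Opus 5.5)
The statement's diagrams did not survive extraction. The plan below assumes that \eqref{fanrecursion} has the same shape as \eqref{bubblecross}. In that reading, the left side is a "fan": a thick strand of thickness $a$ splits into $t$ and $a-t$, the $t$-part carries a dot $\omega_t$ and closes into a bubble-like loop, and the $a-t$ part continues. The right side is a recursion, in $t$, of terms that are either smaller fans or a bubble $\dotbubble_{t}$ tensored with a dotted strand. This identification of the statement is a guess; the plan should be adjusted once the actual diagrams are fixed.

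I would argue by induction on $t$. The case $t=0$ is trivial, since $\omega_0$ is the identity and the fan is just the thick strand; the case $t=a$ is the closed bubble itself.

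For the inductive step I would proceed in three stages.
\begin{enumerate}
\item Use \eqref{lollipopr} / \eqref{equ:r=0andr=a} to write the dot $\omega_t$ on the $t$-strand as $\frac{1}{t!}$ times a split into $t$ thin dotted strands followed by a merge, as in \eqref{newgendot}. Alternatively, write it as a split into $1$ and $t-1$ with the dot on the thin strand, which lowers $t$ by one.
\item Slide the single dotted thin strand around the loop. It must pass the merge/split junction with the complementary $a-t$ strand, and there I would invoke \eqref{updotmovesplits+merge} together with \eqref{dotmovecrossing1}. This produces the main term, in which the dot has gone around the loop, plus correction terms indexed by $s$ carrying lollipops $\lol$ and cup/caps.
\item Simplify the correction terms. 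Cups and caps closing on the loop are removed using \eqref{bubblecross} (already proved), \eqref{dumbbellremove}, \eqref{sidewaydumbell} and \eqref{curlremove}; these produce binomial coefficients $\binom{x-\cdot}{\cdot}$ and signs $(-1)^a$. Dots passing caps are handled by \eqref{dotmovecupcaps}. What remains is fans with smaller $t$, to which the induction hypothesis applies.
\end{enumerate}

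After these stages one equates the two ways of evaluating the same diagram, by moving the dot clockwise versus counterclockwise, as in the proof of \cref{Deltaaodd}. This should give a linear relation in which the fan with parameter $t$ appears with an invertible coefficient; over $\mathbb J'$, $2$ is invertible, so this determines it. Solving gives the recursion \eqref{fanrecursion}.

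The main obstacle is the bookkeeping of coefficients in the correction terms. Each application of \eqref{updotmovesplits+merge} generates a sum over $s$ with factors $s!$, together with the $\frac{1}{2^s}$ normalization hidden in the lollipops \eqref{def:huochai}. These must combine, via binomial identities analogous to the Vandermonde computation in the proof of \eqref{dumbbellremove}, into the clean coefficients of the claimed formula. I would first do the computation for $a=2$ and $t=1$ to fix signs, and then organize the general case by the thickness of the cup/cap that gets created, using \eqref{bubblecross} to kill all terms except the one where the cup thickness matches.
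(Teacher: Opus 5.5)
\textbf{There is a genuine gap.} Your plan never fixes what is being proved, and the one concrete closing mechanism you propose cannot work.

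First, \eqref{fanrecursion} is an identity in $\End(\unit)$ between closed diagrams, not a statement about an open strand. The statement's diagrams are missing, so I read it off from how it is used in the proof of \eqref{fanandbubble}. There it concerns the closed ``fan'' $F_{a,t}$:
\begin{itemize}
\item a thick cup of thickness $a$ whose two ends each split into an outer leg of thickness $a-t$ and an inner leg of thickness $t$;
\item these legs are closed by nested caps;
\item the inner right leg carries $\omega_t$.
\end{itemize}
The recursion is $F_{a,t}=\binom{x-t}{a-t}\sum_{k=0}^{t}k!\binom{a-t}{k}F_{t,t-k}$, with $F_{t,t}=(-1)^t\dotbubble_t$. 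It lowers the total thickness from $a$ to $t$, so an induction on $t$ at fixed $a$ has the wrong shape.

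Second, the ``move the dot clockwise versus counterclockwise and divide by $2$'' step borrowed from \cref{Deltaaodd} determines nothing here. Transporting $\omega_t$ around a cap only produces the sign $(-1)^t$ of \eqref{dotmovecupcaps}. Comparing the two routes therefore gives the fan with coefficient $1-(-1)^t$, which vanishes for even $t$. Even for odd $t$, that trick only works modulo lower-degree terms, whereas \eqref{fanrecursion} is an exact identity with specific coefficients.

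\textbf{The missing idea} is how to free the dotted inner leg from inside the outer loop. The paper's proof is a direct, non-inductive computation.
\begin{enumerate}
\item Using \eqref{swallows}, write the right-hand split $a\to(t,a-t)$ as a thick crossing composed with the split $a\to(a-t,t)$.
\item Move $\omega_t$ through that crossing with \eqref{dotmovecrossing1}, taken with $(a,b)=(a-t,t)$.
\item Every cup--cap correction term of \eqref{dotmovecrossing1} contains a cap of thickness $k$ joining two legs of the single split beneath it. So it vanishes by \eqref{bubblecap}, for $1\le k\le\min\{t,a-t\}$. No lollipops and no \eqref{updotmovesplits+merge} arise, and you never need to break $\omega_t$ into thin dots.
\item In each surviving square-switch term, with crossing thickness $k$, slide the merges and the remaining crossing through the caps using \eqref{mergesplitslideleft} and \eqref{zigzag}. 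This exposes two innermost side loops: one of thickness $k$ on a strand of thickness $a-k$, and one of thickness $a-t-k$ on a strand of thickness $t$.
\item By \eqref{dumbbellremove} these loops contribute $\binom{x-a+k}{k}\binom{x-t}{a-t-k}$, and what remains is exactly $F_{t,t-k}$, with coefficient $k!$.
\item The formula follows from $\binom{x-a+k}{k}\binom{x-t}{a-t-k}=\binom{x-t}{a-t}\binom{a-t}{k}$ and $\binom{x-t}{a-t-k}=0$ for $k>a-t$.
\end{enumerate}
Your toolbox lists some of these relations. Without the crossing step, though, the outer loop encloses the dot, so it cannot be removed as a side loop and the coefficients never become computable.
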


\begin{proof}
By \cref{dotmovecrossing1}, we have the following identity:
\begin{equation}
    \label{dotmovesym}
   %
\overset{\cref{bubblecap}}{=}0
$ for any $1\leq k\leq \min\{t,a-t\}$. Thus we have
\begin{gather*}
    %
 .
\end{gather*}
Since $\binom{x-t}{a-t-k}=0$ if $k>a-t$ and
\begin{gather*}
    \binom{x-a+k}{k}\binom{x-t}{a-t-k}=\binom{x-t}{a-t}\binom{a-t}{k},
\end{gather*}
the desired identity \cref{fanrecursion} follows.
\end{proof}

\begin{lemma}
   For $0\leq t\leq a-1$, we have
   \begin{equation}
        \label{fanandbubble}
        %
=\binom{x-t}{a-t}\sum_{k=0}^t(-1)^k\binom{a-k-1}{t-k}\binom{x-k}{t-k}(t-k)!\dotbubble_k.
\end{equation}
\end{lemma}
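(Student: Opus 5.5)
We have to guess the LHS of \eqref{fanandbubble}; its diagram has been stripped. Judging from the right-hand side, it is presumably a thickness-$a$ bubble in which a thin $t$-strand "fan" (split/merge of thickness $t$ inside the bubble) carries the dots. The factor $\binom{x-t}{a-t}$ is exactly the one produced by \eqref{fanrecursion}.

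The plan is to prove \eqref{fanandbubble} by induction on $t$, feeding the output of \eqref{fanrecursion} back into itself.

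Base case $t=0$. The fan is trivial and the diagram reduces, via \eqref{dumbbellremove}/\eqref{sidewaydumbell} and \eqref{bubblecap}, to $\binom{x}{a}\dotbubble_0$. This matches the right-hand side, where only $k=0$ contributes, with coefficient $\binom{a-1}{0}\binom{x}{0}0!=1$.

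Inductive step. I would apply \eqref{fanrecursion} to the LHS of \eqref{fanandbubble}. That identity expresses the fan diagram of type $(a,t)$ as $\binom{x-t}{a-t}$ times two kinds of terms: diagrams where the dot has been transported across the crossing (obtained from \eqref{dotmovecrossing1}, with the $t!$-weighted cap/cup corrections), and diagrams of strictly smaller fan thickness. The correction terms in which a thickness-$k$ cap hits a merge vanish by the second relation in \eqref{bubblecap}, exactly as in the proof of \eqref{fanrecursion}.

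\begin{enumerate}
\item First, close up each surviving term into a bubble using \eqref{zigzag} and \eqref{mergesplitslideleft}.
\item Next, identify each term in one of two ways. Either it is a closed bubble $\dotbubble_t$ with the dot on the full thickness-$t$ strand (the top term $k=t$), or it is a fan bubble of type $(a',t')$ with $t'<t$. The latter are handled by the induction hypothesis, after removing undotted thick loops by \eqref{dumbbellremove}.
\item Finally, use \eqref{curlremove} to account for the signs $(-1)^k$ coming from the curls created when the dot is passed through the crossing.
\end{enumerate}

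After these substitutions the coefficient of $\dotbubble_k$ is a finite sum of products of binomials. The remaining task is to check that it equals $(-1)^k\binom{a-k-1}{t-k}\binom{x-k}{t-k}(t-k)!$, with the common prefactor $\binom{x-t}{a-t}$. I expect this to follow from Pascal's rule $\binom{a-k-1}{t-k}=\binom{a-k-2}{t-k}+\binom{a-k-2}{t-k-1}$ together with the identity $\binom{x-a+k}{k}\binom{x-t}{a-t-k}=\binom{x-t}{a-t}\binom{a-t}{k}$ used in the proof of \eqref{fanrecursion}. If that is not enough, a Vandermonde-type convolution like the one in the proof of \eqref{dumbbellremove} should close it.

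The main obstacle is the bookkeeping in the inductive step. One must track exactly which thicknesses appear after the dot is moved, since the parameters $a$ and $t$ both shift. One must also make sure the $t!$ normalizations in \eqref{dotmovecrossing1} combine into the factor $(t-k)!$. I would first verify the cases $t=1$ and $t=2$ by hand to pin down the shape of the recursion, for instance to see whether the recursion reduces $a$ as well as $t$. Then I would formulate the recursion for the coefficients $c_{a,t,k}$ and check that the proposed closed form satisfies it.
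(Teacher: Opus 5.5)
Your plan is essentially the paper's proof. \eqref{fanrecursion} already expresses the $(a,t)$ diagram as $\binom{x-t}{a-t}\sum_{m=0}^{t}m!\binom{a-t}{m}$ times the diagrams of outer thickness $t$ and dotted thickness $t-m$, so the only step is to insert the induction hypothesis; none of the extra closing-up or curl-removal steps you list are needed. The paper inducts on $a$, but your induction on $t$ works equally well, since the $m=0$ term is just $(-1)^t\dotbubble_t$ and every other term has dotted thickness below $t$.

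The remaining check is exactly the Vandermonde convolution you anticipated. Using $\binom{x-t+m}{m}\binom{x-k}{t-k-m}=\binom{x-k}{t-k}\binom{t-k}{m}$, the coefficient of $(-1)^k\binom{x-t}{a-t}\dotbubble_k$ becomes $\binom{x-k}{t-k}(t-k)!\sum_{m}\binom{a-t}{m}\binom{t-k-1}{t-k-m}=\binom{x-k}{t-k}(t-k)!\binom{a-k-1}{t-k}$, which is the claimed coefficient.
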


\begin{proof}
We shall proceed via an induction on $a$. The case when $a=1$ is trivially true. 
    Assume for all $ a' < a $ and $ 0 \leq s < a' $, we have
\[
      %
= \binom{x - s}{a' - s} \sum_{k=0}^{s} (-1)^k \binom{a' - k - 1}{s - k} \binom{x - k}{s - k} (s - k)! \dotbubble_k.
\]
Plug into the recurrence \cref{fanrecursion}, we obtain
\begin{align*}
%
 \\
&= \binom{x-t}{a-t}\sum_{m=0}^{t} m! \binom{ a - t}{m}
\binom{x - t + m}{m}\cdot \\ &\qquad
\sum_{k = 0}^{t - m} (-1)^k \binom{t - k - 1}{t - m - k} \binom{x - k}{t - m - k} (t - m - k)! \dotbubble_k.
\end{align*}
We rearrange the sums such that
\begin{gather*}
    %
 = \binom{x-t}{a-t}\sum_{k = 0}^{t} (-1)^k \dotbubble_k 
    \sum_{m = 0}^{t - k} m! \binom{a - t }{m} \binom{x - t + m}{m}\cdot \\\binom{t - k - 1}{t - m - k} \binom{x - k}{t - m - k} (t - m - k)!\\
    = \binom{x-t}{a-t}\sum_{k = 0}^{t} (-1)^k \dotbubble_k (a-t)\binom{x-k}{t-k}(t-k-1)!
    \sum_{m=0}^{t-k}\binom{a-t-1}{m-1}\binom{t-k}{m}.
\end{gather*}
Thus, to show  \cref{fanandbubble} holds for $a$, it suffices to verify the following equation:
\begin{equation}
\label{fanbubblebinomial}
    (a-t)(t-k-1)!\sum_{m=0}^{t-k}\binom{a-t-1}{m-1}\binom{t-k}{m}=\binom{a-k-1}{t-k}(t-k)!.
\end{equation}
One checks directly that \cref{fanbubblebinomial} follows from (and is equivalent to) the following standard binomial identity:
\begin{equation*}
    \begin{gathered}
 \sum_{m=0}^{t-k}\binom{a-t-1}{m-1}\binom{t-k}{t-k-m}=\binom{a-k-1}{t-k-1}.
\end{gathered}
\end{equation*}
\end{proof}

\begin{lemma}
    For $a\geq 1$, we have
    \begin{equation}
        \label{bubblecrosssum}
        \sum_{t=0}^a t!\begin{tikzpicture}
        [anchorbase,color=\clr,scale=.7]
        \draw[-,thick] (0.6,-.2) to (0.6,0) to (-.5,.8) to (-.5,1);
\draw[-,thick] (-.5,-.2) to (-.5,0) to (.6,.8) to (.6,1);
\draw[-,thick] (-.5,-.2) to (-.5,1);
\draw[-,thick] (0.6,-.2) to (0.6,1);
\draw(.35,0.2)\bdot;
\draw[-,thick] (.6,1) to [out=up,in=up] (-.5,1);
\draw[-,thick] (.6,-.2) to [out=down,in=down] (-.5,-.2);
\node at (0,-.7) {$\scriptstyle a$};
\node at (0.77,.5) {$\scriptstyle t$};
\node at (-.75,.5) {$\scriptstyle t$};
    \end{tikzpicture}
    =
    \dotbubble_a+(x-a+1)\dotbubble_{a-1}.
    \end{equation}
\end{lemma}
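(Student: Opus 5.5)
The plan is to obtain \eqref{bubblecrosssum} by closing up the defining relation \eqref{dotmovecrossing1} with $b=a$, capping on top and cupping on the bottom. The closed-up right-hand side of \eqref{dotmovecrossing1} is exactly the left-hand side of \eqref{bubblecrosssum}. So the task reduces to evaluating the closed-up left-hand side, namely
\[
\sum_{t=0}^a t!\,\mathrm{cl}(D_t),
\]
where $D_t$ is the diagram with $t$-thick cups and caps between the two strands and a dot on the upper crossing leg.

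First, I treat the term $t=0$. It is a thick crossing $X_{a,a}$ with a dot on one upper leg, closed by a cap and a cup. Using \eqref{bubblecap}, the cap absorbs the crossing at the cost of a factor $(-1)^a$. The dot can then be slid around the cap with \eqref{dotmovecupcaps}, which contributes a second sign $(-1)^a$. The result is $\dotbubble_a$; the two signs should cancel, and I would check this against the conventions of \eqref{curlremove}.

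Next, I handle the terms $t\ge 1$. In $D_t$ the inner $t$-strands form a small cap at the top and a small cup at the bottom, each attached to the two $a$-strands through merges and splits. After closing with the outer $a$-cap and $a$-cup, the undotted inner cap at the top slides into the outer cap. By \eqref{mergesplitslideleft} and \eqref{sidewaydumbell}, the outer cap together with an inner $t$-cap produces a configuration which \eqref{bubblecap} kills, namely a cup composed with a merge equals $0$, whenever a thick strand carries no dot. Only the piece on which the dot interacts with the inner strands can survive.

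Concretely, I would move the dot off the inner cup using \eqref{updotmovesplits+merge} and the lollipop relations. By the earlier lemma showing that a lollipop attached to a thick cap vanishes, the surviving contributions should be exactly those with $t=1$. Those terms reduce through \eqref{dumbbellremove} and \eqref{sidewaydumbell}, which supply the scalar $\binom{x-(a-1)}{1}=x-a+1$, multiplying the bubble $\dotbubble_{a-1}$ of thickness $a-1$ obtained once the thin strand is removed.

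An alternative route uses the already-established \eqref{fanandbubble} and \eqref{bubblecross}. These compute precisely the closed-up diagrams with a thickness-$t$ rung and a dot, as $\binom{x-t}{a-t}$ times explicit combinations of the $\dotbubble_k$. With them, the left side becomes
\[
\sum_t t!\cdot\bigl(\text{explicit sum in }\dotbubble_k\bigr),
\]
and the identity becomes a binomial identity requiring the coefficient of $\dotbubble_k$ to vanish for $k\le a-2$.

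The main obstacle is this bookkeeping: showing that all coefficients of $\dotbubble_k$ with $k<a-1$ cancel, and fixing signs such as $(-1)^a$ from \eqref{bubblecap} and \eqref{curlremove}. I would attempt the cancellation via a Vandermonde-type summation, in the same way as in the proof of \eqref{fanandbubble}.
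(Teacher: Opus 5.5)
\textbf{The primary route does not work.} Closing up \eqref{dotmovecrossing1} evaluates nothing; it only trades \eqref{bubblecrosssum} for an equivalent statement. The closed left-hand terms $\mathrm{cl}(D_t)$ can be computed exactly, and they do not behave as you claim.

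For $t\ge 1$ nothing is killed and no lollipop appears. At the top, the outer $a$-cap sits on two merges whose inner $t$-legs are joined by the $t$-cup. This is precisely the configuration of \eqref{sidewaydumbell}, with $a-t$, $t$ in place of $a$, $b$. The same happens at the bottom, so
\[
\mathrm{cl}(D_t)=(-1)^{a-t}\binom{x-a+t}{t}^{2}\dotbubble_{a-t},
\]
which is nonzero for every $t$.

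For $t=0$ the dot sits between the crossing and the cap, so the crossing must be absorbed into the cup instead (the upside-down form of the last relation in \eqref{bubblecap}). This gives $(-1)^a\dotbubble_a$, not $\dotbubble_a$. Likewise the $t=1$ term is $(-1)^{a-1}(x-a+1)^2\dotbubble_{a-1}$, not $(x-a+1)\dotbubble_{a-1}$.

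Your reduction therefore turns the lemma into
\[
\sum_{t=0}^{a}(-1)^{a-t}\,t!\binom{x-a+t}{t}^{2}\dotbubble_{a-t}=\dotbubble_a+(x-a+1)\dotbubble_{a-1}.
\]
This is exactly \cref{RelationBubble}, the nontrivial bubble relation that the paper \emph{deduces} from \eqref{bubblecrosssum} by this very closing-up. Already for $a=1$ it amounts to $\dotbubble_1=\binom{x}{2}$, whereas the left side of \eqref{bubblecrosssum} for $a=1$ is directly $\dotbubble_1+x$. So this route runs in the wrong direction and cannot be completed.

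\textbf{Your fallback route is correct and is the paper's proof.} By \eqref{bubblecross}, the $t$-th closed diagram on the left of \eqref{bubblecrosssum} equals $(-1)^{a-t}$ times a fan diagram on the $a$-cup. The fan has an undotted outer arc of thickness $t$ and a dotted inner arc of thickness $a-t$. The $t=0$ term gives $\dotbubble_a$ by \eqref{dotmovecupcaps}. For $t\ge 1$ the fan is evaluated by \eqref{fanandbubble} with $a-t$ in place of $t$.

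Use the identities
\[
t!\,(a-t-k)!\binom{x-a+t}{t}\binom{x-k}{a-t-k}=(a-k)!\binom{x-k}{a-k},\qquad \binom{a-k-1}{a-t-k}=\binom{a-k-1}{t-1}.
\]
Then the coefficient of $\dotbubble_k$ for $k\le a-1$ is
\[
(a-k)!\binom{x-k}{a-k}\sum_{t=1}^{a-k}(-1)^{a-t-k}\binom{a-k-1}{t-1}.
\]
The inner sum is $\sum_{j}(-1)^{a-k-1-j}\binom{a-k-1}{j}$. It vanishes unless $k=a-1$, where it equals $1$ and produces $(x-a+1)\dotbubble_{a-1}$. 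No Vandermonde identity is needed. Make this the proof and drop the closing-up argument.
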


\begin{proof}
   We have
    \begin{gather*}
        \sum_{t=0}^a t!\begin{tikzpicture}
        [anchorbase,color=\clr,scale=.7]
        \draw[-,thick] (0.6,-.2) to (0.6,0) to (-.5,.8) to (-.5,1);
\draw[-,thick] (-.5,-.2) to (-.5,0) to (.6,.8) to (.6,1);
\draw[-,thick] (-.5,-.2) to (-.5,1);
\draw[-,thick] (0.6,-.2) to (0.6,1);
\draw(.35,0.2)\bdot;
\draw[-,thick] (.6,1) to [out=up,in=up] (-.5,1);
\draw[-,thick] (.6,-.2) to [out=down,in=down] (-.5,-.2);
\node at (0,-.7) {$\scriptstyle a$};
\node at (0.77,.5) {$\scriptstyle t$};
\node at (-.75,.5) {$\scriptstyle t$};
    \end{tikzpicture}
    ~\overset{\cref{bubblecross}}{=}~
    \sum_{t=0}^a(-1)^{a-t}t! \begin{tikzpicture}
    [anchorbase,color=\clr,scale=.7]
    \draw[-,thick] (-.5,0) to [out=down,in=left] (0,-.5) to [out=right,in=down] (.5,0);
    \draw[-,thick] (-.5,0) -- (-.25,.25)
                   (-.5,0) -- (-.75,.25)
                   (.5,0) -- (.25,.25)
                   (.5,0) -- (.75,.25)
                   (-.25,.25) to [out=45,in=135] (.25,.25)
                   (-.75,.25) to [out=135,in=left]
                   (0,.75) to [out=right,in=45] (.75,.25);
    \node at (0,-.7) {$\scriptstyle a$};
    \node at (.95,.25){$\scriptstyle t$};
    \draw (.25,.25)\bdot;
\end{tikzpicture}
\overset{\cref{fanandbubble}}{=}\\\dotbubble_a+ \sum_{t=1}^a(-1)^{a-t}t!
\Bigg(
\binom{x-a+t}{t}\sum_{k=0}^{a-t}(-1)^k\binom{a-k-1}{a-t-k}\binom{x-k}{a-t-k}(a-t-k)!\dotbubble_k\Bigg) \\
~=~\dotbubble_a+\sum_{t=1}^a\sum_{k=0}^{a-t}(-1)^{a-t-k}{(a-k)!}{\binom{x-k}{a-k}}\binom{a-k-1}{t-1}\dotbubble_k\\
=\dotbubble_a   +\sum_{k=0}^{a-1}{(a-k)!}{\binom{x-k}{a-k}}\dotbubble_k\sum_{t=1}^{a-k}(-1)^{a-t-k}\binom{a-k-1}{t-1}.
    \end{gather*}
Since $\sum_{t=1}^{a-k}(-1)^{a-t-k}\binom{a-k-1}{t-1}=0$ unless $k=a-1$ and $t=1$, we conclude the desired identity \cref{bubblecrosssum}.
\end{proof}

\subsection{Relations between bubbles}

Recall $\dotbubble_0=1$ and  $\dotbubble_1=\binom{x}{2}$.

\begin{proposition}  \label{RelationBubble}
For any $a\geq 1$, we have
    \[
    \sum_{0\le t\le a}(-1)^{a-t}t!\binom{x-a+t}{t}^2 \dotbubble_{a-t} =\dotbubble_a+(x-a+1)\dotbubble_{a-1}.
    \]
\end{proposition}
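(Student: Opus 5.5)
The right-hand side of the proposition is exactly the right-hand side of \cref{bubblecrosssum}. So the plan is to evaluate the left-hand side of \cref{bubblecrosssum} a second time, now by moving the dot the other way, and to show that this second evaluation produces $\sum_{t}(-1)^{a-t}t!\binom{x-a+t}{t}^2\dotbubble_{a-t}$.

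Concretely, start from the closed diagram $D_a:=\sum_{t=0}^a t!\,(\cdots)$ appearing on the left of \cref{bubblecrosssum}. This is a thick crossing of two $a$-strands, with the $t$-rungs, capped and cupped, and carrying a dot on one strand just above the lower crossing. Apply the defining relation \cref{dotmovecrossing1}, in the form \cref{dotmovecrossing2} obtained via zigzags, to transport the dot to the other side of the crossing. This trades the sum over $t$ with the dot placed below for a sum where the dot sits on the opposite strand and the connecting cup/cap thickness $t$ appears on the other side. Closing up with the cap and cup and using \cref{curlremove} for the curls, each resulting summand becomes a bubble of thickness $a$. Inside it, a thickness-$t$ thin part has been pinched off by a merge/split pair attached through a cap to itself.

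Next, simplify each such summand using the dumbbell relations \cref{dumbbellremove} and \cref{sidewaydumbell}. Each removal of an undotted side loop of thickness $t$ on a strand of thickness $a-t$ produces a factor $\binom{x-(a-t)}{t}=\binom{x-a+t}{t}$. There is one such loop above the dot and one below, which is why the coefficient is squared. What remains is the dotted bubble $\dotbubble_{a-t}$, together with the sign $(-1)^{a-t}$ coming from the curl/crossing relations \cref{bubblecap} and \cref{curlremove}, and the $t!$ already present in \cref{dotmovecrossing1}. I would carry this out first for $a=1,2$ to pin down the signs, then write it in general. Equating the two evaluations of $D_a$ then gives the proposition.

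The main obstacle is the middle step. After the dot is moved through the crossing, one must check that the dot ends up on a strand of thickness exactly $a-t$ isolated from the two $t$-loops, so that \cref{dumbbellremove} applies cleanly on both sides. If instead the dot lands on the full $a$-strand, I would first split it with \cref{dotmovesplitss} / \cref{updotmovesplits+merge}. Any cross terms containing $\begin{tikzpicture}[yscale=-1, baseline = -1mm,scale=.5,color=\clr]
\draw[-,line width=1.2pt] (0.1,.3) to (0.1,.6);
\draw[-,thick] (0.1,-.51) to [out=right,in=-45] (0.1,.31);
\draw[-,thick] (0.1,-.51) to [out=left,in=-135] (0.1,.31);
\end{tikzpicture}$ should vanish by \cref{bubblecap}, as in the proof of \cref{fanrecursion}. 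Alternatively, the second evaluation can be organized as in \cref{fanandbubble}, by induction on $a$, reducing the needed identity to a Vandermonde-type binomial identity.
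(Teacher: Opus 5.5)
Your proposal is correct and is essentially the paper's proof: closing \cref{dotmovecrossing1} (with $a=b$) by $\cup_a$ and $\cap_a$ expresses the left-hand side of \cref{bubblecrosssum} as $\sum_{t} t!$ times the closed cup/cap-rung diagrams, each equal to $(-1)^{a-t}\binom{x-a+t}{t}^2\dotbubble_{a-t}$ by \cref{sidewaydumbell} and the curl relation, and comparing with \cref{bubblecrosssum} gives the claim. In fact \cref{dotmovecrossing1} applies verbatim with no zigzag variant needed, and the obstacle you anticipate does not arise, since the dot already sits on the $(a-t)$-strand between the $t$-cap and the $t$-cup.
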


\begin{proof}
  By \cref{dotmovecrossing1}, we have
  \[
  \begin{tikzpicture}[anchorbase, scale=0.4, color=\clr]
\draw[-,line width=1.2pt] (0,-.2) to  (1,2.2);
\draw[-,line width=1.2pt] (1,-.2) to  (0,2.2);
\draw(0.2,1.6)\bdot;
\node at (0, -.5) {$\scriptstyle a$};
\node at (1, -.5) {$\scriptstyle a$};
\end{tikzpicture}
= 
 \sum_{0 \leq t \leq a}t!~
\begin{tikzpicture}[anchorbase,scale=.8, color=\clr]
\draw[-,thick] (0.58,-.2) to (0.58,0) to (-.48,.8) to (-.48,1);
\draw[-,thick] (-.48,-.2) to (-.48,0) to (.58,.8) to (.58,1);
\draw[-,thick] (-.5,-.2) to (-.5,1);
\draw[-,thick] (0.605,-.2) to (0.605,1);
\draw(.35,0.2)\bdot;
\node at (-.5,-.3) {$\scriptstyle a$};
\node at (0.6,-.3) {$\scriptstyle a$};
\node at (0.77,.5) {$\scriptstyle t$};
\end{tikzpicture}
-
\sum_{1\leq t \leq a}t!~
\begin{tikzpicture}[anchorbase,scale=.8, color=\clr]
\draw[-,thick] (0.58,-.2) to (0.58,0) to (-.48,.8) to (-.48,1);
\draw[-,thick] (-.48,-.2) to (-.48,0) to (.58,.8) to (.58,1);
\draw[-,thick] (-.48,.95) to [out=-46,in=left](0.1,.7) to [out=right, in =45] (.58,.95);
\draw[-,thick] (-.48,.-.15) to [out=46,in=left](0.1,.1) to [out=right, in =-45] (.58,-.15);
\draw[-,thick] (-.5,-.2) to (-.5,0);
\draw[-,thick] (-.5,0.8) to (-.5,1);
\draw[-,thick] (0.605,-.2) to (0.605,0);
\draw[-,thick] (0.605,.8) to (0.605,1);
\draw(-.2,0.6)\bdot;
\node at (-.5,-.3) {$\scriptstyle a$};
\node at (0.6,-.3) {$\scriptstyle a$};
\node at (0.1,.9) {$\scriptstyle t$};
\node at (0.1,-.1) {$\scriptstyle t$};
\end{tikzpicture} .
  \]
  Composing the above equation with $\cupa$ and $\capa$, then we obtain
  \begin{gather*}
      (-1)^a\dotbubble_a=\sum_{t=0}^a t!\begin{tikzpicture}
        [anchorbase,color=\clr,scale=.7]
        \draw[-,thick] (0.6,-.2) to (0.6,0) to (-.5,.8) to (-.5,1);
\draw[-,thick] (-.5,-.2) to (-.5,0) to (.6,.8) to (.6,1);
\draw[-,thick] (-.5,-.2) to (-.5,1);
\draw[-,thick] (0.6,-.2) to (0.6,1);
\draw(.35,0.2)\bdot;
\draw[-,thick] (.6,1) to [out=up,in=up] (-.5,1);
\draw[-,thick] (.6,-.2) to [out=down,in=down] (-.5,-.2);
\node at (0,-.7) {$\scriptstyle a$};
\node at (0.77,.5) {$\scriptstyle t$};
\node at (-.75,.5) {$\scriptstyle t$};
    \end{tikzpicture}
    ~-\sum_{t=1}^at!~
    \begin{tikzpicture}[anchorbase,scale=.8, color=\clr]
\draw[-,thick] (0.58,-.2) to (0.58,0) to (-.48,.8) to (-.48,1);
\draw[-,thick] (-.48,-.2) to (-.48,0) to (.58,.8) to (.58,1);
\draw[-,thick] (-.48,.95) to [out=-46,in=left](0.1,.7) to [out=right, in =45] (.58,.95);
\draw[-,thick] (-.48,.-.15) to [out=46,in=left](0.1,.1) to [out=right, in =-45] (.58,-.15);
\draw[-,thick] (-.5,-.2) to (-.5,0);
\draw[-,thick] (-.5,0.8) to (-.5,1);
\draw[-,thick] (0.605,-.2) to (0.605,0);
\draw[-,thick] (0.605,.8) to (0.605,1);
\draw[-,line width=1.2pt] (.605,1) to [out=up,in=up] (-.48,1);
\draw[-,line width=1.2pt] (.605,-.2) to [out=down,in=down] (-.48,-.2);
\draw(-.2,0.6)\bdot;
\node at (0,-.7) {$\scriptstyle a$};
\node at (0.1,.9) {$\scriptstyle t$};
\node at (0.1,-.1) {$\scriptstyle t$};
\end{tikzpicture} \\
\overset{\cref{sidewaydumbell}}{=}\sum_{t=0}^a t!\begin{tikzpicture}
        [anchorbase,color=\clr,scale=.7]
        \draw[-,thick] (0.6,-.2) to (0.6,0) to (-.5,.8) to (-.5,1);
\draw[-,thick] (-.5,-.2) to (-.5,0) to (.6,.8) to (.6,1);
\draw[-,thick] (-.5,-.2) to (-.5,1);
\draw[-,thick] (0.6,-.2) to (0.6,1);
\draw(.35,0.2)\bdot;
\draw[-,thick] (.6,1) to [out=up,in=up] (-.5,1);
\draw[-,thick] (.6,-.2) to [out=down,in=down] (-.5,-.2);
\node at (0,-.7) {$\scriptstyle a$};
\node at (0.77,.5) {$\scriptstyle t$};
\node at (-.75,.5) {$\scriptstyle t$};
    \end{tikzpicture}
    ~-~\sum_{t=1}^a(-1)^{a-t}\binom{x-a+t}{t}^2\dotbubble_{a-t}.
  \end{gather*}
  Then the desired equation follows from \cref{bubblecrosssum} directly.
\end{proof}

From \cref{RelationBubble} it follows that $\dotbubble_{2m+1}$ is a linear combination of $\dotbubble_{2a}$ with $0\leq a\leq m$. We establish an explicit formula for such a linear combination.

\begin{theorem}  \label{bubbleOdd}
    For $m\in \N$, we have
    \begin{align}  \label{Delta:odd}
        \dotbubble_{2m+1} = \sum_{k=0}^m (-1)^k(2k)! d_k \binom{x-2m+2k}{2k+2} \binom{x-2m+2k-1}{2k} \dotbubble_{2m-2k},
    \end{align}
    where the scalars $d_k \in \Z_{>0}$ are determined by $d_0=1$ and the recursion
    \begin{align}
    \label{recursiondm}
        d_k = (-1)^k (k+1) + \frac12 \sum_{i=1}^k (-1)^{i-1}  \binom{2k+2}{2i}d_{k-i}. 
    \end{align}
\end{theorem}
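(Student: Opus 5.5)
Our plan is to derive \eqref{Delta:odd} purely algebraically from \cref{RelationBubble}, treating the bubbles as formal unknowns in the commutative ring $\End_{\AWb}(\unit)$ over $\Q[x]$. Set $\beta_a(x):=\binom{x-a+1}{1}$ and $c_{a,t}:=(-1)^{a-t}t!\binom{x-a+t}{t}^2$. Moving the $t=0$ term to the right, \cref{RelationBubble} reads
\[
(x-a+1)\dotbubble_{a-1} = \sum_{t=1}^{a}(-1)^{a-t}t!\binom{x-a+t}{t}^2\dotbubble_{a-t} + \big((-1)^a-1\big)\dotbubble_a .
\]
For $a=2m+2$ the $\dotbubble_a$ term cancels. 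The $t=1$ term, $(-1)^{a-1}(x-a+1)^2\dotbubble_{a-1}$, combines with the left side. This expresses $\dotbubble_{2m+1}$, with coefficient $(x-2m-1)(1+(x-2m-1))=(x-2m-1)(x-2m)$, in terms of $\dotbubble_{j}$ for $j\le 2m$. Recursively substituting the odd ones yields a formula in even bubbles. (For odd $a$ one instead gets a relation expressing $\dotbubble_{a}$ directly; we will use whichever is cleaner, probably the odd-$a$ one, which gives $-2\dotbubble_{2m+1}$ plus lower terms.) In both cases we would verify \eqref{Delta:odd} by induction on $m$.

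The base case $m=0$ must give $\dotbubble_1=\binom{x}{2}$, matching $d_0\binom{x}{2}\binom{x-1}{0}$. For the inductive step, we insert the claimed expressions for $\dotbubble_{2j+1}$, $j<m$, into the relation for $\dotbubble_{2m+1}$. We then collect the coefficient of each $\dotbubble_{2m-2k}$. The claim reduces to an identity of polynomials in $x$: for each $k$, the collected coefficient equals $(-1)^k(2k)!\,d_k\binom{x-2m+2k}{2k+2}\binom{x-2m+2k-1}{2k}$.

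To manage this, we shift the variable $y=x-2m+2k$ so that the coefficient depends only on $k$ and $y$. We then factor out $\binom{y}{2k+2}\binom{y-1}{2k}(2k)!$ from every contribution. The squared binomials $t!\binom{x-a+t}{t}^2$ multiplied by the inductive coefficients $\binom{\cdot}{2i+2}\binom{\cdot}{2i}$ should telescope into $\binom{y}{2k+2}\binom{y-1}{2k}$ times a pure number such as $\binom{2k+2}{2i}$. After this, what remains is a numerical recursion in $k$ alone, which should be exactly \eqref{recursiondm}. The term $(-1)^k(k+1)$ there comes from the direct even-bubble contributions (odd $t$), and the sum $\tfrac12\sum\binom{2k+2}{2i}d_{k-i}$ comes from the substituted odd bubbles (even $t$). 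The factor $\tfrac12$ comes from the $-2\dotbubble_{2m+1}$.

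The main obstacle is this factorization step: showing that the $x$-dependence of every summand collapses to the common factor $\binom{y}{2k+2}\binom{y-1}{2k}$ with a purely numerical cofactor. I would first check it on $m=1$ by direct expansion to pin down the normalizations and signs. I would then prove the general factorization via the identity $t!\binom{z+t}{t}^2\binom{z}{s}\binom{z-1}{s-2}\cdot\text{const}=\binom{z+t}{s+t}\binom{z+t-1}{s+t-2}\cdot\text{const}'$, manipulated as products of falling factorials. Positivity and integrality of $d_k$ would follow separately by induction from \eqref{recursiondm}, if needed.
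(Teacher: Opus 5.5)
Your skeleton coincides with the paper's proof: induction on $m$ via \cref{RelationBubble} at $a=2m+1$, which gives $2\dotbubble_{2m+1}=(x-2m)(x-2m-1)\dotbubble_{2m}+\sum_{t=1}^{2m}(-1)^t(t+1)!\binom{x-2m+t}{t+1}^2\dotbubble_{2m-t}$. You then substitute the inductive formula for the odd bubbles and compare coefficients of $\dotbubble_{2m-2k}$ in the variable $y=x-2m+2k$.

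The gap is exactly at the step you flag as the main obstacle. The $x$-dependence does \emph{not} collapse to $\binom{y}{2k+2}\binom{y-1}{2k}$ times pure numbers. With $F:=y(y-1)^2(y-2)^2\cdots(y-2k)^2/(2k+2)!$ one has
\[
(2k)!\tbinom{y}{2k+2}\tbinom{y-1}{2k}=(y-2k-1)F,\qquad (2k+1)!\tbinom{y}{2k+1}^2=(2k+2)\,y\,F .
\]
The contribution of the odd bubble $\dotbubble_{2m-t}$ with $t=2k-2s-1$ is
\[
(2k-2s)!\tbinom{y-2s-1}{2k-2s}^2\,(2s)!\tbinom{y}{2s+2}\tbinom{y-1}{2s}=\tbinom{2k+2}{2s+2}(y-2s-1)F .
\]
So the cofactors are the genuinely different linear polynomials $(2k+2)y$, $\binom{2k+2}{2s+2}(y-2s-1)$ and $y-2k-1$. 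In particular, the falling-factorial identity you propose is false: comparing zeros, its left side is a constant times $(z+t)/z$ times its right side. For $k\ge 1$ the claim is therefore the identity of linear polynomials \eqref{keybinomial}:
\[
(2k+2)y-\sum_{s=0}^{k-1}(-1)^s\tbinom{2k+2}{2s+2}(y-2s-1)d_s=2(-1)^k(y-2k-1)d_k .
\]

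Matching coefficients of $y$ gives precisely \eqref{recursiondm}, which is the only condition your plan anticipates. Matching constant terms gives a second condition:
\[
\sum_{s=0}^{k-1}(-1)^s(2s+1)\tbinom{2k+2}{2s+2}d_s=2(-1)^{k+1}(2k+1)d_k .
\]
Since $d_k$ is already fixed by the recursion, this is a genuine identity about the sequence $(d_k)$ that must be proved. This is the missing ingredient.

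The paper substitutes \eqref{recursiondm} into it and uses $2i\binom{2k+2}{2i}=(2k+2)\binom{2k+1}{2i-1}$. This reduces it to $\sum_{r=0}^{k-1}(-1)^r\binom{2k+1}{2r+2}d_r=2k+1$ (\cref{recursiondm2}). That identity is proved by its own induction on $k$: the left side is split as $S_1+S_2$ with $S_1=(2k+1)2^{2k-2}$ and $S_2=(2k+1)(1-2^{2k-2})$.

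The same recursion is what identifies $d_k$ with the unsigned Genocchi number $G_{2k+2}$, and that is where positivity and integrality of $d_k$ come from. Neither is evident from a direct induction on \eqref{recursiondm}, whose terms alternate in sign and carry a factor $\frac12$.
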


For the proof of \cref{bubbleOdd} we prepare another recursion formula for the sequence $(d_k)_{k\ge 0}$. 

\begin{lemma}
\label{recursiondm2}
The following identity holds for $k\ge 1$:
    \begin{align} \label{identity2}
    \sum_{r=0}^{k-1}(-1)^r\binom{2k+1}{2r+2}d_r=2k+1.
\end{align}
\end{lemma}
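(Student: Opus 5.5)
The plan is to turn the recursion \eqref{recursiondm} into a functional equation for an exponential generating function, solve it in closed form, and then read off \eqref{identity2} as a coefficient identity.

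\textbf{Step 1: remove the signs.} Set $e_k:=(-1)^k d_k$. Since $(-1)^{i-1}d_{k-i}=(-1)^{k-1}e_{k-i}$, multiplying \eqref{recursiondm} by $2(-1)^k$ gives
\[
2e_k = 2(k+1) - \sum_{i=1}^k \binom{2k+2}{2i} e_{k-i}.
\]
Reindexing with $j=k-i$, and absorbing one copy of $e_k$ into the sum as the $j=k$ term, this becomes
\[
e_k+\sum_{j=0}^{k}\binom{2k+2}{2j+2}e_j = 2k+2, \qquad k\ge 0.
\]
This also covers $k=0$, because $d_0=1$.

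\textbf{Step 2: solve for the generating function.} Introduce
\[
E(t):=\sum_{j\ge0} e_j\,\frac{t^{2j+2}}{(2j+2)!}.
\]
The sum $\sum_j \binom{2k+2}{2j+2}e_j$ is exactly the coefficient of $t^{2k+2}/(2k+2)!$ in $\cosh(t)E(t)$. Moreover $\sum_{k\ge0}(2k+2)\,t^{2k+2}/(2k+2)! = t\sinh t$. Since all three series involved are even, the identity from Step 1 is equivalent to
\[
(1+\cosh t)\,E(t)=t\sinh t .
\]
Using the half-angle identities this gives $E(t)=t\sinh t/(1+\cosh t)=t\tanh(t/2)$.

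\textbf{Step 3: recognize \eqref{identity2}.} The left side of \eqref{identity2} equals $\sum_{r=0}^{k-1}\binom{2k+1}{2r+2}e_r$. I claim this is the coefficient of $t^{2k+1}/(2k+1)!$ in $\sinh(t)E(t)$. Indeed, pairing $t^{2r+2}$ from $E$ with the odd power $t^{2k-1-2r}$ from $\sinh t$ forces $r\le k-1$ automatically. Now compute
\[
\sinh(t)\,E(t)=t\cdot 2\sinh\tfrac t2\cosh\tfrac t2\cdot\frac{\sinh(t/2)}{\cosh(t/2)}=t\,(2\sinh^2\tfrac t2)=t(\cosh t-1).
\]
For $k\ge1$, the coefficient of $t^{2k+1}/(2k+1)!$ in $t\cosh t - t$ is $(2k+1)!/(2k)!=2k+1$, which is \eqref{identity2}. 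The restriction $k\ge 1$ is needed only because of the subtracted term $-t$.

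\textbf{Main obstacle.} The only delicate point is the bookkeeping in Step 1: getting the signs right and absorbing the extra $e_k$ correctly so that the convolution begins at $j=0$ with the binomial $\binom{2k+2}{2j+2}$. I would check it against $d_0=1$ (giving $E(t)=t^2/2+\cdots$) and against $d_1$ computed directly from \eqref{recursiondm}, comparing with the expansion of $t\tanh(t/2)$. After that, the hyperbolic identities are routine.
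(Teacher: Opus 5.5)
Your argument is correct, and it takes a genuinely different route from the paper. The paper proves \eqref{identity2} by strong induction on $k$. It substitutes \eqref{recursiondm} for every $d_r$ in $S(k)$ and splits off the explicit part $S_1=(2k+1)2^{2k-2}$. In the double sum $S_2$ it uses $\binom{2k+1}{2r+2}\binom{2r+2}{2i}=\binom{2k+1}{2k-2i+1}\binom{2k-2i+1}{2(r-i)+2}$ to interchange the summations, so that the inner sums are exactly $S(k-i)=2(k-i)+1$ by the inductive hypothesis. The remaining even binomial sums then combine to $2k+1$.

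You instead solve the recursion outright, and the steps check out. Your sign bookkeeping in Step 1 is right, including the case $k=0$, where it reads $2e_0=2$. The convolution $\sum_{j\le k}\binom{2k+2}{2j+2}e_j$ is indeed the coefficient extracted from $\cosh(t)E(t)$. Dividing by $1+\cosh t$ is legitimate because it is invertible in $\mathbb{Q}[[t]]$. Finally, $\sinh(t)\,t\tanh(t/2)=t(\cosh t-1)$ yields the identity, with the restriction $k\ge 1$ accounted for by the $-t$ term.

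The paper's proof stays entirely within finite binomial manipulations. Yours avoids induction and proves more. Since $E(t)=t\tanh(t/2)=-G(t)$, it shows directly that $(-1)^r d_r$ are the Taylor coefficients of $t\tanh(t/2)$, i.e. $d_r=G_{2r+2}$. This makes the identification in \cref{rem:Genocchi} self-contained, rather than resting on a quoted recursion for the Genocchi numbers.
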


\begin{proof}
    We prove the identity \eqref{identity2} by induction on $k$, with the base case $k=1$ being clear.

Let $k\ge 2$ and set $S(k) =\sum_{r=0}^{k-1}(-1)^r\binom{2k+1}{2r+2}d_r$ to be the left-hand side of \eqref{identity2}. Using \eqref{recursiondm}, we express $S(k)$ as a sum of two summands:
\begin{align*}
    S(k) &= \sum_{r=0}^{k-1}(-1)^r\binom{2k+1}{2r+2}
    \left((-1)^r (r+1) + \frac12 \sum_{i=1}^r (-1)^{i -1}  \binom{2r+2}{2i}d_{r-i} \right)
    \\
    &=\underbrace{\sum_{r=0}^{k-1}(r+1)\binom{2k+1}{2r+2}}_{S_1} + \underbrace{\frac12\sum_{r=0}^{k-1}(-1)^r\binom{2k+1}{2r+2}
     \sum_{i=1}^r (-1)^{i -1}  \binom{2r+2}{2i}d_{r-i}}_{S_2}.
\end{align*}
We have
\begin{align*}
    S_1 
    &= \sum_{r=0}^{k-1}\frac{2k+1}2 \binom{2k}{2r+1}
    =(2k+1) 2^{2k-2}.
\end{align*}
On the other hand, we have
\begin{align*}
    S_2 
&=\frac12\sum_{r=0}^{k-1}\sum_{i=1}^r (-1)^{r+i-1}\binom{2k+1}{2r+2}\binom{2r+2}{2i} d_{r-i}.
\end{align*}
Noting $\binom{2k+1}{2r+2}\binom{2r+2}{2i} =\binom{2k+1}{2k-2i+1}\binom{2k-2i+1}{2(r-i)+2}$, interchanging the double summation above and changing variables $j=r-i$, we obtain
\begin{align*}
    S_2 
&= -\frac12\sum_{i=1}^{k-1} \binom{2k+1}{2k-2i+1} \sum_{j=0}^{k-i-1}(-1)^{j}\binom{2(k-i)+1}{2j+2} d_{j}
\\
&=-\frac12\sum_{i=1}^{k-1} \binom{2k+1}{2k-2i+1} (2k-2i+1)
\\
&=-\frac12\sum_{i=1}^{k-1} (2k+1)\binom{2k}{2k-2i}
\\
&=-\frac12 (2k+1) \left(\sum_{i=0}^{k} \binom{2k}{2k-2i} -2\right) \\
&=-\frac12 (2k+1) (2^{2k-1}-2) 
=(2k+1)(1-2^{2k-2}).
\end{align*}
We have used the inductive assumption for the second equality above.

Summing the above formulas for $S_1$ and $S_2$, we have proved $S(k) =2k+1$, whence the identity \eqref{identity2}. 
\end{proof}

Now we are ready to derive \cref{bubbleOdd} from \cref{RelationBubble}.

\begin{proof} [Proof of \cref{bubbleOdd}]
We prove the identity \eqref{Delta:odd} by induction on $m$. For the base case $m=0$, we have $\dotbubble_1 =\binom{x}{2} \dotbubble_0$, which follows from \cref{RelationBubble} for $a=1$. 

Let $m\ge 1$. By the inductive assumption, we have
\begin{equation}
\label{bubblem-t}
      \dotbubble_{2m-2s+1} = \sum_{k=0}^{m-s} (-1)^k(2k)! d_k \binom{x-2m+2s+2k}{2k+2} \binom{x-2m+2s+2k-1}{2k} \dotbubble_{2m-2s-2k}.
\end{equation}
for $s=m,m-1,\ldots,1$. By \cref{RelationBubble} for $a=2m+1$ we have
\begin{equation}
\label{bubble2m+1}
    2\dotbubble_{2m+1}=(x-2m)(x-2m-1)\dotbubble_{2m}+\sum_{t=1}^{2m}(-1)^t(t+1)!\binom{x-2m+t}{t+1}^2\dotbubble_{2m-t}.
\end{equation}

 Substituting \cref{bubblem-t} into \cref{bubble2m+1}, we obtain that 
 \begin{align}  \label{Delta:odd2}
        \dotbubble_{2m+1} = \sum_{k=0}^m c_k \dotbubble_{2m-2k},
\end{align}
where
\begin{align*}
  {\textstyle c_k} &=  {\textstyle (2k+1)!\binom{x-2m+2k}{2k+1}^2+ }\\
   &\quad {\textstyle \sum_{s=0}^{k-1} (-1)^{s+1}(2s)!(2k-2s)! \binom{x-2m+2k-2s-1}{2k-2s}^2 \binom{x-2m+2k}{2s+2}\binom{x-2m+2k-1}{2s}d_{s}. }
\end{align*}
By comparing the coefficients of $\dotbubble_{2m-2k}$ in \eqref{Delta:odd2} and the desired identity \eqref{Delta:odd}, we reduce the proof of \eqref{Delta:odd} to verifying the following identity, for $k\ge 1$ (where we have changed variables $z=x-2m+2k$):
\begin{align} 
 \label{identity}
      &   (2k+1)!\binom{z}{2k+1}^2+ 
    \\
   & {\tiny  \sum_{s=0}^{k-1} (-1)^{s+1}(2s)!(2k-2s)! \binom{z-2s-1}{2k-2s}^2 \binom{z}{2s+2}\binom{z-1}{2s}d_{s} } 
    \notag \\
    &=2\cdot (-1)^k(2k)!\binom{z}{2k+2}\binom{z-1}{2k}d_k. 
    \notag 
\end{align}
After canceling the common factor $z(z-1)^2(z-2)^2\cdots (z-2k)^2 / (2k+2)!$, we see that \eqref{identity} is equivalent to the following identity
\begin{gather}
\label{keybinomial}
    (2k+2)z+  \sum_{s=0}^{k-1}(-1)^{s+1}(z-2s-1)\binom{2k+2}{2s+2}d_s=2\cdot (-1)^k(z-2k-1)d_k.
\end{gather}
It remains to prove the identity \eqref{keybinomial} on linear polynomials in $z$. The equality of the coefficients of $z$ on both sides of \eqref{keybinomial} is a simple variant of the identity \eqref{recursiondm} with $m=k$. 
Hence by comparing the constant terms, the proof of the identity \cref{keybinomial} is reduced to verifying
\begin{gather*}
    \sum_{s=0}^{k-1}(-1)^s(2s+1)\binom{2k+2}{2s+2}d_s=2(-1)^{k+1}(2k+1)d_k,
\end{gather*}
which is equivalent to (by plugging in \cref{recursiondm} to RHS)
\begin{gather}
\label{keybinomiald}
    \sum_{i=1}^k(-1)^ii\binom{2k+2}{2i}d_{k-i}=(-1)^k(2k+1)(k+1).
\end{gather}
By the fact that $2i\binom{2k+2}{2i}=(2k+2)\binom{2k+1}{2i-1}$, we see that
\cref{keybinomiald} reduces to  
\begin{gather*}
    \sum_{i=1}^k(-1)^i\binom{2k+1}{2i-1}d_{k-i}=(-1)^k(2k+1),
\end{gather*}
which is equivalent to the identity \eqref{identity2} established in \cref{recursiondm2}.

This proves the theorem. 
\end{proof}

\begin{rem}
\label{rem:Genocchi}
One computes $(d_0, d_1, d_2, d_3, d_4, d_5, \ldots) = (1, 1, 3, 17, 155, 2073, \ldots)$. Let $\{G_{2k},k\geq 1\}$ be the {\it unsigned Genocchi numbers} determined by the following generating function (see Wikipedia or The Online Encyclopedia of Integer Sequences A110501)
\[
G(t):=\sum_{n=1}^{\infty} (-1)^nG_{2n}\frac{t^{2n}}{(2n)!}=-t\tanh(\frac{t}{2})=-t\frac{e^t-1}{e^t+1}.
\]
They satisfy the recursion
\[
\sum_{j=0}^{k-1}(-1)^j\binom{2k+1}{2j+2}G_{2j+2}=2k+1.
\]
Comparing with \cref{identity2}, we see that $\{d_r\mid r\geq 0\}$ and $\{G_{2r+2}\mid r\geq 0\}$ satisfy the same recursion, and $d_0=G_2=1$. Hence we conclude that 
\[
d_r =G_{2r+2},
\qquad
\text{ for } r \ge 0.
\]
\end{rem}

\vspace{2mm}
%
{\bf Acknowledgement.} 
W.W. thanks Department of Mathematics and IMS at National University of Singapore, Academia Sinica and the NSTC in Taipei for support and providing excellent research environment. L.S. is partially supported by NSFC (Grant No. 12071346). W.W. is partially supported by DMS--2401351. Y.S. is partially supported by the Fields Institute and the Science and Technology Commission of Shanghai Municipality (No. 22DZ2229014).

\appendix

\section{Proofs of \cref{lollipopspan,thm:basisAWb,prop:Wiso}}
  \label{app:technical}

\subsection{Proof of \cref{lollipopspan}}
  \label{app:lollipop}

Recall from \eqref{eq:ballstick} that 
$
\lol_{a,\lambda} =
\begin{tikzpicture}[yscale=-1, anchorbase,scale=.8,color=\clr]
\draw[-,line width=1.4pt] (0.08,.3) to (0.08,.6);
\draw[-,thick] (0.1,-.51) to [out=right,in=-45] (0.1,.31);
\draw[-,thick] (0.1,-.51) to [out=left,in=-135] (0.1,.31);
\node at (0.08,.8) {$\scriptstyle 2a$};
\draw (-.1,-.2)\bdot;
\node at (-.4,-.2) {$\scriptstyle \omega_\lambda$};
\end{tikzpicture}
$
for $\lambda\in \Par_a$ and $\venus_{2a}$ denotes the subspace of $\Hom_{\AWb}(2a,0)$ spanned by $\wkdotamu$, for all  $\mu\in\EPar_{2a}$. We first prepare some lemmas. 

\begin{lemma}
\label{lem:diamondnonzero}
    We have $\lol_{a,\lambda}\equiv 0$ unless $\lambda\vdash a+2b$ for some $b\in \N$; in this case $\lol_{a,\lambda}$ lies in the subspace spanned by $
\left\{\lol_{a,(a,\mu)}\mid \mu\in \Par_a(2b),\ l(\mu)<l(\lambda),\ (a,\mu)\unrhd \lambda\right\}    
$ up to lower degree terms.
\end{lemma}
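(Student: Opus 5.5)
The lollipop $\lol_{a,\lambda}$ is the cup $\cupa$ turned upside down into a thickness-$2a$ merge. The dot packet $\omega_\lambda$ sits on the left thin leg, and we work modulo lower degree terms throughout. The plan has two parts. First, use the antisymmetry coming from the cap to kill every packet whose degree has the wrong parity. Second, use the same antisymmetry to force a full part $a$ into $\lambda$.

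\emph{Parity step.} We move the whole packet $\omega_\lambda$ from the left leg to the right leg in two ways.
\begin{itemize}
\item Along the cap: by \eqref{dotmovecupcaps}, applied once for each factor $\omega_{\lambda_i}$, this multiplies the diagram by $(-1)^{|\lambda|}$ up to a sign convention. Concretely, each factor is a thick dot $\omega_r$ built from thin dots, and each thin dot contributes one sign, as in \cref{Deltaaodd}.
\item Through the merge: first insert a crossing using \eqref{bubblecap}, i.e.\ (merge)$\circ$(crossing) $=$ merge on $(a,a)$. Then \cref{dotmovefreely}(1) moves the dots through the crossing modulo lower terms.
\end{itemize}
Comparing the two, with $\tfrac12\in\mathbb J'$, should give $\lol_{a,\lambda}\equiv (-1)^{|\lambda|-a}\lol_{a,\lambda}$. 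The extra $(-1)^a$ comes from the cap--crossing sign in \eqref{bubblecap}. Hence $\lol_{a,\lambda}\equiv 0$ unless $|\lambda|\equiv a \pmod 2$, which is the condition $\lambda\vdash a+2b$. I would check this sign bookkeeping on $a=1$ first.

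\emph{Forcing a part equal to $a$.} Write $\omega_\lambda=\omega_{\lambda_1}\cdots\omega_{\lambda_k}$; the factors commute by \eqref{commutato}. Take the last factor $\omega_{\lambda_k}$ with $\lambda_k<a$ and expand it, using the definition \eqref{equ:r=0andr=a}, as a split into $(\lambda_k,a-\lambda_k)$ with a dot $\omega_{\lambda_k}$ on the first strand, followed by a merge. Now slide the split and merge down to the thick leg using \eqref{mergesplitslideleft}/\cref{bendstick}, so that the $a$-strand of the cap becomes a thick strand with a thin lantern attached.
\begin{itemize}
\item Apply \cref{dotmovefreely}(2), \cref{Lem:dotmovesplitmerge} and \cref{lem:222s} to transport the dot $\omega_{\lambda_k}$ across the cap to the right-hand $a$-strand; this generates a sign.
\item Then move it back to the left leg through the merge at the bottom using \cref{dotmovefreely}(1).
\end{itemize}
This should produce a relation of the form $2\,\lol_{a,\lambda}\equiv\sum\pm\lol_{a,\lambda'}$. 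Here $\lambda'$ is obtained by replacing $\lambda_k$ with $a$ and redistributing the remaining parts into a partition $\mu$. By \cref{wkdotsspan}, each such $\lambda'=(a,\mu)$ dominates $\lambda$ and has length reduced by one. Repeating this on the remaining packets, by induction on $\ell(\lambda)$ and downward on dominance, places every term in the span of $\lol_{a,(a,\mu)}$ with $\mu\in\Par_a(2b)$, $\ell(\mu)<\ell(\lambda)$ and $(a,\mu)\unrhd\lambda$.

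\emph{Main obstacle.} The hard part is the second step: obtaining a clean identity in which the term with $\lambda_k$ itself cancels with a nonzero coefficient, namely $2$, and in which all the new terms genuinely have one more part equal to $a$. My first attempt would be to compute the case $\lambda=(r)$ directly using \cref{bendstick} and \cref{lem:222s}, and then bootstrap to general $\lambda$ via the commutativity \eqref{commutato} and the filtration $E_a^\lambda$.
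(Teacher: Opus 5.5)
\textbf{Parity step.} This part is sound, and it is a different, shorter route to the parity half than the paper's induction. It is the same trick the paper uses for odd bubbles in \cref{Deltaaodd}. Moving $\omega_\lambda$ across the cap costs $(-1)^{|\lambda|}$; for a factor $\omega_r$ with $r<a$, expand it and use \eqref{mergesplitmove} together with \eqref{dotmovecupcaps} on the thickness-$r$ cap. Then write the split as crossing$\circ$split by \eqref{swallows}, slide the packet through the crossing by \cref{dotmovefreely}(1), and use cap$\circ$crossing$=(-1)^a$cap from \eqref{bubblecap}. This gives $\lol_{a,\lambda}\equiv(-1)^{a+|\lambda|}\lol_{a,\lambda}$.

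However, this only proves $|\lambda|\equiv a \pmod 2$. The statement also asserts $\lol_{a,\lambda}\equiv 0$ when $|\lambda|<a$, for example $\lol_{2,\emptyset}=0$ or $\lol_{4,(2)}\equiv 0$. No sign-chasing yields these. Their vanishing ultimately comes from the relation in \eqref{bubblecap} saying the undotted lollipop is zero, and your argument never uses it.

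\textbf{The gap is in your second step.} Transporting $\omega_{\lambda_k}$ across the cap and back through the bottom split is just your parity computation applied to a single factor. Modulo lower terms it produces only a sign, or moves the factor to the other leg; it never produces a term with a different dot packet. So nothing supports the hoped-for identity $2\lol_{a,\lambda}\equiv\sum\pm\lol_{a,\lambda'}$ with a new part equal to $a$. Note also that replacing $\lambda_k$ by $a$ changes the degree unless something else drops. An induction on $\ell(\lambda)$ and dominance at fixed $a$ therefore has nothing to feed it.

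\textbf{What is missing is the paper's induction on the thickness $a$.} The steps are:
\begin{itemize}
\item Expand $\omega_r$ with $r=\lambda_k<a$ as split--dot--merge.
\item Distribute the remaining packet by \eqref{dotmovesplitss} as $\sum_{\vartheta+\varsigma=\overline{\lambda}}$, where $\overline{\lambda}=(\lambda_1,\ldots,\lambda_{k-1})$.
\item Use \eqref{mergesplitslideleft}/\cref{bendstick} to rewrite the diagram as the split $2a\to(2r,2(a-r))$ followed by the two smaller lollipops $\lol_{r,(r,\vartheta)}$ and $\lol_{a-r,\varsigma}$.
\item Apply the inductive hypothesis to these. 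This gives the vanishing (both parity and the size condition), and replaces $\lol_{a-r,\varsigma}$ by combinations of $\lol_{a-r,(a-r,\nu)}$ and $\lol_{r,(r,\vartheta)}$ by combinations of $\lol_{r,(r,\mu)}$.
\item Undo the bend. The full dots $\omega_r$ and $\omega_{a-r}$ on the two legs of the lantern combine into $\omega_a$ on the $a$-strand modulo lower terms (\cref{Lem:dotmovesplitmerge}).
\item Apply \cref{wkdotsspan} to obtain $\lol_{a,(a,\eta)}$ with $(a,\eta)\unrhd(a,\mu+\nu)\unrhd\lambda$.
\end{itemize}
The case $\lambda=(a^k)$ is handled directly by pairing $\omega_a\otimes\omega_a$ into $\omega_{2a}$ on the thick strand. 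This reduction in thickness is what creates the part equal to $a$ and what forces the vanishing when $|\lambda|<a$.
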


\begin{proof}
    Let $k=l(\lambda)$. We proceed by induction on $a$. When $a=1$, it is straightforward to check the lemma via \eqref{dotmovesplitss}, \eqref{dotmovemergess} and \eqref{bubblecap}.  Now we suppose $a>1$.  If $\lambda_k=a$, then $\lambda=(a^k)$ and by \eqref{dotmovesplitss}, \eqref{dotmovemergess} and \eqref{bubblecap} we have
    \[
    \lol_{a,\lambda}\equiv
    \begin{cases}
        0, & \text{ for } k\equiv 0\pmod 2  \\
        2^a \;
       \lol\!_{a, ((2a)^{\frac{k-1}{2}})} & \text{ for } k\equiv 1\pmod 2.
    \end{cases} 
    \]
    Hence the lemma holds in this case. If $\lambda_k=r<a$, then we have
    \[
    \lol_{a,\lambda}\equiv \sum_{\vartheta+\varsigma=\overline{\lambda}}
    \begin{tikzpicture}
[anchorbase,scale=.9,color=\clr]
\draw[-,line width=1pt] (0.08,-.7) to (0.08,-.5);
\draw[-,line width=1pt] (0.08,.3) to (0.08,.5) to [out=up,in=up] (1,.5) -- (1,-.7) -- (.54,-.9) -- (.08,-.7);
\draw[-,line width=1.2pt] (.54,-.9) -- (.54,-1.1);
\draw[-,thick] (0.1,-.51) to [out=45,in=-45] (0.1,.31);
\draw[-,thick] (0.06,-.51) to [out=135,in=-135] (0.06,.31);
\draw(-.1,0) \bdot;
\draw(-.4,0)node {$\scriptstyle \omega_{\vartheta}$};
\draw(.25,0) \bdot;
\draw(.6,0)node {$\scriptstyle \omega_{\varsigma}$};
\draw(-.09,-.35) \bdot;
\node at (-.35,-.35) {$\scriptstyle {r}$};
\node at (.52,-.35) {$\scriptstyle a-r$};
\end{tikzpicture}
~=~\sum_{\vartheta+\varsigma=\overline{\lambda}}~
\begin{tikzpicture}
    [anchorbase,scale=.9,color=\clr,yscale=-1]
\draw[-,line width=1pt] (-.35,-.2) to (0.08,.4);
\draw[-,line width=1pt] (.45,-.2) to (0.08,.4);
\draw[-,line width=1.4pt] (0.08,.6) to (0.08,.4);
\draw[-,line width=1pt]  (.45,-1) to [out=left,in=-135] (.45,-.2)
 (.45,-1) to [out=right,in=-45] (.45,-.2);
 \draw[-,line width=1pt]  (-.35,-1) to [out=left,in=-135] (-.35,-.2)
 (-.35,-1) to [out=right,in=-45] (-.35,-.2);
\node at (-.55,0.1) {$\scriptstyle 2r$};
\node at (1.1,0.1) {$\scriptstyle 2(a-r)$};
\draw (.25,-.6) \bdot;
\node at (.15,-.4){$\scriptstyle {\omega_{\varsigma}}$};
\draw (-.53,-.5) \bdot;
\node at (-.85,-.5){$\scriptstyle r$};
\draw (-.53,-.7) \bdot;
\node at (-.85,-.7){$\scriptstyle \omega_{\vartheta}$};
\end{tikzpicture}
    \]
    where $\overline{\lambda}=(\lambda_1,\ldots,\lambda_{k-1})$, $\ell(\varsigma),\ell(\vartheta)\leq k-1$. By induction on $a$, we see that each summand in the above equation is equivalent to zero unless $\vartheta\vdash 2b_1$ and $\varsigma\vdash (a-r+2b_2)$; in this case, we have $\lambda\vdash (a+2b_1+2b_2)$. Moreover, up to lower degree terms, by the inductive hypothesis we see that $\lol_{r,(r,\vartheta)}$ lies in the subspace spanned by  $
\left\{\lol_{r,(r,\mu)}\mid \mu\in \Par_a(2b_1),\ l(\mu)\leq k-1,\ (r,\mu)\unrhd (r,\vartheta)\right\}    
$  while  $\lol_{a-r,\varsigma}$ lies in the subspace spanned by  $
\left\{\lol_{a-r,(a-r,\nu)}\mid \nu\in \Par_a(2b_2),\ \ell(\nu)\leq k-2,\ (a-r,\nu)\unrhd \varsigma\right\}    
$. However, we have
\[
\begin{tikzpicture}
    [anchorbase,scale=.9,color=\clr,yscale=-1]
\draw[-,line width=1pt] (-.35,-.2) to (0.08,.4);
\draw[-,line width=1pt] (.45,-.2) to (0.08,.4);
\draw[-,line width=1.4pt] (0.08,.6) to (0.08,.4);
\draw[-,line width=1pt]  (.45,-1) to [out=left,in=-135] (.45,-.2)
 (.45,-1) to [out=right,in=-45] (.45,-.2);
 \draw[-,line width=1pt]  (-.35,-1) to [out=left,in=-135] (-.35,-.2)
 (-.35,-1) to [out=right,in=-45] (-.35,-.2);
\node at (-.55,0.1) {$\scriptstyle 2r$};
\node at (1.1,0.1) {$\scriptstyle 2(a-r)$};
\draw(.25,-.8)\bdot;
\node at (.25,-1.1){$\scriptstyle a-r$};
\draw (.3,-.4) \bdot;
\node at (0.05,-.4){$\scriptstyle {\omega_{\nu}}$};
\draw (-.53,-.5) \bdot;
\node at (-.85,-.5){$\scriptstyle r$};
\draw (-.53,-.7) \bdot;
\node at (-.85,-.7){$\scriptstyle \omega_{\mu}$};
\end{tikzpicture}
~=~
\begin{tikzpicture}
[anchorbase,scale=.9,color=\clr]
\draw[-,line width=1pt] (0.08,-.7) to (0.08,-.5);
\draw[-,line width=1pt] (0.08,.3) to (0.08,.5) to [out=up,in=up] (1,.5) -- (1,-.7) -- (.54,-.9) -- (.08,-.7);
\draw[-,line width=1.2pt] (.54,-.9) -- (.54,-1.1);
\draw[-,thick] (0.1,-.51) to [out=45,in=-45] (0.1,.31);
\draw[-,thick] (0.06,-.51) to [out=135,in=-135] (0.06,.31);
\draw(-.1,0) \bdot;
\draw(-.4,0)node {$\scriptstyle \omega_{\mu}$};
\draw(.25,0) \bdot;
\draw(.6,0)node {$\scriptstyle \omega_{\nu}$};
\draw(-.09,-.35) \bdot;
\node at (-.35,-.35) {$\scriptstyle {r}$};
\draw(.2,-.35)\bdot;
\node at (.55,-.35) {$\scriptstyle a-r$};
\end{tikzpicture}
~\equiv~
\begin{tikzpicture}
[anchorbase,scale=.9,color=\clr]
\draw[-,line width=1pt] (0.08,-.7) to (0.08,-.5);
\draw[-,line width=1pt] (0.08,.3) to (0.08,.5) to [out=up,in=up] (1,.5) -- (1,-.7) -- (.54,-.9) -- (.08,-.7);
\draw[-,line width=1.2pt] (.54,-.9) -- (.54,-1.1);
\draw[-,thick] (0.1,-.51) to [out=45,in=-45] (0.1,.31);
\draw[-,thick] (0.06,-.51) to [out=135,in=-135] (0.06,.31);
\draw(-.1,0) \bdot;
\draw(-.4,0)node {$\scriptstyle \omega_{\mu}$};
\draw(.08,.45) \bdot;
\draw(.25,0) \bdot;
\draw(.6,0)node {$\scriptstyle \omega_{\nu}$};
\node at (.5,.45) {$\scriptstyle {a}$};
\end{tikzpicture} \, .
\]
By Lemma~\ref{wkdotsspan}, the right hand side above lies in the subspace spanned by $\lol_{a,(a,\eta)}$ such that $(a,\eta)\unrhd (a,\mu+\nu)\unrhd (r,\vartheta)+(\varsigma)\unrhd \lambda$. This concludes the proof of the case when $a>1$ and hence the lemma.
\end{proof}

By Lemma~\ref{lem:diamondnonzero}, we only need to consider $\lol_{a,\lambda}$ for $\lambda\in \Par_a$ satisfies that
\begin{itemize}
    \item[(i)] $\lambda_1=a$,
    \item[(ii)]  $\underline{\lambda}:=(\lambda_2,\ldots,\lambda_k)\vdash2b$ for some $b\in\N$.
\end{itemize}
We call such $\lambda$ admissible, and denote $\Par_a(a+2b)^{\text{adm}}=\{\lambda\in\Par_a(a+2b)\mid \lambda \text{ satisfies } (i)\text{-}(ii) \text{ above}\}$. Moreover, let $\venus_{2a}^{\lambda}$ denote the subspace spanned by $\{\lol_{a,(a,\mu)}\mid \mu\unrhd \underline{\lambda},\mu\vdash(\lambda-a),l(\mu)=2l,\mu_{2i-1}=\mu_{2i}, \forall i=1,\ldots,l\}$.
\begin{lemma}
\label{diamondadmissible}
    For any $\lambda\in\Par_a(a+2b)^{\text{adm}}$, we have $\lol_{a,\lambda}\in \venus_{2a}^{\lambda}$ up to lower degree terms.
\end{lemma}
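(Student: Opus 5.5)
I would prove the lemma by induction on $\ell(\underline\lambda)$, and within a fixed length by downward induction on $\underline\lambda$ in the dominance order; this is what allows lower-degree and more dominant terms to be absorbed. The target space $\venus_{2a}^{\lambda}$ consists of lollipops $\lol_{a,(a,\mu)}$ in which the parts of $\mu$ come in equal pairs. The basic mechanism should be a symmetry of the lollipop. The strand of thickness $2a$ splits into a cup of thickness $a$ on each side. Using \eqref{dotmovecupcaps}, a dot packet $\omega_r$ placed on the left leg can be transported to the right leg at the cost of a sign $(-1)^a$. Using \cref{Lem:dotmovesplitmerge}, the full dot $\omega_a$ on the left leg together with the cup behaves like a "half" of an even dot on the thick $2a$ strand. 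Concretely, the key identity I aim for is the following: modulo lower degree terms, attaching $\omega_r$ to both legs of the cup equals, up to sign, $\omega_{2r}$ on the thick strand plus a combination of mixed terms $\omega_s\otimes\omega_{2r-s}$ with $s\neq r$.

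The plan then proceeds in three steps.

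\textbf{Step 1: the case where every part of $\underline\lambda$ has even multiplicity.} Write $\underline\lambda=(\mu_1,\mu_1,\mu_2,\mu_2,\ldots)$. If all parts already pair up in this way, then $\lol_{a,\lambda}$ is itself a spanning element of $\venus_{2a}^{\lambda}$, and there is nothing to prove.

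\textbf{Step 2: reducing an unpaired part.} Suppose some part value $r$ of $\underline\lambda$ occurs with odd multiplicity. I would isolate one copy of $\omega_r$ on the left leg. Using the commutativity \eqref{commutato}, I move it to the position adjacent to $\omega_a$. Then I transport it to the right leg via \eqref{dotmovecupcaps}, picking up the sign $(-1)^a$. Next I move it back through the merge using \cref{dotmovefreely}(2), which expands it as $\sum_{c+d=r}$ of packets distributed over the two legs.

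Comparing this expression with the original one, I expect a relation of the form
\[
\bigl(1-(-1)^{a}(-1)^{r}\bigr)\lol_{a,\lambda}\equiv\sum \text{(terms indexed by partitions strictly dominating } \lambda\text{, or with shorter } \underline\lambda),
\]
where the role of the full dot $\omega_a$ is to convert $\omega_r$ on one leg into $\omega_{a-r}$-type information. The cross terms should combine, via \cref{wkdotsspan}, into lollipops $\lol_{a,(a,\nu)}$ with $\nu\rhd\underline\lambda$. When $a+r$ is odd, this relation solves for $\lol_{a,\lambda}$ directly, since $2$ is invertible in $\mathbb J'$. When $a+r$ is even, I would instead pair the odd-multiplicity part $r$ with another part $r'$ of odd multiplicity. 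Such an $r'$ exists because $|\underline\lambda|=2b$ is even. I would then use \eqref{dotmovecrossing1}, the Pieri-type product $\omega_r\omega_{r'}$, and the fact (from \cref{Deltaaodd}-style arguments) that the antisymmetric combination vanishes modulo lower-degree and more dominant terms.

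\textbf{Step 3: closing the induction.} Every term produced in Step 2 has strictly larger $\underline\lambda$ in the dominance order, or is of lower degree. Since the dominance order is finite within a fixed size, the downward induction terminates.

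\textbf{Main obstacle.} I expect the hardest part to be the sign and coefficient bookkeeping in Step 2. Specifically, one must show that the leading coefficient is a unit in $\mathbb J'$ (a power of $2$), and that all cross terms genuinely dominate $\underline\lambda$ rather than merely being incomparable to it. To control this, I would first try the thin case: carry out the computation for $a=1,2$ explicitly, mirroring the proof of \cref{lem:bubble1+dott}, in order to extract the correct recursive pattern before attempting the general induction.
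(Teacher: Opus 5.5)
Your Step 2, which carries all the content, does not work as stated.

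\textbf{The sign is wrong.} \eqref{dotmovecupcaps} gives $(-1)^a$ only for the full dot on an $a$-strand. A packet $\omega_r$ passes across the cap with sign $(-1)^r$ modulo lower degree terms: under $\cG_N$ the right leg carries the variables $-h_{i_1},\dots,-h_{i_a}$, so $\omega_r$ there contributes $e_r(-h)=(-1)^re_r(h)$. Hence the factor $1-(-1)^{a+r}$ has no source.

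\textbf{There is no merge to move the dot back through.} The lollipop contains a split, and \cref{dotmovefreely}(2) only rewrites a dot on the thick $2a$-strand as $\sum_{c+d=r}\omega_c\otimes\omega_d$ on the two legs; a dot on one leg cannot be pushed down by itself. Running your manipulation honestly gives
\[
(1+(-1)^r)\,\omega_r^{\mathrm{left}}\equiv \omega_r^{\mathrm{thick}}-\sum_{0<c<r}(-1)^{r-c}\,(\omega_c\omega_{r-c})^{\mathrm{left}}.
\]
For $r$ odd this says nothing about $\omega_r^{\mathrm{left}}$. For $r$ even it trades the part $r$ for $(c,r-c)$, i.e.\ for less dominant and longer packets, which is the wrong direction for your induction.

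\textbf{The fallback for $a+r$ even relies on a false claim.} Evenness of $|\underline\lambda|$ only forces the \emph{sum} of the odd-multiplicity part values to be even; it does not give two such values. Take $a=2$ and $\lambda=(2,2,1,1)$. The only unpaired value of $\underline\lambda=(2,1,1)$ is $r=2$, and $a+r$ is even. Yet $\lol_{2,\lambda}\not\equiv0$: its leading action under $\cG_N$ is a nonzero multiple of $h_{i_1}^3h_{i_2}^3$, and in fact $\lol_{2,\lambda}\equiv 2\,\lol_{2,(2,2,2)}$. Your plan has nothing to say about this case.

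What is missing are the paper's two reductions. They run by induction on $\ell(\lambda)$, downward induction on $\rhd$, \emph{and} induction on the thickness $a$.

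(i) Suppose $\lambda_i=\lambda_{i+1}$ for some $i$. Use exactly the identity from your preamble, which you state but never use in Steps 1--3. Trade the pair for the even thick dot $\omega_{2\lambda_i}$ below the split, at the cost of packets with $(\lambda_i+t,\lambda_i-t)$, $t>0$, which are strictly more dominant. Apply the length induction to the shorter packet, then push the thick dot back onto the legs. The resulting packets $[a,\mu,\lambda_i+j,\lambda_i-j]$ are paired for $j=0$ and strictly more dominant for $j>0$.

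(ii) Suppose all parts of $\underline\lambda$ are distinct, with smallest part $r$.
\begin{enumerate}
\item Realize $\omega_r$ through an $(r,a-r)$ split of the left leg.
\item Rewrite $\lol_{a,\lambda}$ as a merge of two lollipops of thicknesses $2r$ and $2(a-r)$.
\item By induction on $a$, both carry paired packets $\mu,\nu$.
\item \cref{wkdotsspan} merges them back into lollipops $\lol_{a,(a,\gamma)}$ with $\gamma\unrhd\mu+\nu$.
\item A sum of paired partitions is paired, whereas $\underline\lambda$ has distinct parts, so $\mu+\nu\rhd\underline\lambda$ strictly. This closes the downward induction.
\end{enumerate}

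Step (ii)---reducing the thickness and using ``paired $+$ paired is paired''---is the idea your proposal lacks. It is what handles unpaired parts, in place of a sign-cancellation argument.
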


\begin{proof}
    We prove  by induction on $k=l(\lambda)$ and an inverse induction on the order with respect to $\rhd$. We first consider the case for $k=1$. In this case, we must have $b=0$ and $\lambda=(a)$, and the lemma clearly holds.

    We then consider the case when $\lambda$ is maximal with respect to $\rhd$. If $2b<a$, then we have $\lambda=(a,2b)$. Thus
    \begin{equation}
    \label{eq:diamonda2b}
    \lol_{a,(a,2b)}\equiv
    \sum_{0\leq k\leq \min\{a-2b,2b\}}
    \begin{tikzpicture}
[anchorbase,scale=.9,color=\clr]
\draw[-,line width=1pt] (0.08,-.7) to (0.08,-.5);
\draw[-,line width=1pt] (0.08,.3) to (0.08,.5) to [out=up,in=up] (1,.5) -- (1,-.7) -- (.54,-.9) -- (.08,-.7);
\draw[-,line width=1.2pt] (.54,-.9) -- (.54,-1.1);
\draw[-,thick] (0.1,-.51) to [out=45,in=-45] (0.1,.31);
\draw[-,thick] (0.06,-.51) to [out=135,in=-135] (0.06,.31);
\draw(-.1,0) \bdot;
\draw(-.4,0)node {$\scriptstyle k$};
\draw(.25,0) \bdot;
\draw(.6,0)node {$\scriptstyle a-k$};
\draw(-.09,-.35) \bdot;
\node at (-.35,-.35) {$\scriptstyle {2b}$};
\node at (.55,-.35) {$\scriptstyle a-2b$};
\node at (.53,-1.35){$\scriptstyle a$};
\end{tikzpicture}\ .
    \end{equation}
    The right hand side above is zero unless $2b=0$, in which case we have $\lol_{a,\lambda}=\lol_{a,a}$. On the other hand, if $2b\geq a$, then we have $\lambda=(a,a,\nu)$ with $\ell(\nu)<\ell(\underline{\lambda})$. Thus $\lol_{a,\lambda}\equiv (-1)^a\begin{tikzpicture}[yscale=-1, anchorbase,scale=.8,color=\clr]
\draw[-,line width=1.4pt] (0.1,.3) to (0.1,.7);
\draw[-,thick] (0.1,-.51) to [out=right,in=-45] (0.1,.31);
\draw[-,thick] (0.1,-.51) to [out=left,in=-135] (0.1,.31);
\node at (0.08,.8) {$\scriptstyle 2a$};
\draw (-.1,-.2)\bdot;
\draw (.1,.45)\bdot;
\node at (-.4,-.2) {$\scriptstyle \omega_\nu$};
\end{tikzpicture}$ and the lemma in this case follows from induction on $k$ and Lemma~\ref{lem:diamondnonzero}. This concludes the proof when $\lambda$ is maximal.

In general, suppose $k>1$ and there exists $1\leq i\leq k-1$ such that $\lambda_i=\lambda_{i+1}$, then we have
\begin{equation}
    \label{eq:casei=i+1}
\lol_{a,\lambda}
~\equiv~
\begin{tikzpicture}[yscale=-1, anchorbase,scale=.8,color=\clr]
\draw[-,line width=1.4pt] (0.1,.3) to (0.1,.7);
\draw[-,thick] (0.1,-.51) to [out=right,in=-45] (0.1,.31);
\draw[-,thick] (0.1,-.51) to [out=left,in=-135] (0.1,.31);
\node at (0.08,.8) {$\scriptstyle 2a$};
\draw (-.1,-.2)\bdot;
\draw (.1,.45)\bdot;
\node at (.5,.45){$\scriptstyle 2\lambda_i$};
\node at (-.6,-.2) {$\scriptstyle {\omega_{\mu^{(0)}}}$};
\end{tikzpicture}
~+\sum_{0<t\leq \lambda_i}\lol_{a,\mu^{(t)}}~
\end{equation}
where $\mu^{(0)}$ is obtained from $\lambda$ by deleting $\lambda_i$ and $\lambda_{i+1}$ and $\mu^{(t)}$  is obtained from $\lambda$ by replacing $(\lambda_i,\lambda_{i+1})$ by $(\lambda_i+t,\lambda_i-t)$. Thus we have $\mu^{(t)}\rhd \lambda$ for $0<t\leq \lambda_i$. Hence the second summand of \eqref{eq:casei=i+1} lie in $\venus_{2a}^{\lambda}$ via inverse induction on the order with respect to $\rhd$. For the first summand of \eqref{eq:casei=i+1}, by induction on $k$ we have $\lol_{a,\mu^{(0)}}\in \venus_{2a}^{\mu^{(0)}}$. For any $\mu\in \Par_a$ with $(a,\mu)\unrhd \mu^{(0)}$, $l(\mu)=2l$ and $\mu_{2i-1}=\mu_{2i}\ \forall i$, we have
\[
\begin{tikzpicture}[yscale=-1, anchorbase,scale=.8,color=\clr]
\draw[-,line width=1.4pt] (0.1,.3) to (0.1,.7);
\draw[-,thick] (0.1,-.51) to [out=right,in=-45] (0.1,.31);
\draw[-,thick] (0.1,-.51) to [out=left,in=-135] (0.1,.31);
\node at (0.08,.8) {$\scriptstyle 2a$};
\draw (-.1,-.2)\bdot;
\draw (.1,.45)\bdot;
\node at (.5,.45){$\scriptstyle 2\lambda_i$};
\node at (-.6,-.2) {$\scriptstyle {\omega_{\mu}}$};
\end{tikzpicture}
~\equiv \sum_{0\leq j\leq \lambda_i}f_j\lol_{a, [a,\mu,\lambda_i+j,\lambda_i-j]}
\]
where $f_j\in \mathbb J'$ and $[a,\mu,\lambda_i+j,\lambda_i-j]$ is the partition obtained from reordering the composition $(a,\mu,\lambda_i+j,\lambda_i-j)$. Since $(a,\mu)\unrhd \mu^{(0)}$ and $(\lambda_i+j,\lambda_i-j)\unrhd \lambda$, we have $[a,\mu,\lambda_i+j,\lambda_i-j]\unrhd \lambda$. The equality holds if and only if $j=0$ and $(a,\mu)=\mu^{(0)}$, in which case $\lambda$ is already of the desired form. If not, then it follows by induction on $\rhd$ that each $\lol_{a, [a,\mu,\lambda_i+j,\lambda_i-j]}\in \venus_{2a}^{\lambda}$. This completes the proof of the lemma when there exists $i$ such that $\lambda_i=\lambda_{i+1}$.

It remains to consider the case when $\lambda=(a,\lambda_2,\ldots,\lambda_k)$ with $\lambda_2>\cdots>\lambda_k=r>0$. In this case we have  \[
    \lol_{a,\lambda}
    \equiv \sum_{\vartheta+\varsigma=\overline{\lambda}}
    \begin{tikzpicture}
[anchorbase,scale=.9,color=\clr]
\draw[-,line width=1pt] (0.08,-.7) to (0.08,-.5);
\draw[-,line width=1pt] (0.08,.3) to (0.08,.5) to [out=up,in=up] (1,.5) -- (1,-.7) -- (.54,-.9) -- (.08,-.7);
\draw[-,line width=1.2pt] (.54,-.9) -- (.54,-1.1);
\draw[-,thick] (0.1,-.51) to [out=45,in=-45] (0.1,.31);
\draw[-,thick] (0.06,-.51) to [out=135,in=-135] (0.06,.31);
\draw(-.1,0) \bdot;
\draw(-.4,0)node {$\scriptstyle \omega_{\vartheta}$};
\draw(.25,0) \bdot;
\draw(.6,0)node {$\scriptstyle \omega_{\varsigma}$};
\draw(-.09,-.35) \bdot;
\node at (-.35,-.35) {$\scriptstyle {r}$};
\node at (.52,-.35) {$\scriptstyle a-r$};
\end{tikzpicture}
~=~\sum_{\vartheta+\varsigma=\overline{\lambda}}~
\begin{tikzpicture}
    [anchorbase,scale=.9,color=\clr,yscale=-1]
\draw[-,line width=1pt] (-.35,-.2) to (0.08,.4);
\draw[-,line width=1pt] (.45,-.2) to (0.08,.4);
\draw[-,line width=1.4pt] (0.08,.6) to (0.08,.4);
\draw[-,line width=1pt]  (.45,-1) to [out=left,in=-135] (.45,-.2)
 (.45,-1) to [out=right,in=-45] (.45,-.2);
 \draw[-,line width=1pt]  (-.35,-1) to [out=left,in=-135] (-.35,-.2)
 (-.35,-1) to [out=right,in=-45] (-.35,-.2);
\node at (-.55,0.1) {$\scriptstyle 2r$};
\node at (1,0.1) {$\scriptstyle 2(a-r)$};
\draw (.25,-.6) \bdot;
\node at (.15,-.4){$\scriptstyle {\omega_{\varsigma}}$};
\draw (-.53,-.5) \bdot;
\node at (-.85,-.5){$\scriptstyle r$};
\draw (-.53,-.7) \bdot;
\node at (-.85,-.7){$\scriptstyle \omega_{\vartheta}$};
\end{tikzpicture}
    \]
where $\overline{\lambda}=(\lambda_1,\ldots,\lambda_{k-1})$, $\vartheta_1=r$ and $\varsigma_1=a-r$. By induction on $a$, we see that
    \[
    \begin{tikzpicture}
    [anchorbase,scale=.9,color=\clr,yscale=-1]
\draw[-,line width=1pt] (-.35,-.2) to (0.08,.4);
\draw[-,line width=1pt] (.45,-.2) to (0.08,.4);
\draw[-,line width=1.4pt] (0.08,.6) to (0.08,.4);
\draw[-,line width=1pt]  (.45,-1) to [out=left,in=-135] (.45,-.2)
 (.45,-1) to [out=right,in=-45] (.45,-.2);
 \draw[-,line width=1pt]  (-.35,-1) to [out=left,in=-135] (-.35,-.2)
 (-.35,-1) to [out=right,in=-45] (-.35,-.2);
\node at (-.4,0.1) {$\scriptstyle 2r$};
\node at (.9,0.1) {$\scriptstyle 2(a-r)$};
\draw (.25,-.6) \bdot;
\node at (.15,-.4){$\scriptstyle {\omega_{\varsigma}}$};
\draw (-.53,-.5) \bdot;
\node at (-.8,-.5){$\scriptstyle r$};
\draw (-.53,-.7) \bdot;
\node at (-.9,-.7){$\scriptstyle \omega_{\vartheta}$};
\end{tikzpicture}
    ~\equiv\sum_{\mu,\nu}~
    \begin{tikzpicture}
    [anchorbase,scale=.9,color=\clr,yscale=-1]
\draw[-,line width=1pt] (-.35,-.2) to (0.08,.4);
\draw[-,line width=1pt] (.45,-.2) to (0.08,.4);
\draw[-,line width=1.4pt] (0.08,.6) to (0.08,.4);
\draw[-,line width=1pt]  (.45,-1) to [out=left,in=-135] (.45,-.2)
 (.45,-1) to [out=right,in=-45] (.45,-.2);
 \draw[-,line width=1pt]  (-.35,-1) to [out=left,in=-135] (-.35,-.2)
 (-.35,-1) to [out=right,in=-45] (-.35,-.2);
\node at (-.4,0.1) {$\scriptstyle 2r$};
\node at (.9,0.1) {$\scriptstyle 2(a-r)$};
\draw(.25,-.8)\bdot;
\node at (.2,-1.1){$\scriptstyle a-r$};
\draw (.3,-.4) \bdot;
\node at (0.05,-.4){$\scriptstyle {\omega_{\nu}}$};
\draw (-.53,-.5) \bdot;
\node at (-.8,-.5){$\scriptstyle r$};
\draw (-.53,-.7) \bdot;
\node at (-.85,-.7){$\scriptstyle \omega_{\mu}$};
\end{tikzpicture}
~=\sum_{\mu,\nu}~
\begin{tikzpicture}
[anchorbase,scale=.9,color=\clr]
\draw[-,line width=1pt] (0.08,-.7) to (0.08,-.5);
\draw[-,line width=1pt] (0.08,.3) to (0.08,.5) to [out=up,in=up] (1,.5) -- (1,-.7) -- (.54,-.9) -- (.08,-.7);
\draw[-,line width=1.2pt] (.54,-.9) -- (.54,-1.1);
\draw[-,thick] (0.1,-.51) to [out=45,in=-45] (0.1,.31);
\draw[-,thick] (0.06,-.51) to [out=135,in=-135] (0.06,.31);
\draw(-.1,0) \bdot;
\draw(-.4,0)node {$\scriptstyle \omega_{\mu}$};
\draw(.25,0) \bdot;
\draw(.6,0)node {$\scriptstyle \omega_{\nu}$};
\draw(-.09,-.35) \bdot;
\node at (-.3,-.35) {$\scriptstyle {r}$};
\draw(.2,-.35)\bdot;
\node at (.6,-.35) {$\scriptstyle a-r$};
\end{tikzpicture}
~\equiv\sum_{\mu,\nu}~
\begin{tikzpicture}
[anchorbase,scale=.8,color=\clr]
\draw[-,line width=1pt] (0.08,-.7) to (0.08,-.5);
\draw[-,line width=1pt] (0.08,.3) to (0.08,.5) to [out=up,in=up] (1,.5) -- (1,-.7) -- (.54,-.9) -- (.08,-.7);
\draw[-,line width=1.2pt] (.54,-.9) -- (.54,-1.1);
\draw[-,thick] (0.1,-.51) to [out=45,in=-45] (0.1,.31);
\draw[-,thick] (0.06,-.51) to [out=135,in=-135] (0.06,.31);
\draw(-.1,0) \bdot;
\draw(-.4,0)node {$\scriptstyle \omega_{\mu}$};
\draw(.08,.45) \bdot;
\draw(.25,0) \bdot;
\draw(.6,0)node {$\scriptstyle \omega_{\nu}$};
\node at (.3,.45) {$\scriptstyle {a}$};
\end{tikzpicture}
    \]
    where $\mu\unrhd \vartheta,\ \nu\unrhd (\varsigma_2,\ldots,\varsigma_{\ell(\varsigma)})$. Hence we have $\mu+\nu\unrhd \underline{\lambda}$. Moreover, by Lemma~\ref{wkdotsspan}, each summand $\begin{tikzpicture}
[anchorbase,scale=.8,color=\clr]
\draw[-,line width=1pt] (0.08,-.7) to (0.08,-.5);
\draw[-,line width=1pt] (0.08,.3) to (0.08,.5) to [out=up,in=up] (1,.5) -- (1,-.7) -- (.54,-.9) -- (.08,-.7);
\draw[-,line width=1.2pt] (.54,-.9) -- (.54,-1.1);
\draw[-,thick] (0.1,-.51) to [out=45,in=-45] (0.1,.31);
\draw[-,thick] (0.06,-.51) to [out=135,in=-135] (0.06,.31);
\draw(-.1,0) \bdot;
\draw(-.4,0)node {$\scriptstyle \omega_{\mu}$};
\draw(.08,.45) \bdot;
\draw(.25,0) \bdot;
\draw(.6,0)node {$\scriptstyle \omega_{\nu}$};
\node at (.3,.45) {$\scriptstyle {a}$};
\end{tikzpicture}$ 
lies in the subspace spanned by $\{\lol_{a,(a,\gamma)}\mid \gamma\unrhd \mu+\nu\}$. Thus $\gamma\unrhd \underline{\lambda}$ as well. However, since $\mu$ and $\nu$ are partitions of even lengths such that $\mu_{2i-1}=\mu_{2i}$ and $\nu_{2j-1}=\nu_{2j}$ for all $1\leq i \leq l(\mu)$ and $1\leq j \leq \ell(\nu)$, we must have $\mu+\nu\rhd \underline{\lambda}$ by the assumption on $\lambda$. Hence the lemma in this case now follows from the inverse induction on the order with respect to $\rhd$. This proves the lemma.
\end{proof}

\begin{proof} [Proof of \cref{lollipopspan}]
  (1)  By Lemma~\ref{lem:diamondnonzero}, we may assume that $\lambda$ is admissible. We proceed by induction on $\lambda$ with respect to the reversed dominance order. 
    
    Suppose $\lambda$ is maximal with respect to $\rhd$. If $2b<a$, then we have $\lambda=(a,2b)$ and this case follows from the same analysis as for \eqref{eq:diamonda2b}. If $2b\geq a$, then we have $\lambda=(a,a,\nu)$ and $\lol_{a,\lambda}\equiv (-1)^a\begin{tikzpicture}[yscale=-1, anchorbase,scale=.8,color=\clr]
\draw[-,line width=1.4pt] (0.1,.3) to (0.1,.7);
\draw[-,thick] (0.1,-.51) to [out=right,in=-45] (0.1,.31);
\draw[-,thick] (0.1,-.51) to [out=left,in=-135] (0.1,.31);
\node at (0.08,.8) {$\scriptstyle 2a$};
\draw (-.1,-.2)\bdot;
\draw (.1,.45)\bdot;
\node at (-.4,-.2) {$\scriptstyle \omega_\nu$};
\end{tikzpicture}$. Hence lemma in this case follows from induction on $k$ and Lemma~\ref{lem:diamondnonzero}. This concludes the proof of (1) when $\lambda$ is maximal.

In general, by Lemma~\ref{diamondadmissible}, we may assume that $\lambda$ is of the form $(a,\underline{\lambda})$ where $\ell(\underline{\lambda})$ is even with $\underline{\lambda}_{2i-1}=\underline{\lambda}_{2i}$ for all $1\leq i \leq \ell(\underline{\lambda})/2$. Then Part~ (1) in this case follows from the same reasoning as in the proof of Lemma~\ref{diamondadmissible} for \eqref{eq:casei=i+1}, i.e. the case when there exists $c\in\N$ such that $\lambda_c=\lambda_{c+1}$.

  (2)  We have
    \[
    \begin{tikzpicture}[anchorbase,color=\clr]
    \draw[line width=1.4pt] (0,-.3) -- (0,0);
    \draw[line width=1pt] (0,0) -- (.3,.6)
                          (0,0) -- (-.3,.6);
    \draw (.3,.6)\bdot (-.3,.6)\bdot (.15,.3)\bdot (-.15,.3)\bdot;
    \node at (-.45,.35){$\scriptstyle \omega_\lambda$};
    \node at (.5,.3){$\scriptstyle \omega_\mu$};
    \node at (-.6,.1){$\scriptstyle 2a-2b$};
     \node at (.35,.1){$\scriptstyle 2b$};
\end{tikzpicture}
~\overset{\eqref{dotmovemergess}}{\equiv}~
\sum_{\vartheta,\varsigma}
\begin{tikzpicture}   [anchorbase,scale=.9,color=\clr,yscale=-1]
\draw[-,line width=1pt] (-.35,-.2) to (0.08,.4);
\draw[-,line width=1pt] (.45,-.2) to (0.08,.4);
\draw[-,line width=1.4pt] (0.08,.6) to (0.08,.4);
\draw[-,line width=1pt]  (.45,-1) to [out=left,in=-135] (.45,-.2)
 (.45,-1) to [out=right,in=-45] (.45,-.2);
 \draw[-,line width=1pt]  (-.35,-1) to [out=left,in=-135] (-.35,-.2)
 (-.35,-1) to [out=right,in=-45] (-.35,-.2);
\node at (-.7,0.1) {$\scriptstyle 2a-2b$};
\node at (.6,0.1) {$\scriptstyle 2b$};
\draw (.25,-.6) \bdot;
\node at (.1,-.4){$\scriptstyle {\omega_{\varsigma}}$};
\draw (-.53,-.6) \bdot;
\node at (-.85,-.6){$\scriptstyle \omega_{\vartheta}$};
\end{tikzpicture}
=
\sum_{\vartheta,\varsigma}
    \begin{tikzpicture}
[anchorbase,scale=.9,color=\clr]
\draw[-,line width=1pt] (0.08,-.7) to (0.08,-.5);
\draw[-,line width=1pt] (0.08,.3) to (0.08,.5) to [out=up,in=up] (1,.5) -- (1,-.7) -- (.54,-.9) -- (.08,-.7);
\draw[-,line width=1.2pt] (.54,-.9) -- (.54,-1.1);
\draw[-,thick] (0.1,-.51) to [out=45,in=-45] (0.1,.31);
\draw[-,thick] (0.06,-.51) to [out=135,in=-135] (0.06,.31);
\draw(-.1,0) \bdot;
\draw(-.4,0)node {$\scriptstyle \omega_{\vartheta}$};
\draw(.25,0) \bdot;
\draw(.6,0)node {$\scriptstyle \omega_{\varsigma}$};
\node at (-.6,-.35) {$\scriptstyle {2a-2b}$};
\node at (.5,-.35) {$\scriptstyle 2b$};
\end{tikzpicture}
~\overset{\cref{wkdotsspan}}{\equiv}~
\sum_\nu
\begin{tikzpicture}[yscale=-1, anchorbase,scale=.8,color=\clr]
\draw[-,line width=1.4pt] (0.1,.3) to (0.1,.7);
\draw[-,thick] (0.1,-.51) to [out=right,in=-45] (0.1,.31);
\draw[-,thick] (0.1,-.51) to [out=left,in=-135] (0.1,.31);
\node at (0.08,.9) {$\scriptstyle 2a$};
\draw (-.1,-.2)\bdot;
\node at (-.4,-.2) {$\scriptstyle \omega_\nu$};
\end{tikzpicture}.
    \]
It follows by (1) that the rightmost terms belong to $\venus_{2a}$, whence (2).
\end{proof}

\subsection{Proof of \cref{thm:basisAWb}}
  \label{app:independence}

To prove the basis \cref{thm:basisAWb}, it remains to prove the linear independence of $\bPSMat_{\lambda,\mu}$.
We first reduce the proof of linear independence to the case when $\mu=\emptyset$. 

To this end, we note that $\AWb$ is a rigid monoidal category by \cref{zigzag}. For any $\lambda=(\lambda_1,\ldots,\lambda_k) \in \Lambda_{\text{st}}(m)$, the dual of $\lambda$ is the strict composition $\lambda^*=(\lambda_k,\ldots,\lambda_1)$: 
\begin{equation}
    \label{adjunction}
    \eta_{\lambda}=
    \begin{tikzpicture}
        [baseline=0,color=\clr]
        \draw[line width=1pt] (-.4,.2) to [out=-60,in=-120] (1.5,.2)
                              (.2,.2) to [out=-60,in=-120] (.8,.2);
        \node at (-.1,.2){$\scriptstyle\cdots$};
         \node at (1.1,.2){$\scriptstyle\cdots$};
         \node at (-.4,.4){$\scriptstyle \lambda_1$};
          \node at (1.5,.4){$\scriptstyle \lambda_1$};
        \node at (.2,.4){$\scriptstyle \lambda_k$};
          \node at (.8,.4){$\scriptstyle \lambda_k$};
    \end{tikzpicture},\qquad
    \epsilon_{\lambda}=
    \begin{tikzpicture}
        [baseline=0,color=\clr,yscale=-1]
        \draw[line width=1pt] (-.4,.2) to [out=-60,in=-120] (1.5,.2)
                              (.2,.2) to [out=-60,in=-120] (.8,.2);
        \node at (-.1,.2){$\scriptstyle\cdots$};
         \node at (1.1,.2){$\scriptstyle\cdots$};
         \node at (-.4,.4){$\scriptstyle \lambda_1$};
          \node at (1.5,.4){$\scriptstyle \lambda_1$};
        \node at (.2,.4){$\scriptstyle \lambda_k$};
          \node at (.8,.4){$\scriptstyle \lambda_k$};
    \end{tikzpicture}.
\end{equation} 

By the standard rigidity argument (cf. \cite[\S 2.4]{BCNR}), the assignment $f\mapsto (f\otimes 1_{\mu^*})\circ (1_\unit\otimes\eta_{\mu})$ gives an isomorphism of $\mathbb J'$-modules
\begin{equation}
\label{isomorphismspace}
   \phi_\mu^\lambda\colon \Hom_{\AWb}(\mu,\lambda)\longrightarrow \Hom_{\AWb}(\unit, \lambda\otimes \mu^*) .
\end{equation}
Any $g \in \bPSMat_{\lambda,\mu}$ 
can be written in the form 
\[
g=\Delta(g)f, 
\qquad \text{where } \Delta(g)=\dotbubble_{2}^{k_2}\dotbubble_4^{k_4}\cdots\in \End_{\AWb}(\unit).
\]

\begin{lemma}
\label{phicorrespond}
    Suppose $\lambda\in\Lambda_{\text{st}}(m)$ and $\mu\in \Lambda_{\text{st}}(n)$
    with $m\equiv n\mod 2$. For any $f\in\bPSMat_{\mu,\lambda}$, we have (modulo lower degree terms)
    $\phi_\mu^{\lambda}(f)\equiv f'$, 
    for some $f'\in \bPSMat_{\unit,\lambda\otimes \mu^*}$ such that $\Delta(f)=\Delta(f')$.
\end{lemma}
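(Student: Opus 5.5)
Write $f=\Delta(f)\,g$ with $g$ a bubble-free elementary diagram of type $(A,P)$, where $A\in\SMat_{\lambda,\mu}$. Since $\phi_\mu^\lambda$ is $\Xi$-linear (bubbles sit on the far left and commute past everything by horizontal concatenation), it suffices to show that $\phi^\lambda_\mu(g)\equiv g'$ for a single bubble-free elementary diagram $g'\in\PSMat_{\lambda\otimes\mu^*,\unit}$. The equality $\Delta(f)=\Delta(f')$ then holds automatically.

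\textbf{Topological step.} Compose $g$ with $1_\lambda\otimes\eta_\mu$: this bends each bottom endpoint $\mu_j$ of $g$ upward to the right, creating the boundary point $\mu^*_{t+1-j}$. In $g$, each such bottom vertex carries an $s$-fold split. Use \eqref{mergesplitslideleft} and the zigzag relations \eqref{zigzag} to convert it into an $s$-fold merge at the new top vertex. Under this bending:
\begin{itemize}
\item a leg between $\lambda_i$ and $\mu_j$ becomes a cup between $\lambda_i$ and $\mu^*_{t+1-j}$;
\item a cap between $\mu_j$ and $\mu_{j'}$ becomes a cup between the corresponding points of $\mu^*$;
\item bottom lollipops attached at $\mu_j$ become top lollipops attached at $\mu^*_{t+1-j}$, via \eqref{mergesplitslideleft} and \cref{bendstick};
\item cups already present among the $\lambda$'s are unchanged.
\end{itemize}
The crossings in the middle part become crossings between cups. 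Using \eqref{sliders}--\eqref{braid} and the isotopy invariance of undotted diagrams (as in the degree-zero case of \cref{lem:nested}), the underlying undotted diagram is equal to a reduced chicken foot diagram from $\unit$ to $\lambda\otimes\mu^*$. Its matrix is $A$ with the indices of $\mu$ reversed, so it lies in $\SMat_{\unit,\lambda\otimes\mu^*}$. The diagram is reduced because each pair of legs still meets at most once.

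\textbf{Dots.} Each elementary dot packet of $g$ sits either at the bottom of a through-leg or at the leftmost boundary of a cap/cup. After bending, it lies somewhere along a cup of $g'$. By \cref{dotmovefreely}, moving dot packets through crossings, merges and splits costs only lower degree terms. Using \eqref{dotmovecupcaps} together with \eqref{curlremove}, one can then slide each packet along the cup to its leftmost boundary, at the cost of a sign $(-1)^{|\nu|\cdot a}$ (thickness $a$) and lower terms. The commutativity \eqref{commutato} ensures the packet remains $\omega_{a,\nu}$ with $\nu$ a partition. Lollipop legs with even-partition packets $\wkdotamurev$ keep their form under the bending. If any lollipops from $\mu$ land on the same top vertex as existing ones, combine them using \cref{lollipopspan}(2) modulo lower terms.

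The resulting diagram is thus $\pm g'$ modulo lower terms, with $g'$ the elementary diagram of type $(A',P')$ obtained by the index reversal. The sign is harmless: replace $f'$ by $\pm f'$, or absorb it, since the statement only needs linear independence later.

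The main obstacle is the dot bookkeeping. In particular, we must check that sliding packets around bent cups, and over the crossings created by bending, produces a nonzero multiple of exactly the prescribed elementary packet rather than a combination of packets. We would handle this with the $\equiv$ calculus of \cref{dotmovefreely}: the leading term is a signed transport of the packet, and all corrections have strictly smaller degree.
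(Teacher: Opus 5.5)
Your route is the paper's: bend the bottom of $f$ with $\eta_\mu$, turn the bottom splits into top merges via \eqref{mergesplitslideleft} and \eqref{zigzag}, reposition the packets with \cref{dotmovefreely} modulo lower degree terms, and note that the floating bubbles are untouched. Your $\Xi$-linearity reduction is a clean way to organize that last point.

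The one place you fall short of the statement is the sign. You only obtain $\phi_\mu^\lambda(f)\equiv\pm f'$, and then propose to absorb the sign because only linear independence is used later. That changes the lemma rather than proving it, since $-f'$ is not an element of the set $\bPSMat$ in which $f'$ must lie. Your sign is also not the right one. Pushing $\omega_{a,\nu}$ past a cup or cap turning point costs $(-1)^{|\nu|}$ modulo lower terms, not $(-1)^{|\nu|a}$. For $\nu=(a)$ this is \eqref{dotmovecupcaps}; for $\omega_{a,r}$, write it as split, dot, merge and use \eqref{mergesplitmove}.

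The gap closes because no packet ever has to pass a turning point, so the sign is $+1$:
\begin{enumerate}
\item \emph{Through-legs.} A packet at the bottom of a through-leg from $\mu_j$ to $\lambda_i$ lies, after bending, on the arm of the new cup that descends from $\lambda_i$. That is already its left arm, separated from the left endpoint only by crossings.
\item \emph{Caps.} Take a cap joining $\mu_j$ and $\mu_{j'}$ with $j<j'$. The bent strand is an S-shaped zigzag (the thin cup coming from $\mu_{j'}$, then the cap) followed by the thin cup coming from $\mu_j$. The packet sits near $\mu_j$, on an end segment of the zigzag rather than its middle one. Straightening with \eqref{zigzag} therefore leaves it, with no sign, on the left arm of the resulting cup.
\item \emph{Lollipops.} For a bottom lollipop of thickness $2s$, split plus thick cup becomes nested cups plus merge, followed by a zigzag whose first segment carries the dot. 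So bending yields exactly the upward lollipop of \eqref{def:huochai}, with the dot on the left thin strand. The packet $\omega_\nu$ on the stick has $|\nu|$ even, so moving it is sign-free anyway.
\end{enumerate}
With this bookkeeping you get $\phi_\mu^\lambda(f)\equiv f'$ exactly.

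Your provision for merging lollipops at an occupied vertex is vacuous. The $\mu^*$-vertices are new, and each carries at most one lollipop.
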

\begin{proof}
    By definition, we have
    \(
    \phi_\mu^{\lambda}(f)=\begin{tikzpicture}[baseline =0,color=\clr]
        \draw[line width=1.2pt] (0,.6) \toplabel{\lambda} -- (0,0) to [out=down,in=left] (.2,-.2) to [out=right,in=down] (.4,0) -- (.4,.6)\toplabel{\mu^*};
        \coupon{0,.3}{f};
    \end{tikzpicture}.
    \) Through \cref{mergesplitslideleft}, every split in the bottom part of $f$ becomes a merge on the top part. We then apply \cref{dotmovefreely} such that each elementary dot packet sits in proper positions modulo lower degree terms. Then we can assume the resulting diagram $f'$ is elementary. In this process, the floating bubbles of $f$ remain unchanged, and hence we have $\Delta(f)=\Delta(f')$.
\end{proof}

It follows from \cref{phicorrespond} that each elementary diagram of type $\mu \to \lambda$ corresponds to a unique elementary diagram of type $\unit\to \lambda\otimes\mu^*$ via $\phi^{\lambda}_{\mu}$.

 By \cref{phicorrespond} and \eqref{isomorphismspace}, it remains to show that $\bPSMat_{\emptyset,\gamma}$ is linearly independent in $\Hom_{\AWb}(\emptyset,\gamma)$, for $\gamma=(\gamma_1,\ldots,\gamma_t)$. 
 
Let us first outline the proof strategy below. We associate to each diagram $g=(A_g,P_g)\in \PSMat_{\emptyset,\gamma}$ a carefully chosen label $\theta(g)$ of the exterior tensor factors.  This label records the undotted shape of $g$, hence the corresponding $v_{\theta(g)}$-component separates diagrams with different matrices $A_g$.  Therefore, in any putative linear relation in $\PSMat_{\emptyset,\gamma}$, after passing to terms of maximal degree, it suffices to compare diagrams with the same undotted shape.  For such diagrams, the explicit action formulas for bubbles, lollipops and dot packets identify the top-degree contribution in the relevant $v_{\theta(g)}$-component with a product of elementary symmetric polynomials in Cartan variables.  A suitable monomial order then shows that these leading terms are linearly independent, forcing all maximal-degree coefficients to vanish and any putative linear relation to be trivial.  This proves linear independence over $\C$, and the general case follows by base change.
 
Now let us get into the details. Recall from \cref{eq:MSMat} that any $g\in \PSMat_{\emptyset,\gamma}$ is of the form $g=(A_g,P_g)$, for some $A_g=(a_{ij})\in \SMat_{\emptyset,\gamma}$ and $P_g=(\vartheta_{ij})$ where $\vartheta_{ij}\in \Par_{a_{ij}}$ for $i\neq j$ and $\vartheta_{ii}\in \EPar_{2a_{ii}}$.
We enumerate the endpoints at the top row of $g$ by $1,\ldots,t$ from left to right. The shape of $g$ (after removing the dots) is uniquely determined by $A_g=(a_{ij})$. Namely, there is a lollipop  attached at each thick strand $\gamma_i$ with thickness $2a_{ii}$ and there is a cup with thickness $a_{ij}$ connecting $\gamma_i$ and $\gamma_j$, for $1\leq i\neq j\leq t$. Suppose there are $k$ lollipops and bubbles in total, we label them by a sequence
\begin{equation}
\label{bubbleshapeg}
\sh(g):=\big((L(g)_1,R(g)_1),\ldots, (L(g)_k,R(g)_k)\big)
\end{equation}
such that
\begin{itemize}
    \item $(L(g))_u$ (resp. $(R(g))_u$) represents the left (resp. right) endpoint of $(L(g)_u,R(g)_u)$,
    \item $L(g)_1\leq L(g)_2\leq \cdots \leq L(g)_k\leq t$,
    \item if $L(g)_i=L(g)_j$ with $i<j$, then $R(g)_i<R(g)_j$.
\end{itemize}
Note that if $(L(g)_u,R(g)_u)$ is a lollipop attached at $\gamma_i$, then it should be understood that $L(g)_u=R(g)_u=i$, and the thickness of $(L(g)_u,R(g)_u)$ is encoded by $a_{ii}$.

Suppose $N\gg 0$, we now construct $\theta(g)\in [N]^{|\gamma|}$ in the following way:
\begin{itemize}
    \item[Step 1:] Suppose we have $|\gamma|$ boxes as follows:
    \begin{equation*}
    \begin{tikzpicture}[baseline=0,scale=.5]
    \BoxRow{0}{0};
    \node at (1.5,0.5){$\scriptstyle\cdots$};
    \BoxRow{0}{2};
    \node at (1.5,-.5){$\scriptstyle \gamma_1 \text{ boxes}$};
\end{tikzpicture} \quad
\begin{tikzpicture}
    [baseline=0]
    \draw (0,-.5) -- (0,.8);
\end{tikzpicture}
\quad
\begin{tikzpicture}[baseline=0,scale=.5]
    \BoxRow{0}{0};
    \node at (1.5,0.5){$\scriptstyle\cdots$};
    \BoxRow{0}{2};
    \node at (1.5,-.5){$\scriptstyle \gamma_2 \text{ boxes}$};
\end{tikzpicture}\quad \begin{tikzpicture}
    [baseline=0]
    \draw (0,-.5) -- (0,.8);
\end{tikzpicture} \qquad
\cdots\qquad \begin{tikzpicture}
    [baseline=0]
    \draw (0,-.5) -- (0,.8);
\end{tikzpicture}
\quad
\begin{tikzpicture}[baseline=0,scale=.5]
    \BoxRow{0}{0};
    \node at (1.5,0.5){$\scriptstyle\cdots$};
    \BoxRow{0}{2};
    \node at (1.5,-.5){$\scriptstyle \gamma_t \text{ boxes}$};
\end{tikzpicture}
    \end{equation*}
\item[Step 2:] We consider $(L(g)_1,R(g)_1)$. Let $z_1=a_{L(g)_1,R(g)_1}$. By definition, we have $L(g)_1=1$. We then fill $1,2,\ldots,z_1$ into the stack of $\gamma_1$ boxes:
\begin{equation*}
    \begin{tikzpicture}[baseline=0,scale=.5]
    \BoxRow{0}{-3};
    \BoxRow{1}{-1};
    \node at (-2.5,.5){$\scriptstyle {1}$};
    \node at (-.5,.5){$\scriptstyle {z_1}$};
     \node at (-1.5,0.5){$\scriptstyle\cdots$};
    \node at (1.5,0.5){$\scriptstyle\cdots$};
    \BoxRow{0}{2};
    \node at (0,-.5){$\scriptstyle \gamma_1 \text{ boxes}$};
\end{tikzpicture} \quad
\begin{tikzpicture}
    [baseline=0]
    \draw (0,-.5) -- (0,.8);
\end{tikzpicture}
\quad
\begin{tikzpicture}[baseline=0,scale=.5]
    \BoxRow{0}{0};
    \node at (1.5,0.5){$\scriptstyle\cdots$};
    \BoxRow{0}{2};
    \node at (1.5,-.5){$\scriptstyle \gamma_2 \text{ boxes}$};
\end{tikzpicture}\quad \begin{tikzpicture}
    [baseline=0]
    \draw (0,-.5) -- (0,.8);
\end{tikzpicture} \qquad
\cdots\qquad \begin{tikzpicture}
    [baseline=0]
    \draw (0,-.5) -- (0,.8);
\end{tikzpicture}
\quad
\begin{tikzpicture}[baseline=0,scale=.5]
    \BoxRow{0}{0};
    \node at (1.5,0.5){$\scriptstyle\cdots$};
    \BoxRow{0}{2};
    \node at (1.5,-.5){$\scriptstyle \gamma_t \text{ boxes}$};
\end{tikzpicture}
    \end{equation*}

If $R(g)_1=L(g)_1=1$, then we fill $-{1},{-2},\ldots,{-z_1}$ in the stack of  $\gamma_1$ from the rightmost empty box to the left: 
\begin{equation}
       \label{step2boxesa}
    \begin{tikzpicture}[baseline=0,scale=.5]
     \BoxRow{0}{-6};
    \BoxRow{1}{-4};
    \BoxRow{1}{-1};
    \node at (2.5,.5){$\scriptstyle {-1}$};
    \node at (.5,.5){$\scriptstyle {-z_1}$};
    \node at (-3.5,.5){$\scriptstyle {z_1}$};
    \node at (-5.5,.5){$\scriptstyle {1}$};
     \node at (-1.5,0.5){$\scriptstyle\cdots$};
     \node at (-4.5,0.5){$\scriptstyle\cdots$};
    \node at (1.5,0.5){$\scriptstyle\cdots$};
    \BoxRow{0}{2};
    \node at (-1,-.5){$\scriptstyle \gamma_1 \text{ boxes}$};
\end{tikzpicture} \quad
\begin{tikzpicture}
    [baseline=0]
    \draw (0,-.5) -- (0,.8);
\end{tikzpicture}
\quad
\begin{tikzpicture}[baseline=0,scale=.5]
    \BoxRow{0}{0};
    \node at (1.5,0.5){$\scriptstyle\cdots$};
    \BoxRow{0}{2};
    \node at (1.5,-.5){$\scriptstyle \gamma_2 \text{ boxes}$};
\end{tikzpicture}\quad \begin{tikzpicture}
    [baseline=0]
    \draw (0,-.5) -- (0,.8);
\end{tikzpicture} \qquad
\cdots\qquad \begin{tikzpicture}
    [baseline=0]
    \draw (0,-.5) -- (0,.8);
\end{tikzpicture}
\quad
\begin{tikzpicture}[baseline=0,scale=.5]
    \BoxRow{0}{0};
    \node at (1.5,0.5){$\scriptstyle\cdots$};
    \BoxRow{0}{2};
    \node at (1.5,-.5){$\scriptstyle \gamma_t \text{ boxes}$};
\end{tikzpicture}
    \end{equation}
If $R(g)_1=i>1$, then we have $a_{1,1}=a_{1,2}=\ldots=a_{1,i-1}=0$. We then fill ${-1},{-2},\ldots,{-z_1}$ in the stack of $\gamma_i$ boxes from the rightmost empty box to the left:  
\begin{equation}
       \label{step2boxesb}
    \begin{tikzpicture}[baseline=0,scale=.5]
    \BoxRow{0}{-3};
    \BoxRow{1}{-1};
    \node at (-2.5,.5){$\scriptstyle {1}$};
    \node at (-.5,.5){$\scriptstyle {z_1}$};
     \node at (-1.5,0.5){$\scriptstyle\cdots$};
    \node at (1.5,0.5){$\scriptstyle\cdots$};
    \BoxRow{0}{2};
    \node at (0,-.5){$\scriptstyle \gamma_1 \text{ boxes}$};
\end{tikzpicture} \quad
\begin{tikzpicture}
    [baseline=0]
    \draw (0,-.5) -- (0,.8);
\end{tikzpicture}
\quad
\cdots
\quad
\begin{tikzpicture}
    [baseline=0]
    \draw (0,-.5) -- (0,.8);
\end{tikzpicture}
\quad
 \begin{tikzpicture}[baseline=0,scale=.5]
    \BoxRow{0}{-3};
    \BoxRow{1}{-1};
    \node at (2.5,.5){$\scriptstyle {-1}$};
    \node at (.5,.5){$\scriptstyle {-z_1}$};
     \node at (-1.5,0.5){$\scriptstyle\cdots$};
    \node at (1.5,0.5){$\scriptstyle\cdots$};
    \BoxRow{0}{2};
    \node at (0,-.5){$\scriptstyle \gamma_i \text{ boxes}$};
\end{tikzpicture}
\quad
\begin{tikzpicture}
    [baseline=0]
    \draw (0,-.5) -- (0,.8);
\end{tikzpicture}
\quad
\cdots
\quad
\begin{tikzpicture}
    [baseline=0]
    \draw (0,-.5) -- (0,.8);
\end{tikzpicture}
\quad
\begin{tikzpicture}[baseline=0,scale=.5]
    \BoxRow{0}{0};
    \node at (1.5,0.5){$\scriptstyle\cdots$};
    \BoxRow{0}{2};
    \node at (1.5,-.5){$\scriptstyle \gamma_t \text{ boxes}$};
\end{tikzpicture}
    \end{equation}
\item[Step 3:]
    Now consider $(L(g)_2,R(g)_2)$. Let $z_2=z_1+a_{L(g)_2,R(g)_2}$. We first fill $z_1+1,z_1+2,\ldots ,z_2$ into the stack of $\gamma_{L(g)_2}$ boxes of \cref{step2boxesa} or \cref{step2boxesb} from the leftmost empty box to the right. After that, we fill  $-(z_1+1),-(z_1+2),\ldots,-z_2$ to the stack of  $\gamma_{L(g)_2}$ boxes from the rightmost empty box to the left as in Step 2.
\item[Step 4:] Repeat the above steps for $(L(g)_3,R(g)_3),\ldots, (L(g)_k,R(g)_k)$ one by one, then every box will be filled with a number and we get
  \begin{equation}
       \label{filledbox}
    \begin{tikzpicture}[baseline=0,scale=.6]
    \BoxRow{0}{0};
    \node at (1.5,0.5){$\scriptstyle\cdots$};
    \node at (.5,.5){$\scriptstyle g^{(1)}_{1}$};
    \node at (2.5,.5){$\scriptstyle g^{(1)}_{\gamma_1}$};
    \BoxRow{0}{2};
    \node at (1.5,-.5){$\scriptstyle \gamma_1 \text{ boxes}$};
\end{tikzpicture} \quad
\begin{tikzpicture}
    [baseline=0]
    \draw (0,-.5) -- (0,.8);
\end{tikzpicture}
\quad
\begin{tikzpicture}[baseline=0,scale=.6]
       \node at (.5,.5){$\scriptstyle g^{(2)}_{1}$};
    \node at (2.5,.5){$\scriptstyle g^{(2)}_{\gamma_2}$};
    \BoxRow{0}{0};
    \node at (1.5,0.5){$\scriptstyle\cdots$};
    \BoxRow{0}{2};
    \node at (1.5,-.5){$\scriptstyle \gamma_2 \text{ boxes}$};
\end{tikzpicture}\quad \begin{tikzpicture}
    [baseline=0]
    \draw (0,-.5) -- (0,.8);
\end{tikzpicture} \qquad
\cdots\qquad \begin{tikzpicture}
    [baseline=0]
    \draw (0,-.5) -- (0,.8);
\end{tikzpicture}
\quad
\begin{tikzpicture}[baseline=0,scale=.6]
       \node at (.5,.5){$\scriptstyle g^{(t)}_{1}$};
    \node at (2.5,.5){$\scriptstyle g^{(t)}_{\gamma_t}$};
    \BoxRow{0}{0};
    \node at (1.5,0.5){$\scriptstyle\cdots$};
    \BoxRow{0}{2};
    \node at (1.5,-.5){$\scriptstyle \gamma_t \text{ boxes}$};
\end{tikzpicture}
    \end{equation}
    where $-|\gamma|/2\leq g_{\gamma_i}^{(j)}\leq |\gamma|/2$.
\end{itemize}
Our labeling ensures that within each stack of boxes, all positive numbers appear in strictly increasing order, and all negative numbers also appear in strictly increasing order. Moreover, all positive numbers precede the negative numbers in each stack.

Then we define
\begin{equation}
\label{thetag}
\theta(g):=(g^{(1)}_{1},\ldots,g^{(1)}_{\gamma_1}, \ldots,g^{(t)}_{1},\ldots,g^{(t)}_{\gamma_t})\in [N]^{|\gamma|}.
\end{equation}
For any ${\bf i}=({\bf i}^{(1)}_{1},\ldots,{\bf i}^{(1)}_{\gamma_1}, \ldots,{\bf i}^{(t)}_{1},\ldots,{\bf i}^{(t)}_{\gamma_t})\in [N]^{|\gamma|}$, we define a corresponding element 
\[
v_{\bf i}:= v_{{\bf i}^{(1)}_{1}}\wedge \cdots\wedge v_{{\bf i}^{(1)}_{\gamma_1}} \otimes \cdots \otimes v_{{\bf i}^{(t)}_{1}}\wedge \cdots\wedge v_{{\bf i}^{(t)}_{\gamma_t}} \in \wedge{}^{\gamma}V
\]
We define the $v_{\bf i}$-component of $M^{\text{gen}}\otimes \wedge^{\gamma}V$ to be the subspace spanned by $y\otimes v_{\bf i}$ for all $y\in M^{\text{gen}}$. In particular, we have
\[
v_{\theta(g)}:= v_{g^{(1)}_{1}}\wedge \cdots\wedge v_{g^{(1)}_{\gamma_1}} \otimes \cdots \otimes v_{g^{(t)}_{1}}\wedge \cdots\wedge v_{g^{(t)}_{\gamma_t}}.
\]

\begin{lemma}
\label{seperatewedgecomponent}
    Suppose $g=(A_g,P_g),g'=(A_{g'},P_{g'})\in \PSMat_{\emptyset,\gamma}$ and $\hm=1\otimes1 \in M^{\text{gen}}$. If there is a nonzero term of $\cG_N(g')_{M^{\text{gen}}}(\hm)$ in the $v_{\theta(g)}$-component of $M^{\text{gen}}\otimes \bigwedge^{\gamma}V$, then $A_g=A_g'$.
\end{lemma}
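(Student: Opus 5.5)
We would prove this by computing which basis vectors $v_{\bf i}$ can appear in $\cG_N(g')_{M^{\text{gen}}}(\hm)$ at all, independently of the dot packets $P_{g'}$. The action of every dot packet, lollipop dot, or bubble factor is by $\Omega+\frac12(N-1)$ applied on a tensor factor. By \cref{actionodotgen} and \cref{actionlantern}, modulo lower degree it acts diagonally on the $v$-part. The lower degree corrections, however, come from $\Omega$'s off-diagonal terms $F_{ij}\otimes F_{ji}$, and these can change the index labels. So we must track the undotted skeleton carefully.

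\textbf{Step 1: the undotted skeleton.} Take the undotted diagram $g'_0$ of shape $A_{g'}$. By \cref{cuparep}, \eqref{actionlollipoprev} and the merge formula \eqref{eq:rotYab}, $\cF_N(g'_0)(1)$ is a sum over index choices. Each cup of thickness $a_{ij}$ between $\gamma_i$ and $\gamma_j$ contributes some $p_1<\dots<p_{a_{ij}}$ to the $\gamma_i$ factor and their negatives to the $\gamma_j$ factor. Each lollipop on $\gamma_i$ contributes to the $\gamma_i$ factor a set of pairs $\{p,-p\}$, with $a_{ii}$ pairs in all. The result is then wedged into each $\bigwedge^{\gamma_i}V$.

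\textbf{Step 2: the dots.} Dots act on $M^{\text{gen}}\otimes(\cdot)$ through $\Omega$, which on a tensor factor involves $F_{ij}$ acting on $V$ and can alter indices. This alteration is the subtle point. We would handle it by weight bookkeeping. Every component of $\cG_N(g')_{M^{\text{gen}}}(\hm)$ lies in the $\h$-weight space of total weight $0$ of $M^{\text{gen}}\otimes\bigwedge^\gamma V$, because the map is $\so_N$-equivariant from $M^{\text{gen}}$ ($\hm$ has weight $0$ over $U(\h)$). Weight alone is too weak, though. Instead we argue that the $v_{\theta(g)}$-component with a nonzero coefficient forces the shape.

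\textbf{Step 3: reading off $A_g$.} In $\theta(g)$ the labels $\pm1,\dots,\pm|\gamma|/2$ each appear exactly once. The pair $\pm p$ sits in factors $(i,j)$ precisely according to the cup or lollipop of $g$ that created it. So the number of pairs $\{p,-p\}$ split between $\gamma_i$ and $\gamma_j$ (or both in $\gamma_i$) equals $a_{ij}$ (resp.\ $2a_{ii}$). We then need that any nonzero $v_{\theta(g)}$-component of $g'$ has the same pairing statistics as the skeleton of $g'$.

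For this we would use that $\Omega$ acting on $M\otimes\bigwedge^aV$ is $\sum F_{ij}\otimes F_{ji}$ with the first factor acting on $M^{\text{gen}}$. The cups and lollipops of $g'$ are created from $\unit$, before any dot. Dots sit on legs, and \cref{dotmovecupcaps} lets us regard them as acting on whole cup and lollipop morphisms from $\unit$. By \eqref{actionlollipoprev} the images lie in spans of vectors where each index $p$ is paired with $-p$ along the cup or lollipop. Passing from the leg factors into the $\gamma_i$ factors via merges only wedges these vectors together. With $N\gg0$ and indices $\le|\gamma|/2$ distinct, the pairing pattern determines which legs connect which $\gamma_i$. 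Hence $A_{g'}=A_g$.

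The main obstacle is Step 2's claim: that the off-diagonal parts of $\Omega$ (acting through $M^{\text{gen}}$ rather than on lower degree) preserve the partner structure of each $\pm p$ pair. We would first try to prove directly that $(\Omega\otimes 1)\cup_a$ has image in the span of elements whose $V$-part is of the form of the cup vector, with possible relabelings $p\mapsto q$ applied simultaneously to $p$ and $-p$. This would use the $\so_N$-invariance of $\cup_a$, namely $(F\otimes1+1\otimes F)\cup_a=0$, which lets us move each $F_{ji}$ across the cup.
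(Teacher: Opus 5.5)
Your Step~2 fails, and the obstacle you flag cannot be overcome: read literally, with all filtered degrees included, the statement is false.

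Your proposed sub-claim, that $(\Omega\otimes1)\cup_a$ lands in ``relabelled cup vectors'', already fails for $a=1$. Let $\Omega$ act on $M\otimes V$, with $V$ the first leg. Then
\[
(\Omega\otimes 1)\Bigl(m\otimes \sum_{p\in[N]} v_p\otimes v_{-p}\Bigr)=\sum_{k,l\in[N]}F_{kl}m\otimes v_l\otimes v_{-k},
\]
where the two halves of $\Omega$ are combined using $F_{kl}=-F_{-l,-k}$. For $m=\hm$ the coefficient of $v_1\otimes v_{-2}$ is $F_{21}\hm\neq0$, because $F_{21}=-F_{-1,-2}\in\n^-$. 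The invariance $(F\otimes1+1\otimes F)\cup_a=0$ only transports $F$ from one leg to the other; this is \eqref{dotmovecupcaps} on the representation side, and it does not restore the pairing.

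Moreover, a dot acts by $\Omega$ between its strand and $M$ together with the strands on one side of it, so some terms couple legs of different cups. Take $\gamma=(1,1,1,1)$, let $g'$ be the nested thin cups of shape $\{(1,4),(2,3)\}$ with one dot on the inner cup, and let $g$ have shape $\{(1,2),(3,4)\}$, so $\theta(g)=(1,-1,2,-2)$. The part of the dot's action coupling the inner cup's leg with a leg of the outer cup gives $\cG_N(g')_{M^{\rm gen}}(\hm)$ the coefficient $\pm\hm\neq0$ on $v_{\theta(g)}$. The remaining parts contribute nothing there, yet $A_{g'}\neq A_g$. So no bookkeeping of the off-diagonal terms can prove the exact statement.

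The missing idea is that the lemma is only meant, proved, and used for the top filtered-degree part. In the proof of \cref{thm:basisAWb} only the highest-degree $v_{\theta(f)}$-components are compared, and the offending terms above have lower degree.

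You already observed the needed fact at the start. By \cref{eM filtered degree 1} and \cref{actionlantern}, modulo lower-degree terms each dot packet acts diagonally on $1\otimes h\otimes v_{\bf i}$:
\begin{itemize}
\item off-diagonal terms from $\n^-$ do not raise the degree;
\item terms from $\n^+$ kill $1\otimes U(\h)$;
\item terms coupling two $V$-factors do not touch $M^{\rm gen}$.
\end{itemize}
By \eqref{actionlollipoprev}, a lollipop contributes, modulo lower degree, only wedges of pairs $v_p,v_{-p}$ lying in one factor. Hence the top-degree part of $\cG_N(g')_{M^{\rm gen}}(\hm)$ is supported on the same $v_{\bf i}$ as the undotted skeleton of $g'$.

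Your Step~3 count then finishes the argument. Each of $\pm1,\dots,\pm|\gamma|/2$ occurs exactly once in $\theta(g)$, so each pair $\{p,-p\}$ must come from a single cup or lollipop of $g'$. The factors containing $p$ and $-p$ then record $A_{g'}$, with $a_{ii}$ pairs (not $2a_{ii}$) inside $\gamma_i$, which forces $A_{g'}=A_g$. This is exactly the paper's proof.
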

\begin{proof}
    Since the actions of the elementary dot packets do not change the $v_{\bf i}$ components of $\cG_N(g')_{M^{\text{gen}}}(\hm)$, we may assume that there are no dot packets on any strand (other than in the heads of the lollipops). By the above construction, we see that $\theta(g)=\theta(g')$ if and only if $\sh(g)=\sh(g')$. However, $\sh(g)$ is uniquely determined by $A_g$. Thus the lemma follows by \cref{cuparep} and \cref{actionlollipoprev}; cf. \cite[Proof of Theorem A]{RS2019}.
\end{proof}

Assume now $\kk=\C$. For any $\tilde g\in \bPSMat_{\emptyset,\gamma}$, define $\beta^{\tilde g}=(\beta^{\tilde g}_{2},\beta^{\tilde g}_{4},\ldots,)$ to be the sequence of non-negative integers such that  $\tilde g=\Delta^{\beta^{\tilde g}} g$  
 where $\Delta^{\beta^{\tilde g}}=\dotbubble_2^{\beta^{\tilde g}_2}\dotbubble_4^{\beta^{\tilde g}_4}\cdots$ and  $g\in \PSMat_{\emptyset,\gamma}$. We shall prove by contradiction that $\bPSMat_{\emptyset,\gamma}$ is linearly independent. Suppose there exists a nonempty finite subset $W\subset\bPSMat_{\emptyset,\gamma}$ such that
\begin{equation}
\label{sum=0hypo}
    \sum_{\tilde g\in W}c_{\tilde g} \Delta^{\beta^{\tilde g}}g=0
\end{equation}
where $c_{\tilde{g}}\neq 0$ for all $\tilde{g}\in W$. Then
$W_0:=\big\{\tilde{f}\in W\mid \deg(\tilde{f}) = \max \{ \deg \tilde{g} ~|~ \tilde{g}\in W\}\big\}\not=\emptyset$. 

Fix a $\tilde f=\Delta^{\beta^{\tilde f}} f \in W_0$. We consider $\cG_N(\tilde g)_{M^{\text{gen}}}(\hm)$ for every $\tilde g\in W_0$ and compare the highest degree terms in the $v_{\theta(f)}$-component of $M^{\text{gen}}\otimes \bigwedge ^\gamma V$.  By \cref{seperatewedgecomponent}, the highest degree $v_{\theta(f)}$-component of $\cG(\tilde g)_{M^{\text{gen}}}(\hm)$ is nonzero if and only if $A_{g}=A_{f}$. Thus we may assume all elements in $W_0$ have the same shape as $\sh(f)$. Let 
\[
\hslash_{\tilde g}(h_1^2,h_2^2,\cdots,h_n^2)=  e_1^{\beta^{\tilde{g}}_2}(h_1^2,\ldots,h_n^2)e_2^{\beta^{\tilde{g}}_4}(h_1^2,\ldots,h_n^2)\cdots
\] 
be the symmetric polynomial appearing in $\cG(\Delta^{\beta^{\tilde{g}}})_{M^{\text{gen}}}(\hm)$ (up to a sign). Then by \cref{cuparep}, \cref{actionlantern} and \cref{actionlollipoprev}, the $M^{\text{gen}}$-part of the $v_{\theta(f)}$-component with degree $\deg(\tilde{f})$ in $\cG(\tilde{g})_{M^{\text{gen}}}(\hm)$ up to a sign equals
\begin{equation}
    \label{highestdegreetermh}
    \begin{aligned}
        \hslash_{\tilde g}(h_1^2,h_2^2,\cdots,h_n^2)\prod_{i=1}^t
e_{\vartheta_{ii}/2}\Big(h_{f^{(i)}_{1}}^2,\cdots,h_{f^{(i)}_{a_{ii}}}^2\Big)h_{f^{(i)}_{1}}\cdots h_{f^{(i)}_{a_{ii}}}
    \prod_{j=i+1}^{\gamma_i} 
    e_{\vartheta_{ij}}\Big(h_{f^{(i)}_{b_{i,j-1}+1}},\cdots,h_{f^{(i)}_{b_{ij}}}\Big)
    \end{aligned}
\end{equation}
where $A_g=(a_{ij}), P_g=(\vartheta_{ij})$, $b_{ij}=\sum_{k=i}^ja_{ik}$ and $\theta(f):=(f^{(1)}_{1},\ldots,f^{(1)}_{\gamma_1}, \ldots,f^{(t)}_{1},\ldots,f^{(t)}_{\gamma_t})$ as defined in \cref{thetag}.

Fix a total order on the monomials of $\U(\mathfrak h)$ such that $h_n^{r_n}\cdots h_2^{r_2}h_1^{r_1}<h_n^{l_n}\cdots h_2^{l_2}h_1^{l_1}$ if either $\sum_{i=1}^n r_i<\sum_{i=1}^n l_i$ or $\sum_{i=1}^n r_i=\sum_{i=1}^n l_i$ and there is a $1\leq u\leq n$ such that $r_u<l_u$ and $r_j>l_j$ whenever $j>u$. Since we choose $N\gg 0$, we may assume the leading monomial of $\hslash_{\tilde{g}}(h_1^2,\cdots,h_n^2)$ with respect to the total order does not contain the factor $h_{f^{(i)}_j}$ for any $i=1,\ldots,t$ and $j=1,\ldots,{\gamma_i}$. 

It follows by \cref{sum=0hypo} and the definition of $W_0$ that 
\begin{align} \label{W0}
\cG_N\Big( \sum_{\tilde g\in W_0}c_{\tilde g} \Delta^{\beta^{\tilde g}}g\Big)_{M^{\text{gen}}}(\hm)=0.
\end{align}

We now compare the leading monomials of the
$v_{\theta(f)}$-component of degree $\deg(\tilde f)$.  Since
$N\gg 0$, by the choice of the total order, the leading monomial of
\(\hslash_{\tilde g}(h_1^2,\ldots,h_n^2)\) involves none of the variables
\(h_{f^{(i)}_j}\), where \(1\leq i\leq t\) and \(1\leq j\leq \gamma_i\). For a fixed undotted shape A, the variables appearing in the dot packets on different legs form disjoint blocks. The bubble factor \(\hslash_{\tilde g}\) has leading monomial supported on variables outside these blocks, by our choice of \(N\gg0\). Within each block, the products of elementary symmetric functions \(e_{\vartheta_{ij}}\) have distinct leading monomials as \(\vartheta_{ij}\) varies. Hence the full products in \eqref{highestdegreetermh} have pairwise distinct leading monomials for distinct \(\tilde g\in W_0\), and any nonzero linear combination of these expressions is nonzero.
This contradicts \eqref{W0} as $W_0 \neq \emptyset$.

The linear independence of $\bPSMat_{\emptyset,\gamma}$ for $\kk=\Z$ and then for an arbitrary commutative ring $\kk$ with $1$ now follows by standard base change arguments. 

This completes the proof of \cref{thm:basisAWb}.

\subsection{Proof of \cref{prop:Wiso}} 
\label{app:Wiso}

\begin{proof} [Proof of \cref{prop:Wiso}]
There is a natural functor $\natural: \Wb \rightarrow \WCb$ which matches the generating objects and morphisms in the same symbols. The defining relations for $\WCb$ are a subset of those for $\Wb$. To show that $\natural$ is an isomorphism it remains to verify that all defining relations \eqref{webassoc}--\eqref{bubblecap} of $\Wb$ hold in $\WCb$. They are verified below in \cref{lem:ass:zz:cc,lem:MSmovecap,lem:sp:m,lem:curl,lem:bub,lem:loll}, respectively.
\end{proof}

\begin{lemma}
\label{lem:ass:zz:cc}
    The following relations hold in  $\WCb$ for $a,b,c\in \Z_{\geq 1}$:
    \begin{gather}
\label{thickwebassocC}
 .
    \end{gather} 
\end{lemma}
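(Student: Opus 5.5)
The plan is to treat the three families of relations in \cref{thickwebassocC} separately: associativity of thick merges and splits, the zigzag relations for thick cups and caps, and sliding a strand through a thick cup or cap. In each case we argue by induction on thickness, using the inductive definitions \eqref{thickmergesplitC} to reduce to thickness-$1$ relations, which are available in $\WCb$ as \eqref{webassocC}--\eqref{bubbleC}.

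\textbf{Associativity.} The cap/cup-free part of $\WCb$ is generated by $(k,1)$-merges, $(k+1)\to(k,1)$-splits and the thin crossing. It satisfies \eqref{webassocC} and \eqref{mergesplitC}, and the thick merges, splits and crossings are defined from these generators exactly as in type $A$. We therefore invoke the characteristic-zero presentation of the type $A$ web category (cf.\ \cite{BEEO}, and the strategy of \cite{SWweb}). This gives associativity of thick merges/splits, and also \eqref{splitmerge}, \eqref{swallows}, \eqref{sliders} and \eqref{braid}, for the thick generators of \eqref{thickmergesplitC}. Concretely, one first proves that $\frac1b(\cdots)$ in \eqref{thickmergesplitC} agrees with peeling off thin strands one at a time. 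Associativity then follows by induction on $b+c$ from the thin associativity \eqref{webassocC}.

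\textbf{Zigzag relations.} We induct on $a$, the case $a=1$ being \eqref{zigzagC}. In the zigzag for thickness $a$, we substitute $\cupa$ and $\capa$ by their definitions in \eqref{thickmergesplitC}, each carrying a factor $\frac1a$. The thin cap nested inside the $(a-1)$-cap is then straightened by the thin zigzag, and the $(a-1)$-part by the induction hypothesis. What remains is a split of $a$ into $(a-1,1)$ followed by a merge back. This is $a$ times the identity by \eqref{mergesplitC}, which cancels the prefactor.

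\textbf{Sliding through caps/cups (the main obstacle).} For the relations moving a strand of thickness $b$ through a thick cap or cup, we first do induction on $b$, peeling thin strands off via \eqref{thickmergesplitC} and using \eqref{sliders}. This reduces to $b=1$. We then do induction on $a$, writing the cap as nested $(a-1)$- and $1$-caps on splits. The thin crossing is slid through the two nested caps using the thin relation in \eqref{zigzagC} and the induction hypothesis. It is then slid past the splits using \eqref{sliders} for a thin strand crossing a split, which already holds by the associativity step.

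I expect the hard part to be matching conventions. The thick cup is defined by nesting, so its right foot reverses the left one, and we must check that the crossings and splits recombine exactly into $\capa$ with no stray sign. The sign twist of the thin crossing relative to \cite{RS2019} enters here.
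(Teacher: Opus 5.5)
Your associativity step is fine: you import the characteristic-zero type $A$ presentation, whereas the paper re-derives the type $A$ relations in $\WCb$ by hand afterwards. Your induction for sliding a strand through a cap is also fine. The zigzag step, however, is wrong as written.

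Write $M_{p,q}$, $S_{p,q}$, $X_{p,q}$ for the merge $(p,q)\to(p+q)$, the split $(p+q)\to(p,q)$ and the crossing $(p,q)\to(q,p)$. Both the thick cup and the thick cap carry a factor $\frac1a$ in \eqref{thickmergesplitC}, so the prefactor is $\frac1{a^2}$. A single factor $a$ from \eqref{mergesplitC} cannot cancel it.

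The missing factor comes from the step you skip. At the shared endpoint the cup contributes $M_{a-1,1}$ and the cap contributes $S_{1,a-1}$, with the thin strand on opposite sides. This is the nesting reversal you only worry about for the sliding relation. As a result the thin cup is not joined to the thin cap, and the two zigzags cannot be straightened independently. By \eqref{splitmerge},
\[
S_{1,a-1}\circ M_{a-1,1}=X_{a-1,1}+(1_1\otimes M_{a-2,1})\circ(S_{1,a-2}\otimes 1_1).
\]
In the first term the thin zigzag and the $(a-1)$-zigzag cross once. After moving the crossing through the caps, this term gives $M_{1,a-1}X_{a-1,1}S_{a-1,1}=a\,1_a$ by \eqref{swallows}. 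The second term, after moving its merge and split vertices through the $(a-1)$-cap and cup, gives $M_{1,a-2,1}S_{1,a-2,1}=a(a-1)\,1_a$. Only the total $a^2$ cancels $\frac1{a^2}$. Already for $a=2$ the correct count is $\frac14(2+2)=1$, whereas yours gives $\frac12$. Both straightenings need sliding relations at lower thickness, so the zigzag must be proved jointly with, or after, those relations.

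A clean repair is available. Unwinding \eqref{thickmergesplitC} with associativity writes the thick cap as $\frac1{a!}\,C^{(a)}\circ(S^{(a)}\otimes S^{(a)})$. Here $S^{(a)}\colon a\to(1^a)$ is the full split and $C^{(a)}$ is $a$ nested thin caps; the thick cup is the dual expression with full merges $M^{(a)}$. In the zigzag the middle composite becomes $S^{(a)}M^{(a)}=\sum_{w\in\mathfrak S_a}w$, a sum of thin permutations (a type $A$ fact you may import). Each $w$ slides through the nested thin caps by the third relation in \eqref{zigzagC}, and is then absorbed into the split at the input by \eqref{swallows}. The remaining parallel thin zigzags straighten, leaving $\frac{a!}{(a!)^2}M^{(a)}S^{(a)}=1_a$.
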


\begin{proof}
    These relations follow from \cref{webassocC}, \cref{zigzagC}, and \cref{thickmergesplitC} by induction.
\end{proof}

\begin{lemma}
    The following relations hold in $\WCb$ for $a,b\geq 1$:
    \begin{gather}
        %
 .
\label{swallowsC}
    \end{gather}
\end{lemma}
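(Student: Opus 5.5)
The relations \eqref{swallowsC} are the $\WCb$-counterparts of \eqref{swallows}: a thick crossing of strands $a,b$ followed by the merge into $a+b$ equals the merge of the swapped pair, and dually for splits. I would first prove the thin case and then bootstrap to arbitrary thicknesses, expanding every thick morphism through \eqref{thickmergesplitC} into thin strands. Throughout I use that in $\WCb$ the identity of a thick strand $a$ factors as $\frac{1}{a!}$ times the $a$-fold merge of thin strands composed with the $a$-fold split. This follows from the first relation of \eqref{mergesplitC} and the associativity of \cref{lem:ass:zz:cc}, which also makes iterated thin merges and splits well defined. By the reflection symmetry $\div$ it suffices to treat merges; the split version is the mirror image.

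\emph{Base case $a=b=1$.} Precompose the second relation in \eqref{mergesplitC} (with $k=1$) with the merge $(1,1)\to 2$ and postcompose with the merge. The left side becomes merge$\circ$split$\circ$merge $=2\cdot$merge by the first relation of \eqref{mergesplitC}. The right side becomes merge $+$ merge$\circ$(thin crossing), once the thick strand labelled $1$ is identified with the identity. Hence merge$\circ$(thin crossing) $=$ merge, which is the thin swallow. The dual statement (thin crossing)$\circ$split $=$ split follows from the upside-down version or by applying $\div$.

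\emph{Thin symmetry of iterated merges.} Let $m_k:(1^k)\to(k)$ be the iterated thin merge. Using associativity, I would show that $m_k\circ s_i=m_k$ for every thin crossing $s_i$ between positions $i$ and $i+1$. Factor $m_k$ as a merge that first joins strands $i$ and $i+1$, then apply the base case. Consequently $m_k\circ\sigma=m_k$ for any composite $\sigma$ of thin crossings.

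\emph{General $a,b$.} By \eqref{thickmergesplitC}, the thick crossing $(a,b)\to(b,a)$ is $\frac{1}{ab}$ times a composite built from splits peeling off single thin strands, one thin crossing, and merges. I would instead substitute the factorisations $1_a=\frac{1}{a!}m_a\circ(\text{split})$ and $1_b=\frac{1}{b!}m_b\circ(\text{split})$ at the bottom. Sliding the splits and merges upward turns the thick crossing into a block permutation of thin strands, up to the normalising scalars. This sliding uses the slider-type relations for thick versus thin strands, which should themselves follow by induction from \eqref{webassocC} and \eqref{mergesplitC}, as in \cref{lem:ass:zz:cc}. After sliding, the top merge into $a+b$ becomes $m_{a+b}$ composed with that permutation, and the permutation is absorbed by the thin symmetry. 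The remaining scalars reassemble into the merge $(b,a)\to(a+b)$.

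The main obstacle I expect is this sliding step: moving merges and splits through the thick crossing as defined in \eqref{thickmergesplitC}, before the full slider relations \eqref{sliders} are available in $\WCb$. My first attempt would be an induction on $a+b$ that peels off one thin strand at a time, directly following the recursive definitions with their $\frac1b$ and $\frac1{ab}$ factors. If the bookkeeping there becomes unmanageable, I would prove the sliders first as a separate lemma by the same induction and then deduce \eqref{swallowsC}.
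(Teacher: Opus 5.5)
Your base case is exactly the paper's: compose the $k=1$ case of the second relation in \eqref{mergesplitC} on top with the thin merge, and use merge$\circ$split$\,=2$. The reduction of the split version via $\div$, the thin symmetry $m_k\circ s_i=m_k$, and the factorisation $1_a=\frac1{a!}m_a\circ s_a$ are also fine.

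The gap is in the general step. Write $\mathrm{mg}_{u,v}$, $\mathrm{sp}_{u,v}$, $X_{u,v}$ for merges, splits and crossings. Your argument rests entirely on the claim that $X_{a,b}\circ(m_a\otimes m_b)$ equals $(m_b\otimes m_a)$ composed with (a combination of) thin permutations, and you do not prove this. In $\WCb$ the thick crossing is only the recursively defined composite of \eqref{thickmergesplitC}. The sliders \eqref{sliders} for $\WCb$ are established later in the appendix, after \eqref{swallowsC} and after \eqref{mergesplitsliderightC}, whose proof uses \eqref{swallowsC}. So you cannot cite them here. The naturality you want can be proved by its own induction through the definition: expand $\mathrm{sp}_{a-1,1}\circ m_a$ into $a$ terms using the second relation of \eqref{mergesplitC}, then slide through the smaller crossings. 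But that is the real work, and it is missing.

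You also misdescribe the definition. It reads $X_{a,b}=\frac1{ab}(\mathrm{mg}_{b-1,1}\otimes\mathrm{mg}_{a-1,1})\circ C\circ(\mathrm{sp}_{a-1,1}\otimes\mathrm{sp}_{b-1,1})$, where $C$ is built from the four crossings $X_{a-1,b-1},X_{a-1,1},X_{1,b-1},X_{1,1}$, not from a single thin crossing.

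That recursive structure is what makes a direct induction on $a+b$ work with no sliders at all; the paper's proof is an induction on $a+b$ of this kind, and it is essentially your unexecuted ``first attempt''.

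1. Substitute the definition into $\mathrm{mg}_{b,a}\circ X_{a,b}$.
2. By \eqref{thickwebassocC}, $\mathrm{mg}_{b,a}\circ(\mathrm{mg}_{b-1,1}\otimes\mathrm{mg}_{a-1,1})$ is a $4$-fold merge that can be regrouped so that any two adjacent strands merge first.
3. Work from the topmost crossing of $C$ downward. Each $X_{u,v}$ then sits directly under a regrouped $\mathrm{mg}_{v,u}$ and is swallowed by the inductive hypothesis, since $u+v<a+b$.
4. What remains is $\frac1{ab}\,\mathrm{mg}_{a,b}\circ(\mathrm{mg}_{a-1,1}\mathrm{sp}_{a-1,1}\otimes\mathrm{mg}_{b-1,1}\mathrm{sp}_{b-1,1})$, which equals $\mathrm{mg}_{a,b}$ by the first relation of \eqref{mergesplitC}.

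This uses neither thin symmetry nor any sliding, and it should be your main argument rather than a fallback.
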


\begin{proof}
The second relation follows from the first one by applying $\div$, hence we only prove the first one.
We proceed by induction on $a+b$. When $a+b=2$, this follows from composing $%
$ on top with \cref{mergesplitC} (setting $k=1$). Now suppose the first relation is true for any labels whose sum is less than $a+b$, then by the inductive hypothesis we have
        \[
%
.
        \]
        This proves the lemma.
\end{proof}

\begin{lemma}
\label{lem:MSmovecap}
    The following relations hold in $\WCb$ for $a,b\geq 1$:
    \begin{gather}
        \label{mergesplitsliderightC}
            %
 \:\: .
    \end{gather}
\end{lemma}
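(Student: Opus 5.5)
The displayed relations \eqref{mergesplitsliderightC} have been lost from the text, so I take them to be the $\WCb$-analogues of \eqref{mergesplitmove}: a thick merge (or split) $(a,b)\to(a+b)$ composed with the thick cap $\capa$-type morphism on $a+b$ can be slid across the cap, where it becomes a split (respectively merge) on the other leg. If the intended statements are instead the mirrored versions \eqref{mergesplitslideleft}, the same plan applies after composing with zigzags.

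The approach is to reduce everything to thin strands. I use the inductive definitions \eqref{thickmergesplitC} of the thick merges, splits, caps and cups in $\WCb$. Alongside these I use the relations already available in $\WCb$: associativity from \cref{lem:ass:zz:cc}, the swallow relations \eqref{swallowsC}, the zigzag relations \eqref{zigzagC}, and the thin pitchfork relation in \eqref{zigzagC} (a thin strand slides through a thin cap), together with its cup version derived after \cref{def:affineBwebC}.

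First I treat the case $b=1$, inducting on $a$. I expand the thick cap of thickness $a+1$ via \eqref{thickmergesplitC} as $\frac{1}{a+1}$ times the nested cap diagram, whose outer cap has thickness $a$, inner cap is thin, and feet are two splits $(a+1)\to(a,1)$. Precomposing with the merge $(a,1)\to(a+1)$ on one foot produces a merge-split composite. I rewrite it using the second relation of \eqref{mergesplitC}, namely
\[
\text{split}\circ\text{merge}=\id+\text{(crossing through a thick strand)}.
\]
The identity term gives exactly the desired diagram: the thin strand connects around the thin cap and the $a$-strand goes through the $a$-cap. For the crossing term, I slide the thin strand through the caps using the thin pitchfork relation and the inductive hypothesis on $a$ applied to the thick $a$-cap. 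After that, it reassembles via \eqref{swallowsC} and \cref{lem:ass:zz:cc} into the same shape. The two contributions together should supply the factor $a+1$ that cancels the $\frac{1}{a+1}$, and this cancellation is the key computation.

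For general $b$, I induct on $b$ using $\merge=\frac1b(\text{merge with thin strand peeled off})$ from \eqref{thickmergesplitC}. This decomposes the $(a,b)$ merge as an $(a,b-1,1)$ iterated merge. I then slide the pieces across the cap one at a time using the case $b=1$ and the inductive hypothesis. Finally I recombine on the other side via associativity (\cref{lem:ass:zz:cc}), using the split counterpart of \eqref{thickmergesplitC} (the prefactor $\frac1b$ matches on both sides). The split version follows by applying the anti-involution $\div$, or by composing with zigzags \eqref{zigzagC}.

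The main obstacle is the base case $b=1$. I must check carefully that the crossing term from \eqref{mergesplitC} contributes exactly $a$ copies of the target diagram. This requires the sign conventions of the thin crossing and cap in \eqref{bubbleC} to be compatible. I would first test $a=1$, i.e.\ thickness-$2$ caps, by direct computation with \eqref{bubbleC}, and then let the induction carry the rest.
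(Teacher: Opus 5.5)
Your inductive mechanism matches the paper's: expand via \eqref{thickmergesplitC}, split the merge--split composite with \eqref{mergesplitC} into an identity term and a crossing term, and induct. However, the proposal has a genuine gap in how you obtain the second relation, and it depends on which foot the merge sits on.

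The lemma is the $\WCb$-version of both relations in \eqref{mergesplitmove}: a relation and its left--right mirror image. This is what the proof of \cref{prop:Wiso} needs, and it is why the paper proves both together by induction on $a+b$, with a separate base computation for each. The mirror is not produced by $\div$, which exchanges caps and cups and yields the cup versions. Nor does it follow from zigzags alone. Writing $M_{a,b}$ and $S_{b,a}$ for the thick merge and split, the two relations say that $M_{a,b}$ is the right mate, respectively the left mate, of $S_{b,a}$. Identifying left and right mates needs extra input (thick curl and slider relations), which in $\WCb$ is only established later. Also, $\WCb$ has no built-in left--right symmetry. Its generators are only $(k,1)\to(k+1)$ and $(k+1)\to(k,1)$, and the thick cap is defined asymmetrically: $\cap_{a+1}=\frac{1}{a+1}\,\cap_a(1_a\otimes\cap_1\otimes 1_a)(S_{a,1}\otimes S_{1,a})$, with the thin cap innermost.

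Concretely, your base step works only when the merge is on the left foot. There the composite $S_{a,1}\circ M_{a,1}$ is exactly the one in \eqref{mergesplitC}. The identity term gives $\frac{1}{a+1}$ of the target. For the crossing term:
\begin{itemize}
\item use the inductive hypothesis for $\cap_a(M_{a-1,1}\otimes 1_a)$;
\item slide the thin crossing through the two thin caps by the pitchfork relation;
\item absorb it using $X\circ S_{1,1}=S_{1,1}$ and associativity;
\item what remains is $\cap_{a-1}(1\otimes\cap_1\otimes 1)(S_{a-1,1}\otimes S_{1,a-1})=a\,\cap_a$, i.e. $\frac{a}{a+1}$ of the target.
\end{itemize}
No sign issue arises. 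Your $b$-induction also closes in this orientation, because the recombination is exactly the definition of $b\,\cap_b$. That is where the $\frac1b$ cancels; it is not a split normalization, since $S_{b,a}$ carries $\frac1a$.

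For the merge on the right foot, which is the first relation of \eqref{mergesplitmove} and the one you describe, both steps fail. You get $S_{1,a}\circ M_{a,1}$, to which \eqref{mergesplitC} does not apply. The $b$-step recombines into the thin-outermost cap $\cap_1(1\otimes\cap_{b-1}\otimes 1)(S_{1,b-1}\otimes S_{b-1,1})$, which is not the defining expression for $b\,\cap_b$.

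To close the gap, first prove the left-foot relation as above. Apply it with labels $(1,a)$ and use $M_{1,a}S_{1,a}=(a+1)1_{a+1}$, which follows from the definitions by a short induction. This gives the alternative formula $\cap_{a+1}=\frac{1}{a+1}\cap_1(1_1\otimes\cap_a\otimes 1_1)(S_{1,a}\otimes S_{a,1})$. After that, your argument runs verbatim in mirrored form for the merge on the right foot. Alternatively, interleave the two inductions as the paper does.
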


\begin{proof}
    We prove both relations together by induction on $a+b$. When $a=b=1$, we have
    \[
    %
.
    \]
    Similarly one can prove that
    \[
     %
 \:\: .
    \]
    This concludes the base case of the induction. The inductive process is similar to the calculation above using \cref{mergesplitC,zigzagC,swallowsC,thickmergesplitC}.
\end{proof}

\begin{lemma}
    The following relations hold in $\WCb$ for $k\geq 1$:
    \begin{gather}
    \label{reversemergesplitC}
         %
 .
    \end{gather}
\end{lemma}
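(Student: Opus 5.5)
The plan is to reduce the reversed relations \eqref{reversemergesplitC}, in which the thin strand sits on the left, to the defining relations \eqref{mergesplitC}, in which it sits on the right. The tool is the thin-crossing calculus in $\WCb$ together with the swallow relations \eqref{swallowsC} just established. Since $\div$ reflects diagrams across a horizontal axis and preserves all defining relations, it suffices to prove the relations in which the merge is on top; the split-on-top versions then follow by applying $\div$.

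First I would record that the thick crossing $X_{1,k}$ (respectively $X_{k,1}$) defined in \eqref{thickmergesplitC} can be absorbed into a merge. By \eqref{swallowsC} with $(a,b)=(1,k)$, precomposing the merge $(k,1)\to(k+1)$ with $X_{1,k}$ gives the merge $(1,k)\to(k+1)$, and dually for splits. I also need $X_{k,1}\circ X_{1,k}=1_{(1,k)}$. I would prove this by induction on $k$. The case $k=1$ is the thin relation coming from \eqref{mergesplitC} with $k=1$ combined with \eqref{zigzagC}, \eqref{bubbleC}; alternatively it can be extracted from the type A part exactly as in \cite{BEEO}. For the inductive step, I would write $X_{1,k}$ through the defining formula and slide thin strands using the $k-1$ case and \eqref{thickwebassocC}.

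With these in hand, the first reversed relation is a direct rewriting. Merging $(1,k)$ after splitting $k+1$ into $(1,k)$ is equal to $\mathrm{merge}_{k,1}\circ X_{1,k}\circ X_{k,1}\circ \mathrm{split}_{k,1}$. This equals $\mathrm{merge}_{k,1}\circ\mathrm{split}_{k,1}=(k+1)\,1_{k+1}$ by \eqref{mergesplitC}. For the second relation I would conjugate the square relation in \eqref{mergesplitC} by $X_{k,1}$ on the top and bottom. The left side becomes the reversed square, the identity term stays the identity, and the crossing term becomes $X_{k,1}\circ(\text{crossing term})\circ X_{1,k}$. I would then identify this last expression with the reversed crossing diagram via the braid-type sliding moves (\eqref{sliders}-type relations, derived in $\WCb$ from \eqref{swallowsC} and \eqref{thickwebassocC}).

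The main obstacle I expect is the inductive proof of $X_{k,1}X_{1,k}=1$, and more generally the naturality of thick crossings with respect to thin merges, inside $\WCb$. There the thick crossing is only defined through thin strands, so every sliding move has to be built by hand from \eqref{mergesplitC} with careful bookkeeping of the normalizing factors $\frac1b$ and $\frac1{ab}$. If that bookkeeping becomes unwieldy, my fallback is a direct induction on $k$. I would expand the reversed square by inserting the definition of the thick merge as $\frac1k$ times an iterated thin merge, apply \eqref{mergesplitC} at thickness $k-1$, and collect terms using \eqref{webassocC}.
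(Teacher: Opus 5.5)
Your reduction breaks down for the square relation. Write $\mathrm{merge}_{a,b}$, $\mathrm{split}_{a,b}$, $X_{a,b}$ for the thick merge, split and crossing of \eqref{thickmergesplitC}, and $C_k$ for the crossing term in \eqref{mergesplitC}. By \eqref{swallowsC}, $\mathrm{split}_{1,k}\circ\mathrm{merge}_{1,k}=X_{k,1}\circ(1+C_k)\circ X_{1,k}$. To finish you need two facts:
\begin{itemize}
\item $X_{k,1}\circ X_{1,k}=1_{(1,k)}$;
\item an identification of $X_{k,1}\circ C_k\circ X_{1,k}$ with the mirrored crossing term, which means pushing thick crossings through the merge and split inside $C_k$ and commuting crossings past one another.
\end{itemize}
These are instances of \eqref{sliders} and \eqref{braid}. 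In $\WCb$ the paper obtains them only downstream of this lemma: \eqref{sliders} in the next lemma, and \eqref{braid} only after \eqref{splitmerge}. The proof of \eqref{splitmerge} itself runs through \eqref{sliders} and \eqref{aquaC}, and \eqref{aquaC} is proved by an induction modeled on the present lemma. So you cannot quote them here. Your sketch for producing them on the spot (``write $X_{1,k}$ through the defining formula and slide thin strands using the $(k-1)$ case and \eqref{thickwebassocC}'') presupposes exactly the slider moves it is meant to establish. A slider for $X_{1,k}$ is a separate induction that runs through the square relation in \eqref{mergesplitC}. The digon relation is fine and needs no involutivity: applying \eqref{swallowsC} twice gives $\mathrm{merge}_{1,k}\circ\mathrm{split}_{1,k}=\mathrm{merge}_{1,k}\circ X_{k,1}\circ\mathrm{split}_{k,1}=\mathrm{merge}_{k,1}\circ\mathrm{split}_{k,1}=(k+1)\,1_{k+1}$.

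The paper uses no braid or slider relations here. It argues by a direct induction on $k$: the case $k=1$ is a special case of \eqref{mergesplitC}, and the inductive step is a single computation from the $(k-1)$ case. That is your fallback, but you leave it unexecuted, so as written the proposal does not prove the lemma.

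When you carry it out, also check which relations \eqref{reversemergesplitC} actually displays. An induction of exactly this shape proves the reversed-order identity $\mathrm{split}_{1,k}\circ\mathrm{merge}_{k,1}=X_{k,1}+H_k$, where $H_k:=(1_1\otimes\mathrm{merge}_{k-1,1})\circ(\mathrm{split}_{1,k-1}\otimes 1_1)$. The steps are:
\begin{itemize}
\item unfold $X_{k,1}$ by \eqref{thickmergesplitC};
\item substitute the $(k-1)$ case for $X_{k-1,1}$;
\item use \eqref{mergesplitC}, \eqref{webassocC} and \eqref{thickwebassocC} to rewrite the two resulting terms as $k\,\mathrm{split}_{1,k}\circ\mathrm{merge}_{k,1}-H_k$ and $(k-1)H_k$, so that $kX_{k,1}=k(\mathrm{split}_{1,k}\circ\mathrm{merge}_{k,1}-H_k)$.
\end{itemize}
That is a different relation from the mirror images of \eqref{mergesplitC} you assumed. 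If it (with its $\div$-image) is what is displayed, your proposal is aimed at the wrong statement.
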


\begin{proof}
    We prove this lemma by induction on $k$. The case when $k=1$ is a special case of \cref{mergesplitC}. Suppose the relation holds for any label less than $k$, then we have
    \begin{multline*}
    %
\end{multline*}
as desired.
\end{proof}

\begin{lemma}
    The relations \cref{sliders} hold in $\WCb$.
\end{lemma}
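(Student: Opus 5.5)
The plan is to derive the \cref{sliders} relations in $\WCb$ from the thick merge/split definitions \cref{thickmergesplitC}, the thick crossing definition, and the lemmas already established in this appendix. In $\WCb$ the thick crossing of thickness $(a,b)$ is defined as $\frac{1}{ab}$ times a diagram built from thin merges and splits. It therefore suffices to prove the sliders in the special case where the strand passing through the merge or split has thickness $1$, and where the merge itself is of the form $(k,1)\to(k+1)$. The general case should then follow by decomposing thick strands into thin ones with the explodings $\frac1{a!}$ (splits followed by merges, via \cref{reversemergesplitC} and \cref{thickwebassocC}), and sliding one thin strand at a time.

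For the base case I would first show that the thin crossing slides through a thin merge $(1,1)\to(2)$. Expanding the thick crossing $(2,1)$ via its definition and using \cref{mergesplitC} together with \cref{swallowsC}, this should reduce to the braid relation for thin crossings. That braid relation is itself a consequence of the Brauer-type relations available in $\WCb$; more precisely, it can be extracted from the second relation of \cref{mergesplitC} with $k=1$, which identifies the thin crossing as $\text{merge-split} - \mathrm{id}$, together with \cref{webassocC}. Next I would induct on $k$, writing the $(k,1)\to(k+1)$ merge via \cref{webassocC} and \cref{thickmergesplitC} as a composite of lower merges, and applying the inductive hypothesis to each piece. The versions with splits then follow by applying the symmetry $\div$, and the left/right mirror versions by the same argument with roles reversed.

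The step I expect to be the main obstacle is controlling the thick crossing normalisation. Because the crossing is defined as $\frac1{ab}$ times a composite, one must check that after sliding, the resulting diagram reassembles into exactly the thick crossing with the correct coefficient. I would handle this by first proving an auxiliary identity: the thick crossing of $(a,1)$ equals the composite ``thin strand crosses each thin component of an exploded $a$-strand'', with the $\frac1{a!}$ factors absorbed by \cref{reversemergesplitC}. Once that identity is in hand, every slider becomes a statement about thin crossings moving past thin merges, which is handled by the base case.
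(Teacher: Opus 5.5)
Your proposal has two genuine gaps.

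First, the reduction from a thick crossing strand to thin ones is circular. To \emph{slide one thin strand at a time} you insert $1_a=\frac{1}{a!}\,\mathrm{merge}\circ\mathrm{explode}$ on the $a$-strand. You then have to move the exploding split through $X_{b+c,a}$, and that move is itself an instance of \cref{sliders}, with the thick strand $b+c$ now acting as the crossing strand. Your auxiliary identity only covers a thick strand crossing a \emph{thin} one. That $(a,1)$ case essentially unwinds the recursive definition \cref{thickmergesplitC}, but it gives no handle on $X_{b+c,a}$, $X_{b,a}$ or $X_{c,a}$ once $b+c\ge 2$.

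Nor can you reassemble afterwards. $\mathrm{explode}_a\circ\mathrm{merge}_a$ is not a scalar (for $a=2$ it is $1+X$), so the thin pieces leaving $X_{c,a}$ cannot be fed into $X_{b,a}$ without again sliding a merge through a thick crossing.

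A similar circularity affects your induction on $k$. By \cref{thickmergesplitC} the merge $(k-1,2)\to(k+1)$ is defined through the $(k,1)$-merge itself, so $m_{(k,1)}$ cannot be rewritten via \cref{webassocC} as a composite of strictly lower merges. In $\WCb$ the thick crossing is defined recursively by peeling a thin strand off \emph{both} inputs. What is needed is therefore an induction on total thickness that works with \cref{thickmergesplitC} directly. This is what the paper does: it inducts on $a+c$ and computes the case $a=c=1$ by hand.

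Second, your base case reduces to the thin braid relation, but your justification for it is false. The $k=1$ case of \cref{mergesplitC} together with \cref{webassocC} does not imply \cref{braid}. Consider the following assignment:
\begin{itemize}
\item $1\mapsto V$ and $2\mapsto W:=\operatorname{im}(1+\tau)\subseteq V\otimes V$, with every object $\ge 3$ and every $(k,1)$-merge or split with $k\ge 2$ sent to $0$;
\item the thin crossing to an involution $\tau$ of $V\otimes V$;
\item the thin merge $m$ to $1+\tau$, corestricted to $W$, and the thin split $s$ to the inclusion.
\end{itemize}
Then $m\circ s=2$ and $s\circ m=1+\tau$, and \cref{webassocC} holds trivially because both sides land in $0$.

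Now take $V=\mathbb C^2$ and let $\tau$ act by $-1$ on $e_2\otimes e_2$ and by $+1$ on the other $e_i\otimes e_j$. Then $\tau_{12}=\tau\otimes 1_V$ and $\tau_{23}=1_V\otimes\tau$ are commuting diagonal involutions. So $\tau_{12}\tau_{23}\tau_{12}=\tau_{23}\neq\tau_{12}=\tau_{23}\tau_{12}\tau_{23}$, and braid fails in this model.

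The braid relation therefore needs at least the $k\ge 2$ relations. In the paper it is established in $\WCb$ only after \cref{splitmerge}, whose proof invokes \cref{sliders}. As written, your base case presupposes a consequence of the statement you are proving. You need either an argument for the case $a=c=1$ that avoids \cref{braid}, or a genuinely independent derivation of the thin braid relation in $\WCb$.
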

\begin{proof}
  We prove the first relation in \cref{sliders}; the remaining relations are proved in the same way.  
The argument proceeds by an induction on \(a+c\).  
We note that the base case is not explicitly treated there, so we include it here. Our base case is for $a=c=1$. To prove this, we see that
\begin{align*}
    %
\end{align*}
as desired.
\end{proof}

\begin{lemma}
    The following relations hold in $\WCb$ for any $a+1=b+d\geq 2$:
     \begin{gather}
    \label{aquaC}
         %
 .
    \end{gather}
\end{lemma}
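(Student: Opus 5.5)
The relation \eqref{aquaC} is the special case $c=1$ of the square-switch relation in \eqref{splitmerge}: a merge $(a,1)\to(a+1)$ followed by a split $(a+1)\to(b,d)$. With $c=1$ the sum has only the terms $t\in\{0,1\}$, so the right-hand side has at most two diagrams. I will prove it in $\WCb$ by induction on $d$, using only the definitions \eqref{thickmergesplitC}, the relations \eqref{webassocC}, \eqref{mergesplitC}, and the already established \cref{thickwebassocC}, \eqref{swallowsC}, \eqref{reversemergesplitC} and \cref{sliders}.

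\emph{Base case $d=1$ (so $b=a$).} Here the split $(a+1)\to(a,1)$ is a generator of $\WCb$. The relation is exactly the second identity in \eqref{mergesplitC} with $k=a$: the identity $\mathrm{id}_{a}\otimes\mathrm{id}_1$ is the $t=0$ term, and the merge–crossing–split term is the $t=1$ term. Only the notation needs matching, namely that the thick crossing of \eqref{thickmergesplitC} with labels $(a,1)$ agrees with the diagram appearing in \eqref{mergesplitC}; this follows from \eqref{swallowsC}. The symmetric case $b=1$ follows by the same argument, or by applying $\div$ together with the crossing symmetry \eqref{braid}.

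\emph{Inductive step, $d\ge2$.} Write the split $(a+1)\to(b,d)$ using \eqref{thickmergesplitC} as $\frac1d$ times the split $(a+1)\to(b+d-1,1)$, followed by a split $(b+d-1)\to(b,d-1)$, followed by a merge $(d-1,1)\to d$. Apply the base case $d=1$ to the composite of the merge $(a,1)\to(a+1)$ with the split $(a+1)\to(a,1)$. This produces two terms.
\begin{itemize}
\item In the identity term, the merge $(d-1,1)\to d$ sits on top of the split $(a)\to(b,d-1)$. This gives the $t=0$ contribution.
\item In the crossing term, a thin strand travels from the bottom-right input to the top. There I use \cref{thickwebassocC} and \cref{sliders} to slide the split $(a)\to(b,d-1)$ past the crossing, and then apply the induction hypothesis to the smaller square $(a-1,1)\to(b,d-1)$, which has smaller $d$.
\end{itemize}
Afterwards, I reassemble the resulting diagrams using \eqref{swallowsC} and the bigon relation \eqref{reversemergesplitC}. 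The result should be a multiple $d$ of the desired $t=1$ term plus the $t=0$ term, and the multiple $d$ cancels the prefactor $\frac1d$.

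\emph{Main obstacle.} The hard part is the bookkeeping of scalar coefficients in the inductive step. One must check that the various contributions, including the bigon factors coming from \eqref{reversemergesplitC} and \eqref{mergesplitC} in the form $(k+1)$, sum to exactly $d$ times the target $t=1$ diagram, with no leftover multiple of the $t=0$ term. I would first test the case $d=2$ by hand to pin down the coefficients, and only then write the general induction.
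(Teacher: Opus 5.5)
Your overall strategy is the paper's: it proves \eqref{aquaC} by induction on $d$, in the same way as \eqref{reversemergesplitC}. Your first step is sound: you write $d\cdot\mathrm{split}_{a+1\to(b,d)}=(1_b\otimes\mathrm{merge}_{d-1,1})\circ(\mathrm{split}_{a\to(b,d-1)}\otimes 1_1)\circ\mathrm{split}_{a+1\to(a,1)}$, using associativity and the bigon in \eqref{mergesplitC}, and then apply the base case.

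However, the coefficient bookkeeping you predict for the inductive step is wrong, and as written the step does not close. Let $D_{\rm st}$ be the diagram in which the thin input merges straight into $d$ (split $a\to(b,d-1)$, then merge $(d-1,1)\to d$). Let $D_\times$ be the diagram in which the thin input crosses the $d$-strand and merges into $b-1$. Note that $D_{\rm st}$ is the $t=1$ term of \eqref{splitmerge}, since there $t$ is the thickness of the strand from $c$ to $d$; your labels are swapped.

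The identity term of the base case gives $D_{\rm st}$ exactly once. If the crossing term produced only $d\,D_\times$ with no leftover, you would get $\mathrm{LHS}=\tfrac1d D_{\rm st}+D_\times$. This is false for $d\ge2$. For example, take $(a,b,d)=(2,1,2)$ in the exterior-algebra representation. The left side sends $v_1\wedge v_2\otimes v_3$ to $v_1\otimes v_2\wedge v_3-v_2\otimes v_1\wedge v_3+v_3\otimes v_1\wedge v_2$, which is exactly the image of $D_{\rm st}+D_\times$, and $D_{\rm st}$ acts nontrivially.

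What the crossing term actually contributes is $(d-1)D_{\rm st}+d\,D_\times$. Let $\gamma$ be the thin input and $\alpha$ the thin strand split off $a$. In the crossing term, the merge $(a-1,\gamma)\to a$ is already immediately followed by the split $a\to(b,d-1)$. So no sliding is needed, and the inductive hypothesis for $(a-1,b,d-1)$ applies directly, giving two pieces.

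\begin{itemize}
\item In the straight piece, $\gamma$ and $\alpha$ cross and then both merge into $d$. The thin merge absorbs the crossing: $\mathrm{merge}_{1,1}\circ X_{1,1}=\mathrm{merge}_{1,1}$, cf.\ \eqref{swallowsC}. After associativity, the bigon $\mathrm{merge}_{d-2,1}\circ\mathrm{split}_{d-1\to(d-2,1)}=d-1$ gives $(d-1)D_{\rm st}$.
\item In the crossing piece, $\gamma$ crosses $\alpha$ and then $d-1$, which afterwards merge. \cref{sliders} turns this into a crossing with the merged $d$-strand, and the bigon $\mathrm{merge}_{d-1,1}\circ\mathrm{split}_{d\to(d-1,1)}=d$ gives $d\,D_\times$. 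This, not the base case, is where the thick $(d,1)$-crossing of \eqref{thickmergesplitC} enters; in the base case the crossing is the thin generator.
\end{itemize}

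Thus $d\cdot\mathrm{LHS}=D_{\rm st}+(d-1)D_{\rm st}+d\,D_\times$. It is the total $1+(d-1)=d$ on the straight diagram, together with $d$ on the crossing diagram, that cancels the $\tfrac1d$.

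Two smaller points. No separate case $b=1$ is needed, since induction on $d$ from $d=1$ covers every $b\ge1$. Also, $\div$ would not supply that case anyway: it reflects vertically and sends this merge--split composite to a different family.
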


\begin{proof}
    This follows by induction on $d$; the inductive process is similar to the proof of \cref{reversemergesplitC} and will be skipped.
\end{proof}

\begin{lemma}
\label{lem:sp:m}
    The relations \cref{splitmerge} hold in $\WCb$ for any $a+b=c+d\geq 2$.
\end{lemma}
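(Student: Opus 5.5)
The plan is to follow the type A strategy of \cite[Lemma 4.7]{BEEO} inside $\WCb$: thick merges and splits are built from thin ones by \eqref{thickmergesplitC}, and \eqref{splitmerge} becomes a statement about thin strands. Write $E_{a,c}^{b,d}$ for the left-hand side of \eqref{splitmerge} and $S_{a,c}^{b,d}$ for the right-hand side, the sum of thick crossing diagrams with labels $s,t$. I will argue by induction on $a+b+c+d$ (equivalently on the thickness $a+c$ of the middle strand). I treat the degenerate cases first. If one of $a,b,c,d$ equals $a+c$, the right-hand side has a single summand and the identity is the associativity \eqref{thickwebassocC}, or the digon removal $\binom{a+b}{a}$, which follows from \eqref{mergesplitC} by induction on $b$ using \eqref{thickmergesplitC}. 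The first relation of \eqref{splitmerge} (the digon) I prove this way first: peel off one thin strand with \eqref{thickmergesplitC} and use associativity together with \eqref{mergesplitC} to get the recursion $\binom{a+b}{a}=\frac{a+b}{b}\binom{a+b-1}{a}$.

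For the square relation, I would first reduce to $d=1$. Write the top split into $(b,d)$ as $\frac1d$ times a split $(b,d-1,1)$ followed by merging $d-1$ with $1$. Then \eqref{thickwebassocC} regroups this as a split into $(b+d-1,1)$ followed by a split $(b,d-1)$ on the left strand and a merge. The base case $d=1$ is handled by \eqref{aquaC}, which is exactly the $d=1$ instance of \eqref{splitmerge} with $a+1=b+d$. It remains to cover general $c$ by the same peeling on the bottom merge. Apply the $d=1$ identity to the inner piece. Apply the induction hypothesis to the resulting smaller square, where $(b,d-1)$ is on top. Then slide the remaining thin strand through the thick crossings using \eqref{sliders}, which are already established in $\WCb$, together with \eqref{swallowsC} and the associativity relations.

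Finally, I would collect the terms. Each resulting diagram is a crossing-type summand with labels $(s,t)$ or $(s,t-1)$ and an extra thin strand merged into the $t$-leg. The digon relation absorbs that extra strand into the $t$-leg, producing a coefficient $t$. Against the prefactor $\frac1d$, the coefficients should telescope, since the contributions from $t$ and $d-t$ sum to $d$, and this leaves exactly $S_{a,c}^{b,d}$.

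I expect the main obstacle to be this coefficient bookkeeping. Sliding the thin strand past a thick crossing requires thick crossings to be expressed via \eqref{thickmergesplitC}, and one must check that no extra terms appear, which uses the commutation of a thin strand with merges and splits from \eqref{swallowsC} and \eqref{sliders}. I would first verify the small cases $a=b=c=d=1$ and $c=1$ explicitly to fix the normalization, and then run the induction.
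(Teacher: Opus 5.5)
Your plan is essentially the paper's proof. There, one peels a thin strand off the bottom leg $c$ using \eqref{thickmergesplitC} and applies \eqref{aquaC} to the resulting merge of $(a+c-1,1)$ followed by the split into $(b,d)$. Induction on $c$ then handles the two smaller squares, and the terms are recombined via associativity, the digon relation and \eqref{sliders} with coefficients summing to $c$---the top--bottom mirror of your $t+(d-t)=d$ bookkeeping.

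One correction: \eqref{aquaC}, stated for $a+1=b+d$, is the $c=1$ case (with $a,b,d$ arbitrary), not the $d=1$ case. So a single induction on $c$ by bottom peeling already finishes the proof, and your reduction to $d=1$ is just the $\div$-mirror of that step, whose base case is $\div$ applied to \eqref{aquaC}.
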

\begin{proof}
    The first relation in \cref{splitmerge} is straightforward to check using \cref{mergesplitC,thickmergesplitC}. For the second relation in \cref{splitmerge}, we proceed by induction on $c$. When $c=1$, this is exactly \cref{aquaC}. Now suppose the statement is true for any label less than $c$, then we have
    \begin{align*}
    c\begin{tikzpicture}[anchorbase,scale=1, color=\clr]
	\draw[-,line width=1pt] (0,0) to (.29,.3) to (.29,.7) to (0,1);
	\draw[-,line width=1pt] (.6,0) to (.31,.3) to (.31,.7) to (.6,1);
        \node at (0,1.13) {$\scriptstyle b$};
        \node at (0.63,1.13) {$\scriptstyle d$};
        \node at (0,-.1) {$\scriptstyle a$};
        \node at (0.63,-.1) {$\scriptstyle c$};
\end{tikzpicture}
\overset{\cref{thickmergesplitC}}{=}&
  \begin{tikzpicture}[anchorbase,scale=1, color=\clr]
	\draw[-,line width=1pt] (0,0) to (.29,.3) to (.29,.7) to (0,1);
	\draw[-,line width=1pt] (.6,0) to (.31,.3) to (.31,.7) to (.6,1);
    \draw[-,thick] (.5,.1) -- (.2,.2);
    \node at (.55,.3){$\scriptstyle 1$};
        \node at (0,1.13) {$\scriptstyle b$};
        \node at (0.63,1.13) {$\scriptstyle d$};
        \node at (0,-.1) {$\scriptstyle a$};
        \node at (0.63,-.1) {$\scriptstyle c$};
\end{tikzpicture}
\overset{\cref{aquaC}}{=}
\begin{tikzpicture}
[anchorbase,scale=.8,color=\clr]
    \draw[-,thick] (0,0) -- (0,1);
    \draw[-,thick] (.6,0) -- (.6,1);
    \draw[-,thick] (0,.4) -- (.6,.2);
    \draw[-,thick] (0,.6) -- (.6,.8);
    \node at (.75,.5){$\scriptstyle 1$};
    \node at (0,-.15){$\scriptstyle a$};
    \node at (.6,-.15){$\scriptstyle c$};
    \node at (0,1.15){$\scriptstyle b$};
    \node at (.6,1.15){$\scriptstyle d$};
\end{tikzpicture}
~+~
\begin{tikzpicture}
[anchorbase,scale=.8,color=\clr]
    \draw[-,thick] (0,0) -- (0,1);
    \draw[-,thick] (.6,0) to[out=up,in=-45] (0,.9);
    \draw[-,thick] (0,.4) -- (.6,.2);
    \draw[-,thick] (0,.6) -- (.6,.8);
    \node at (.75,.5){$\scriptstyle 1$};
    \node at (0,-.15){$\scriptstyle a$};
    \node at (.6,-.15){$\scriptstyle c$};
    \node at (0,1.15){$\scriptstyle b$};
    \node at (.6,1.15){$\scriptstyle d$};
\end{tikzpicture}\\
=&
\sum_{\substack{0 \leq s \leq \min(a,b)\\0 \leq t \leq \min(c-1,d-1)\\t-s=d-1-a}}
\begin{tikzpicture}[anchorbase,scale=1, color=\clr]
	\draw[-,thick] (0.85,0) to (0.85,.2) to (.02,.8) to (.02,1);
	\draw[-,thick] (0.02,0) to (0.02,.2) to (.85,.8) to (.85,1);
	\draw[-,thick] (0,0) to (0,1);
	\draw[-,line width=1pt] (0.61,.32) to (0.61,.65);
    \draw[-] (0.85,0) to (0.85,1);
        \node at (0,1.13) {$\scriptstyle b$};
        \node at (0.7,1.13) {$\scriptstyle d$};
        \node at (0,-.1) {$\scriptstyle a$};
        \node at (0.7,-.1) {$\scriptstyle c$};
        \node at (-0.1,.5) {$\scriptstyle s$};
        \node at (0.7,.5) {$\scriptstyle t$};
        \node at (0.95,.5) {$\scriptstyle 1$};
\end{tikzpicture}
+\sum_{\substack{0 \leq k \leq \min(a,b-1)\\0 \leq h \leq \min(c-1,d)\\k-h=d-a}}
\begin{tikzpicture}[anchorbase,scale=1, color=\clr]
	\draw[-,thick]  (0.68,-.1) to (0.68,0) to (.02,.6) to (.02,1);
    \draw[-] (.02,.8) to [out=0,in=90](0.68,0);
	\draw[-,thick] (0.02,0) to (0.02,.2) to (.68,1);
	\draw[-,thick] (0,0) to (0,1);
	\draw[-,line width=1pt] (0.38,0.25) to (0.38,.6);
        \node at (0,1.13) {$\scriptstyle b$};
        \node at (0.65,1.13) {$\scriptstyle d$};
        \node at (0,-.1) {$\scriptstyle a$};
        \node at (0.68,-.2) {$\scriptstyle c$};
        \node at (-0.1,.4) {$\scriptstyle k$};
        \node at (0.47,.45) {$\scriptstyle h$};
        \node at (0.75,.5) {$\scriptstyle 1$};
\end{tikzpicture} \quad \text{ by induction on $c$ }\\
\overset{\cref{webassocC},\cref{mergesplitC},\cref{sliders}}=&
(t-1)\sum_{\substack{0 \leq s \leq \min(a,b)\\1 \leq t \leq \min(c,d)\\t-s=d-a}}
\begin{tikzpicture}[anchorbase,scale=1, color=\clr]
	\draw[-,thick] (0.58,0) to (0.58,.2) to (.02,.8) to (.02,1);
	\draw[-,thick] (0.02,0) to (0.02,.2) to (.58,.8) to (.58,1);
	\draw[-,thick] (0,0) to (0,1);
	\draw[-,line width=1pt] (0.61,0) to (0.61,1);
        \node at (0,1.13) {$\scriptstyle b$};
        \node at (0.6,1.13) {$\scriptstyle d$};
        \node at (0,-.1) {$\scriptstyle a$};
        \node at (0.6,-.1) {$\scriptstyle c$};
        \node at (-0.1,.5) {$\scriptstyle s$};
        \node at (0.77,.5) {$\scriptstyle t$};
\end{tikzpicture}
+
(c-t)\sum_{\substack{0 \leq s \leq \min(a,b-1)\\0 \leq t \leq \min(c-1,d)\\t-s=d-a}}
\begin{tikzpicture}[anchorbase,scale=1, color=\clr]
	\draw[-,thick] (0.58,0) to (0.58,.2) to (.02,.8) to (.02,1);
	\draw[-,thick] (0.02,0) to (0.02,.2) to (.58,.8) to (.58,1);
	\draw[-,thick] (0,0) to (0,1);
	\draw[-,line width=1pt] (0.61,0) to (0.61,1);
        \node at (0,1.13) {$\scriptstyle b$};
        \node at (0.6,1.13) {$\scriptstyle d$};
        \node at (0,-.1) {$\scriptstyle a$};
        \node at (0.6,-.1) {$\scriptstyle c$};
        \node at (-0.1,.5) {$\scriptstyle s$};
        \node at (0.77,.5) {$\scriptstyle t$};
\end{tikzpicture}
\\
=&
c\sum_{\substack{0 \leq s \leq \min(a,b)\\0 \leq t \leq \min(c,d)\\t-s=d-a}}
\begin{tikzpicture}[anchorbase,scale=1, color=\clr]
	\draw[-,thick] (0.58,0) to (0.58,.2) to (.02,.8) to (.02,1);
	\draw[-,thick] (0.02,0) to (0.02,.2) to (.58,.8) to (.58,1);
	\draw[-,thick] (0,0) to (0,1);
	\draw[-,line width=1pt] (0.61,0) to (0.61,1);
        \node at (0,1.13) {$\scriptstyle b$};
        \node at (0.6,1.13) {$\scriptstyle d$};
        \node at (0,-.1) {$\scriptstyle a$};
        \node at (0.6,-.1) {$\scriptstyle c$};
        \node at (-0.1,.5) {$\scriptstyle s$};
        \node at (0.77,.5) {$\scriptstyle t$};
\end{tikzpicture}.
    \end{align*}
  Then  \cref{splitmerge} follows.  
\end{proof}

Once \cref{splitmerge} is established for $\WCb$, it follows, together with \cref{thickwebassocC}, that \cref{braid} also holds in $\AWCb$; see for example \cite[Appendix]{BEEO}. 

\begin{lemma}
\label{lem:curl}
    The following relation holds in $\WCb$, for $a\ge 1$:
    \begin{gather*}
         \begin{tikzpicture}[anchorbase, scale=1, color=\clr]
 \draw[line width=1.2pt] (-0.15,0) arc(180:0:0.15) to[out=down,in=60] (-0.15,-0.4);
        \draw[line width=1.2pt] (0.15,-0.4) to[out=120,in=down] (-0.15,0);
        \node at (-.15,-.5) {$\scriptstyle a$};
        \node at (.15,-.5) {$\scriptstyle a$};
\end{tikzpicture}
= (-1)^a\!\capa .
    \end{gather*}
\end{lemma}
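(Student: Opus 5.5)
We argue by induction on $a$. The base case $a=1$ is the first relation in \cref{bubbleC}. For $a\ge 2$, we first replace the thick cap $\capa$ by its definition \cref{thickmergesplitC}:
\[
\capa=\tfrac{1}{a}\,(\text{cap of thickness } 1 \text{ nested inside a cap of thickness } a-1,\ \text{joined to the two } a\text{-strands by splits}).
\]
We then precompose with the thick crossing $(a,a)\to(a,a)$.

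The key step is to slide the crossing up through the two splits. For this we use the slider relations \cref{sliders}, which are already established in $\WCb$ by the lemma preceding this one. After sliding, the thick $(a,a)$-crossing becomes four crossings among strands of thickness $1$ and $a-1$. Pushing it up turns the nested pair of caps into a configuration in which the $1$-cap and the $(a-1)$-cap each carry a self-crossing. These self-crossings are joined by two further crossings between the $1$-strand and the $(a-1)$-strand.

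Concretely, we expect the picture to become
\[
\tfrac1a\,(\text{curled } (a-1)\text{-cap})\circ(\text{curled } 1\text{-cap nested inside}),
\]
after we untangle the two crossings between the $1$-strand and the $(a-1)$-strand. The untangling should go as follows. The curl on the inner $1$-cap is a Reidemeister-I-type twist, which \cref{bubbleC} evaluates to $-\,$cap. The remaining crossings between the $1$-strand and the $(a-1)$-strand form a pair that cancels by the braid relation \cref{braid}, which holds in $\WCb$ as noted just before this lemma. We also need the relation sliding a strand over a cap, namely the fourth relation in \cref{zigzag}; for thick strands it follows from the thin one in \cref{zigzagC} via \cref{thickmergesplitC}, \cref{thickwebassocC} and \cref{sliders}.

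The outer curl is a curl on the $(a-1)$-cap, and by the induction hypothesis it equals $(-1)^{a-1}$ times the $(a-1)$-cap. Collecting signs gives $(-1)\cdot(-1)^{a-1}\cdot\tfrac1a\,(\text{defining picture})=(-1)^a\capa$.

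The main obstacle is the sliding step: checking rigorously that a thick crossing passed through the nested cap configuration produces exactly the two curls, up to isotopy moves. Here isotopy means moving strands over cups and caps, which uses \cref{zigzagC} together with the crossing-through-cap relation, and it must be done without extra terms. Since only sliders, braid and zigzag-type relations are used (none of \cref{mergesplitC} with its correction term), no extra summands should appear.

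If the direct geometric sliding proves messy, our fallback is to induct using the split $a=(a-1)+1$ on a single side. We would write the $(a,a)$-crossing via \cref{thickmergesplitC} as $\tfrac{1}{a^2}$ times a composite of merges and splits, and then evaluate each thin piece with \cref{bubbleC} and the inductive curl relation.
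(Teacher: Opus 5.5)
Your proposal is correct and is essentially the paper's own argument: you induct on $a$ from the first relation of \cref{bubbleC}, expand the thick cap by \cref{thickmergesplitC}, and use \cref{sliders} and \cref{braid} to reduce the $a$-curl to $-\frac1a$ times an $(a-1)$-curl with a thin cap nested beneath it, where the sign comes from the thin curl. The induction hypothesis then gives $(-1)^{a-1}$ for the $(a-1)$-curl, and what remains is $\frac1a$ times the defining diagram of the thick cap, so the result is $(-1)^a$ times the cap; your fallback route is not needed.
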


\begin{proof}
    We proceed by induction on $a$. The case when $a=1$ is exactly the first relation in \cref{bubbleC}. Now suppose the statement is true for any label less than $a$. Then we have
    \[
    \begin{tikzpicture}[anchorbase, scale=1, color=\clr]
 \draw[line width=1.2pt] (-0.15,0) arc(180:0:0.15) to[out=down,in=60] (-0.15,-0.4);
        \draw[line width=1.2pt] (0.15,-0.4) to[out=120,in=down] (-0.15,0);
        \node at (-.15,-.5) {$\scriptstyle a$};
        \node at (.15,-.5) {$\scriptstyle a$};
\end{tikzpicture}
=~\frac{1}{a}~
         \begin{tikzpicture}
        [anchorbase,color=\clr]
        \draw[line width=1.2pt] (0,0) to [out=70,in=down] (.6,.6) to [out=up,in=right] (.3,.9) to [out=left,in=up] (0,.6) to[out=down,in=110] (.6,0);
        \draw[-] (0,.5) to [out=up,in=up] (.6,.5);
        \node at (0,-.15) {$\scriptstyle a$};
        \node at (.6,-.15) {$\scriptstyle a$};
        \node at (.3,1.2){$\scriptstyle a-1$};
    \end{tikzpicture}
    \overset{\cref{sliders}\cref{braid}}{=}~-\frac{1}{a}~
        \begin{tikzpicture}
    [anchorbase,color=\clr]
        \draw[line width=1.2pt] (-.1,0) to [out=70,in=down] (.4,.6) to [out=up,in=right] (.2,.8) to [out=left,in=up] (0,.6) to[out=down,in=110] (.5,0);
        \draw[-] (-.05,.05) to [out=20,in=160] (.45,.05);
        \node at (-.1,-.15) {$\scriptstyle a$};
        \node at (.5,-.15) {$\scriptstyle a$};
        \node at (.2,1){$\scriptstyle a-1$};
    \end{tikzpicture}
    =(-1)^{a}\!\capa.
    \]
    This finishes the proof.
\end{proof}

\begin{lemma}
\label{lem:bub}
The following relation holds in $\AWCb$, for $a\ge 1$:
\begin{align*}
    \bubblea= \binom{x }{a}.
\end{align*}
\end{lemma}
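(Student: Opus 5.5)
We prove $\bubblea=\binom{x}{a}$ in $\WCb$ by induction on $a$. The base case $a=1$ is the last relation in \cref{bubbleC}.

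For the inductive step, write the thick cap using its definition in \eqref{thickmergesplitC}:
\[
\capa=\frac1a\,(\text{nested thin/thick cap}),
\]
which involves a $1$-cap nested inside an $(a-1)$-cap, with splits $a\to(a-1,1)$ and $(1,a-1)\to a$ at the feet. The bubble then becomes $\frac1a$ times a diagram in which the thick $a$-cup is capped off through these splits. Using \cref{mergesplitsliderightC}, we slide the split on one leg of the $a$-cup around the cup, so that it becomes a merge sitting on the other leg. Both splits then lie on the same strand, and after straightening with \cref{zigzagC} the diagram is the closure of an ``$a$-strand dumbbell'': a thick strand of thickness $a$ split into $(a-1,1)$ and merged back, with the $1$-strand and the $(a-1)$-strand closed off separately.

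Next we expand the split followed by the merge on the $a$-strand using \cref{splitmerge}, which was already established in $\WCb$ by \cref{lem:sp:m}. Equivalently, we first close off only the thin $1$-strand and compute that partial closure. Closing a $1$-strand around an $a$-strand through a merge then split is the sideways-dumbbell computation. Its $b=1$ case in the proof of \cref{dumbbellremove} uses only \cref{splitmerge}, \cref{curlremove} (the curl relation \cref{lem:curl}, available here) and the thin bubble $=x$. We redo that computation with thickness $a-1$ in place of $a$: capping the $1$-leg of a merge/split pair on $a$ yields $(x-(a-1))$ times the identity on the $(a-1)$-strand.

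Hence
\[
\bubblea=\frac1a\,(x-a+1)\,\dotbubble',
\]
where $\dotbubble'$ denotes the plain bubble of thickness $a-1$, which equals $\binom{x}{a-1}$ by induction. Therefore $\bubblea=\frac{x-a+1}{a}\binom{x}{a-1}=\binom{x}{a}$.

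The main obstacle is making sure that the needed step, sideways-dumbbell removal with one thin strand, is available in $\WCb$ without circularity. The proof of \cref{dumbbellremove} invokes \eqref{bubblecap}, but only through the thin bubble and the curl, both already known in $\WCb$. We must also check that the sign produced by the curl and the orientation of \cref{mergesplitsliderightC} combine to give exactly $x-a+1$ rather than $-(x-a+1)$. I would verify this by direct expansion: $1_{a-1}\otimes\cup_1$, then the merge into $a$, then the split back, then $1_{a-1}\otimes\cap_1$. This expands via \eqref{mergesplitC} into an identity term giving $x$ and a crossing term giving $-(a-1)$ by the curl relation.
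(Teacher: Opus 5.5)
Your argument is correct and is essentially the paper's proof: the paper likewise rewrites $a$ times the thick bubble as the closure of the merge-then-split on $(a-1,1)$ (your sliding step). It then expands this by \eqref{mergesplitC} into an identity term giving $x\binom{x}{a-1}$ and a curl term giving $-(a-1)\binom{x}{a-1}$, and divides by $a$. Tidy the wording, though: both feet of the expanded cap are splits ($a\to(a-1,1)$ and $a\to(1,a-1)$), and after sliding you have a merge followed by a split, not a split followed by a merge on the $a$-strand, which would only return $a$ times the thick bubble and make the argument circular.
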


\begin{proof}
    We proceed by induction on $a$. When $a=1$ the lemma follows from \cref{bubbleC}. Suppose the lemma holds for $a-1$. Then we have
    \[
     a\bubblea
     ~=~
     \begin{tikzpicture}
         [anchorbase,color=\clr]
         \draw[line width=1pt] (0,0) to [out=up,in=left] (.4,.4) to [out=right,in=up] (.8,0) to [out=down,in=right] (.4,-.4) to [out=left,in=down] (0,0);
         \draw (.6,.35) to [out=-120,in=120] (.6,-.35);
         \node at (-.2,0){$\scriptstyle a$};
         \node at (1.1,0){$\scriptstyle a-1$};
     \end{tikzpicture}
     ~=~
      \begin{tikzpicture}
         [anchorbase,color=\clr]
         \draw[line width=1pt] (.2,.3) to [out=45,in=left] (.4,.4) to [out=right,in=up] (.8,0) to [out=down,in=right] (.4,-.4) to [out=left,in=-45] (.2,-.3)
         (.2,.3) -- (.3,.1) -- (.3,-.1) -- (.2,-.3);
         \draw (.3,.1) -- (.4,.3) to [out=45,in=-45] (.4,-.3) -- (.3,-.1);
         \node at (.1,0){$\scriptstyle a$};
       \node at (1.1,0){$\scriptstyle a-1$};
     \end{tikzpicture}
     ~\overset{\cref{mergesplitC}}{=}~
     \begin{tikzpicture}
         [anchorbase,color=\clr]
        \draw[line width=1pt] (0,0) to [out=up,in=left] (.4,.4) to [out=right,in=up] (.8,0) to [out=down,in=right] (.4,-.4) to [out=left,in=down] (0,0);
        \draw (.2,0) to [out=up,in=left] (.4,.2) to [out=right,in=up] (.6,0) to [out=down,in=right] (.4,-.2) to [out=left,in=down] (.2,0);
         \node at (1.1,0){$\scriptstyle a-1$};
     \end{tikzpicture}
     ~+~
      \begin{tikzpicture}
         [anchorbase,color=\clr]
        \draw[line width=1pt] (0,0) to [out=up,in=left] (.4,.4) to [out=right,in=up] (.8,0) to [out=down,in=right] (.4,-.4) to [out=left,in=down] (0,0);
        \draw (.1,.25) to [out=-45,in=left] (.4,-.2) to [out=right,in=right] (.4,.2) to [out=left,in=45] (.1,-.25); 
         \node at (1.1,0){$\scriptstyle a-1$};
     \end{tikzpicture}
     ~=~(x-a+1)\binom{x}{a-1}.
    \]
    This lemma follows from this since $(x-a+1)\binom{x}{a-1}=a\binom{x}{a}$.
\end{proof}

\begin{lemma}
\label{lem:loll}
The following relation holds in $\WCb$,  for $a\ge 1$:
    \begin{align*}
\begin{tikzpicture}[anchorbase,scale=.8,color=\clr]
\draw[-,line width=1.4pt] (0.08,.3) to (0.08,.6);
\draw[-,thick] (0.1,-.51) to [out=right,in=-45] (0.1,.31);
\draw[-,thick] (0.1,-.51) to [out=left,in=-135] (0.1,.31);
\node at (-.33,-.05) {$\scriptstyle a$};
\node at (.45,-.05) {$\scriptstyle a$};
\end{tikzpicture}
~=0=~
\begin{tikzpicture}[yscale=-1, anchorbase,scale=.8,color=\clr]
\draw[-,line width=1.4pt] (0.08,.3) to (0.08,.6);
\draw[-,thick] (0.1,-.51) to [out=right,in=-45] (0.1,.31);
\draw[-,thick] (0.1,-.51) to [out=left,in=-135] (0.1,.31);
\node at (-.33,-.05) {$\scriptstyle a$};
\node at (.45,-.05) {$\scriptstyle a$};
\end{tikzpicture}.
\end{align*} 
\end{lemma}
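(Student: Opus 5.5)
We prove the relation by induction on $a$, working throughout in $\WCb$ where the thick generators are defined through \eqref{thickmergesplitC}. The second relation (the one with a cup instead of a cap) follows from the first by applying the anti-involution $\div$, so we only treat the lollipop formed by a split $(2a)\to(a,a)$ followed by the thick cap $\capa$. The base case $a=1$ is the middle relation in \eqref{bubbleC}.

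For the inductive step we expand the thick cap using \eqref{thickmergesplitC}: $\capa=\frac1a$ times the diagram in which each $a$-strand splits as $(a-1,1)$ (resp. $(1,a-1)$). The inner thin strands are joined by a thin cap, and the outer $(a-1)$-strands by a cap of thickness $a-1$. Precomposing with the split $(2a)\to(a,a)$ and using coassociativity \eqref{thickwebassocC}, we rewrite the composite as a single $4$-fold split $(2a)\to(a-1,1,1,a-1)$ followed by the nested caps.

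The inner thin cap then sits on two adjacent thin legs of one split. By associativity we may factor those two legs through a split $(2)\to(1,1)$, whose composite with the thin cap is the $a=1$ lollipop and hence vanishes. This gives zero directly, and only uses the base case. The inductive hypothesis for $a-1$ would be needed only if instead the two $(a-1)$-legs had to be grouped first.

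The main obstacle is making sure the regrouping is legitimate, i.e.\ that the four legs really appear in the order $(a-1,1,1,a-1)$, so that the thin legs are adjacent and the associativity relations already established in $\WCb$ suffice. If the definition forces the order $(a-1,1,a-1,1)$ instead, I would first move one thin leg past the $(a-1)$-strand. For this I would use the crossing relations \eqref{sliders}/\eqref{braid}, or \eqref{mergesplitsliderightC} to slide the merge through the cap. Any extra crossing of the two thin legs produced this way is removed by the curl relation of \cref{lem:curl} at the cost of a sign, and the same vanishing argument then applies.
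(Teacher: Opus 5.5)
Your proof is correct and is essentially the paper's argument. The paper reaches the same nested $(a-1,1,1,a-1)$ configuration, obtaining the factor $a$ by inserting an $(a-1,1)$ digon on one leg rather than unfolding \eqref{thickmergesplitC} directly, and then kills the inner thin lollipop via associativity and the $a=1$ relation in \eqref{bubbleC}; so, as you note, the induction hypothesis is never really used.

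Your contingency about the leg order is unnecessary. \eqref{thickmergesplitC} already defines the thick cap with the left strand split as $(a-1,1)$ and the right strand as $(1,a-1)$, so the two thin legs are adjacent from the start.
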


\begin{proof} 
We prove only the first equation; the second follows by applying \(\div\) to the first. We proceed by induction on $a$. When $a=1$, the lemma follows from \cref{bubbleC}.  Suppose the lemma is true for $a-1$, then we have
\[
a\begin{tikzpicture}[anchorbase,scale=.8,color=\clr]
\draw[-,line width=1.4pt] (0.08,.3) to (0.08,.6);
\draw[-,thick] (0.1,-.51) to [out=right,in=-45] (0.1,.31);
\draw[-,thick] (0.1,-.51) to [out=left,in=-135] (0.1,.31);
\node at (-.33,-.05) {$\scriptstyle a$};
\node at (.45,-.05) {$\scriptstyle a$};
\end{tikzpicture}
~=~
\begin{tikzpicture}[anchorbase,scale=.8,color=\clr]
\draw[-,line width=1.4pt] (0.08,.3) to (0.08,.6);
\draw[-,line width=1pt] (0.1,-.51) to [out=right,in=-45] (0.1,.31);
\draw[-,line width=1pt] (0.1,-.51) to [out=left,in=-135] (0.1,.31);
\draw (-.05,0) to [out=-60,in=30] (-.02,-.5);
\node at (-.55,-.25) {$\scriptstyle a-1$};
\node at (.45,-.05) {$\scriptstyle a$};
\end{tikzpicture}
~=~
\begin{tikzpicture}[anchorbase,scale=.8,color=\clr]
\draw[-,line width=1.4pt] (0.08,.3) to (0.08,.6);
\draw[-,line width=1pt] (0.1,-.51) to [out=right,in=-45] (0.1,.31);
\draw[-,line width=1pt] (0.1,-.51) to [out=left,in=-135] (0.1,.31);
\draw[-] (0.1,-.31) to [out=right,in=-60] (0.1,.31);
\draw[-] (0.1,-.31) to [out=left,in=-120] (0.1,.31);
\node at (.65,-.05) {$\scriptstyle a-1$};
\end{tikzpicture}
=0.
\]
This proves the lemma.
\end{proof}

\bibliographystyle{alpha}
\bibliography{Bweb}

\end{document}